\DeclareMathOperator*{\argmax}{arg\,max}
\DeclareMathOperator*{\argmin}{arg\,min}
\definecolor{block-gray}{gray}{0.85}
\newtcolorbox{myquote}{colback=block-gray,grow to right by=-0mm,grow to left by=-0mm,
boxrule=0pt,boxsep=0pt,breakable}
\g@addto@macro{\UrlBreaks}{\UrlOrds}
\renewcommand{\alph}[1]{\asbuk{#1}} 
\def \epsilon {\varepsilon}
\def  \R {\mathbb R}
\def \NN {\mathbb N}
\def \PP {\mathbb P}
\def \EE {\mathbb E}
\newcommand{\blockmatrix}[9]{
  \draw[draw=#4,fill=#5] (0,0) rectangle( #1,#2);
  \ifthenelse{\equal{#6}{true}}
  {
    \draw[draw=#7,fill=#8] (0,#2) -- (#9,#2) -- ( #1,#9) -- ( #1,0) -- ( #1 - #9,0) -- (0,#2 -#9) -- cycle;
  }
  {}
  \draw ( #1/2, #2/2) node { #3};
}
\newcommand{\rightparen}[1]{
  \begin{tikzpicture} 
    \draw (0,#1/2) arc (0:30:#1);
    \draw (0,#1/2) arc (0:-30:#1);
  \end{tikzpicture}
}
\newcommand{\leftparen}[1]{
  \begin{tikzpicture} 
    \draw (0,#1/2) arc (180:150:#1);
    \draw (0,#1/2) arc (180:210:#1);
  \end{tikzpicture}
}
\newcommand{\mblockmatrix}[4][none]{
  \begin{tikzpicture} 
  \ifthenelse{\equal{#1}{none}}
  {
    \blockmatrix{#2}{#3}{#4}{none}{none}{false}{none}{none}{0.0}
  }
  {
    \definecolor{fillcolor}{rgb}{#1}
    \blockmatrix{#2}{#3}{#4}{none}{fillcolor}{false}{none}{none}{0.0}
  }
  \end{tikzpicture}
}
\newcommand{\fblockmatrix}[4][none]{
  \begin{tikzpicture} 
  \ifthenelse{\equal{#1}{none}}
  {
    \blockmatrix{#2}{#3}{#4}{black}{none}{false}{none}{none}{0.0}
  }
  {
    \definecolor{fillcolor}{rgb}{#1}
    \blockmatrix{#2}{#3}{#4}{black}{fillcolor}{false}{none}{none}{0.0}
  }
  \end{tikzpicture}
}
\newcommand{\dblockmatrix}[4][none]{
  \begin{tikzpicture} 
  \ifthenelse{\equal{#1}{none}}
  {
    \blockmatrix{#2}{#3}{#4}{black}{none}{true}{black}{none}{0.35cm}
  }
  {
    \definecolor{fillcolor}{rgb}{#1}
    \blockmatrix{#2}{#3}{#4}{black}{none}{true}{black}{fillcolor}{0.35cm}
  }
  \end{tikzpicture}
}
\newcommand{\diagonalblockmatrix}[5][none]{
  \begin{tikzpicture} 

  \ifthenelse{\equal{#1}{none}}
  {
    \blockmatrix{#2}{#3}{#4}{black}{none}{true}{black}{none}{#5}
  }
  {
    \definecolor{fillcolor}{rgb}{#1}
    \blockmatrix{#2}{#3}{#4}{black}{none}{true}{black}{fillcolor}{#5}
  }

  \end{tikzpicture}
}
\newcommand{\valignbox}[1]{
  \vtop{\null\hbox{#1}}
}
\newenvironment{blockmatrixtabular}
{
  \begin{tabular}{
  @{}l@{}l@{}l@{}l@{}l@{}l@{}l@{}l@{}l@{}l@{}l@{}l@{}l@{}l@{}l@{}l@{}l@{}l@{}l
  @{}l@{}l@{}l@{}l@{}l@{}l@{}l@{}l@{}l@{}l@{}l@{}l@{}l@{}l@{}l@{}l@{}l@{}l@{}l
  @{}l@{}l@{}l@{}l@{}l@{}l@{}l@{}l@{}l@{}l@{}l@{}l@{}l@{}l@{}l@{}l@{}l@{}l@{}l
  @{}
  }
}
{
  \end{tabular}
}
\renewcommand{\Re}{\mathrm{Re}\,}
\renewcommand{\Im}{\mathrm{Im}\,}
\newcommand{\const}{\ensuremath{\mathop{\mathrm{const}}}\nolimits}
\newcommand{\Var}{\ensuremath{\mathrm{{\mathbb D}}}}
\newcommand{\Exp}{\ensuremath{\mathrm{{\mathbb E}}}}
\newcommand{\PR}{\ensuremath{\mathrm{{\mathbb P}}}}
\newcommand{\Be}{\ensuremath{\mathrm{Be}}}
\newcommand{\Po}{\ensuremath{\mathrm{Po}}}
\newcommand{\Bi}{\ensuremath{\mathrm{Bi}}}
\newcommand{\cov}{\ensuremath{\mathrm{cov}}}
\newcommand{\N}{\ensuremath{\mathcal{N}}}
\newcommand{\Nbb}{\mathbb{N}}
\newcommand{\Rbb}{\mathbb{R}}
\newcommand{\Ibb}{\mathbb{I}}
\newcommand{\F}{\mathcal{F}}
\newcommand{\liminmean}{\text{l.i.m.}}
\newcommand{\B}{\mathcal{B}}
\newcommand{\eps}{\varepsilon}
\newcommand\mean[1]{\accentset{\rule{0.8em}{.7pt}}{#1}}
\newtheoremstyle{theoremdd}
  {0pt}          
  {0pt}          
  {\normalfont}  
  {0pt}          
  {\bfseries}    
  {.}            
  { }            
  {\thmname{#1}\thmnumber{ #2}\thmnote{#3}}
\theoremstyle{theoremdd}
\newtheorem{theorem}{\indent Теорема}[section]
\newtheorem{example}{\indent Пример}[section]
\newtheorem{definition}{\indent Определение}[section]
\newcommand{\EndEx}{$\triangle$}
\newcommand{\EndProof}{$\square$}
\renewenvironment{proof}{{\bfseries Доказательство.}}{}
\newenvironment{solution}{{\bfseries Решение.}}{}
\renewcommand{\refname}{Литература}
\definecolor{myorange}{RGB}{255, 136, 0}
\def\gav#1{{\color{black}#1}}    
\def\shmaxg#1{{\color{black}#1}} 
\def\elena#1{{\color{black}#1}}  
\def\eduard#1{{\color{black}#1}} 
\def\egor#1{{\color{black}#1}}   
\def\ag#1{{\color{black}#1}}     
\def\eg#1{{\color{black}#1}}     
\begin{document}

\selectlanguage{russian}
\def\contentsname{Оглавление}
\def\figurename{\small Рис.}

\captionsetup[figure]{labelsep = period, font = small, 
}

\begin{titlepage}
\begin{center}
	{\small
	Министерство науки и высшего образования  Российской Федерации\\[3pt]
	
	Федеральное государственное автономное\\ 
	
	образовательное учреждение высшего образования\\
	
	<<Московский физико-технический институт\\
	
	(национальный исследовательский университет)>>
	
}

\vspace{24mm}


\vspace{7mm}

{\huge {\bfseries{\HugeЛЕКЦИИ}\\[12pt] {ПО СЛУЧАЙНЫМ\\[8pt]   ПРОЦЕССАМ\\
[25pt]}} 
  }

\vspace{9.5mm}

\begin{center}
	{\largeПод редакцией\ \ А.\,В.~Гасникова}
\end{center}

\vfill



\vspace{49mm} МОСКВА \\ МФТИ \\ 2019
\end{center}
\end{titlepage}

\thispagestyle{empty}
{\small
\begin{flushleft}
{\bf УДК 519.21(075)\\
ББК 22.317я73} \\
\hspace{27pt}{\bf Л43}
\end{flushleft}}



\begin{center}
	{\normalsize А\,в\,т\,о\,р\,ы: \\
		А.\,В.~Гасников, Э.\,А.~Горбунов, С.\,А.~Гуз, Е.\,О.~Черноусова, М.\,Г.~Широбоков,  Е.\,В.~Шульгин} 
\end{center}

\begin{center}
Рецензенты: \\ 
{\small 
Институт проблем передачи информации \\
профессор  \emph{В.\,Г.~Спокойный} }\\[3pt]
{\small 
НИУ Высшая школа экономики \\ 
доцент \emph{А.\,А.~Наумов}}
\end{center}




\vspace*{5mm}

\noindentЛ43\hspace{22pt}{\bfЛекции по случайным процессам~:} учебное пособие\,/\\ 
\hspace*{27,6pt}А.\,В.\,Гасников, Э.\,А.\,Горбунов, С.\,А.\,Гуз и др.\,; под ред.\\
\hspace*{27,6pt}А.\,В.~Гасникова. --~Москва~: МФТИ, 2019. --~\ag{285}~с.

\noindent\hspace{40pt}ISBN 978-5-7417-0710-4  



\vspace{5mm}
    

{\small
Учебное пособие содержит конспект лекций и материалы семинарских занятий по курсу <<Случайные процессы>>
, десятилетиями формировавшемуся сотрудниками кафедры математических основ управления \mbox{ФУПМ} \mbox{МФТИ} А.\,А.\;Натаном, С.\,А.\;Гузом и О.\,Г.\;Горбачевым. 


Написано на достаточно элементарном языке и рассчитано на~широкую аудитори\eg{ю} (не только студентов МФТИ). 

}


{\small
\hspace{220pt}{\bf УДК 519.21(075)}

\hspace{220pt}{\bf ББК 22.317я73}

}

\vfill

{\it\smallПечатается по решению Редакционно-издательского совета Московс\-кого физико-технического института {\rm(}национального исследовательского университета{\rm)}}


\vfill

{\scriptsize \parindent=0mm  
	
	{\bf ISBN 978-5-7417-0710-4}  \hspace{1.3em} \copyright\ Гасников~А.\,В., Горбунов~Э.\,А., Гуз~С.\,А.,
	
	
	\hspace{15.2em}\ Черноусова~Е.\,О.,Широбоков~М.\,Г., Шульгин Е\,В., 2019
	
	\hspace{13.65em} \copyright\  Федеральное государственное автономное
	
	\hspace{15.2em}\ образовательное учреждение высшего образования 
	
	\hspace{15.2em}\ <<Московский физико-технический институт
	
	\hspace{15.2em}\ (национальный исследовательский университет)>>, 2019
	
}\newpage  













\setcounter{page}{3}
\tableofcontents

\section*{\centering Список обозначений}\addcontentsline{toc}{section}{Список обозначений\vspace{-1mm}}


\noindent $( \Omega, \mathcal{F}, \PR )$ -- вероятностное пространство ($\Omega$ -- множество исходов, \linebreak $\mathcal{F}$ -- $\sigma$-алгебра, $\PR$ -- вероятностная мера). \\
$\Exp X, \Exp(X)$ --    математическое ожидание случайной величины $X$.\\
$\Var X, \Var(X)$ -- дисперсия случайной величины $X$. \\
$\mathrm{cov}(X,Y)$ -- корреляционный момент случайных величин $X$ и $Y$.\\
$\Be(p)$ -- распределение Бернулли. \\
$\Po(\lambda)$ -- распределение Пуассона. \\
$\mathrm{R}(a,b)$ -- непрерывное равномерное распределение на отрезке $[a,b]$. \\
$\mathrm{N}(\mu,\sigma^2)$ -- нормальное распределение. \\
$\mathrm{Exp}(\lambda)$ -- показательное распределение с параметром $\lambda$. \\
$\mathrm{B}(\alpha,\beta)$ -- бета-распределение. \\
$\Phi(x)$ -- функция стандартного нормального распределения $\mathrm{N}(0,1)$. \\
$\overset{d}{\longrightarrow}$  -- сходимость по распределению.\\
$\overset{p}{\longrightarrow}$, $\overset{\PR}{\longrightarrow}$ -- сходимость по вероятности. \\
$\overset{\text{п.н.}}{\longrightarrow}$ -- сходимость с вероятностью 1 (почти наверное).\\
$\overset{d}{=}$  -- равенство по распределению.\\
с.к. -- в среднем квадратичном.\\
ЗБЧ -- закон больших чисел. \\
х.ф. -- характеристическая функция. \\
ЦПТ -- центральная предельная теорема. \\
i.i.d. -- independent identically distributed random variables. \\
$\langle \cdot, \cdot\rangle$ -- скалярное произведение. \\
Индикаторная функция:
\[
\mathsf{I}(\text{true}) = [\text{true}] = 1, \quad \mathsf{I}(\text{false}) = [\text{false}] = 0.
\]
$\|x\|_2^2 = \sqrt{\sum\limits_{k=1}^n x_k^2}$, где $x\in\Rbb^n$.\\
$L_p(X,\mu)$ -- пространство интегрируемых функций по мере $\mu$ (если $\mu$  не указана, то рассматривается мера Лебега на множестве $X$). Норма в $L_p(X,\mu)$ определяется следующим образом:
\[
    \|f\|_{L_p} = \left(\int_X |f(x)|^pd\mu(x) \right)^{1/p}.
\]
$\Rbb_+$ -- неотрицательные вещественные числа. \\
\ag{$\forall$ -- квантор <<для всех>>}\\
\ag{$\exists$ -- квантор <<существует>>}


\rm{\section*{\centering\rm{\rm\small ОБЯЗАТЕЛЬНАЯ ЧАСТЬ ПРОГРАММЫ УЧЕБНОГО КУРСА}\\ «Случайные процессы»}}\addcontentsline{toc}{section}{Обязательная часть программы учебного курса\\ «Случайные процессы»\vspace{-1mm}} 

 
Определение понятия «случайный процесс». Система конечномерных распределений случайного процесса, ее свойства. Моментные функции случайного процесса. Корреляционная и взаимная корреляционная функции случайных процессов, их свойства. Преобразования случайных процессов.
Непрерывность случайного процесса в среднем квадратическом, ее необходимое и достаточное условие. Непрерывность случайного процесса по вероятности и с вероятностью единица. Производная случайного процесса в среднем квадратическом, необходимое и достаточное условие ее существования. Интеграл от случайного процесса в среднем квадратическом, необходимое и достаточное условие его существования.
Стационарный случайный процесс. Строгая и слабая стационарность случайного процесса. Взаимная стационарность случайных процессов. Эргодичность случайного процесса по математическому ожиданию в среднем квадратическом. Условия эргодичности по математическому ожиданию.
Спектральное представление стационарного случайного процесса. Теорема Хинчина о спектральном представлении корреляционной функции случайного процесса. Спектральная функция и спектральная плотность случайного процесса, их свойства и приложение. Случайный процесс типа «белый шум».
Пуассоновский случайный процесс. Сложный пуассоновский процесс.
Гауссовский (нормальный) случайный процесс, его свойства. Винеровский случайный процесс. Процессы Леви.
Марковский случайный процесс. Дискретная марковская цепь. Переходные вероятности. Уравнения Колмогорова–Чепмена. Однородные дискретные марковские цепи. Классификация состояний дискретной марковской цепи, теорема о «солидарности» их свойств.
Управляемые марковские процессы. Уравнение Вальда--Беллмана. Задача о разборчивой невесте.
Асимптотическое поведение дискретной марковской цепи. Предельное и стационарное распределения вероятностей состояний дискретной марковской цепи. Теоремы об эргодичности дискретных марковских цепей.
Марковская цепь с непрерывным аргументом. Прямое и обратное уравнения Колмогорова–Феллера. Примеры приложения теории марковских цепей (модель Эренфестов, модель ранжирования web-страниц). Концепция равновесия макросистемы на примере задачи поиска вектора Page Rank.
Непрерывный марковский процесс. Обобщенное уравнение Маркова. Уравнения Колмогорова и Колмогорова–Фоккера–Планка. 

\newpage

\section*{Введение}
\addcontentsline{toc}{section}{Введение}

Курс теории случайных процессов читается студентам на факультете управления и прикладной математики  МФТИ в весеннем семестре третьего курса. В осеннем семестре студенты изучают теорию вероятностей, в осеннем семестре четвертого курса -- математическую статистику.
Данный цикл вероятностных  дисциплин входит в обязательную программу бакалавриата ФУПМ МФТИ на протяжении нескольких десятков лет~\cite{NGG1, NGG2, NGG3}. 
Изначально в курсе случайных процессов большой акцент делался на задачах, связанных с исследованием операций~\cite{Ventsel1, Ventsel2} 
(системы массового обслуживания~\cite{GnedenkoKovalenko}, динамическое программирование~\cite{Bertsekas}, затем курс стал немного смещаться в сторону приложений к страхованию (актуарной математике~\cite{Falin1, Falin2}) 
и финансовой математике~\cite{Shiryaev2}. 
Примеры, собранные в ряде основных учебников, определенно свидетельствуют об этом~\cite{BulinskyShiryaev2005,  Rozanov1, Feller, ShiryaevT1}. 

Однако в последнее время, прежде всего в связи с бурным развитием\ag{,} анализа данных в программу обучения магистров ФУПМ стали входить:  дополнительные главы математической статистики, статистическая теория обучения и онлайн оптимизация, стохастическая оптимизация, обучение с подкреплением, моделирование интернета (случайные графы и модели их роста), стохастические дифференциальные уравнения. Все эти предметы оказались сильно завязаны на курс случайных процессов. Возникла необходимость в адаптации курса. К сожалению, большая нагрузка студентов третьего курса ФУПМ не позволила существенно изменить объем излагаемого материала. Тем не менее, некоторые акценты было решено немного изменить.

Предлагаемое пособие во многом написано на базе курса (пособия)~\cite{NGG2}, 
однако содержит больше примеров и материала, отражающего некоторые современные приложения теории случайных процессов. Такие, например, как Метод Монте-Карло для марковских цепей (Markov Chain Monte Carlo). 

Материалы, собранные 
разделах~\ref{sec:PageRank} и~\ref{bandit_problem},
разрабатывались в ходе кооперации с компаниями Яндекс и Хуавей.

Авторы хотели бы выразить благодарность своим коллегам по кафедре Математических основ управления за помощь в создании данного пособия: Олегу Геннадьевичу Горбачеву и Ольге Сергеевне Федько. Также хотели бы выразить признательность студентам МФТИ,
помогавшим с набором данного курса лекций.


Замечания и пожелания просьба присылать на адрес электронной почты кафедры математических основ управления физтех-школы прикладной математики и информатики МФТИ mou@mail.mipt.ru.

\section{Случайные процессы, примеры}
\label{random_processes}

\subsection{Базовые понятия и примеры}

Цель данного раздела -- ввести новое для читателя понятие: случайный процесс. По своей сути, случайный процесс -- это некоторое обобщение понятия случайной величины из курса Теории вероятностей. А именно, это параметризованное семейство случайных величин. Параметр может быть скалярным (случай, описываемый в этой книге), векторным (тогда говорят о \textit{случайных полях}) или даже функцией (\textit{обобщенные случайные процессы}). Вводится понятие {\it реализации случайного процесса}, его многомерной функции распределения. Появляется и качественно новое для читателя понятие -- {\it сечение случайного процесса}.

Кроме того, здесь же появляются определения самых важных с точки зрения последующего изложения процессов -- винеровского процесса, пуассоновского процесса и процесса Леви.

\begin{definition}
\textit{Случайным процессом} называется семейство случайных величин $X(\omega,t)$, ${\omega\in\Omega}$, заданных на одном вероятностном пространстве $\left(\Omega,\mathcal{F},\mathbb{P}\right)$ и зависящих от параметра $t$, принимающего значения из некоторого множества ${T\subseteq\mathbb{R}}$. Параметр $t$ обычно называют \textit{временем}.
\end{definition}

Если множество $T$ состоит из одного элемента, то случайный процесс состоит из одной случайной величины и, стало быть, является случайной величиной. Если $T$ содержит $n\ge1$ элементов, то случайный процесс представляет собой случайный вектор с $n$ компонентами. Если ${T=\mathbb{N}}$, то случайный процесс представляет собой случайную последовательность $(X_1(\omega),X_2(\omega), \dots,X_n(\omega),\dots)$. Если ${T=[a,b]}$, ${-\infty\le a < b \le +\infty}$, то такой процесс называют \textit{случайной функцией}.

\begin{definition}
При фиксированном времени ${t=t_0}$ случайная величина $X(\omega,t_0)$ называется \textit{сечением процесса} в точке $t_0$. При фиксированном исходе ${\omega=\omega_0}$ функция времени $X(\omega_0,t)$ называется \textit{траекторией {\rm (}реализацией, выборочной функцией{\rm )}} процесса.
\end{definition}

К случайному процессу всегда следует относиться как к функции двух переменных: исхода $\omega$ и времени $t$. Это независимые переменные. Заметим, впрочем, что в записи случайного процесса $X(\omega,t)$ аргумент $\omega$ обычно опускают и пишут просто $X(t)$. Точно так же поступают и со случайными величинами -- вместо $X(\omega)$ пишут $X$. Надо помнить, однако, что во всех случаях зависимость от $\omega$ подразумевается. Записи $X(\omega,t)$ или $X(\omega)$ используются в тех случаях, когда необходимо явно подчеркнуть функциональную зависимость случайного процесса или случайной величины от исхода $\omega$. Иногда для случайного процесса используют обозначение ${\{X(t),t\in T\}}$, чтобы явно подчеркнуть множество времени $T$, на котором определен процесс.

\begin{example}\label{ex:Ex3.1}
Рассмотрим процесс ${\xi(\omega,t)=t\cdot\eta(\omega)}$, где ${\eta\in \mathrm{R}(0,1)}, \\  {t\in[0,1]=T}$. В качестве вероятностного пространства здесь рассмотрим тройку $\left(\Omega,\mathcal{F},\mathbb{P}\right)$, где пространство исходов ${\Omega=[0,1]}$, $\sigma$-алгебра является борелевской ${\mathcal{F} = \mathcal{B(\mathbb{R})}}$, а вероятностная мера совпадает с мерой Лебега на отрезке $[0,1]$. В этом случае случайная величина ${\eta(\omega)=\omega}$, а случайный процесс $\xi(\omega,t)=t\cdot\omega$. Тогда при фиксированном ${\omega_0\in[0,1]}$ траектория представляет собой линейную функцию ${\xi(\omega_0,t)=\omega_0 t}$, принимающую значение ноль в момент ${t=0}$ и значение $\omega_0$ в момент $t=1$. Если же зафиксировать ${t_0\in(0,1]}$, то получится случайная величина ${\xi(\omega,t_0)=t_0\omega}$, равномерно распределенная на отрезке $[0,t_0]$, т.е. ${\xi(\omega,t_0)\in \mathrm{R}(0,t_0)}$. При  ${t_0=0}$ получается вырожденная случайная величина: $\xi(\omega,0)=0$ при всех $\omega\in[0,1]$. \EndEx
\end{example}

Мы обращаем внимание читателя на то, что в данном примере траектория процесса однозначно восстанавливается по ее части: достаточно узнать наклон траектории на любом интервале времени. На практике же более типичная ситуация -- это когда предыстория не позволяет однозначно восстановить траекторию процесса и узнать ее будущее. В~таких случаях различным исходам $\omega_1$ и $\omega_2$ может соответствовать одна и та же наблюдаемая до текущего момента времени предыстория, но будущее этих траекторий может отличаться.

В приложениях случайные величины часто задаются не как явные функции исходов, а через вероятностное распределение. Вероятностное распределение случайных величин однозначно определяется функцией распределения. Случайные векторы также имеют функцию распределения: $$F_X(x_1,\dots,x_n)=\mathbb{P}(X_1<x_1,\dots,X_n<x_n)$$ для случайного вектора ${X=(X_1,\dots,X_n)}$ с $n$ компонентами. Определим теперь функцию распределения для случайного процесса.

Пусть даны $n \ge 1$ сечений $X(t_1)$, $\dots$, $X(t_n)$ случайного процесса $\{X(t)$, $t\in T\}$; ${t_1,\dots,t_n\in T}$. Так как по определению случайного процесса сечения заданы на одном вероятностном пространстве, то они образуют случайный вектор $(X(t_1),\dots,X(t_n))$. Функция распределения такого вектора
\begin{equation}\label{eq:nDimCDF}
F_X(x_1,\dots,x_n;t_1,\dots,t_n)=\mathbb{P}(X(t_1)<x_1,\dots,X(t_n)<x_n)
\end{equation}
зависит от $n$ и параметров $t_1$, $\dots$, $t_n$ из множества $T$.

\begin{definition}
Функция распределения из~\eqref{eq:nDimCDF} называется $n$\textit{-мер\-ной функцией распределения случайного процесса} $X(t)$. Совокупность функ\-ций рас\-пре\-де\-ле\-ния~\eqref{eq:nDimCDF} для различных ${n\ge 1}$ и всех возможных моментов времени $t_1$, $\dots$, $t_n$ из множества $T$ называется \textit{семейством конечномерных распределений случайного процесса} $X(t)$.
\end{definition}

\begin{example}
Рассмотрим случайный процесс ${\xi(t)=t\eta}$, ${t\in(0,1)}$, где ${\eta\in \mathrm{R}[0,1]}$. Для произвольных ${n\ge 1}$ и ${t_1,\dots,t_n\in(0,1)}$ найдем его $n$-мерную функцию распределения:
$$F_{\xi}(x_1,\dots,x_n;t_1,\dots,t_n) = \mathbb{P}(\xi(t_1)<x_1,\dots,\xi(t_n)<x_n)=$$
$$=\mathbb{P}(t_1\eta<x_1,\dots,t_n\eta<x_n)=\mathbb{P}\left( \eta < \frac{x_1}{t_1},\dots,\eta<\frac{x_n}{t_n} \right)=$$
$$=\mathbb{P}\left( \eta < \min\limits_{1\le k \le n}\frac{x_k}{t_k} \right)=F_{\eta}\left( \min\limits_{1\le k \le n}\frac{x_k}{t_k} \right),$$
где функция распределения случайной величины $\eta$ равна $$F_{\eta}(y)=\left\{ {\begin{array}{*{20}{l}}
	0,&y\le 0, \\ 
	y,&y\in(0,1], \\
	1,&y> 1.
	\end{array}} \right. \text{ \EndEx}$$ 
\end{example}
Семейство конечномерных распределений -- это основная характеристика вероятностных свойств случайного процесса. Мы будем считать, что случайный процесс задан, если задано его семейство конечномерных распределений или если явно задано вероятностное пространство и соответствующая функция двух переменных -- исхода и времени.
\begin{example}\label{ex:SimpProb3Sec}
На вероятностном пространстве
$\left([0,1], \mathcal{B}_{[0,1]}, \lambda_{[0,1]}\right)$, состоящем из пространства элементарных исходов $[0,1]$, борелевской $\sigma$-алгебры $\mathcal{B}_{[0,1]}$ подмножеств множества $[0,1]$ и меры Лебега $\lambda_{[0,1]}$, определен случайный процесс $$\xi(\omega,t)=\left\{ {\begin{array}{*{20}{l}}
	1,& t \le \omega, \\ 
	0,& t > \omega. \\
	\end{array}} \right.$$ 
Время определено на множестве ${T=[0,1]}$. Найти траектории, сечения и двумерное распределение процесса. Исследовать сечения на попарную независимость.
\end{example}

\begin{solution}
Чтобы определить траектории процесса, зафиксируем произвольный исход $\omega_0\in[0,1]$ и изобразим на графике зависимость $\xi(\omega_0,t)$, см. рис.~\ref{fig:fig1}. По условию задачи ${\xi(\omega_0,t)=1}$ при ${t\le\omega_0}$, а при ${t\in(\omega_0,1]}$ получаем $\xi(\omega_0,t)=0$.

\begin{figure}[ht]
	\centering
	\includegraphics[scale=0.20]{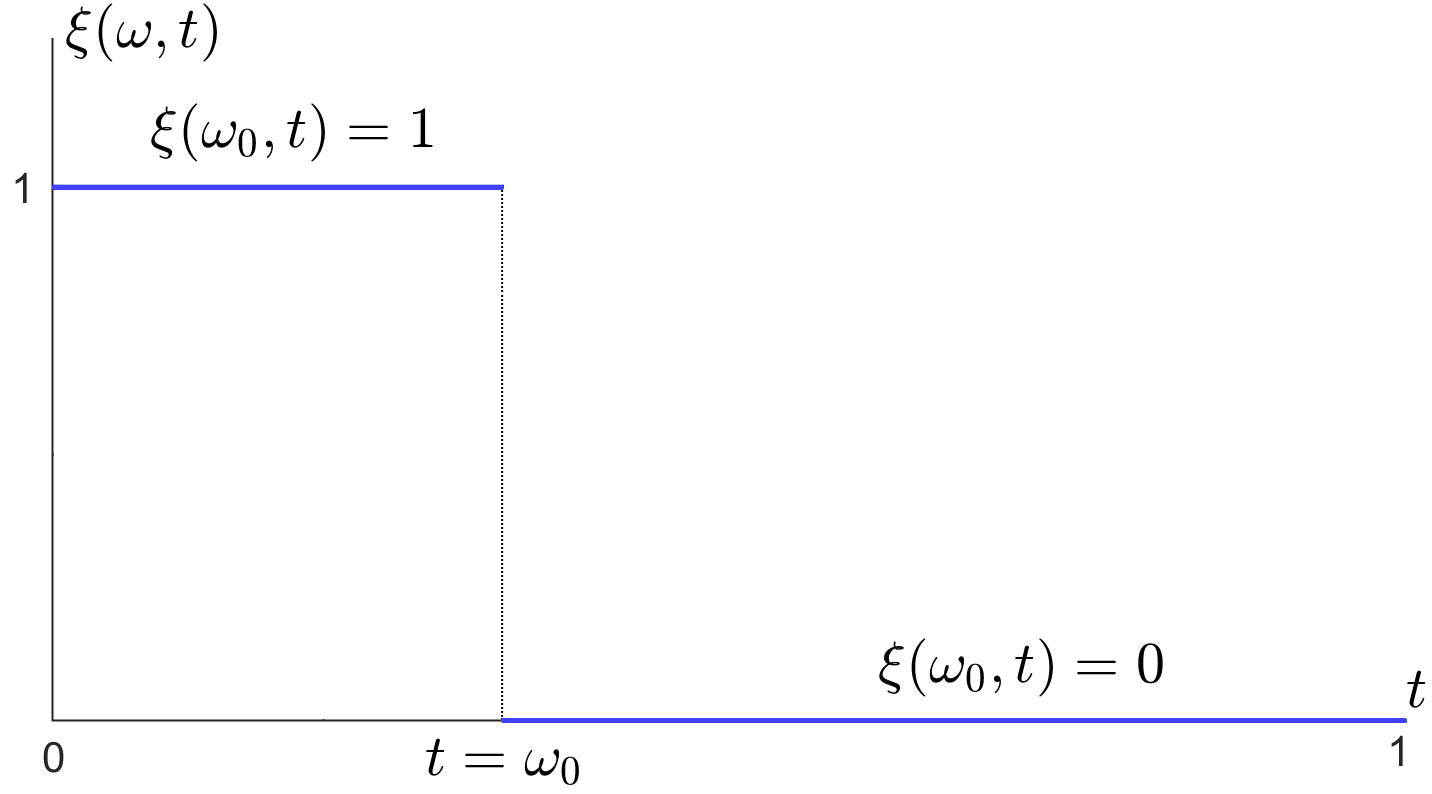}
	\caption{Типичная траектория процесса $\xi(\omega,t)$ из примера~\ref{ex:SimpProb3Sec}}
	\label{fig:fig1}
\end{figure}

Чтобы определить сечения процесса, зафиксируем время $t_0\in[0,1]$. Случайная величина $\xi(\omega,t_0)$ может принимать только два значения: 0 и 1. Следовательно, она имеет распределение Бернулли $\mathrm{Be}(p)$ с некоторым параметром $p$. Напоминаем, что $p$ есть вероятность принятия случайной величиной значения 1. Но случайная величина $\xi(\omega,t_0)$ принимает значение 1 только при ${t_0\le\omega\le 1}$. Вероятностная мера на отрезке $[0,1]$ по условию задачи является мерой Лебега, причем такой, что $\lambda_{[0,1]}([a,b])=b-a$ для любого отрезка $[a,b]\subset[0,1]$. Значит, искомая вероятность $$p=\mathbb{P}(\{\omega:\xi(\omega,t_0)=1\})=\mathbb{P}(\{\omega:t_0\le\omega\le 1\})=1-t_0,$$ поэтому произвольное сечение $\xi(t)\in\mathrm{Be}(1-t)$, $t\in[0,1]$.

Запишем по определению формулу двумерной функции распределения: $$F_{\xi}(x_1,x_2;t_1,t_2)=\mathbb{P}(\xi(t_1)<x_1,\xi(t_2)<x_2).$$ Вычислим эту вероятность, рассмотрев все возможные значения $x_1$ и~$x_2$, считая, что $t_1$ и $t_2$ принадлежат отрезку ${T=[0,1]}$. Начнем с~крайних случаев, когда хотя бы один из $x_1$ или $x_2$ превысит единицу или меньше нуля. Анализируя по отдельности случаи, можно прийти к~выражению
$$F_{\xi}(x_1,x_2;t_1,t_2) = \left\{ {\begin{array}{*{20}{l}}
	1,&x_1>1\text{ и }x_2>1, \\ 
	0,&x_1\le 0\text{ или }x_2\le 0, \\
	t_2,&x_1>1\text{ и }x_2\in(0,1],\\
	t_1,&x_2>1\text{ и }x_1\in(0,1],\\
	\min(t_1,t_2),&x_1\in(0,1]\text{ и }x_2\in(0,1].
	\end{array}} \right.$$
Здесь мы учли, что сечение $\xi(t)$ принимает значение 1 с вероятностью ${1-t}$ и значение 0 с вероятностью $t$. При подсчете первых четырех выражений достаточно было информации о том, что сечения $\xi(t)$ имеют распределение Бернулли. При вычислении последнего выражения этой информации недостаточно, требуется воспользоваться конкретным видом функции $\xi(\omega,t)$:
\begin{gather*}
\begin{split}
\mathbb{P}(\xi(t_1)<x_1,\xi(t_2)<x_2)&=\mathbb{P}(\{\omega:\xi(\omega,t_1)=0,\xi(\omega,t_2)=0\}),\\&=\mathbb{P}(\{\omega:0\le\omega<t_1,0\le\omega<t_2\}),\\&=\mathbb{P}(\{\omega:0\le\omega<\min(t_1,t_2)\}),\\&=\min(t_1,t_2).
\end{split}
\end{gather*}
Теперь исследуем произвольные два сечения $\xi(t_1)$ и $\xi(t_2)$ процесса на независимость. Для этого проверим наличие равенства в выражении
\begin{equation*}
\mathbb{P}(\xi(t_1)<x_1,\xi(t_2)<x_2)=\mathbb{P}(\xi(t_1)<x_1)\,\mathbb{P}(\xi(t_2)<x_2)
\end{equation*}
для всех $x_1$ и $x_2$ из $\mathbb{R}$. Если ${x_1\notin(0,1]}$ или $x_2\notin(0,1]$, то это равенство очевидно выполнено, причем для любых $t_1$ и $t_2$. При ${x_1\in(0,1]}$ и~${x_2\in(0,1]}$ это равенство равносильно
\begin{equation*}
\min(t_1,t_2)=t_1t_2,
\end{equation*}
что верно лишь в случаях, когда хотя бы одно из значений $t_1$ или $t_2$ равно нулю или единице. В таких случаях $\xi(t_1)$ и $\xi(t_2)$ являются независимыми случайными величинами. В остальных случаях, т.е. когда $t_1\ne0,1$ и $t_2\ne0,1$, сечения $\xi(t_1)$ и $\xi(t_2)$ являются зависимыми случайными величинами. \EndEx
\end{solution}

\begin{example}
Пусть дан тот же самый процесс, что и в предыдущем примере, и мы можем его наблюдать. Пусть на отрезке времени $[0,t_0]$ значение процесса все еще равно единице. С какой вероятностью скачок до нуля произойдет на интервале времени $[t_0,t_0+\Delta t]$, где $\Delta t < 1-t_0$?
\end{example}

\begin{solution}
Нам доступна информация о том, что на отрезке $[0,t_0]$ траектория процесса ${\xi(\omega,t_0)=1}$. Ясно, что скачок может произойти почти сразу после $t_0$ или позже. Таким образом, данная нам предыстория процесса не позволяет однозначно определить поведение траектории в будущем. Однако \textit{вероятность} скачка на наперед заданном интервале времени можно рассчитать точно. Воспользуемся для этого аппаратом условных вероятностей. Пусть событие $A$ состоит в том, что скачок произойдет в интервале времени $[t_0,t_0+\Delta t]$, а событие $B$ состоит в том, что на интервале времени $[0,t_0]$ скачка нет. Тогда по условию задачи необходимо вычислить условную вероятность $$\mathbb{P}(A \, | \, B)=\frac{\mathbb{P}(A\cap B)}{\mathbb{P}(B)}.$$ Так как из события $A$ следует событие $B$, то ${A\cap B=A}$. Далее, $$\mathbb{P}(A)=\mathbb{P}(\{\omega:t_0\le\omega\le t_0+\Delta t\})=\Delta t,$$ $$\mathbb{P}(B)=\mathbb{P}(\{\omega:t_0\le\omega\le 1\})=1-t_0.$$ Отсюда ответ: $$\mathbb{P}(A \, | \, B)=\frac{\Delta t}{1-t_0}.$$ Это и есть вероятность того, что скачок произойдет в ближайшее время $\Delta t$ при условии, что до момента $t_0$ скачка не было. Отметим, что если скачок до момента $t_0$ произошел, то будущее процесса определено однозначно -- его значение так и останется равным нулю. \EndEx
\end{solution}

Произвольная $n$-мерная функция распределения случайного процесса $\xi(t)$ обладает следующими свойствами:
\begin{enumerate}
    \item $0 \le F_{\xi}(x_1,\dots,x_n;t_1,\dots,t_n)\le 1$.
    \item Функции $F_{\xi}(x_1,\dots,x_n;t_1,\dots,t_n)$ непрерывны слева по каждой переменной $x_i$.
    \item Если хотя бы одна из переменных ${x_i\to-\infty}$, то $$F_{\xi}(x_1,\dots,x_n;t_1,\dots,t_n)\to0,$$
    если же все переменные $x_i\to+\infty$, то $$F_{\xi}(x_1,\dots,x_n;t_1,\dots,t_n)\to1.$$
    \item Функции $F_{\xi}(x_1,\dots,x_n;t_1,\dots,t_n)$ монотонны в следующем смысле:
    $$\Delta_1\dots\Delta_n F_{\xi}(x_1,\dots,x_n;t_1,\dots,t_n)\ge0,$$ где $\Delta_i$ -- оператор конечной разности по переменной $x_i$
    $$\Delta_i F = F(x_1,\dots,x_{i-1},x_i+h_i,x_{i+1},\dots,x_n;t_1,\dots,t_n)-$$
    $$-F(x_1,\dots,x_{i-1},x_i,x_{i+1},\dots,x_n;t_1,\dots,t_n)$$
    а $h_1\ge0$, \dots, $h_n\ge0$ произвольны.
    \item Для любой перестановки $\{k_1,\dots,k_n\}$ индексов $\{1,\dots,n\}$
    $$F_{\xi}(x_1,\dots,x_n;t_1,\dots,t_n)=F_{\xi}(x_{k_1},\dots,x_{k_n};t_{k_1},\dots,t_{k_n}).$$
    \item Для любых $1 \le k < n$ и $x_1,\dots,x_k\in\mathbb{R}$
    $$F_{\xi}(x_1,\dots,x_k;t_1,\dots,t_k)=F_{\xi}(x_1,\dots,x_k,+\infty,\dots,+\infty;t_1,\dots,t_n).$$
\end{enumerate}

Пусть дано некоторое семейство функций  $F(x_1,\dots, x_n; t_1, \dots, t_n)$, обладающее свойствами а)--е). Всегда ли можно подобрать случайный процесс с соответствующими $n$-мерными функциями распределения? Положительный ответ на этот вопрос дает теорема Колмогорова.

\begin{theorem}[ (Колмогоров)]
{\it Пусть задано некоторое семейство функций 
$$F=\{F(x_1,\dots,x_n;t_1,\dots,t_n), \ x_i\in\mathbb{R}, \ t_i\in T, \ i=1,\dots,n, \ n\ge 1\},$$
удовлетворяющих условиям а{\rm)}--е{\rm)}. Тогда существуют вероятностное пространство $(\Omega,\mathcal{F},\mathbb{P})$ и случайный процесс $\{\xi(t),t\in T\}$, \shmaxg{определенный на этом вероятностном пространстве}, такие, что семейство конечномерных распределений $F_{\xi}$ случайного процесса $\xi(t)$ совпадает с $F$.}
\end{theorem}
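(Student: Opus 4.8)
The plan is to build the probability space as the product space $\Omega = \mathbb{R}^T$ of all functions $T \to \mathbb{R}$, equipped with the cylinder $\sigma$-algebra $\mathcal{F}$ generated by the coordinate maps $\xi(t)\colon \omega \mapsto \omega(t)$, and to produce the measure $\mathbb{P}$ via Kolmogorov's extension (consistency) theorem. The desired process is then simply the family of coordinate projections $\xi(t)$, so that by construction $\mathbb{P}(\xi(t_1)<x_1,\dots,\xi(t_n)<x_n) = F(x_1,\dots,x_n;t_1,\dots,t_n)$ once the measure is in place. Thus the whole content is to verify that the given family $F$ furnishes the input required by the extension theorem and to invoke it.

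First I would fix, for each finite tuple $\tau = (t_1,\dots,t_n)$ of distinct points of $T$, the function $F(\,\cdot\,;t_1,\dots,t_n)$ and check that properties a)--d) are exactly the classical conditions (nonnegativity, left-continuity, correct limits at $\pm\infty$, and the nonnegativity of the mixed finite differences $\Delta_1\cdots\Delta_n F \ge 0$) guaranteeing that it is the joint distribution function of a genuine Borel probability measure $\mu_\tau$ on $\mathbb{R}^n$; this is the standard one-to-one correspondence between distribution functions and probability measures on $\mathbb{R}^n$, applied coordinatewise. Next I would check the two consistency conditions on the family $\{\mu_\tau\}$: the symmetry/permutation condition, which is precisely property e) (reordering the arguments and the times simultaneously leaves $F$ unchanged, hence $\mu_{\sigma\tau}$ is the pushforward of $\mu_\tau$ under the corresponding coordinate permutation), and the marginalization condition, which is property f) (sending the last coordinates to $+\infty$ recovers the lower-dimensional distribution function, hence $\mu_{(t_1,\dots,t_k)}$ is the projection of $\mu_{(t_1,\dots,t_n)}$ onto the first $k$ coordinates). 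For tuples with repeated times one reduces to the distinct-time case by pushing forward under the appropriate diagonal embedding, using e) and f) to see this is well-defined.

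With the consistent family $\{\mu_\tau\}$ in hand, Kolmogorov's extension theorem yields a unique probability measure $\mathbb{P}$ on $(\mathbb{R}^T,\mathcal{F})$ whose finite-dimensional marginals are the $\mu_\tau$. Defining $\xi(\omega,t) = \omega(t)$ gives a stochastic process on $(\Omega,\mathcal{F},\mathbb{P})$, and for any $n$ and any $t_1,\dots,t_n \in T$ the law of $(\xi(t_1),\dots,\xi(t_n))$ is $\mu_{(t_1,\dots,t_n)}$, whose distribution function is $F(\,\cdot\,;t_1,\dots,t_n)$ by construction; hence $F_\xi = F$, which is the claim.

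The main obstacle is the proof of Kolmogorov's extension theorem itself, which I would cite rather than reprove: the delicate point there is the countable additivity of the finitely additive set function defined on the algebra of cylinder sets, established by a compactness argument (inner regularity of the finite-dimensional measures by compact sets, then a finite-intersection-property argument on the resulting cylinders). Everything on our side — the translation of a)--d) into "is a distribution function on $\mathbb{R}^n$" and of e)--f) into the Kolmogorov consistency relations — is routine bookkeeping, the only mild care being the handling of coinciding time points, which is dispatched exactly as indicated above.
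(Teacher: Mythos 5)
The paper itself states this theorem without proof (it is quoted as a known result), so there is no internal argument to compare against; I can only judge your outline on its own terms. Structurally it is the canonical construction: take $\Omega=\mathbb{R}^T$ with the cylinder $\sigma$-algebra generated by the coordinate maps, read off from conditions а)--г) that each $F(\,\cdot\,;t_1,\dots,t_n)$ is a bona fide distribution function of a Borel probability measure $\mu_{(t_1,\dots,t_n)}$ on $\mathbb{R}^n$, read off from the permutation and marginalization conditions that the family $\{\mu_\tau\}$ is consistent (your handling of repeated time points via е) and the last condition is fine), obtain the measure $\mathbb{P}$ on the cylinder $\sigma$-algebra, and let $\xi(\omega,t)=\omega(t)$; then $F_\xi=F$ by construction. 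All of that bookkeeping is correct and is exactly how the theorem is proved in the standard references the paper cites.

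The one real weakness is that you propose to \emph{cite} Kolmogorov's extension theorem at the decisive step. But the statement you are asked to prove \emph{is} Kolmogorov's extension theorem, merely phrased in terms of distribution functions rather than consistent families of measures; outsourcing the passage from the finitely additive set function on the algebra of cylinder sets to a countably additive $\mathbb{P}$ is therefore circular, and that passage is the entire mathematical content of the theorem. You do correctly name the missing argument — inner regularity of each $\mu_\tau$ by compact sets, a finite-intersection-property argument showing that a decreasing sequence of cylinders with measures bounded away from zero has nonempty intersection, and then the Carathéodory extension — so the gap is one of execution rather than of idea; to turn the outline into a proof you would have to actually carry out that compactness argument instead of invoking the theorem it establishes.
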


Напомним читателю, что функция распределения не задает случайную величину однозначно как функцию: может оказаться, что на одном и том же вероятностном пространстве существуют две случайные величины $\xi(\omega)$ и $\eta(\omega)$ с совпадающими функциями распределения. Точно так же семейство конечномерных распределений может неоднозначно определять случайный процесс. Это нас приводит к необходимости ввести понятие \textit{эквивалентности случайных процессов}.

\begin{definition}
Пусть ${\{\xi(t),t\in T\}}$ и ${\{\eta(t),t\in T\}}$ -- два случайных процесса, определенные на одном и том же вероятностном пространстве $(\Omega,\mathcal{F},\mathbb{P})$ и принимающие значения в одном и том же измеримом пространстве. Если для любого $t\in T$ $$\mathbb{P}(\xi(t)=\eta(t))=1,$$ то эти процессы называются \textit{стохастически эквивалентными} или \\ просто \textit{эквивалентными}. При этом $\xi(t)$ называется \textit{модификацией} \\ процесса $\eta(t)$, а $\eta(t)$ -- \textit{модификацией процесса} $\xi(t)$.
\end{definition}

\shmaxg{Легко показать, что конечномерные распределения эквивалентных процессов совпадают}. В теории случайных процессов понятие эквивалентности процессов чрезвычайно важно, так как позволяет обосновать возможность расчета вероятности каких-нибудь событий, связанных с процессом, переходом к другому, эквивалентному, процессу. Например, пусть на вероятностном пространстве $(\Omega,\mathcal{F},\mathbb{P})$ дан какой-нибудь процесс $\{\xi(t),t\ge0\}$ с непрерывным временем и непрерывным множеством значений на $\mathbb{R}$ и допустим, что нам требуется вычислить следующую вероятность: $$\mathbb{P}\left(\sup\limits_{t\in[0,1]}\xi(t) \le x\right).$$ Вероятность определена лишь для элементов сигма-алгебры $\mathcal{F}$, но принадлежит ли \shmaxg{событие, записанное под вероятностью,} этой сигма-алгеб\-ре? Супремум процесса не превосходит $x$ на отрезке $[0,1]$ тогда и только тогда, когда в каждый момент времени ${t\in[0,1]}$ выполнено ${\xi(t)\le x}$. И хотя \shmaxg{событие} $\{\xi(t)\le x\}$ является элементом сигма-алгебры $\mathcal{F}$, несчетное пересечение таких событий для разных ${t\in[0,1]}$ \shmaxg{может не быть} элементом сигма-алгебры. Однако если все или почти все траектории процесса $\xi(t)$ непрерывны на $[0,1]$, то супремум по $[0,1]$ можно заменить супремумом по рациональным точкам отрезка $[0,1]$, а это счетное пересечение событий сигма-алгебры. На самом деле траектории этого процесса не обязаны быть непрерывными, достаточно лишь, чтобы у процесса существовала его непрерывная модификация (модификация с почти всеми непрерывными траекториями). Установить существование непрерывной модификации процесса помогает следующая теорема.

\begin{theorem}[ (Колмогоров~\cite{VentselAD})] 
\label{Kolmog cont modific}
\textit{Пусть $\{\xi(t)$, $t\in T\}$ -- случайный процесс и $T=[a,b]$. Если существуют $\alpha>0$, $\beta>0$, $c<\infty$, такие, что при всех $t$, $t+h\in[a,b]$ выполнено неравенство $$\mathbb{E} \left|\xi(t+h)-\xi(t) \right|^\alpha \le c|h|^{1+\beta},$$ то $\xi(t)$ имеет непрерывную модификацию.}
\end{theorem}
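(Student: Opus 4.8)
The plan is to reduce, by an affine change of the time variable (which only changes the value of $c$), to the case $T=[0,1]$, and then to build the continuous modification on the set of dyadic rationals $D=\bigcup_{n\ge 0}D_n$, where $D_n=\{k2^{-n}:0\le k\le 2^n\}$, extending it afterwards by uniform continuity. The starting point is Markov's inequality applied to the hypothesis: for every $\lambda>0$ and all $t,t+h\in[0,1]$,
$$\mathbb{P}\big(|\xi(t+h)-\xi(t)|\ge\lambda\big)\le\lambda^{-\alpha}\,\mathbb{E}|\xi(t+h)-\xi(t)|^{\alpha}\le c\,\lambda^{-\alpha}|h|^{1+\beta}.$$

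First I would fix an exponent $\gamma\in(0,\beta/\alpha)$ and bound the largest increment between neighbouring points of the $n$-th dyadic level. By the union bound,
$$\mathbb{P}\Big(\max_{1\le k\le 2^n}\big|\xi(k2^{-n})-\xi((k-1)2^{-n})\big|\ge 2^{-\gamma n}\Big)\le 2^n\cdot c\,2^{\gamma n\alpha}2^{-n(1+\beta)}=c\,2^{-n(\beta-\gamma\alpha)},$$
and since $\beta-\gamma\alpha>0$ the right-hand side is summable in $n$. By the Borel--Cantelli lemma there is an event $\Omega_0$ with $\mathbb{P}(\Omega_0)=1$ such that for every $\omega\in\Omega_0$ there is a (random) index $N(\omega)$ with $\max_{1\le k\le 2^n}|\xi(k2^{-n})-\xi((k-1)2^{-n})|<2^{-\gamma n}$ for all $n\ge N(\omega)$.

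The crucial step --- and the one I expect to be the main obstacle --- is the deterministic chaining estimate: from control of the neighbouring increments at every level $n\ge N$ one must deduce that $t\mapsto\xi(\omega,t)$ is H\"older continuous of order $\gamma$ on $D$, i.e.
$$|\xi(\omega,t)-\xi(\omega,s)|\le K\,|t-s|^{\gamma}\qquad\text{for all }s,t\in D\text{ with }|t-s|\le 2^{-N(\omega)},$$
with $K=2/(1-2^{-\gamma})$. One proves first, by induction on $n$, that $|\xi(\omega,t)-\xi(\omega,s)|\le 2\sum_{j=m+1}^{n}2^{-\gamma j}$ whenever $s,t\in D_n$ and $0<t-s<2^{-m}$ with $m\ge N$: in the inductive step one passes from $s,t\in D_n$ to the nearest points $s_1,t_1\in D_{n-1}$ on either side, which costs at most one neighbouring increment of level $n$ at each end. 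Letting $n\to\infty$ and choosing $m$ with $2^{-(m+1)}\le|t-s|<2^{-m}$ yields the H\"older bound. The slightly delicate points are that any two dyadic numbers lie in a common $D_n$ and that the chosen $s_1,t_1$ satisfy $s\le s_1\le t_1\le t$ with $s_1-s,\,t-t_1\in\{0,2^{-n}\}$, so that every step of the chain really connects neighbours at some level.

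Finally I would define the modification. On $\Omega_0$ the restriction $\xi(\omega,\cdot)|_{D}$ is uniformly continuous, hence extends uniquely to a continuous function $\widetilde\xi(\omega,\cdot)$ on $[0,1]$; outside $\Omega_0$ put $\widetilde\xi(\omega,\cdot)\equiv 0$. By construction $\widetilde\xi$ has continuous trajectories. It remains to check that $\mathbb{P}(\widetilde\xi(t)=\xi(t))=1$ for each fixed $t\in[0,1]$: choose dyadic $s_n\to t$; on $\Omega_0$ we have $\xi(s_n)\to\widetilde\xi(t)$, while the hypothesis gives $\mathbb{E}|\xi(s_n)-\xi(t)|^{\alpha}\le c|s_n-t|^{1+\beta}\to 0$, so $\xi(s_n)\to\xi(t)$ in probability; since $\xi(s_n)\to\widetilde\xi(t)$ almost surely, hence in probability, and a limit in probability is unique up to a null set, $\widetilde\xi(t)=\xi(t)$ almost surely. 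One also notes that $\widetilde\xi(t)$ is measurable, being a pointwise almost-sure limit of measurable random variables. This completes the construction.
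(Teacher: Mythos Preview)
Your argument is the standard dyadic-chaining proof of Kolmogorov's continuity criterion, and it is carried out correctly: the Markov inequality plus union bound gives summable tail probabilities for the maximal neighbouring increment at level $n$, Borel--Cantelli yields an a.s.\ finite random level $N(\omega)$ after which all levels are controlled, the chaining lemma upgrades this to uniform H\"older continuity on the dyadics, and the extension by uniform continuity together with the $L^\alpha$-convergence $\xi(s_n)\to\xi(t)$ shows that the continuous process $\widetilde\xi$ is a modification of $\xi$.

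There is nothing to compare against: the paper states this theorem with a reference to~\cite{VentselAD} and does not give a proof. Your proof is exactly the argument one finds in that reference (and in Булинский--Ширяев~\cite{BulinskyShiryaev2005}), so it fits the paper perfectly.
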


Если непрерывная модификация процесса $\xi(t)$ существует, то всегда можно так определить вероятностное пространство и сигму-алгеб\-ру $\mathcal{F}$ (с сохранением семейства конечномерных распределений), на которых задан этот процесс, что условия на процесс в несчетном числе точек можно заменить на условия в счетном числе точек, и множества типа $\{\sup_{t\in[0,1]}\xi(t) \le x\}$ становятся измеримыми, вероятность их определена.



Если непрерывной модификации процесса не существует или по каким-то причинам трудно установить ее существование, то на помощь может прийти следующее понятие.

\begin{definition}
Случайный процесс ${\{\xi(t),t\in T\}}$ называется \textit{сепарабельным}, если множество $T$ содержит всюду плотное счетное подмножество $S$, такое, что для любого интервала $I\subset T$ справедливо
$$\mathbb{P}\left( \sup\limits_{t\in I \cap S} \xi(t) = \sup\limits_{t\in I}\xi(t), \ \inf\limits_{t\in I \cap S} \xi(t) = \inf\limits_{t\in I}\xi(t) \right) = 1.$$
\end{definition}

Оказывается, что любой процесс с непрерывным временем имеет сепарабельную модификацию, и всегда можно построить вероятностное пространство и сигму-алгебру, на которых определен случайный процесс, таким образом, что множества, связанные со значениями процесса в несчетном числе точек, станут измеримыми. Впрочем, этот серьезный математический факт остается во многих случаях чисто теоретическим -- для практических расчетов все равно потребуется знать множество сепарабельности $S$, которое следует дополнительно искать. Ситуация здесь упрощается в случае, когда исходный процесс $\xi(t)$ является стохастически непрерывным.

\begin{definition}
Случайный процесс $\{\xi(t),t\in T\}$ называется \textit{стохастически непрерывным}, если в каждый момент времени $t$ $$\forall\varepsilon > 0 \ \lim\limits_{h\to0}\mathbb{P}(|\xi(t+h)-\xi(t)|>\varepsilon)=0.$$
\end{definition}

Оказывается, что если случайный процесс является стохастически непрерывным, то в качестве множества сепарабельности $S$ его сепарабельной модификации можно брать любое всюду плотное счетное подмножество $S$, например, множество рациональных чисел, поэтому поиском такого множества в этом случае заниматься не приходится.

Несколько слов добавим вообще про способы задания случайного процесса. С практической точки зрения удобно задавать и определять процессы через семейства конечномерных распределений. Именно такой подход и принят в настоящем пособии. Однако задание семейства конечномерных распределений неоднозначно определяет случайный процесс как функцию двух переменных; так же, как и функция распределения неоднозначно определяет случайную величину как функцию исхода. В литературе можно встретить случаи, когда вместе с процессом определяется вероятностное пространство, которому процесс принадлежит. Пространством исходов такого вероятностного пространства часто выступает какое-то множество функций, которому принадлежат траектории процесса. Например, это может быть множество всех функций, заданных на множестве времени $T$ (для этого обычно используется обозначение $\mathbb{R}^T$), или множество непрерывных функций на $T$ (обозначение может быть, например, $C(T)$). Кроме того, много внимания уделяется сигма-алгебрам на таких пространствах исходов. Обычно сигма-алгебра определяется как минимальная сигма-алгебра, которая содержит какой-то набор более простых множеств. Например, в качестве такого набора может выступать набор \textit{цилиндрических множеств}: множеств вида $$\{X(t_1)\in B_1,\dots,X(t_n)\in B_n\},$$ где $B_1,\dots,B_n$ -- борелевские множества из $\mathbb{R}$. Название оправдано следующей геометрической аналогией: если $T=\{1,2,3\}$, то для случайного вектора $(X_1,X_2,X_3)$ множество ${C=\{X_1\in B_1, X_2 \in B_2\}}$ представляет собой цилиндр с образующей, параллельной третьей оси. Вероятности на цилиндрических множествах определяют вероятность произвольных множеств из сигма-алгебры ими порожденной. Причем задание вероятности на цилиндрических множествах равносильно заданию семейства конечномерных распределений.

\subsection{Моментные характеристики процессов}

Теперь, когда все формальные сведения о распределении случайных процессов даны, перейдем к их моментным характеристикам. Мы дадим определение математическому ожиданию, а также корреляционной и ковариационной функции случайного процесса. Когда мы будем давать эти определения, мы будем предполагать, что дан случайный процесс $\{X(t),t\in T\}$, каждое сечение которого на $T$ имеет конечный второй момент ${\mathbb{E}X^2(t) < \infty}$. Такие процессы называются \textit{случайными процессами второго порядка}, а множество таких процессов обозначают $L_2$.

\begin{definition}
\textit{Математическим ожиданием случайного процесса} ${X(t)}$ называется функция ${m_X: T \to \Rbb}$, значение которой в каждый момент времени равно математическому ожиданию соответствующего сечения, т.е. $\forall t\in T \ m_X(t) = \mathbb{E} X(t)$.
\end{definition}

\begin{definition}
\textit{Корреляционной функцией случайного процесса} $X(t)$ называется функция двух переменных ${R_X: T\times T \to \Rbb}$, которая каждой паре моментов времени сопоставляет корреляционный момент соответствующих сечений процесса, т.е. $$R_X(t_1,t_2) = \Exp \overset{\circ}{X}(t_1)\overset{\circ}{X}(t_2)=\mathbb{E}X(t_1)X(t_2) - \mathbb{E}X(t_1)\mathbb{E}X(t_2).$$
\end{definition}

\textbf{Замечание}. \textit{Корреляционной} эту функцию называют также в  уче\-бнике~\cite{NGG2}. 
В некоторых других источниках, например -- в книге~\cite{MillerPankov}, 
ее называют \textit{ковариационной}. В учебнике~\cite{NGG2} 
тоже есть понятие \textit{ковариационной} функции: ${K_{X}(t_1,t_2)=\mathbb{E}X(t_1)X(t_2)}$. В настоящем пособии мы будем придерживаться терминологии учебника~\cite{NGG2}. 

\begin{definition}
\textit{Ковариационной функцией случайного процесса} ${X(t)}$ называется функция двух переменных ${K_X: T\times T \to \Rbb}$, такая, что $$ K_X(t_1,t_2) = \Exp X(t_1)X(t_2) .$$
\end{definition}

Отметим, что корреляционная и ковариационная функции связаны соотношением ${K_X(t_1,t_2)=R_X(t_1,t_2)+ m_X(t_1)m_X(t_2)}$.

\begin{definition}
\textit{Дисперсией случайного процесса} ${X(t)}$ называется функция ${D_X: T \to \Rbb}$, в каждый момент времени равная дисперсии соответствующего сечения процесса, т.е. $$D_X(t) = \Exp \overset{\circ}{X}{}^2(t)=\mathbb{E}X^2(t) - (\mathbb{E}X(t))^2.$$ Легко видеть, что дисперсия процесса связана с корреляционной функцией по формуле $D_{X}(t)=R_X(t,t)$.
\end{definition}

Введем также важное понятие взаимной корреляционной функции двух процессов.

\begin{definition}
\textit{Взаимной корреляционной функцией} процессов ${X(t)}$ и ${Y(t)}$ из $L_2$ называется функция двух переменных $R_{X,Y}:T\times T\to\mathbb{R}$ такая, что $$R_{X,Y}(t_1,t_2)=\mathbb{E}\overset{\circ}{X}(t_1)\overset{\circ}{Y}(t_2).$$
\end{definition}

\begin{definition}
\textit{Характеристической функцией} процесса \\ $X(t)$ называется функция 
$$\varphi_{X(t)}(s)=\mathbb{E}\exp(isX(t)), \ s\in\mathbb{R}.$$
\end{definition}

Рассмотрим несколько задач на расчет введенных численных характеристик случайных процессов.

\begin{example}\label{ex:Ex6.1}
Вычислить математическое ожидание, корреляционную функцию и дисперсию случайного процесса ${\xi(t)=X\cos{(t+Y)}}$, где ${t\in\mathbb{R}}$, а случайные величины ${X \in \mathrm{N}(0,1)}$ и ${Y\in \mathrm{R}(-\pi,\pi)}$ независимы.
\end{example}

\begin{solution}
Сначала найдем математическое ожидание $\xi(t)$. Для этого зафиксируем момент времени $t\in\mathbb{R}$ и вычислим математическое ожидание сечения $$\mathbb{E}\xi(t)=\mathbb{E} \left(X\cos{(t+Y)}\right).$$ По условию задачи случайные величины $X$ и $Y$ независимы, значит, $$\mathbb{E}\left(X\cos{(t+Y)}\right) = \mathbb{E}X\cdot\mathbb{E}\cos{(t+Y)}.$$ Так как ${X\in\mathrm{N}(0,1)}$, то ${\mathbb{E}X=0}$, откуда ${\mathbb{E}\xi(t)=0}$ для всех $t\in\mathbb{R}$. Найдем теперь корреляционную функцию $$R_{\xi}(t_1,t_2)=\mathbb{E}\xi(t_1)\xi(t_2) - \mathbb{E}\xi(t_1)\mathbb{E}\xi(t_2).$$ Из $\mathbb{E}\xi(t)=0$ для всех $t\in\mathbb{R}$ следует, что $$R_{\xi}(t_1,t_2)=\mathbb{E} \xi(t_1)\xi(t_2)=\mathbb{E} \left(X^2\cos{(t_1+Y)}\cos{(t_2+Y)} \right).$$ Снова воспользуемся независимостью $X$ и $Y$, и получим $$R_{\xi}(t_1,t_2)=\mathbb{E}X^2\cdot\mathbb{E} \left(\cos{(t_1+Y)}\cos{(t_2+Y)}\right).$$ Второй момент посчитаем через дисперсию: $$\mathbb{E}X^2=\mathbb{D}X + (\mathbb{E}X)^2 = 1.$$ Второй множитель вычислим напрямую:
$$\mathbb{E}\cos{(t_1+Y)}\cos{(t_2+Y)}=\int\limits_{-\infty}^{+\infty}\cos{(t_1+y)}\cos{(t_2+y)}f_{Y}(y)dy,$$ где функция плотности равномерного распределения $$f_{Y}(y)=\left\{ {\begin{array}{*{20}{l}}
	1/(2\pi),& y\in(-\pi,\pi), \\ 
	0,& y\notin(-\pi,\pi). \\
	\end{array}} \right.$$
Отсюда получаем $$R_{\xi}(t_1,t_2)=\frac{1}{2\pi}\int\limits_{-\pi}^{\pi}\cos{(t_1+y)}\cos{(t_2+y)}dy=\frac{1}{2}\cos{(t_1-t_2)}.$$
Дисперсию можно вычислить по формуле $\mathbb{D}\xi(t)=R_{\xi}(t,t)=1/2$.
Итак, в результате имеем ${m_{\xi}(t)\equiv 0}$, ${D_{\xi}(t)\equiv 1/2}$,  ${R_{\xi}(t_1,t_2)=\frac{1}{2} \cos{(t_1-t_2)}}$. \EndEx
\end{solution}

\begin{example}\label{ex:Ex6.3}
Найти математическое ожидание, дисперсию и корреляционную функцию случайного процесса из примера~\ref{ex:SimpProb3Sec}.
\end{example}

\begin{solution}
Мы выяснили, что любое сечение ${\xi(t) \in \mathrm{Be}(1-t)}$. \shmaxg{Значит, математическое ожидание и дисперсия равны
$$m_{\xi}(t)=1-t, \ D_{\xi}(t)=t(1-t).$$}
Корреляционная функция равна $$R_{\xi}(t_1,t_2)=\mathbb{E}\xi(t_1)\xi(t_2)-\mathbb{E}\xi(t_1)\mathbb{E}\xi(t_2).$$ Для вычисления первого слагаемого удобно воспользоваться общей формулой для математического ожидания через интеграл Лебега: $$\mathbb{E}\xi(t_1)\xi(t_2)=\int\limits_{0}^1\xi(\omega,t_1)\xi(\omega,t_2)\,d\omega=1-\max{(t_1,t_2)}.$$ Другой подход состоит в том, чтобы заметить, что случайная величина $\xi(t_1)\xi(t_2)$ имеет распределение Бернулли $\mathrm{Be}(1-\max{(t_1,t_2)})$, стало быть, ее математическое ожидание равно $1-\max{(t_1,t_2)}$. В любом случае получаем $$R_{\xi}(t_1,t_2)=1-\max{(t_1,t_2)}-(1-t_1)(1-t_2)=\min{(t_1,t_2)}-t_1 t_2. \text{ \EndEx}$$ 
\end{solution}

\subsection{Винеровский процесс}\label{Winer}

Перейдём теперь к более содержательным примерам случайных процессов, повсеместно встречающихся в приложениях. Для этого дадим сначала определение одного важного класса случайных процессов.

\begin{definition}
Случайная функция $\{X(t), t\ge 0\}$ называется \textit{процессом с независимыми приращениями}, если для любого ${n\ge 1}$ и любых моментов времени $0\le t_1 \le t_2 \le \dots \le t_n$ случайные величины $$X(0), \ X(t_1)-X(0), \ X(t_2)-X(t_1), \ \dots, \ X(t_n)-X(t_{n-1})$$ независимы в совокупности.
\end{definition}

\begin{definition}\label{def:WienerDef1}
\textit{Винеровским процессом с параметром ${\sigma>0}$} на\-зы\-вается случайная функция ${\{W(t),t\ge0\}}$, удовлетворяющая условиям:
\begin{enumerate}
    \item $W(0)=0$ почти всюду.
    \item $W(t)$ -- процесс с независимыми приращениями.
    \item Для любых $t,s\ge0$ выполнено $W(t)-W(s)\in\mathrm{N}(0,\sigma^2|t-s|)$.
\end{enumerate}
\end{definition}
Для простоты винеровские процессы с параметром ${\sigma}$ будем иногда называть просто \textit{винеровскими процессами} там, где значение $\sigma$ не играет принципиальную роль.

Найдем плотность $n$-мерного распределения винеровского процесса. Для этого составим вектор $Y=(W(t_1),\dots,W(t_n))$ из сечений процесса в точках $t_1<\dots<t_n$. Сечения винеровского процесса являются зависимыми величинами, а приращения на непересекающихся интервалах -- независимые. Найдем поэтому сначала распределение вектора $$Z=(W(t_1),W(t_2)-W(t_1),\dots,W(t_n)-W(t_{n-1})).$$ Так как этот вектор состоит из независимых в совокупности нормальных случайных величин, определенных на одном вероятностном пространстве, то он является нормальным случайным вектором. Плотность его распределения выражается по формуле $$f_Z(z)=\prod\limits_{k=1}^n \frac{1}{\sqrt{2\pi\sigma^2(t_k-t_{k-1})}}\exp{\left( -\frac{z_k^2}{2\sigma^2(t_k-t_{k-1})} \right)}, \ z=(z_1,\dots,z_n),$$ где $t_0=0$. Остается заметить, что векторы $Y$ и $Z$ связаны линейным преобразованием $Y=AZ$, где 
\begin{equation}\label{eq:AFormulaGauss}
   A = \left[ {\begin{array}{*{20}{c}}
1&0&0&0& \ldots &0\\
1&1&0&0& \ldots &0\\
1&1&1&0& \ldots &0\\
 \vdots & \vdots & \vdots & \vdots & \ddots & \vdots \\
1&1& \ldots &1&1&1
\end{array}} \right]. 
\end{equation}
Плотность распределения $Y$ связана с плотностью распределения $Z$ по формуле
$$f_Y(y)=\frac{1}{|\det{A}|}f_Z(A^{-1}y), \ y=(y_1,\dots,y_n).$$ После преобразований получаем окончательно

$$
f_Y(y)=\prod_{k=1}^n \frac{1}{\sqrt{2 \pi \sigma^2(t_k-t_{k-1})}} \exp{\left(-\frac{(y_k - y_{k-1})^2}{2\sigma^2 (t_k - t_{k-1})}\right)}, \ t_0=0, \ y_0 = 0.
$$

Напомним, что линейное преобразование переводит нормальные случайные векторы в нормальные случайные векторы (см. раздел~\ref{gauss}).
Принимая во внимание выкладки выше, мы можем заключить, что любой вектор, составленный из сечений винеровского процесса, является нормальным случайным вектором, причем $$\mathbb{E}Y=\mathbb{E}(AZ)=A\mathbb{E}Z=0,$$ $$R_{Y}=R_{AZ} = AR_{Z}A^T=\left\|\min(t_i,t_j)\right\|_{i,j=1}^n,$$ где $R_Y$ и $R_Z$ -- ковариационные матрицы векторов $Y$ и $Z$ соответственно. Винеровский процесс таким образом относится к более широкому классу случайных процессов -- гауссовским процессам.

\begin{definition}
Случайный процесс $\{X(t),t\ge0\}$ называется \textit{гауссовским}, если для любого $n\ge1$ и точек $0 \le t_1 < \dots < t_n$ вектор $(X(t_1)$, $\dots$, $X(t_n))$ является нормальным случайным вектором.
\end{definition}

\begin{example}
Найти математическое ожидание, дисперсию, корреляционную и ковариационную функции винеровского процесса.
\end{example}

\begin{solution}
Воспользуемся тем, что ${W(t)=W(t)-W(0)\in\mathrm{N}(0,\sigma^2 t)}$:  $$m_W(t)=\mathbb{E}W(t)=0\text{ для любого } t\ge 0,$$
$$D_W(t)=\mathbb{D}W(t)=\sigma^2 t\text{ для любого } t\ge 0.$$
Для расчета корреляционной функции $$R_W(t,s)=\mathbb{E} \left(W(t)W(s)\right) - \mathbb{E} W(t)\mathbb{E}W(s) = \mathbb{E} \left(W(t)W(s)\right)$$ сначала рассмотрим случай ${t>s}$. В этом случае приращения\linebreak $W(t)-W(s)$ и $W(s)-W(0)=W(s)$ являются независимыми. Значит,
\begin{gather*}
\begin{split}
R_W(t,s)&=\mathbb{E}(W(t)-W(s))W(s)+\mathbb{E}W^2(s)\\&=\mathbb{E}(W(t)-W(s))\mathbb{E}W(s) + \mathbb{E}W^2(s)\\&=\mathbb{E}W^2(s)=\mathbb{D}W(s)+(\mathbb{E}W(s))^2=\sigma^2 s.
\end{split}
\end{gather*}
Если же ${t<s}$, то аналогично получим ${R_W(t,s)=\sigma^2 t}$. Случай ${t=s}$ рассматривается отдельно, получается ${R_W(t,t)=\mathbb{E}W^2(t)=\sigma^2 t}$. Значит, в общем случае корреляционная функция винеровского процесса $$R_W(t,s)=\sigma^2\min(t,s).$$ Так как математическое ожидание винеровского процесса равно нулю, то ковариационная функция равна $$K_W(t,s)=R_W(t,s)+\mathbb{E}W(t)\mathbb{E}W(s) = R_W(t,s) = \sigma^2\min(t,s).\text{ \EndEx}$$ 
\end{solution}

В связи с определением гауссовкого процесса можно дать равносильное определение винеровского процесса.

\begin{theorem}
Процесс $\{W(t),t\ge0\}$ является винеровским тогда и только тогда, когда
\begin{enumerate}[topsep=3pt,itemsep=0pt]
    \item $W(t)$ -- гауссовский процесс;
    \item $\mathbb{E}W(t) = 0$ для любого $t \ge 0$;
    \item $R_W(t,s) = \min(t,s)$ для любых $t,s \ge 0$.
\end{enumerate}
\end{theorem}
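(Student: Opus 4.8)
The plan is to establish the two implications separately, reusing essentially everything already computed above. Throughout, ``винеровский процесс'' in the statement means a Wiener process with parameter $\sigma = 1$, since condition 3 of the theorem, $R_W(t,s) = \min(t,s)$, forces $\sigma^2 = 1$.

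First, the direct implication. Assume $\{W(t), t \ge 0\}$ satisfies Definition~\ref{def:WienerDef1}. Condition 1 of the theorem (Gaussianity) is precisely what was derived just above from the factorization $Y = AZ$ with $A$ as in~\eqref{eq:AFormulaGauss}: the increment vector $Z$ consists of jointly independent normal variables on a common probability space, hence is a normal vector, and $Y = AZ$ is its linear image, so every $(W(t_1), \dots, W(t_n))$ is normal. Condition 2, $\mathbb{E}W(t) = 0$, follows from $W(t) = W(t) - W(0) \in \mathrm{N}(0, \sigma^2 t)$ together with $W(0) = 0$ a.s. Condition 3, $R_W(t,s) = \min(t,s)$, is exactly the computation in the Example above (cases $t > s$, $t < s$, $t = s$, using independence of $W(t) - W(s)$ and $W(s)$), specialized to $\sigma = 1$.

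For the converse, assume $W$ is Gaussian with $\mathbb{E}W(t) = 0$ and $R_W(t,s) = \min(t,s)$, and verify the three conditions of Definition~\ref{def:WienerDef1}. (i) $W(0) = 0$ a.s.: the section $W(0)$ has mean $0$ and variance $R_W(0,0) = 0$, and a random variable of zero variance equals its mean a.s. (ii) The increment law: for $t > s$ the variable $W(t) - W(s)$ is a linear combination of the jointly normal pair $(W(s), W(t))$, hence normal, with mean $0$ and variance $R_W(t,t) + R_W(s,s) - 2R_W(t,s) = t + s - 2s = t - s = |t-s|$; the case $t < s$ is symmetric, so $W(t)-W(s)\in\mathrm{N}(0,|t-s|)$. (iii) Independent increments: fix $0 \le t_1 \le \dots \le t_n$, put $t_0 = 0$, and note that the vector of increments $\bigl(W(t_1)-W(t_0), \dots, W(t_n)-W(t_{n-1})\bigr)$ is a linear image of the normal vector $(W(t_0), \dots, W(t_n))$, hence jointly normal, while $W(t_0) = W(0) = 0$ a.s. For $1 \le j < k \le n$,
$$\mathrm{cov}\bigl(W(t_j)-W(t_{j-1}),\, W(t_k)-W(t_{k-1})\bigr) = \min(t_j,t_k) - \min(t_j,t_{k-1}) - \min(t_{j-1},t_k) + \min(t_{j-1},t_{k-1}) = 0,$$
since $t_{j-1} \le t_j \le t_{k-1} \le t_k$ makes the first two $\min$'s equal to $t_j$ and the last two equal to $t_{j-1}$. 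So the increments are pairwise uncorrelated; being jointly Gaussian, they are mutually independent, and the a.s.-constant $W(0)$ is trivially independent of them.

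The only step that is more than bookkeeping is the passage from pairwise uncorrelatedness to mutual independence for a jointly Gaussian family; I would invoke it from the Gaussian-vector material in Section~\ref{gauss}, or prove it in one line by noting that once the covariance matrix is block-diagonal the joint characteristic function (equivalently, the density in the nondegenerate case) factors into a product. Everything else is a reuse of identities already established in the text, so I expect no real obstacle.
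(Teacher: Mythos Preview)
Your proof is correct and follows essentially the same route as the paper. The only cosmetic difference is in the independence step of the converse: the paper applies the inverse of the matrix $A$ from~\eqref{eq:AFormulaGauss} to pass from sections to increments and reads off that the resulting covariance matrix is diagonal, whereas you compute each off-diagonal covariance directly via the $\min$ identity --- these are the same computation presented two ways.
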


\textbf{Доказательство}. Пусть процесс является винеровским. Ранее мы уже установили принадлежность винеровского процесса к гауссовским процессам, а в предыдущем примере вычислили его математическое ожидание и корреляционную функцию.

Пусть теперь дан гауссовский процесс $X(t)$ с численными характеристиками $\mathbb{E}X(t)\!=\!0$ и $R_X(t,s)\!=\!\min(t,s)$. Из \mbox{$R_X(0,0)\!=\!\mathbb{D}X(0)\!=\!0$}\linebreak следует, что ${X(0)=\const}$ п.н., а из того, что ${\mathbb{E}X(0)=0}$ получаем\linebreak $X(0)=0$~п.н.

Далее, так как процесс $X(t)$ гауссовский, то и произвольное приращение $X(t)-X(s)$ является нормальной случайной величиной. Математическое ожидание $\mathbb{E}(X(t)-X(s))=0$, а дисперсия
$$\mathbb{D}(X(t)-X(s))=\mathbb{D}X(t) - 2\cov(X(t),X(s)) + \mathbb{D}X(s)=$$
$$=R_X(t,t)-2R_X(t,s)+R_X(s,s)=t+s-\min(t,s)=|t-s|.$$
Так как процесс гауссовский, то любой вектор из его сечений является гауссовским, причем математическое ожидание этого вектора -- нулевой вектор, а ковариационная матрица $\left\|R_{ij}\right\|$ состоит из элементов $R_{ij}=\min(t_i,t_j)$. Если взять преобразование, обратное к $A$ из формулы~\eqref{eq:AFormulaGauss}, то получится, что вектор $(X(t_1),X(t_2)-X(t_1),\dots,X(t_n)-X(t_{n-1}))$ состоит из некоррелированных случайных величин, а раз вектор гауссовский, то -- из независимых случайных величин. \EndProof

Согласно теореме~\ref{Kolmog cont modific} Колмогорова для винеровского процесса существует непрерывная модификация (для этого в теореме можно положить ${\alpha = 4}$, ${\beta = 1}$, ${c = 3 \sigma^4}$). Поэтому всегда подразумевается, что траектории винеровского процесса непрерывны.

Мы дали аксиоматическое определение винеровского процесса, т.е. путем перечисления его свойств. Доказательство существования винеровского процесса можно выполнить различными способами~\cite{BulinskyShiryaev2005}, 
основанными на теоремах Колмогорова, разложениях в ряды, слабой сходимости мер в $C(0,1)$ и центральной предельной теореме. Здесь важно отметить, что винеровский процесс можно получать (и определять) как в некотором смысле предел случайных блужданий на вещественной оси. Поясним сказанное на модельном примере.

Допустим имеется некоторая акциия, цена которой формируется скачками в моменты времени $1/N$, $2/N$, $3/N$ и т.д., ${N \ge 1}$. Пусть в момент времени ${t > 0}$ цена акции определяется формулой $$X_N(t)=\sum\limits_{i=1}^{[Nt]} \Delta X_i,$$ где 
$$\Delta X_i = \begin{cases}
	\sigma/\sqrt{N}, & \text{с вероятностью } $1/2$,\\
	-\sigma/\sqrt{N}, & \text{с вероятностью } $1/2$,
	\end{cases}$$
причем $\{\Delta X_i\}$ -- независимые одинаково распределенные случайные величины. Заметим, что ${\mathbb{E}\Delta X_i = 0}$, ${\mathbb{D}\Delta X_i=\sigma^2/N}$. Тогда из центральной предельной теоремы следует, что для $t>0$
$$\frac{X_N(t) - 0}{\sigma/\sqrt{N}\cdot\sqrt{[Nt]}} \xrightarrow[N\to\infty]{d} Z \in \mathrm{N}(0,1).$$ Но так как $[Nt]/N \to t$ при $N\to\infty$, то $$X_N(t) \xrightarrow[N\to\infty]{d} X(t) \in \mathrm{N}(0, \sigma^2t), \ t > 0.$$

Оказывается, что предел $X(t)$ в этом выражении -- это винеровский процесс $W(t)$ с параметром $\sigma$, но чтобы это доказать, потребуется обратиться к понятию \textit{слабой сходимости случайных процессов}, или \textit{слабой сходимости мер}.

Подробное изложении теории слабой сходимости случайных процессов, включая доказательство сходимости процесса выше к винеровскому процессу, можно найти в монографии~\cite{Billingsley}. Мы же приведем лишь краткое пояснение.

Вообще в функциональном анализе под слабой сходимостью абстрактной последовательности $x_n$ к абстрактному элементу $x$ в линейном топологическом пространстве $E$ понимается сходимость числовой последовательности $f(x_n)$ к $f(x)$ для любого непрерывного линейного функционала $f$ из сопряженного пространства $E^*$. В теории вероятностей, когда говорят о слабой сходимости случайной последовательности $\xi_n$ к величине $\xi$, обычно для простоты имеют в виду поточечную сходимость $F_n(x)\to F(x)$ функций распределения  $\xi_n$ к функции распределения $\xi$ в точках непрерывности $F(x)$. На самом деле же слабая сходимость $\xi_n$ к $\xi$ означает слабую сходимость последовательности мер $\mathbb{P}_n$, порождаемой функциями распределения $F_n(x)$, к мере $\mathbb{P}$, порождаемой функцией распределения $F(x)$, причем меры эти являются элементами некоторого линейного топологического пространства мер. Не важно, как строится такое топологическое пространство, отметим лишь, что слабую сходимость мер можно определить несколькими равносильными способами. Например, под слабой сходимостью мер $\mathbb{P}_n$ к мере $\mathbb{P}$ (определенных на некотором измеримом пространстве $(\mathcal{S},\mathcal{F})$) можно понимать сходимость числовой последовательности 
$$\mathbb{E}_n f \to \mathbb{E} f, \ n\to\infty$$ для каждой непрерывной ограниченной функции $f$, определенной на элементах метрического пространства $\mathcal{S}$; здесь $\mathbb{E}_n$ и $\mathbb{E}$ обозначают математические ожидания, подсчитанные относительно мер $\mathbb{P}_n$ и $\mathbb{P}$, соответственно.

А теперь вернемся к случайным процессам. Траектории случайного процесса $X_N(t)$ и винеровского процесса $W(t)$ непрерывны с вероятностью 1, поэтому можно считать, что они заданы на метрическом пространстве $\mathcal{S}=C[0,T]$, оснащенном стандартной равномерной метрикой и некоторой сигма-алгеброй. Конечномерные распределения этих процессов порождают на $\mathcal{S}$ некоторые меры: $\mathbb{P}_N$ и $\mathbb{W}$ (последняя еще называется \textit{мерой Винера}). Оказывается, что $\mathbb{P}_N$ слабо сходится к $\mathbb{W}$. Доказывать это можно по-разному, одно из достаточных условий сходимости описывается в монографии~\cite{Billingsley}:
1) сначала надо доказать, что при каждом $n\ge1$ и каждом фиксированном наборе $(t_1,\dots,t_n)$ конечномерные распределения процесса $X_N(t)$ сходятся к соответствующим конечномерным распределениям процесса $W(t)$, а~затем надо  показать, что последовательность $\mathbb{P}_N$ \textit{полна}, т.е. <<достаточно хорошая>>, чтобы вычислить пределы по ${N\to\infty}$ для произвольных, нефиксированных наборов $(t_1,\dots,t_n)$ (грубо говоря, чтобы имела место <<равномерная сходимость по всем сечениям>>). Мы же в примере выше показали только сходимость одномерных распределений $X_N(t)$ к $W(t)$ для фиксированного $t$.

Отметим, наконец, что так как винеровский процесс является пределом некоторых других, более простых процессов, то его числовые характеристики можно аппроксимировать теми же числовыми характеристики простых процессов, и наоборот.

На рис.~\ref{fig:TypicalWiener} показаны типичные траектории винеровского процесса. На~рис.~\ref{fig:ManyTrajsWiener} показаны 1000 реализаций процесса. Хорошо видно, что они с большой вероятностью попадают в область между графиками функций ${y=\pm 3\sqrt{t}}$, которые отвечают правилу трех сигм нормального распределения: если ${\xi\in\mathrm{N}(0,\sigma^2)}$, то ${\mathbb{P}(-3\sigma \le \xi \le 3\sigma)\approx 0.9973}$. В~нашем случае $\xi=W(t)$, $\sigma^2=t$.

\begin{figure}[p]
	\centering
	\begin{subfigure}[h]{.495\linewidth}
		\centering
		\includegraphics[width=1.0\textwidth]{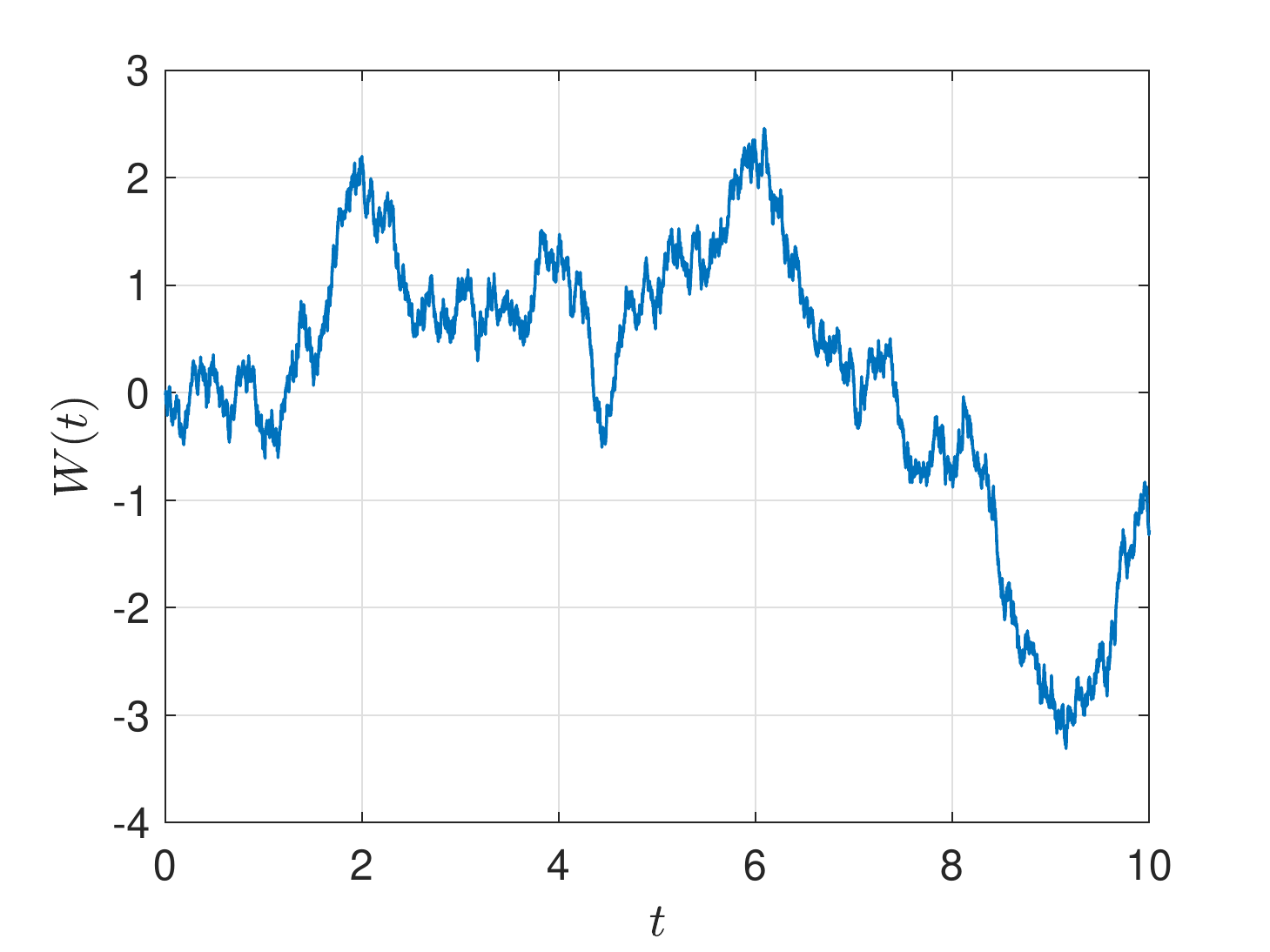} 
		\caption{}
	\end{subfigure}
	\begin{subfigure}[h]{.495\linewidth}
		\centering
		\includegraphics[width=1.0\textwidth]{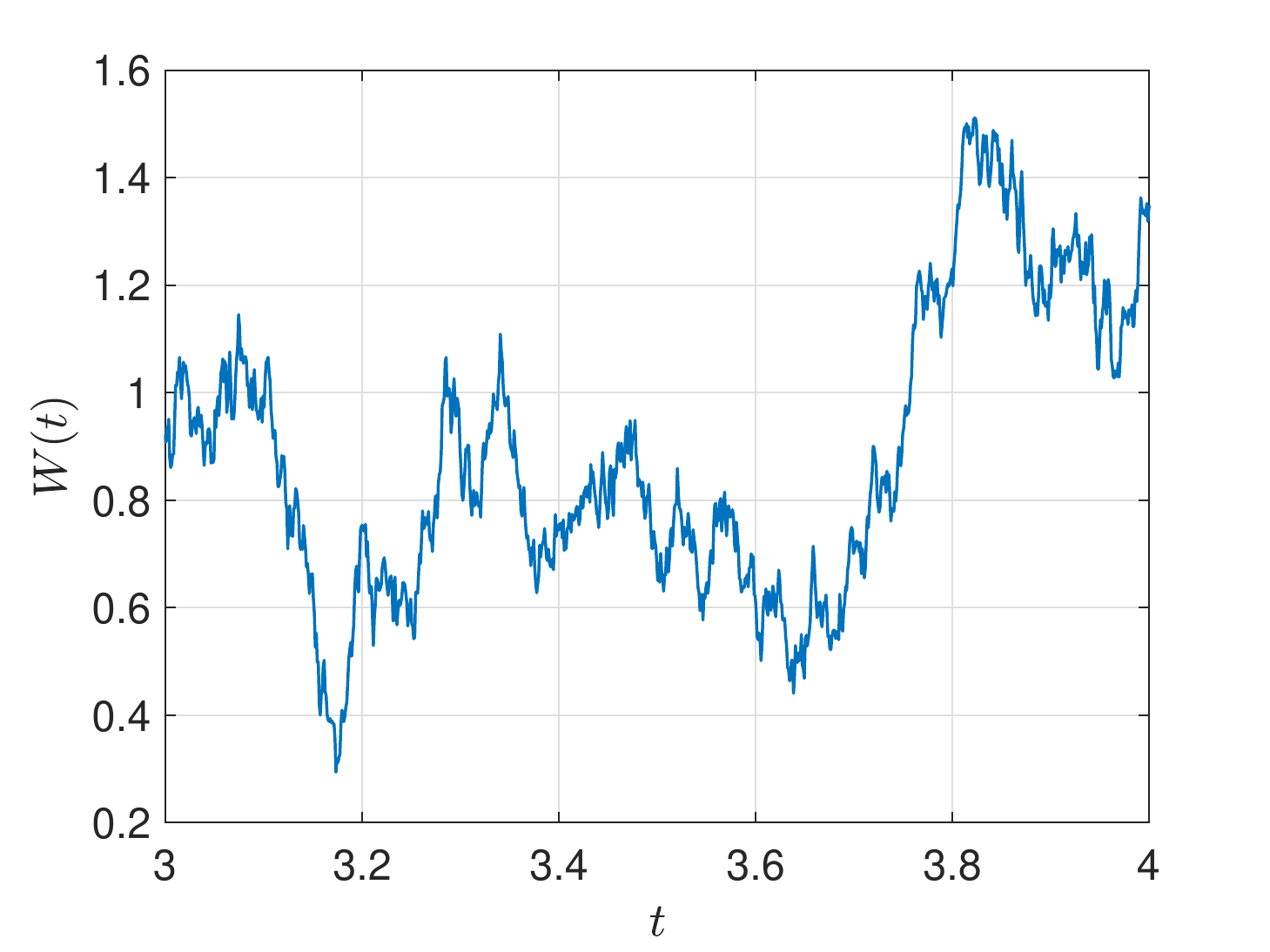} 
		\caption{}
	\end{subfigure}
	\caption{Пример траектории винеровского процесса на а)\;интервале \mbox{$t\in[0,10]$}~и~б)\;интервале $t\in[3,4]$}\label{fig:TypicalWiener}
\end{figure}

\begin{figure}[p]
	\centering
	\includegraphics[scale=0.7]{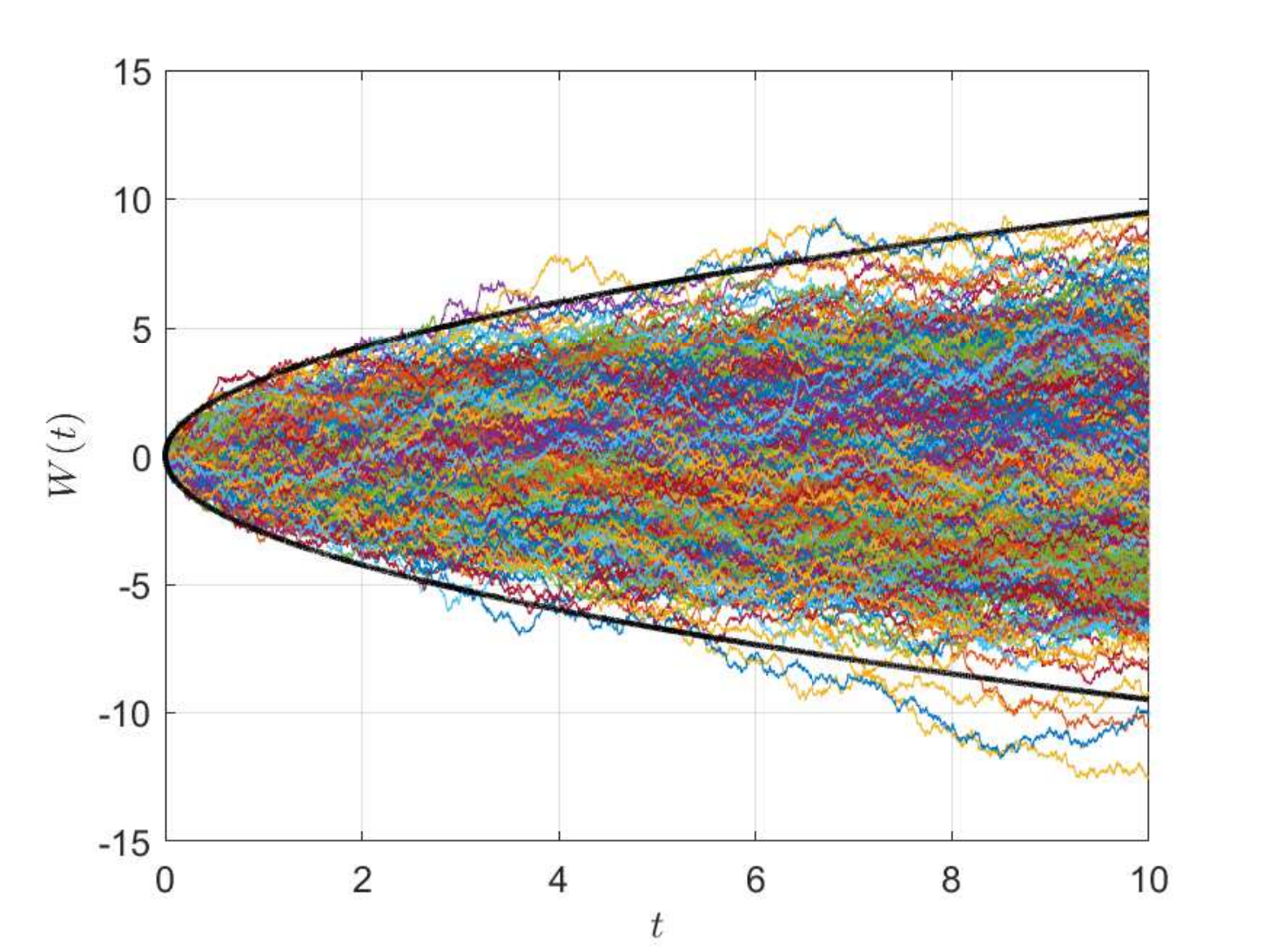}
	\caption{Реализации винеровского процесса с вероятностью приблизительно $99.7\%$ попадают в область между графиками функций $y\!=\!\pm 3\sqrt{t}$}
	\label{fig:ManyTrajsWiener}
\end{figure}

Известны и более сильные результаты о поведении винеровского процесса (см., например,~\cite[гл. III]{BulinskyShiryaev2005}).
\begin{theorem}[ (закон повторного логарифма, Хинчин)]
\textit{С~вероятностью единица}
$$
\liminf\limits_{t\to\infty}\frac{W(t)}{\sqrt{2t \ln\ln t}} = -1, \ \limsup\limits_{t\to\infty}\frac{W(t)}{\sqrt{2t \ln\ln t}} = 1.
$$
\end{theorem}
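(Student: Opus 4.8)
Можно считать $\sigma=1$: замена $W\mapsto W/\sigma$ лишь масштабирует обе части. Положим $h(t)=\sqrt{2t\ln\ln t}$ (эта функция возрастает при $t$, больших некоторого $t_0$); супремумы процесса ниже корректны, так как траектории винеровского процесса можно считать непрерывными (теорема Колмогорова о непрерывной модификации, уже установленная выше). Так как $-W(t)$ --- гауссовский процесс с нулевым средним и корреляционной функцией $\min(t,s)$, по доказанному выше критерию винеровского процесса он тоже винеровский; поэтому равенство для $\liminf$ получается применением равенства для $\limsup$ к $-W$. Итак, достаточно доказать $\limsup_{t\to\infty}W(t)/h(t)=1$ п.н., что разбивается на верхнюю оценку ($\le 1$) и нижнюю ($\ge 1$). (Равносильно можно было бы доказывать аналог при $t\to 0+$ и переходить к $t\to\infty$ заменой $t\mapsto 1/t$, пользуясь тем, что $tW(1/t)$ --- снова винеровский процесс.)

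\textbf{Верхняя оценка.} Фиксируем $\varepsilon>0$, берём $\theta\in(1,(1+\varepsilon)^2)$ и $t_n=\theta^n$. Поскольку при $t\in[t_{n-1},t_n]$ (для больших $n$) выполнено $h(t)\ge h(t_{n-1})$, достаточно показать, что п.н. $\sup_{s\le t_n}W(s)<(1+\varepsilon)h(t_{n-1})$ для всех больших $n$. По максимальному неравенству (принцип отражения) и оценке хвоста $\mathbb{P}(W(T)\ge a)\le\frac{1}{2}e^{-a^2/(2T)}$,
\[
\mathbb{P}\Bigl(\sup_{s\le t_n}W(s)\ge(1+\varepsilon)h(t_{n-1})\Bigr)\le\exp\Bigl(-\frac{(1+\varepsilon)^2 h(t_{n-1})^2}{2t_n}\Bigr)=\bigl((n-1)\ln\theta\bigr)^{-(1+\varepsilon)^2/\theta},
\]
так как $h(t_{n-1})^2=2\theta^{n-1}\ln\ln\theta^{n-1}$ и $t_n=\theta^n$. При выбранном $\theta$ показатель степени больше $1$, ряд сходится, и по лемме Бореля--Кантелли указанные события наступают лишь конечное число раз; значит $\limsup_{t\to\infty}W(t)/h(t)\le 1+\varepsilon$ п.н. Пересечение этих событий по $\varepsilon=1/k$ даёт $\limsup\le 1$ п.н.

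\textbf{Нижняя оценка.} Фиксируем $\delta\in(0,1)$ и выбираем $\theta$ настолько большим, что одновременно $(1-\delta)^2\theta/(\theta-1)<1$ и $4/\sqrt{\theta}<\delta$; берём $t_n=\theta^n$. Приращения $D_n=W(t_n)-W(t_{n-1})$ независимы в совокупности, $D_n\in\mathrm{N}(0,\theta^{n-1}(\theta-1))$. Для событий $B_n=\{D_n\ge(1-\delta)h(t_n)\}$ с помощью нижней оценки гауссовского хвоста $\mathbb{P}(Z\ge x)\ge c\,x^{-1}e^{-x^2/2}$ ($x\ge 1$) получаем
\[
\mathbb{P}(B_n)\ge\frac{c'}{\sqrt{\ln n}}\,n^{-(1-\delta)^2\theta/(\theta-1)},
\]
откуда $\sum_n\mathbb{P}(B_n)=\infty$. События $B_n$ независимы, поэтому по второй лемме Бореля--Кантелли п.н. наступает бесконечно много из них. По уже доказанной верхней оценке, применённой к $-W$, п.н. $W(t_{n-1})\ge-2h(t_{n-1})$ для всех больших $n$, а так как $h(t_{n-1})/h(t_n)\to\theta^{-1/2}$, для больших $n$ имеем $h(t_{n-1})\le 2\theta^{-1/2}h(t_n)$. Тогда на $B_n$ при больших $n$
\[
W(t_n)=D_n+W(t_{n-1})\ge(1-\delta)h(t_n)-\frac{4}{\sqrt{\theta}}h(t_n)\ge(1-2\delta)h(t_n),
\]
и так бывает для бесконечно многих $n$; значит $\limsup_{t\to\infty}W(t)/h(t)\ge 1-2\delta$ п.н. Пересечение по $\delta=1/k$ даёт $\limsup\ge 1$ п.н., что вместе с верхней оценкой даёт $\limsup=1$ п.н.

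\textbf{Основная трудность.} Она не в выкладках, а в аккуратном согласовании параметров и порядке предельных переходов: в верхней оценке $\theta$ берётся близким к $1$, в нижней --- большим (порядка $1/\delta$); сначала фиксируется $\varepsilon$ (или $\delta$), затем подбирается $\theta$, и лишь в конце совершается счётный предельный переход $\varepsilon,\delta\to 0$. Отдельный тонкий момент нижней оценки --- необходимость «погасить» вклад $W(t_{n-1})$, для чего повторно привлекается только что доказанная верхняя оценка (для $-W$). Наконец, сама схема Бореля--Кантелли опирается на два стандартных, но требующих отдельного обоснования факта: максимальное неравенство для винеровского процесса (следствие принципа отражения) и двусторонние оценки хвоста нормального распределения.
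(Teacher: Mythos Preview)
В пособии эта теорема только формулируется со ссылкой на~\cite[гл.~III]{BulinskyShiryaev2005} и не доказывается, так что сравнивать с «собственным» доказательством не с чем. Ваш аргумент --- стандартное классическое доказательство (геометрические моменты $t_n=\theta^n$, принцип отражения и лемма Бореля--Кантелли для верхней оценки; независимые приращения и вторая лемма Бореля--Кантелли для нижней), и оно проведено корректно, включая ключевой приём повторного использования верхней оценки для контроля $W(t_{n-1})$ в нижней части.
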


С помощью преобразования  ${\widetilde{W} (t) = t W(1/t)}$ получается снова винеровский процесс, поведение которого в окрестности нуля описывается поведением $W(t)$ на бесконечности. Откуда вытекает следующая теорема.
\begin{theorem}[ (локальный закон повторного логарифма)]\textit{ С~вероятностью единица}
$$
\liminf\limits_{t\to 0+} \frac{W(t)}{\sqrt{2t \ln\ln (1/t)}} = -1, \ 
\limsup\limits_{t\to 0+} \frac{W(t)}{\sqrt{2t \ln\ln (1/t)}} = 1.
$$
\end{theorem}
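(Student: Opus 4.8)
The plan is to transfer the already established law of the iterated logarithm at infinity to a neighbourhood of zero by means of the time-inversion $\widetilde{W}(t) = tW(1/t)$ mentioned just above. Concretely, I would set $\widetilde{W}(t) := tW(1/t)$ for $t>0$ and $\widetilde{W}(0):=0$, show that $\widetilde{W}$ is again a Wiener process, apply Khinchin's theorem to $\widetilde{W}$, and then substitute $s=1/t$.

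First I would check that $\widetilde{W}$ is a Wiener process, using the equivalent characterization proved earlier (Gaussian process, zero mean, correlation function $\min$). Indeed: (i) for any $0<t_1<\dots<t_n$ the vector $(\widetilde{W}(t_1),\dots,\widetilde{W}(t_n))$ is obtained by rescaling the coordinates of the Gaussian vector $(W(1/t_1),\dots,W(1/t_n))$, hence is Gaussian, and adjoining the degenerate coordinate $\widetilde{W}(0)=0$ keeps all finite-dimensional distributions Gaussian; (ii) $\mathbb{E}\widetilde{W}(t) = t\,\mathbb{E}W(1/t) = 0$; (iii) for $t,s>0$ one computes $R_{\widetilde{W}}(t,s) = ts\,\mathbb{E}\big(W(1/t)W(1/s)\big) = ts\min(1/t,1/s) = ts/\max(t,s) = \min(t,s)$, while $R_{\widetilde{W}}(0,s)=0=\min(0,s)$. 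Hence $\widetilde{W}$ is a Wiener process.

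Next I would record that the trajectories of $\widetilde{W}$ may be taken continuous on $[0,\infty)$: on $(0,\infty)$ this is immediate from continuity of $W$ and of $t\mapsto 1/t$, while continuity at $0$ follows from the law of the iterated logarithm for $W$ at infinity — almost surely $|W(u)|\le C\sqrt{u\ln\ln u}$ for all large $u$ with a random constant $C$, so $|\widetilde{W}(t)| = t\,|W(1/t)| \le C\sqrt{t\ln\ln(1/t)} \to 0$ as $t\to 0+$. This step serves only to single out one good version of the process; the LIL used below is a statement about the Wiener process and therefore applies to $\widetilde{W}$ in any event.

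Finally I would apply Khinchin's law of the iterated logarithm to the Wiener process $\widetilde{W}$ and change variables $s = 1/t$, so that $t\to\infty$ corresponds to $s\to 0+$. Since $\widetilde{W}(t) = tW(1/t) = W(s)/s$ and $\sqrt{2t\ln\ln t} = \sqrt{2\ln\ln(1/s)}/\sqrt{s}$, we obtain
$$\frac{\widetilde{W}(t)}{\sqrt{2t\ln\ln t}} = \frac{W(s)}{\sqrt{2s\ln\ln(1/s)}},$$
so that with probability one $\limsup_{s\to 0+} W(s)/\sqrt{2s\ln\ln(1/s)} = \limsup_{t\to\infty}\widetilde{W}(t)/\sqrt{2t\ln\ln t} = 1$, and likewise the corresponding $\liminf$ equals $-1$. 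I expect the only genuine obstacle to be the careful verification that $\widetilde{W}$ is really a Wiener process — in particular, treating the point $t=0$ separately and noticing that continuity there rests on the LIL at infinity; once that is in place, the substitution is purely algebraic.
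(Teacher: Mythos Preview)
Your approach is correct and is exactly the route the paper indicates: it simply remarks that the time-inversion $\widetilde{W}(t)=tW(1/t)$ is again a Wiener process, so its behaviour near zero is governed by the behaviour of $W$ at infinity, and the local law follows. You have supplied considerably more detail than the paper (the Gaussian-process verification, the covariance computation, continuity at $0$, and the explicit substitution $s=1/t$), but the underlying idea is identical.
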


\begin{theorem}[ (Башелье)]
\textit{При всех }$T$, $x \ge 0$
$$
\mathbb{P} \left( \max_{t\in [0,T]} W(t) \ge x \right) = 2 \mathbb{P} \left(  W(T) \ge x \right) = \mathbb{P} \left(  |W(T)| \ge x \right).
$$
\end{theorem}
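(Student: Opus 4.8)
The plan is to deduce everything from the reflection principle for the Wiener process. The second equality $2\mathbb{P}(W(T)\ge x)=\mathbb{P}(|W(T)|\ge x)$ is purely elementary: the section $W(T)$ has the symmetric distribution $\mathrm{N}(0,\sigma^2 T)$, so for $x\ge 0$ one has $\mathbb{P}(W(T)\ge x)=\mathbb{P}(W(T)\le -x)$; for $x>0$ the events $\{W(T)\ge x\}$ and $\{W(T)\le -x\}$ are disjoint and their union is $\{|W(T)|\ge x\}$, while for $x=0$ both sides equal $1$. So the whole content lies in the first equality.

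For the first equality I would argue as follows. Assume $x>0$ (the case $x=0$ is trivial, since $W(0)=0$). Let $\tau=\inf\{t\ge 0:W(t)=x\}$ be the first hitting time of the level $x$. Since we work with the continuous modification of $W$ guaranteed by Theorem~\ref{Kolmog cont modific}, the event $\{\max_{t\in[0,T]}W(t)\ge x\}$ coincides with $\{\tau\le T\}$. I split this event according to the value of $W(T)$:
\[
\{\tau\le T\}=\{\tau\le T,\ W(T)>x\}\ \sqcup\ \{\tau\le T,\ W(T)=x\}\ \sqcup\ \{\tau\le T,\ W(T)<x\}.
\]
The middle event has probability $0$ because $\mathbb{P}(W(T)=x)=0$, and by path continuity $\{W(T)>x\}\subseteq\{\tau\le T\}$, hence $\mathbb{P}(\tau\le T,\ W(T)>x)=\mathbb{P}(W(T)>x)=\mathbb{P}(W(T)\ge x)$.

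The key step is the identity $\mathbb{P}(\tau\le T,\ W(T)<x)=\mathbb{P}(\tau\le T,\ W(T)>x)$. I would get it from the strong Markov property at the stopping time $\tau$: on $\{\tau\le T\}$ the shifted process $\widehat W(s):=W(\tau+s)-x$ is again a Wiener process independent of $\mathcal{F}_\tau$, and by its symmetry $\widehat W\overset{d}{=}-\widehat W$ the increment $W(T)-x=\widehat W(T-\tau)$ is equally likely to be positive or negative given $\mathcal{F}_\tau$, hence given $\{\tau\le T\}$. Equivalently, the reflected trajectory $W(t)\mathbf{1}_{\{t\le\tau\}}+(2x-W(t))\mathbf{1}_{\{t>\tau\}}$ is again a Wiener process, and it maps $\{\tau\le T,\ W(T)<x\}$ onto $\{\tau\le T,\ W(T)>x\}$. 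Collecting the pieces,
\[
\mathbb{P}\Bigl(\max_{t\in[0,T]}W(t)\ge x\Bigr)=\mathbb{P}(\tau\le T)=2\,\mathbb{P}(W(T)>x)=2\,\mathbb{P}(W(T)\ge x).
\]

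The main obstacle is justifying the reflection principle itself, i.e. the strong Markov property of the Wiener process and the fact that the reflected path is again a Wiener process; this machinery is not developed in the text, so one would either sketch it or invoke it as a known fact. A more self-contained alternative, in the spirit of this chapter, is to prove the discrete analogue for the symmetric random walk $X_N$ — for a walk the reflection principle is a purely combinatorial bijection of paths obtained by reflecting the portion after the first visit to the level, giving $\mathbb{P}(\max_k X_N(k/N)\ge x)=\mathbb{P}(X_N(T)\ge x)+\mathbb{P}(X_N(T)>x)$ up to lattice corrections — and then to pass to the limit $N\to\infty$ using the weak convergence $X_N\Rightarrow W$ in $C[0,T]$, continuity of the supremum functional, and $\mathbb{P}(\max_{t\in[0,T]}W(t)=x)=0$. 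On that route the technical difficulty moves to justifying the limit passage for the maximum functional.
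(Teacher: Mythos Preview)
The paper does not prove this theorem; it is merely stated, together with the laws of the iterated logarithm, under the blanket reference ``см., например,~\cite[гл.~III]{BulinskyShiryaev2005}''. So there is nothing in the text to compare against.

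Your argument is the standard reflection-principle proof and is correct. You handle the second equality cleanly via symmetry of $\mathrm{N}(0,\sigma^2 T)$, and for the first you correctly reduce $\{\max_{t\in[0,T]}W(t)\ge x\}$ to $\{\tau\le T\}$ using path continuity, then split by the sign of $W(T)-x$ and appeal to the strong Markov property at $\tau$. Your self-assessment of the obstacles is accurate: the strong Markov property (or equivalently, that the reflected path is again a Wiener process) is not developed in this text, so it must be imported; and your alternative route through the discrete reflection principle for the symmetric walk plus weak convergence in $C[0,T]$ is exactly in the spirit of the chapter's construction of $W$ as a scaling limit. On that route you correctly flag the two technical points --- continuity of the supremum functional and $\mathbb{P}(\max_{[0,T]}W=x)=0$ --- needed to pass to the limit.
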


\subsection{Пуассоновский процесс}\label{sec:poisson_proc}

\begin{definition}
\textit{Пуассоновским процессом с интенсивностью} $\lambda>0$ называется случайная функция $\{K(t),t \ge 0\}$, удовлетворяющая свойствам: 
\begin{enumerate}
	\item $K(0)=0$ почти всюду.
	\item $K(t)$ -- процесс с независимыми приращениями.
	\item $\forall t > s \ge 0$ выполнено $K(t)-K(s)\in\mathrm{Po}(\lambda (t-s))$.
\end{enumerate}
\end{definition}

Легко показать, что математическое ожидание, дисперсия и корреляционная функция пуассоновского процесса выражаются формулами ${\mathbb{E}K(t)=\lambda t}$, ${\mathbb{D}K(t)=\lambda t}$, ${R_K(t,s)=\lambda\min (t,s)}$. Так как для любых ${t>s\ge 0}$ приращение ${K(t)-K(s)\in\mathrm{Po}(\lambda (t-s))}$ неотрицательно и может принимать лишь значения $0,1,2,\dots$, то траектории пуассоновского процесса -- неубывающие кусочно-постоянные функции. Эти функции начинаются в нуле, так как ${K(0)=0}$ п.н., и испытывают прыжки (скачки) в случайные моменты времени.

\begin{theorem} \shmaxg{\textit{Пусть $\{\xi_n\}$ -- последовательность независимых случайных величин с распределением $\mathrm{Exp}(\lambda)$. Обозначим $S_n = \xi_1 + \dots + \xi_n$ ($S_0=0$). Введем процесс
\begin{equation}\label{PP}
    X(t) = \sup\{n:S_n \le t\}.
\end{equation}
Тогда $X(t)$ -- пуассоновский процесс интенсивности $\lambda$.}}
\end{theorem}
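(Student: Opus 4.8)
The plan is to verify the three defining axioms of a Poisson process of intensity $\lambda$ for the process $X(t)$ from~\eqref{PP}. Axiom~1 is immediate: since $\xi_i>0$ almost surely, $S_n=\xi_1+\dots+\xi_n>0=S_0$ for every $n\ge1$, so $\{n:S_n\le 0\}=\{0\}$ and $X(0)=0$ a.s.; moreover $S_n\to\infty$ a.s. by the strong law of large numbers (as $\mathbb{E}\xi_i=1/\lambda$), so the set $\{n:S_n\le t\}$ in~\eqref{PP} is a.s. finite and $X(t)$ is a genuine random variable. Everything else rests on the elementary duality
$$\{X(t)\ge n\}=\{S_n\le t\},\qquad\text{equivalently}\qquad \{X(t)=n\}=\{S_n\le t<S_{n+1}\},$$
which holds because $(S_n)$ is increasing.

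First I would settle the one-dimensional marginal. From the duality, $\mathbb{P}(X(t)=n)=\mathbb{P}(S_n\le t)-\mathbb{P}(S_{n+1}\le t)$. Since $S_n$ has the Erlang density $\lambda^n x^{n-1}e^{-\lambda x}/(n-1)!$ on $x>0$, the classical identity $\mathbb{P}(S_n\le t)=\sum_{k\ge n}e^{-\lambda t}(\lambda t)^k/k!$ (proved by induction on $n$, or by repeated integration by parts, or by differentiating in $t$) telescopes to $\mathbb{P}(X(t)=n)=e^{-\lambda t}(\lambda t)^n/n!$. Hence $X(t)\sim\mathrm{Po}(\lambda t)$.

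The core of the argument is a regeneration lemma based on memorylessness of the exponential law. Fix $s\ge0$ and work on $\{X(s)=k\}=\{S_k\le s<S_{k+1}\}$. Conditionally on $\sigma(\xi_1,\dots,\xi_k)$ — which, together with the constraint $\xi_{k+1}>s-S_k$, determines the trajectory $(X(u))_{0\le u\le s}$ — the overshoot $\eta_1:=S_{k+1}-s=\xi_{k+1}-(s-S_k)$ is again $\mathrm{Exp}(\lambda)$ and independent of the past, while $\eta_j:=\xi_{k+j}$ for $j\ge2$ are the unused i.i.d. $\mathrm{Exp}(\lambda)$ variables; thus $(\eta_j)_{j\ge1}$ is i.i.d. $\mathrm{Exp}(\lambda)$ and independent of $(X(u))_{u\le s}$. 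Since $S_{k+j}=s+\eta_1+\dots+\eta_j$, one gets for $v\ge0$
$$X(s+v)-X(s)=\sup\{j:\eta_1+\dots+\eta_j\le v\},$$
which is a fresh copy of the construction~\eqref{PP}. By the marginal just computed, $X(s+v)-X(s)\sim\mathrm{Po}(\lambda v)$, and it is independent of $(X(u))_{u\le s}$. A short induction over a partition $0=t_0<t_1<\dots<t_m$ (apply the lemma at $s=t_1$, note that $X(t_2)-X(t_1),\dots,X(t_m)-X(t_{m-1})$ are increments of the post-$t_1$ process, and invoke the inductive hypothesis for it) then gives that $X(t_1)-X(t_0),\dots,X(t_m)-X(t_{m-1})$ are independent with $X(t_j)-X(t_{j-1})\sim\mathrm{Po}(\lambda(t_j-t_{j-1}))$, which is axioms~2 and~3.

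I expect the regeneration lemma to be the only genuinely delicate step, because the conditioning event $\{X(s)=k\}$ itself involves $\xi_{k+1}$ through $\{\xi_{k+1}>s-S_k\}$, so the independence of the overshoot from the past must be argued rather than asserted. A fully rigorous route that bypasses this subtlety is to compute the joint law of $(X(t_1),\dots,X(t_m))$ directly: condition on the number $N$ of arrivals before $t_m$ and on $(\xi_1,\dots,\xi_N)$, integrate the product exponential density over the region $\{X(t_\ell)=n_1+\dots+n_\ell,\ \ell=1,\dots,m\}$ determined by given increment values $n_1,\dots,n_m$, and verify that the integral factors as $\prod_{\ell=1}^m e^{-\lambda(t_\ell-t_{\ell-1})}(\lambda(t_\ell-t_{\ell-1}))^{n_\ell}/n_\ell!$. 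The remaining items — axiom~1, the duality, the telescoping marginal — are routine.
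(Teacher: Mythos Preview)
Your proposal is correct. Your primary route---the regeneration/memorylessness lemma followed by induction over the partition---is a genuinely different argument from the paper's. The paper takes exactly the ``fully rigorous route'' you sketch at the end: it writes down the joint density of $(S_1,\dots,S_{k_n+1})$ explicitly as $\lambda^{k_n+1}e^{-\lambda x_{k_n+1}}\mathsf{I}(0<x_1<\dots<x_{k_n+1})$, integrates it over the region $\{S_1,\dots,S_{k_1}\in(0,t_1);\ S_{k_1+1},\dots,S_{k_2}\in(t_1,t_2);\ \dots;\ S_{k_n+1}>t_n\}$, and observes that the integral factors as the desired product of Poisson probabilities (each block contributing the volume of an ordered simplex divided by the appropriate factorial). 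This direct computation sidesteps precisely the conditioning subtlety you flag, at the cost of a multivariate integral; your regeneration argument is more conceptual and makes the Markov/renewal structure transparent, but---as you yourself note---requires care in justifying that the overshoot is independent of the past when the conditioning event already constrains $\xi_{k+1}$.
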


\begin{proof} \shmaxg{Рассмотрим случайный вектор $(S_1,\dots,S_n)$. Этот вектор связан со случайным вектором $(\xi_1,\dots,\xi_n)$ линейным преобразованием:
$$\left[ {\begin{array}{*{20}{c}}
  {{S_1}} \\ 
  {{S_2}} \\ 
  {{S_3}} \\ 
   \vdots  \\ 
  {{S_n}} 
\end{array}} \right] = \left[ {\begin{array}{*{20}{c}}
  1&0&0& \ldots &0 \\ 
  1&1&0& \ldots &0 \\ 
  1&1&1& \ldots &0 \\ 
   \vdots & \vdots & \vdots & \ddots & \vdots  \\ 
  1&1&1&1&1 
\end{array}} \right]\left[ {\begin{array}{*{20}{c}}
  {{\xi _1}} \\ 
  {{\xi _2}} \\ 
  {{\xi _3}} \\ 
   \vdots  \\ 
  {{\xi _n}} 
\end{array}} \right]$$
Это наблюдение позволяет вычислить плотность вектора $(S_1,\dots,S_n)$. Действительно определитель этой матрицы равен единице, поэтому плотность вектора $(S_1,\dots,S_n)$ равна
$$p_{S_1,\dots,S_n}(x_1,\dots,x_n)=p_{\xi_1,\dots,\xi_n}(x_1,x_2-x_1,\dots,x_n-x_{n-1}).$$ Остается вспомнить, что случайные величины $\xi_1,\dots,\xi_n$ независимы, значит
$$p_{S_1,\dots,S_n}(x_1,\dots,x_n) = \prod\limits_{j=1}^n \lambda e^{-\lambda(x_j - x_{j-1})}I(x_j - x_{j-1} \ge 0)=$$
$$=\lambda^n e^{-\lambda x_n}I(x_n > x_{n-1} > \dots > x_1).$$ Зафиксируем моменты времени $0=t_0<t_1<\dots<t_n$ и введем неотрицательные целые числа $k_1,\dots,k_n\in\mathbb{Z}_+$ и $0=k_0 \le k_1 \le k_2 \le \dots \le k_n$. Чтобы доказать, что процесс $X(t)$ пуассоновский нужно доказать, что приращения его независимые и имеют пуассоновское распределение. Для этого мы вычислим вероятность
$$\mathbb{P}(X(t_n)-X(t_{n-1})=k_n-k_{n-1},\dots,X(t_2)-X(t_1)=k_2-k_1, X(t_1)=k_1),$$ покажем, что она распадается на произведение вероятностей и что эти вероятности равны тому, чему нужно, чтобы эти случайные величины имели пуассоновское распределение. Эта вероятность равна
$$\mathbb{P}(\{S_1,\dots,S_{k_1}\}\in(0,t_1),\{S_{k_1+1},\dots,S_{k_2}\}\in(t_1,t_2),\dots,$$$$\dots\{S_{k_{n-1}+1},\dots,S_{k_n}\}\in(t_{n-1},t_n),S_{k_n+1} > t_n)=$$
$$=\int\dots\int \lambda^{k_n+1}e^{-\lambda x_{k_n+1}}I(0<x_1<\dots<x_{k_n+1})\,dx_1\dots dx_n,$$ где интеграл берется по области
$$A = \{(x_1,\dots,x_{k_n+1})\in\mathbb{R}^{k_n+1}: \{x_1,\dots,x_{k_1}\}\in(0,t_1),\dots,x_{k_n+1}>t_n\}.$$
Далее, интеграл выше равен
$$\int\limits_{t_n}^{\infty}\lambda e^{-\lambda x_{k_n+1}}\,dx_{k_n+1} \lambda^{k_n}\prod\limits_{j=1}^n\int\dots\int I(x_{k_{j-1}+1} < \dots < x_{k_j})\,dx_1,\dots dx_{k_{n}}=$$
$$=e^{-\lambda t_n}\lambda^{k_n}\prod\limits_{j=1}^n\frac{(t_j-t_{j-1})^{k_j-k_{j-1}}}{(k_j-k_{j-1})!}=\prod\limits_{j=1}^n \frac{(\lambda(t_j-t_{j-1}))^{k_j-k_{j-1}}}{(k_j-k_{j-1})!}e^{-\lambda(t_j-t_{j-1})}.$$ Интеграл в выражении выше без индикатора равен объему многомерного прямоугольника, а с индикатором -- объему симплекса, который равен объему прямоугольника поделить на факториал размерности пространства. Из полученного выражения следует, что все случайные величины $X(t_j)-X(t_{j-1})$ независимы, причем вероятность того, что $$\mathbb{P}(X(t_j)-X(t_{j-1}))=\frac{(\lambda(t_j-t_{j-1}))^{k_j-k_{j-1}}}{(k_j-k_{j-1})!}e^{-\lambda(t_j-t_{j-1})},$$
т.е. ${X(t_j)-X(t_{j-1})\in\mathrm{Po}(\lambda(t_j-t_{j-1}))}$. А это и есть то, что мы хотели доказать: приращения процесса независимы и распределены по Пуассону с нужными параметрами. \EndProof}

\end{proof}

\shmaxg{Теперь разберемся с тем, какие следствия мы получаем из явной конструкции пуассоновского процесса.}

\shmaxg{1) Скачки происходят в моменты времени $\tau_1=S_1$, $\tau_2=S_2$ и так далее. Так как $S_n$ представляет собой сумму $n$ независимых случайных величин с распределением $\mathrm{Exp}(\lambda)$, то $\tau_n=S_n$ имеет распределение Эрл\'{а}нга $\mathrm{Erl}(n,\lambda)$.}

\shmaxg{2) Между скачками проходит случайное время $$\tau_n-\tau_{n-1}=S_n-S_{n-1}=\xi_n\in\mathrm{Exp}(\lambda).$$ Времена между соседними последовательными скачками -- независимые случайные величины.}

\shmaxg{3) С вероятностью 1 все скачки пуассоновского процесса являются единичными. Действительно, скачки происходят только в моменты времени $\tau_1=S_1$, $\tau_2=S_2$ и так далее, поэтому
$$\mathbb{P}(\exists\,\text{скачок размера}\ge2)=\mathbb{P}(\exists n:S_n = S_{n+1})=\mathbb{P}(\exists n: \xi_{n+1}=0)=0,$$ так как все $\xi_j$ имеют непрерывное распределение.}

\shmaxg{Доказывая вышеприведенную теорему, мы получили заодно и $n$-мерную функцию вероятности пуассоновского процесса:
$$\mathbb{P}(K(t_1)=k_1,\dots,K(t_n)=k_n)=\prod\limits_{j=1}^n \frac{(\lambda \Delta t_j)^{\Delta k_j}}{\Delta k_j!}\exp{(-\lambda \Delta t_j)},$$ где $\Delta t_j = t_j - t_{j-1}$, $\Delta k_j=j_j-k_{j-1}$, $t_0=0$, $k_0=0$.}

\shmaxg{Процессы вида
$$X(t) = \sup\left\{n: \sum\limits_{k=1}^n \xi_k \le t\right\},$$ где $\xi_k$ -- независимые случайные величины (не обязательно одинаково или показательно распределенные) называются еще \textit{процессами восстановления}. Теорема, сформулированная и доказанная выше, говорит о том, что пуассоновский процесс -- это процесс восстановления, построенный по случайным величинам с показательным распределением. Последовательность $\{S_n\}$, где $S_n=\sum_{k=1}^n\xi_k$, называется \textit{случайным блужданием}.}

На рисунке~\ref{fig:PoissonProcTraj} изображена типичная траектория пуассоновского процесса.

\begin{figure}[h]
	\centering
	\includegraphics[scale=0.5]{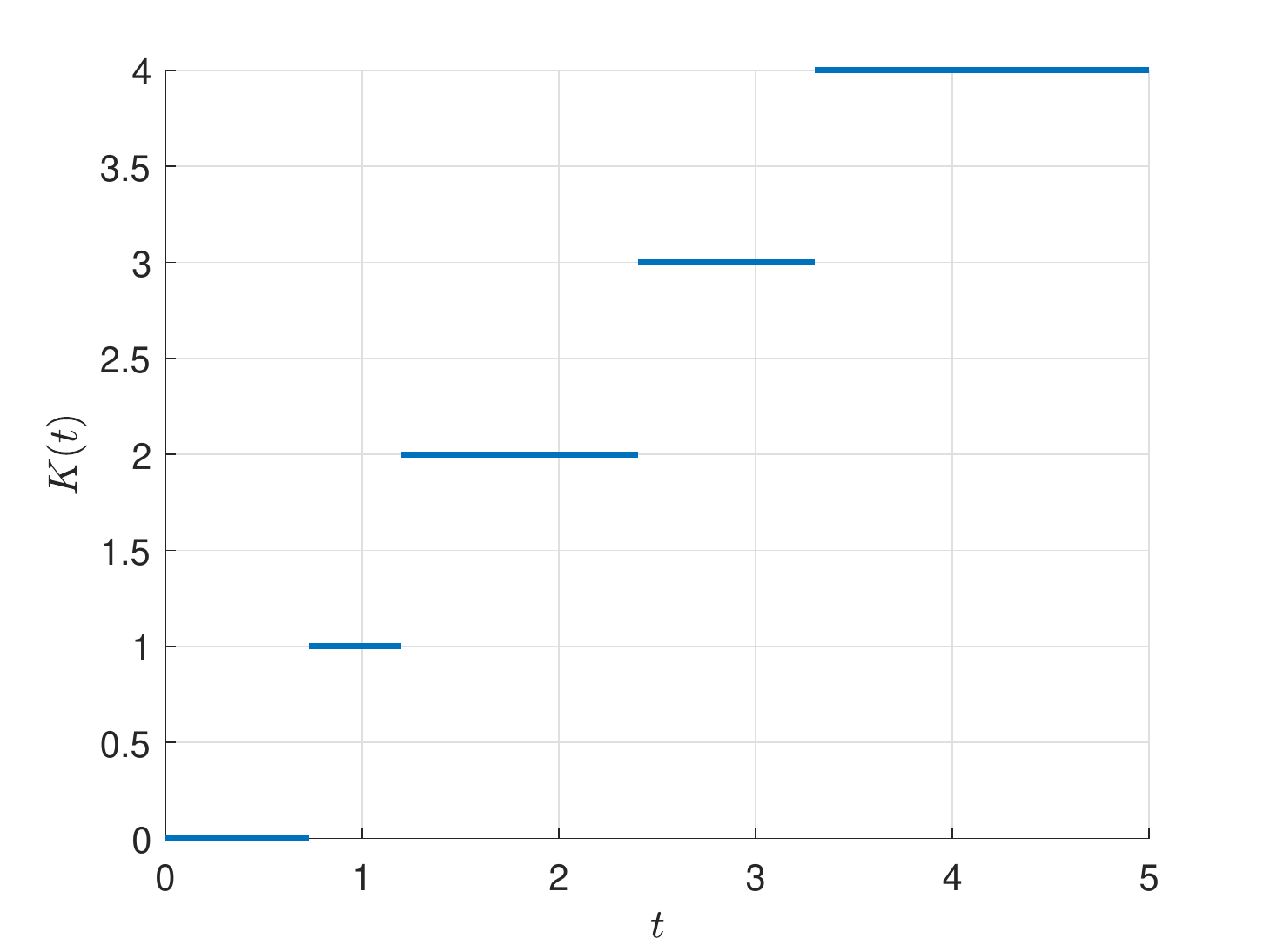}
	\caption{Пример типичной траектории пуассоновского процесса}
	\label{fig:PoissonProcTraj}
\end{figure}

Также отметим, что формула~\eqref{PP} дает конструктивное определение пуассоновского процесса на вероятностном пространстве $([0,1],$ $\mathcal{B}([0,1]), \mathcal{L})$, $\mathcal{L}$  -- мера Лебега. Для этого по $\omega\in [0,1]$ нужно построить последовательность независимых равномерно на $[0,1]$ распределенных случайных величин $\omega_k(\omega)$, $k\ge 1$, далее методом обратной функции построить показательные случайные величины $\xi_k(\omega) = -\ln{\omega_k (\omega)}$, $k\ge 1$ и воспользоваться формулой~\eqref{PP}. Для построения последовательности $\omega_k$, $k\ge 1$, можно провести следующую процедуру: записать $\omega$ в двоичной системе исчисления, получив тем самым последовательность независимых случайных величин из распределения $\mathrm{Be}(1/2)$, разбить ее на счетное число непересекающихся подпоследовательностей (например, записав исходную последовательность <<змейкой>> в таблицу, выделить в ней строки) и снова свернуть каждую из них в десятичное представление.

Покажем теперь, как пуассоновский процесс может быть получен из последовательности процессов путем предельного перехода. Пусть ${X_k \in \mathrm{R}(0, N)}$, ${k = 1,\dots,N}$ и нас интересует число точек, которые попадают на отрезок $[0,\lambda t] \subseteq [0, N]$, где $\lambda > 0$, $t \ge 0$. Для этого введем случайную величину ${\nu_k = \mathsf{I}(X_k \in [0,\lambda t]) \in \Be(\lambda t / N)}$. Тогда из теоремы Пуассона следует, что
$$\sum_{k=1}^{N}\nu_k \xrightarrow[N\to\infty]{d} X(t) \in \Po(\lambda t).$$
Можно показать, что определённый для ${t \ge 0}$ предельный процесс $X(t)$ является пуассоновским процессом $K(t)$ с интенсивностью $\lambda$. Данный пример демонстрирует важное понятие, играющее ключевую роль в статистической механике -- \textit{термодинамический предельный переход}. Пусть есть сосуд длины $N$ и $N$ невзаимодействующих частиц, равномерно распределенных в нем. Тогда если выбрать в этом сосуде какую-нибудь произвольную область длины $L$, то при предельном переходе $N\to\infty$ и сохранении равномерности распределения частиц (постоянстве концентрации) окажется, что вероятность нахождения $k$ частиц в области $L$ равна $e^{-L}L^k/k!$. Данный пример демонстрирует естественность возникновения распределения Пуассона при таком предельном переходе. Много интересных приложений пуассоновские процессы (поля) находят в~\cite{KendallMoran, Kingman, KoralovSinai, Malyshev}. 

Приведем и другой важный пример естественного возникновения пуассоновского процесса. Предположим имеется остановка для автобусов, и время между прибытиями автобусов на остановку имеет показательное распределение ${\xi_i \in \mathrm{Exp}(\lambda)}$. Отметим, что в классе распределений, имеющих плотность, только показательное распределение удовлетворяет следующему условию (отсутствие последействия): для всех $t, \tau > 0$:
$$\mathbb{P}(\xi \ge t + \tau \, | \, \xi \ge t) = \mathbb{P}(\xi \ge \tau).$$
Это значит, что если человек приходит на остановку в некоторый фиксированный момент времени $t_0$, не связанный с пуассоновским процессом, то в силу отсутствия памяти у показательного распределения время ожидания автобуса будет по-прежнему иметь  показательный закон с тем же параметром $\lambda$. Отметим еще, что в классе дискретных распределений таким свойством обладает только геометрическое распределение.

\begin{example}[ (нерадивый пешеход)]
Пешеход хочет перейти дорогу в неположенном месте. Для того, чтобы перебежать дорогу, ему требуется $a$ секунд. Кроме того, ему приходится ждать некоторое случайное время, когда зазор между соседними машинами, образующими пуассоновский процесс интенсивности $\lambda$, будет больше, чем $a$ секунд. Вычислить математическое ожидание времени перехода дороги с учетом ожидания.
\end{example}

\begin{solution}
Сначала введем случайную величину, равную времени перехода дороги с учетом ожидания <<зазора>> между машинами:
$$T_a = \min \left( t>0: \, K(t-a) = K(t) \right) .$$
Для вычисления ее математического ожидания воспользуемся формулой полной вероятности, <<обуславливая>> по моменту времени $\xi_1$ проезда мимо пешехода первой машины:
$$\mathbb{E}T_a = \int\limits_{0}^{\infty} \mathbb{E}(T_a \, | \, \xi_1=x)f_{\xi_1}(x)\,dx = \int\limits_0^\infty \lambda e^{-\lambda x} \mathbb{E}(T_a \, | \, \xi_1 = x)\, dx,$$ где $f_{\xi_1}(x)$ -- плотность распределения $\xi_1$. Заметим, что если $\xi_1 > a$, то пешеходу не нужно ждать, т.е. $T_a = a$, а если $\xi_1 = x < a$, то так как до следующей машины пройдет показательно распределенное время, мы получаем ту же задачу, но со сдвигом на время $x$, т.е. $$\mathbb{E}(T_a \, | \, \xi=x) = x + \mathbb{E}T_a, \ x < a.$$
Поэтому, разбивая интеграл на две части, по отрезку $[0,a]$ и $[a,+\infty)$, получаем
$$
\mathbb{E} T_a = a e^{-\lambda a}+ \int\limits_0^a \lambda e^{-\lambda x} (x + \mathbb{E} T_a)\,dx.
$$
Разрешая это уравнение относительно $\mathbb{E}T_a$, получаем окончательно $$\mathbb{E}T_a = \frac{e^{\lambda a} - 1}{\lambda}.$$ Отсюда следует, что при интенсивном потоке машин в среднем переход через дорогу придется ждать экспоненциально долго. \EndEx
\end{solution}

Приведем еще одно определение пуассоновского про\-цесса. Предположим, что во времени наблюдается некоторый поток случайных событий. Эти события происходят через случайные моменты времени, но не произвольным образом, а по следующим правилам:

\setlength{\parskip}{0pt}

1) \textit{Стационарность}: распределение числа событий, происшедших на фиксированном (неслучайном) интервале времени $[t,s]$ есть функция только длины интервала $t-s$.

2) \textit{Отсутствие последействия}: числа событий, происшедших на непересекающихся временных интервалах независимы в совокупности.

3) \textit{Ординарность}: вероятность того, что на интервале $[t,t+\Delta t]$ произойдет хотя бы одно событие, равна $\lambda(t)\Delta t + o(1)$, $\Delta t\to 0$, где для стационарного процесса $\lambda(t)=\lambda$ не зависит от $t$.

Поток событий с вышеперечисленными тремя свойствами называется \textit{простейшим потоком событий}. Обозначим за $\nu(t,s)$ число событий такого потока, произошедшее на интервале времени $[t,s]$. Оказывается, что $K(t)=\nu(0,t)$. Другими словами, пуассоновский процесс можно определить как число событий простейшего потока событий на интервале времени $[0,t]$.

Это определение удобно в том смысле, что допускает естественное обобщение понятия пуассоновского процесса до \textit{пуассоновского поля} \elena{ на $\mathbb R^d$, $d\ge 1$  и даже на произвольном измеримом пространстве $S$, не обладающем евклидовой структурой (см. \cite{Kingman}). При этом в евклидовом случае параметр времени теперь можно интерпретировать как пространственную переменную (см. пример~\ref{ex:Forest} далее). Случайная конфигурация точек (иначе, событий) на $S$, обладающих свойствами:}

1) распределение числа событий в произвольной измеримой области $A$ зависит лишь от величины некоторой (неатомической) меры $\mu$ области $A$.

2) число событий, произошедших на непересекающихся измеримых областях независимы в совокупности.

3) число событий $\nu(A)$ в измеримой области $A$ имеет распределение Пуассона с параметром $\mu(A)$.

\elena{Функцию, определенную на измеримых подмножествах $A\subset S$, $\nu(A)$, называют \textit{считающей функцией } пуассоновского поля. Если $S = \mathbb{R}^+$,  то принято называть пуассоновским процессом не случайную конфигурацию точек на действительной полуоси, а $K(t) = \nu \left( (0,\, t]  \right)$ и интерпретировать параметр $t\ge 0$ как время. }В случае \elena{$S=\mathbb R$ } мера $\mu$ может, в частности, иметь постоянную плотность относительно меры Лебега ${\mu([0,t]) = \lambda t}$, тогда мы получаем пуассоновский процесс с постоянной интенсивностью $\lambda$, а может быть представлена к 
$$\mu([0,t]) = \int\limits_0^t \lambda (u)\,du,$$
где $\lambda(u)$ -- неотрицательная функция (\textit{плотность интенсивности}). В~этом случае говорят о пуассоновском процессе с \textit{переменной интенсивностью}.

\begin{example}[ (модель леса как пуассоновского поля, \elena{\cite{Kingman}})]\label{ex:Forest}
Рассматривается пуассоновское поле в $\mathbb{R}^d$ с постоянной плотностью интенсивности $\lambda$. Точки-события этого поля будем интерпретировать как деревья в лесу. Выберем в качестве начала координат местоположение грибника. Нужно найти плотность распределения случайной величины $\xi$ -- расстояние до ближайшего дерева. 

Важное свойство пуассоновских моделей состоит в том, что их можно отображать в другие пространства, при этом образы случайных точек вновь образуют пуассоновский процесс~\cite{Kingman}. 

Рассмотрим в качестве первого отображения -- переход от декартовых координат к полярным: 
$$
f_1(x,y) = \left(\sqrt{x^2 + y^2}, \ \arctg (y/x) \right).
$$
Мера интенсивности при этом преобразуется по формуле
$$
\mu_1 (B) = \iint_{f_1^{-1}(B)} \lambda \,d x\,d y = \iint_{B}\lambda r \,dr \,d\theta,
$$
где $B$ -- борелевское множество в полосе $\left \{ (r, \theta): \, r>0, \, \theta\in [0, 2\pi)  \right\}$. 

В качестве второго отображения рассмотрим проекцию на радиальную компоненту:
$f_2 (r,\theta) = r$, тогда 
$$\mu_2 (C) = \iint_{C\times[0,2\pi)} \lambda r \,dr \,d\theta = \int\limits_{C} 2 \pi \lambda r \,dr,$$
$C$ -- борелевское множество в $\mathbb{R}^+$.

Таким образом, композиция отображений $f_1$ и $f_2$ преобразует пуассоновское поле на  $\mathbb{R}^d$ с постоянной интенсивностью $\lambda$ в пуассоновский процесс на  $\mathbb{R}^+$ с переменной интенсивностью $2 \pi \lambda r$, $r>0$. Значит,
$$
\mathbb{P} (\xi > x ) = \exp\left(-\int\limits_0^x 2 \pi \lambda u \,du \right),
$$
а плотность распределения $\xi$ есть $2\pi\lambda x \exp{( - \pi \lambda x^2)}$, $x > 0$. \EndEx
\end{example}

\begin{definition}
Случайный процесс
$$Q(t)=\sum\limits_{i=1}^{K(t)}V_i,$$ где $K(t)$~--- пуассоновский процесс, $\{V_i \}_{i=1}^\infty$~-- независимые одинаково распределенные случайные величины, не зависящие от $K(t)$, называется \textit{сложным пуассоновским процессом}. \elena{Предполагается, что если $K(t)=0$, то и $Q(t) = 0$.}
\end{definition}

Найдем математическое ожидание, дисперсию, корреляционную и характеристическую функцию сложного пуассоновского процесса в предположении ${\mathbb{E}V_1^2 < \infty}$. Для этого удобно воспользоваться формулами полной вероятности 

$$\mathbb{E}X = \mathbb{E}(\mathbb{E}(X \, | \, Y)), \ \ \mathbb{D}X = \mathbb{D}\left(\mathbb{E}(X \, | \, Y)\right) + \mathbb{E} (\mathbb{D}(X \, | \, Y)).$$ При вычислении условного математического ожидания или условной дисперсии значение вспомогательной случайной величины $Y$ фиксируется и воспринимается как неслучайная величина. Более подробно с понятием условными математическими ожиданиями и, в частности, с доказательством формул выше можно ознакомиться в книге~\cite{ShiryaevT1}. Легко видеть, что $$\mathbb{E}Q(t) = \mathbb{E}(\mathbb{E}(Q(t) \, | \, K(t)))=\mathbb{E}(K(t)\mathbb{E}V_1) = \lambda t \mathbb{E}V_1.$$ Что касается дисперсии, то удобно сначала вычислить
$$\mathbb{E}(Q(t) \, | \, K(t))=K(t)\mathbb{E}V_1, \ \mathbb{D}(Q(t) \,|\, K(t))=K(t)\mathbb{D}V_1,$$ и затем подставить в формулу для дисперсии:
$$\mathbb{D}Q(t)=\mathbb{D}(K(t)\mathbb{E}V_1) + \mathbb{E}(K(t)\mathbb{D}V_1) = \lambda t (\mathbb{E}V_1)^2 + \lambda t \mathbb{D}V_1=\lambda t \mathbb{E}V_1^2.$$ 

Перейдем теперь к расчету корреляционной функции $Q(t)$: $$R_Q(t,s)=\mathbb{E}\overset{\circ}{Q}(t)\overset{\circ}{Q}(s).$$ Для расчета этого выражения установим сначала вспомогательный факт: сложный пуассоновский процесс -- это процесс с независимыми приращениями. 
\elena{ Для доказательства независимости приращений воспользуемся  аппаратом характеристических функций. 
Пусть $\phi_V(s) = \mathbb E e^{is V}$ -- характеристическая функция случайной величины $V$. Вычислим сначала характеристическую функцию приращения $Q(t_2) - Q(t_1)$, $0\le t_1 < t_2$, воспользовавшись формулой полной вероятности для математического ожидания.
$$\phi_{Q(t_2) - Q(t_1) }(s) = \mathbb{E} e^{is \left (Q(t_2) - Q(t_1) \right)} = \mathbb{E} \exp \left \{is \sum\limits_{j =K(t_1) + 1}^{K(t_2)} V_j \right \} =$$
$$=\sum_{m_1, m_2 = 0}^\infty  \mathbb{E} \left [ \exp \left \{is \sum_{j =K(t_1) + 1}^{K(t_2)} V_j \right \} \Bigm | K(t_1) = m_1, K(t_2) = m_1 + m_2\right] \times$$
$$\times \mathbb{P } \left ( K(t_1) = m_1, K(t_2) = m_1 + m_2 \right)=$$
$$=\sum_{m_1, m_2 = 0}^\infty   \mathbb{E} \exp \left \{is \sum_{j =m_1 + 1}^{m_1 + m_2} V_j \right \} \mathbb{P } \left ( K(t_1) = m_1, K(t_2) = m_1 + m_2 \right)=$$
$$=\sum_{m_1, m_2 = 0}^\infty  \mathbb{E} \exp \left \{is \sum_{j =m_1 + 1}^{m_1 + m_2} V_j \right \}  \mathbb{P } \left ( K(t_1) = m_1\right)\mathbb P \left(  K(t_2) - K(t_1) =  m_2 \right) =$$
$$=\sum_{m_1, m_2 = 0}^\infty   \left [ \phi_V(s) \right]^{m_2}  \mathbb{P } \left ( K(t_1) = m_1\right) \mathbb P \left(  K(t_2) - K(t_1) =  m_2 \right) =$$
$$= \sum_{m_2 = 0}^\infty   \left [ \phi_V(s) \right]^{m_2}   \mathbb P \left(  K(t_2) - K(t_1) =  m_2 \right) =$$
$$= \sum_{m_2 = 0}^\infty   \left [ \phi_V(s) \right]^{m_2}   \frac{(\lambda (t_2 - t_1))^{m_2}}{m_2! }e^{-\lambda (t_2 - t_1)} =$$$$=\exp \left \{ -\lambda (t_2 - t_1) (1 - \phi_V(s)) \right\}.$$}

\elena{Таким образом, характеристическая функция приращения сложного пуассоновского процесса на интервале $[t_1,\,t_2]$ равна
\begin{equation}
\label{char funct Q(t1) - Q(t2)}
     \phi_{Q(t_2) - Q(t_1) }(s) = \exp \left\{ -\lambda (t_2 - t_1) (1 - \phi_V(s) \right\}.
\end{equation}
В частности, если положить $t_1 = 0$, $t_2 = t$, получаем характеристическую функцию сечения
\begin{equation}
\label{char funct Q}
    \phi_{Q(t) }(s) = \mathbb{E} e^{is Q(t)} = \exp \left\{ -\lambda (t) (1 - \phi_V(s) )\right\}.
\end{equation}
}
\elena{
Из формул \eqref{char funct Q(t1) - Q(t2)} и \eqref{char funct Q} можно сделать вывод, что сложный пуассоновский процесс является процессом со стационарными приращениями (иначе говоря, является однородным по времени): $$Q(t_2) - Q(t_1) \eg{\overset{d}{=}} Q(t_2 - t_1)$$ для любых $0\le t_1<t_2$.
}

\elena{
Вычислим теперь совместную характеристическую функцию вектора приращений  $\left ( Q(t_2)-Q(t_1), Q(t_3) - Q(t_2),\ldots, Q(t_n) - Q(t_{n-1}) \right)$, где $t_1 < t_2 < \dots < t_n$, и увидим, что она распадается в произведение характеристических функций компонент вектора (что равносильно независимости компонент рассматриваемого вектора приращений).
$$\phi_{Q(t_2) - Q(t_1), Q(t_3) - Q(t_2),\ldots, Q(t_n) - Q(t_{n-1}) }(s_1,\ldots,s_n) = $$
$$=\mathbb{E} e^{i \sum_{k=1}^n s_k \left (Q(t_{k}) - Q(t_{k-1})\right)}= \mathbb{E} \exp\left \{i \sum_{k=1}^n s_k \sum\limits_{j =K(t_{k-1}) + 1}^{K(t_{k})} V_j \right \} =$$
$$=\sum_{\substack{ m_i = 0\\i=1,\ldots n}}^\infty  \mathbb{E} \left [ \exp \left \{i\sum_{k=1}^n s_k \sum\limits_{j =K(t_{k-1}) + 1}^{K(t_{k})} V_j\right \} \Bigm | \bigcap_{k=1}^n \left \{ K(t_k) = \sum_{j=1}^k m_j \right \} \right] \times$$
$$\times \mathbb{P } \left (  \bigcap_{k=1}^n \left \{ K(t_k) = \sum_{j=1}^k m_j \right \} \right)=$$
$$=\sum_{\substack{ m_i = 0\\
    i=1,\ldots n}}^\infty  \mathbb{E}  \exp \left \{i\sum_{k=1}^n s_k \sum\limits_{j =m_{k-1} + 1}^{m_k} V_j \right \} \mathbb{P } \left (   \bigcap_{k=1}^n \left \{ K(t_k) - K(t_{k-1}) = m_k \right \}  \right)=$$
$$=\sum_{\substack{ m_i = 0\\
    i=1,\ldots n}}^\infty   \prod_{k=1}^n \left[\phi_V(s_k) \right]^{m_k}\prod_{k=1}^n \mathbb{P } \left (    K(t_k) - K(t_{k-1}) = m_k  \right)=$$
$$=\prod_{k=1}^n\sum_{m_k = 0}^\infty \left[\phi_V(s_k) \right]^{m_k}  \mathbb{P } \left( K(t_k) - K(t_{k-1}) =  m_k\right) =$$
$$=\prod_{k=1}^n \sum_{m_k = 0}^\infty   \left [ \phi_V(s) \right]^{m_k}     \frac{(\lambda (t_k - t_{k-1}))^{m_k}}{m_k! }e^{-\lambda (t_k - t_{k-1})} =$$
$$=\prod_{k=1}^n\exp \left\{ -\lambda (t_k - t_{k-1}) (1 - \phi_V(s_k) )\right\}.$$
С учётом формулы \eqref{char funct Q(t1) - Q(t2)} получаем 
$$
\mathbb{E} e^{i \sum_{k=1}^n s_k \left (Q(t_{k}) - Q(t_{k-1})\right)}= \prod_{k=1}^n \mathbb{E} e^{i s_k \left (Q(t_{k}) - Q(t_{k-1})\right)}.$$}

Независимость приращений сложного пуассоновского процесса установлена. Теперь легко вычислить $$\mathbb{E}\overset{\circ}{Q}(t)\overset{\circ}{Q}(s)=\mathbb{E}\overset{\circ}{Q}(\min(t,s))(\overset{\circ}{Q}(\max(t,s))-\overset{\circ}{Q}(\min(t,s))+\overset{\circ}{Q}{^2}(\min(t,s))),$$ что с учетом независимости приращений дает
\begin{equation}\label{eq:CorrLevy}
\mathbb{E}\overset{\circ}{Q}(t)\overset{\circ}{Q}(s)=\mathbb{E}\overset{\circ}{Q}{^2}(\min(t,s))=\mathbb{D}Q(\min(t,s)),
\end{equation}
откуда сразу получаем $$R_Q(t,s)=\lambda \min(t,s)\mathbb{E}V_1^2.$$ Отметим, что формула~\eqref{eq:CorrLevy} для корреляционной функции является общей для всех процессов с независимыми приращениями.

\elena{Характеристическая функция сечения сложного пуассоновского процесса была нами получена выше, как частный случай (см. формулу~\eqref{char funct Q}). Выведем эту формулу еще раз независимо от предыдущих рассуждений.} 
Для этого воспользуемся формулой полной вероятности для математического ожидания 
$$\mathbb{E}X = \mathbb{E}(\mathbb{E}(X\,|\,Y))$$
и тем фактом, что характеристическая функция суммы независимых случайных величин равна произведению их характеристических функций:
\begin{eqnarray*}
\varphi_{Q(t)}(s) &=& \mathbb{E}e^{isQ(t)}=\mathbb{E}(\mathbb{E}(e^{isQ(t)} \,|\, K(t))) = \mathbb{E}(\varphi_{V_1}(s))^{K(t)}= \\ &=& \sum\limits_{k=1}^{+\infty} (\varphi_{V_1}(s))^k \frac{(\lambda t)^k}{k!}e^{-\lambda t}=\exp\left (\lambda t (\varphi_{V_1}(s) - 1)\right).
\end{eqnarray*}

Теперь поговорим о распределениях сечений сложного пуассоновского процесса.
\begin{definition}\label{def:complex_poisson_variable}
Распределение случайной величины $Q(1)$, т.е. распределение случайной величины
$$Q(1)=\sum\limits_{k=1}^{\xi}V_k,$$ где $\xi\in\mathrm{Po}(\lambda)$, а $V_k$ -- независимые одинаково распределенные случайные величины, называется \textit{сложным пуассоновским распределением}.
\end{definition}

Из формулы для характеристической функции $\varphi_{Q(t)}(s)$ следует, что $$\varphi_{Q(1)}(s)=\exp{(\lambda(\varphi_{V_1}(s)-1))},$$ причем $$\varphi_{Q(t)}=\varphi_{Q(1)}^t.$$

Сложное пуассоновское распределение зависит от двух вещей: параметра ${\lambda>0}$ и распределения $V_1$. В последующем нам пригодится следующее представление характеристической функции сложного пуассоновской случайной величины:
$$\varphi_{Q(1)}(s)=\exp{\int\limits_{-\infty}^{+\infty}\lambda(e^{ixs}-1)\,dF_{V_1}(x)}=\exp{\int\limits_{-\infty}^{+\infty}(e^{ixs}-1)\,\mu(dx)},$$ где ${\varphi_{V_1}(s)=\int_{-\infty}^{+\infty}e^{ixs}\,dF_{V_1}(x)}$ и ${\mu(dx)=\lambda dF_{V_1}(x)}$. Теперь можно сказать, что сложное пуассоновское распределение определяется лишь только одной мерой $\mu(x)$. Обозначать принадлежность этому распределению поэтому будем так: ${Q(1)\in\mathrm{CPo}(\mu)}$, обозначение происходит от слов \textit{Compound Poisson Process}.

Обычное пуассоновское распределение является частным случаем сложного пуассоновского распределения: ${\mathrm{Po}(\lambda) = \mathrm{CPo}(\lambda\delta(x-1))}$, где $\delta(x-1)$ -- дельта-функция. Если $\xi\in\mathrm{Po}(\lambda)$, то $a\xi\in\mathrm{CPo}(\lambda\delta(x-a))$ для любой неслучайной величины $a$. Пусть даны неслучайные числа $\{a_k\}$, $k=1,\dots,n$ и независимые случайные величины $X_k\in\mathrm{CPo}(\mu_k)$. Тогда
\begin{eqnarray*}
\varphi_{\sum a_k X_k}(s) &=& \prod\limits_{k=1}^n \varphi_{a_k X_k}(s)\!=\!\exp\left( \sum\limits_{k=1}^n \int\limits_{-\infty}^{+\infty}\lambda_k(e^{ixs}\!-\!1)\,\delta(x\!-\!a_k)\,dx \right)\!= \\ &=& \exp\left(\int\limits_{-\infty}^{+\infty}(e^{ixs}-1)\sum\limits_{k=1}^{n}\lambda_k\delta(x-a_k)\,dx\right),
\end{eqnarray*}

Рассмотрим теперь несколько частных случаев сложного пуассоновского процесса, они отличаются только выбором случайных величин $V_i$. Если $V_i = 1$ п.н., то ${Q(t)=K(t)}$, т.е. обычный пуассоновский процесс. Если ${V_i\in\mathrm{Be}(p)}$, то из характеристической функции получаем, что $Q(t)$ имеет распределение Пуассона с параметром $\lambda p t$. \elena{На самом деле, этого достаточно, чтобы утверждать, что такой процесс является пуассоновским, поскольку, как будет написано ниже в разделе \ref{subsec:LevyProcess} процесс Пуассона является процессом Леви, для описания которого достаточно знать распределение только одного сечения (см. Теорему \ref{th:PP-LP} и Замечание  1 в конце раздела \ref{subsec:LevyProcess}. 
Однако, рассуждения ниже приводятся здесь для полноты картины.}

С учетом независимости приращений процесса $Q(t)$ получаем, что это снова пуассоновский процесс, но с интенсивностью $\lambda p$. Действительно, из $K(0)=0$ п.н. следует $Q(0)=0$ п.н. Независимость приращений $Q(t)$ была показана выше. Осталось показать, что $Q(t+u)-Q(t))\in\mathrm{Po}(\lambda p u)$, чтобы установить, что $Q(t)$ является пуассоновским процессом с интенсивностью $\lambda p$. Напомним, что характеристическая функция случайной величины $\xi \in\mathrm{Po}(\lambda)$ равняется $\varphi_\xi(s) = \exp\left(\lambda (e^{is}-1)\right)$, а для $\eta \in \mathrm{Be}(p)$ характеристическая функция имеет вид $\varphi_\eta(s) = 1 + p(e^{is}-1)$. Поэтому
$$
    \varphi_{Q(t)}(s) = \exp\left(\lambda p t(e^{is}-1)\right).
$$
Рассмотрим произвольные ${t,u \ge 0}$ и вычислим характеристическую функцию случайной величины $Q(t+u) - Q(t)$. Из независимости приращений сложного пуассоновского процесса получаем
\begin{eqnarray*}
    \varphi_{Q(t+u)}(s) &=& \EE e^{isQ(t+u)} = \EE\left(e^{isQ(t)}e^{is(Q(t+u)-Q(t))}\right)=\\
    &=& \EE e^{isQ(t)} \EE e^{is(Q(t+u)-Q(t))} = \varphi_{Q(t)}(s)\varphi_{Q(t+u)-Q(t)}(s),
\end{eqnarray*}
откуда следует, что
$$
    \varphi_{Q(t+u)-Q(t)}(s) = \exp\left(\lambda p u(e^{is}-1)\right),
$$
что соответствует характеристической функции пуассоновской случайной величины с параметром $\lambda p u$. Распределение случайной величины\footnote{Аналогичное утверждение выполнено и для случайных векторов.} однозначно задается ее характеристической функцией (см., например, в~\cite{Gnedenko}), а значит, $Q(t+u) - Q(t) \in \mathrm{Po}(\lambda p u)$.

Итак, мы показали, что при ${V_i\in\mathrm{Be}(p)}$ процесс $Q(t)$ является пуассоновским с интенсивностью $\lambda p$. В этом случае говорят, что процесс Пуассона \textit{устойчив по отношению к случайному прореживанию}. Мо\-жно дать следующую интерпретацию: каждое событие исходного процесса $K(t)$ независимо от других с вероятностью $p$ оставляем, а с вероятностью $1-p$ удаляем. Такая процедура называется \textit{случайным прореживанием}.

Если $V_i$ равновероятно принимают значения $+1$ и $-1$, то процесс $Q(t)$ называют \textit{рандомизированным случайным блужданием}. 
Распределение координаты частицы при этом выражается через функции Бесселя (см.~\cite[Том~2, гл.~II, \S 7]{Feller}).

\subsection{Безгранично делимые случайные величины}

Теперь перейдем к более общему классу процессов, который включает в себя и винеровский, и пуассоновский процессы и сложный пуассоновский процесс -- процессам Леви. Предварительно введем несколько вспомогательных определений.

\begin{definition}
Случайную величину $X$ будем называть \textit{безгранично делимой}, если для любого натурального ${n\ge1}$ существует набор $\{X_{kn}\}_{k=1}^{n}$ независимых одинаково распределенных случайных величин, такой, что 
$$X \overset{d}{=} \sum_{k=1}^{n}X_{kn}.$$
\end{definition}

Для безгранично делимых случайных величин справедливы следующие две теоремы, которые мы приводим без доказательства. С~доказательствами можно познакомиться в~\cite[гл.~I, \S~3]{GikhmanSkorohod1}. Эти теоремы дают эквивалентные представления характеристических функций безгранично делимых случайных величин. В зависимости от задачи удобно пользоваться разными представлениями, которые суть преобразование компонент триплета (см. далее). Также читателю могут быть полезны книги~\cite{GikhmanSkorohod2, SatoBook1999}.

\begin{theorem}[ (Леви--Хинчина)]
\label{Levy-Hinchin}
\textit{Случайная величина $Y$ является безгранично делимой тогда и только тогда, когда логарифм ее характеристической функции $\varphi_Y(s)$ имеет вид}
\begin{equation}\label{eq:LeviKhinchinFormula}
    g(s) = \ln\varphi_Y(s) = ibs - \frac{\sigma^2s^2}{2} + \int\limits_{-\infty}^{+\infty}\left(e^{isx}-1-isx\mathsf{I}(|x|<1)\right)\nu(dx),
\end{equation}
\textit{где ${b\in\mathbb{R}}$ и ${\sigma^2 \ge 0}$ -- некоторые числа, а $\nu(x)$ -- некоторая мера на вещественной оси $\mathbb{R}$ со свойствами $\nu(\{0\})=0$ и $\int_{\mathbb{R}}\min(1,x^2)\nu(dx)<\infty$. Тройка $(b,\sigma^2,\nu)$ для каждой безгранично делимой случайной величины определяется единственным образом.}
\end{theorem}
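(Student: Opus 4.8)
The plan is to prove the two implications separately, with essentially all the work in the ``only if'' direction. \emph{Sufficiency} I would dispatch by a direct construction: if $g(s)$ has the form~\eqref{eq:LeviKhinchinFormula}, then so does $g(s)/n$ (with triple $(b/n,\sigma^2/n,\nu/n)$), so it is enough to check that $e^{g(s)}$ is itself a characteristic function --- then $e^{g(s)/n}$ is one too and $e^{g(s)}=\bigl(e^{g(s)/n}\bigr)^n$ exhibits $Y$ as a sum of $n$ i.i.d.\ summands. To see $e^{g}$ is a characteristic function I would truncate: put $\nu_m(dx)=\mathsf{I}(|x|>1/m)\,\nu(dx)$, a finite measure, so that $\int(e^{isx}-1-isx\mathsf{I}(|x|<1))\,\nu_m(dx)$ differs from $\int(e^{isx}-1)\,\nu_m(dx)$ only by a term linear in $s$, i.e.\ corresponds to a shifted compound Poisson law (its characteristic function was computed in Section~\ref{sec:poisson_proc}); together with the Gaussian factor $\exp(ibs-\sigma^2s^2/2)$ this makes $e^{g_m}$ a genuine characteristic function. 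Since $|e^{isx}-1-isx\mathsf{I}(|x|<1)|\le C(s)\min(1,x^2)$ and $\int\min(1,x^2)\,\nu(dx)<\infty$, dominated convergence gives $g_m(s)\to g(s)$ pointwise, and $e^{g}$ is continuous at $0$; by the continuity theorem the limit $e^{g}$ is a characteristic function, so $Y$ is infinitely divisible.

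For \emph{necessity}, suppose $Y$ is infinitely divisible, $\varphi_Y=(\varphi_n)^n$ with $\varphi_n$ the characteristic function of a summand $X_{1n}$ with distribution $F_n$. The first step is to show $\varphi_Y$ never vanishes: from $|\varphi_n(s)|^2=|\varphi_Y(s)|^{2/n}$ the pointwise limit of $|\varphi_n|^2$ equals $1$ where $\varphi_Y\neq0$ and $0$ where $\varphi_Y=0$; being a pointwise limit of characteristic functions that is continuous at the origin (where it equals $1$), this limit is a characteristic function, hence continuous, hence $\equiv1$, so $\varphi_Y\neq0$ everywhere. Then there is a unique continuous branch $g(s)=\ln\varphi_Y(s)$ with $g(0)=0$, and $\varphi_n(s)=\exp(g(s)/n)$ for large $n$, whence $n(\varphi_n(s)-1)\to g(s)$. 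Writing $n(\varphi_n(s)-1)=\int_{\mathbb{R}}(e^{isx}-1)\,n\,dF_n(x)$ and inserting the compensator yields
\[ g(s)=\lim_{n\to\infty}\left(isb_n+\int_{\mathbb{R}}(e^{isx}-1-isx\mathsf{I}(|x|<1))\,n\,dF_n(x)\right),\qquad b_n=\int_{|x|<1}x\,n\,dF_n(x). \]

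The main step is to extract a limiting measure. I would introduce the ``accompanying'' measures $G_n(dx)=\frac{x^2}{1+x^2}\,n\,dF_n(x)$ and use the identity $\int_{-T}^{T}(1-\varphi_n(s))\,ds=2T\int_{\mathbb{R}}\bigl(1-\tfrac{\sin Tx}{Tx}\bigr)dF_n(x)$: multiplying by $n$ and letting $n\to\infty$ (the left side tends to $\int_{-T}^{T}(-g(s))\,ds$, finite) gives, taking $T=1$, an $n$-uniform bound on $n\int_{\mathbb{R}}\min(1,x^2)\,dF_n(x)$, hence on the total mass $G_n(\mathbb{R})$, and, letting $T\to0$, an $n$-uniform control of $nF_n(\{|x|>M\})\to0$ as $M\to\infty$, i.e.\ tightness of $\{G_n\}$. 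Along a subsequence $G_n$ converges weakly to a finite measure $G$, and since $\psi_s(x):=(e^{isx}-1-isx\mathsf{I}(|x|<1))\frac{1+x^2}{x^2}$ is bounded and continuous with the convention $\psi_s(0)=-s^2/2$ (the jump at $|x|=1$ being removed by passing to a continuous truncation function, which only modifies $b$), one obtains in the limit
\[ g(s)=isb-\frac{\sigma^2 s^2}{2}+\int_{\mathbb{R}\setminus\{0\}}(e^{isx}-1-isx\mathsf{I}(|x|<1))\,\nu(dx), \]
where $\sigma^2:=G(\{0\})$ is the Gaussian part read off from the atom of $G$ at the origin, $\nu(dx):=\frac{1+x^2}{x^2}G(dx)$ on $\mathbb{R}\setminus\{0\}$ (so $\nu(\{0\})=0$ and $\int_{\mathbb{R}}\min(1,x^2)\,\nu(dx)=G(\mathbb{R}\setminus\{0\})<\infty$), and $b$ is the limit of $b_n$ along the subsequence, whose existence follows once everything else converges.

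For \emph{uniqueness}, I would argue that $g$ determines the finite measure $G$ --- hence $\sigma^2=G(\{0\})$ and $\nu$ --- because a suitably averaged second difference of $g$, e.g.\ $g(s)-\tfrac12\int_{-1}^{1}g(s+u)\,du$, annihilates the linear term and turns the rest into the Fourier transform of a finite measure built explicitly from $G$, and Fourier transforms determine measures uniquely (see~\cite{Gnedenko}); then $b$ is recovered by subtraction. The hard part is exactly the compactness/tightness of the accompanying family $\{G_n\}$ together with the careful bookkeeping of the truncation, which is what forces the clean separation of the Gaussian component from the Lévy measure in~\eqref{eq:LeviKhinchinFormula}.
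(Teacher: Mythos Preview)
The paper does not prove this theorem at all: it is stated explicitly ``без доказательства'' with a reference to~\cite[гл.~I, \S~3]{GikhmanSkorohod1} for the argument. Your sketch is the classical proof one finds there (and in Gnedenko, Feller, Sato), so in that sense you are filling in exactly what the paper outsources.

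Your outline is essentially correct and complete at the level of a proof plan. A few points worth tightening if you write it out in full: (i) the function $\psi_s(x)=(e^{isx}-1-isx\mathsf{I}(|x|<1))\tfrac{1+x^2}{x^2}$ is \emph{not} bounded and continuous on all of $\mathbb{R}$ because of the jump of the indicator at $|x|=1$; you note this and fix it by replacing $\mathsf{I}(|x|<1)$ with a smooth truncation, but you should also check that $G_n$ does not put mass on the discontinuity set in the limit, or else work with the smooth truncation from the start and translate back at the end; (ii) the tightness argument via $\int_{-T}^{T}(1-\varphi_n(s))\,ds$ gives control of $n\int(1-\tfrac{\sin Tx}{Tx})\,dF_n$, and one has to be slightly careful that $1-\tfrac{\sin Tx}{Tx}$ dominates $\min(1,T^2x^2)$ only up to constants and only away from the zeros of $\sin$, so the uniform bound on $G_n(\mathbb{R})$ needs a line of justification; (iii) the convergence of $b_n$ along the subsequence is genuine work --- it does not follow automatically ``once everything else converges'' unless you have already shown the integral term converges for at least one $s$ with $\sin s\neq 0$, so that $b$ can be read off from $g(s)$ minus the identified pieces. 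None of these is a gap in the strategy, only places where the write-up needs care.
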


\begin{theorem}
\label{char funct 2 for i.d.r.v.}
\textit{Случайная величина $Y$ является безгранично делимой тогда и только тогда, когда логарифм ее характеристической функции $\varphi_Y(s)$ имеет вид}
\begin{equation}
\label{eq:char funct 2 for i.d.r.v.}
    g(s) = \ln\varphi_Y(s) = i\bar b s - \frac{\sigma^2s^2}{2} + \int\limits_{-\infty}^{+\infty}\left(e^{isx}-1-\frac{isx}{x^2+1}\right)\frac{1+x^2}{x^2}\,d \bar\nu(x), 
\end{equation}
\textit{где ${\bar b\in\mathbb{R}}$, ${\sigma^2 \ge 0}$ -- некоторые числа, а ${\bar\nu(x)}$ -- некоторая конечная (то есть ${\bar\nu(\mathbb{R}) < \infty}$) мера на вещественной оси $\mathbb{R}$, такая, что ${\bar\nu(\{0\})=0}$.
Тройка $(\bar b,\sigma^2,\bar\nu)$ для каждой безгранично делимой случайной величины определяется единственным образом.}

\textit{Если случайная величина $Y$ имеет конечный второй момент, то формула \eqref{eq:char funct 2 for i.d.r.v.} может быть уточнена следующим образом}:
\begin{equation}
\label{eq:char funct 3 for i.d.r.v.}
    g(s) = \ln\varphi_Y(s) = i\Tilde{b}s - \frac{\Tilde\sigma^2s^2}{2} + \int\limits_{-\infty}^{+\infty}\left(e^{isx}-1-isx\right)\frac{1}{x^2}\Tilde{\nu}(dx), 
\end{equation}
\textit{где $\Tilde{\nu}$ -- конечная (то есть ${\Tilde\nu(\mathbb{R}) < \infty}$) мера  и $\Tilde{\nu}(\{0\}) = 0$. Тройка $(\Tilde b,\Tilde\sigma^2,\Tilde\nu)$ для каждой безгранично делимой случайной величины с конечным вторым моментом определяется  единственным образом.}
\end{theorem}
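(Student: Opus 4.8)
The plan is not to redo the Lévy--Khinchin analysis from scratch but to deduce Theorem~\ref{char funct 2 for i.d.r.v.} from Theorem~\ref{Levy-Hinchin} by an explicit, invertible change of the triplet. Given a representation~\eqref{eq:LeviKhinchinFormula} with triple $(b,\sigma^2,\nu)$, I would put $\bar\nu(dx)=\frac{x^2}{1+x^2}\,\nu(dx)$ on $\mathbb{R}\setminus\{0\}$ and $\bar\nu(\{0\})=0$, keep $\sigma^2$ unchanged, and adjust only the drift; conversely, from~\eqref{eq:char funct 2 for i.d.r.v.} with $(\bar b,\sigma^2,\bar\nu)$ I would recover $\nu(dx)=\frac{1+x^2}{x^2}\,\bar\nu(dx)$. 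The first step is to check these two maps are mutually inverse bijections between $\{\nu:\nu(\{0\})=0,\ \int_{\mathbb{R}}\min(1,x^2)\,\nu(dx)<\infty\}$ and $\{\bar\nu\ \text{finite},\ \bar\nu(\{0\})=0\}$; this is immediate because $c_1\min(1,x^2)\le\frac{x^2}{1+x^2}\le\min(1,x^2)$ for a constant $c_1>0$, so $\bar\nu(\mathbb{R})<\infty$ iff $\int_{\mathbb{R}}\min(1,x^2)\,\nu(dx)<\infty$.

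The second step is to reconcile the integrands. After substituting $\nu(dx)=\frac{1+x^2}{x^2}\bar\nu(dx)$, the integral in~\eqref{eq:char funct 2 for i.d.r.v.} becomes $\int_{\mathbb{R}}\bigl(e^{isx}-1-\tfrac{isx}{1+x^2}\bigr)\nu(dx)$, and its difference from the integral in~\eqref{eq:LeviKhinchinFormula} equals $is\int_{\mathbb{R}}x\bigl(\tfrac{1}{1+x^2}-\mathsf{I}(|x|<1)\bigr)\nu(dx)$, a constant multiple of $is$. That integral converges absolutely: the integrand is $O(|x|^3)$ near $0$ (controlled by $\int_{|x|<1}x^2\nu(dx)$) and bounded for $|x|\ge1$ (controlled by $\int_{|x|\ge1}\nu(dx)$). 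Absorbing it into the drift gives $\bar b=b+\int_{\mathbb{R}}x\bigl(\tfrac{1}{1+x^2}-\mathsf{I}(|x|<1)\bigr)\nu(dx)$. I would also note that the integrand $\bigl(e^{isx}-1-\tfrac{isx}{1+x^2}\bigr)\tfrac{1+x^2}{x^2}$ tends to $-s^2/2$ as $x\to0$, hence is bounded there, which is exactly why finiteness of $\bar\nu$ is the correct hypothesis in~\eqref{eq:char funct 2 for i.d.r.v.}. Combining, $Y$ is infinitely divisible iff~\eqref{eq:LeviKhinchinFormula} holds with a unique $(b,\sigma^2,\nu)$ iff~\eqref{eq:char funct 2 for i.d.r.v.} holds with $(\bar b,\sigma^2,\bar\nu)$; uniqueness of the latter triple follows from Theorem~\ref{Levy-Hinchin} since the triplet correspondence is a bijection.

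For the refinement~\eqref{eq:char funct 3 for i.d.r.v.}, the first point is that a finite second moment forces $\int_{|x|>1}x^2\nu(dx)<\infty$; together with $\int_{|x|\le1}x^2\nu(dx)\le\int_{\mathbb{R}}\min(1,x^2)\nu(dx)<\infty$ this gives $\int_{\mathbb{R}}x^2\nu(dx)<\infty$, so $\tilde\nu(dx):=x^2\nu(dx)$ is a finite measure with $\tilde\nu(\{0\})=0$. Then I can use the symmetric centering: since $\tfrac{|x|^3}{1+x^2}\le\tfrac12 x^2$, the term $is\int_{\mathbb{R}}\tfrac{x^3}{1+x^2}\nu(dx)$ converges absolutely, and subtracting it replaces $\tfrac{isx}{1+x^2}$ by $isx$, i.e. $\int_{\mathbb{R}}\bigl(e^{isx}-1-\tfrac{isx}{1+x^2}\bigr)\nu(dx)=\int_{\mathbb{R}}\bigl(e^{isx}-1-isx\bigr)\tfrac{1}{x^2}\tilde\nu(dx)+is\int_{\mathbb{R}}\tfrac{x^3}{1+x^2}\nu(dx)$. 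Setting $\tilde b=\bar b+\int_{\mathbb{R}}\tfrac{x^3}{1+x^2}\nu(dx)$ and $\tilde\sigma^2=\sigma^2$ yields~\eqref{eq:char funct 3 for i.d.r.v.}, with uniqueness of $(\tilde b,\tilde\sigma^2,\tilde\nu)$ inherited, since $\tilde\nu=x^2\nu$ and $\tilde\sigma^2=\sigma^2$ are then determined by $Y$.

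The only genuinely analytic step — everything else being bookkeeping on the triplet — is $\mathbb{E}Y^2<\infty\Rightarrow\int_{|x|>1}x^2\nu(dx)<\infty$, and I expect this to be the main obstacle. I would prove it by symmetrization: $|\varphi_Y(h)|^2=\exp\bigl(g(h)+g(-h)\bigr)=\exp\bigl(-\sigma^2h^2-2\int_{\mathbb{R}}(1-\cos hx)\nu(dx)\bigr)=e^{-u(h)}$ with $u(h)\ge0$ and $u(h)\to0$ as $h\to0$ (dominated convergence, the dominating function being a multiple of $\min(1,x^2)$). Since $1-e^{-u}\ge u/2$ for small $u$, for $h$ near $0$ one gets $\tfrac{2}{h^2}\bigl(1-|\varphi_Y(h)|^2\bigr)\ge\sigma^2+\int_{\mathbb{R}}\tfrac{2(1-\cos hx)}{h^2}\nu(dx)$. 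The left side converges to $\mathbb{E}(Y-Y')^2=2\mathbb{D}Y$ (with $Y'$ an independent copy of $Y$), while Fatou's lemma for the nonnegative integrand on the right, using $\tfrac{2(1-\cos hx)}{h^2}\to x^2$, gives $\liminf$ at least $\int_{\mathbb{R}}x^2\nu(dx)$; hence $\int_{\mathbb{R}}x^2\nu(dx)\le 2\mathbb{D}Y-\sigma^2<\infty$. Alternatively, one may simply cite this standard fact from~\cite{GikhmanSkorohod1, SatoBook1999}.
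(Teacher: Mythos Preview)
The paper does not prove this theorem at all: both Theorem~\ref{Levy-Hinchin} and Theorem~\ref{char funct 2 for i.d.r.v.} are stated with the remark ``которые мы приводим без доказательства'' and a reference to~\cite[гл.~I, \S~3]{GikhmanSkorohod1}. So there is nothing to compare your argument against in the paper itself.

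Your proposal is correct and is the standard way to pass between the three canonical forms of the Lévy--Khinchin representation. The triplet bijection $\bar\nu(dx)=\tfrac{x^2}{1+x^2}\nu(dx)$, $\nu(dx)=\tfrac{1+x^2}{x^2}\bar\nu(dx)$ is exactly the change that converts~\eqref{eq:LeviKhinchinFormula} into~\eqref{eq:char funct 2 for i.d.r.v.}; your integrability checks for the drift correction are accurate, and uniqueness is inherited because the bijection is explicit and invertible. For the finite-second-moment refinement, the only nontrivial step is indeed $\mathbb{E}Y^2<\infty\Rightarrow\int_{\mathbb{R}}x^2\,\nu(dx)<\infty$, and your symmetrization/Fatou argument is clean and correct (the bound $1-e^{-u}\ge u/2$ for small $u$ and the limit $\tfrac{2(1-\cos hx)}{h^2}\to x^2$ are exactly what is needed). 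The passage from~\eqref{eq:char funct 2 for i.d.r.v.} to~\eqref{eq:char funct 3 for i.d.r.v.} via $\tilde\nu(dx)=x^2\nu(dx)=(1+x^2)\bar\nu(dx)$ and the drift shift $\int\tfrac{x^3}{1+x^2}\nu(dx)$ (controlled by $\tfrac{|x|^3}{1+x^2}\le\tfrac12 x^2$) is also correct. In short, you have supplied a complete proof where the paper only gives a citation; this derivation-from-\eqref{eq:LeviKhinchinFormula} approach is in fact how~\cite{GikhmanSkorohod1} organizes the argument as well.
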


\begin{theorem}\label{th:SuffCondIDRV}
\textit{Пусть для каждого $n\ge1$ набор $\{X_{kn}\}_{k=1}^n$ состоит из независимых одинаково распределенных случайных величин и, кроме того, $$\sum_{k=1}^{n}X_{kn} \xrightarrow[n \to \infty]{d} X.$$ Тогда $X$~-- безгранично делимая случайная величина.}
\end{theorem}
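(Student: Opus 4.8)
The plan is to prove infinite divisibility of $X$ directly from the definition: for every fixed integer $m\ge 1$ I will exhibit $X$ as a sum of $m$ independent identically distributed random variables. Write $S_n=\sum_{k=1}^{n}X_{kn}$, so that $\varphi_{S_n}\to\varphi_X$ pointwise and, the limit being a characteristic function, uniformly on every bounded interval; in particular $S_{mk}\xrightarrow[k\to\infty]{d}X$. Fix $m$ and for each $k$ partition the $mk$ i.i.d.\ summands of the $(mk)$-th row into $m$ consecutive blocks of length $k$. The block sums $Y_{1,k},\dots,Y_{m,k}$ are i.i.d., $S_{mk}=Y_{1,k}+\dots+Y_{m,k}$, and with $h_k(s):=\varphi_{Y_{1,k}}(s)=\bigl(\varphi_{X_{1,mk}}(s)\bigr)^{k}$ one has $h_k^{m}=\varphi_{S_{mk}}\to\varphi_X$ pointwise and uniformly on compacts.

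The first --- and crucial --- step is to show that $\varphi_X$ has no zeros. Let $g_k:=|h_k|^2$, the characteristic function of the symmetrization $Y_{1,k}-Y_{1,k}'$; it is real with values in $[0,1]$, and $g_k^{m}=|\varphi_{S_{mk}}|^2\to|\varphi_X|^2=:Q$. Since $t\mapsto t^{1/m}$ is continuous on $[0,1]$, $g_k\to Q^{1/m}$ pointwise; as $Q$ is continuous with $Q(0)=1$, the limit $Q^{1/m}$ is continuous with value $1$ at the origin, so by the continuity theorem for characteristic functions $Q^{1/m}$ is itself a characteristic function. This holds for every $m\ge 1$, hence $Q=|\varphi_X|^2$ is infinitely divisible, and an infinitely divisible characteristic function never vanishes: indeed its $n$-th root $\psi_n$ satisfies $|\psi_n|^2=|Q|^{2/n}$, a characteristic function converging pointwise to $\mathsf{I}(Q\ne 0)$, which is continuous at $0$ with value $1$ there and therefore, being a characteristic function that is $\{0,1\}$-valued and continuous on the connected set $\mathbb{R}$, is identically $1$. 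Thus $Q$, and with it $\varphi_X$, never vanishes.

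Knowing $\varphi_X\ne 0$, let $L$ be the unique continuous logarithm of $\varphi_X$ with $L(0)=0$ and set $\rho:=\exp\!\bigl(\tfrac1m L\bigr)$, the continuous $m$-th root of $\varphi_X$ normalized by $\rho(0)=1$. I would next show $h_k\to\rho$ uniformly on each interval $K=[-A,A]$. On $K$, for $k$ large $h_k^{m}$ is uniformly close to $\varphi_X$ and bounded away from $0$, hence so is $h_k$; therefore $w_k:=h_k/\rho$ is well defined, continuous, $w_k(0)=1$, and $w_k^{m}=h_k^{m}/\varphi_X\to 1$ uniformly on $K$. This forces $w_k(s)$, for each $s$, to lie in a small neighbourhood of some $m$-th root of unity; by continuity of $w_k$, the normalization $w_k(0)=1$, and connectedness of $K$, that root is $1$ throughout, so $w_k\to 1$ uniformly and $h_k\to\rho$. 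Each $h_k$ is a characteristic function and $\rho$ is continuous with $\rho(0)=1$, so the continuity theorem gives $\rho=\varphi_{\mu_m}$ for some probability measure $\mu_m$. Then $\varphi_X=\rho^{m}$ means $X\overset{d}{=}Z_1+\dots+Z_m$ with $Z_1,\dots,Z_m$ i.i.d.\ of law $\mu_m$; since $m$ was arbitrary, $X$ is infinitely divisible.

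I expect the non-vanishing of $\varphi_X$ to be the real obstacle. Once it is established, the choice of a coherent branch of the $m$-th root and the branch-tracking argument for $h_k\to\rho$ are routine, whereas without it the $h_k$ could, in principle, converge to different $m$-th roots of $\varphi_X$ at different points (a zero of $\varphi_X$ being able to force a change of branch), and no limiting $m$-th-root characteristic function need exist.
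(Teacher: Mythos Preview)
The paper does not prove this theorem; it is stated without proof (the surrounding discussion refers the reader to Гихман--Скороход for the representation theorems, and no argument is supplied for this statement either). So there is nothing in the paper to compare your proposal against.

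Your argument is correct and is the standard textbook route. The subsequence trick $n=mk$ together with the block decomposition produces, for each fixed $m$, a family of candidate $m$-th roots $h_k$ of $\varphi_{S_{mk}}$; the symmetrization step cleanly reduces the non-vanishing of $\varphi_X$ to the easier real-nonnegative case, and your argument that an infinitely divisible characteristic function cannot vanish (via $Q^{1/n}\to\mathsf{I}(Q\ne 0)$ and the continuity theorem) is the usual one. The branch-tracking for $w_k=h_k/\rho$ is also carried out correctly: connectedness of $K$, continuity of $w_k$, and the normalization $w_k(0)=1$ pin down the correct $m$-th root of unity uniformly once $w_k^{m}$ is close to $1$. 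One minor remark: in the non-vanishing step you write $|\psi_n|^2=|Q|^{2/n}$, but since your $\psi_n=Q^{1/n}$ is already real and nonnegative you could work with $\psi_n$ directly rather than its square; this is cosmetic and does not affect the validity of the argument.
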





Рассмотрим важнейшие примеры безгранично делимых случайных величин.

\begin{example}
Пусть случайная величина $X$ принимает одно и то же значение $m$ при любом исходе, т.е. ${X \equiv m}$. По определению, это безгранично делимая случайная величина, т.к. для любого ${n\ge1}$ ее можно представить как сумму $n$ чисел $m/n$, которые являются независимыми и одинаково распределенными случайными величинами.
	
Случайная величина $X$ является безгранично делимой и согласно теореме Леви--Хинчина. Действительно, ее характеристическая функция $$\varphi_X(s)=e^{ims}$$ имеет вид~\eqref{eq:LeviKhinchinFormula} при $b=m$, $\sigma^2=0$ и $\nu(x)=0$. \EndEx
	
\end{example}
    
	
	
\begin{example}
Пусть случайная величина $X$ имеет распределение Пуассона с параметром $\lambda>0$, т.е. $X\in \Po(\lambda)$ и $$\mathbb{P}(X=k)=\frac{\lambda^k}{k!}e^{-\lambda}, \ k=0,1,2,\dots$$

Покажем, что эта случайная величина является безгранично делимой по определению. Для каждого $n\ge1$ рассмотрим набор $\{X_{kn}\}_{k=1}^n$ независимых одинаково распределенных случайных величин, имеющих распределение $\Po(\lambda/n)$. Тогда, как известно, $$\sum\limits_{k=1}^n X_{kn}\in\Po\left(\sum\limits_{k=1}^n \lambda/n\right)=\Po(\lambda),$$ значит, $X\overset{d}{=}\sum_{k=1}^n X_{kn}$, т.е. $X\in\Po(\lambda)$ -- безгранично делимая случайная величина.
	
	
Легко видеть, что характеристическая функция случайной величины $X$ имеет вид~\eqref{eq:LeviKhinchinFormula}, где ${b=0}$, ${\sigma^2=0}$, ${\nu(x)=\lambda\delta(x-1)}$, и ${\delta(x)}$ -- дельта-функция Дирака. \EndEx
\end{example}

\begin{example}\label{ex:normal_inf_div}
Покажем теперь, что случайная величина, имеющая стандартное нормальное распределение ${X \in \mathrm{N}(0,1)}$, т.е. непрерывная случайная величина с плотностью распределения 
$$f(x)=\frac{1}{\sqrt{2\pi}}e^{-x^2/2}, \ x\in\mathbb{R},$$
тоже является безгранично делимой. Для этого рассмотрим набор независимых одинаково распределенных по закону
$\mathrm{N}(0,1/n)$ случайных величин $\{X_{kn}\}_{k=1}^n$. 
Как известно, 
$$
\sum\limits_{k=1}^n X_{kn} \in \mathrm{N} \left(0,\sum\limits_{k=1}^n 1/n \right)=\mathrm{N}(0,1),
$$ 
значит, 
$X\overset{d}{=}\sum_{k=1}^n X_{kn}$, т.е. ${X \in \mathrm{N}(0,1)}$ -- безгранично делимая случайная величина. 

	
Свойство безграничной делимости можно было бы установить и следуя теореме Леви--Хинчина, заметив, что характеристическая функция $X$ имеет вид~\eqref{eq:LeviKhinchinFormula} при $b=0$, $\sigma^2=1$, $\nu(x)=0$. \EndEx
\end{example}

Безгранично делимыми случайными величинами являются также случайные величины с гамма-распределением, отрицательным биномиальным распределением, геометрическим распределением и многие другие.  Случайные величины с  биномиальным~$\Bi(n,p)$, равномерным~$\mathrm{R}(a,b)$ и всяким другим невырожденным распределением с ограниченным носителем не являются безгранично делимыми. 

Теперь дадим некоторые пояснения вышеприведенным теоремам. Теорема~\ref{th:SuffCondIDRV} утверждает, что для сумм независимых одинаково распределенных случайных величин предельным может быть только безгранично делимое распределение. С другой стороны, теоремы~\ref{Levy-Hinchin}  и~\ref{char funct 2 for i.d.r.v.} предоставляют явный вид характеристической функции произвольного безгранично делимого распределения. Значит, предельное распределение может иметь характеристическую функцию только вида~\eqref{eq:LeviKhinchinFormula} и характеризуется тремя параметрами: $b$, $\sigma^2$ и $\nu(x)$ либо эквивалентного вида \eqref{eq:char funct 2 for i.d.r.v.}  с преобразованным триплетом: $\bar b$, $\sigma^2$ и $\bar\nu(x)$ (отметим еще раз, что еще один вид характеристической функции, данный в формуле \eqref{eq:char funct 3 for i.d.r.v.}, является уточнением \eqref{eq:char funct 2 for i.d.r.v.} при условии конечности второго момента). Теперь рассмотрим, например, представление Леви--Хинчина. Тогда при $b\ne0$, ${\sigma^2=0}$, ${\nu(x)=0}$ предельное распределение представляет собой распределение постоянной случайной величины, константы. При ${b\in\mathbb{R}}$, ${\sigma^2 > 0}$ и ${\nu(x)=0}$ получается нормальное распределение с математическим ожиданием $b$ и дисперсией $\sigma^2$. Наконец, при ${b=0}$, ${\sigma^2=0}$ и ${\nu(x)=\lambda\delta(x-1)}$ получается распределение Пуассона с параметром $\lambda$, а при ${b=\int_{-1}^{1}x\,\nu(dx)}$, ${\sigma^2=0}$ и произвольной мере $\nu(x)$ (равной какой-либо вероятностной мере с точностью до множителя), получается сложное пуассоновское распределение. \eduard{В некотором смысле, любое другое предельное распределение, отвечающее какому-то набору $(b,\sigma^2,\nu(x))$, <<складывается>> из двух <<базисных>> распределений: нормального распределения $\mathrm{N}(b,\sigma^2)$ и сложного распределения Пуассона $\mathrm{CPo}(\nu)$ или предела этих распределений.}

Чрезвычайно интересно отметить, что вообще \textit{класс безгранично делимых распределений совпадает с классом пределов последовательностей сложных пуассоновских распределений}~\cite[Том 2, Глава IX, §5, Теорема 2]{Feller}. Даже нормальное распределение является пределом последовательности сложных пуассоновских распределений. За подробной информацией о безгранично делимых случайных величинах и их свойствах мы также отправляем читателя к монографии~\cite{SatoBook1999}.

Нам важно отметить, что при весьма общих предположениях о случайных последовательностях их суммы сходятся к одному из трех вышеописанных распределений.
Для того, чтобы это показать, удобно использовать так называемую \textit{теорему непрерывности}. Эта теорема связывает сходимость по распределению и поточечную сходимость характеристических функций. Так как эта теорема является мощным инструментом для исследования свойств сходимости случайных последовательностей, мы приводим ее формулировку здесь.

\begin{theorem}[ (непрерывности~\cite{ShiryaevT1})]\label{th:ContTheorem}
\textit{Последовательность случайных величин $\{X_k\}_{k=1}^{\infty}$ сходится по распределению к случайной величине $X$ тогда и только тогда, когда в каждой точке $s\in\mathbb{R}$ $$\varphi_{X_{k}}(s) \to \varphi_X(s), \ k\to\infty,$$ где $\varphi_{X_{k}}(s)$ -- характеристическая функция $X_k$, а $\varphi_X(s)$ -- характеристическая функция $X$.}
\end{theorem}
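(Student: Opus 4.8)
\emph{Необходимость} почти очевидна. Пусть $X_k\overset{d}{\longrightarrow}X$. При каждом фиксированном $s\in\mathbb{R}$ функции $x\mapsto\cos(sx)$ и $x\mapsto\sin(sx)$ ограничены и непрерывны на $\mathbb{R}$, поэтому по определению сходимости по распределению $\mathbb{E}\cos(sX_k)\to\mathbb{E}\cos(sX)$ и $\mathbb{E}\sin(sX_k)\to\mathbb{E}\sin(sX)$. Складывая, получаем $\varphi_{X_k}(s)=\mathbb{E}e^{isX_k}\to\mathbb{E}e^{isX}=\varphi_X(s)$ в каждой точке $s$.

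\emph{Достаточность} требует больше работы. Ключевой шаг --- установить равномерную плотность (tightness) семейства распределений $\{X_k\}$. Для этого используется неравенство усечения: интегрируя тождество $\tfrac{1}{2T}\int_{-T}^{T}(1-e^{itx})\,dt=1-\tfrac{\sin Tx}{Tx}$ по распределению случайной величины $X_k$ и пользуясь тем, что $1-\tfrac{\sin u}{u}\ge0$ при всех $u$, а $1-\tfrac{\sin u}{u}\ge\tfrac12$ при $|u|\ge2$, получаем
$$\mathbb{P}\left(|X_k|\ge\frac{2}{T}\right)\le\frac{1}{T}\int_{-T}^{T}\bigl(1-\Re\varphi_{X_k}(t)\bigr)\,dt.$$
Поскольку $\varphi_X$ --- характеристическая функция, она непрерывна в нуле и $\varphi_X(0)=1$, поэтому для данного $\varepsilon>0$ можно выбрать $T>0$ так, что $\tfrac{1}{T}\int_{-T}^{T}(1-\Re\varphi_X(t))\,dt<\varepsilon$. Так как $|1-\Re\varphi_{X_k}(t)|\le2$ и $\varphi_{X_k}(t)\to\varphi_X(t)$ поточечно, по теореме о мажорируемой сходимости интеграл в правой части для $X_k$ стремится к соответствующему интегралу для $X$; значит при всех достаточно больших $k$ он меньше $2\varepsilon$. Для оставшегося конечного набора индексов каждая величина $X_k$ сама по себе удовлетворяет $\mathbb{P}(|X_k|\ge M)\to0$ при $M\to\infty$, так что, увеличив при необходимости порог усечения, заключаем, что семейство $\{X_k\}$ равномерно плотно.

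Дальше применяется стандартная схема. По теореме Хелли (эквивалентно, теореме Прохорова) из любой подпоследовательности последовательности $\{X_k\}$ можно выделить подподпоследовательность $X_{k_j}$, сходящуюся по распределению к некоторой случайной величине $Y$. По уже доказанной необходимости $\varphi_{X_{k_j}}(s)\to\varphi_Y(s)$; с другой стороны, по условию $\varphi_{X_{k_j}}(s)\to\varphi_X(s)$, поэтому $\varphi_Y\equiv\varphi_X$. Так как характеристическая функция однозначно определяет распределение (см., например,~\cite{Gnedenko}), имеем $Y\overset{d}{=}X$. Итак, каждая подпоследовательность последовательности $\{X_k\}$ содержит подподпоследовательность, сходящуюся по распределению к $X$; по стандартному принципу подпоследовательностей отсюда следует $X_k\overset{d}{\longrightarrow}X$.

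Основным препятствием здесь является именно доказательство равномерной плотности: нужно аккуратно вывести неравенство усечения и проследить за равномерностью оценки хвоста по $k$, разбивая индексы на <<большие>> (где работает оценка через $1-\Re\varphi_{X_k}$) и <<малые>> (конечное число). Остальные шаги --- необходимость, извлечение сходящихся подпоследовательностей и идентификация предела через единственность характеристической функции --- технически рутинны. (Альтернативно достаточность можно доказать прямым фурье-сглаживанием, свёрткой с малым гауссовским ядром и предельным переходом, минуя плотность, но приведённый путь чище.)
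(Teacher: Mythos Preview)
The paper does not prove this theorem; it only states it with a citation to Shiryaev~\cite{ShiryaevT1} and then uses it as a tool. Your proposal is a correct and standard proof (essentially the one in Shiryaev): necessity via bounded continuous test functions, sufficiency via the truncation inequality to get tightness, then Helly/Prokhorov plus the uniqueness theorem for characteristic functions and the subsequence principle. Nothing to correct.
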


Рассмотрим произвольную последовательность независимых одинаково распределенных случайных величин $\{X_k\}_{k=1}^\infty$ с математическим ожиданием $m$. Характеристическую функцию случайной величины $X_k$ обозначим за ${\varphi_{X_k}(t)=\mathbb{E}\exp(i t X_k)}$. Так как случайные величины независимы, то характеристическая функция суммы ${S=\sum_{k=1}^n X_k}$ будет равна произведению характеристических функций слагаемых, а так как распределения равные, то -- ${\varphi_S(t)=(\varphi_{X_1}(t))^n}$. Напомним, что для произвольной случайной величины $\xi$ есть равенство $\varphi_{a\xi}(t)=$\linebreak $=\varphi_{\xi}(at)$, $\forall a\in\mathbb{R}$. Далее получаем, что для каждого $t\in\mathbb{R}$
$$\varphi_{S/n}(t)=\varphi_S(t/n)=(\varphi_{X_1}(t/n))^n=\left( 1 + itm/n + o(1/n) \right)^n, \ n\to\infty,$$ откуда получаем
$$\varphi_{S/n}(t) \to \exp(itm), \ n\to\infty.$$
Так как $\exp(itm)$ -- это характеристическая функция случайной величины $X\equiv m$, то по теореме непрерывности получаем, что $1/n\sum_{k=1}^n X_k$ сходится по распределению к константе $m$. Этот факт известен в литературе как \textit{закон больших чисел по Хинчину}.

Рассмотрим теперь последовательность $\{X_{kn}\}_{k=1}^n$ независимых и одинаково распределенных случайных величин ${X_{kn}\in\mathrm{Be}(p_n)}$, причем $p_n\to 0$, $np_n \to \lambda > 0$.
$$
\varphi_{X_{kn}}(s) = 1 + p_n\left(e^{is}-1\right).
$$
Пусть ${S_n = \sum_{k=1}^{n}X_{kn}}$. Тогда
$$
\varphi_{S_n}(s) = \left(\varphi_{X_{1n}}(s)\right)^n = \left(1+p_n(e^{is}-1)\right)^n \to e^{\lambda(e^{is}-1)}, \ n\to \infty.
$$
Это значит, что $S_n$ сходится по распределению к распределению Пуассона с параметром $\lambda$. Этот факт известен в литературе как \textit{теорема Пуассона}.

Рассмотрим, наконец, последовательность независимых и одинаково распределенных случайных величин $\{X_k\}_{k=1}^n$ с конечным вторым моментом $\mathbb{E}X_k^2 < \infty$. Обозначим математическое ожидание ${m=\mathbb{E}X_1}$, а дисперсию  ${\sigma^2=\mathbb{D}X_1}$. Рассмотрим случайную последовательность $$\eta_n = \frac{1}{\sqrt{\sigma^2/n}}\left(\sum_{k=1}^n X_k/n - m\right) = \sum\limits_{k=1}^n \left( X_k \cdot \frac{1}{\sigma \sqrt{n}} - \frac{m}{\sigma\sqrt{n}} \right).$$ Все коэффициенты здесь подобраны так, чтобы $\mathbb{E}\eta_n=0$, а $\mathbb{D}\eta_n = 1.$ Отсюда можно получить, что $$\varphi_{\eta_n}(t) = \left( 1 - \frac{t^2}{2n} + o\left(\frac{1}{n}\right)\right)^n, \ n\to\infty,$$ т.е. $\varphi_{\eta_n}(t)\to\exp(-t^2/2)$, $n\to\infty$, что означает сходимость к стандартному нормальному распределению. Этот факт в литературе известен как \textit{центральная предельная теорема}.




\subsection{Процессы Леви}
\label{subsec:LevyProcess}

Рассмотрим класс случайных процессов $\{X(t), t \ge 0\}$ с независимыми стационарными приращениями (у которых распределение приращения $X(t+s) - X(t)$ не зависит от $t$) и выходящими из нуля, то есть ${X(0) = 0}$ п.н.  Пусть $F_t$ -- распределение $X(t)$. Тогда для любых $t,s > 0$
$$
F_{t+s} = F_t \star F_s,
$$
где $\star$ означает свертку распределений. В частности, для любого $t>0$ и $n$ получается
$F_t = (F_{t/n})^{n \star}$, т.е. $F_t$ -- распределение безгранично делимой случайной величины (безгранично делимое распределение). Таким образом, для описания случайного процесса, выходящего из нуля, с независимыми стационарными приращениями требуется безгранично делимое распределение $F_t$. Кстати говоря, если отказаться от условия $X(0) = 0$, то для задания процесса дополнительно потребуется распределение $X(0)$.

Справедливо и обратное утверждение: всякое безгранично делимое распределение с характеристической функцией вида $e^{t \psi}$, где $\psi$ не зависит от $t$, определяет случайный процесс, выходящий из нуля, с независимыми стационарными приращениями. Этот результат можно обобщить на случай нестационарных приращений потребовав взамен стохастическую непрерывность: для всех $t \ge 0$ $X(t+h) - X(t)$ стремится по вероятности к нулю при $h\to 0$~\cite{Feller}.

В связи с описанным выше введем важный класс процессов -- процессов Леви.

\begin{definition}
Случайный процесс $\{X(t), t \ge 0\}$ будем называть \textit{процессом Леви}, если
\begin{enumerate}[topsep=3pt,itemsep=0pt]
	\item $X(0)=0$ почти всюду,
	\item $X(t)$ -- процесс с независимыми приращениями,
	\item для любых $t,s \ge 0$ случайная величина $X(t+s) - X(t)$ имеет распределение, не зависящее от $t$ (стационарность приращений),
	\item $X(t)$~-- стохастически непрерывный процесс, т.е. $$\forall t\ge0 \  \ X(t+\varepsilon) \xrightarrow[\varepsilon\to 0]{\PR} X(t).$$
	\end{enumerate}
\end{definition}

Иногда в определении процесса Леви дополнительно требуют, чтобы с вероятностью единица траектории процесса Леви были непрерывны справа при ${t\ge 0}$ и имели конечный предел слева при ${t>0}$ (c\`adl\`ag функции от французского термина continue \`a  droite, limite \`a gauche, или согласно английской терминологии rcll -- right continuous, left-hand limits). На самом деле это дополнение несущественно, так как можно показать, что всегда существует модификация процесса Леви с данным условием.

Отметим, что винеровский, пуассоновский, сложный пуассоновский процессы являются процессами Леви. Можно даже ввести следующие эквивалентные определения.
\begin{theorem}
\label{th:PP-LP}
\textit{Случайный процесс $\{K(t), t \ge 0\}$ является пуассоновским процессом с параметром  $\lambda >0$, если}
\begin{itemize}
    \item $K(t)$ -- \textit{процесс Леви},
    \item \textit{для любого $t>0$ сечение $K(t)$ имеет распределение $\mathrm{Po}(\lambda t)$}.
\end{itemize}
\end{theorem}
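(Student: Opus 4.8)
The plan is to establish this as an equivalence of definitions, arguing the two implications separately. First I would dispatch the easy direction: if $\{K(t)\}$ is a Poisson process in the sense of the original definition, then conditions 1 and 2 of the Lévy-process definition hold verbatim, stationarity of increments (condition 3) is immediate because $K(t+s)-K(t)\sim\mathrm{Po}(\lambda s)$ depends only on $s$, and $K(t)=K(t)-K(0)\sim\mathrm{Po}(\lambda t)$ gives the second bullet. The only genuinely new point is stochastic continuity (condition 4): since $K(t+\varepsilon)-K(t)\sim\mathrm{Po}(\lambda|\varepsilon|)$ for small $\varepsilon$ (using $K(t)-K(t+\varepsilon)$ when $\varepsilon<0$), for any $\delta>0$ one has $\mathbb{P}(|K(t+\varepsilon)-K(t)|>\delta)\le\mathbb{P}(|K(t+\varepsilon)-K(t)|\ge1)=1-e^{-\lambda|\varepsilon|}\to0$ as $\varepsilon\to0$, so the process is a Lévy process.

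For the substantive direction I would assume $\{K(t)\}$ is a Lévy process with $K(t)\sim\mathrm{Po}(\lambda t)$ for every $t>0$ and verify the three axioms of a Poisson process. Axioms 1 (start at $0$ a.s.) and 2 (independent increments) are part of the Lévy-process definition, so the whole content is axiom 3: for $t>s\ge0$, $K(t)-K(s)\sim\mathrm{Po}(\lambda(t-s))$. Here I would invoke stationarity of increments: the law of $K(t)-K(s)$ equals the law of $K(t-s)-K(0)=K(t-s)$, and by hypothesis $K(t-s)\sim\mathrm{Po}(\lambda(t-s))$. When $s=0$ this reduces to the hypothesis together with $K(0)=0$ a.s. That completes the converse.

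I do not expect a serious obstacle; the argument is essentially a matching of axioms plus the one-line Poisson-tail estimate $1-e^{-\lambda|\varepsilon|}\to0$. The only points deserving a word of care are: checking stochastic continuity from both sides of $t$ (for $t=0$ only the right side is meaningful, which is consistent with the index set $t\ge0$); and noting that stationarity of increments is exactly what converts the single hypothesis ``$K(t)\sim\mathrm{Po}(\lambda t)$ for all $t$'' into the two-point statement about $K(t)-K(s)$. Alternatively the converse could be phrased through characteristic functions, $\varphi_{K(t)-K(s)}(u)=\varphi_{K(t-s)}(u)=\exp(\lambda(t-s)(e^{iu}-1))$, using uniqueness of the characteristic function, but the direct distributional argument via stationarity is shorter and self-contained.
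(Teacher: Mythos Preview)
Your argument is correct. The paper, however, does not give a self-contained proof of this particular theorem: it lists it (together with the analogous statements for the compound Poisson and Wiener processes) as an ``equivalent definition'' and then, a few paragraphs later, proves the general structural result that any two L\'evy processes whose sections at $t=1$ have the same law must have identical finite-dimensional distributions (point~3 of the subsequent theorem, reinforced by Замечание~1). In the paper's intended logic, Theorem~\ref{th:PP-LP} is a corollary of that uniqueness principle: since the standard Poisson process is itself a L\'evy process with $K(1)\sim\mathrm{Po}(\lambda)$, any other L\'evy process with the same marginal at $t=1$ must coincide with it in law.

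Your route is more elementary and self-contained: you bypass the general uniqueness theorem entirely and simply read off each Poisson axiom from the L\'evy axioms, the key step being that stationarity of increments gives $K(t)-K(s)\overset{d}{=}K(t-s)\sim\mathrm{Po}(\lambda(t-s))$ directly. This is shorter and requires no appeal to infinite divisibility or characteristic functions. The paper's approach, on the other hand, explains \emph{why} such statements are automatic for L\'evy processes in general (the law of $X(1)$ determines everything), which is conceptually valuable but overkill for this specific case. Both are valid; yours is the natural direct proof, the paper's is the structural one.
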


\begin{theorem}
\textit{Случайный процесс ${\{Q(t), t \ge 0\}}$ является сложным пуассоновским процессом $\sum_{i=1}^{K(t)}V_i$, где $K(t)$ -- пуассоновский процесс интенсивности $\lambda>0$ и $V_i$ определяются вероятностной мерой $\mathbb{P}_{V_1}$, если}
\begin{itemize}
    \item $Q(t)$ -- \textit{процесс Леви},
    \item \textit{для любого $t>0$ сечение $Q(t)$ имеет распределение $\mathrm{CPoiss}(\lambda t \mathbb{P}_{V_1})$}.
\end{itemize}
\end{theorem}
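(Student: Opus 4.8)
Утверждение -- критерий вида <<тогда и только тогда>>, так что план состоит в том, чтобы доказать обе импликации, причём в обоих направлениях основная работа фактически уже проделана выше в этом разделе. Главная идея: сложный пуассоновский процесс уже опознан как процесс Леви с явно вычисленной характеристической функцией сечения, а процесс Леви однозначно (в смысле семейства конечномерных распределений) восстанавливается по одномерным распределениям своих сечений.

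Для необходимости я бы исходил из того, что если $Q(t)=\sum_{i=1}^{K(t)}V_i$ -- сложный пуассоновский процесс, то $Q(0)=0$ п.н., независимость приращений и их стационарность уже установлены выше (см., в частности, \eqref{char funct Q(t1) - Q(t2)}). Останется только проверить стохастическую непрерывность: по формуле~\eqref{char funct Q(t1) - Q(t2)} характеристическая функция приращения $Q(t+\varepsilon)-Q(t)$ равна $\exp\{-\lambda\varepsilon(1-\varphi_{V_1}(s))\}$ и при $\varepsilon\to0$ поточечно стремится к $1$; по теореме непрерывности~\ref{th:ContTheorem} отсюда $Q(t+\varepsilon)-Q(t)\overset{d}{\longrightarrow}0$, а сходимость по распределению к постоянной эквивалентна сходимости по вероятности. Значит, $Q$ -- процесс Леви. После этого остаётся сослаться на формулу~\eqref{char funct Q}: сечение $Q(t)$ имеет характеристическую функцию $\exp\{\lambda t(\varphi_{V_1}(s)-1)\}$, совпадающую с характеристической функцией сложного пуассоновского распределения из определения~\ref{def:complex_poisson_variable} с параметром $\lambda t$ и законом скачков $\mathbb{P}_{V_1}$; поскольку характеристическая функция однозначно задаёт распределение, $Q(t)\in\mathrm{CPoiss}(\lambda t\,\mathbb{P}_{V_1})$.

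Для достаточности я бы рассуждал так: пусть $\{Q(t),t\ge0\}$ -- процесс Леви и $Q(t)\in\mathrm{CPoiss}(\lambda t\,\mathbb{P}_{V_1})$ при каждом $t>0$. Фиксируем $0=t_0<t_1<\dots<t_n$; приращения $Q(t_1)-Q(t_0),\dots,Q(t_n)-Q(t_{n-1})$ независимы, а в силу стационарности $Q(t_k)-Q(t_{k-1})\overset{d}{=}Q(t_k-t_{k-1})\in\mathrm{CPoiss}(\lambda(t_k-t_{k-1})\,\mathbb{P}_{V_1})$; стало быть, совместное распределение вектора приращений есть произведение этих сложных пуассоновских распределений, а так как $(Q(t_1),\dots,Q(t_n))$ получается из вектора приращений невырожденным линейным преобразованием с матрицей $A$ из~\eqref{eq:AFormulaGauss}, всё семейство конечномерных распределений процесса $Q$ определяется лишь $\lambda$ и $\mathbb{P}_{V_1}$. Далее я бы взял эталонный сложный пуассоновский процесс $\widetilde Q(t)=\sum_{i=1}^{\widetilde K(t)}\widetilde V_i$ (его существование обеспечивает конструкция~\eqref{PP} для пуассоновского процесса $\widetilde K$ интенсивности $\lambda$ плюс независимая от него последовательность независимых одинаково распределённых $\widetilde V_i$ с законом $\mathbb{P}_{V_1}$): по уже доказанной части $\widetilde Q$ -- процесс Леви с $\widetilde Q(t)\in\mathrm{CPoiss}(\lambda t\,\mathbb{P}_{V_1})$, так что те же выкладки дают совпадение конечномерных распределений $Q$ и $\widetilde Q$. Отсюда $Q$ -- сложный пуассоновский процесс в принятом в книге смысле (совпадение семейств конечномерных распределений); при желании, перейдя к c\`adl\`ag-модификации, получаем процесс буквально вида $\sum_{i=1}^{K(t)}V_i$.

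Главным препятствием я ожидаю именно шаг достаточности -- обоснование того, что структура процесса Леви вместе с одномерными распределениями сечений полностью восстанавливает все конечномерные распределения; здесь существенны независимость и стационарность приращений и единственность распределения по его характеристической функции. Попутно стоит аккуратно оговорить два момента: <<$Q$ -- сложный пуассоновский процесс>> понимается с точностью до эквивалентности процессов (та же оговорка, что и в теореме~\ref{th:PP-LP}), и пара $(\lambda,\mathbb{P}_{V_1})$ вообще говоря не восстанавливается по распределению однозначно при $\mathbb{P}_{V_1}(\{0\})>0$ -- но это несущественно, так как представление фиксировано в формулировке.
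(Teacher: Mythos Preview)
Ваш план корректен и по существу совпадает с тем, как эта теорема обосновывается в тексте пособия: там она приведена без отдельного доказательства как <<эквивалентное определение>>, но сразу после идёт общая теорема о том, что конечномерные распределения процесса Леви однозначно восстанавливаются по распределению $X(1)$ (пункт~3) и Замечание~1 с явными формулами для характеристических функций векторов приращений и сечений, --- именно эту логику вы и воспроизводите в обеих импликациях. Никаких пробелов нет; отмеченные вами оговорки (эквивалентность с точностью до модификации, неединственность пары $(\lambda,\mathbb{P}_{V_1})$ при атоме в нуле) уместны и согласуются с трактовкой пособия.
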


\begin{theorem}
Случайный процесс ${\{W(t), t \ge 0\}}$ является \ag{\textit{винеровским процессом}}, если
\begin{itemize}
    \item $W(t)$ -- \textit{процесс Леви},
    \item \textit{для любого $t>0$ сечение $W(t)$ имеет распределение $\mathrm{N} (0, t)$}.
\end{itemize}
\end{theorem}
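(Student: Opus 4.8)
The plan is to establish the two implications separately, leaning on Definition~\ref{def:WienerDef1} and on the mean/correlation characterization of the Wiener process proved just above.

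First I would treat the direct implication. Suppose $\{W(t),t\ge0\}$ is a Wiener process with parameter $\sigma=1$, i.e.\ it satisfies Definition~\ref{def:WienerDef1}. Conditions~1 and~2 of the definition of a Lévy process ($W(0)=0$ a.s.\ and independent increments) are literally conditions~1 and~2 of Definition~\ref{def:WienerDef1}. Stationarity of increments is immediate from condition~3 of Definition~\ref{def:WienerDef1}, since $W(t+s)-W(t)\in\mathrm{N}(0,s)$ depends only on $s$. Stochastic continuity follows from Chebyshev's inequality: $\mathbb{P}(|W(t+h)-W(t)|>\varepsilon)\le\mathbb{D}(W(t+h)-W(t))/\varepsilon^2=|h|/\varepsilon^2\to0$ as $h\to0$. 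Hence $W(t)$ is a Lévy process; and $W(t)=W(t)-W(0)\in\mathrm{N}(0,t)$ for every $t>0$, which is the second bullet.

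For the converse, suppose $\{W(t),t\ge0\}$ is a Lévy process with $W(t)\in\mathrm{N}(0,t)$ for all $t>0$. I need to verify the three conditions of Definition~\ref{def:WienerDef1}. Conditions~1 and~2 are part of the Lévy property. For condition~3 fix $t,s\ge0$. If $t>s$, then by stationarity of increments $W(t)-W(s)\overset{d}{=}W(t-s)-W(0)=W(t-s)\in\mathrm{N}(0,t-s)$. If $t<s$, write $W(t)-W(s)=-(W(s)-W(t))$; since the centered normal law is symmetric, $W(t)-W(s)\in\mathrm{N}(0,s-t)$. The case $t=s$ is trivial. Thus $W(t)-W(s)\in\mathrm{N}(0,|t-s|)$ in every case, so $W(t)$ is a Wiener process.

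I do not expect a genuine obstacle: once one observes that the Lévy axioms already contain the first two Wiener axioms, and that stationarity of increments reduces the law of an increment to a one-dimensional marginal, the rest is routine; the only points needing a word of care are the appeal to symmetry of $\mathrm{N}(0,\cdot)$ to pass from $t>s$ to $t<s$, and the second-moment bound for stochastic continuity in the direct part. An alternative route to the converse goes through the Gaussian characterization established earlier: a Lévy process has independent increments, so $(W(t_1),\dots,W(t_n))$ is the image under the matrix~\eqref{eq:AFormulaGauss} of the independent increments $W(t_1),W(t_2)-W(t_1),\dots$, each normal by stationarity and the marginal hypothesis; hence the vector is normal, the process is Gaussian with $\mathbb{E}W(t)=0$, and $R_W(t,s)=\min(t,s)$ by the same increment computation — but the increment-distribution argument above is shorter.
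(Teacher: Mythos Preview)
Your proof is correct. The paper, however, does not actually prove this theorem: it is one of three parallel statements (for the Poisson, compound Poisson, and Wiener processes) introduced as ``equivalent definitions'' and left without explicit argument. The intended justification in the paper is the general structural result stated shortly afterwards (point~3 of the theorem on L\'evy processes and infinite divisibility, together with Remark~1 at the end of that section): a L\'evy process is completely determined by the distribution of $X(1)$, so any L\'evy process with $W(1)\in\mathrm{N}(0,1)$ must coincide in finite-dimensional distributions with the Wiener process.

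Your route is different and more elementary: you verify Definition~\ref{def:WienerDef1} directly, using only stationarity of increments to identify the law of $W(t)-W(s)$ with that of $W(|t-s|)$, plus symmetry of the centered normal to handle $t<s$. This avoids appealing to the uniqueness theorem for L\'evy processes and is entirely self-contained. The paper's implicit approach, by contrast, buys generality --- the same one-line argument works for all three characterization theorems at once --- at the cost of invoking a nontrivial structural result whose proof is only sketched. Your alternative route via the Gaussian characterization (the matrix~\eqref{eq:AFormulaGauss}) is also valid and closer in spirit to what the paper does earlier when first proving that the Wiener process is Gaussian.
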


Несложно заметить, что для процессов Леви справедливо функциональное уравнение относительно математического ожидания:
$$\mathbb{E} X(t+s) = \mathbb{E} X(t) + \mathbb{E} X(s)$$
для любых $t,s > 0$. \shmaxg{Это -- частный вид известного в литературе функционального \textit{уравнения Коши} или \textit{уравнения Гамеля}, в общем случае записывающегося как уравнение $$f(x+y)=f(x)+f(y)$$ на функцию $f$. Читателю предлагается доказать, что без каких-либо дополнительных предположений на функцию $f$ в рациональных точках $x$ решения этого уравнения имеют вид ${f(x) = c x}$, где ${c\in\mathbb{Q}}$ -- произвольная постоянная. Если же дополнительно предположить, что функция $f$ непрерывная, то отсюда следует, что решения этого уравнения имеют вид ${f(x)=c x}$, где ${c\in\mathbb{R}}$ -- произвольная постоянная. Поэтому если процесс Леви обладает непрерывным математическим ожиданием, то $\mathbb{E}X(t)= c t$ для некоторой постоянной $c$.}

Если процесс Леви имеет конечные вторые моменты, то аналогичное уравнение можно записать для дисперсии, откуда ${\mathbb{D}X(t) = \sigma^2 t}$. При этом корреляционная функция равна $R_X(t,s) = \sigma^2 \min(t,s)$.


\begin{theorem}
\textit{Справедливо следующее}.

1) \textit{Если $X(t)$, $t\ge 0$ -- процесс Леви, то для любого $t > 0$ сечение $X(t)$ -- безгранично делимая случайная величина}.

2) \textit{Если $F$ -- безгранично делимое распределение, то существует процесс Леви $X(t)$, $t \ge 0$, такой, что $X(1)$ имеет распределение $F$}.

3) \textit{Если $X(t)$, $Y(t)$, $t \ge 0$ -- два процесса Леви таких, что распределения $X(1)$ и $Y(1)$ совпадают, то конечномерные распределения процессов $X(t)$ и $Y(t)$ совпадают}.
\end{theorem}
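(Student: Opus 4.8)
Утверждение 1) получается прямо из определения процесса Леви: при любом $n\ge1$ имеем $X(t)=\sum_{k=1}^n\bigl(X(kt/n)-X((k-1)t/n)\bigr)$, а эти $n$ слагаемых независимы в совокупности (независимость приращений) и одинаково распределены (стационарность приращений); значит, $X(t)$ при каждом $n$ представима суммой $n$ независимых одинаково распределённых величин, то есть безгранично делима.

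Пункт 2) --- это, по существу, уже анонсированное в тексте перед теоремой обратное утверждение, и я распишу его конструкцию. По теореме~\ref{Levy-Hinchin} запишем $\varphi_F(s)=e^{g(s)}$, где $g$ имеет вид~\eqref{eq:LeviKhinchinFormula} с триплетом $(b,\sigma^2,\nu)$. При каждом $t\ge0$ функция $tg$ снова имеет вид~\eqref{eq:LeviKhinchinFormula} с триплетом $(tb,t\sigma^2,t\nu)$ (мера $t\nu$ по-прежнему удовлетворяет условиям теоремы), поэтому $e^{tg}$ есть характеристическая функция некоторого безгранично делимого распределения $F_t$; при этом $F_0=\delta_0$ и $F_{t+s}=F_t\star F_s$, так как $e^{(t+s)g}=e^{tg}e^{sg}$. Зададим семейство конечномерных распределений: при $0\le t_1<\dots<t_n$ распределение вектора $(X(t_1),\dots,X(t_n))$ объявляется образом произведения $F_{t_1}\otimes F_{t_2-t_1}\otimes\dots\otimes F_{t_n-t_{n-1}}$ под действием нижнетреугольной матрицы $A$ из~\eqref{eq:AFormulaGauss}, а для несортированных наборов времён --- образом отсортированного под соответствующей перестановкой координат. Нужно проверить условия согласования а)--е); нетривиальна лишь согласованность при выбрасывании промежуточной точки $t_j$: её выбрасывание сливает два соседних ``приращения'' в одно, сумма которых имеет распределение $F_{t_{j+1}-t_j}\star F_{t_j-t_{j-1}}=F_{t_{j+1}-t_{j-1}}$ --- именно здесь работает полугрупповое свойство. Применяя теорему Колмогорова о существовании процесса по согласованному семейству конечномерных распределений, получаем процесс $\{X(t),t\ge0\}$; по построению $X(0)=0$ п.н., приращения независимы и стационарны, $X(1)\sim F_1=F$. Стохастическая непрерывность: $X(t+h)-X(t)$ имеет распределение $F_{|h|}$, а $\varphi_{F_{|h|}}(s)=e^{|h|g(s)}\to1$ при $h\to0$ в каждой точке $s$ (ибо $g$ конечна, $g(0)=0$), откуда по теореме~\ref{th:ContTheorem} $F_{|h|}$ слабо сходится к $\delta_0$, то есть $X(t+h)\xrightarrow{\PR}X(t)$; итак, $X(t)$ --- процесс Леви.

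Для 3) сначала покажем, что $X(t)\overset{d}{=}Y(t)$ при всех $t\ge0$. По пункту 1) величина $X(1)$ безгранично делима, поэтому $\varphi_{X(1)}$ нигде не обращается в нуль и имеет единственный непрерывный логарифм $g$ с $g(0)=0$. При $t=1/n$ из равенства $X(1)=\sum_{k=1}^n\bigl(X(k/n)-X((k-1)/n)\bigr)$ (сумма $n$ независимых копий $X(1/n)$) следует $\varphi_{X(1)}=(\varphi_{X(1/n)})^n$; беря непрерывные логарифмы (величина $X(1/n)$ тоже безгранично делима) и пользуясь тем, что непрерывная функция со значениями в $2\pi i\mathbb{Z}$, равная нулю при $s=0$, тождественно нулевая, получаем $\varphi_{X(1/n)}=e^{g/n}$, откуда $\varphi_{X(m/n)}=(\varphi_{X(1/n)})^m=e^{(m/n)g}$. Для произвольного $t\ge0$ берём рациональные $t_k\to t$; по стохастической непрерывности $X(t_k)\to X(t)$ по распределению, поэтому $\varphi_{X(t)}(s)=\lim_k e^{t_k g(s)}=e^{tg(s)}$. То же верно для $Y$ с тем же $g$ (поскольку $\varphi_{X(1)}=\varphi_{Y(1)}$), так что $X(t)\overset{d}{=}Y(t)$ для всех $t$. Наконец, при $0\le t_1<\dots<t_n$ положим $t_0=0$ и $v_k=u_k+\dots+u_n$; тогда в силу независимости и стационарности приращений
\[
\mathbb{E}\exp\Bigl(i\textstyle\sum_{k=1}^n u_kX(t_k)\Bigr)=\prod_{k=1}^n\varphi_{X(t_k)-X(t_{k-1})}(v_k)=\prod_{k=1}^n e^{(t_k-t_{k-1})g(v_k)},
\]
и правая часть зависит только от $g$, а значит, совпадает для $X$ и $Y$; следовательно, совпадают и все конечномерные распределения.

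Самым тонким местом я считаю пункт 2): проверку условий согласования Колмогорова (где по существу используется полугрупповое свойство $F_{t+s}=F_t\star F_s$) и стохастической непрерывности построенного процесса. Сопутствующая аккуратность нужна и в пункте 3) из-за многозначности комплексной степени --- равенство $\varphi_{X(t)}=(\varphi_{X(1)})^t$ имеет смысл лишь через единственность непрерывного логарифма безгранично делимой характеристической функции.
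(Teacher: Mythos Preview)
Твой разбор всех трёх пунктов верен. Пункт 1) у тебя и в пособии совпадают дословно.

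В пунктах 2) и 3) ты пошёл несколько иным, более развёрнутым путём, чем текст пособия. Там пункт 2) лишь набросан: сначала по безграничной делимости определяются распределения в рациональных точках (как $n$-е свёрточные корни из $F$ и их степени), а затем по стохастической непрерывности происходит доопределение в иррациональных; на~\cite{SatoBook1999} даётся ссылка за подробностями. Пункт~3) в пособии вообще объявлен следствием пункта~2) без выкладок. Ты же для пункта~2) сразу задействуешь формулу Леви--Хинчина, чтобы для \emph{всех} $t\ge0$ определить $F_t$ через характеристическую функцию $e^{tg}$, и явно проверяешь согласованность семейства конечномерных распределений (где работает полугрупповое свойство $F_{t+s}=F_t\star F_s$) и стохастическую непрерывность. Для пункта~3) ты даёшь самостоятельное доказательство через единственность непрерывного логарифма $g$ безгранично делимой х.ф.\ и переход от рациональных $t$ к произвольным по стохастической непрерывности --- это ровно тот аргумент, который неявно стоит за фразой <<пункт~3 следует из пункта~2>>. Твой подход к пункту~2) чуть менее экономен (требует уже доказанной теоремы Леви--Хинчина, тогда как схема пособия обходится лишь определением безграничной делимости), зато он конструктивнее и сразу даёт $F_t$ при всех вещественных $t$; твоё доказательство пункта~3) содержательнее голой ссылки и аккуратно разбирается с многозначностью комплексной степени.
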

\begin{proof}
Для доказательства первого пункта заметим, что для произвольного натурального $N$ выполнено
$$X(t) - X(0) = \sum\limits_{j=1}^N \left(X\left(\frac{jt}{N}\right) - X\left(\frac{(j-1)t}{N}\right) \right).$$
По определению процесса Леви все слагаемые в выписанной сумме независимы и одинаково распределены. Так как, кроме того, $X(0) = 0$ почти наверное, то по определению получаем, что $X(t)$~--- безгранично делимая случайная величина.

\setlength{\parskip}{0pt}

Далее мы приведём только схему доказательства пункта 2 (подробности см. в~\cite[Chapter 2, \S~7, Theorem 7.10]{SatoBook1999}), пункт~3 же следует из пункта~2. Во-первых, в качестве $X(1)$ достаточно взять случайную величину, имеющую распределение $F$, а в качестве $X(0)$~--- константу $0$. Оказывается, что таким образом мы задали распределения сечений во всех положительных рациональных точках (действительно, достаточно воспользоваться представлением из пункта 1, чтобы задать распределение сечений в рациональных точках из $[0,1]$, а затем воспользоваться тем, что распределение приращения $X(t+s) - X(t)$ зависит только от $s$, чтобы задать распределение сечений в рациональных точках из $[1,2]$, затем из $[2,3]$ и так далее). Пользуясь стохастической непрерывностью, можно доопределить процесс и во всех иррациональных точках. \EndProof
\end{proof}
\setlength{\parskip}{0pt}

Как мы убедились класс процессов Леви тесно связан с классом безгранично делимых случайных величин. Вид характеристических фунций сечений процесса Леви обобщает теоремы~\ref{Levy-Hinchin} и~\ref{char funct 2 for i.d.r.v.} и дается в следующей теореме (с доказательством можно познакомиться, например, в~\cite[гл.~I, \S~3, теорема~4]{GikhmanSkorohod1}).

\begin{theorem}\label{Levy}
\textit{Пусть $X(t)$ -- процесс Леви. Тогда характеристическая функция }
 $\varphi_{X(t + \tau) - X(\ag{t})}(s)=\exp\left(\tau g(s)\right)$,
\textit{где $g(s)$ дается формулой~\eqref{eq:LeviKhinchinFormula} или~\eqref{eq:char funct 2 for i.d.r.v.}. 
Если процесс $X(t)$ является процессом второго порядка, то для $g(s)$ имеет место  представление} \eqref{eq:char funct 3 for i.d.r.v.}.
\end{theorem}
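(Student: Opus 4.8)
The plan is to reduce the statement to two ingredients that are already available: the multiplicative structure of the characteristic functions of the increments, and the Lévy–Khinchin description of infinitely divisible laws (Theorems~\ref{Levy-Hinchin} and~\ref{char funct 2 for i.d.r.v.}).

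First I would fix $s\in\mathbb{R}$ and set $f(\tau)=\varphi_{X(\tau)}(s)=\mathbb{E}e^{isX(\tau)}$ for $\tau\ge0$. By stationarity of increments, $\varphi_{X(t+\tau)-X(t)}(s)$ does not depend on $t$ and equals $f(\tau)$, so it suffices to show $f(\tau)=e^{\tau g(s)}$ with $g(s)$ independent of $\tau$. Writing $X(\tau_1+\tau_2)=\bigl(X(\tau_1+\tau_2)-X(\tau_1)\bigr)+X(\tau_1)$ and using that these two summands are independent (independent increments) and that the first has the same law as $X(\tau_2)$ (stationary increments), one gets the functional equation $f(\tau_1+\tau_2)=f(\tau_1)f(\tau_2)$, together with $f(0)=1$ since $X(0)=0$ a.s. Stochastic continuity gives $X(h)\to0$ in probability as $h\to0+$, hence $f(h)\to1$, and combined with the functional equation this makes $\tau\mapsto f(\tau)$ continuous on $[0,\infty)$.

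Next I would solve this Cauchy-type (multiplicative) functional equation. First, $f$ is zero-free: if $f(\tau_0)=0$ for some $\tau_0>0$, then $f(\tau_0/n)^n=f(\tau_0)=0$ forces $f(\tau_0/n)=0$ for every $n$, contradicting $f(\tau_0/n)\to f(0)=1$. Being a continuous, nonvanishing, multiplicative function on $[0,\infty)$ with $f(0)=1$, $f$ must have the form $f(\tau)=e^{\tau c}$ for a single constant $c$ (apply the real case to $|f|$, and pick a continuous branch of the argument along $[0,\infty)$ using the functional equation); that is, $\varphi_{X(t+\tau)-X(t)}(s)=\exp(\tau g(s))$ with $g(s):=\ln\varphi_{X(1)}(s)$.

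Finally I would identify the form of $g$. From the representation $X(1)=\sum_{j=1}^{n}\bigl(X(j/n)-X((j-1)/n)\bigr)$ with independent, identically distributed summands, $X(1)$ is infinitely divisible; hence by Theorem~\ref{Levy-Hinchin} (resp. Theorem~\ref{char funct 2 for i.d.r.v.}) its log-characteristic function $g(s)$ has the form~\eqref{eq:LeviKhinchinFormula} (resp.~\eqref{eq:char funct 2 for i.d.r.v.}). If in addition $X(t)$ is a second-order process, then $X(1)$ has finite second moment, so the refinement in Theorem~\ref{char funct 2 for i.d.r.v.} applies and $g(s)$ admits the representation~\eqref{eq:char funct 3 for i.d.r.v.}. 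I expect the only genuinely delicate point to be the careful handling of the complex-valued functional equation — showing $f$ is zero-free and selecting a continuous logarithm so that the exponent $g(s)$ is well defined — after which the theorem follows from results already proved.
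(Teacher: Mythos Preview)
Your argument is correct and is the standard route to this result. Note, however, that the paper does not actually prove Theorem~\ref{Levy}: it states the result and refers the reader to \cite[гл.~I, \S~3, теорема~4]{GikhmanSkorohod1} for a proof. So there is no in-paper proof to compare against; your write-up supplies exactly the argument the paper omits. Two minor remarks: the infinite divisibility of $X(1)$ that you invoke at the end has already been proved in the paper (the theorem immediately preceding Theorem~\ref{Levy}), so you may simply cite that rather than re-derive it; and the multiplicative identity $\varphi_{X(t)}(s)=(\varphi_{X(1)}(s))^t$ you obtain is precisely what the paper records, without proof, in Замечание~1 at the end of \S\ref{subsec:LevyProcess}.
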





Приведем несколько частных случаев процессов Леви.

а) Если ${\sigma = 0}$, ${\nu \equiv 0}$, то ${\varphi_{X(t)}(s)=\exp\left(i t b s\right)}$ -- это характеристическая функция вырожденного распределения, то есть $X(t) = b t$.

б) Если ${\nu \equiv 0}$, то ${\varphi_{X(t)}(s)=\exp\left(i t b s - t\sigma^2s^2/2 \right)}$. В этом случае приращения $X(t + \tau) - X(\ag{t})$ имеют нормальное распределение с математическим ожиданием $b\ag{\tau}$ и дисперсией $\sigma^2 \ag{\tau}$. Таким образом, $X(t)$, $t\ge 0$ -- гауссовский процесс и его можно представить как $X(t) = b t +\sigma W(t)$, $t\ge 0$, где $W(t)$ -- винеровский процесс.

в) Если ${b = 0}$, ${\sigma = 0}$, мера $\nu$ сосредоточена в точке ${x_0 \in (-1, 1)}$ и ${\nu(\{ x_0\}) = \lambda > 0}$, то $\varphi_{X(t)}(s)=\exp\left(t \lambda (e^{i s x_0} - 1 - i s x_0)\right)$. Процесс $X(t)$ можно представить как $X(t) = x_0 ( K(t) - \lambda t)$, $t \ge 0$ где $K(t)$ -- пуассоновский процесс.

г) Если $\sigma = 0$ и $\int_{-1}^1 d \nu(x) < \infty$, то  $$\varphi_{X(t)}(s)=\exp\left(t\cdot \left[ib_1s + \lambda \int\limits_{-\infty}^{+\infty}\left(e^{isx}-1 \right)\,d\nu_1(x)\right]\right),$$
где $b_1 = b+\int_{-1}^1 d \nu(x)$, $\lambda = \int_{-\infty}^\infty\,d \nu(x)$, $\nu_1 = \lambda^{-1} \nu$ -- вероятностная мера. Иначе говоря,
$$\varphi_{X(t)}(s)=e^{i t b s} \sum_{n=0}^\infty e^{-\lambda t} \frac{(\lambda t)^n}{n!} \left [  \int\limits_{-\infty}^\infty e^{i s x}\,d \nu_1(x) \right]^n,$$
то есть $X(t) = b_1 t + \sum_{n=1}^{K(t)} \xi_n$, $t\ge 0$, где $K(t)$ -- пуассоновский процесс интенсивности $\lambda$, $\{ \xi_n \}_{n=1}^\infty$ -- последовательность независимых случайных величин с одинаковым распределением $\nu_1$ (этот случай соответствует сложному пуассоновскому процессу).

Ранее мы выяснили, что безгранично делимая случайная величина представляется в виде суммы константы, гауссовской случайной величины и сложной пуассоновской случайной величины (при  ${\nu(\R)<\infty}$ в \eqref{eq:LeviKhinchinFormula}) или, в общем случае, предела сложных пуассоновских случайных величин (при $\nu(\R)\le\infty$). Если воспользоваться аналогичными рассуждениями, то из примера выше мы получаем, что процесс Леви с конечной мерой $\nu$ (см. теоремы~\ref{Levy-Hinchin},~\ref{Levy}) можно представить в виде суммы процесса вида $X(t) = bt$, промасштабированного винеровского процесса и сложного пуассоновского процесса. 

В качестве примера процесса Леви с ${\nu(\R) = \infty}$ рассмотрим \textit{субординатор Морана}. По определению это процесс Леви, отвечающий гамма-распределению: $\nu(dx) = x^{-1}e^{-x}dx$ (см.~\cite[гл. 5, задача 42]{StochAn2016}). При помощи субординатора Морана можно генерировать последовательности случайных величин, имеющие распределение Пуассона–Дирихле. Это распределение упорядоченных по убыванию нормированных скачков субординатора Морана (детали см. в~\cite{Kingman}). Распределение Пуассона–Дирихле часто применяется в популяционной генетике и экономике (оно является равновесным распределением
для ряда эволюционных моделей).

\elena{\textbf{Замечание 1.} Стоит еще раз подчеркнуть, что процессы Леви в некотором смысле просты в своем описании. А именно, для их задания не нужно знать семейства конечномерных распределений. Они полностью описываются распределением одного сечения, например, в момент времени $t=1$. Действительно, если характеристическая функция сечения $X(1)$ равна $\mathbb{E} e^{is X(1)} = \phi(s)$, то характеристическая функция 
\begin{itemize}
    \item произвольного сечения равна 
$$\mathbb{E} e^{is X(t)} = \left( \phi(s) \right)^t,$$
\item приращения на интервале $[t_1, t_2]$
$$\mathbb{E} e^{is \left (X(t_2) - X(t_1)\right)} =\left( \phi(s) \right)^{t_2 - t_1},$$
\item векторы из приращений $X(t_1) - X(t_0)$, $ X(t_2) - X(t_1)$,  $X(t_3) - X(t_2)$,\ldots, $X(t_n) - X(t_{n-1})  $, где $0=t_0 = t_1 < t_2< \ldots <t_n$:
$$\mathbb{E} e^{i \sum_{k=1}^n s_k \left (X(t_{k}) - X(t_{k-1})\right)} =\prod_{k=1}^n \left( \phi(s_k) \right)^{t_{k} - t_{k-1}},$$
\item векторы из сечений
 $\left( X(t_1),X(t_2),\ldots, X(t_n)  \right)$, где $t_1 < t_2< \ldots <t_n$:
$$\mathbb{E} e^{i \sum_{k=1}^n s_k  X(t_k)} =\prod_{k=1}^n \left( \phi(\tilde s_k) \right)^{t_{k} - t_{k-1}},$$
где $\tilde s_k = \sum_{i=k}^n s_i$.
\end{itemize}
 Последняя формула следует из связи характеристических функций векторов, получающихся друг из друга с помощью линейного преобразования.
}

\section{Введение в стохастический анализ} 
\label{correlation}

Данный раздел представляет собой введение в стохастический анализ случайных процессов. Здесь рассматриваются случайные процессы, любое сечение которых является случайной величиной второго порядка, т.е. ${\forall t\in T}$ выполнено $\mathbb{E}X^2(t)<\infty$. Такие процессы мы называем \textit{случайными процессами второго порядка}. Кроме того, будем считать для определенности, что ${T = [0,+\infty)}$. Здесь вводятся понятия непрерывности, дифференцируемости и интегрируемости случайных процессов.

\subsection{Пространство $L_2$ случайных величин}
\label{gilbert}

В данном разделе мы приведем несколько свойств \textit{случайных величин второго порядка}, т.е. случайных величин с конечным вторым моментом: ${\Exp X^2 < \infty}$. Эти свойства понадобятся нам при построении корреляционной теории случайных процессов в следующем разделе.

Обозначим за $L_2$ множество случайных величин второго порядка, заданных на одном вероятностном пространстве. Билинейная функция $\langle\cdot,\cdot\rangle: L_2\times L_2 \to \Rbb$, такая, что ${\forall X,\,Y \in L_2 \hookrightarrow \langle X, Y\rangle = \Exp (XY)}$ определяет скалярное произведение на $L_2$. Чтобы это показать, проверим аксиомы скалярного произведения:
\begin{enumerate}
	\item[1.] ${\forall X \in L_2\hookrightarrow \langle X, X\rangle = \Exp X^2 \ge 0}$, т.к. $X^2 \ge 0$. Кроме того, $$\langle X, X\rangle = 0 \; \Leftrightarrow \; \Exp X^2 = 0 \; \Leftrightarrow \; X \overset{\text{п.н.}}{=} 0.$$
	\item[2.] $\forall X,\, Y \in L_2 \hookrightarrow \langle X, Y\rangle = \Exp (XY) = \Exp (YX) = \langle Y,X\rangle$.
	\item[3.] $\forall X,\, Y,\, Z \in L_2,\, \alpha,\, \beta \in \Rbb \hookrightarrow \langle\alpha X + \beta Y, Z\rangle = \alpha\langle X , Z\rangle + \beta\langle Y, Z\rangle$.
\end{enumerate}

Аксиомы скалярного произведения проверены, линейное пространство $L_2$ с введенным скалярным произведением является евклидовым. Отсюда вытекает несколько простых и полезных следствий, которые являются общими для всех евклидовых пространств. 

\begin{theorem}[ (неравенство Коши--Буняковского--Шварца)]
$$\forall X, \, Y \in L_2 \hookrightarrow \langle X, Y \rangle^2 \leqslant \langle X, X \rangle \langle Y, Y \rangle .$$
\end{theorem}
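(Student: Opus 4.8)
План состоит в том, чтобы свести неравенство к утверждению о неотрицательности квадратного трёхчлена, пользуясь лишь аксиомами скалярного произведения, проверенными выше, — тем самым доказательство будет работать в любом евклидовом пространстве, а не только в $L_2$. Сначала я бы отдельно разобрал вырожденный случай ${\langle Y, Y\rangle = 0}$: по аксиоме~1 это равносильно ${Y \overset{\text{п.н.}}{=} 0}$, откуда ${XY \overset{\text{п.н.}}{=} 0}$ и ${\langle X, Y\rangle = \Exp(XY) = 0}$, так что обе части неравенства обращаются в нуль и оно тривиально выполнено. Поэтому далее можно считать ${\langle Y, Y\rangle > 0}$.

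Основной шаг: для произвольного ${\lambda \in \Rbb}$ рассмотрим элемент ${X - \lambda Y \in L_2}$ и воспользуемся неотрицательностью ${\langle X - \lambda Y, X - \lambda Y\rangle \ge 0}$ (аксиома~1). Раскрывая скобки с помощью билинейности и симметрии (аксиомы~2 и~3), получаем
$$
0 \le \langle X - \lambda Y, X - \lambda Y\rangle = \langle X, X\rangle - 2\lambda \langle X, Y\rangle + \lambda^2 \langle Y, Y\rangle .
$$
Правая часть — квадратный трёхчлен относительно $\lambda$ с положительным старшим коэффициентом ${\langle Y, Y\rangle}$, неотрицательный при всех ${\lambda \in \Rbb}$; значит, его дискриминант неположителен:
$$
4\langle X, Y\rangle^2 - 4\langle X, X\rangle\,\langle Y, Y\rangle \le 0 ,
$$
что после деления на $4$ и переноса слагаемого и даёт требуемое неравенство ${\langle X, Y\rangle^2 \le \langle X, X\rangle\,\langle Y, Y\rangle}$. (Альтернативно можно было бы не апеллировать к дискриминанту, а подставить конкретное значение ${\lambda = \langle X, Y\rangle / \langle Y, Y\rangle}$, минимизирующее трёхчлен, и сразу получить тот же вывод.)

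Серьёзных препятствий здесь я не ожидаю: всё доказательство опирается только на уже установленные аксиомы евклидова пространства. Единственная тонкость, которую важно не упустить, — корректная обработка вырожденного случая ${\langle Y, Y\rangle = 0}$, где деление на ${\langle Y, Y\rangle}$ недопустимо; именно поэтому этот случай выделяется в самом начале. \EndProof
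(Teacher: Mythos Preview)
Ваше доказательство корректно и по существу совпадает с доказательством в пособии: там также рассматривается выражение $\langle X+\alpha Y, X+\alpha Y\rangle$ как квадратичная функция параметра $\alpha$ и используется неположительность её дискриминанта. Вы лишь аккуратнее --- явно разбираете вырожденный случай $\langle Y, Y\rangle = 0$, который в пособии опущен.
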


\begin{proof}
Выражение $\langle X+\alpha Y , X+\alpha Y \rangle$ является квадратичной функцией $\alpha$; при этом оно неотрицательно для каждого ${\alpha\in\mathbb{R}}$. Неравенство из утверждения теоремы -- лишь условие неотрицательности дискриминанта этой квадратичной функции. \EndProof
\end{proof}

Теперь перейдём к вопросу сходимости случайных величин из $L_2$. Введем евклидову норму, связанную со скалярным произведением: $\left\|X\right\|_2 = \sqrt{\langle X,X\rangle}$. Договоримся далее о следующих обозначениях:
$$
X_n \xrightarrow[n\to\infty]{L_2} X \; \Leftrightarrow \; \left\|X_n-X\right\|_2 \xrightarrow[n\to\infty]{} 0 \; \Leftrightarrow \; X = \underset{n\to\infty}{\operatorname{l.i.m.}}\,X_n,
$$
означающих, что последовательность случайных величин $\{X_n\}_{n=1}^\infty$ \\ сходится \textit{в среднем квадратичном} к случайной величине $X$.

\begin{theorem}[ (непрерывность скалярного произведения)]\label{th:ContScalarProd}
\textit{Если} $X_n \xrightarrow[n\to\infty]{L_2} X$, $Y_m \xrightarrow[m\to\infty]{L_2} Y$, \textit{то} $$ \langle X_n, Y_m \rangle \xrightarrow[n,m\to\infty]{} \langle X, Y \rangle .$$
\end{theorem}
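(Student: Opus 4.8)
Идея состоит в том, чтобы оценить разность $\langle X_n, Y_m\rangle - \langle X,Y\rangle$, добавив и вычтя промежуточное слагаемое $\langle X, Y_m\rangle$. А именно, пользуясь билинейностью скалярного произведения, я бы записал
$$\langle X_n, Y_m\rangle - \langle X,Y\rangle = \langle X_n - X,\, Y_m\rangle + \langle X,\, Y_m - Y\rangle,$$
после чего оценивал бы каждое слагаемое по отдельности с помощью неравенства Коши--Буняковского--Шварца.

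Первым шагом я бы проверил, что последовательность норм $\|Y_m\|_2$ ограничена. Это сразу следует из неравенства треугольника для евклидовой нормы (которое, в свою очередь, есть следствие неравенства Коши--Буняковского--Шварца): $\|Y_m\|_2 \le \|Y\|_2 + \|Y_m - Y\|_2$, а правая часть ограничена в силу $\|Y_m - Y\|_2 \to 0$. Обозначив $C = \sup_m \|Y_m\|_2 < \infty$, получаем оценку первого слагаемого:
$$\bigl|\langle X_n - X,\, Y_m\rangle\bigr| \le \|X_n - X\|_2 \cdot \|Y_m\|_2 \le C\,\|X_n - X\|_2 \xrightarrow[n\to\infty]{} 0.$$

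Второе слагаемое оценивается ещё проще, так как $\|X\|_2$ -- фиксированное конечное число (ведь $X\in L_2$):
$$\bigl|\langle X,\, Y_m - Y\rangle\bigr| \le \|X\|_2 \cdot \|Y_m - Y\|_2 \xrightarrow[m\to\infty]{} 0.$$
Складывая обе оценки, получаем, что $\bigl|\langle X_n, Y_m\rangle - \langle X,Y\rangle\bigr| \to 0$ при $n,m\to\infty$, что и требовалось. Никаких серьёзных препятствий здесь нет; единственный момент, на который стоит обратить внимание, -- это обоснование ограниченности $\|Y_m\|_2$, без которой оценка первого слагаемого не проходила бы равномерно по $m$. \EndProof
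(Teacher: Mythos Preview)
Your proof is correct. It is the standard textbook argument: telescope the difference as $\langle X_n-X,Y_m\rangle+\langle X,Y_m-Y\rangle$, then apply Cauchy--Buniakowski--Schwarz to each term, using the boundedness of $\|Y_m\|_2$ (which you correctly justify via the triangle inequality).

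The paper takes a slightly different route. Instead of your asymmetric telescoping, it uses a symmetric ``polarization'' identity:
$$\langle X_n+X,\,Y_m-Y\rangle+\langle X_n-X,\,Y_m+Y\rangle=2\bigl(\langle X_n,Y_m\rangle-\langle X,Y\rangle\bigr),$$
and shows that each summand on the left tends to zero via Cauchy--Buniakowski--Schwarz. This is cosmetically more symmetric in the roles of $X$ and $Y$, but substantively it is the same mechanism: one factor goes to zero, the other stays bounded (the paper writes ``ограничено'' under $\langle X_n+X,X_n+X\rangle$, which requires exactly the boundedness argument you spelled out). Your decomposition is arguably more direct; the paper's avoids singling out one of the two limits as ``fixed'', but neither approach gains anything the other lacks.
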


\begin{proof}
Заметим, что согласно неравенству Коши--Буня\-ковского--Шварца
$$|\langle X_n+X,Y_m-Y\rangle|^2 \le \underset{\text{ограничено}}{\underbrace{\langle X_n+X,X_n+X \rangle}}\cdot \underset{\xrightarrow[m\to\infty]{}0}{\underbrace{\langle Y_m-Y,Y_m-Y \rangle}} \xrightarrow[n,m\to\infty]{} 0,$$
что означает $$\langle X_n+X,Y_m-Y\rangle \xrightarrow[n,m\to\infty]{} 0.$$
Аналогично доказывается, что 
$$\langle X_n-X,Y_m+Y\rangle \xrightarrow[n,m\to\infty]{} 0.$$
Пользуясь линейностью скалярного произведения, легко получаем
$$\underset{\xrightarrow[n,m\to\infty]{} 0}{\underbrace{\langle X_n+X,Y_m-Y\rangle + \langle X_n-X,Y_m+Y\rangle}} = 2\left(\langle X_n,Y_m\rangle - \langle X,Y\rangle  \right),$$
откуда следует $\langle X_n,Y_m \rangle \to \shmaxg{\langle X,Y\rangle}$ при $n,m\to\infty$, что и требовалось доказать. \EndProof
\end{proof}

Следующую теорему приводим без доказательства (доказательство можно найти, например, в~\cite{Borovkov1999}).

\begin{theorem}[ (полнота $L_2$)]\label{th:L2Complete}
\textit{Любая фундаментальная по Коши последовательность из $L_2$ сходится в среднем квадратичном к случайной величине из} $L_2$.
\end{theorem}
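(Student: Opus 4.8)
Схема стандартная: сначала выделить быстро сходящуюся подпоследовательность, доказать для неё сходимость почти наверное и в среднем квадратичном с помощью мажоранты, а затем перенести сходимость на всю фундаментальную последовательность. Заметим предварительно, что из уже доказанного неравенства Коши--Буняковского--Шварца следует неравенство треугольника для нормы $\|\cdot\|_2$: $\|X+Y\|_2^2 = \|X\|_2^2 + 2\langle X,Y\rangle + \|Y\|_2^2 \le (\|X\|_2+\|Y\|_2)^2$; им я буду пользоваться без оговорок.

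Пусть $\{X_n\}$ -- фундаментальная по Коши последовательность в $L_2$. Первым делом я бы выбрал подпоследовательность $\{X_{n_k}\}$ с $\|X_{n_{k+1}}-X_{n_k}\|_2 \le 2^{-k}$ для всех $k\ge 1$ (это возможно по определению фундаментальности). Затем рассмотрел бы неотрицательную случайную величину $Y = |X_{n_1}| + \sum_{k=1}^{\infty}|X_{n_{k+1}}-X_{n_k}|$, корректно определённую со значениями в $[0,+\infty]$ как предел возрастающих частичных сумм. По теореме о монотонной сходимости и неравенству треугольника $\|Y\|_2 \le \|X_{n_1}\|_2 + \sum_{k=1}^{\infty}2^{-k} < \infty$, так что $Y\in L_2$ и, в частности, $Y<\infty$ почти наверное. Значит, ряд $X_{n_1} + \sum_{k=1}^{\infty}(X_{n_{k+1}}-X_{n_k})$ сходится абсолютно почти наверное; обозначим его сумму через $X$. По построению $X_{n_k}\to X$ почти наверное, а так как $|X|\le Y$ п.н., то $\mathbb{E}X^2 \le \mathbb{E}Y^2 < \infty$, то есть $X\in L_2$.

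Далее я бы показал, что $X_{n_k}\xrightarrow{L_2}X$. Зафиксировав $\varepsilon>0$, возьмём $K$ с $\|X_{n_i}-X_{n_j}\|_2<\varepsilon$ при $i,j\ge K$. Тогда для $k\ge K$, пользуясь тем, что $|X_{n_j}-X_{n_k}|^2\to |X-X_{n_k}|^2$ п.н. при $j\to\infty$, и леммой Фату, получаем $\mathbb{E}|X-X_{n_k}|^2 \le \liminf_{j\to\infty}\mathbb{E}|X_{n_j}-X_{n_k}|^2 \le \varepsilon^2$. Наконец, переношу сходимость на всю последовательность: для данного $\varepsilon$ выбираем индекс $N$ из условия фундаментальности и номер $k$ с $n_k\ge N$ и $\|X_{n_k}-X\|_2\le\varepsilon$, после чего для всех $n\ge N$ пишу $\|X_n-X\|_2 \le \|X_n-X_{n_k}\|_2 + \|X_{n_k}-X\|_2 \le 2\varepsilon$. Отсюда $X_n\xrightarrow{L_2}X$, $X\in L_2$, что и требовалось.

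\textbf{Основная трудность.} Все идейно нетривиальные места -- это два перехода «предел под знаком интеграла»: применение теоремы о монотонной сходимости к $Y$ (чтобы получить не просто измеримость, а $Y\in L_2$, и тем самым конечность $Y$ п.н.) и применение леммы Фату к $|X_{n_j}-X_{n_k}|^2$ при фиксированном $k$. Именно связка «сходимость п.н. вдоль подпоследовательности $\Rightarrow$ сходимость в $L_2$» и является сутью доказательства; остальное -- прямые следствия неравенства треугольника и определения фундаментальной последовательности.
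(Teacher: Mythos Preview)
Your proof is correct and is precisely the standard Riesz--Fischer argument. The paper, however, does not prove this theorem at all: it states the result and refers the reader to Borovkov's textbook, so there is nothing to compare against. Your write-up could stand in for the omitted proof without changes.
\textbf{}
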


Мы же воспользуемся этой теоремой для доказательства следующей теоремы.

\begin{theorem}\label{th:ExistProccess}
\textit{Пусть для последовательности ${\{X_n\}_{n=1}^{\infty} \subset L_2}$ найдется такое $c\in\Rbb$, что для любых подпоследовательностей $\{X_{n_k}\}_{k=1}^{\infty}$ и  $\{X_{n_m}\}_{m=1}^{\infty}$ выполнено $ \langle X_{n_k} , X_{n_m} \rangle \xrightarrow[k,m\to\infty]{} c$. Тогда} $$\exists X\in L_2: \; X = \underset{n\to\infty}{\operatorname{l.i.m.}} X_n.$$
\end{theorem}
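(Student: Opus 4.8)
The plan is to reduce the statement to the completeness of $L_2$ (Theorem~\ref{th:L2Complete}) by proving that $\{X_n\}_{n=1}^{\infty}$ is a Cauchy (fundamental) sequence in the mean-square norm $\|\cdot\|_2$. First I would extract from the hypothesis the plain double-index limit: choosing the trivial subsequences $n_k=k$ and $n_m=m$, we obtain $\langle X_n,X_m\rangle\xrightarrow[n,m\to\infty]{}c$, where $n$ and $m$ tend to infinity independently; in particular, along the diagonal $m=n$ this gives $\|X_n\|_2^2=\langle X_n,X_n\rangle\to c$.

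Next I would expand the squared distance using the bilinearity and symmetry of the scalar product on $L_2$ and regroup around the constant $c$:
\begin{align*}
\|X_n-X_m\|_2^2 &= \langle X_n,X_n\rangle - 2\langle X_n,X_m\rangle + \langle X_m,X_m\rangle \\
&= \bigl(\langle X_n,X_n\rangle-c\bigr) - 2\bigl(\langle X_n,X_m\rangle-c\bigr) + \bigl(\langle X_m,X_m\rangle-c\bigr).
\end{align*}
Given $\varepsilon>0$, by the previous step there is an $N$ such that each of the three bracketed quantities has absolute value less than $\varepsilon$ whenever $n,m\ge N$; hence $\|X_n-X_m\|_2^2<4\varepsilon$ for all $n,m\ge N$, so $\{X_n\}$ is fundamental in $L_2$. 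Applying Theorem~\ref{th:L2Complete} yields an $X\in L_2$ with $\|X_n-X\|_2\to 0$, i.e.\ $X=\underset{n\to\infty}{\operatorname{l.i.m.}}\,X_n$, which is exactly the claim. (If one also wishes to identify the limit, the continuity of the scalar product, Theorem~\ref{th:ContScalarProd}, gives $\langle X,X\rangle=\lim_{n\to\infty}\langle X_n,X_n\rangle=c$; this is not needed for the statement but is a natural corollary.)

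There is no serious obstacle here: the only point requiring a little care is the handling of the double limit — making the three terms in the expansion of $\|X_n-X_m\|_2^2$ simultaneously close to $c$ for large $n$ and $m$ — and this is immediate once we know $\langle X_n,X_m\rangle\to c$ with the indices running independently. The subsequence formulation in the hypothesis is essentially cosmetic: it is equivalent to the plain statement $\langle X_n,X_m\rangle\to c$, since every subsequence of a convergent numerical sequence has the same limit.
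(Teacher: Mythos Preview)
Your proof is correct and follows essentially the same route as the paper: expand $\|X_n-X_m\|_2^2$ via bilinearity, use the hypothesis to see that each term tends to $c$, conclude that the sequence is Cauchy, and invoke completeness of $L_2$ (Theorem~\ref{th:L2Complete}). The paper phrases it with arbitrary subsequences $\{X_{n_k}\}$, $\{X_{n_m}\}$ before specializing to the full sequence, whereas you go straight to the trivial choice $n_k=k$, $n_m=m$ --- your observation that the subsequence formulation is equivalent to the plain double-index limit is exactly right, and makes the argument a touch more direct.
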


\begin{proof}
Рассмотрим произвольные подпоследовательности \\ $\{X_{n_k}\}_{k=1}^{\infty}, \{X_{n_m}\}_{m=1}^{\infty}\subset \{X_n\}_{n=1}^{\infty}$ и квадрат их разности
$$(X_{n_k} - X_{n_m})^2 = X_{n_k}^2 - 2X_{n_k}X_{n_m} + X_{n_m}^2.$$
Из условия теоремы следует, что существует $c\in\mathbb{R}$, такое что $$ \langle X_{n_k} , X_{n_m} \rangle \xrightarrow[k,m\to\infty]{} c, \ \langle X_{n_k} , X_{n_k} \rangle \xrightarrow[k,m\to\infty]{} c, \ \langle X_{n_m} , X_{n_m} \rangle \xrightarrow[k,m\to\infty]{} c.$$
Отсюда получаем, что
$$\underset{k,m\to\infty}{\lim} \langle X_{n_k} - X_{n_m}, X_{n_k} - X_{n_m} \rangle = c - 2c + c = 0,$$
т.е.
$$X_{n_k} - X_{n_m} \xrightarrow[k,m\to\infty]{L_2} 0.$$
Если рассмотреть в качестве подпоследовательностей $\{X_{n_k}\}$ и $\{X_{n_m}\}$ исходную последовательность $\{X_n\}_{n=1}^\infty$, то получим, что
$$X_k - X_m \xrightarrow[k,m\to\infty]{L_2} 0,$$
т.е. что $\{X_n\}_{n=1}^\infty$~--- фундаментальная по Коши последовательность. Но по теореме \ref{th:L2Complete} это значит, что она сходится. \EndProof
\end{proof}

Теперь, когда введено пространство случайных величин со скалярным произведением и пределом, мы приступаем к определению таких важнейших понятий любого анализа как \textit{непрерывность}, \textit{дифференцируемость} и \textit{интегрируемость}.

Всюду далее мы будем называть процессами второго порядка те процессы, у которых каждое сечение имеет конечный второй момент, т.е. для любого ${t\ge0}$ второй момент ${\mathbb{E}X^2(t)<\infty}$. Принадлежность классу процессов второго порядка мы будем обозначать так: $X(t)\in L_2$, аналогично тому, как мы поступали для случайных величин второго порядка.
\subsection{Непрерывность в среднем квадратичном}

\begin{definition}
Случайный процесс второго порядка $X(t)$ называется \textit{непрерывным в среднем квадратичном} (\textit{с.к.-непрерывным, непрерывным по математическому ожиданию}), если $$	\forall t \ge 0 \hookrightarrow X(t+\varepsilon) \xrightarrow[\varepsilon\to 0]{L_2} X(t). $$
\end{definition}

\begin{theorem}[ (критерий с.к.-непрерывности)]\label{th:SKCont1}
\textit{Процесс $X(t)\in$\linebreak $\in L_2$ является с.к.-непрерывным тогда и только тогда, когда его ковариационная функция $K_X(t_1,t_2)$ непрерывна на множестве }$[0,+\infty)^2$.
\end{theorem}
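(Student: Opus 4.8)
The plan is to use the identity $K_X(t_1,t_2)=\mathbb{E}X(t_1)X(t_2)=\langle X(t_1),X(t_2)\rangle$, which turns the statement into an assertion about continuity of the scalar product on $L_2$, and to lean entirely on Theorem~\ref{th:ContScalarProd} (continuity of the scalar product) together with the definition of с.к.-непрерывность. The two implications are handled separately.

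For necessity, I would assume $X(t)$ is с.к.-непрерывен, fix a point $(t_1,t_2)\in[0,+\infty)^2$, and take arbitrary nonnegative sequences $t_1^{(n)}\to t_1$ and $t_2^{(m)}\to t_2$. Writing $t_i^{(k)}=t_i+\varepsilon_i^{(k)}$ with $\varepsilon_i^{(k)}\to0$, с.к.-непрерывность gives $X(t_1^{(n)})\xrightarrow[n\to\infty]{L_2}X(t_1)$ and $X(t_2^{(m)})\xrightarrow[m\to\infty]{L_2}X(t_2)$, so Theorem~\ref{th:ContScalarProd} yields
$$K_X\bigl(t_1^{(n)},t_2^{(m)}\bigr)=\langle X(t_1^{(n)}),X(t_2^{(m)})\rangle\xrightarrow[n,m\to\infty]{}\langle X(t_1),X(t_2)\rangle=K_X(t_1,t_2).$$
Since the sequences and the point were arbitrary, $K_X$ is (jointly) continuous on $[0,+\infty)^2$.

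For sufficiency, I would assume $K_X$ continuous on $[0,+\infty)^2$, fix $t\ge0$, and expand, using bilinearity of the scalar product,
$$\|X(t+\varepsilon)-X(t)\|_2^2=K_X(t+\varepsilon,t+\varepsilon)-2\,K_X(t+\varepsilon,t)+K_X(t,t).$$
As $\varepsilon\to0$ both arguments $(t+\varepsilon,t+\varepsilon)$ and $(t+\varepsilon,t)$ tend to $(t,t)$, so continuity of $K_X$ forces the right-hand side to $K_X(t,t)-2K_X(t,t)+K_X(t,t)=0$; hence $X(t+\varepsilon)\xrightarrow[\varepsilon\to0]{L_2}X(t)$, and as $t$ was arbitrary, $X(t)$ is с.к.-непрерывен.

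The argument is essentially routine, so there is no deep obstacle. The one place that needs attention is the necessity direction: one must invoke the genuinely two-index form of Theorem~\ref{th:ContScalarProd} (independent indices $n$ and $m$, not a single diagonal sequence), so that the conclusion is true joint continuity of $K_X$ on $[0,+\infty)^2$ rather than mere continuity in each variable separately. A trivial caveat is that for $t=0$ in the sufficiency part the limit $\varepsilon\to0$ is understood one-sidedly, $\varepsilon\to0+$, which affects nothing in the computation.
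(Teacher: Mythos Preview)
Your proof is correct and follows essentially the same approach as the paper: both directions rely on the identity $K_X(t_1,t_2)=\langle X(t_1),X(t_2)\rangle$, invoke Theorem~\ref{th:ContScalarProd} for necessity, and expand $\|X(t+\varepsilon)-X(t)\|_2^2$ in terms of $K_X$ for sufficiency. The only cosmetic difference is that the paper works directly with increments $\varepsilon_1,\varepsilon_2\to0$ rather than sequences, and it explicitly remarks (as you implicitly do) that the sufficiency argument uses continuity of $K_X$ only at the diagonal points $(t,t)$.
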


\begin{proof}
Пусть процесс $X(t)$ является с.к.-непрерывным. Тогда для любых $t_1,t_2 \ge0$ можно записать $$X(t_1+\varepsilon_1) \xrightarrow[\varepsilon_1\to 0]{L_2} X(t_1), \ X(t_2+\varepsilon_2) \xrightarrow[\varepsilon_2\to 0]{L_2} X(t_2).$$
Из теоремы \ref{th:ContScalarProd} получаем, что $$\langle X(t_1+\varepsilon_1),X(t_2+\varepsilon_2)\rangle \xrightarrow[\varepsilon_1,\varepsilon_2\to 0]{} \langle X(t_1),X(t_2) \rangle,$$ что и означает $K_X(t_1+\varepsilon_1,t_2+\varepsilon_2) \to K_X(t_1,t_2)$ при $\varepsilon_1,\varepsilon_2\to 0$.

Предположим теперь, что функция $K_X(t_1,t_2)$ непрерывна всюду на $[0,+\infty)^2$. Тогда она непрерывна во всех точках вида $(t,t)$, $t\ge0$. Остается заметить, что 
$$\Exp \left(X(t+\varepsilon) - X(t) \right)^2 = K_X(t+\varepsilon,t+\varepsilon) - 2K_X(t+\varepsilon,t) + K_X(t,t)$$ и, взяв предел при $\varepsilon\to0$, получить $$X(t+\varepsilon) \xrightarrow[\varepsilon\to 0]{L_2} X(t).\text{ \EndProof}$$
\end{proof}

Заметим, что при доказательстве с.к.-непрерывности процесса при непрерывной ковариационной функции используется ее непрерывность (как функции двух аргументов) лишь в точках вида $(t,t)$. Получается, что непрерывность ковариационной функции в точках вида $(t,t)$ влечет с.к.-непрерывность процесса, а с.к.-непрерывность процесса влечет непрерывность ковариационной функции всюду (не только в точках вида $(t,t)$). Это значит, что непрерывность ковариационной функции в точках вида $(t,t)$ влечет ее непрерывность во всех точках множества $[0,+\infty)^2$.

Кроме того, легко показать, что непрерывность ковариационной функции равносильна непрерывности функции математического ожидания процесса и непрерывности корреляционной функции. Действительно, если математическое ожидание и корреляционная функция всюду непрерывны, то непрерывна и ковариационная функция, т.к. они связаны соотношением ${K_X(t_1,t_2)=R_X(t_1,t_2)+m_X(t_1)m_X(t_2)}$. Наоборот, если ковариационная функция всюду непрерывна (и следовательно процесс -- с.к.-непрерывный), то $$\Exp \left(X(t+\varepsilon) - X(t) \right)^2=\mathbb{E}\left( \overset{\circ}{X}(t+\varepsilon) - \overset{\circ}{X}(t)+m_X(t+\varepsilon) - m_X(t)  \right)^2=$$
$$=\mathbb{E} \left( \overset{\circ}{X}(t+\varepsilon) - \overset{\circ}{X}(t) \right)^2 + \left( m_X(t+\varepsilon) - m_X(t) \right)^2 \xrightarrow[\varepsilon\to 0]{}0.$$ Так как оба слагаемых неотрицательные, то каждое из них стремится к нулю при ${\varepsilon\to 0}$. Сходимость к нулю второго слагаемого означает непрерывность математического ожидания. Сходимость к нулю первого слагаемого означает с.к.-непрерывность центрированного процесса, а следовательно, -- непрерывность его ковариационной функции. Но~ковариационная функция центрированного процесса -- это корреляционная функция исходного процесса.

Итак, теорему \ref{th:SKCont1} можно сформулировать и следующим образом.

\addtocounter{theorem}{-1}
\begin{theorem}[$'$ (критерий с.к.-непрерывности)] 
\textit{Процесс $X(t)\in$\linebreak $\in L_2$ является с.к.-непрерывным тогда и только тогда, когда его математическое ожидание $m_X(t)$ непрерывно при $t\ge0$ и корреляционная функция $R_X(t_1,t_2)$ непрерывна на множестве} $[0,+\infty)^2$.
\end{theorem}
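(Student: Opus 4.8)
The plan is to derive this primed criterion directly from the already‑established one (Theorem~\ref{th:SKCont1}), which characterizes m.s.-continuity via continuity of the covariance function $K_X$, by exploiting the identity $K_X(t_1,t_2)=R_X(t_1,t_2)+m_X(t_1)m_X(t_2)$. Thus the whole argument reduces to showing that continuity of $K_X$ on $[0,+\infty)^2$ is equivalent to the conjunction ``$m_X$ continuous on $[0,+\infty)$ and $R_X$ continuous on $[0,+\infty)^2$'', after which the conclusion follows by quoting Theorem~\ref{th:SKCont1}. I would treat the two implications separately.

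For the ``if'' direction, assume $m_X$ and $R_X$ are continuous. Then $(t_1,t_2)\mapsto m_X(t_1)m_X(t_2)$ is continuous on $[0,+\infty)^2$ as a product of continuous functions of the separate variables, so $K_X=R_X+m_X(t_1)m_X(t_2)$ is continuous on $[0,+\infty)^2$ as a sum of continuous functions, and Theorem~\ref{th:SKCont1} yields that $X(t)$ is m.s.-continuous.

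For the ``only if'' direction, assume $X(t)$ is m.s.-continuous. First, Theorem~\ref{th:SKCont1} gives that $K_X$ is continuous on $[0,+\infty)^2$. Second, from $X(t+\varepsilon)\xrightarrow[\varepsilon\to0]{L_2}X(t)$ we have $\|X(t+\varepsilon)-X(t)\|_2\to0$, and since $|m_X(t+\varepsilon)-m_X(t)|=|\mathbb{E}(X(t+\varepsilon)-X(t))|\le\|X(t+\varepsilon)-X(t)\|_2$ (Jensen's inequality, i.e. $(\mathbb{E}Y)^2\le\mathbb{E}Y^2$), the function $m_X$ is continuous on $[0,+\infty)$. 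Hence $R_X(t_1,t_2)=K_X(t_1,t_2)-m_X(t_1)m_X(t_2)$ is continuous on $[0,+\infty)^2$ as a difference of continuous functions. An equivalent route, matching the discussion just before the statement, is to split $\mathbb{E}(X(t+\varepsilon)-X(t))^2=\mathbb{E}(\overset{\circ}{X}(t+\varepsilon)-\overset{\circ}{X}(t))^2+(m_X(t+\varepsilon)-m_X(t))^2$ into two nonnegative summands whose total tends to $0$: the second forces continuity of $m_X$, the first forces m.s.-continuity of the centered process $\overset{\circ}{X}$, whose covariance function is precisely $R_X$, so $R_X$ is continuous by Theorem~\ref{th:SKCont1}.

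There is no real obstacle here; the statement is essentially a corollary. The only point needing care is that continuity of $K_X$ by itself does not give continuity of $m_X$, so in the ``only if'' direction one must appeal to the definition of m.s.-continuity (or, equivalently, to the variance decomposition above) rather than try to recover $m_X$ from $K_X$; once $m_X$ is known to be continuous, continuity of $R_X$ is immediate from the algebraic identity relating $R_X$, $K_X$ and $m_X$.
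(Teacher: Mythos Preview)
Your proposal is correct and matches the paper's argument essentially verbatim: the paper proves Theorem~\ref{th:SKCont1}$'$ in the paragraph immediately preceding its statement, using the identity $K_X=R_X+m_X m_X$ for the ``if'' direction and exactly your variance decomposition $\mathbb{E}(X(t+\varepsilon)-X(t))^2=\mathbb{E}(\overset{\circ}{X}(t+\varepsilon)-\overset{\circ}{X}(t))^2+(m_X(t+\varepsilon)-m_X(t))^2$ for the ``only if'' direction. Your first route for ``only if'' (Jensen for $m_X$, then $R_X=K_X-m_X m_X$) is a harmless and slightly more direct variant, but the content is the same.
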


\begin{example}
Винеровский процесс является с.к.-непрерывным, так как его математическое ожидание всюду непрерывно (оно всюду равно нулю) и корреляционная функция $R_X(t_1,t_2)=\min(t_1,t_2)$ непрерывна всюду. То же справедливо для пуассоновского процесса интенсивности ${\lambda > 0}$: его математическое ожидание ${m_K(t)=\lambda t}$ и корреляционная функция $R_K(t_1,t_2)=$ $\lambda\min(t_1,t_2)$ непрерывны всюду на своих областях определения. Процесс из задачи \ref{ex:Ex6.1} является с.к.-непрерывным, так как его математическое ожидание непрерывно (всюду равно нулю) и корреляционная функция ${R_{\xi}(t_1,t_2)=1/2\cos{(t_1-t_2)}}$ непрерывна всюду. Процесс из примера \ref{ex:Ex6.3} является с.к.-непрерывным, так как его математическое ожидание ${m_{\xi}(t)=1-t}$ и корреляционная функция ${R_{\xi}(t_1,t_2)=\min(t_1,t_2)-t_1t_2}$ непрерывны всюду на своих областях определения. \EndEx
\end{example}
\subsection{Дифференцируемость в среднем квадратичном}

\begin{definition}
Случайный процесс $X(t)\in L_2$ называется \textit{дифференцируемым в среднем квадратичном} (\textit{с.к.-дифференцируемым или дифференцируемым по математическому ожиданию}), если существует процесс $Y(t)\in L_2$, $t\ge0$, такой, что  $$\forall t\ge0 \hookrightarrow \frac{X(t+\varepsilon) - X(t)}{\varepsilon} \xrightarrow[\varepsilon\to0]{L_2} Y(t).$$ Случайный процесс $Y(t)$ в этом случае называется \textit{с.к.-производной} процесса $X(t)$ и будет обозначаться штрихом, т.е. $Y(t)=X'(t)$.
\end{definition}

\begin{theorem}[ (критерий с.к.-дифференцируемости)]\label{th:SKDiff1}
\textit{Случайный процесс $X(t)\!\!\!\!\in\!\!\!\!L_2$ является с.к.-диф\-фе\-рен\-цируемым тогда\linebreak и~только тогда, когда для любой пары ${t_1,t_2 \ge 0}$ существует конечный предел }
\begin{multline*}
\lim\limits_{\varepsilon_1, \varepsilon_2 \to 0} \frac{1}{\varepsilon_1 \varepsilon_2} (K_X(t_1+ \varepsilon_1, t_2 + \varepsilon_2) - K_X(t_1 + \varepsilon_1, t_2)\,- \\ 
- K_X(t_1, t_2 + \varepsilon_2) + K_X(t_1, t_2)) < \infty.
\end{multline*}
\end{theorem}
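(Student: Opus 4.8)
The plan is to translate the differentiability question entirely into the Euclidean structure of $L_2$ and then reduce it to the completeness results already proved. Write $D_{t,\varepsilon}:=\dfrac{X(t+\varepsilon)-X(t)}{\varepsilon}\in L_2$ for the difference quotient. The one computation that does all the work is the bilinearity of the scalar product $\langle X,Y\rangle=\mathbb{E}(XY)$, which gives
\[
\langle D_{t_1,\varepsilon_1},D_{t_2,\varepsilon_2}\rangle=\frac{1}{\varepsilon_1\varepsilon_2}\bigl(K_X(t_1+\varepsilon_1,t_2+\varepsilon_2)-K_X(t_1+\varepsilon_1,t_2)-K_X(t_1,t_2+\varepsilon_2)+K_X(t_1,t_2)\bigr).
\]
Thus the inner product of the two difference quotients is exactly the mixed second difference of $K_X$ that appears in the statement, and the finite limit in the theorem is precisely $\lim_{\varepsilon_1,\varepsilon_2\to0}\langle D_{t_1,\varepsilon_1},D_{t_2,\varepsilon_2}\rangle$.

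For necessity, suppose $X(t)$ is s.k.-differentiable with derivative $X'(t)$. Then $D_{t_i,\varepsilon_i}\xrightarrow[\varepsilon_i\to0]{L_2}X'(t_i)$ for $i=1,2$, and by continuity of the scalar product (Theorem~\ref{th:ContScalarProd}) we get $\langle D_{t_1,\varepsilon_1},D_{t_2,\varepsilon_2}\rangle\to\langle X'(t_1),X'(t_2)\rangle$, a finite number; by the identity above this is exactly the required limit. (Incidentally this identifies it as the covariance function $K_{X'}(t_1,t_2)$ of the derivative process.)

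For sufficiency, fix $t\ge0$ and apply the hypothesis with $t_1=t_2=t$, calling the finite limit $c_t$. Then as $\varepsilon_1,\varepsilon_2\to0$,
\[
\|D_{t,\varepsilon_1}-D_{t,\varepsilon_2}\|_2^2=\langle D_{t,\varepsilon_1},D_{t,\varepsilon_1}\rangle-2\langle D_{t,\varepsilon_1},D_{t,\varepsilon_2}\rangle+\langle D_{t,\varepsilon_2},D_{t,\varepsilon_2}\rangle\longrightarrow c_t-2c_t+c_t=0,
\]
so $\{D_{t,\varepsilon}\}$ is a Cauchy family as $\varepsilon\to0$. Passing to an arbitrary sequence $\varepsilon_n\to0$ makes $\{D_{t,\varepsilon_n}\}$ a Cauchy sequence in $L_2$, hence convergent by completeness (Theorem~\ref{th:L2Complete}) to some $Y(t)\in L_2$; a standard interleaving argument shows $Y(t)$ does not depend on the chosen sequence, so $D_{t,\varepsilon}\xrightarrow[\varepsilon\to0]{L_2}Y(t)$. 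Since $t$ was arbitrary, $X(t)$ is s.k.-differentiable with $X'=Y$. Alternatively one can invoke Theorem~\ref{th:ExistProccess} verbatim: the convergence of $\langle D_{t,\varepsilon_k},D_{t,\varepsilon_m}\rangle\to c_t$ along all subsequences is exactly its hypothesis.

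I expect no genuine obstacle here; the only point needing a word of care is the reduction from the continuous limit $\varepsilon\to0$ to the sequential setting in which Theorems~\ref{th:L2Complete} and~\ref{th:ExistProccess} are phrased, together with the check that the limit $Y(t)$ is well defined — the usual "net Cauchy criterion" remark. Everything else is the single bilinearity expansion displayed above, mirroring the proof of the s.k.-continuity criterion (Theorem~\ref{th:SKCont1}).
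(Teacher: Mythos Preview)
Your argument is correct and follows essentially the same route as the paper: the key identity expressing $\langle D_{t_1,\varepsilon_1},D_{t_2,\varepsilon_2}\rangle$ as the mixed second difference of $K_X$, then Theorem~\ref{th:ContScalarProd} for necessity and Theorem~\ref{th:ExistProccess} (applied at the diagonal $t_1=t_2=t$) for sufficiency. The paper invokes Theorem~\ref{th:ExistProccess} directly rather than unpacking the Cauchy computation, but you explicitly note that alternative yourself.
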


\begin{proof}
Пусть $X(t)$ является с.к.-дифференцируемым. Введем обозначения
$$Y_{\varepsilon_1}(t_1) = \frac{X(t_1+\varepsilon_1) - X(t_1)}{\varepsilon_1},\quad Y_{\varepsilon_2}(t_2) = \frac{X(t_2+\varepsilon_2) - X(t_2)}{\varepsilon_2}.$$ Тогда $Y_{\varepsilon_1}(t_1) \xrightarrow[\varepsilon_1\to0]{L_2} X'(t_1)$ и $Y_{\varepsilon_2}(t_2) \xrightarrow[\varepsilon_2\to0]{L_2} X'(t_2)$. Из теоремы \ref{th:ContScalarProd} следует, что
$$
\Exp(Y_{\varepsilon_1}(t_1)Y_{\varepsilon_2}(t_2)) \xrightarrow[\varepsilon_1, \varepsilon_2 \to 0]{} \Exp(X'(t_1)X'(t_2)) \le \Exp X'(t_1)^2 \cdot \Exp X'(t_2)^2 < \infty.
$$ 
Остается заметить, что $\mathbb{E} (Y_{\varepsilon_1}(t_1) Y_{\varepsilon_2}(t_2))$ равняется подпредельному выражению из утверждения теоремы и
\begin{multline*}
\infty > \lim\limits_{\varepsilon_1, \varepsilon_2 \to 0} \mathbb{E}(Y_{\varepsilon_1}(t_1) Y_{\varepsilon_2}(t_2)) = \lim\limits_{\varepsilon_1, \varepsilon_2 \to 0} \frac{1}{\varepsilon_1 \varepsilon_2} (K_X(t_1 + \varepsilon_1, t_2 + \varepsilon_2) - \\ 
- K_X(t_1 + \varepsilon_1, t_2) - K_X(t_1, t_2 + \varepsilon_2) + K_X(t_1, t_2)).
\end{multline*}

Пусть теперь существует предел из утверждения теоремы для любой пары $(t_1,t_2)$. Тогда этот предел существует для пары $(t,t)$, т.е. при ${t_1=t_2=t}$. Из существования предела выражения $\mathbb{E} (Y_{\varepsilon_1}(t) Y_{\varepsilon_2}(t))$ при ${\varepsilon_1,\varepsilon_2 \to 0}$ и теоремы \ref{th:ExistProccess} следует существование процесса ${Y(t)\in L_2}$ такого, что ${Y(t)=\operatorname{l.i.m.}_{\varepsilon\to0}Y_{\varepsilon}(t)}$. По определению, это и означает существование с.к.-производной процесса $X(t)$. \EndProof
\end{proof}

Теперь, когда теорема доказана, необходимо сделать несколько важных замечаний.

Первое замечание состоит в том, что предел из утверждения теоремы не является смешанной производной второго порядка ковариационной функции $K_X(t_1,t_2)$. Дело в том, что по определению смешанная производная второго порядка функции двух переменных -- это производная по одной переменной от производной по другой переменной, т.е. повторный предел по каждой переменной. Предел из теоремы -- это предел по двум переменным, он отличается от второй смешанной производной так же, как предел по двум переменным отличается от повторного предела. Предел из теоремы обычно называется \textit{обобщенной смешанной производной} (она не имеет никакого отношения к обобщенным функциям!). Из существования обобщенной смешанной производной вытекает существование обычной смешанной производной второго порядка; обратное, вообще говоря, не верно. Достаточным условием существования обобщенной смешанной производной функции $K_X(t_1,t_2)$ является непрерывность хотя бы одной из частных производных $$\frac{\partial }{\partial t_1} \left(\frac{\partial K_X(t_1,t_2)}{\partial t_2}\right), \ \frac{\partial }{\partial t_2} \left(\frac{\partial K_X(t_1,t_2)}{\partial t_1}\right).$$

Второе замечание состоит в том, что при доказательстве существования с.к.-производной мы воспользовались лишь конечностью обобщенной смешанной производной в точках вида $(t,t)$. Получается, что конечность обобщенной смешанной производной в точках вида $(t,t)$ влечет существование с.к.-производной исходного процесса, что влечет конечность обобщенной смешанной производной \textit{всюду}. Поэтому, теорему можно сформулировать и следующим образом.

\addtocounter{theorem}{-1}
\begin{theorem}[$'$ (критерий с.к.-дифференцируемости)]
\textit{Случайный процесс ${X(t)\in L_2}$ является с.к.-диф\-фе\-рен\-цируемым тогда и только тогда, когда для любого ${t \ge 0}$ существует конечный предел }
\begin{multline*}
\lim\limits_{\varepsilon_1, \varepsilon_2 \to 0} \frac{1}{\varepsilon_1 \varepsilon_2}( K_X(t + \varepsilon_1, t + \varepsilon_2) - K_X(t + \varepsilon_1, \shmaxg{t})\,- \\ 
- K_X(t, t + \varepsilon_2) + K_X(t, t)) < \infty.
\end{multline*}
\end{theorem}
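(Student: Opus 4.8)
The plan is to deduce this reformulation directly from Theorem~\ref{th:SKDiff1} and its proof, exploiting the fact that that proof is asymmetric: the ``if'' half only ever uses the generalized mixed difference of $K_X$ at diagonal points $(t,t)$, whereas s.k.-differentiability forces its finiteness at all pairs.

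\emph{The ``only if'' direction.} Suppose $X(t)\in L_2$ is s.k.-differentiable. Then Theorem~\ref{th:SKDiff1} already gives that the generalized mixed difference of $K_X$ has a finite limit at \emph{every} pair $(t_1,t_2)$; restricting to $t_1=t_2=t$ yields exactly the finite limit asserted here. So this direction requires no new argument.

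\emph{The ``if'' direction.} Suppose that for each $t\ge 0$ the limit
$$c(t)=\lim_{\varepsilon_1,\varepsilon_2\to 0}\frac{1}{\varepsilon_1\varepsilon_2}\bigl(K_X(t+\varepsilon_1,t+\varepsilon_2)-K_X(t+\varepsilon_1,t)-K_X(t,t+\varepsilon_2)+K_X(t,t)\bigr)$$
exists and is finite. Fix $t$ and set $Y_\varepsilon(t)=\frac{X(t+\varepsilon)-X(t)}{\varepsilon}\in L_2$. As was computed in the proof of Theorem~\ref{th:SKDiff1}, for all nonzero $\varepsilon_1,\varepsilon_2$
$$\langle Y_{\varepsilon_1}(t),Y_{\varepsilon_2}(t)\rangle=\frac{1}{\varepsilon_1\varepsilon_2}\bigl(K_X(t+\varepsilon_1,t+\varepsilon_2)-K_X(t+\varepsilon_1,t)-K_X(t,t+\varepsilon_2)+K_X(t,t)\bigr),$$
so the hypothesis says precisely that $\langle Y_{\varepsilon_1}(t),Y_{\varepsilon_2}(t)\rangle\to c(t)$ as $\varepsilon_1,\varepsilon_2\to 0$. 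Taking any sequence $\varepsilon_n\to 0$, the family $\{Y_{\varepsilon_n}(t)\}$ then satisfies the hypothesis of Theorem~\ref{th:ExistProccess} (convergence of the double limit entails $\langle Y_{\varepsilon_{n_k}}(t),Y_{\varepsilon_{n_m}}(t)\rangle\to c(t)$ along any pair of subsequences), hence there is $Y(t)\in L_2$ with $Y(t)=\operatorname{l.i.m.}_{\varepsilon\to 0}Y_\varepsilon(t)$. Carrying this out for every $t\ge 0$ defines a process $Y(t)=X'(t)\in L_2$, which is exactly what s.k.-differentiability of $X(t)$ means.

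Finally I would add the remark that makes the equivalence with Theorem~\ref{th:SKDiff1} transparent: the ``if'' argument above uses finiteness of the generalized mixed difference only at diagonal points $(t,t)$, yet its conclusion --- s.k.-differentiability --- forces, via the ``only if'' direction, that the same generalized mixed difference be finite at \emph{all} pairs $(t_1,t_2)$; thus finiteness on the diagonal automatically propagates everywhere. I do not expect any real obstacle: the only mildly delicate point is the passage into Theorem~\ref{th:ExistProccess}, i.e. that convergence of the double limit $\lim_{\varepsilon_1,\varepsilon_2\to 0}$ entails convergence of $\langle Y_{\varepsilon_{n_k}}(t),Y_{\varepsilon_{n_m}}(t)\rangle$ along arbitrary pairs of subsequences of $\{\varepsilon_n\}$, and this is immediate from the definition of a limit in two variables.
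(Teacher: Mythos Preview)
Your argument is correct and coincides with the paper's: the reformulation is justified there by the remark that the sufficiency part of the proof of Theorem~\ref{th:SKDiff1} uses the generalized mixed difference of $K_X$ only at diagonal points $(t,t)$, while the necessity part yields its finiteness at all pairs $(t_1,t_2)$. You have simply written out that remark in full detail, including the explicit appeal to Theorem~\ref{th:ExistProccess}.
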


Наконец, третье замечание заключается в том, что существование обобщенной смешанной производной ковариационной функции равносильно существованию производной математического ожидания процесса и существованию обобщенной смешанной производной корреляционной функции. Мы опускаем доказательство этого факта, но сформулируем еще один вариант теоремы \ref{th:SKDiff1}.

\addtocounter{theorem}{-1}
\begin{theorem}[$''$ (критерий с.к.-дифференцируемости)] 
\textit{Случайный процесс ${X(t)\in L_2}$ является с.к.-диф\-фе\-рен\-цируемым тогда и только тогда, когда для любого ${t \ge 0}$ существует производная математического ожидания процесса $m_X(t)$ и существует конечный предел }
\begin{multline*}
\lim\limits_{\varepsilon_1, \varepsilon_2 \to 0} \frac{1}{\varepsilon_1 \varepsilon_2} (R_X(t + \varepsilon_1, t + \varepsilon_2) - R_X(t + \varepsilon_1, \shmaxg{t})\,- \\ 
- R_X(t, t + \varepsilon_2) + R_X(t, t)) < \infty.
\end{multline*}
\end{theorem}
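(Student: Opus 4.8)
Замысел — свести всё к уже установленному критерию~\ref{th:SKDiff1}, применённому к \textit{центрированному} процессу $\overset{\circ}{X}(t)=X(t)-m_X(t)$. Этот процесс лежит в $L_2$, поскольку $\mathbb{E}\overset{\circ}{X}{}^2(t)=D_X(t)=R_X(t,t)<\infty$, а его ковариационная функция совпадает с корреляционной функцией исходного процесса: $K_{\overset{\circ}{X}}(t_1,t_2)=R_X(t_1,t_2)$. Рабочим инструментом будет непрерывность математического ожидания как функционала на $L_2$: из неравенства Коши--Буняковского--Шварца $|\mathbb{E}Z|\le\|Z\|_2$ следует, что $L_2$-сходимость влечёт сходимость математических ожиданий.

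Сначала я показал бы, что с.к.-дифференцируемость $X(t)$ влечёт дифференцируемость функции $m_X(t)$: если $\dfrac{X(t+\varepsilon)-X(t)}{\varepsilon}\xrightarrow[\varepsilon\to0]{L_2}X'(t)$, то, взяв математическое ожидание разностного отношения и воспользовавшись только что отмеченной непрерывностью $\mathbb{E}$, получаем $\dfrac{m_X(t+\varepsilon)-m_X(t)}{\varepsilon}\to\mathbb{E}X'(t)$, т.е. $m_X'(t)$ существует. После этого с.к.-дифференцируемость $\overset{\circ}{X}(t)=X(t)-m_X(t)$ немедленна: её разностное отношение равно разностному отношению для $X$ минус (неслучайное) разностное отношение для $m_X$, а последнее сходится к числу $m_X'(t)$ и, значит, сходится в $L_2$. Применяя теорему~\ref{th:SKDiff1} к $\overset{\circ}{X}$ (в эквивалентной <<диагональной>> форме — см. замечание после неё) с $K_{\overset{\circ}{X}}=R_X$, получаю в точности конечность указанного в формулировке предела для $R_X$ при всех $t\ge0$. Прямое утверждение доказано.

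Обратное утверждение получается разворачиванием той же цепочки. Пусть для каждого $t\ge0$ существует $m_X'(t)$ и конечен указанный предел для $R_X$. Тогда, снова по теореме~\ref{th:SKDiff1} применительно к $\overset{\circ}{X}$ (с $K_{\overset{\circ}{X}}=R_X$), процесс $\overset{\circ}{X}(t)$ с.к.-дифференцируем. Записывая $X(t)=\overset{\circ}{X}(t)+m_X(t)$, я замечу, что разностное отношение для $X$ есть сумма разностного отношения для $\overset{\circ}{X}$ (сходящегося в $L_2$ к $\overset{\circ}{X}{}'(t)$) и неслучайного разностного отношения для $m_X$ (сходящегося к числу $m_X'(t)$, а значит, и в $L_2$); следовательно, $X(t)$ с.к.-дифференцируем и $X'(t)=\overset{\circ}{X}{}'(t)+m_X'(t)$.

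Шаг, требующий аккуратности (и единственная настоящая тонкость), — учёт связи обобщённых смешанных разностей функций $K_X$ и $R_X$, делающий переформулировку прозрачной. В силу тождества $K_X(t_1,t_2)=R_X(t_1,t_2)+m_X(t_1)m_X(t_2)$ вклад произведения в двойное разностное отношение равен $\dfrac{m_X(t_1+\varepsilon_1)-m_X(t_1)}{\varepsilon_1}\cdot\dfrac{m_X(t_2+\varepsilon_2)-m_X(t_2)}{\varepsilon_2}$; каждый сомножитель зависит лишь от одной переменной, поэтому при существовании $m_X'$ этот \textit{двойной} предел действительно равен произведению $m_X'(t_1)m_X'(t_2)$. Значит, при условии существования $m_X'$ конечность двойного (а не повторного) предела для $K_X$ равносильна конечности такового для $R_X$, причём пределы отличаются на $m_X'(t_1)m_X'(t_2)$; именно здесь нужно следить за тем, что речь идёт о двойном пределе суммы, и разбиение законно лишь потому, что слагаемое-произведение имеет двойной предел само по себе. Других препятствий не ожидается. \EndProof
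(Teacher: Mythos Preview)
Your proof is correct. The paper, in fact, omits the proof of this variant entirely: after stating the third remark that existence of the generalized mixed derivative of $K_X$ is equivalent to existence of $m_X'$ together with existence of the generalized mixed derivative of $R_X$, it writes <<Мы опускаем доказательство этого факта>>. So there is nothing in the paper to compare your argument against --- you are filling a gap the authors left open.

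Your route via the centered process $\overset{\circ}{X}$, using $K_{\overset{\circ}{X}}=R_X$ to reduce to Theorem~\ref{th:SKDiff1} (in its diagonal form~$'$), and extracting $m_X'(t)=\mathbb{E}X'(t)$ from the continuity of the expectation functional on $L_2$, is clean and complete. The closing paragraph on the factored double difference quotient $\dfrac{m_X(t_1+\varepsilon_1)-m_X(t_1)}{\varepsilon_1}\cdot\dfrac{m_X(t_2+\varepsilon_2)-m_X(t_2)}{\varepsilon_2}$ is not strictly needed for your main argument (which bypasses any direct comparison of the $K_X$- and $R_X$-limits), but it does supply exactly the content of the paper's unproved remark and is a worthwhile sanity check.
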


\begin{example}
Винеровский процесс не является с.к.-диф\-фе\-рен\-ци\-руемым ни в какой точке, так как не существует обобщенной производной корреляционной функции в точках вида $(t,t)$. Действительно, возьмем произвольную точку $t$ и рассмотрим предел 
\begin{multline*}
\lim\limits_{\eps_1, \eps_2 \to 0} \frac{1}{\eps_1 \eps_2} (R_W(t+\eps_1,t+\eps_2)-R_W(t+\eps_1,t)\,- \\ 
- R_W(t,t+\eps_2) + R_W(t,t)).
\end{multline*}
Так как ${R_W(t,s)=\min(t,s)=(t+s-|t-s|)/2}$, то данный предел равен пределу $$\lim\limits_{\eps_1,\eps_2\to0}\frac{|\eps_1|+|\eps_2|-|\eps_1-\eps_2|}{2\eps_1\eps_2}.
$$
Конечный предел не существует, так как при $\eps_1=\eps_2$ он равен $+\infty$. \EndEx
\end{example}

\begin{example}
Процесс из задачи \ref{ex:Ex6.1} является с.к.-дифференцируе\-мым. Действительно, его математическое ожидание тождественно равно нулю, а значит, дифференцируемо всюду. Корреляционная функция $R_\xi(t_1,t_2)=$ $1/2\cos(t_1-t_2)$ имеет непрерывную смешанную производную второго порядка в любой точке вида $(t,t)$; следовательно, обобщенная производная существует и конечна всюду.
\end{example}
\subsection{Интегрируемость в среднем квадратичном}

\begin{definition}
Пусть процесс $X(t)\in L_2$ определен на отрезке ${[a,b]\subset [0,+\infty)}$. На отрезке $[a,b]$ построим некоторое разбиение $a=$\linebreak $=t_0 < t_1 < \dots < t_{n-1} < t_n = b $, и на каждом из промежутков этого разбиения выберем произвольную точку ${\tau_i\in[t_{i-1},t_i)}$, $i=1,\dots,n$. Если при ${n\to\infty}$ и ${\max\limits_{i=1,\dots,n}(t_i-t_{i-1})\to0}$ существует предел в среднеквадратическом $$\sum\limits_{i=1}^n X(\tau_i)(t_i-t_{i-1})\xrightarrow[\varepsilon\to0]{L_2}Y,$$ не зависящий от способа разбиения $\{t_i\}$ и выбора точек $\{\tau_i\}$, то процесс $X(t)$ называется \textit{с.к.-интегрируемым на} $[a,b]$, а случайная величина $Y$ называется ее \textit{с.к.-интегралом} или \textit{стохастическим интегралом Римана} на отрезке $[a,b]$ и обозначается $$Y=\int\limits_a^b X(t)\,dt.$$ 
\end{definition}

Определение с.к.-интеграла допускает полезное обобщение.

\begin{definition}
Пусть процесс $X(t)\in L_2$ определен на отрезке ${[a,b]\subset [0,+\infty)}$, а $g(t)$ -- неслучайная непрерывная функция на отрезке $[a,b]$. Тогда с.к.-интеграл случайного процесса $g(t)X(t)$, если он существует, называется \textit{с.к.-интегралом процесса $X(t)$ с непрерывной функцией $g(t)$} или \textit{стохастическим интегралом Римана по математическому ожиданию процесса $X(t)$ с непрерывной функцией $g(t)$}. Процесс $X(t)$ в этом случае называется \textit{с.к.-интегрируемым с непрерывной функцией $g(t)$}.
\end{definition}

Следующие два критерия с.к.-интегрируемости мы приводим без доказательства.

\begin{theorem}[ (критерий с.к.-интегрируемости)]
\textit{Случайный\linebreak процесс ${X(t)\in L_2}$ с.к.-интегрируем с непрерывной функцией $g(t)$ на отрезке ${[a,b]\subset [0,+\infty)}$ тогда и только тогда, когда существует \shmaxg{конечный} интеграл Римана}: 
$$\int\limits_{a}^{b} \int\limits_{a}^{b} g(t_1)g(t_2)K_X(t_1,t_2)\,dt_1dt_2 < \infty.$$
\end{theorem}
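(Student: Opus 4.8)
Зафиксируем размеченное разбиение $\Delta$ отрезка $[a,b]$: точки $a=t_0<t_1<\dots<t_n=b$ и метки $\tau_i\in[t_{i-1},t_i)$, и положим $S_\Delta=\sum_{i=1}^n g(\tau_i)X(\tau_i)(t_i-t_{i-1})$; как конечная линейная комбинация сечений процесса, $S_\Delta\in L_2$. Первым делом установим ключевое тождество: для двух размеченных разбиений $\Delta$ (узлы $\{t_i\}$, метки $\{\tau_i\}$) и $\Delta'$ (узлы $\{t_j'\}$, метки $\{\tau_j'\}$)
\[
\langle S_\Delta,S_{\Delta'}\rangle=\sum_{i,j} g(\tau_i)g(\tau_j')\,K_X(\tau_i,\tau_j')\,(t_i-t_{i-1})(t_j'-t_{j-1}'),
\]
и заметим, что правая часть --- это интегральная сумма для двойного интеграла $I=\iint_{[a,b]^2}g(t_1)g(t_2)K_X(t_1,t_2)\,dt_1\,dt_2$, отвечающая сеточному разбиению $\{t_i\}\times\{t_j'\}$ квадрата $[a,b]^2$ с отмеченными точками $(\tau_i,\tau_j')$.

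Далее докажем достаточность. Пусть интеграл $I$ существует и конечен; тогда при стремлении мелкостей разбиений к нулю выписанные суммы сходятся к $I$ независимо от выбора меток. Взяв произвольную последовательность размеченных разбиений $\Delta_n$ с $\max_i(t_i-t_{i-1})\to 0$ и положив $X_n=S_{\Delta_n}$, получим, что для любых подпоследовательностей $\langle X_{n_k},X_{n_m}\rangle\to I$, поэтому по теореме~\ref{th:ExistProccess} существует $Y=\operatorname{l.i.m.}_{n\to\infty}X_n\in L_2$. Чтобы убедиться, что предел не зависит от выбора последовательности разбиений и меток, <<перемешаем>> две такие последовательности в одну: она тоже удовлетворяет условию теоремы~\ref{th:ExistProccess}, значит сходится, откуда обе исходные последовательности имеют общий предел. Это и означает с.к.-интегрируемость $X(t)$ с функцией $g$, причём $Y=\int_a^b g(t)X(t)\,dt$.

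Затем докажем необходимость. Пусть $X(t)$ с.к.-интегрируем с $g$, то есть $S_\Delta$ сходится в среднем квадратичном к некоторому $Y\in L_2$ при измельчении разбиения (независимо от разбиения и меток), так что $\mathbb{E}Y^2<\infty$. По теореме~\ref{th:ContScalarProd} для любых последовательностей размеченных разбиений с мелкостями, стремящимися к нулю, имеем $\langle S_{\Delta_n},S_{\Delta_m'}\rangle\to\langle Y,Y\rangle=\mathbb{E}Y^2<\infty$; в силу тождества выше это означает, что интегральные суммы интеграла $I$ (по сеточным разбиениям квадрата) сходятся к конечному числу, откуда предстоит вывести существование и конечность $I$, причём $I=\mathbb{E}Y^2$.

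Главную трудность составляет аккуратный переход между <<сетью>> интегральных сумм $\langle S_\Delta,S_{\Delta'}\rangle$, занумерованной размеченными разбиениями, и теоремами~\ref{th:ExistProccess}, \ref{th:ContScalarProd}, сформулированными для последовательностей; это устраняется диагональным (перемешивающим) приёмом, как указано выше. Вторая, более техническая тонкость возникает в части необходимости: нужно обосновать, что сходимость интегральных сумм лишь по сеточным разбиениям квадрата с <<сепарабельными>> метками $(\tau_i,\tau_j')$ уже влечёт римановскую интегрируемость функции $g(t_1)g(t_2)K_X(t_1,t_2)$ на $[a,b]^2$ при произвольных разбиениях и метках. Это делается по стандартной схеме: сначала из сходимости <<диагональных>> сумм $\mathbb{E}S_\Delta^2$ при всех метках выводится локальная ограниченность подынтегральной функции, а затем критерий Коши для кратного интеграла вместе с оценкой колебаний позволяет заменить сепарабельные метки произвольными, а сеточные разбиения --- произвольными.
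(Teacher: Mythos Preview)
В пособии эта теорема приводится без доказательства (<<следующие два критерия с.к.-интегрируемости мы приводим без доказательства>>), так что сравнивать не с чем; оцениваю ваш план по существу. Достаточность у вас корректна и стандартна: ключевое тождество верно, теорема~\ref{th:ExistProccess} применима, а перемешивающий приём действительно обеспечивает независимость с.к.-предела от выбора последовательности разбиений и меток.

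В необходимости вы правильно локализуете трудность, и первый шаг --- ограниченность --- действительно проходит: если бы $g(s_n)^2K_X(s_n,s_n)\to\infty$ вдоль некоторой $s_n\to s_0$, то, меняя в фиксированном мелком разбиении лишь метку в ячейке, содержащей $s_0$, с $s_m$ на $s_n$, получили бы $\|S_\Delta-S_{\Delta'}\|^2\ge h_i^2\bigl(\sqrt{M_n}-\sqrt{M_m}\bigr)^2\to\infty$, что противоречит фундаментальности $\{S_\Delta\}$. А вот второй шаг --- <<критерий Коши вместе с оценкой колебаний>> --- в набросанном виде не замыкается. Естественная оценка колебания $f(s,t)=g(s)g(t)K_X(s,t)$ на клетке $I_i\times I_j$ через одномерные $L_2$-колебания $\omega_i=\sup_{s,s'\in I_i}\|g(s)X(s)-g(s')X(s')\|$ даёт лишь $\omega_{ij}(f)\le C(\omega_i+\omega_j)$, но величина $\sum_i\omega_ih_i$ при с.к.-интегрируемости к нулю стремиться вовсе не обязана: для процесса с независимыми сечениями $X(t)\sim\mathrm N(0,1)$ имеем $\omega_i\equiv\sqrt2$, хотя $X$ с.к.-интегрируем (с интегралом $0$), а $K_X(s,t)=\mathbf{1}_{\{s=t\}}$ риманово интегрируема. Иначе говоря, с.к.-интегрируемость $L_2$-значной функции $t\mapsto g(t)X(t)$ не влечёт дарбу-условия $\sum_i\omega_ih_i\to0$, и колебательный аргумент в лоб не работает. Полное замыкание необходимости требует либо более тонкого использования неотрицательной определённости $K_X$, либо --- что фактически и делается в ряде источников --- понимания слов <<существует интеграл Римана>> как существования предела именно сумм вида $\langle S_\Delta,S_{\Delta'}\rangle$ (а это вы уже доказали), а не римановой интегрируемости при произвольных двумерных метках.
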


\addtocounter{theorem}{-1}
\begin{theorem}[$'$ (критерий с.к.-интегрируемости)]
\textit{Случайный\linebreak процесс ${X(t)\in L_2}$ с.к.-интегрируем с непрерывной функцией $g(t)$ на отрезке $[a,b]\subset[0,+\infty)$ тогда и только тогда, когда существуют \shmaxg{конечные} интегралы Римана}: $$\int\limits_{a}^{b}g(t)m_X(t)\,dt < \infty, \
\int\limits_{a}^{b} \int\limits_{a}^{b} g(t_1)g(t_2)R_X(t_1,t_2)\,dt_1dt_2 < \infty.$$
\end{theorem}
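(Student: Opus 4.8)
The plan is to reduce this primed criterion to the preceding one (the version stated through the covariance function $K_X$) by separating $X(t)$ into its mean and its centered part. Writing $X(t) = \overset{\circ}{X}(t) + m_X(t)$, every tagged integral sum splits as $\sum_{i=1}^n g(\tau_i)X(\tau_i)(t_i - t_{i-1}) = S_n + s_n$, where $S_n = \sum_{i=1}^n g(\tau_i)\overset{\circ}{X}(\tau_i)(t_i - t_{i-1})$ is a centered random variable ($\mathbb{E}S_n = 0$) and $s_n = \sum_{i=1}^n g(\tau_i)m_X(\tau_i)(t_i - t_{i-1}) \in \mathbb{R}$ is an ordinary Riemann sum for $\int_a^b g(t)m_X(t)\,dt$. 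Because $S_n$ has zero mean and $s_n$ is deterministic, one has the exact identity $\left\|(S_n + s_n) - (S_m + s_m)\right\|_2^2 = \left\|S_n - S_m\right\|_2^2 + (s_n - s_m)^2$, so by the completeness of $L_2$ (Theorem~\ref{th:L2Complete}) and of $\mathbb{R}$, the sums $\sum_i g(\tau_i)X(\tau_i)(t_i-t_{i-1})$ are Cauchy, hence convergent, along refining partitions \emph{iff} the centered sums $S_n$ converge in $L_2$ \emph{and} the numerical sums $s_n$ converge. Taking $\mathbb{E}$ of the whole sum, which equals $s_n$, shows that the limit of $s_n$, and then of $S_n = \sum_i g(\tau_i)X(\tau_i)(t_i-t_{i-1}) - s_n$, is independent of the choice of $\{t_i\}$ and $\{\tau_i\}$ whenever the original limit is; conversely independence of the two pieces gives independence of their sum. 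Hence $g(t)X(t)$ is s.k.-integrable on $[a,b]$ iff $g(t)\overset{\circ}{X}(t)$ is s.k.-integrable on $[a,b]$ and $\int_a^b g(t)m_X(t)\,dt$ exists and is finite.

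The second half is identification of each condition. For the centered part I would apply the already-stated criterion in its covariance form to the process $\overset{\circ}{X}(t)$, which lies in $L_2$ whenever $X(t) \in L_2$: $g(t)\overset{\circ}{X}(t)$ is s.k.-integrable on $[a,b]$ iff the finite Riemann integral $\int_a^b\int_a^b g(t_1)g(t_2)K_{\overset{\circ}{X}}(t_1,t_2)\,dt_1\,dt_2$ exists. Since $K_{\overset{\circ}{X}}(t_1,t_2) = \mathbb{E}\overset{\circ}{X}(t_1)\overset{\circ}{X}(t_2) = R_X(t_1,t_2)$ by the definition of the correlation function, this is exactly the condition $\int_a^b\int_a^b g(t_1)g(t_2)R_X(t_1,t_2)\,dt_1\,dt_2 < \infty$. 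The condition on the mean is already in the desired form $\int_a^b g(t)m_X(t)\,dt < \infty$. Assembling: $g(t)X(t)$ is s.k.-integrable iff both the Riemann integral $\int_a^b g(t)m_X(t)\,dt$ and the double Riemann integral $\int_a^b\int_a^b g(t_1)g(t_2)R_X(t_1,t_2)\,dt_1\,dt_2$ are finite, which is the asserted equivalence.

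The main obstacle I expect is bookkeeping around the \emph{definition} of the s.k.-integral rather than any analytic estimate: the limit there is taken over the directed family of all tagged partitions with mesh going to zero, so the Cauchy criterion must be phrased in that net form, and one must verify carefully that the clean identity $\|(S_n+s_n)-(S_m+s_m)\|_2^2 = \|S_n-S_m\|_2^2 + (s_n-s_m)^2$ together with the expectation trick genuinely transfers both the convergence \emph{and} the required independence of partition and tags in both directions. A secondary subtlety is that $m_X$ need not be continuous (only $g$ is), so the statement about $\int_a^b g(t)m_X(t)\,dt$ must be read as the assertion that the Riemann integral of the bounded function $g\cdot m_X$ exists, and the argument for the deterministic part must be made directly from Riemann sums rather than from any continuity of $m_X$; one also checks that invoking the covariance-form criterion for $\overset{\circ}{X}$ is legitimate precisely because $\overset{\circ}{X}(t) \in L_2$.
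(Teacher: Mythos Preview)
The paper explicitly states both integrability criteria \emph{без доказательства} (without proof), so there is no in-paper argument to compare against.

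Your reduction is correct and is the natural way to pass from the $K_X$-version to the primed $R_X,m_X$-version. The key points all check out: decomposing the tagged sums as $S_n+s_n$ with $\mathbb{E}S_n=0$ and $s_n$ deterministic gives the exact Pythagorean identity you wrote; taking $\mathbb{E}$ of the full sum (i.e.\ pairing with the constant $1$ in $L_2$, using the continuity of the scalar product, Теорема~\ref{th:ContScalarProd}) extracts the convergence of $s_n$ and hence the existence of $\int_a^b g\,m_X$ in the necessary direction; and $K_{\overset{\circ}{X}}=R_X$ lets you invoke the unprimed criterion for the centered process. This is exactly in the spirit of how the paper handles the analogous passage for continuity (the remark after Теорема~\ref{th:SKCont1}$'$, where $K_X=R_X+m_Xm_X$ is split into pieces). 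One cosmetic note: for the \emph{sufficiency} direction you could bypass the Cauchy-net bookkeeping entirely by observing
\[
\int_a^b\!\!\int_a^b g(t_1)g(t_2)K_X(t_1,t_2)\,dt_1dt_2=\int_a^b\!\!\int_a^b g(t_1)g(t_2)R_X(t_1,t_2)\,dt_1dt_2+\Bigl(\int_a^b g\,m_X\Bigr)^2,
\]
so existence of the two primed integrals immediately gives existence of the $K_X$ integral; your decomposition at the level of sums is what is genuinely needed for the \emph{necessity} direction, and you have it right.
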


Отметим, что если процесс является с.к.-непрерывным, то он является с.к.-интегрируемым на любом отрезке и с любой непрерывной функцией. Если процесс является с.к.-дифференцируемым, то он является с.к.-непрерывным. Например, т.к. винеровский и пуассоновский процессы являются с.к.-непрерывными, то они являются с.к.-интегрируемыми на любом отрезке с любой непрерывной функцией.
\subsection{Полезные формулы и примеры}

Итак, мы рассмотрели понятие с.к.-предела в пространстве $L_2$ случайных величин второго порядка и ввели на его основе понятия\linebreak с.к.-~непрерывности, с.к.-производной и с.к.-интеграла случайных процессов. Совершенно аналогично можно было бы ввести понятия непрерывности, дифференцируемости и интегрируемости, рассматривая их в смысле сходимости почти наверное, по вероятности и по распределению. Для этого во всех определениях выше вместо с.к.-предела нужно поставить предел в соответствующем смысле. Отметим, что критерии непрерывности, дифференцируемости и интегрируемости в этом случае будут другие.

\begin{example}
Выше мы видели, что пуассоновский процесс не является с.к.-дифференцируемым. Покажем, что он является дифференцируемым в смысле сходимости по вероятности, т.е. что существует такая случайная величина $Y(t)$, что $$\forall t \ge 0 \hookrightarrow \frac{K(t+\eps)-K(t)}{\eps} \xrightarrow[\eps\to0]{\mathbb{P}}Y(t),$$ или, по определению сходимости по вероятности,
$$\forall t \ge 0, \ \forall \delta > 0 \hookrightarrow \mathbb{P}\left( \left|\frac{K(t+\eps)-K(t)}{\eps} - Y\right| > \delta \right) \xrightarrow[\eps\to0]{}0.$$ Пусть $\lambda$ -- интенсивность пуассоновского процесса. Возьмем ${Y(t)=0}$. Заметим, что $K(t+\eps)-K(t)\in\mathrm{Po}(\lambda\eps)$ и следовательно, $$\mathbb{P}(K(t+\eps)-K(t)=k)=\frac{(\lambda\eps)^k}{k!}e^{-\lambda\eps}, \ k=0,1,2,\dots.$$ При ${\eps<1/\delta}$ число $\delta\eps < 1$, событие $\{K(t+\eps)-K(t)>\delta\eps\}$ совпадает с событием $\{K(t+\eps)-K(t)>0\}$, и значит, $$\mathbb{P}(K(t+\eps)-K(t)>\delta\eps)= \mathbb{P}(K(t+\eps)-K(t)>0)=1-e^{-\lambda\eps}\to0$$ при ${\eps\to0}$. Итак, мы показали, что пуассоновский процесс в любой точке обладает производной в смысле сходимости по вероятности и эта производная всюду равна 0. Отсюда следует и то, что пуассоновский процесс в любой точке обладает производной в смысле сходимости по распределению, и эта производная тоже всюду равна 0. \EndEx
\end{example}

\begin{example}
Покажем, что винеровский процесс не является дифференцируемым ни в какой точке даже по распределению. Предположим противное. Зафиксируем ${t\ge0}$. Пусть существует случайная величина $Y$ такая, что $$\forall t \ge 0 \hookrightarrow \frac{W(t+\eps)-W(t)}{\eps} \xrightarrow[\eps\to0]{d}Y.$$ Введем обозначение $$X_{\eps}=\frac{W(t+\eps)-W(t)}{\eps}.$$ По теореме непрерывности, из сходимости по распределению следует сходимость характеристических функций, т.е. $$\forall s\in\mathbb{R} \hookrightarrow \varphi_{X_{\eps}}(s)\to\varphi_{Y}(s), \ \eps\to0.$$ Так как ${X_{\eps}\in\mathrm{N}(0,1/|\eps|)}$, то ${\varphi_{X_{\eps}}(s)=e^{-s^2/(2|\eps|)}}$. Следовательно, при ${s=0}$ имеем ${\varphi_{X_{\eps}}(0)=1}$, а при ${s\ne0}$ получаем ${\varphi_{X_{\eps}}(s)\to0}$, ${\eps\to0}$. Таким образом, предельная функция $\varphi_Y(s)$ не является непрерывной в точке ${s=0}$, но так как характеристическая функция всякой случайной величины всюду непрерывна, получаем противоречие. \EndEx
\end{example}

Приведем также ряд соотношений, вывод которых предлагается выполнить читателю в качестве упражнения. Все процессы здесь предполагаются достаточно с.к.-гладкими.

$$\mathbb{E}X'(t) = \frac{d}{dt}\mathbb{E}X(t),$$


$$\mathrm{cov} (X'(t),X'(s)) = R_{X'}(t,s) = \frac{\partial^2R_X(t,s)}{\partial t \, \partial s},$$

$$\mathrm{cov} (X'(t),X'(t)) = \mathbb{D}X'(t) = \frac{\partial^2R_X(t,s)}{\partial t \, \partial s}\bigg\rvert_{s=t},$$

$$\mathrm{cov} \left(X(t),X'(s)\right) = \frac{\partial R_X(t,s)}{\partial s},$$

$$\mathbb{E}\int\limits_a^b X(t)\,dt=\int\limits_a^b\mathbb{E}X(t)\,dt,$$

$$\mathrm{cov} \left(X(t),\int\limits_{a}^b X(t)\,dt \right) = \int\limits_{a}^b R_X(t,s) ds,$$

$$\mathrm{cov} \left(\int\limits_{a}^b X(t)\,dt,\int\limits_{c}^d X(t)\,dt \right) = \int\limits_{a}^b\int\limits_{c}^d R_X(t,s)\,dt ds,$$

$$\mathrm{cov} \left(\int\limits_{a}^b X(t)\,dt,\int\limits_{a}^b X(t)\,dt \right) = \mathbb{D}\int\limits_{a}^b X(t)\,dt= \int\limits_{a}^b \int\limits_{a}^b R_X(t,s)\,dt ds.$$

\begin{example}
Пусть ${X(t)\in L_2, X'(t) \in L_2, J(t) = \int_{0}^{t}X(\tau)d\tau \in L_2}$ и $m_X(t)\equiv0$. Вычислить взаимную корреляционную функцию случайных процессов $X'(t)$ и $J(t)$.
\end{example}

\begin{solution}
Из соотношений выше сразу следует, что $m_{X'}(t)=0$ и $m_{J}(t)=0$ для всех $t$. Следовательно,
$$R_{X',J}(t_1,t_2) = \Exp (X'(t_1)J(t_2)) =$$
$$	= \Exp \left( \underset{\varepsilon\to0}{\operatorname{l.i.m.}} \frac{X(t_1+\varepsilon) - X(t_1)}{\varepsilon}\cdot \underset{\substack{0=\tau_0 < \tau_1<\ldots<\tau_n=t_2 \\
\underset{i=\overline{1,n}}{\max}\Delta \tau_i \xrightarrow[n\to\infty]{}0\\
\tau_i'\in[\tau_i,\tau_{i+1})}}{\operatorname{l.i.m.}} \sum_{i=1}^{n}X(\tau_i')\Delta\tau_i \right) = $$
$$	\overset{\text{Теорема \ref{th:ContScalarProd}}}{=} \underset{\substack{\varepsilon\to0\\
n\to\infty}}{\lim}\Exp\left(\frac{X(t_1+\varepsilon_1)-X(t_1)}{\varepsilon}\cdot\sum_{i=1}^{n}X(\tau_i')\Delta\tau_i\right) =$$
$$
= \underset{\substack{\varepsilon\to0\\	n\to\infty}}{\lim}\Exp\left(\sum_{i=1}^{n}\frac{X(t_1+\varepsilon)X(\tau_i') - X(t_1)X(\tau_i')}{\varepsilon}\Delta\tau_i\right) =
$$
$$
= \underset{\substack{\varepsilon\to0\\
n\to\infty}}{\lim}\sum_{i=1}^{n}\frac{R_X(t_1+\varepsilon,\tau_i') - R_X(t_1,\tau_i')}{\varepsilon}\Delta\tau_i = \int\limits_{0}^{t_2}\frac{\partial R_X(t_1,\tau)}{\partial t_1}\,d\tau,$$ где ${\Delta \tau_i=\tau_{i}-\tau_{i-1}}$, ${i \ge 1}$, и начиная с третьей строчки условия о стремлении мелкости разбиения отрезка $[0,t_2]$ к нулю, заменены для краткости одним условием $n\to\infty$. \EndEx
\end{solution}
\subsection{Интеграл \gav{Римана--Стилтьеса}}

Теперь введем понятие \textit{интеграла \gav{Римана--Стилтьеса}}, аналогичное понятию интеграла Стилтьеса для неслучайных функций.

\begin{definition}\label{def:ItoIntegral}
Пусть процесс $X(t)\in L_2$ определен на отрезке ${[a,b]\subset [0,+\infty)}$ и $g(t)$ -- непрерывная на отрезке $[a,b]$ функция. На~отрезке $[a,b]$ построим некоторое разбиение $a=t_0 < t_1 < \dots <$\linebreak $< t_{n-1} < t_n = b $, и на каждом из промежутков этого разбиения выберем произвольную точку ${\tau_i\in[t_{i-1},t_i)}$, $i=1,\dots,n$. Если при ${n\to\infty}$ и $\shmaxg{\varepsilon}={\max\limits_{i=1,\dots,n}(t_i-t_{i-1})\to0}$ существует предел в среднеквадратическом $$\sum\limits_{i=1}^n g(\tau_i)   (X(\tau_i) - X(\tau_{i-1}))\xrightarrow[\varepsilon\to0]{L_2}Y,$$ не зависящий от способа разбиения $\{t_i\}$ и выбора точек $\{\tau_i\}$, то случайная величина $Y$ называется \textit{интегралом \gav{Римана--Стилтьеса} с непрерывной функцией $g(t)$} на отрезке $[a,b]$ и обозначается символом $$Y=\int\limits_a^b g(t)\,dX(t).$$ 
\end{definition}

Приведем без доказательства критерий существования \shmaxg{такого интеграла}. 


\begin{theorem}\textbf{(см.~\cite{NGG3})}
Пусть $X(t)\in L_2$. Интеграл Римана--Стил\-тьеса от случайного процесса $X(t)$ с непрерывной функцией $g(t)$ существует тогда и только тогда, когда 
$$\int\limits_{a}^{b} \int\limits_{a}^{b} g(t_1)g(t_2)\,dK_X(t_1,t_2) < \infty,$$
где интеграл понимается в смысле интеграла Римана--Стилтьеса\footnote{Здесь интеграл понимается в следующем смысле. Пусть мера клетки $[a_1,a_2]\times[b_1,b_2]$ равна $K(a_2,b_2) - K(a_1,b_2) - K(a_2,b_1) + K(a_1,b_1)$. При помощи клеточных разбиений и сумм Римана вводится интеграл Римана--Стилтьеса от непрерывной функции подобно тому, как определялся интеграл Римана--Стилтьеса в одномерном случае (см.~\cite[гл. 3, \S 6]{Borovkov1999}) и кратный интеграл Римана.}.
\end{theorem}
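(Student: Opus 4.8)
Идея в том, чтобы свести всё к критерию Коши в полном пространстве $L_2$ (теорема~\ref{th:L2Complete}) и к выражению скалярных произведений интегральных сумм через функцию $K_X$. Обозначим через $S_\Delta=\sum_{i=1}^n g(\tau_i)\,(X(t_i)-X(t_{i-1}))$ интегральную сумму Римана--Стилтьеса, отвечающую разбиению $\Delta=\{t_i\}$ и набору промежуточных точек $\{\tau_i\}$. Первым делом я бы вычислил скалярное произведение двух таких сумм для разбиений $\Delta$, $\Delta'$: раскрывая произведение по линейности,
$$\langle S_\Delta,S_{\Delta'}\rangle=\mathbb{E}\bigl(S_\Delta S_{\Delta'}\bigr)=\sum_{i,j} g(\tau_i)g(\tau'_j)\,\mathbb{E}\bigl[(X(t_i)-X(t_{i-1}))(X(t'_j)-X(t'_{j-1}))\bigr],$$
и воспользовался бы элементарным тождеством
$$\mathbb{E}\bigl[(X(u_2)-X(u_1))(X(v_2)-X(v_1))\bigr]=K_X(u_2,v_2)-K_X(u_1,v_2)-K_X(u_2,v_1)+K_X(u_1,v_1),$$
то есть тем, что математическое ожидание произведения приращений процесса равно «мере клетки» функции $K_X$ ровно в том смысле, в каком клеточная мера введена в сноске к формулировке теоремы.

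Второй шаг -- заметить, что после подстановки этого тождества $\mathbb{E}(S_\Delta S_{\Delta'})$ превращается в интегральную сумму Римана--Стилтьеса для двойного интеграла $\int_a^b\int_a^b g(t_1)g(t_2)\,dK_X(t_1,t_2)$ по прямоугольному (сеточному) разбиению квадрата $[a,b]^2$, порождённому $\Delta$ и $\Delta'$, с промежуточными точками вида $(\tau_i,\tau'_j)$. Так как функция $g$ непрерывна (а значит, и равномерно непрерывна на $[a,b]$), для такого подынтегрального выражения сходимость сеточных сумм при измельчении равносильна существованию и конечности указанного двойного интеграла.

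Дальше план распадается на две импликации. Для достаточности: если двойной интеграл конечен, то в равенстве $\|S_\Delta-S_{\Delta'}\|_2^2=\langle S_\Delta,S_\Delta\rangle-2\langle S_\Delta,S_{\Delta'}\rangle+\langle S_{\Delta'},S_{\Delta'}\rangle$ все три скалярных произведения при стремлении мелкостей разбиений к нулю сходятся к одному и тому же числу -- значению двойного интеграла, -- поэтому $\|S_\Delta-S_{\Delta'}\|_2\to 0$; семейство $\{S_\Delta\}$ фундаментально по Коши, по теореме~\ref{th:L2Complete} оно сходится в среднем квадратичном к некоторому $Y\in L_2$, и тем же рассуждением предел не зависит от разбиения и выбора промежуточных точек. Для необходимости: если $S_\Delta\xrightarrow{L_2}Y$, то по теореме~\ref{th:ContScalarProd} о непрерывности скалярного произведения $\mathbb{E}(S_\Delta S_{\Delta'})\to\langle Y,Y\rangle=\mathbb{E}Y^2<\infty$ для любых двух последовательностей разбиений, а это и есть сходимость сеточных сумм Римана--Стилтьеса для $\int_a^b\int_a^b g(t_1)g(t_2)\,dK_X(t_1,t_2)$ к общему конечному пределу, то есть существование двойного интеграла. (Здесь также удобно опираться на теорему~\ref{th:ExistProccess}, которая как раз переводит «сходимость всех попарных скалярных произведений подпоследовательностей интегральных сумм» в существование с.к.-предела.)

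Главным препятствием я ожидаю аккуратную стыковку «сеточных» интегральных сумм, естественно возникающих из $\mathbb{E}(S_\Delta S_{\Delta'})$, с общим определением двумерного интеграла Римана--Стилтьеса по произвольным клеточным разбиениям квадрата: нужно показать, что для непрерывного подынтегрального выражения $g(t_1)g(t_2)$ достаточно прямоугольных разбиений, и проконтролировать независимость предела от выбора промежуточных точек $\tau_i$, $\tau'_j$ (для чего пригодятся равномерная непрерывность $g$ и стандартный аргумент измельчения). Второстепенная техническая тонкость -- соглашение о крайних точках в сумме $S_\Delta$ (случай $i=1$), но на значение предела оно не влияет.
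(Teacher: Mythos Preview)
В пособии эта теорема приводится без доказательства (с отсылкой к~\cite{NGG3}), так что сравнивать буквально не с чем. Ваш набросок верен и воспроизводит стандартную схему, полностью аналогичную доказательствам критериев с.к.-непрерывности и с.к.-дифференцируемости из того же раздела: ключевое тождество $\langle S_\Delta,S_{\Delta'}\rangle=\sum_{i,j}g(\tau_i)g(\tau'_j)\,\Delta_{ij}K_X$ действительно превращает скалярное произведение интегральных сумм в сеточную сумму Римана--Стилтьеса для двойного интеграла, после чего достаточность даёт полнота $L_2$ (теорема~\ref{th:L2Complete}, либо сразу теорема~\ref{th:ExistProccess}), а необходимость --- непрерывность скалярного произведения (теорема~\ref{th:ContScalarProd}). Отмеченная вами техническая тонкость о переходе от прямоугольных сеток $\Delta\times\Delta'$ к произвольным клеточным разбиениям квадрата --- это единственное место, требующее аккуратности; оно действительно закрывается равномерной непрерывностью $g(t_1)g(t_2)$ и стандартным аргументом измельчения до общего подразбиения.
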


Интеграл \gav{Римана--Стилтьеса} по конечным отрезкам естественным образом обобщается на случай бесконечных интервалов. Например, по определению будем считать, что
$$\int\limits_{0}^{+\infty}g(t)\,dX(t)=\  \underset{b\to+\infty}{\operatorname{l.i.m.}}\int\limits_0^b g(t)\,dX(t),$$
если этот предел существует.

\subsection{Стохастический интеграл и формула Ито}

Введем теперь понятие стохастического интеграла  от случайного процесса по винеровскому процессу ${\{W(t), t\ge0\}}$. Это понятие пригождается для определения стохастических дифференциальных уравнений.

Далее нам пригодится обозначение $\sigma\{X(t),t\in T\}$ сигма-алгебры, порожденной случайным процессом $X(t)$ на множестве $t\in T$. Это минимальная сигма-алгебра, относительно которой измеримы все сечения $X(t)$ на множестве $t\in T$.

\begin{definition}
Случайная функция ${\{X(t),t\in [a,b]\}}$ называется \textit{неупреждающей} относительно процесса ${\{W(t), t\in [a,b]\}}$, если для любого ${t\in [a,b]}$ и для любого борелевского множества ${B\in\mathcal{B}}$ выполнено ${\{X(t) \in B\}\in\sigma\{W(s),s \le t\}}$.
\end{definition}
Если говорить очень грубо, то это значит, что события, связанные с процессом $X(t)$ в момент $t$, связаны с событиями процесса $W(t)$ на интервале времени до $t$ включительно и не связаны с <<будущим>> процесса $W(t)$, т.е. с событиями на интервалах времени после момента времени $t$.

Как обычно в теории меры, интеграл от случайной функции мы построим как предел интеграла от простых (ступенчатых, ку\-соч\-но-постоянных) функций. Предел этот будем мы будем понимать\linebreak в~с.к.-смысле. Под интегралом же простой функции $X_n(t)$, принимающей значения ${X_n(t)=X_n(t_k)}$ на интервалах ${t_k\le t < t_{k+1}}$ разбиения ${\{t_k\}_{k=1}^n}$ интервала $[a,b]$ будет понимать просто сумму
$$I(X_n)=\sum\limits_{k=1}^n X_n(t_k)(W(t_{k+1})-W(t_k)).$$
\begin{theorem} 
\textbf{(см.~\cite[С.~214]{MillerPankov})}
\textit{Пусть ${\{X(t),t\in [a,b]\}}$ --\linebreak с.к.-непрерывная на $[a,b]$ неупреждающая функция. Тогда найдется последовательность $X_n(t)$ простых неупреждающих функций $\{X_n(t)\}$, такая, что $$\int\limits_{a}^b \mathbb{E}|X_n(t)-X(t)|^2\,dt \to 0, \ n\to\infty,$$ и для которой существует с.к.-предел} $I(X_n) \xrightarrow{\text{с.к.}} I $, $n\to\infty$.
\end{theorem}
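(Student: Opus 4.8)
The plan is to take for $X_n$ the left-endpoint step functions attached to a refining sequence of partitions, and to split the statement into two essentially independent parts: (i) the mean-square approximation $\int_a^b\mathbb{E}|X_n(t)-X(t)|^2\,dt\to0$, and (ii) the existence of the m.s.-limit of the simple-function integrals $I(X_n)$. The preliminary step is to upgrade m.s.-continuity to \emph{uniform} m.s.-continuity on the compact interval: by Theorem~\ref{th:SKCont1} the covariance function $K_X$ is continuous on the compact square $[a,b]^2$, hence uniformly continuous there, so the modulus $\omega(\delta):=\sup\{\mathbb{E}|X(u)-X(v)|^2:\ u,v\in[a,b],\ |u-v|\le\delta\}$ tends to $0$ as $\delta\to0$; in particular $\sup_{t\in[a,b]}\mathbb{E}X^2(t)<\infty$. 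Then I fix for each $n$ a partition $a=t^{(n)}_0<\dots<t^{(n)}_{m_n}=b$ of mesh $\delta_n\to0$ and set $X_n(t)=X(t^{(n)}_k)$ for $t\in[t^{(n)}_k,t^{(n)}_{k+1})$. Each $X_n$ is simple, and it is non-anticipating because, for $t\in[t^{(n)}_k,t^{(n)}_{k+1})$ and Borel $B$, $\{X_n(t)\in B\}=\{X(t^{(n)}_k)\in B\}\in\sigma\{W(s),s\le t^{(n)}_k\}\subseteq\sigma\{W(s),s\le t\}$, the last inclusion holding since $t\ge t^{(n)}_k$ — this is exactly why one evaluates at the left endpoint.

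For part (i): if $t\in[t^{(n)}_k,t^{(n)}_{k+1})$ then $\mathbb{E}|X_n(t)-X(t)|^2=\mathbb{E}|X(t^{(n)}_k)-X(t)|^2\le\omega(\delta_n)$, whence $\int_a^b\mathbb{E}|X_n(t)-X(t)|^2\,dt\le(b-a)\,\omega(\delta_n)\to0$. This already furnishes the required approximating sequence of simple non-anticipating functions.

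For part (ii) the key tool is the It\^o isometry for simple non-anticipating functions. If $\phi$ takes the value $\phi_k\in L_2$, measurable with respect to $\sigma\{W(s),s\le u_k\}$, on $[u_k,u_{k+1})$, and $\Delta W_k:=W(u_{k+1})-W(u_k)$, then $\mathbb{E}|I(\phi)|^2=\sum_{k,l}\mathbb{E}[\phi_k\phi_l\,\Delta W_k\,\Delta W_l]$; for $k<l$, conditioning on $\sigma\{W(s),s\le u_l\}$ makes $\phi_k,\phi_l,\Delta W_k$ measurable while $\mathbb{E}[\Delta W_l\mid\sigma\{W(s),s\le u_l\}]=0$ by independence of increments, so that term vanishes, and for $k=l$, conditioning on $\sigma\{W(s),s\le u_k\}$ gives $\mathbb{E}[\phi_k^2\,\Delta W_k^2]=\sigma^2(u_{k+1}-u_k)\,\mathbb{E}\phi_k^2$; hence $\mathbb{E}|I(\phi)|^2=\sigma^2\int_a^b\mathbb{E}|\phi(t)|^2\,dt$ (with $\sigma=1$ for the standard Wiener process). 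Applying this to $X_n-X_m$, which is again simple and non-anticipating when written on the common refinement of the two partitions, and noting that $I$ is linear there and that $I(X_n)$ is unchanged by passing to the refinement (Wiener increments telescope over sub-intervals), I obtain $\mathbb{E}|I(X_n)-I(X_m)|^2=\sigma^2\int_a^b\mathbb{E}|X_n(t)-X_m(t)|^2\,dt\le 2\sigma^2(b-a)\big(\omega(\delta_n)+\omega(\delta_m)\big)\to0$. So $\{I(X_n)\}$ is Cauchy in $L_2$, and by completeness of $L_2$ (Theorem~\ref{th:L2Complete}) there exists $I\in L_2$ with $I(X_n)\xrightarrow{\text{с.к.}}I$, which is the second assertion.

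The main obstacle is the It\^o isometry, more precisely the measurability bookkeeping behind it: one must check that left-endpoint evaluation really makes $\phi_k$ measurable with respect to the past $\sigma$-algebra $\sigma\{W(s),s\le u_k\}$, that the increment of a parameter-$\sigma$ Wiener process is centered with variance $\sigma^2(u_{k+1}-u_k)$ and independent of that past, and that $X_n-X_m$ may legitimately be handled as one simple non-anticipating function on the common refinement so that linearity of $I(\cdot)$ applies. Everything else — the passage from m.s.-continuity to uniform m.s.-continuity and the modulus-of-continuity estimates — is routine once continuity of $K_X$ is in hand.
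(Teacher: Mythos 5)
Заметьте прежде всего, что в самом пособии эта теорема приведена \emph{без доказательства} --- дана лишь ссылка на Миллера--Панкова, так что сравнивать ваш текст не с чем; оцениваю его по существу. Ваше рассуждение корректно и является стандартным доказательством этого факта: из непрерывности $K_X$ на компакте $[a,b]^2$ (теорема~\ref{th:SKCont1}) следует равномерная с.к.-непрерывность, левоточечная дискретизация даёт простые неупреждающие функции с $\int_a^b \mathbb{E}|X_n(t)-X(t)|^2\,dt \le (b-a)\,\omega(\delta_n)\to 0$, а изометрия Ито для простых неупреждающих функций вместе с полнотой $L_2$ (теорема~\ref{th:L2Complete}) даёт фундаментальность и, значит, сходимость $I(X_n)$; переход к общему измельчению двух разбиений и телескопирование приращений винеровского процесса обосновывают линейность и неизменность $I$ при измельчении.

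Единственное место, которое стоит довести до конца, --- интегрируемость в выводе изометрии: чтобы вынести $\mathcal{F}_{u_l}$-измеримый множитель из условного математического ожидания (или сослаться на независимость), нужно заранее знать, что произведение $\varphi_k\varphi_l\,\Delta W_k\,\Delta W_l$ лежит в $L_1$. Это легко закрывается правильной группировкой: $\varphi_k$ и $\Delta W_k$ независимы (первый множитель $\mathcal{F}_{u_k}$-измерим, приращение от $\mathcal{F}_{u_k}$ не зависит), поэтому $\varphi_k\Delta W_k\in L_2$ с $\mathbb{E}(\varphi_k\Delta W_k)^2=\sigma^2(u_{k+1}-u_k)\mathbb{E}\varphi_k^2$; тогда $(\varphi_k\Delta W_k)\varphi_l\in L_1$ по неравенству Коши--Буняковского, и уже эта величина независима с $\Delta W_l$, откуда математическое ожидание произведения распадается и равно нулю. Вы сами назвали эту «бухгалтерию измеримости» главным препятствием --- с указанной группировкой она закрывается, и доказательство полно.
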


\begin{definition}
Предел $I$ в теореме выше называется \textit{стохастическим интегралом} функции $X(t)$ по винеровскому процессу $W(t)$ на интервале $[a,b]$ и обозначается $$I(X)=\int\limits_{a}^bX(t)\,dW(t).$$
\end{definition}

Если сравнить это определение с интегралом от неслучайной функции по случайному процессу, то можно видеть, что все отличие состоит в паре формальностей: вместо непрерывной подынтегральной функции мы имеем дело с с.к.-непрерывной функцией и дополнительно требуем от нее свойство неупреждаемости. Можно доказать, что значение $I$ не зависит от выбора последовательности простых функций. Поэтому на практике интервал $[a,b]$ можно разбивать равномерно по времени. Разберем хрестоматийный пример.

\begin{example}
Вычислить стохастический интеграл $$I = \int\limits_{0}^T W(t)\,dW(t).$$
\end{example}

\textbf{Решение}. Подынтегральная функция $W(t)$ является всюду\linebreak с.к.-непрерывной и, очевидно, неупреждающей функцией. Вычислять интеграл будем по определению, для этого введем равномерное разбиение отрезка $[0,T]$, $t_1 < \dots t_n$, $t_{k+1}-t_k=h$, запишем интегральную сумму 
\begin{equation}\label{eq:ItoIntForm}
I_n = \sum\limits_{k=1}^n W(t_k)(W(t_{k+1}) - W(t_k)).
\end{equation}
Для краткости обозначений введем $\Delta t_k = t_{k+1}-t_k$, $\Delta W_k = W(t_{k+1}) -$\linebreak $- W(t_k)\in\mathrm{N}(0,\Delta t_k)$. Теперь заметим, что $$W(t_k)\Delta W_k = \frac{1}{2}\left( W^2(t_{k+1}) - W^2(t_k) \right) - \frac{1}{2}(\Delta W_k)^2.$$ Отсюда следует, что
$$I_n = \frac{1}{2}\left(W^2(T)-W^2(0)\right) - \frac{1}{2}\sum\limits_{k=1}^n (\Delta W_k)^2.$$ Первое слагаемое этого выражения не зависит от $n$ и с измельчением отрезка $[0,T]$ не меняется. Найдем с.к.-предел второго слагаемого.\footnote{\ag{Ниже приводится только схема рассуждений. Более подробно соответствующие выкладки расписаны в примере \ref{exDW}.}} Попробуем сначала найти его математическое ожидание
$$\mathbb{E}\sum\limits_{k=1}^n (\Delta W_k)^2=\sum\limits_{k=1}^n\mathbb{E}(\Delta W_k)^2=nh=T$$ и дисперсию
\begin{eqnarray*}
\mathbb{D}\sum\limits_{k=1}^n (\Delta W_k)^2 &= \sum\limits_{k=1}^n\mathbb{D}(\Delta W_k)^2=\sum\limits_{k=1}^n\mathbb{E}(\Delta W_k)^4 - (\mathbb{E}(\Delta W_k)^2)^2\\
&= \sum\limits_{k=1}^n 3(\mathbb{D}\Delta W_k)^2 - nh^2=3h^2n-h^2n=2h^2n,
\end{eqnarray*}

где для расчета момента четвертого порядка можно воспользоваться теоремой Вика (теорема~\ref{th:WickTheorem} настоящего пособия). Получается, что $\mathbb{D}\sum_{k=1}^n (\Delta W_k)^2=2T^2/n\to0$, $n\to\infty$, но по определению с.к.-предела это значит, что $\sum_{k=1}^n (\Delta W_k)^2 \xrightarrow{\text{с.к.}}T$. Итак, мы показали, что $$I_n \xrightarrow{\text{с.к.}}\frac{1}{2}\left(W^2(T) - T\right), \ n\to\infty,$$ поэтому окончательно заключаем, что 
$$ \int\limits_{0}^T W(t)\,dW(t) = \frac{1}{2}\left(W^2(T) - T\right). \ \ \text{\EndEx}$$

Обратим внимание на то, что если бы в формуле~\eqref{eq:ItoIntForm} значение подынтегральной функции вычислялось не в крайней левой точке отрезка $[t_k,t_{k+1}]$, а в какой-либо другой, то значение интеграла изменилось бы. Если выбрать произвольное ${\theta\in[0,1]}$ и в качестве промежуточной точки взять $\tau_k^\theta=(1-\theta)t_k + \theta t_{k+1}$ и рассмотреть интегральную сумму 
$$I(X_n)=\sum\limits_{k=1}^n X(\tau_k^\theta)(W(t_{k+1}) - W(t_k)),$$ то при тех же условиях, что и ранее, справедливо утверждение: $$I(X_n) \xrightarrow{\text{с.к.}} I^{\theta}, \ n\to\infty,$$ где предельная величина $I^{\theta}$ в общем случае зависит от параметра $\theta$ и называется \textit{стохастическим} $\theta$-\textit{интегралом}. При ${\theta=0}$ мы получаем случай, рассмотренный выше. Такой интеграл называется еще \textit{стохастическим интегралом Ито}. Если ${\theta=1/2}$, то интеграл называется \textit{стохастическим интегралом Стратоновича}. Можно показать, что в случае Стратоновича интеграл из примера выше будет равен $$I^{1/2}(W)=\frac{1}{2} W^2(T).$$ Кстати говоря, в этом же случае справедлива привычная формула интегрирования по частям:
$$I^{1/2}(W)=\int\limits_0^T W(t)\,dW(t) = \frac{1}{2}W^2(t)\biggr\rvert_0^T=\frac{1}{2}W^2(T).$$

В общем случае вычислять стохастические интегралы помогает \textit{формула Ито}.
\begin{definition}
Пусть даны два неупреждающих случайных процесса $\{f(t),t\in[a,b]\}$ и $\{g(t),t\in[a,b]\}$ такие, что
$$\mathbb{P}\left(\omega:\int\limits_{a}^b |f(\omega,t)|\,dt < \infty\right) = 1, \ \mathbb{P}\left(\omega:\int\limits_{a}^b |g(\omega,t)|^2\,dt < \infty\right) = 1.$$ Случайный процесс $\{X(t),t\in[a,b]\}$ называется \textit{процессом Ито}, если
\begin{equation}\label{eq:ItoProcessIntegralForm}
X(t)=X(0)+\int\limits_{a}^t f(\omega,s)\,ds + \int\limits_{a}^t g(\omega,s)\,dW(s),
\end{equation}
где первый интеграл понимается в <<потраекторном>> смысле, а второй -- в смысле стохастического интеграла Ито.
\end{definition}
Обычно вместо~\eqref{eq:ItoProcessIntegralForm} используется запись в <<дифференциалах>>
\begin{equation}\label{eq:ItoProcessDiffForm}
dX(t)= f(\omega,t)dt + g(\omega,t)dW(t),
\end{equation}
говоря при этом, что процесс $X(t)$ имеет стохастический дифференциал. Запись~\eqref{eq:ItoProcessDiffForm} следует всегда понимать как сокращенную запись формулы~\eqref{eq:ItoProcessIntegralForm}.

Теперь приведем один из основных результатов стохастического анализа.

\begin{theorem}[ (формула Ито~\cite{Bulinsky2010})]
\textit{Пусть неслучайная функция $F(t,x)$ не\-прерывно дифференцируема по ${t\ge0}$ и дважды непрерывно дифференцируема по $x\in\mathbb{R}$. Пусть процесс $\{X(t),t\ge0\}$ имеет стохастический дифференциал~\eqref{eq:ItoProcessDiffForm}. Тогда процесс $\{F(t,X(t)),t\ge0\}$ также имеет стохастический дифференциал}:
\begin{equation}
    dF(t,X(t)) = \left[ \frac{\partial F}{\partial t} + f(\omega,t)\frac{\partial F}{\partial x} + \frac{1}{2}g^2(\omega,t)\frac{\partial^2 F}{\partial x^2} \right]\,dt + \frac{\partial F}{\partial x} g(\omega,t)\,dW(t).
\end{equation}
\end{theorem}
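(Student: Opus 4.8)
The plan is to prove the formula first for simple (piecewise‑constant in time) non‑anticipating integrands $f$ and $g$, where the essential analysis already occurs, and then to extend it to the general case by approximation. For the main step, fix $t\ge0$ and a partition $0=t_0<t_1<\dots<t_n=t$ with mesh $\varepsilon=\max_k(t_{k+1}-t_k)$, and set $\Delta t_k=t_{k+1}-t_k$, $\Delta X_k=X(t_{k+1})-X(t_k)$. From the telescoping identity $F(t,X(t))-F(0,X(0))=\sum_{k}\bigl(F(t_{k+1},X(t_{k+1}))-F(t_k,X(t_k))\bigr)$ I would expand each increment, treating the time and space increments separately so that only the stated smoothness of $F$ is used: the mean value theorem in $t$ and Taylor's formula to second order in $x$ give
$$F(t_{k+1},X(t_{k+1}))-F(t_k,X(t_k))=\frac{\partial F}{\partial t}(t_k,X(t_k))\,\Delta t_k+\frac{\partial F}{\partial x}(t_k,X(t_k))\,\Delta X_k+\frac12\frac{\partial^2 F}{\partial x^2}(t_k,X(t_k))\,(\Delta X_k)^2+r_k,$$
where $r_k$ collects the error terms $\bigl(\frac{\partial F}{\partial t}(\xi_k,X(t_{k+1}))-\frac{\partial F}{\partial t}(t_k,X(t_k))\bigr)\Delta t_k$ (with $\xi_k\in(t_k,t_{k+1})$) and $\frac12\bigl(\frac{\partial^2 F}{\partial x^2}(t_k,\widetilde X_k)-\frac{\partial^2 F}{\partial x^2}(t_k,X(t_k))\bigr)(\Delta X_k)^2$ (with $\widetilde X_k$ between $X(t_k)$ and $X(t_{k+1})$).

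Then I would let $\varepsilon\to0$ and identify the limit of each sum. The first sum is a Riemann sum converging in mean square to $\int_0^t \frac{\partial F}{\partial t}(s,X(s))\,ds$, by mean‑square continuity of $s\mapsto\frac{\partial F}{\partial t}(s,X(s))$. In the second sum I substitute $\Delta X_k=\int_{t_k}^{t_{k+1}}f\,ds+\int_{t_k}^{t_{k+1}}g\,dW$, so that its first part tends to $\int_0^t \frac{\partial F}{\partial x}(s,X(s))f(\omega,s)\,ds$ and its second part to $\int_0^t \frac{\partial F}{\partial x}(s,X(s))g(\omega,s)\,dW(s)$; the crucial point is that $\frac{\partial F}{\partial x}$ is evaluated at the left endpoint $t_k$, which is exactly what makes the limit an Itô integral rather than something $\theta$‑dependent. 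The third sum is the heart of the matter: writing $(\Delta X_k)^2=\bigl(\int f\,ds\bigr)^2+2\bigl(\int f\,ds\bigr)\bigl(\int g\,dW\bigr)+\bigl(\int g\,dW\bigr)^2$, the first piece is $O(\varepsilon)$ after summation, the second is negligible by the Cauchy--Bunyakovsky inequality, and $\sum_k\frac{\partial^2 F}{\partial x^2}(t_k,X(t_k))\bigl(\int_{t_k}^{t_{k+1}}g\,dW\bigr)^2\to\int_0^t\frac{\partial^2 F}{\partial x^2}(s,X(s))g^2(\omega,s)\,ds$ in mean square — this is the quadratic‑variation statement $\sum_k(\Delta W_k)^2\to t$, proved exactly as in the computation of $\int_0^T W\,dW$ in the text above using $\mathbb{E}(\Delta W_k)^2=\Delta t_k$ and $\mathbb{D}(\Delta W_k)^2=2(\Delta t_k)^2$. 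Finally $\sum_k r_k\to0$, using uniform continuity and boundedness of $\frac{\partial F}{\partial t}$ and $\frac{\partial^2 F}{\partial x^2}$ on the almost surely bounded range of $X$ over $[0,t]$ together with the almost sure bound $\sum_k(\Delta X_k)^2=O(1)$.

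The main obstacle will be the rigorous control of the quadratic sum and of $\sum_k r_k$: one must replace $(\Delta X_k)^2$ by $g^2(t_k)\,\Delta t_k$ with an error vanishing after summation, and this, together with the merely almost‑sure integrability of $f$ and $g$ assumed in the definition of an Itô process, calls for a localization (stopping‑time) argument keeping $f$, $g$, $X$ and the derivatives of $F$ evaluated along the trajectory bounded. The same localization makes the extension step routine: approximate $f$ and $g$ by simple non‑anticipating processes — in the $L_1$‑in‑$t$ sense for $f$ and the $L_2$‑in‑$(t,\omega)$ sense for $g$, as in the approximation theorem quoted above — apply the formula already established, and pass to the limit using continuity of the Lebesgue integral, the isometry bound for the stochastic integral, and continuity of $F$, $\partial F/\partial t$, $\partial F/\partial x$, $\partial^2 F/\partial x^2$.
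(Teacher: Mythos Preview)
The paper does not actually prove this theorem: it is stated with a reference to~\cite{Bulinsky2010} and immediately followed only by the mnemonic form $dF=\frac{\partial F}{\partial t}\,dt+\frac{\partial F}{\partial x}\,dX+\frac12\frac{\partial^2 F}{\partial x^2}\,(dX)^2$ with the rules $dt\,dt=0$, $dW\,dt=0$, $(dW)^2=dt$, and by an application to $\int_0^T W\,dW$. So there is nothing in the paper to compare your argument against.

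Your outline is the standard proof and is correct in structure: telescoping plus the one-variable Taylor expansion in $x$ and the mean value theorem in $t$, identification of the three principal sums as the Riemann integral, the It\^o integral (left-endpoint evaluation being the decisive point), and the quadratic-variation integral $\int_0^t F_{xx}\,g^2\,ds$, followed by localization and approximation of $f,g$ by simple non-anticipating processes. The one place to be careful is the remainder control: the estimate $\sum_k(\Delta X_k)^2=O(1)$ holds in probability (or in $L_1$ after stopping), not almost surely in general, so the passage $\sum_k r_k\to 0$ should be phrased accordingly --- stop first to make $f,g,X$ and the derivatives of $F$ bounded, then use uniform continuity of $F_{xx}$ on the stopped range together with $\mathbb{E}\sum_k(\Delta X_k)^2\le C\,t$ to get convergence in probability, and finally remove the stopping. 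With that adjustment your plan goes through.
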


Эту формулу иногда записывают в более удобном для запоминания виде:
$$dF = \frac{\partial F}{\partial t} dt + \frac{\partial F}{\partial x}dX(t) + \frac{1}{2}\frac{\partial ^2 F}{\partial x^2} (dX(t))^2,$$
если всюду принять $dt\,dt=0$, $dW(t)\,dt=0$ и $(dW(t))^2=dt$.

Эта теорема позволяет очень просто вычислить стохастический интеграл Ито $\int_0^T W(t)\,dW(t)$. По сути, мы ищем процесс $X(t)$, дифференциал которого равен $W(t)\,dW(t)$:
$$dX = W(t)\,dW(t).$$


Задача поиска $X(t)$ сводится к задаче поиска подходящей функции $F(t,x)$. Возьмем $F(t,x)=x^2$ и вычислим
$$dF(t,W(t))=0 + 2W(t)\,dW(t) + (dW(t))^2=2W(t)\,dW(t) + dt.$$
Как мы уже говорили выше, эта запись является сокращенной формой записи
$$W^2(t)=W^2(0)+2\int\limits_0^t W(s)\,dW(s) + \int\limits_0^t\,ds,$$
откуда сразу получаем значение интересующего интеграла.
\subsection{Стохастические дифференциальные уравнения*}

Приведем, следуя~\cite{Oksendal2003}, определение стохастического дифференциального уравнения и простой пример такого уравнения.
\begin{definition}
\textit{Стохастическое дифференциальное уравнение Ито}
\begin{equation}\label{eq:StochDiffEq}
dX(t)=f(t,X(t))\,dt+g(t,X(t))\,dW(t)
\end{equation}
с коэффициентами $f$ и $g$ и начальным условием $X_0$ -- это задача поиска случайной функции $X(t)$ с дифференциалом~\eqref{eq:StochDiffEq} и ${X(0)=X_0}$. Решение с п.н.-непрерывными траекториями называется \textit{сильным решением} такого уравнения.
\end{definition}
Если функции $f(t,x)$ и $g(t,x)$ достаточно <<хорошие>>, а именно если они по переменной $x$ удовлетворяют локальному условию Липшица:
$$\forall n\in\mathbb{N} \ \forall x,y \in [-n,n] \ \exists C(n) \ |f(t,x)-f(t,y)| \le C(n) |x-y|,$$
$$\forall n\in\mathbb{N} \ \forall x,y \in [-n,n] \ \exists C(n) \ |g(t,x)-g(t,y)| \le C(n) |x-y|,$$
а также условию линейного роста:
$$\exists C>0 \ \forall t\ge 0 \ \forall x\in\mathbb{R} \ |f(t,x)| \le C |x|, \ |g(t,x)| \le C|x|,$$
то сильное решение уравнения~\eqref{eq:StochDiffEq} существует и единственно для любой случайной величины $X_0$, измеримой относительно $W(0)$. 

\gav{Приведем} пример одного стохастического дифференциального уравнения:
$$dS(t)=aS(t)dt + \sigma S(t)\,dW(t).$$
Решение $S(t)$ этого уравнения называется \textit{процессом Башелье--Саму\-эль\-сона}. Этот процесс является одной из первых моделей стоимости акций в финансовой математике. С помощью формулы Ито можно \gav{проверить}, что
$$S(t)=S_0 e^{at}e^{\sigma W(t)-\sigma^2 t / 2}.$$
\section{Гауссовские случайные процессы}
\label{gauss}

\subsection{Моментные характеристики}

В разделе~\ref{random_processes} нами было введено важно понятие -- \textit{гауссовский процесс}. Гауссовский процесс -- это процесс, все конечномерные распределения которого нормальные (гауссовские). Это значит, что любой случайный вектор, составленный из сечений такого процесса, имеет нормальное распределение. Такие случайные векторы обладают целом рядом полезных для практических расчетов свойств. Цель данного раздела -- привести эти свойства и продемонстрировать их использование в задачах.


Напомним, что случайный вектор $X\in\mathbb{R}^n$ является \textit{гауссовским} с математическим ожиданием $m$ и \gav{корреляционной} матрицей $R \in \mathbb{R}^{n \times n}$ (обозначается ${X\in\mathrm{N}(m, R)}$), если его характеристическая функция задается формулой
$$
\varphi_{X}(\shmaxg{s}) \overset{\text{def}}{=} \Exp e^{i\langle \shmaxg{s},X \rangle} = \exp\left(i\langle \shmaxg{s},m \rangle - \frac{1}{2}\langle \shmaxg{s}, R\shmaxg{s} \rangle\right), \ \shmaxg{s}\in\Rbb^n.
$$ Если $\det R \neq 0$, то $X$ обладает плотностью распределения
$$
p_{X}(u) = \frac{1}{\sqrt{(2\pi)^n \det R}} \exp\left(-\frac{1}{2}\langle u, R^{-1}u \rangle\right), \ u\in\Rbb^n.
$$ Если же ${\det R = 0}$, то плотность распределения в $\Rbb^n$ не существует. В этом случае найдется вектор $c$ такой, что ${Rc=0}$. Но так как\linebreak $R=\mathbb{E}(XX^{\top})$, то из цепочки равенств
$$
0 = c^{\top} Rc = c^{\top}\Exp(XX^{\top}) c = \Exp(c^{\top}XX^{\top}c) = \Exp(c^{\top}X)^2 = \Exp\langle c,X\rangle^2$$ вытекает равенство $\langle X,c \rangle = 0$ почти всюду, то есть некоторые компоненты вектора $X$ являются линейными комбинациями других. В~этом случае вероятностная мера сосредоточена в пространстве меньшей размерности.

Класс нормальных случайных векторов $X$ замкнут относительно линейных преобразований. А именно, если $X\in\mathrm{N}(m, R)$ -- нормальный случайный вектор с $n$ компонентами, то для любой (даже прямоугольной или невырожденной) матрицы $A \in\mathbb{R}^{m\times n}$ и любого вектора $b\in \mathbb{R}^m$ случайный вектор (или случайная величина для $m=1$) $Y = AX + b \in \mathrm{N}(A m + b, A R A^{\top})$. Этот факт легко доказать с помощью аппарата характеристических функций. Действительно,
\begin{eqnarray*}
    \varphi_{Y}(\shmaxg{s}) &=&  \Exp e^{i\langle \shmaxg{s}, Y \rangle} = \Exp e^{i\langle \shmaxg{s}, AX + b \rangle} = e^{i\langle \shmaxg{s}, b \rangle} \Exp e^{i\langle A^{\top} \shmaxg{s},X  \rangle} = e^{i\langle \shmaxg{s}, b \rangle} \varphi_{X}(A^{\top} \shmaxg{s})=\\
    &=&\exp\left(i\langle \shmaxg{s},b \rangle+i\langle A^{\top} \shmaxg{s},m \rangle - \frac{1}{2}\langle A^{\top} \shmaxg{s}, R A^{\top} \shmaxg{s} \rangle\right)= \\
    &=&\exp\left(i\langle \shmaxg{s},(A m + b) \rangle - \frac{1}{2}\langle \shmaxg{s},A R A^{\top} \shmaxg{s} \rangle\right), \ \shmaxg{s}\in\Rbb^m.
\end{eqnarray*}

Для нормальных случайных векторов известны явные выражения для моментов вида $\Exp(X_1^{\alpha_1}\ldots X_n^{\alpha_n})$, где $X=(X_1,\dots,X_n)^{\top}$. А именно, справедлива следующая теорема.

\begin{theorem}[ (Вика)]\label{th:WickTheorem}
\textit{Пусть дан нормальный случайный вектор\linebreak ${X=(X_1,\dots,X_n)\in\mathrm{N}(0, R)}$ с нулевыми средними и произвольной \gav{корреляционной} матрицей ${R=\left\| R_{ij} \right\|_{i,j=1}^n}$. Тогда если $n$ нечетно, то}
$$\mathbb{E}(X_1\dots X_n)=0.$$
\textit{Если же $n$ четно, то $$\mathbb{E}(X_1 X_2 \dots X_n)=\sum \underbrace{R_{i_1 j_1}\dots R_{i_{n/2} j_{n/2}}}_{n/2\text{ множителей}},$$ где сумма берется по всем неупорядоченным разбиениям множества $\{1,\dots,n\}$ на $n/2$ неупорядоченных пар.}
\end{theorem}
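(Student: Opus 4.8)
The plan is to compute the moment $\mathbb{E}(X_1\cdots X_n)$ by differentiating the characteristic function of $X$. Since $X$ is Gaussian all its moments are finite, and $|\partial^{\alpha}_{s}e^{i\langle s,X\rangle}|\le |X_{k_1}\cdots X_{k_{|\alpha|}}|$ is integrable, so differentiation under the expectation is legitimate and
$$\mathbb{E}(X_1\cdots X_n)=\frac{1}{i^n}\left.\frac{\partial^n\varphi_X}{\partial s_1\cdots\partial s_n}\right|_{s=0},$$
where, by the formula for the characteristic function of a normal vector recalled above, $\varphi_X(s)=\exp\left(-\tfrac12\langle s,Rs\rangle\right)$. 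So the whole problem reduces to differentiating $e^{Q}$ with $Q(s)=-\tfrac12\sum_{k,l}R_{kl}s_ks_l$ and reading off the combinatorial structure at $s=0$.

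First I would record that, because $R$ is symmetric, $\partial_k Q=-\sum_l R_{kl}s_l$ is linear, $\partial_k\partial_l Q=-R_{kl}$ is constant, and all higher partials of $Q$ vanish. Then I would prove by induction on $n$, using only the product rule, the identity
$$\frac{\partial^n}{\partial s_1\cdots\partial s_n}e^{Q}=\left(\sum_{\pi}\prod_{B\in\pi}\partial_B Q\right)e^{Q},$$
where $\pi$ runs over all partitions of $\{1,\dots,n\}$ into non-empty blocks and $\partial_B Q$ is the mixed partial of $Q$ in the variables indexed by $B$. The inductive step is exactly the dichotomy: applying $\partial_n$ either hits one of the factors $\partial_B Q$ (adjoining $n$ to the block $B$) or hits $e^{Q}$ (creating the new singleton block $\{n\}$), which is precisely how partitions of $\{1,\dots,n\}$ arise from partitions of $\{1,\dots,n-1\}$.

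Next I would evaluate at $s=0$. There $e^{Q}=1$; a block with at least three elements gives $\partial_B Q=0$ (since $Q$ is quadratic); a singleton block $\{k\}$ gives $\partial_k Q|_{s=0}=0$; and a two-element block $\{k,l\}$ gives $\partial_k\partial_l Q=-R_{kl}$. Hence only partitions all of whose blocks have size exactly $2$, i.e. the pairings of $\{1,\dots,n\}$, survive. If $n$ is odd there are no such pairings, so the derivative is $0$ and $\mathbb{E}(X_1\cdots X_n)=0$. If $n$ is even, each pairing $\pi$ contributes $\prod_{\{k,l\}\in\pi}(-R_{kl})=(-1)^{n/2}\prod_{\{k,l\}\in\pi}R_{kl}$; since $i^n=(-1)^{n/2}$ for even $n$, the prefactor $1/i^n=(-1)^{n/2}$ cancels this sign, leaving $\mathbb{E}(X_1\cdots X_n)=\sum_{\pi}\prod_{\{k,l\}\in\pi}R_{kl}$, the sum over all unordered pairings, as claimed.

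I do not expect a serious obstacle here: the only points requiring care are the (routine) justification of differentiating under the expectation for Gaussian vectors and the bookkeeping of the factor $i^{-n}$ together with the signs coming from $Q<0$. As an alternative that avoids characteristic functions, one may use Gaussian integration by parts: for a jointly Gaussian zero-mean vector, $\mathbb{E}\big(X_1\,g(X_2,\dots,X_n)\big)=\sum_{k=2}^n R_{1k}\,\mathbb{E}(\partial_{x_k}g)$; taking $g=X_2\cdots X_n$ yields the recursion $\mathbb{E}(X_1\cdots X_n)=\sum_{k=2}^n R_{1k}\,\mathbb{E}\!\left(\prod_{j\neq 1,k}X_j\right)$, which unfolds by induction into exactly the sum over pairings (and gives $0$ for odd $n$). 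I would present the characteristic-function proof as the primary one, since that machinery is already available in the text. \EndProof
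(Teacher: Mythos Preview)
Your proof is correct and shares the paper's core idea --- differentiate the generating function of $X$ and read off the pairings --- but the execution differs in several respects worth noting. The paper works with the \emph{moment} generating function $\mathbb{E}e^{\langle s,X\rangle}=\exp\bigl(\tfrac12 s^\top R s\bigr)$ rather than the characteristic function, which spares you the bookkeeping of $i^n$ and the sign $(-1)^{n/2}$; you handle those factors correctly, but the paper's choice makes them disappear. Second, the paper treats odd $n$ separately by the symmetry trick $-X\overset{d}{=}X$, whereas your partition formula dispatches both parities at once (no pairings exist when $n$ is odd). Third, for the even case the paper extracts the coefficient of $s_1\cdots s_n$ by writing $\exp\bigl(\sum_{i<j}R_{ij}s_is_j+\text{(quadratic-in-one-variable terms)}\bigr)=\prod_{i<j}(1+R_{ij}s_is_j+\dots)$ and picking out the right monomial, which is quick but somewhat ad hoc; your inductive proof of the set-partition identity $\partial_{s_1}\cdots\partial_{s_n}e^{Q}=\bigl(\sum_\pi\prod_{B\in\pi}\partial_B Q\bigr)e^{Q}$ is a cleaner and more reusable statement (it is Fa\`a di Bruno for $\exp$). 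Your alternative via Gaussian integration by parts is not in the paper and is a nice self-contained recursion.
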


\begin{proof}
Для доказательства случая с нечетным $n$ заметим, что вектор $Y=-X$ имеет то же распределение, что и вектор $X$, а значит и тот же момент $\mathbb{E}(Y_1\dots Y_n)=\mathbb{E}(X_1\dots X_n)$. Но с другой стороны $\mathbb{E}(Y_1\dots Y_n)=(-1)^n\mathbb{E}(X_1\dots X_n)$. Для нечетного $n$ отсюда следует, что $\mathbb{E}(X_1\dots X_n)=0$.

Для доказательства второго случая воспользуемся связью между смешанными моментами и производящей функцией моментов:
$$\mathbb{E}(X_1\dots X_n)=\frac{\partial^n}{\partial s_1 \dots \partial s_n} \mathbb{E}e^{\langle s^\top X \rangle}\biggr\rvert_{s=0}=\frac{\partial^n}{\partial s_1 \dots \partial s_n} \exp\left( \frac{1}{2} s^\top R s \right)\biggr\rvert_{s=0},$$
где $s=(s_1,\dots,s_n)$. Для расчета смешанной производной в правой части выражения выше выполним разложение функции $\exp\left( \frac{1}{2} s^\top R s \right)$ по степеням компонент $s$ и выделим только слагаемые со степенью $s_1\dots s_n$, так как остальные слагаемые после взятия производной в точке $s=0$ обращаются в нуль. Для этого будем в разложении отбрасывать члены, содержащие $s_i$, $i=1,\dots,n$, во второй степени и выше. Сначала заметим, что $$\exp\left(\frac{1}{2} s^\top R s\right) = \exp\left(\sum_{i<j} R_{ij}s_is_j + A(s)\right),$$ где $A(s)$ содержит не нужные нам слагаемые. Далее,
$$\exp\left(\sum_{i<j} R_{ij}s_is_j\right)=\prod_{i<j} \exp(R_{ij}s_is_j)=\prod_{i<j} (1+R_{ij}s_is_j+B_{ij}(s)),$$ где функции $B_{ij}(s)$ содержат не нужные нам слагаемые. В итоге получается, что
$$\frac{\partial^n}{\partial s_1 \dots \partial s_n} \exp\left( \frac{1}{2} s^\top R s \right)\biggr\rvert_{s=0} = \frac{\partial^n}{\partial s_1 \dots \partial s_n} \prod_{i<j} (1+R_{ij}s_is_j)\biggr\rvert_{s=0}.$$ Для расчета полученной смешанной производной нужно распределить $n$ частных производных, по множителям, а точнее -- распределить $n/2$ пар $(s_i,s_j)$, взятых из множества $\{s_1,\dots,s_n\}$ по множителям. Получается, что  
$$\frac{\partial^n}{\partial s_1 \dots \partial s_n} \prod_{i<j} (1+R_{ij}s_is_j)\biggr\rvert_{s=0}=\sum \underbrace{R_{i_1 j_1}\dots R_{i_{n/2} j_{n/2}}}_{n/2\text{ множителей}},$$ где сумма берется по всем неупорядоченным разбиениям $$\{1,\dots,n\} = \{i_1,j_1\} \cup \dots \cup \{i_{n/2},j_{n/2}\}$$ множества $\{1,\dots,n\}$ на $n/2$ неупорядоченных пар. \EndProof
\end{proof}

\textbf{Замечание}. Упомянутая теорема позволяет вычислять произвольные моменты вида $\mathbb{E}(X_1^{\alpha_1}\dots X_n^{\alpha_n})$. Для этого можно просто рассмотреть нормальный вектор $Y$ с $N=\alpha_1+\ldots+\alpha_n$ компонентами, воспользоваться формулой для расчета момента $\mathbb{E}(Y_1\dots Y_N)$ и затем заменить все $Y_i$, $i=1,\dots,\alpha_1$ на $X_1$, все $Y_i$, $i=1+\alpha_1,\dots,\alpha_1+\alpha_2$ на $X_2$ и т.д. Поясним сказанное на примерах.

\begin{example}
Пусть имеется нормальный вектор $(X_1,X_2,X_3,X_4)$ с нулевым средним и \gav{корреляционной} матрицей $R=\left\|R_{ij}\right\|_{i,j=1}^4$. Тогда
$$\mathbb{E}(X_1 X_2 X_3 X_4) = R_{12}R_{34} + R_{13}R_{24} + R_{14} R_{23}.$$ Каждое слагаемое содержит $n/2=4/2=2$ сомножителя. Количество слагаемых определяется количеством разбиения множества $\{1,2,3,4\}$ на $n/2=2$ пары. Множество $\{1,2,3,4\}$ на 2 пары можно разбить тремя способами: $\{1,2\}\cup\{3,4\}$, $\{1,3\}\cup\{2,4\}$, $\{1,4\}\cup\{2,3\}$.

Так как любой подвектор нормального вектора является нормальным, то и $(X_1,X_2,X_3)$ является нормальным случайным вектором. Для него справедливо равенство $$\mathbb{E}(X_1X_2X_3)=0,$$ так как множителей -- нечетное количество. \EndEx
\end{example}

А теперь рассмотрим задачу про предел в среднем квадратичном сумм квадратов приращений винеровского процесса. Оказывается, что этот предел равен длине отрезка времени. Если бы мы вычисляли такой предел для гладкой функции, то он равнялся бы нулю. Отсюда на качественном уровне можно сделать вывод, что \textit{траектории винеровского процесса почти нигде не дифференцируемы, хотя почти наверное всюду непрерывны}. Подробнее см.~\cite{BulinskyShiryaev2005, VentselAD}. 
\begin{example}\label{exDW}
Пусть $\{W(t), t \ge 0\}$ -- винеровский процесс, 
$[a, b] \subset$\linebreak $\subset T = [0, \infty)$ и $a = t_0 < t_1 < \dots, < t_N = b$. Покажем теперь, что в смысле с.к.-сходимости существует предел
$$
\mathop{\mathrm{l.i.m.}}\limits_{\max(t_k - t_{k-1}) \to 0} \sum_{k=1}^N |W(\omega, t_k) - W(\omega, t_{k-1})|^2 = b - a.
$$ 
Согласно определению винеровского процесса, случайные величины $W( \omega, t_k) - W(\omega, t_{k-1})$, $k = \overline{1, N}$, являются независимыми, имеют нулевые математические ожидания и дисперсии
$$
\mathbb{D}(W(\omega, t_k) - W(\omega, t_{k-1})) = \mathbb{E}|W(\omega, t_k) - W(\omega, t_{k-1})|^2 = t_k - t_{k-1}.
$$
А так как
$$\mathbb{E}  \sum\limits_{k=1}^N |W(\omega, t_k) - W(\omega, t_{k-1})|^2 = \sum\limits_{k=1}^N \mathbb{E} |W(\omega, t_k) - W(\omega, t_{k-1})|^2=$$
$$=\sum\limits_{k=1}^N (t_k - t_{k-1})  = b - a,$$
то очевидно, что
\begin{equation*}
\begin{split}
    &\lim\limits_{\max(t_k - t_{k-1}) \to 0} \mathbb{E}  \left( \sum\limits_{k=1}^N |W(\omega, t_k) - W(\omega, t_{k-1})|^2 - (b - a)^2 \right) = \\
    &=\lim\limits_{\max(t_k - t_{k-1}) \to 0} \mathbb{D} \sum\limits_{k=1}^N |W(\omega, t_k) - W(\omega, t_{k-1})|^2 = \\
    &=\lim\limits_{\max(t_k - t_{k-1}) \to 0} \sum\limits_{k=1}^N \mathbb{D} |W(\omega, t_k) - W(\omega, t_{k-1})|^2.
\end{split}
\end{equation*}
Кроме того, случайная величина
$$
\eta_k(\omega) = \dfrac{W(\omega, t_k) - W(\omega, t_{k-1})}{\sqrt{t_k - t_{k-1}}}
$$
распределена по нормальному закону с нулевым математическим ожиданием и единичной дисперсией, а случайная величина $\eta^2_k(\omega)$ распределена по закону $\chi^2_1$. Поэтому $\mathbb{D}\eta^2_k(\omega) \equiv 2 $ и
$$
\mathbb{D}  |W(\omega, t_k) - W(\omega, t_{k-1})|^2 \equiv 2(t_k - t_{k-1})^2.
$$
Таким образом,
\begin{equation*}
\begin{split}
    &\lim\limits_{\max(t_k - t_{k-1}) \to 0} \mathbb{E} \left( \sum\limits_{k=1}^N |W(\omega, t_k) - W(\omega, t_{k-1})|^2 - (b - a)^2 \right) = \\
    &=2 \lim\limits_{\max(t_k - t_{k-1}) \to 0} \sum\limits_{k=1}^N (t_k - t_{k-1})^2 \leq \\
    &\egor{\leq}2 \lim\limits_{\max(t_k - t_{k-1}) \to 0} \max\limits_k (t_k - t_{k-1}) \sum\limits_{k=1}^N (t_k - t_{k-1}) = \\
    &=2 (b - a) \lim\limits_{\max(t_k - t_{k-1}) \to 0} \max\limits_k (t_k - t_{k-1}) = 0,
\end{split}
\end{equation*}
что и требовалось доказать.
\EndEx
\end{example}

\subsection{Условные распределения сечений}

Пусть дан нормальный случайный вектор $X$ с $n$ компонентами. Пусть $X=(Y,Z)$, где $Y$ и $Z$ -- два подвектора вектора $X$ с $n_1$ и $n_2$ компонентами, соответственно; ${n=n_1+n_2}$. \gav{Корреляционную} матрицу вектора $X$ можно представить в виде 
$${R} = \left[ {\begin{array}{*{20}{c}}
  {{{R}_{11}}}&{{{R}_{12}}} \\ 
  {{{R}_{21}}}&{{{R}_{22}}} 
\end{array}} \right],$$ где $R_{12}$ -- \gav{корреляционная} матрица вектора $Y$, $R_{22}$ -- \gav{корреляционная} матрица вектора $Z$, а матрицы $R_{12}$ и $R_{21}=R_{12}^{\top}$ состоят из \gav{корреляций} компонент вектора $Y$ и $Z$ (взаимные корреляционные матрицы). Вектор математического ожидания $\Exp X = m$ также разбивается на два подвектора $\Exp Y = m_1$ и $\Exp Z = m_2$.

Найдем условное распределение подвектора  $Y$ при фиксированном подвекторе $Z$. Будем предполагать, что матрица $R_{22}$ невырождена (если это не так, то некоторые компоненты $Z$ являются линейными комбинациями других, а значит, их можно исключить, понизив размерность $Z$).

Введем \shmaxg{случайную величину}
$$
\hat Y  = m_1 + R_{12}R_{22}^{-1} (Z - m_2).
$$
Заметим, что \shmaxg{для любых матриц $A$, $B$, $C$ верны равенства
$$\mathrm{tr}(AB)=\mathrm{tr}(BA), \ \mathrm{tr}(ABC)=\mathrm{tr}(BCA)=\mathrm{tr}(CAB).$$ Отсюда можно получить, что
\begin{equation}
\label{orthogonal}
    \Exp ( Y - \hat{Y}  )^{\top} \left (Z - m_2 \right) = 0.
\end{equation}
}
Действительно,
$$\shmaxg{\Exp ( Y - \hat{Y} )^{\top} \left (Z - m_2 \right) =}$$
$$\shmaxg{=\Exp \left ( Y - m_1 -  R_{12}R_{22}^{-1} (Z - m_2) \right)^{\top} \left (Z - m_2 \right) =}$$
$$\shmaxg{=\mathrm{tr}\left(R_{12}\right) - \mathrm{tr}\left(R_{12}R_{22}^{-1} R_{22}\right) = 0.}$$
Условие~\eqref{orthogonal} означает, что компоненты вектора ${Y - \hat{Y}}$ и компоненты вектора $Z$ некоррелированны, а значит, и независимы (в силу гауссовости эти понятия совпадают).

Таким образом, вектор
$Y$ раскладывается в сумму нормальных векторов $Y =  ( Y - \hat{Y} ) + \hat{Y} = ( Y - \hat{Y} ) +  m_1 + R_{12}R_{22}^{-1} (Z - m_2)$,
где первое слагаемое $ Y - \hat{Y} $ не зависит от $Z$, а \shmaxg{$\hat{Y}$} при \shmaxg{фиксированном} $Z$ является константой. Значит, условное распределение $Y$ при \shmaxg{фиксированном} $Z$ определяется нормальным законом с условным математическим ожиданием
\begin{eqnarray}\label{eq:X1condX2}
\Exp (Y \,|\, Z ) &=& \Exp \left ( Y - \hat{Y} \right) +  m_1 + R_{12}R_{22}^{-1} (Z - m_2)= \notag
\\ &=& m_1 + R_{12}R_{22}^{-1} (Z - m_2)
\end{eqnarray}

\noindent и условной корреляционной матрицей
\begin{equation}\label{eq:X1X1condX2}
     \begin{split}
         &\Exp \left( \left( Y - \Exp(Y \,|\, Z ) \right) \left(Y - \Exp(Y \,|\, Z) \right)^{\top} \,\big|\, Z \right) = \Exp( Y - \hat{Y} )  ( Y - \hat{Y} )^{\top} = \\
         &=\Exp \left ( Y - m_1 \right)\left ( Y - m_1 \right)^{\top} - R_{12}R_{22}^{-1} \Exp \left (Z - m_2 \right)\left ( Y - m_1 \right)^{\top} - \\
         &-\Exp \left ( Y - m_1 \right)\left (Z - m_2 \right)^{\top}R_{22}^{-1}R_{21} + \\
         &+R_{12}R_{22}^{-1} \Exp \left (Z - m_2 \right)\left ( Z - m_2 \right)^{\top} R_{22}^{-1}R_{21}= \\
         &=R_{11} - R_{12}R_{22}^{-1}R_{21}. 
     \end{split}
\end{equation}

Отметим, что условная \gav{корреляционная} матрица не зависит от $Z$, а условное математическое ожидание является линейной функцией от~$Z$.

Приведем и докажем теперь важную и полезную теорему о том, что условное математическое ожидание является на самом деле проекцией на подпространство функций от случайных величин, стоящих в условии условного математического ожидания. Эта теорема, с одной стороны, предоставляет геометрическую интуицию этого понятия, а с другой стороны, помогает решать задачи. Ради простоты мы докажем эту теорему для случая, когда условное математическое ожидание вычисляется от случайной величины, хотя аналогичные выкладки можно провести и для случайного вектора.

\begin{theorem}\label{th:cond_expectation}
\textit{Пусть случайная величина $X$ имеет ограниченный второй момент, т.е. ${X \in L_2}$. Пусть также ${Y_1,\dots,Y_n\in L_2}$. Доказать, что}
$$
\|X - \EE\left(X\mid Y_1,\dots,Y_n\right)\|_2 = \min\limits_{\varphi\in H}\|X - \varphi(Y_1,\dots,Y_n)\|_2,
$$
\textit{где $H$~--- подпространство пространства $L_2$ всевозможных борелевских функций $\varphi(Y_1,\dots,Y_n)\in L_2$, $\EE\left(X\mid Y_1,\dots, Y_n\right)$~--- условное математическое ожидание случайной величины $X$ относительно $\sigma$-алгеб\-ры, порожденной случайными величинами $Y_1,\dots,Y_n$. Напомним, что норма и скалярное произведение определяются следующим образом}:
$$
\|X\|_2 = \shmaxg{\sqrt{\langle X,X\rangle}},\quad \langle X, Y \rangle = \EE\left(X Y\right).
$$
\end{theorem}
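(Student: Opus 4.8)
The plan is to recognize the asserted identity as the statement that $\hat X := \mathbb{E}(X\mid Y_1,\dots,Y_n)$ is the orthogonal projection of $X$ onto the subspace $H$ in the Hilbert space $L_2$, and then to invoke the standard Hilbert‑space characterization of the nearest point in a subspace. First I would check that $\hat X$ itself belongs to $H$: by the Doob–Dynkin representation, any random variable measurable with respect to $\sigma\{Y_1,\dots,Y_n\}$ — in particular $\hat X$ — has the form $\varphi(Y_1,\dots,Y_n)$ for some Borel $\varphi$; and by the conditional version of Jensen's inequality, $\mathbb{E}\hat X^2 \le \mathbb{E}\bigl(\mathbb{E}(X^2\mid Y_1,\dots,Y_n)\bigr) = \mathbb{E}X^2 < \infty$, so indeed $\hat X\in H$.

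The key step is the orthogonality relation $\langle X-\hat X,\psi\rangle = 0$ for every $\psi = \varphi(Y_1,\dots,Y_n)\in H$. Since $X-\hat X\in L_2$ and $\psi\in L_2$, the product $(X-\hat X)\psi$ is integrable by the Cauchy–Bunyakovsky inequality, so the following manipulation is legitimate:
$$\langle X-\hat X,\psi\rangle = \mathbb{E}\bigl((X-\hat X)\psi\bigr) = \mathbb{E}\bigl(\mathbb{E}((X-\hat X)\psi\mid Y_1,\dots,Y_n)\bigr) = \mathbb{E}\bigl(\psi\,\mathbb{E}(X-\hat X\mid Y_1,\dots,Y_n)\bigr) = 0,$$
where we pulled the $\sigma\{Y_1,\dots,Y_n\}$‑measurable factor $\psi$ out of the conditional expectation and used $\mathbb{E}(X-\hat X\mid Y_1,\dots,Y_n) = \mathbb{E}(X\mid Y_1,\dots,Y_n) - \hat X = 0$.

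Finally I would conclude by the Pythagorean identity in $L_2$. For an arbitrary $\psi\in H$ we have $\hat X-\psi\in H$, hence $X-\hat X\perp \hat X-\psi$, and therefore
$$\|X-\psi\|_2^2 = \|(X-\hat X)+(\hat X-\psi)\|_2^2 = \|X-\hat X\|_2^2 + \|\hat X-\psi\|_2^2 \ \ge\ \|X-\hat X\|_2^2,$$
with equality precisely when $\|\hat X-\psi\|_2 = 0$, i.e. $\psi = \hat X$ almost surely. Taking the infimum over $\psi\in H$ and noting that it is attained at $\psi = \hat X\in H$ yields exactly $\|X-\hat X\|_2 = \min\limits_{\varphi\in H}\|X-\varphi(Y_1,\dots,Y_n)\|_2$.

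The only genuinely nontrivial ingredients are (a) that $H$ is a well‑defined subspace of $L_2$ containing $\hat X$, which rests on the Doob–Dynkin representation together with conditional Jensen, and (b) the "taking out what is known" property of conditional expectation used in the orthogonality step; both are standard facts that may simply be quoted from the probability course, so the main obstacle is essentially bookkeeping rather than a real difficulty — once these are in place, the statement reduces to the one‑line projection argument above.
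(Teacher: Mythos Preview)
Your proof is correct and follows essentially the same route as the paper: both arguments reduce the minimization to the orthogonality relation $\langle X-\hat X,\psi\rangle=0$ for all $\psi\in H$ and verify it via the tower property together with ``taking out what is known.'' Your write-up is slightly more explicit in checking $\hat X\in H$ (via Doob--Dynkin and conditional Jensen) and in spelling out the Pythagorean step, whereas the paper simply asserts these as standard facts with a reference to Shiryaev.
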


\textbf{Доказательство}. Заметим, что указанное равенство равносильно следующему утверждению:
$$
\forall \varphi\in H \; \langle X - \EE\left(X\mid Y_1,\dots,Y_n\right), \varphi(Y_1,\dots,Y_n) \rangle = 0,
$$
т.е. разность $X - \EE\left(X\mid Y_1,\dots,Y_n\right)$ ортогональна пространству $H$. Действительно, если это доказать, то мы докажем и исходное равенство, т.к. $$\EE\left(X\mid Y_1,\dots,Y_n\right)\in H$$ по определению условного математического ожидания относительно набора случайных величин (см. в~\cite[гл. 2, \S~7]{ShiryaevT1}). Во-первых, если случайная величина $Z$ измерима относительно $\sigma$-алгебры, порожденной случайными величинами $Y_1,\dots,Y_n$, то
$$
\EE\left(XZ\mid Y_1,\dots,Y_n\right) = Z\EE\left(X\mid Y_1,\dots, Y_n\right).
$$
Это свойство является одним из основных свойств условного математического ожидания и доказывается в два этапа: сначала для простых случайных величин, а затем при помощи предельного перехода под знаком условного математического ожидания доказывается для произвольных случайных величин из $H$ (детали см. в \cite[гл. 2, \S 7, свойство K*]{ShiryaevT1}). Кроме того, напомним еще одно важнейшее свойство условного математического ожидания (см. \cite[гл. 2, \S~7, свойства H* и I*]{ShiryaevT1}):
$$
\EE X = \EE\left(\EE\left(X\mid Y_1,\dots,Y_n\right)\right).
$$
Используя эти факты, получаем для произвольной $\varphi \in H$
\begin{equation*}
\begin{split}
    &\EE\left((X - \EE\left(X\mid Y_1,\dots,Y_n\right))\cdot\varphi(Y_1,\dots,Y_n)\right) = \\ 
    &=\EE\left(\EE\left((X - \EE\left(X\mid Y_1,\dots,Y_n\right))\cdot\varphi(Y_1,\dots,Y_n)\mid Y_1,\dots,Y_n\right)\right) = \\ 
    &=\EE\left(\varphi(Y_1,\dots,Y_n)\EE\left(X - \EE\left(X\mid Y_1,\dots,Y_n\right)\mid Y_1,\dots,Y_n\right)\right) = \\ 
    &=\EE\left(\varphi(Y_1,\dots,Y_n)\left(\EE\left(X\mid Y_1,\dots,Y_n\right) - \EE\left(X\mid Y_1,\dots,Y_n\right)\right)\right) = 0,
\end{split}
\end{equation*}
а значит,
$$
\langle X - \EE\left(X\mid Y_1,\dots,Y_n\right), \varphi(Y_1,\dots,Y_n) \rangle = 0.
$$
В случае, когда вектор $(X,Y_1,\dots,Y_n)$ является нормальным случайным вектором, можно показать \shmaxg{(см.~\cite[Глава 2, §13]{ShiryaevT1})}, что \shmaxg{условное математическое ожидание} ${\EE\left(X\mid Y_1,\dots,Y_n\right)}$ является линейной функцией от $Y_1,\dots, Y_n$: \shmaxg{$$\EE\left(X\mid Y_1,\dots,Y_n\right) = \sum_{k=1}^n c_k Y_k \ag{+ b},$$} а значит, можно искать проекцию именно в таком виде. Это следует из того, что для гауссовских случайных величин независимость равносильна некоррелированности. Отсюда, в частности следует, что часто прогнозы для гауссовских процессов, оптимальные в смысле квадрата невязки, получаются по явным и простым формулам. Подробности можно найти в заключении данной книги.
\EndProof

\begin{example}
Для винеровского процесса $\{W(t), t \ge 0\}$, вычислить условные математическое ожидание и дисперсию $$\mathbb{E}\left( W(t) \,|\, W(s) = x \right), \ \mathbb{D}\left( W(t) \,|\, W(s) = x \right) $$ для произвольных $t \ge 0$, $s \ge 0$, $x\in\mathbb{R}$.
\end{example}

\textbf{Решение}. Так как все конечномерные распределения винеровского процесса являются нормальными, то случайные векторы, составленные из его сечений, являются нормальными случайными векторами. В частности, вектор $(W(t),W(s))$ является нормальным с нулевым вектором математических ожиданий и корреляционной матрицей $$R=\left[ {\begin{array}{*{20}{c}}
	{t}&{\min(t,s)} \\ 
	{\min(t,s)}&{s} 
	\end{array}} \right].$$
По формуле~\eqref{eq:X1condX2} условное математическое ожидание равно $$\mathbb{E}(\xi\,|\,Z=x)=\mathbb{E}\xi + R_{12}R_{22}^{-1}(x-\mathbb{E}\eta ),$$
а условная дисперсия, по формуле~\eqref{eq:X1X1condX2}, равна
$$\mathbb{D}(\xi\,|\,\eta=x) = \mathbb{D}\xi - R_{12}R_{22}^{-1}R_{21}.$$
где $R_{ij}$ -- это компонента матрицы $R$ с индексами $i,j$. В нашем случае $$\mathbb{E}\left( W(t) \,|\, W(s) = x \right) = \mathbb{E}W(t) + \min(t,s)\cdot\frac{1}{s}\cdot(x-\mathbb{E}W(s))=\frac{\min(t,s)}{s}x.$$ В частности, при $t>s$ имеем $\mathbb{E}\left( W(t) \,|\, W(s) = x \right)=x$. Условная дисперсия равна $$\mathbb{D}\left( W(t) \,|\, W(s) = x \right) = t - \min(t,s)^2\cdot\frac{1}{s}=\frac{st-\min(t,s)^2}{s}.$$ В частности, она равна $t-s$ при $t>s$. \EndEx
 
 \textbf{Замечание}. Совершенно аналогично решается задача о поиске условной плотности распределения винеровского процесса $W(t)$ при условии, что $W(t_1)=A$ и $W(t_2)=B$, $t_1 < t < t_2$. В этом случае рассмотрим нормальный случайный вектор $(W(t),W(t_1),W(t_2))$. Тогда $W(t)$ при условии ${W(t_1)=A}$ и $W(t_2)=B$ будет также иметь некоторое нормальное распределение $\mathrm{N}(\mu,\sigma^2)$, остается только найти среднее $$\mu=\mathbb{E}\left(W(t) \,|\, W(t_1)=A,W(t_2)=B\right)$$ и дисперсию $$\sigma^2=\mathbb{D}\left(W(t) \,|\, W(t_1)=A,W(t_2)=B\right).$$ Корреляционная матрица $R$ вектора $(W(t),W(t_1),W(t_2))$ имеет размеры $3\times 3$. Из формул~\eqref{eq:X1condX2} и~\eqref{eq:X1X1condX2} следует
\begin{equation*}
\begin{split}
    &\mathbb{E}\left(W(t) \,|\, W(t_1) = A, W(t_2) = B\right) = \\
    &=\left[ {\begin{array}{*{20}{c}}
     {R_{12}}&{R_{13}} \\ 
     \end{array}} \right] \left[ {\begin{array}{*{20}{c}}
     {R_{22}}&{R_{23}} \\ 
     {R_{32}}&{R_{33}} 
     \end{array}} \right]^{-1} \left[ {\begin{array}{*{20}{c}}
     {A}\\{B}\\ 
     \end{array}} \right], \\ \\
     &\mathbb{D}\left(W(t) \,|\, W(t_1) = A, W(t_2) = B\right) = \\
     &=R_{11} - \left[ {\begin{array}{*{20}{c}}
     {R_{12}}&{R_{13}} \\ 
     \end{array}} \right] \left[ {\begin{array}{*{20}{c}}
     {R_{22}}&{R_{23}} \\ 
     {R_{32}}&{R_{33}} 
     \end{array}} \right]^{-1} \left[ {\begin{array}{*{20}{c}}
     {R_{21}}\\{R_{31}}\\ 
     \end{array}} \right].
\end{split}
\end{equation*}
 
 	
 
 
Остается лишь выразить $R$ в терминах $t$, $t_1$ и $t_2$: $$R = \left[ {\begin{array}{*{20}{c}}
	{t}&{t_1}&{t}\\
	{t_1}&{t_1}&{t_1}\\
	{t}&{t_1}&{t_2}
	\end{array}} \right]$$
и получить ответ.

\section{Стационарные процессы}
\label{spectral}

В этом разделе мы приступаем к изучению одного из важнейших понятий теории случайных процессов -- \textit{стационарности}. Стационарные случайные процессы активным образом используются в инженерных приложениях и в особенности -- в теории управления динамическими системами. В данном разделе дается определение стационарного процесса, разбираются базовые примеры стационарных процессов и базовый инструментарий для работы с такими процессами -- разложение стационарного процесса и его корреляционной функции на гармоники.

\subsection{Комплекснозначные случайные процессы}

Теория стационарных случайных процессов помимо веществен\-но\-значных процессов (принимающих реализации в $\mathbb{R}$) опирается на понятие\textit{ комплекснозначных процессов} (принимающих значения в $\mathbb{C}$). Введем аккуратные определения и распространим некоторые ранее введенные понятия на случай комплекснозначных процессов.

\begin{definition}
\textit{Комплекснозначным процессом} ${\{Z(t), t\in T\}}$, определенном на вероятностном пространстве $(\Omega,\mathcal{F},\mathbb{P})$, называется\linebreak функция ${Z(\omega,t)=X(\omega,t)+i Y(\omega,t)}$, где ${i^2=-1}$, $\omega\in\Omega$, а $X(t)$ и $Y(t)$ -- два вещественнозначных процесса, определенных при $t\in T$ и принадлежащих тому же вероятностному пространству $(\Omega,\mathcal{F},\mathbb{P})$.
\end{definition}

Определим математическое ожидание такого процесса $Z(t)$ по формуле ${\mathbb{E}Z(t)=\mathbb{E}X(t)+i\mathbb{E}Y(t)}$, где ${X(t)=\Re Z(t)}$ -- вещественная часть процесса $Z(t)$, а ${Y(t)= \Im Z(t)}$ -- мнимая часть процесса $Z(t)$. Дисперсией комплекснозначного процесса $Z(t)$ будем называть $$\mathbb{D}Z(t)=\mathbb{E}\accentset{\circ}Z(t)\overline{\accentset{\circ}Z(t)},$$ а корреляционной функцией 
$$R_Z(t,s)=\mathbb{E}\accentset{\circ}Z(t)\overline{\accentset{\circ}Z(s)},$$ где черта означает комплексное сопряжение.

Математическое ожидание и корреляционная функция комплекснозначного процесса могут принимать комплексные значения. Дисперсия, как и прежде, может принимать только вещественные значения.

\begin{definition}
\textit{Комплекснозначным процессом $\{Z(t)$, ${t\in T}\}$ второго порядка} будем называть комплекснозначный случайный процесс со всюду конечным вторым моментом $\mathbb{E}|Z(t)|^2<\infty$, $t\in T$.
\end{definition}

Множество комплекснозначных случайных процессов второго порядка будем обозначать символом $CL_2$, т.е. будем писать $X(t)\in CL_2$. Скалярное произведение на $CL_2$ задается как $\langle X, Y\rangle = \Exp X\mean{Y} $.

Кроме того, введем полезное для дальнейшего понятие комплекснозначного гауссовского процесса.

\begin{definition}
${\{Z(t), t\in T\}}$ -- \textit{комплекснозначный гауссовский процесс}, если для любого ${n\in \mathbb{N}}$ и любых $t_1,\ldots,t_n \in T$ вектор $$\left ( X(t_1),  Y(t_1), \ldots , X(t_n), Y(t_n) \right),$$ где ${X(t)=\Re Z(t)}$, ${Y(t)=\Im Z(t)}$, имеет нормальное распределение.
\end{definition}

Вещественнозначные процессы мы будем считать частным случаем комплекснозначных процессов (с нулевой мнимой частью). При этом, если речь идет о комплекснозначном процессе, то его мнимая часть может быть как ненулевой, так и нулевой. Если же рассматривается вещественнозначный процесс, то считается, что мнимая часть нулевая.

\subsection{Стационарность в широком и узком смыслах}

\begin{definition}
Процесс ${X(t)\in CL_2}$ называется \textit{стационар\-ным в узком смысле} (еще говорят, \textit{сильно стационарным}), если ${\forall n\in \Nbb}$, для любых моментов времени $t_1$, $\dots$, $t_n$ и любого ${h > 0}$ распределение вектора ${(X(t_1),\dots,X(t_n))}$ совпадает с распределением вектора $(X(t_1+h),\dots,X(t_n+h))$.
\end{definition}

Иными словами, процесс стационарен в узком смысле, если все его конечномерные распределения не зависят от сдвига моментов времени на одну и ту же величину. Отсюда следует, в частности для $n=1$, что распределение $X(t)$ совпадает с распределением $X(t+h)$ для любого $h$, т.е. одномерное распределение не зависит от времени. Но это значит, что и никакие численные характеристики одномерного распределения такого процесса не зависят от времени. Например, математическое ожидание и дисперсия стационарного в узком смысле процесса не зависят от времени: ${m_X(t)=\mathrm{const}}$, ${D_X(t)=\mathrm{const}}$.

Что касается двумерного распределения стационарного в узком смысле процесса, то оно зависит лишь от разности $t_2-t_1$. Следовательно, и все численные характеристики двумерного распределения (например, корреляционная и ковариационная функции) тоже зависят лишь от разности между $t_1$ и $t_2$. Это значит, что существует функция $R(\tau)$ одного аргумента, такая, что корреляционная функция стационарного в узком смысле процесса $X(t)$ равна $R_X(t_1,t_2)=R(t_2-t_1)$.

\begin{definition}
Процесс ${X(t)\in CL_2}$ называется \textit{стационарным в широком смысле} (еще говорят, \textit{слабо стационарным}), если его математическое ожидание не зависит от времени, а корреляционная функция зависит лишь от разности аргументов $t_1$ и $t_2$.
\end{definition}

Из того, что у стационарных в широком смысле процессов корреляционная функция является функцией лишь разности аргументов, следует, что дисперсия не зависит от времени. Таким образом, дисперсия постоянна как для стационарного в узком смысле процесса (если это процесс второго порядка), так и для стационарного в широком смысле процесса.

Как видно из определений, для процессов второго порядка из стационарности в узком смысле следует стационарность в широком смысле. Вне класса процессов второго порядка это следствие уже не верно, так как определение стационарности в широком смысле предполагает существование вторых моментов сечений процесса. Например, можно рассмотреть процесс $X(t)$, для любого $t\ge0$, равного
$$X(t)=\xi\in\mathrm{C}(0,1),$$ где $\mathrm{C}(0,1)$ -- стандартное распределение Коши с плотностью $$f(x)=\frac{1}{\pi(1+x^2)}.$$ Такой процесс не зависит от времени, следовательно, является стационарным в узком смысле процессом. Однако, хорошо известно, что распределение Коши не имеет даже первый момент, поэтому $X(t)$ пространству процессов второго порядка не принадлежит, и понятие стационарности в широком смысле к нему неприменимо.

Итак, простейшим примером стационарного в узком смысле процесса является не зависящий от времени процесс ${X(t)=\xi}$, где $\xi$ -- какая-нибудь случайная величина. Если $\xi$ обладает конечным вторым моментом ${\mathbb{E}|\xi|^2<\infty}$, то процесс $X(t)$ будет стационарным и в широком смысле, потому что является стационарным в узком смысле. Математическое ожидание этого процесса $\mathbb{E}X(t)=\mathbb{E}\xi$ не зависит от времени, а корреляционная функция $R_X(t,s)=\mathbb{E}\xi^2 - (\mathbb{E}\xi)^2$ вообще не зависит ни от $t$, ни от $s$, поэтому и подавно ${R_X(t,s)=R_X(t+h,s+h)}$ для любого $h$. В частности, если ${\xi=C=\const}$, т.е. это вырожденная случайная величина, принимающая одно значение независимо от исхода, то $\mathbb{E}X(t)=C$ и $R_X(t,s)=0$.

А теперь возьмем какую-нибудь случайную величину $\xi$ c конечным вторым моментом $\mathbb{E}|\xi|^2<\infty$ и рассмотрим случайный процесс $$X(t)=\xi f(t)$$ с какой-нибудь (быть может комплекснозначной) неслучайной и непостоянной функцией $f(t)$. Попробуем выяснить, в каких случаях этот процесс будет стационарным в широком смысле. Математическое ожидание ${\mathbb{E}X(t)=f(t)\mathbb{E}\xi}$ не будет зависеть от времени тогда и только тогда, когда $\mathbb{E}\xi=0$, в этом случае $\mathbb{E}X(t)=0$. Корреляционная функция этого процесса равна $$R_X(t,s)=\mathbb{E}X(t)\overline{X(s)}-\mathbb{E}X(t)\mathbb{E}\overline{X(s)}=f(t)\overline{f(s)}\cdot\mathbb{E}|\xi|^2.$$ Выясним, в каких случаях ${R_X(t,s)=R_X(t+h,s+h)}$ для любых $t$, $s$ и $h$. Пусть сначала ${t=s}$, тогда ${R_X(t,t)=R_X(t+h,t+h)}$ приводит к $|f(t)|^2=\const$, откуда сразу следует ${f(t)=r\exp{(i\varphi(t))}}$ для произвольных ненулевых $r,\varphi(t)\in\mathbb{R}$. Предположим дополнительно, что $f(t)$ -- всюду непрерывная функция. Функция $R_X(t+h,s+h)$ не будет зависеть от $h$ тогда и только тогда, когда $\varphi(t+h)-\varphi(s+h)$ не будет зависеть от $h$, т.е. тогда и только тогда, когда
$$\varphi(t+h)-\varphi(s+h)=\varphi(t)-\varphi(s).$$
Без потери общности будем считать, что $s=0$, и перепишем это выражение в виде
$$\varphi(t+h)=\varphi(t)+\varphi(h)-\varphi(0).$$ Теперь если ввести обозначение $g(t)=\varphi(t)-\varphi(0)$, то мы имеем непрерывную функцию $g(t)$, во всех точках удовлетворяющую уравнению Гамеля \ag{\cite{Feller}}
$$g(t+h)=g(t)+g(h), \ \forall t,h\in\mathbb{R}.$$ Можно доказать, что решением этого уравнения являются функции вида $g(t)=\omega t$, где $\omega\in\mathbb{R}$ -- произвольная постоянная, не зависящая от $t$. Обозначив $\theta=-\varphi(0)$, мы приходим к выражению для $\varphi(t)$: $$\varphi(t)=\omega t + \theta, \ \text{где }\omega,\theta\in\mathbb{R}.$$ Итак, мы выяснили, что процесс вида $X(t)=\xi f(t)$ будет стационарным в широком смысле тогда и только тогда, когда \begin{equation}\label{eq:StatHarmonicProcess}
    X(t)=\xi \cdot r e^{i(\omega t+\theta)}.
\end{equation}
Случайная величина $\xi$ в~\eqref{eq:StatHarmonicProcess} может быть и комплекснозначная. Теперь если объединить этот случай с независящими от времени процессами, а число $r$ включить в состав случайной величины $\xi$, то мы получаем следующее утверждение.
\begin{theorem}
\textit{Случайный процесс вида ${X(t)=\xi f(t)}$ для $\mathbb{E}|\xi|^2\!<\!\infty$ и непрерывной неслучайной функции $f(t)$ будет стационарным в широком смысле тогда и только тогда, когда $X(t)=\xi \exp{(i(\omega t+\theta))}$ для  неслучайных} $\omega,\theta\in\mathbb{R}$.
\end{theorem}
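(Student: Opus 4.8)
Обе импликации по существу уже проведены в тексте, предшествующем формулировке; план состоит в том, чтобы аккуратно их оформить. Сначала я выпишу моментные функции процесса $X(t)=\xi f(t)$: из $\mathbb{E}|\xi|^2<\infty$ и непрерывности $f$ следует $X(t)\in CL_2$ и
\[ m_X(t)=f(t)\,\mathbb{E}\xi,\qquad R_X(t,s)=\mathbb{E}X(t)\overline{X(s)}-m_X(t)\overline{m_X(s)}=f(t)\overline{f(s)}\,\mathbb{D}\xi. \]
Вырожденные случаи $\mathbb{D}\xi=0$ и $f\equiv\const$ я разберу сразу и отдельно: тогда либо $R_X\equiv0$, либо процесс не зависит от времени, утверждение проверяется напрямую, а сам процесс записывается в требуемом виде с $\omega=\theta=0$. Поэтому далее можно считать $\mathbb{D}\xi>0$ и $f$ непостоянной.

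Для необходимости я предположу, что $X(t)$ стационарен в широком смысле. Постоянство $m_X(t)=f(t)\mathbb{E}\xi$ при непостоянной $f$ возможно лишь при $\mathbb{E}\xi=0$, так что $m_X\equiv0$. Затем из того, что $R_X(t,s)=f(t)\overline{f(s)}\mathbb{D}\xi$ зависит только от $t-s$, подстановкой $t=s$ я получу $|f(t)|\equiv r$ с $r>0$, а значит $f(t)=r\,e^{i\varphi(t)}$ с непрерывно выбранным аргументом $\varphi$. Условие стационарности тогда даст $e^{i(\varphi(t)-\varphi(s))}=e^{i(\varphi(t-s)-\varphi(0))}$ при всех $t,s$; полагая $g(t)=\varphi(t)-\varphi(0)$, я замечу, что непрерывная функция $(t,s)\mapsto g(t)-g(s)-g(t-s)$ принимает значения в $2\pi\mathbb{Z}$ и обращается в нуль при $s=0$, поэтому она тождественно нулевая, и $g$ удовлетворяет уравнению Гамеля $g(u+v)=g(u)+g(v)$. По теореме о непрерывных решениях этого уравнения (см.~\cite{Feller}) $g(t)=\omega t$, откуда $\varphi(t)=\omega t+\theta$ с $\theta=\varphi(0)$ и $X(t)=\xi\,r\,e^{i(\omega t+\theta)}$; включив $r$ в состав $\xi$ (новая случайная величина по-прежнему имеет нулевое среднее и конечный второй момент), я получу искомое представление.

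Для достаточности я подставлю $X(t)=\xi e^{i(\omega t+\theta)}$. При $\omega=0$ процесс не зависит от времени, следовательно стационарен в узком и тем самым в широком смысле. При $\omega\neq0$ постоянство $m_X(t)=\mathbb{E}\xi\cdot e^{i(\omega t+\theta)}$ вынуждает $\mathbb{E}\xi=0$ (именно этот случай и поставляется предыдущим пунктом); тогда $m_X\equiv0$, а $R_X(t,s)=\mathbb{E}|\xi|^2\,e^{i\omega(t-s)}$ зависит лишь от $t-s$, что и означает стационарность в широком смысле.

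Главной (и фактически единственной по-настоящему тонкой) точкой я ожидаю переход от равенства <<по модулю $2\pi$>> к точному уравнению Гамеля: нужно непрерывно выбрать аргумент функции $f$ (что возможно, поскольку $f$ непрерывна и не обращается в нуль) и воспользоваться тем, что непрерывная $2\pi\mathbb{Z}$-значная функция на связном множестве постоянна. Всё остальное сводится к прямым выкладкам и к уже известной лемме о линейности непрерывных решений уравнения Гамеля.
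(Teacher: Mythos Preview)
Your proposal is correct and follows essentially the same route as the paper: compute $m_X$ and $R_X$, force $\mathbb{E}\xi=0$ for non-constant $f$, deduce $|f|\equiv r$ from $R_X(t,t)=\const$, write $f=re^{i\varphi}$, and reduce the stationarity condition to the Hamel equation for $g=\varphi-\varphi(0)$. Your explicit handling of the modulo-$2\pi$ ambiguity (continuous lift of the argument, then a connectedness argument to pass from $e^{i(\varphi(t)-\varphi(s))}=e^{i(\varphi(t-s)-\varphi(0))}$ to the exact additive identity) is a genuine refinement: the paper silently jumps from equality of exponentials to equality of phases, so your version closes a gap the paper leaves open.
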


Теперь рассмотрим процесс вида $X(t)=\xi_1 e^{i\omega_1 t} + \xi_2 e^{i\omega_2 t}$ с $\mathbb{E}\xi_1=0$ и $\mathbb{E}\xi_2=0$ и ненулевыми частотами $\omega_1\ne\omega_2$ и выясним, при каких условиях этот процесс будет стационарным в широком смысле. Математическое ожидание $\mathbb{E}X(t)=0$ не зависит от времени. Корреляционная функция
\begin{eqnarray*}
R_X(t,s) &=& \mathbb{E}|\xi_1|^2e^{i\omega_1\tau} + \mathbb{E}(\xi_1\overline{\xi_2})e^{i(\omega_1-\omega_2)t+i\omega_1 \tau}+ \\
&& + \ \mathbb{E}(\xi_2\overline{\xi_1})e^{-i(\omega_1-\omega_2)t+i\omega_2 \tau} + \mathbb{E}|\xi_2|^2e^{i\omega_2\tau},
\end{eqnarray*}
где $\tau=t-s$, является функцией лишь $\tau$ тогда и только тогда, когда $\mathbb{E}(\xi_1\overline{\xi_2})=\mathbb{E}(\xi_2\overline{\xi_1})=0$, что следует из линейной независимости функцией перед этими коэффициентами. Получается, что процесс $X(t)=$\linebreak $=\xi_1 e^{i\omega_1 t} + \xi_2 e^{i\omega_2 t}$ является стационарным в широком смысле тогда и только тогда, когда случайные величины $\xi_1$ и $\xi_2$ некоррелированы. Корреляционная функция в этом случае равна $$R_X(t,s)=\mathbb{E}|\xi_1|^2e^{i\omega_1(t-s)}+\mathbb{E}|\xi_2|^2e^{i\omega_2(t-s)}.$$
Рассуждая совершенно аналогичным образом, нетрудно доказать следующую теорему.
\begin{theorem}\label{th:ComplexHarmonisStationary}
\textit{Случайный процесс }
\begin{equation}\label{eq:SumHarmonicsStatProcess}
    X(t)=\sum\limits_{k=1}^n \xi_k e^{i\omega_k t}, \ \omega_k\ne\omega_m, \ \mathbb{E}\xi_k=0,
\end{equation}
\textit{является стационарным в широком смысле тогда и только тогда, когда случайные величины $\xi_k$ попарно некоррелированы. В этом случае корреляционная функция равна }
$$R_X(t,s)=R(\tau)=\sum\limits_{k=1}^n \mathbb{E}|\xi_k|^2 e^{i\omega_k\tau}, \ \tau=t-s.$$
\end{theorem}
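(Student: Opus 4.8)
The plan is to mirror the two computations just carried out for $n=1$ and $n=2$, but now for a general finite sum of harmonics. First I would compute the mean: by linearity, $\mathbb{E}X(t)=\sum_{k=1}^n \mathbb{E}\xi_k\, e^{i\omega_k t}=0$ since each $\mathbb{E}\xi_k=0$, so the mean is constant regardless of correlations. Hence the only thing that can obstruct stationarity in the wide sense is the behavior of the correlation function, and the whole proof reduces to analyzing $R_X(t,s)$.

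Next I would expand the correlation function directly from the definition:
\begin{equation*}
R_X(t,s)=\mathbb{E}\Big(\sum_{k=1}^n \xi_k e^{i\omega_k t}\Big)\overline{\Big(\sum_{m=1}^n \xi_m e^{i\omega_m s}\Big)}
=\sum_{k=1}^n\sum_{m=1}^n \mathbb{E}(\xi_k\overline{\xi_m})\, e^{i\omega_k t - i\omega_m s}.
\end{equation*}
Writing $\tau=t-s$ and $e^{i\omega_k t-i\omega_m s}=e^{i\omega_k\tau}\,e^{i(\omega_k-\omega_m)s}$, I would split the double sum into the diagonal part $\sum_{k=1}^n \mathbb{E}|\xi_k|^2 e^{i\omega_k\tau}$, which depends on $\tau$ only, and the off-diagonal part $\sum_{k\ne m}\mathbb{E}(\xi_k\overline{\xi_m})\,e^{i\omega_k\tau}\,e^{i(\omega_k-\omega_m)s}$, which carries an explicit dependence on $s$ through the factors $e^{i(\omega_k-\omega_m)s}$. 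If the $\xi_k$ are pairwise uncorrelated, i.e. $\mathbb{E}(\xi_k\overline{\xi_m})=0$ for $k\ne m$ (recall $\mathbb{E}\xi_k=0$, so uncorrelatedness is exactly $\mathbb{E}(\xi_k\overline{\xi_m})=0$), the off-diagonal part vanishes and $R_X(t,s)=R(\tau)=\sum_{k=1}^n\mathbb{E}|\xi_k|^2 e^{i\omega_k\tau}$; together with the constant mean this is precisely wide-sense stationarity, and it also gives the claimed formula for $R(\tau)$.

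For the converse, I would suppose $X(t)$ is wide-sense stationary, so $R_X(t,s)$ depends on $t-s$ alone; equivalently $R_X(t+h,s+h)=R_X(t,s)$ for all $h$. Fixing $\tau$ and varying, say, $s$, the function
$$s\mapsto \sum_{k\ne m}\mathbb{E}(\xi_k\overline{\xi_m})\,e^{i\omega_k\tau}\,e^{i(\omega_k-\omega_m)s}$$
must be identically constant (in fact it must be the constant obtained by comparing with any fixed $s_0$, and subtracting shows it is zero). The key step here — and the main obstacle — is the linear independence of the exponentials: the functions $s\mapsto e^{i\gamma s}$ for distinct real frequencies $\gamma$ are linearly independent over $\mathbb{C}$, so a finite linear combination of them (with the zero frequency folded into the diagonal already removed) that is identically zero must have all coefficients zero. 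Since the $\omega_k$ are pairwise distinct, the differences $\omega_k-\omega_m$ for $k\ne m$ are nonzero, and grouping the terms by the value of $\omega_k-\omega_m$ and applying this independence forces $\mathbb{E}(\xi_k\overline{\xi_m})\,e^{i\omega_k\tau}=0$ for every off-diagonal pair, hence $\mathbb{E}(\xi_k\overline{\xi_m})=0$ for $k\ne m$. One technical care point: two different pairs $(k,m)$ may share the same difference $\omega_k-\omega_m$, so rather than reading off coefficients naively one should either first specialize $\tau$ (e.g. collect, for each fixed difference $\delta$, the coefficient $\sum_{\omega_k-\omega_m=\delta}\mathbb{E}(\xi_k\overline{\xi_m})e^{i\omega_k\tau}$ and then use independence of the $e^{i\omega_k\tau}$ in $\tau$ since the relevant $\omega_k$ are still distinct), or argue by induction on $n$ exactly as in the $n=2$ case already treated in the text. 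Either way the pairwise uncorrelatedness follows, completing the proof.
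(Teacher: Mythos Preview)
Your proposal is correct and follows essentially the same route as the paper: compute the mean (trivially zero), expand $R_X(t,s)$ into diagonal and off-diagonal parts, and invoke linear independence of the exponentials $e^{i\gamma s}$ for distinct $\gamma$ to force the off-diagonal coefficients to vanish. This is exactly the $n=2$ argument the paper carries out and then says ``рассуждая совершенно аналогичным образом'' for general $n$.

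Your treatment is in fact more careful than the paper's on one point: you flag that distinct pairs $(k,m)$ may share the same difference $\omega_k-\omega_m$, so independence in $s$ alone only kills the grouped coefficient $\sum_{\omega_k-\omega_m=\delta}\mathbb{E}(\xi_k\overline{\xi_m})e^{i\omega_k\tau}$, and a second application of independence in $\tau$ (using that the $\omega_k$ appearing in each such group are still distinct) is needed to conclude $\mathbb{E}(\xi_k\overline{\xi_m})=0$ individually. The paper does not spell this out, so your two-step resolution is a genuine improvement in rigor over the ``analogous'' hand-wave.
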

Читателю следует здесь обратить внимание на то, что для стационарности в широком смысле необходима и достаточна некоррелированность комплексных гармоник (экспонент). Кроме того, в то время как процесс представляется в виде суммы некоррелированных гармоник с некоторыми случайными коэффициентами, корреляционная функция также представляет собой сумму гармоник с амплитудами, равными вторым моментам этих коэффициентов. Например, случайный процесс ${X(t)=\xi\cos{t}}$ не является стационарным в широком смысле не просто потому, что его корреляционная функция $$R_X(t,s)=\mathbb{E}|\xi|^2\cos{t}\cos{s}$$ не обладает свойством ${R_X(t,s)=R_X(t+h,s+h)}$ для любого $h$, но и потому что расписав ${X(t)=1/2\xi e^{it} + 1/2\xi e^{-it}}$, мы видим, что коэффициенты при экспонентах не являются некоррелированными (они равны). Эти замечания будут важны для нас потом, когда мы установим представление стационарных процессов в суммы из гармоник.

Ошибочно думать, впрочем, что некоррелированность гармоник достаточна для стационарности процесса в узком смысле. Действительно, пусть дан процесс ${Z(t)=X\cos{t} + Y\sin{t}}$, где случайные величины $X$ и $Y$ независимы и принимают значения $\pm 1$ с вероятностью $1/2$. Легко получить, что ${\mathbb{E}X(t)=0=\const}$, а корреляционная функция \begin{eqnarray*}
R_Z(t,s) &=& \mathbb{E}Z(t)Z(s)-\underbrace{\mathbb{E}Z(t)\mathbb{E}Z(s)}_{0}= \\
&=& \mathbb{E}(X\cos{t} + Y\sin{t})(X\cos{s} + Y \sin{s}) \\
&=& \underbrace{\mathbb{E}X^2}_{1}\cos{t}\cos{s} + \underbrace{\mathbb{E}Y^2}_{1}\sin{t}\sin{s}= \\ 
&& + \underbrace{\mathbb{E}XY}_{\mathbb{E}X\mathbb{E}Y=0}(\cos{t}\sin{s} + \sin{t}\cos{s})= \\
&=& \cos{t}\cos{s} + \sin{t}\sin{s}=\\ 
&=& \cos{(t-s)}
\end{eqnarray*}
зависит только от разницы $t-s$. Это говорит о стационарности процесса в широком смысле. но если бы процесс был стационарен в узком смысле, то его одномерное распределение не зависело бы от $t$. Однако для ${t=0}$ сечение ${Z(0)=X}$ принимает значения $\pm1$, а при $t=\pi/4$ сечение $${Z(\pi/4)=(X+Y)/\sqrt{2}}$$ принимает значения $\pm\sqrt{2}$ и $0$. Значит, уже одномерное распределение меняется при сдвиге времени, поэтому процесс не является стационарным в узком смысле.

Выше были рассмотрены процессы-гармоники с неслучайными частотами и фазами. Разберем пример со случайной частой и фазой.

\begin{example}
\label{14}
Дан случайный процесс ${Z(t)=A\cos{(Bt+\phi)}}$, ${t\ge0}$, в котором $A$, $B$ и $\phi$ являются случайными величинами, причем $\phi$ не зависит от $A$ и $B$ и распределено равномерно на отрезке $[0,2\pi]$. Про $A$ и $B$ известно, что они имеют совместную плотность распределения $f(a,b)$ и $A\ge0$, $B\ge0$ п.н. Исследовать процесс $Z(t)$ на стационарность в обоих смыслах.
\end{example}

\textbf{Решение}. Сначала исследуем процесс на стационарность в широком смысле. Для этого вычислим математическое ожидание $$\mathbb{E}Z(t)=\mathbb{E}A\cos{(Bt+\phi)}=\mathbb{E}A\cos{Bt}\cos{\phi}-\mathbb{E}A\sin{Bt}\sin{\phi}.$$ Так как $\phi$ не зависит от $A$ и $B$, то $\mathbb{E}A\cos{Bt}\cos{\phi}=\mathbb{E}A\cos{Bt}\cdot\mathbb{E}\cos{\phi}\!=\!0$ \elena{(в предположении, что $\mathbb{E}|A| < \infty$)}, что следует из того, что $$\mathbb{E}\cos{\phi}=\int\limits_{0}^{2\pi}\frac{1}{2\pi}\cos{x}\,dx=0.$$ Аналогично получаем, что $\mathbb{E}A\sin{Bt}\sin{\phi}=\mathbb{E}A\sin{Bt}\cdot\mathbb{E}\sin{\phi}=0$. Следовательно, ${\mathbb{E}Z(t)=0}$ для любых ${t\ge0}$, т.е. от времени не зависит. Вычислим теперь корреляционную функцию \elena{(в предположении, что $\mathbb{E} A^2 < \infty$)}
\begin{gather*}
\begin{split}
R_Z(t,s)&=\mathbb{E}Z(t)Z(s)-\underbrace{\mathbb{E}Z(t)\mathbb{E}Z(s)}_{0}=\\
&=\mathbb{E}A^2\cos{(Bt+\phi)}\cos{(Bs+\phi)}=\\
&=\mathbb{E}A^2\cdot\frac{1}{2}\left(\cos{\left(\frac{B(t+s)}{2}+\phi\right)}+\cos{\frac{B(t-s)}{2}}\right)=\\
&=\underbrace{\mathbb{E}A^2\cdot\frac{1}{2}\cos{\frac{B(t-s)}{2}}}_{\text{функция }t-s} + \underbrace{\mathbb{E}A^2\cdot\frac{1}{2}\cos{\left(\frac{B(t+s)}{2}+\phi\right)}}_{0}.
\end{split}
\end{gather*}
Получается, что корреляционная функция $R_Z(t,s)$ зависит от $t$ и $s$ только через их разность. Принимая во внимание постоянность математического ожидания, заключаем, что процесс $Z(t)$ является стационарным в широком смысле \elena{(в предположении, что $\mathbb{E} A^2 < \infty$). При этом, как будет показано далее, без каких-либо дополнительных предположений процесс будет стационарен в узком смысле.}

Теперь исследуем процесс на стационарность в узком смысле. Возьмем произвольные ${n\ge1}$ сечений процесса $Z(t)$ в моменты времени $t_1,\dots,t_n\ge0$ и произвольный сдвиг по времени $h>0$. Пусть $$F(x_1,\dots,x_n;t_1,\dots,t_n)$$ есть функция распределения вектора $(Z(t_1),\dots,Z(t_n))$, тогда стационарность в узком смысле означает, что $$F(x_1,\dots,x_n;t_1,\dots,t_n)=F(x_1,\dots,x_n;t_1+h,\dots,t_n+h)$$ или, в терминах вероятности,
$$\mathbb{P}(Z(t_1)<x_1,\dots,Z(t_n)<x_n)=\mathbb{P}(Z(t_1+h)<x_1,\dots,Z(t_n+h)<x_n).$$
Подставим выражение для $Z(t)$ и получим
\begin{gather}
\begin{split}
    &\mathbb{P}(A\cos{(Bt_1+\phi)}<x_1,\dots,A\cos{(Bt_n+\phi)}<x_n) = \\
    &=\mathbb{P}(A\cos{(Bt_1+Bh+\phi)}<x_1,\dots,A\cos{(Bt_n+Bh+\phi)}<x_n).
\end{split}
\end{gather}
Обусловим полученные выражения по $A$ и $B$
\begin{gather}
\begin{split}
    &\iint\limits_{\mathbb{R}^2}\mathbb{P}(a\cos{(bt_1+\phi)}<x_1,\dots,a\cos{(bt_n+\phi)}<x_n)f(a,b)\,da\,db = \\
    &=\iint\limits_{\mathbb{R}^2}\mathbb{P}(a\cos{(bt_1+bh+\phi)}<x_1,\dots,a\cos{(bt_n+bh+\phi)}<x_n)f(a,b)\,da\,db,
\end{split}
\end{gather}
где интегралы берутся во всем ${a>0}$ и ${b>0}$. Чтобы интегралы были равны, \textit{достаточно}, чтобы подынтегральные функции были равны при любых $a>0$, $b>0$:
\begin{gather}\label{eq:Ex41}
\begin{split}
    &\mathbb{P}(a\cos{(bt_1+\phi)}<x_1,\dots,a\cos{(bt_n+\phi)}<x_n)=\\
    &=\mathbb{P}(a\cos{(bt_1+bh+\phi)}<x_1,\dots,a\cos{(bt_n+bh+\phi)}<x_n).
\end{split}
\end{gather}
В этом выражении $a$ и $b$ -- неслучайные величины, случайной же величиной является лишь ${\phi\in U(0,2\pi)}$. Теперь заметим, что левая часть равна $$\int\limits_{I}f_{\phi}(y)\,dy=\frac{\lambda(I)}{2\pi},$$ где $f_{\phi}(y)$ -- плотность вероятности $\phi$, множество $$I=\{y\in[0,2\pi]:a\cos{(bt_1+y)}<x_1,\dots,a\cos{(bt_n+y)}<x_n\},$$
а $\lambda(\cdot)$ -- мера Лебега\footnote{Про меру Лебега здесь достаточно знать, что $\lambda([p,q])=q-p$ для любого интервала $[p,q]$ и что мера конечного или счетного объединения непересекающихся интервалов равна сумме мер этих интервалов.}. Аналогично, правую часть равенства~\eqref{eq:Ex41} можно выразить через интеграл $$\int\limits_{I'}f_{\phi}(z)\,dz=\frac{\lambda(I')}{2\pi},$$ где множество $$I'=\{z\in[0,2\pi]:a\cos{(bt_1+bh+z)}<x_1,\dots,a\cos{(bt_n+bh+z)}<x_n\}.$$ Теперь заметим, что множества $I$ и $I'$ представляют собой объединение конечного числа непересекающихся интервалов на $[0,2\pi]$. Далее, если $z\in I'$, то $$(z+bh\mod{2\pi})\in I.$$ Это преобразование сдвига сохраняет длины интервалов. Следовательно, меры Лебега множеств $I$ и $I'$ совпадают, т.е. ${\lambda(I)=\lambda(I')}$. Это равносильно выполнению равенства~\eqref{eq:Ex41}, откуда следует равенство интегральных выражений и равенство функций распределения, что означает стационарность процесса в узком смысле. \EndEx

Приведенные примеры показывают, что понятие стационарности в узком смысле сложнее понятия стационарности в широком смысле. В то время как понятие стационарности в широком смысле опирается лишь на две численные характеристики случайных процессов (математическое ожидание и корреляционная функция), которые в общем случае не определяют процесс однозначно, стационарность в узком смысле опирается на все конечномерное распределение процесса. Впрочем, в тех случаях, когда математическое ожидание и корреляционная функция однозначно определяют случайный процесс, может оказаться, что стационарность в широком смысле влечет стационарность в узком смысле. Таким свойством обладают, в частности, гауссовские процессы.

\begin{theorem}
\textit{Гауссовский процесс является стационарным в широком смысле тогда и только тогда, когда он является стационарным в узком смысле.}
\end{theorem}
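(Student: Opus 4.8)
План состоит в том, чтобы одно из включений получить даром, а второе вывести из того, что нормальное распределение однозначно определяется первыми двумя моментами. Сначала замечу, что у гауссовского процесса каждое сечение $X(t)$ является нормальной случайной величиной, поэтому $\mathbb{E}|X(t)|^2<\infty$, то есть гауссовский процесс всегда принадлежит классу процессов второго порядка и понятие стационарности в широком смысле к нему применимо. Отсюда и из общего факта (для процессов второго порядка стационарность в узком смысле влечёт стационарность в широком смысле) немедленно следует одна половина утверждения. Остаётся доказать, что стационарный в широком смысле гауссовский процесс стационарен и в узком смысле.

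Для этого зафиксирую произвольные $n\ge1$, моменты времени $t_1,\dots,t_n$ и сдвиг $h>0$ и сравню распределения двух случайных векторов $(X(t_1),\dots,X(t_n))$ и $(X(t_1+h),\dots,X(t_n+h))$. По определению гауссовского процесса оба вектора нормальны, а нормальный вектор, как показано в разделе~\ref{gauss}, полностью задаётся своей характеристической функцией $\varphi(s)=\exp(i\langle s,m\rangle-\tfrac{1}{2}\langle s,Rs\rangle)$, то есть вектором математического ожидания $m$ и корреляционной матрицей $R$. Поэтому достаточно проверить, что у этих двух векторов совпадают $m$ и $R$. Компоненты их векторов математического ожидания равны $m_X(t_i)$ и $m_X(t_i+h)$ соответственно; так как процесс стационарен в широком смысле, $m_X(t)\equiv\const$, и эти векторы совпадают. Элемент $(i,j)$ корреляционной матрицы первого вектора равен $R_X(t_i,t_j)$, а второго — $R_X(t_i+h,t_j+h)$; поскольку корреляционная функция стационарного в широком смысле процесса зависит лишь от разности аргументов, оба выражения равны $R(t_j-t_i)$, так что корреляционные матрицы тоже совпадают.

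Следовательно, характеристические функции рассматриваемых векторов тождественно равны, а значит, равны и их распределения. Так как $n$, набор $t_1,\dots,t_n$ и сдвиг $h>0$ произвольны, все конечномерные распределения процесса инвариантны относительно сдвига времени, то есть процесс стационарен в узком смысле. Серьёзных препятствий здесь не возникает: всё опирается на уже установленные факты — на то, что линейный образ нормального вектора (в частности, его подвектор и перенумерация координат) снова нормален, и на то, что нормальное распределение определяется своими первыми двумя моментами; аккуратности требует лишь проверка корректности постановки (гауссовость влечёт конечность вторых моментов сечений) и запись элементов корреляционной матрицы через $R_X$.
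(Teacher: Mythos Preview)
Your proof is correct and follows essentially the same route as the paper: for the nontrivial direction you show that the shifted and unshifted finite-dimensional vectors are both Gaussian with identical mean vectors and correlation matrices (by wide-sense stationarity), hence have the same characteristic function and distribution. If anything, your write-up is more complete, since you also justify explicitly that a Gaussian process is automatically of second order and spell out why the ``narrow $\Rightarrow$ wide'' direction is immediate.
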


\begin{proof}
Рассмотрим произвольный стационарный в широком смысле гауссовский процесс $\{X(t), t \ge 0\}$. Составим вектор из сечений процесса $X=(X(t_1), \dots, X(t_n))$. Тогда, по определению гауссовского процесса, характеристическая функция этого вектора имеет вид 
$$\varphi_{X}(\shmaxg{s}) = \exp \left( i \shmaxg{s}^{\top} m - \frac{1}{2} \shmaxg{s}^{\top} R \shmaxg{s} \right),$$ 
где \shmaxg{$s=(s_1,\dots,s_n)$}, $m=(m_1,\dots,m_n)$, $m_i=\Exp X(t_i)$, $R=\left\| R_{ij} \right\|_{i,j=1}^n$, $R_{ij}=\Exp X(t_i) X(t_j) - \Exp X(t_i) \Exp X(t_j)$.
Так как процесс стационарный в широком смысле, то при эквидистантном изменении моментов времени вектор $m$ и матрица $R$ не изменятся. Значит, не изменится характеристическая функция \shmaxg{$\varphi_X(s)$}, т.е. не изменится распределение\linebreak вектора~$X$, который был выбран произвольно. По определению,\linebreak $X(t)$ -- стационарный в узком смысле процесс. \EndProof
\end{proof}

\begin{example}
Исследовать на стационарность в широком и узком смыслах процесс ${X(t)=W(t+a)-W(t)}$, где $W(t)$ -- винеровский процесс.
\end{example}

\textbf{Решение}. В первую очередь замечаем, что математическое ожидание процесса $\mathbb{E}X(t)=0$ для любого $t$ и, следовательно, не зависит от времени. Корреляционная функция
\begin{gather*}
\begin{split}
    &R_X(t,s)=\mathbb{E}(W(t+a)-W(t))(W(s+a)-W(s))= \\ &=\min(t+a,s+a) - \min(t+a,s) - \min(t,s+a) + \min(t,s).
\end{split}
\end{gather*}
Хорошо видно, что если добавить к $t$ и $s$ любую величину $h$, то каждое слагаемое увеличится на $h$, а значение корреляционной функции не изменится. Поэтому процесс $X(t)$ является стационарным в широком смысле.

Теперь остается заметить, что процесс $X(t)$ является гауссовским. Действительно, составив из его произвольных сечений случайный вектор, мы получим с точностью до линейного преобразования вектор из сечений винеровского процесса, т.е. гауссовский вектор. А так как процесс $X(t)$ гауссовский и стационарный в широком смысле, то он является стационарным и в узком смысле. \EndEx

Конечно, существуют процессы, которые не являются стационарными ни в каком смысле. Например, винеровский процесс не является стационарным в широком смысле (и, следовательно, стационарным в узком смысле), так как его дисперсия ${\mathbb{D}_W(t)=t}$ зависит от времени. Пуассоновский процесс также не является стационарным ни в каком смысле, так как его математическое ожидание ${m_K(t)=\lambda t}$ зависит от времени.

\subsection{Корреляционная функция}

В данном разделе мы приведем некоторые простейшие сведения о корреляционных функциях стационарных процессов. Они послужат вспомогательным материалом для понимания основных результатов теории стационарных процессов, изложенных в следующих двух разделах. Кроме того, в этом разделе мы отвечаем на вопрос, в каких случаях произвольная функция является корреляционной функцией некоторой стационарного процесса. Это первый шаг к пониманию природы стационарных процессов.

Для стационарного в широком смысле процесса $X(t)$ удобно ввести функцию $R_X(t)$ одной переменной: $${R_X(t)=R_X(t,0)}=\mathbb{E}X(t)\overline{X(0)} - \mathbb{E}X(t)\mathbb{E}\overline{X(0)}=\mathbb{E}\accentset{\circ}X(t)\overline{\accentset{\circ}X(0)}.$$ Как и $R_X(t,0)$, будем называть ее \textit{корреляционной функцией стационарного в широком смысле процесса}. Данная функция обладает рядом легко проверяемых свойств.

а) Во-первых, заметим, что корреляционная функция связана с дисперсией процесса по формуле ${R_X(0)=\mathbb{D}X(0)=\mathbb{D}X(t)}$ для любого $t$, так как дисперсия стационарного процесса не зависит от времени. Отсюда следует, в частности, что в нуле значение этой функции вещественное и неотрицательное: ${R_X(0)\ge0}$. Если ${R_X(0)=0}$, то тогда ${\mathbb{D}X(t)=0}$ для каждого $t$, что значит ${X(t)=f(t)}$, где $f(t)$ -- произвольная неслучайная функция.

б) Далее, так как в силу стационарности ${R_X(t,s)=R_X(t+h,s+h)}$ для любого $h\in\mathbb{R}$, то $R_X(t,s)=R_X(0,s-t)$. Отсюда следует, что
$$R_X(-t)=R_X(-t,0)=R_X(0,t)=\mathbb{E}\accentset{\circ}X(0)\overline{\accentset{\circ}X(t)}=\overline{R_X(t,0)}=\overline{R_X(t)},$$
т.е. ${R_X(-t)=\overline{R_X(t)}}$ для любого ${t\in\mathbb{R}}$. Функции с таким свойством называются \textit{эрмитовыми}.

в) В силу неравенства Коши--Буняковского
$$|R_X(t)|=|\mathbb{E}\accentset{\circ}X(t)\overline{\accentset{\circ}X(0)}|\le\sqrt{\mathbb{D}X(t)\mathbb{D}X(0)}=\mathbb{D}X(0)=R_X(0),$$ т.е. абсолютное значение функции $R_X(t)$ ограничено сверху значением этой функции в нуле: $|R_X(t)|\le R_X(0)$.

г) Пусть дан вектор $(X(t_1),\dots,X(t_n))$ из сечений некоторого стационарного процесса $X(t)$. Ковариационная матрица этого вектора состоит из элементов $$R_{ij}=\mathbb{E}\accentset{\circ}X(t_i)\overline{\accentset{\circ}X(t_j)}=R_X(t_i,t_j)=R_X(t_j-t_i),$$ где $i,j$ пробегают значения от 1 до $n$. Так как любая ковариационная матрица является неотрицательно определенной в том смысле, что $$\sum\limits_{i=1}^n\sum\limits_{j=1}^n R_{ij} z_i \overline z_j \ge 0 \ \forall z_i \in\mathbb{C},$$ то мы получаем, что и матрица $\left\|R_X(t_j-t_i)\right\|$ тоже является неотрицательно определенной, причем для любого набора $t_1,\dots,t_n$ (необязательно упорядоченного и необязательно состоящего из различных элементов). В связи с этим удобно даже ввести определение неотрицательно определенной функции одной или двух переменных.

\begin{definition}
Функция $f(t)\in\mathbb{C}$ одной переменной называется \textit{неотрицательно определенной}, если для любых комплексных чисел $z_1$, $\dots$, $z_n\in\mathbb{C}$ и моментов времени $t_1$, $\dots$, $t_n$, $n\ge1$: $$\sum\limits_{i=1}^n\sum\limits_{j=1}^n z_i \bar{z}_j f(t_i-t_j)\ge0.$$
\end{definition}

\begin{definition}
Функция $f(t_1,t_2)\in\mathbb{C}$ двух переменных называется \textit{неотрицательно определенной}, если для любых комплексных чисел $z_1$, $\dots$, $z_n\in\mathbb{C}$ и моментов времени $t_1$, $\dots$, $t_n$, $n\ge1$: $$\sum\limits_{i=1}^n\sum\limits_{j=1}^n z_i \bar{z}_j f(t_i,t_j)\ge0.$$
\end{definition}

Получается, что корреляционная функция $R_X(t,s)$ любого процесса (необязательно стационарного) является неотрицательно определенной функцией двух переменных. А корреляционная функция $R_X(t)$ стационарного процесса является неотрицательно определенной функцией одной переменной. Кстати говоря, это можно показать и прямо:
$$\sum\limits_{i=1}^n\sum\limits_{j=1}^n z_i \bar{z}_j R_X(t_i-t_j)=\mathbb{E}\left| \sum\limits_{i=1}^n (X(t_i)-m_X)z_i \right|^2\ge0, \ \forall{z}_i\in\mathbb{C}.$$


Оказывается, верно и обратное (см., например, ~\cite[гл.~2, §~6, теорема~4, с.~53]{BulinskyShiryaev2005}).

\begin{theorem}
\textit{Класс неотрицательно определенных комп\-лек\-с\-но\-знач\-ных функций $\{R(t,s), {s,\,t\in T}\}$ совпадает с классом корреляционных функций процессов второго порядка $\{X(t), {t\in T}\}$ и, более того, совпадает с классом корреляционных функций комплекснозначных гауссовских процессов} $\{X(t)$, $t\in T\}$.
\end{theorem}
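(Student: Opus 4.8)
Утверждение состоит из двух вложений, которые надо доказать в обе стороны; к тому же заключительная часть про гауссовость идёт <<сверх>> основного эквивалентности. Поэтому план естественно разбить на три шага. Первый (простой) шаг: любая корреляционная функция процесса второго порядка неотрицательно определена. Это уже по сути выведено в тексте перед формулировкой теоремы: для вектора сечений $(X(t_1),\dots,X(t_n))$ ковариационная матрица $\|R(t_i,t_j)\|$ неотрицательно определена, поскольку $\sum_{i,j} z_i\bar z_j R(t_i,t_j)=\mathbb{E}\bigl|\sum_{i=1}^n (X(t_i)-m_X(t_i)) z_i\bigr|^2\ge 0$. Значит, класс корреляционных функций вложен в класс неотрицательно определённых функций двух переменных.

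Второй (основной) шаг: по произвольной неотрицательно определённой функции $R(t,s)$ построить процесс второго порядка с этой корреляционной функцией, причём гауссовский. Здесь я бы воспользовался теоремой Колмогорова о существовании процесса по согласованному семейству конечномерных распределений (она приведена ранее в тексте). План такой: для каждого конечного набора моментов времени $t_1<\dots<t_n$ положить распределение вектора $(X(t_1),\dots,X(t_n))$ равным нормальному $\mathrm{N}(0,R_n)$, где $R_n=\|R(t_i,t_j)\|_{i,j=1}^n$ — это корректно определённое нормальное распределение именно потому, что $R$ неотрицательно определена (а если $\det R_n=0$, то, как пояснялось в разделе про гауссовские векторы, распределение сосредоточено на подпространстве меньшей размерности, но всё равно задаётся характеристической функцией $\exp(-\tfrac12 s^\top R_n s)$). Затем надо проверить условия а)–е) теоремы Колмогорова для этого семейства: ограниченность, непрерывность слева, нормировки на $\pm\infty$, монотонность (свойство функции распределения гауссовского вектора), симметрия относительно перестановок индексов (следует из симметрии $R(t_i,t_j)$ при перестановке пар $(x_i,t_i)$ и инвариантности нормального закона при перестановке координат) и согласованность маргиналов (маргинал нормального вектора снова нормален с подматрицей $R$). После этого теорема Колмогорова даёт процесс $\{X(t)\}$, все конечномерные распределения которого — именно эти нормальные распределения; по построению $\mathbb{E}X(t)=0$, $\mathbb{E}X(t)\overline{X(s)}=R(t,s)$, то есть $R$ — его корреляционная функция, и процесс гауссовский (а значит и второго порядка). Это одновременно закрывает обратное вложение и усиление про гауссовость.

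Третий шаг — чисто логическая сборка: из первых двух шагов следует равенство классов {корреляционные функции процессов второго порядка} $=$ {неотрицательно определённые функции} $=$ {корреляционные функции гауссовских процессов}, причём последнее равенство — потому что гауссовские процессы суть частный случай процессов второго порядка (вложение в одну сторону тривиально), а построенный во втором шаге процесс гауссовский (вложение в другую сторону).

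Главное препятствие, на мой взгляд, — аккуратная проверка того, что предписанные нормальные распределения образуют согласованное семейство и удовлетворяют формальным условиям а)–е) теоремы Колмогорова, особенно в вырожденном случае $\det R_n=0$, когда нет плотности и удобнее работать с характеристическими функциями $\varphi_n(s)=\exp(-\tfrac12 \langle s, R_n s\rangle)$ и проверять согласованность через них ($\varphi_n$, ограниченное на подпространство $s_k=0$, совпадает с $\varphi_{n-1}$). Технически это стандартно, но именно здесь сосредоточена вся содержательная работа; всё остальное — прямые следствия уже доказанных в тексте фактов (теорема Колмогорова, замкнутость класса нормальных распределений относительно линейных преобразований и взятия маргиналов).
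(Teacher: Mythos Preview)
The paper does not prove this theorem; it only cites Булинский--Ширяев, гл.~2, §6, теорема~4. Your outline is the standard one and is correct in spirit, but the theorem is explicitly about \emph{complex-valued} nonnegative definite functions and \emph{complex-valued} Gaussian processes, and your Step~2 silently treats $R$ as real.

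The gap is concrete: when $R_n=\|R(t_i,t_j)\|$ is complex Hermitian, the object $\mathrm{N}(0,R_n)$ and the characteristic function $\exp\bigl(-\tfrac12\, s^\top R_n s\bigr)$ with real $s\in\mathbb{R}^n$ are not defined --- $R_n$ is not a real symmetric covariance matrix. According to the paper's own definition of a complex Gaussian process, what must be prescribed is the distribution of the real $2n$-vector $\bigl(X(t_1),Y(t_1),\dots,X(t_n),Y(t_n)\bigr)$, where $Z(t)=X(t)+iY(t)$. Writing $R_n=A+iB$ with $A$ real symmetric and $B$ real skew-symmetric (this decomposition follows from the Hermitian property $R(t,s)=\overline{R(s,t)}$), one takes that real vector to be $\mathrm{N}(0,\Sigma_n)$ with
\[
\Sigma_n=\frac12\begin{pmatrix}A & -B\\ B & A\end{pmatrix}.
\]
A short computation shows that complex nonnegative definiteness of $R_n$ is exactly equivalent to $\Sigma_n$ being a legitimate real covariance matrix, and that the resulting $Z(t)=X(t)+iY(t)$ satisfies $\mathbb{E}Z(t_i)\overline{Z(t_j)}=R(t_i,t_j)$. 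After this correction, the Kolmogorov consistency checks go through just as you describe, most cleanly via the characteristic functions of these real $2n$-vectors. Steps~1 and~3 are fine as written.
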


\textbf{Следствие}. Отсюда следует, что класс неотрицательно определенных комплекснозначных функций $\{R(t)$, ${t\in T}\}$ совпадает с классом корреляционных функций стационарных в широком смысле процессов второго порядка $\{X(t)$, ${t\in T}\}$ и, более того, совпадает с классом корреляционных функций комплекснозначных стационарных в широком смысле гауссовских процессов $\{X(t)$, $t\in T\}$.

Другими словами, функция является корреляционной для некоторого процесса тогда и только тогда, когда она является неотрицательно определенной. Неотрицательно определенная функция неоднозначно определяет процесс, для которого она является корреляционной функцией. Но, по крайней мере, всегда можно ей сопоставить подходящий гауссовский процесс.


д) Ранее в разделе~\ref{correlation} мы установили, что непрерывность ковариационной функции в точках вида $(t,t)$ влечет ее непрерывность во всех точках множества $[0,+\infty)^2$ (или множества $\mathbb{R}^2$, если предполагать ${T=\mathbb{R}}$). Стационарный в широком смысле процесс имеет постоянное (а значит, непрерывное) математическое ожидание. Следовательно, непрерывность \textit{корреляционной} функции в точках вида $(t,t)$ влечет ее непрерывность в любой точке вида $(t_1,t_2)$. Это значит, что из непрерывности функции $R_X(t)$ в точке $t=0$ следует ее непрерывность в любой другой точке $t\in\mathbb{R}$.

\begin{example}
Существует ли стационарный в широком смысле процесс $X(t)$ с корреляционной функцией
\begin{equation}\label{eq:RXT1}
R_X(t)=\left\{ {\begin{array}{*{20}{c}}
	{\sigma^2},&{|t|\le t_0}, \\
	{0,}&{|t|>t_0}; 
	\end{array}} \right.
\end{equation}
с такой корреляционной функцией:
\begin{equation}\label{eq:RXT2}
R_X(t)=\left\{ {\begin{array}{*{20}{c}}
	{\sigma^2},&{|t|\le t_0,\,t\ne0}, \\
	{0,}&{|t|>t_0,\,t=0}; 
	\end{array}} \right.
\end{equation}
с такой корреляционной функцией:
\begin{equation}\label{eq:RXT3}
R_X(t)=\left\{ {\begin{array}{*{20}{l}}
	{\sigma^2}&{|t|\le t_0}, \\
	{f(t)}&{t_0<|t| \le t_0+\Delta}, \\
	{0}&{|t|>t_0+\Delta},
	\end{array}} \right.
\end{equation}
где $f(t)$ -- функция с $f(t_0)=\sigma^2$ и $f(t)=0$ для $|t|\ge t_0+\Delta$?
\end{example}

\begin{solution}
Функция $R_X(t)$ из~\eqref{eq:RXT1} непрерывна в точке ${t=0}$, но не является непрерывной в точках ${t=\pm t_0}$. Поэтому стационарного процесса с такой корреляционной функцией не существует.

Функция $R_X(t)$ из~\eqref{eq:RXT2} имеет разрыв в нуле, поэтому, в принципе, может иметь разрывы и в других точках. Впрочем, эта функция все равно не является корреляционной ни для какого стационарного процесса, так как не выполнено неравенство ${|R_X(t)|\le R_X(0)}$ для ${t\in(0,t_0)}$. Противоречие можно также обнаружить, показав, что предложенная функция $R_X(t)$ не является неотрицательно определенной. Действительно, пусть $t_1=t_0/2$, $t_2=-t_0/2$. Тогда определитель
\setlength{\arraycolsep}{2pt}$$\det\left[ {\begin{array}{*{20}{c}}
	{R_X(t_1-t_1)}&{R_X(t_1-t_2)}\\
	{R_X(t_2-t_1)}&{R_X(t_2-t_2)}
	\end{array}} \right] = \det\left[ {\begin{array}{*{20}{c}}
	{0}&{R_X(t_0)}\\
	{R_X(-t_0)}&{0}
	\end{array}} \right]=-\sigma^2<0,$$ и, следовательно, функция $R_X(t)$ не является неотрицательно определенной.
\setlength{\arraycolsep}{5pt}

Перейдем теперь к функции $R_X(t)$ из~\eqref{eq:RXT3}. Предположим, что существует стационарный процесс $X(t)$, для которого эта функция  корреляционная. В этом случае функция $R_X(t)$ является неотрицательно определенной. Пусть $t_1=0$, $t_2=t_0$, $t_3=2t_0$. Тогда ковариационная матрица $\left\|R_X(t_i-t_j)\right\|_{i,j=1}^{3}$, равная $$\left[ {\begin{array}{*{20}{c}}
	{{\sigma ^2}}&{{\sigma ^2}}&{R_X\left( {2{t_0}} \right)}\\
	{{\sigma ^2}}&{{\sigma ^2}}&{{\sigma ^2}}\\
	{R_X\left( {2{t_0}} \right)}&{{\sigma ^2}}&{{\sigma ^2}}
	\end{array}} \right],$$ будет неотрицательно определенной. Здесь мы учли, что $R(t)$ является вещественной функцией, поэтому всюду $R_X(-t)=R_X(t)$. Определитель этой матрицы равен $$-\sigma^2(\sigma^4-\sigma^2R_X(2t_0))+R_X(2t_0)(\sigma^4-\sigma^2R_X(2t_0))\ge0.$$ Разделим это выражение на $-\sigma^2$, тогда $$R_X^2(2t_0)-2R_X(2t_0)\sigma^2+\sigma^4 \le 0 \Leftrightarrow (R_X(2t_0)-\sigma^2)^2 \le 0,$$ 
что возможно, только лишь если ${R_X(2t_0)=\sigma^2}$. Теперь пусть ${t_1=0}$, ${t_2=t_0}$, ${t_3=3t_0}$. Тогда ковариационная матрица $\left\|R_X(t_i-t_j)\right\|_{i,j=1}^{3}$, равная $$\left[ {\begin{array}{*{20}{c}}
	{{\sigma ^2}}&{{\sigma ^2}}&{R_X\left( {3{t_0}} \right)}\\
	{{\sigma ^2}}&{{\sigma ^2}}&{{\sigma ^2}}\\
	{R_X\left( {3{t_0}} \right)}&{{\sigma ^2}}&{{\sigma ^2}}
	\end{array}} \right],$$ является неотрицательно определенной. Отсюда следует, что $R_X(3t_0)\!\!=$\linebreak $=\sigma^2$. И вообще, необходимо, чтобы ${R_X(nt_0)=\sigma^2}$ для любого $n\in\mathbb{Z}$. Поэтому найдется такое $n_0$, что ${n_0 t_0 > t_0+\Delta}$, но тогда ${R_X(n_0t_0)=\sigma^2}$, а~нам дана функция с ${R_X(n_0t_0)=0}$. Мы получили противоречие. Это значит, что какую бы область сглаживания $(-t_0-\Delta,-t_0)\cup(t_0,t_0+\Delta)$ мы ни выбрали и какую бы функцию сглаживания $f(t)$ на ней мы ни выбрали, функция $R_X(t)$ не может быть корреляционной функцией ни для какого стационарного процесса $X(t)$. \EndEx
\end{solution}

Перейдем теперь к более содержательным результатам корреляционной теории стационарных в широком смысле процессов.

\subsection{Спектральное представление процессов}

Теперь, когда даны начальные представления о стационарных процессах, самое время поговорить подробно о природе таких процессов. Выше мы видели, что суммы гармоник (комплексных экспонент) образуют стационарный процесс тогда и только тогда, когда коэффициенты при этих гармониках являются попарно некоррелированными. В~этом разделе мы увидим сильное обобщение этого наблюдения.

Начнем с содержательного примера. Рассмотрим случайный процесс $$X(t)=\xi e^{i\Omega t},$$
где $\xi$ и $\Omega$ -- независимые случайные величины, ${\mathbb{E}\xi=0}$, ${\mathbb|\xi|^2<\infty}$, $\xi$ -- в общем случае комплекснозначная случайная величина, а $\Omega$ -- вещественнозначная случайная величина. Математическое ожидание этого процесса $$\mathbb{E}X(t)=\mathbb{E}\xi e^{i\Omega t}=\mathbb{E}\xi\cdot\mathbb{E}e^{i\Omega t} = 0$$ в силу независимости случайных величин $\xi$ и $\Omega$ (а стало быть, и произвольных функций от них) и условия $\mathbb{E}\xi=0$. Корреляционная функция
$$R_X(t,s)=\mathbb{E}X(t)\mathbb{E}\overline{X(s)}=\mathbb{E}|\xi|^2 e^{i\Omega(t-s)}$$ зависит лишь от разности ${t-s}$. Отсюда следует, что процесс $X(t)$ является стационарным в широком смысле.

Перепишем корреляционную функцию в другом виде. Введем функцию 
$R\ag{_X}(\tau)=R_X(t,s)$, $\tau=t-s$, и запишем $$R(\tau)=\mathbb{E}|\xi|^2\cdot\mathbb{E}e^{i\Omega \tau} = \mathbb{E}|\xi|^2\int\limits_{-\infty}^{+\infty}e^{i\nu \tau}\,dF_{\Omega}(\nu), \ F_{\Omega}(\nu)=\mathbb{P}(\Omega < \nu),$$ что равносильно
$$R\ag{_X}(\tau) = \int\limits_{-\infty}^{+\infty}e^{i\nu \tau}\,d(\mathbb{E}|\xi|^2 F_{\Omega}(\nu)).$$ Если ввести еще обозначение ${S(\nu)=\mathbb{E}|\xi|^2 F_{\Omega}(\nu)}$, то мы приходим к выражению
\begin{equation}\label{eq:CorrFuncRep1}
   R\ag{_X}(\tau)=\int\limits_{-\infty}^{+\infty}e^{i\nu \tau}\,dS(\nu), 
\end{equation}
где $S(\nu)$ -- это функция, которая с точностью до произвольного неотрицательного множителя совпадает с функцией распределения случайной величины $\Omega$.

Кстати говоря, мы получили, что если функция одной переменной имеет вид~\eqref{eq:CorrFuncRep1} с функцией $S(\nu)$, которая с точностью до неотрицательного множителя совпадает с функцией распределения некоторой случайной величины, то эта функция является корреляционной функцией \shmaxg{некоторого} стационарного с.к.-непрерывного процесса $X(t)=\xi e^{i\Omega t}$, для которого $S(\nu)=\mathbb{E}|\xi|^2 F_{\Omega}(\nu)$.

Оказывается, справедливо и обратное (см. \cite[гл. 7, теорема 8]{BulinskyShiryaev2005}).

\begin{theorem}[ (Бохнер--Хинчин)]
\label{BochnerHinchin}
Для того чтобы функция $R(\tau)$ являлась корреляционной функцией некоторого стационарного в широком смысле непрерывного в среднеквадратическом смысле случайного процесса, необходимо и достаточно, чтобы она была представима в следующем виде:
\begin{equation}\label{eq:BohnerKhinchinFormula}
    R\ag{_X}(\tau) = \int\limits_{-\infty}^{+\infty}e^{i\nu \tau}dS(\nu),
\end{equation}
где $S(\nu)=\mathbb{E}|\xi|^2\cdot\mathbb{P}(\Omega < \nu)$ для некоторых случайных величин $\xi$ и $\Omega$.
\end{theorem}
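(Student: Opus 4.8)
The plan is to prove the two directions separately. \emph{Sufficiency} requires almost nothing new: if $R(\tau)=\int_{-\infty}^{+\infty}e^{i\nu\tau}\,dS(\nu)$ with $S(\nu)=\mathbb{E}|\xi|^2\cdot\mathbb{P}(\Omega<\nu)$, then the process $X(t)=\xi e^{i\Omega t}$ with $\xi$ and $\Omega$ independent — exactly the process analysed immediately before the statement — is stationary in the wide sense, mean-square continuous, and has correlation function $\mathbb{E}|\xi|^2\int_{-\infty}^{+\infty}e^{i\nu\tau}\,dF_{\Omega}(\nu)=\int_{-\infty}^{+\infty}e^{i\nu\tau}\,dS(\nu)=R(\tau)$. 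So the whole content lies in \emph{necessity}: passing from a correlation function to the integral representation.

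For necessity I would first cash in two facts already established in the text. Mean-square continuity of $X(t)$ is equivalent to continuity of its covariance function at the diagonal, hence $R(\tau)$ is continuous on all of $\mathbb{R}$; and $R(\tau)$ is a nonnegative-definite function of one variable, $\sum_{j,k}z_j\bar z_k\,R(t_j-t_k)\ge0$ for all $z_j\in\mathbb{C}$ and all $t_j$. The task thus reduces to the purely analytic \emph{Bochner theorem}: a continuous nonnegative-definite $R$ admits a representation $R(\tau)=\int_{-\infty}^{+\infty}e^{i\nu\tau}\,dG(\nu)$ with $G$ a finite nonnegative measure, $G(\mathbb{R})=R(0)$. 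Granting this, set $S(\nu)=G((-\infty,\nu))$; if $R(0)>0$ pick $\Omega$ with distribution function $S(\nu)/R(0)$ and, on a product space, a $\xi$ independent of $\Omega$ with $\mathbb{E}|\xi|^2=R(0)$, so that $S(\nu)=\mathbb{E}|\xi|^2\,\mathbb{P}(\Omega<\nu)$; if $R(0)=0$ then $R\equiv0$ and $\xi\equiv0$ works.

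To prove Bochner's theorem I would run the classical Fejér-kernel / Helly-selection argument. For $a>0$ put
\[
f_a(\nu)=\frac{1}{2\pi a}\int_0^a\!\!\int_0^a R(t-s)\,e^{-i\nu(t-s)}\,dt\,ds
=\frac{1}{2\pi}\int_{-a}^{a}\Bigl(1-\frac{|\tau|}{a}\Bigr)R(\tau)\,e^{-i\nu\tau}\,d\tau ,
\]
the second form following from the substitution $\tau=t-s$. Approximating the double integral by Riemann sums and invoking nonnegative definiteness gives $f_a(\nu)\ge0$. A mollification — integrating $f_a$ against a Gaussian weight $e^{-\sigma^2\nu^2/2}$, exchanging the order of integration, and letting $\sigma\downarrow0$ so that the kernel concentrates at the origin — together with continuity of $R$ at $0$ and Fatou's lemma yields the uniform bound $\int_{-\infty}^{+\infty}f_a(\nu)\,d\nu\le R(0)$; the same estimate shows the family $\{f_a(\nu)\,d\nu\}_{a>0}$ is tight. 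Helly's selection theorem then gives $a_n\to\infty$ along which $f_{a_n}(\nu)\,d\nu$ converges weakly to a finite nonnegative measure $G$. Finally, the Fourier inversion identity $\int_{-\infty}^{+\infty}e^{i\nu\tau}f_a(\nu)\,d\nu=(1-|\tau|/a)R(\tau)$ for $|\tau|\le a$ (again justified by Gaussian smoothing), combined with weak convergence against the bounded continuous integrand $e^{i\nu\tau}$, gives $\int_{-\infty}^{+\infty}e^{i\nu\tau}\,dG(\nu)=R(\tau)$ for every $\tau$, and at $\tau=0$ this reads $G(\mathbb{R})=R(0)$.

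The main obstacle is the uniform mass bound $\int f_a\,d\nu\le R(0)$ together with the tightness it entails: $f_a$ need not be $d\nu$-integrable on its own, so a mollifier must be inserted, Fubini applied, and the concentration of the kernel exploited — this is the single step that genuinely uses continuity of $R$ at the origin, and it parallels the tightness argument behind the continuity theorem (Theorem~\ref{th:ContTheorem}). Once this estimate and the inversion identity are in place, the nonnegativity of $f_a$, the passage to the weak limit, and the final identification of $G$ with a multiple of a probability distribution are all routine.
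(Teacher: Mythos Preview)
Your proposal is correct, and in fact it goes considerably further than the paper does. The paper handles sufficiency exactly as you do --- the construction $X(t)=\xi e^{i\Omega t}$ with independent $\xi$ and $\Omega$ is worked out in the paragraphs immediately preceding the theorem, and the theorem is then stated as ``it turns out the converse also holds''. For the necessity direction the paper gives no argument at all: it simply cites \cite[гл.~7, теорема~8]{BulinskyShiryaev2005} and moves on.

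Your Fej\'er-kernel / Helly-selection proof of Bochner's theorem is one of the standard routes and is correctly outlined; the identification of the crucial step (the uniform mass bound via Gaussian mollification, which is where continuity of $R$ at $0$ actually enters) is accurate. So you have supplied a genuine proof where the paper only supplies a reference --- the approaches agree on sufficiency, and on necessity yours is strictly more informative.
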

Функция $S(\nu)$ из теоремы Бохнера--Хинчина называется \textit{спектра\-льной функцией}. Это название оправдано тем, что она представляет собой с точностью до множителя функцию распределения \textit{частоты} гармоники $e^{i\Omega t}$.

Обращаем внимание читателя на то, что в теореме Бохнера--Хинчи\-на речь идет именно об с.к.-непрерывных процессах. Функция вида \eqref{eq:BohnerKhinchinFormula} нигде не может иметь разрывы. Естественно, существуют стационарные процессы с разрывной корреляционной функцией, например фун\-кция $R\ag{_X}(t)$, для которой $R\ag{_X}(0)=1$, $R\ag{_X}(t)=0$ для остальных $t$. Эта функция является неотрицательно определенной, поэтому как минимум существует гауссовский стационарный процесс с такой корреляционной функцией\ag{.} 

Бывает, что случайная величина $\Omega$ из теоремы Бохнера--Хинчина имеет плотность вероятности распределения. В таких случаях $S(\nu)$ тоже имеет плотность.
\begin{definition}
Если функция $S(\nu)$ абсолютно непрерывна, т.е. если существует неотрицательная функция $\rho(\nu)$ такая, что для всех $\nu\in\mathbb{R}$: $$S(\nu)=\int\limits_{-\infty}^{\nu}\rho(\tau)\,d\tau,$$ то функция $\rho(\nu)$ называется \textit{спектральной плотностью}. 
\end{definition}

В этом случае корреляционная функция процесса представима фо\-рмулой 
\begin{equation}\label{eq:Rtrho}
R_X(t)=\int\limits_{-\infty}^{+\infty}e^{i\nu t}\rho(\nu)\,d\nu.
\end{equation}
Отметим здесь же, что дисперсия процесса в этом случае
\begin{equation*}
\mathbb{D}X(t)=R_X(0)=\int\limits_{-\infty}^{+\infty}\rho(\nu)\,d\nu.
\end{equation*}

Напомним также из теории интегралов Фурье достаточное условие существования спектральной плотности.
\begin{theorem}
\textit{Если корреляционная функция $R_X(t)$ с.к.-непрерыв\-ного процесса абсолютно интегрируема, т.е.} $$\int\limits_{-\infty}^{+\infty}|R_X(t)|\,dt<\infty,$$ \textit{то спектральная функция этого процесса обладает непрерывной и ог\-раниченной спектральной плотностью $\rho(\nu)$, причем $$\rho(\nu)=\frac{1}{2\pi}\int\limits_{-\infty}^{+\infty}e^{-i\nu t}R_X(t)\,dt.$$ 
Для вещественной функции $R_X(t)$ выражение выше упрощается}: $$\rho(\nu)=\frac{1}{\pi}\int\limits_{0}^{+\infty}\cos{\nu t}\,R_X(t)\,dt$$ и $\rho(\nu)$ \textit{является четной функцией} $\nu$.
\end{theorem}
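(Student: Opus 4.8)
The plan is to construct $\rho$ directly from the stated formula and then verify, one property at a time, that it is continuous and bounded, non-negative, integrable, and that integrating it against $e^{i\nu\tau}$ returns $R_X(\tau)$; together with the Bochner--Khinchin representation (Theorem~\ref{BochnerHinchin}) and the uniqueness of the spectral function this identifies $\rho$ as the spectral density. Throughout I would use two facts already available: s.k.-continuity of the process forces $R_X(t)$ to be continuous on all of $\mathbb{R}$ (the criterion in Theorem~\ref{th:SKCont1} applied to the constant mean, plus the remark that continuity of a correlation function on the diagonal propagates everywhere), and $R_X$ is a non-negative definite function of one variable. So first set $\rho(\nu)=\frac{1}{2\pi}\int_{-\infty}^{+\infty}e^{-i\nu t}R_X(t)\,dt$: the hypothesis $R_X\in L_1$ makes the integral absolutely convergent, gives $|\rho(\nu)|\le\frac{1}{2\pi}\int_{-\infty}^{+\infty}|R_X(t)|\,dt$, and by the Riemann--Lebesgue lemma makes $\rho$ continuous with $\rho(\nu)\to0$ as $|\nu|\to\infty$ --- which is already the asserted boundedness and continuity.

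Next I would show $\rho\ge0$. For $T>0$ the Fej\'er-type average $\frac{1}{2\pi}\int_{-T}^{T}\bigl(1-|t|/T\bigr)e^{-i\nu t}R_X(t)\,dt$ equals $\frac{1}{2\pi T}\int_0^T\!\int_0^T e^{-i\nu(u-v)}R_X(u-v)\,du\,dv$, hence is a limit of Riemann sums $\frac{1}{2\pi T}\sum_{k,l}z_k\bar z_l\,R_X(t_k-t_l)$ with $z_k=e^{-i\nu t_k}$, each $\ge0$ by non-negative definiteness of $R_X$; letting $T\to\infty$ with dominated convergence (majorant $\frac{1}{2\pi}|R_X|\in L_1$) the average tends to $\rho(\nu)$, so $\rho(\nu)\ge0$ for all $\nu$.

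Then I would obtain integrability of $\rho$ and the inversion identity simultaneously: by Fubini $\int_{-A}^{A}\rho(\nu)\,d\nu=\frac{1}{\pi}\int_{-\infty}^{+\infty}R_X(t)\,\frac{\sin(At)}{t}\,dt$, and the classical Dirichlet-integral argument (using continuity of $R_X$ at $0$ and $R_X\in L_1$) yields $\int_{-A}^{A}\rho(\nu)\,d\nu\to R_X(0)$ as $A\to\infty$; since $\rho\ge0$ this means $\rho\in L_1(\mathbb{R})$ with $\int_{-\infty}^{+\infty}\rho(\nu)\,d\nu=R_X(0)=\mathbb{D}X(t)$. Now $R_X\in L_1$ and its Fourier transform $2\pi\rho\in L_1$, so the Fourier inversion theorem gives $R_X(\tau)=\int_{-\infty}^{+\infty}e^{i\nu\tau}\rho(\nu)\,d\nu$ for almost every $\tau$, hence for every $\tau$ by continuity of both sides; this is formula~\eqref{eq:Rtrho}, i.e.\ the representation~\eqref{eq:BohnerKhinchinFormula} with $S(\nu)=\int_{-\infty}^{\nu}\rho(u)\,du$. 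Since the spectral function of a given $R_X$ is unique, $S$ is the spectral function of the process, it is absolutely continuous, and $\rho$ is by definition its spectral density.

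For the real case: if $R_X$ is real-valued then the Hermitian property $R_X(-t)=\overline{R_X(t)}=R_X(t)$ makes $R_X$ even, so in $e^{-i\nu t}=\cos\nu t-i\sin\nu t$ the sine term integrates to $0$ (odd integrand), leaving $\rho(\nu)=\frac{1}{2\pi}\int_{-\infty}^{+\infty}\cos(\nu t)R_X(t)\,dt=\frac{1}{\pi}\int_0^{+\infty}\cos(\nu t)R_X(t)\,dt$ by evenness, which is manifestly an even function of $\nu$. The step I expect to be the real work is the pair of limit interchanges --- the $T\to\infty$ passage proving $\rho\ge0$ and the $A\to\infty$ Dirichlet evaluation proving $\rho\in L_1$ with total mass $R_X(0)$ --- since these are exactly where the hypotheses $R_X\in L_1$ and continuity of $R_X$ are consumed; the remaining ingredients (Riemann--Lebesgue, Fubini, Fourier inversion, uniqueness of the spectral measure) are standard.
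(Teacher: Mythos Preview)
The paper does not actually prove this theorem: it is introduced with the words ``Напомним также из теории интегралов Фурье достаточное условие существования спектральной плотности'' and then stated without argument, i.e.\ it is quoted as a standard fact from Fourier analysis. So there is no ``paper's proof'' to compare against; your proposal supplies exactly the argument the authors chose to omit.

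Your plan is correct and is the standard route. One small remark on the step you flagged as the real work: the Dirichlet-integral evaluation $\frac{1}{\pi}\int R_X(t)\frac{\sin At}{t}\,dt\to R_X(0)$ does not follow from $R_X\in L_1$ and continuity at $0$ alone without some extra care (pointwise Fourier inversion via the Dirichlet kernel usually wants a Dini-type condition). A cleaner variant, using ingredients you already have, is to replace the sharp cutoff by the Fej\'er weight: compute $\int_{-A}^{A}(1-|\nu|/A)\rho(\nu)\,d\nu=\frac{1}{2\pi}\int_{-\infty}^{\infty}R_X(t)\,\frac{4\sin^2(At/2)}{At^2}\,dt$, note the Fej\'er kernel on the right is a non-negative approximate identity, and use that $R_X$ is continuous and bounded by $R_X(0)$ to get the limit $R_X(0)$; since $\rho\ge0$, monotone convergence on the left gives $\int_{-\infty}^{\infty}\rho=R_X(0)<\infty$. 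After that, Fourier inversion and the uniqueness of the spectral function in the Bochner--Khinchin representation finish the identification exactly as you wrote.
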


\begin{example}
Дана спектральная плотность процесса $X(t)$: $$\rho(\nu)=\left\{ {\begin{array}{*{20}{c}}
	{\dfrac{\sigma^2}{2\pi},}&{|\nu|\le\nu_0}, \\ \\
	{0,}&{|\nu|>\nu_0}.
	\end{array}} \right.$$
Найти корреляционную функцию $R_X(t)$ и дисперсию $\mathbb{D}X(t)$ процесса.
\end{example}

\begin{solution}
Из формулы~\eqref{eq:Rtrho} следует $$R_X(t)=\int\limits_{-\infty}^{+\infty}e^{i\nu t}\rho(\nu)\,d\nu=\int\limits_{-\nu_0}^{\nu_0}e^{i\nu t}\,\rho(\nu)\,d\nu=\frac{\sigma^2}{2\pi}\int\limits_{-\nu_0}^{\nu_0}\cos{\nu t}\,d\nu.$$ Отсюда получаем $$R_X(t)=\frac{\sigma^2}{\pi t}\sin{\nu_0 t} \text{ при} \ t \ne 0, \ R_X(0)=\frac{\sigma^2\nu_0}{\pi}.$$ Дисперсия $\mathbb{D}X(t)=R_X(0)=\sigma^2\nu_0/\pi$. \EndEx
\end{solution}

\begin{example}
Корреляционная функция некоторого стационарного процесса $X(t)$ имеет вид
$$R_X(t)=De^{-a|t|}.$$
Найти спектральную плотность процесса.
\end{example}

\textbf{Решение}. Существование спектральной функции следует из того, что корреляционная функция этого процесса является абсолютно интегрируемой на $\mathbb{R}$. Кроме того, спектральную плотность можно вычислить по формуле
\begin{eqnarray*}
\rho(\nu) &=& \frac{1}{2\pi}\int\limits_{-\infty}^{+\infty}e^{-i\nu t}R_X(t)\,dt = \frac{1}{2\pi} \int\limits_{-\infty}^{+\infty}De^{-i\nu t - a|t|}\,dt= \\
&=& \frac{D}{2\pi}\int\limits_{0}^{+\infty}(e^{-i\nu t-at} + e^{i\nu t -at})\,dt = \frac{D}{2\pi} \left(\frac{1}{i\nu + a} + \frac{1}{-i\nu + a}\right)= \\ 
&=& \frac{Da}{\pi}\frac{1}{a^2+\nu^2}. \text{\EndEx}
\end{eqnarray*}

\begin{example}
\label{white noise}
Найти спектральную функцию случайного процесса с корреляционной функцией, заданной \textit{формально} как $$R_X(t)=\sigma^2 \delta(t),$$ где $\delta(t)$ -- дельта-функция.
\end{example}

\textbf{Решение}. Абстрагируясь от математической строгости, относящейся 
к теории выше, мы можем записать:
$$\rho_X(\nu)=\frac{1}{2\pi}\int\limits_{-\infty}^{+\infty}e^{-i\nu t}R_X(t)\,dt=\frac{1}{2\pi}\int\limits_{-\infty}^{+\infty}e^{-i\nu t}\sigma^2\delta(t)\,dt=\frac{\sigma^2}{2\pi}. \ \text{\EndEx}$$

Хотя, строго говоря, процессов с такой корреляционной функцией не существует, тем не менее эта абстракция оказывается чрезвычайно полезна в приложениях. Процесс с такой формальной корреляционной функцией называется \textit{белым шумом}. Его особенность в том, что любые два сколь угодно близких сечения процесса являются некоррелированными. Конечно, существует математически строгое описание процессов, у которых корреляционная функция представляет собой какие-то комбинации обобщенных функций. Такие процессы называются \textit{обобщенными процессами}, к ним принадлежит и белый шум. За подробной справкой об определении и свойствах таких процессов мы отправляем читателя к монографии~\cite[гл. 17]{KoralovSinai}.

\elena{Чтобы избежать описанных выше трудностей, можно рассматривать корреляционную функцию из примера \ref{white noise} в предположении, что время дискретно. В этом случае не нужно требовать непрерывности корреляционной функции, так как теперь $R\ag{_X}(n)$, $n\in\mathbb Z$ -- последовательность. Аналогом теоремы Бохнера--Хинчина \ref{BochnerHinchin} для случайных процессов в дискретном времени (т.е. случайных последовательностей) является теорема Герглотца (см. гл. 7, теорема 4 \cite{BulinskyShiryaev2005}).
\begin{theorem}
\label{Gerglotz}
\textit{Для того, чтобы функция $R\ag{_X}(n)$, $\in\mathbb Z$, являлась корреляционной функции некоторой стационарной в широком смысле последовательности, необходимо и достаточно, чтобы было справедливо представление}
\begin{equation}
    R\ag{_X}(n) = \int_{-\pi}^\pi e^{in \nu} dS(\nu),
\end{equation}
\textit{где $S$ -- конечная мера на борелевской сигма-алебре отрезка $[-\pi, \pi]$.}
\end{theorem}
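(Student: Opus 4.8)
The plan is to prove both implications, disposing of the easy ``if'' direction first and then establishing the substantive ``only if'' direction by a Fej\'er-type Ces\`aro averaging combined with a weak-compactness (Helly) selection, exactly parallel to the proof of the Bochner--Khinchin theorem~\ref{BochnerHinchin} but adapted to the torus.

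For sufficiency, suppose $R_X(n) = \int_{-\pi}^{\pi} e^{in\nu}\,dS(\nu)$ for a finite nonnegative measure $S$ on $[-\pi,\pi]$. Then for any $m\ge 1$, any integers $n_1,\dots,n_m$ and any $z_1,\dots,z_m\in\mathbb{C}$,
$$\sum_{j,k=1}^{m} z_j \bar z_k\, R_X(n_j-n_k) = \int_{-\pi}^{\pi} \Big|\sum_{j=1}^{m} z_j e^{in_j\nu}\Big|^2 dS(\nu) \ge 0,$$
so the sequence $\{R_X(n)\}$ is nonnegative definite. It then is a correlation function of some wide-sense stationary sequence: this is the discrete-time analogue of the theorem quoted in Section~\ref{spectral}, and it follows from Kolmogorov's theorem by declaring all finite-dimensional distributions Gaussian with zero mean and covariance matrix $\|R_X(n_j-n_k)\|$, which is a legitimate covariance matrix precisely because $R_X$ is nonnegative definite.

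For necessity, let $R_X(n)$ be the correlation function of a wide-sense stationary sequence, so $R_X$ is nonnegative definite and $|R_X(n)|\le R_X(0)$. For each $N\ge 1$ set
$$f_N(\nu) = \frac{1}{2\pi N}\sum_{j,k=0}^{N-1} R_X(j-k)\,e^{-i(j-k)\nu} = \frac{1}{2\pi}\sum_{|n|<N}\Big(1-\frac{|n|}{N}\Big) R_X(n)\,e^{-in\nu}.$$
Choosing $z_j=e^{-ij\nu}$ in the nonnegative-definiteness inequality shows $f_N(\nu)\ge 0$ for every $\nu$. Define measures $dS_N(\nu)=f_N(\nu)\,d\nu$ on $[-\pi,\pi]$. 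Since $\int_{-\pi}^{\pi} e^{-in\nu}\,d\nu$ equals $2\pi$ for $n=0$ and $0$ otherwise, integration term by term gives $S_N([-\pi,\pi]) = R_X(0)$ and, more generally, $\int_{-\pi}^{\pi} e^{in\nu}\,dS_N(\nu) = (1-|n|/N)R_X(n)$ for $|n|<N$.

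Finally, $\{S_N\}$ is a family of nonnegative measures on the compact set $[-\pi,\pi]$ with uniformly bounded total mass $R_X(0)$, so by Helly's selection theorem some subsequence $S_{N_k}$ converges weakly to a finite measure $S$. Since $\nu\mapsto e^{in\nu}$ is continuous and bounded, $\int_{-\pi}^{\pi} e^{in\nu}\,dS_{N_k}(\nu)\to\int_{-\pi}^{\pi} e^{in\nu}\,dS(\nu)$; on the other hand this expression equals $(1-|n|/N_k)R_X(n)\to R_X(n)$. Hence $R_X(n)=\int_{-\pi}^{\pi} e^{in\nu}\,dS(\nu)$ for all $n\in\mathbb{Z}$, which is the claimed representation. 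The main point that needs care is the behaviour at the endpoints $\pm\pi$: to make both the weak-convergence step and the identification of Fourier coefficients clean, one should view $S_N$ and $S$ as measures on the circle $\mathbb{T}=\mathbb{R}/2\pi\mathbb{Z}$ (identifying $-\pi$ with $\pi$), where the exponentials are genuinely continuous; with that convention uniqueness of $S$, though not asserted in the statement, also follows at once from density of trigonometric polynomials in $C(\mathbb{T})$.
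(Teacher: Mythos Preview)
Your argument is correct and is the standard proof of the Herglotz theorem via Fej\'er averaging and Helly selection. Note, however, that the paper does not actually prove this theorem: it merely states it and refers the reader to \cite[гл.~7, теорема~4]{BulinskyShiryaev2005}, so there is no in-paper proof to compare against. Your proof is exactly the classical one found in that reference (and in most textbooks), including the remark about passing to the circle $\mathbb{T}$ to handle the endpoint identification cleanly.
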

Теперь рассмотрим пример, аналогичный примеру \ref{white noise}, только в дискретном времени.
Пусть корреляционная функция имеет вид:
$$
R\ag{_X}(n) = \left\{ \begin{array}{*{20}{c}}
	{\sigma^2,}&{n=0}, \\ \\
	{0,}&{n\neq 0}.
	\end{array} \right.
$$
Как несложно проверить, последовательность некоррелированных случайных величин $X_1, X_2,\ldots$ с общей дисперсией $\mathbb D X = \sigma^2$ имеет такую корреляционную функцию. Это и есть модель \textit{белого шума} в дискретном времени. Спектральная плотность для дискретного белого шума равна $\rho(\nu) = 1/(2\pi)$, $\nu\in[-\pi, \pi]$. \EndEx
}

Мы видели выше, что комплексные гармоники с нулевым математическим ожиданием являются стационарными в широком смысле процессами. Линейные комбинации таких некоррелированных гармоник также являются стационарными процессами. Кроме того, мы видели, что корреляционная функция стационарного процесса представляет собой интеграл (<<непрерывную сумму>>) гармоник. Возникает вопрос: можно ли и сам стационарный процесс представить в виде конечной или бесконечной суммы гармоник с некоррелированными коэффициентами? Оказывается, это всегда возможно.

Чтобы это продемонстрировать, вернемся к примеру со стационарным процессом ${X(t)=\xi e^{i\Omega t}}$, где $\xi$ -- комплекснозначная случайная величина второго порядка с нулевым математическим ожиданием и $\Omega$ -- вещественнозначная случайная величина с произвольной функцией распределения $F_{\Omega}(\nu)$ и не зависящая от $\xi$. Представим этот процесс в виде интеграла от комплексной экспоненты.

Для простоты рассмотрим случай, когда ${\xi=1}$ п.н. и ${\Omega\in\mathrm{R}(0,1)}$. Введем процесс ${V(\nu)=\mathsf{I}(\Omega < \nu)}$, где $\nu$ выполняет роль времени для случайного процесса $V(\nu)$. В таком случае $$X(t)= e^{i\Omega t}=\int\limits_{-\infty}^{+\infty} e^{i\nu t}\,dV(\nu),$$ где интеграл понимается в смысле определения~\ref{def:ItoIntegral}. Чтобы это показать, возьмем произвольное разбиение ${0=\nu_1<\dots<\nu_n=1}$ произвольного отрезка $[0,1]$, выберем произвольные точки $\nu'_k\in[\nu_k,\nu_{k+1}]$ и запишем интегральную сумму: 
$$S_n = \sum\limits_{k=1}^{n-1} e^{i\nu'_k t}(V(\nu_{k+1})-V(\nu_{k}))=\sum\limits_{k=1}^{n-1} e^{i\nu'_k t}\cdot \mathsf{I}(\Omega\in[\nu_{k},\nu_{k+1}]).$$ Очевидно, что для каждого исхода $\omega$ в сумме выше будет только одно ненулевое слагаемое и оно будет зависеть от исхода: $e^{i\nu'_k(\omega)t}$. Кроме того, при измельчении разбиения, ${\nu'_k(\omega)\to\Omega(\omega)}$. Получается, что имеется сходимость п.н. ${S_n\to e^{i\Omega t}}$, но так как последовательность ${e^{i\nu'_k(\omega)t}}$ ограничена по модулю единицей, то это будет также сходимость и в среднем квадратичном. С другой стороны, по определению есть сходимость в среднем квадратичном ${S_n\to\int_0^1 e^{i\nu t}\,dV(\nu)}$. Значит, п.н. $$\int\limits_0^1 e^{i\nu t}\,dV(\nu) = e^{i\Omega t}.$$ Теперь в качестве отрезка интегрирования взять ${[a,b]}$, ${a\le0}$, ${b\ge1}$, и устремить ${a\to-\infty}$, ${b\to+\infty}$ в среднем квадратичном. Значение интеграла при этом меняться не будет, так как при таких $\nu$ не меняется $V(\nu)$. Следовательно, мы доказали, что п.н.
$$\int\limits_{-\infty}^{+\infty} e^{i\nu t}\,dV(\nu) = e^{i\Omega t}.$$

Введем теперь одно вспомогательное понятие и сформулируем главную теорему о спектральном представлении стационарного процесса.
\begin{definition}
\shmaxg{Центрированный} случайный процесс ${V(t) \in CL_2}$ будем называть \textit{процессом с ортогональными приращениями}, если \shmaxg{для любых $t_1 < t_2 \le t_3 < t_4$} $$\Exp (V(t_4) - V(t_3))\overline{(V(t_2) - V(t_1))} = 0.$$
\end{definition}

\begin{theorem}[ (Крамер)]\label{th:WeakStatRepresent}
\textit{Любому стационарному в широком смысле с.к.-непрерывному случайному процессу ${X(t)\in CL_2}$ соответствует случайный процесс $V(\nu)$, заданный на том же вероятностном пространстве, что и процесс $X(t)$, с ортогональными приращениями, такой, что с вероятностью единица}
\begin{equation*}
    X(t) = m_X + \int\limits_{-\infty}^{+\infty}e^{i\nu t}\,dV(\nu), \ t\in\mathbb{R},
\end{equation*}
\textit{где интеграл понимается как несобственный интеграл в смысле Рима\-на--Стилтьеса, а $m_X$ -- математическое ожидание процесса $X(t)$.}
\end{theorem}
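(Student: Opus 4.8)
План состоит в том, чтобы представить процесс как изометрический образ семейства комплексных экспонент, живущих в гильбертовом пространстве $L_2(dS)$, где $S$ -- спектральная функция, даваемая теоремой Бохнера--Хинчина~\ref{BochnerHinchin}. Без ограничения общности можно считать, что $m_X = 0$ (иначе всюду заменим $X(t)$ на центрированный процесс $X(t) - m_X$, а в конце добавим константу обратно). Обозначим за $H_X \subseteq CL_2$ замыкание в среднем квадратичном линейной оболочки всех сечений $\{X(t),\ t\in\mathbb{R}\}$; это гильбертово пространство со скалярным произведением $\langle U,W\rangle = \mathbb{E}U\overline{W}$. Так как $S$ с точностью до положительного множителя совпадает с функцией распределения, мера $dS$ является конечной борелевской мерой на $\mathbb{R}$, и мы рассматриваем гильбертово пространство $L_2(dS)$ квадратично интегрируемых комплекснозначных функций со скалярным произведением $\langle f,g\rangle_{L_2(dS)} = \int_{-\infty}^{+\infty} f(\nu)\overline{g(\nu)}\, dS(\nu)$.

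Сначала я бы положил $e_t(\nu) := e^{i\nu t}$ и проверил, что отображение $e_t \mapsto X(t)$ сохраняет скалярное произведение: по теореме Бохнера--Хинчина $\langle e_t, e_s\rangle_{L_2(dS)} = \int_{-\infty}^{+\infty} e^{i\nu(t-s)}\, dS(\nu) = R_X(t-s) = \langle X(t), X(s)\rangle$. Продолжая по линейности, получаем изометрическую биекцию между линейными оболочками множеств $\{e_t\}$ и $\{X(t)\}$, а затем по непрерывности -- унитарное отображение $T$ из замыкания $\mathrm{Lin}\{e_t\}$ на $H_X$. Короткое рассуждение показывает, что $\mathrm{Lin}\{e_t\}$ плотно в $L_2(dS)$: если $f\in L_2(dS)$ ортогональна всем $e_t$, то преобразование Фурье конечной комплексной меры $f\, dS$ тождественно равно нулю, откуда $f\, dS = 0$ и $f = 0$ почти всюду по мере $dS$. Значит, $T$ унитарно отображает всё $L_2(dS)$ на $H_X$.

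Далее я бы определил процесс с ортогональными приращениями формулой $V(\nu) := T(\mathsf{I}_{(-\infty,\nu)})\in H_X \subseteq CL_2$; он автоматически центрирован, так как любой элемент $H_X$ имеет нулевое среднее. Для $t_1 < t_2 \le t_3 < t_4$ индикаторы $\mathsf{I}_{[t_1,t_2)}$ и $\mathsf{I}_{[t_3,t_4)}$ имеют непересекающиеся носители, а потому ортогональны в $L_2(dS)$, так что $\mathbb{E}(V(t_4)-V(t_3))\overline{(V(t_2)-V(t_1))} = \langle \mathsf{I}_{[t_3,t_4)}, \mathsf{I}_{[t_1,t_2)}\rangle_{L_2(dS)} = 0$, то есть $V$ имеет ортогональные приращения. Наконец, я бы отождествил $X(t)$ с $\int_{-\infty}^{+\infty} e^{i\nu t}\, dV(\nu)$: для ступенчатой функции $g = \sum_k c_k \mathsf{I}_{[\nu_k,\nu_{k+1})}$ линейность $T$ даёт $T(g) = \sum_k c_k(V(\nu_{k+1}) - V(\nu_k))$, что в точности есть интегральная сумма из определения~\ref{def:ItoIntegral}, применённого к $g$ и $V$; приближая $e^{i\nu t}\mathsf{I}_{[-N,N]}$ в $L_2(dS)$ такими ступенчатыми функциями $g_n$, получаем, что суммы $T(g_n)$ сходятся в среднем квадратичном к $T(e_t\mathsf{I}_{[-N,N]})$, а устремляя $N\to\infty$ (пользуясь конечностью меры $dS$), получаем, что интегральные суммы для $\int e^{i\nu t}\, dV$ сходятся в среднем квадратичном к $T(e_t) = X(t)$; это и есть требуемое представление. Здесь уместно отметить, что этот план обобщает разобранный прямо перед теоремой частный случай $X(t) = \xi e^{i\Omega t}$.

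Главным препятствием я ожидаю аккуратное обоснование стохастического интеграла Римана--Стилтьеса по процессу $V$: нужно определить $\int g\, dV$ для всех $g\in L_2(dS)$, доказать, что сходимость интегральных сумм не зависит от разбиения и от выбора промежуточных точек, и установить изометрию $\mathbb{E}\bigl|\int g\, dV\bigr|^2 = \int_{-\infty}^{+\infty} |g|^2\, dS$ сначала для ступенчатых функций, а затем предельным переходом -- именно это гарантирует, что построенный выше объект действительно даёт требуемый интеграл. Плотность экспонент в $L_2(dS)$ также заслуживает короткого указанного выше рассуждения с единственностью преобразования Фурье; все остальные моменты суть рутинные манипуляции в гильбертовом пространстве.
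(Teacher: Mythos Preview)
Пособие не приводит собственного доказательства этой теоремы, а лишь отсылает к \cite[гл.~7, теорема~9]{BulinskyShiryaev2005}. Предложенный тобой план --- это и есть классическая конструкция Колмогорова через изометрию $L_2(dS)\to H_X$, которая содержится в указанной ссылке; план корректен и совпадает с референсным доказательством по существу.
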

С доказательством этой теоремы можно познакомиться, например, в \cite[гл. 7, теорема 9]{BulinskyShiryaev2005}.

Отметим, что ортогональность приращений процесса $V(\nu)$ -- это аналог ортогональности коэффициентов при комплексных экспонентах в теореме~\ref{th:ComplexHarmonisStationary}.

\elena{Аналогичная теорема имеет место и для стационарных последовательностей (см.  \cite[гл. 7, теорема 5]{BulinskyShiryaev2005}). 
\begin{theorem}
\textit{Любой стационарной в широком смысле  случайной последовательности $X_n\in CL_2$ соответствует случайный процесс $V(\nu)$, $\nu\in[-\pi,\pi]$, заданный на том же вероятностном пространстве, что и последовательность $X_n$, с ортогональными приращениями, такой, что с вероятностью единица}
\begin{equation*}
    X_n = m_X + \int\limits_{-\pi}^{+\pi}e^{i\nu n}\,dV(\nu), \ n\in\mathbb{Z}.
\end{equation*}
\end{theorem}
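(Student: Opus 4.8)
Без ограничения общности считаем, что $m_X=0$; иначе перейдём к центрированной последовательности $\{X_n-m_X\}$, которая также стационарна в широком смысле. По теореме Герглотца~\ref{Gerglotz} корреляционная функция представима в виде $R_X(n)=\int_{-\pi}^{\pi}e^{in\nu}\,dS(\nu)$ с некоторой конечной борелевской мерой $S$ на $[-\pi,\pi]$; эта мера и будет спектральной мерой последовательности, а искомый процесс $V(\nu)$ я бы построил как образ индикаторов при подходящей изометрии.

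Первым шагом я бы рассмотрел два гильбертовых пространства: замкнутую в $CL_2$ линейную оболочку $H_X$ величин $\{X_n\}_{n\in\mathbb{Z}}$ со скалярным произведением $\langle X,Y\rangle=\Exp X\overline{Y}$, и пространство $L_2\big([-\pi,\pi],S\big)$ функций, интегрируемых с квадратом по мере $S$. На тригонометрических многочленах определим линейный оператор $\mathcal{I}$ равенством $\mathcal{I}\big(\sum_k c_k e^{ik\nu}\big)=\sum_k c_k X_k$ и проверим, что он сохраняет скалярное произведение: для $f=\sum_k c_k e^{ik\nu}$
$$\|\mathcal{I}f\|_{CL_2}^2=\sum_{k,m}c_k\overline{c_m}\,\Exp X_k\overline{X_m}=\sum_{k,m}c_k\overline{c_m}\,R_X(k-m)=\int_{-\pi}^{\pi}\Big|\sum_k c_k e^{ik\nu}\Big|^2 dS(\nu)=\|f\|_{L_2(S)}^2 .$$
Пользуясь тем, что тригонометрические многочлены всюду плотны в $L_2([-\pi,\pi],S)$ (непрерывные периодические функции приближаются тригонометрическими многочленами равномерно по теореме Вейерштрасса, а сами плотны в $L_2$ относительно любой конечной борелевской меры), и полнотой $CL_2$ (теорема~\ref{th:L2Complete}), я бы продолжил $\mathcal{I}$ по непрерывности до изометрического изоморфизма пространства $L_2([-\pi,\pi],S)$ на $H_X$.

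Далее я бы положил $V(\nu)=\mathcal{I}\big(\mathsf{I}_{[-\pi,\,\nu]}\big)$, $\nu\in[-\pi,\pi]$. Тогда $V(\nu)$ центрирован, а для $\nu_1<\nu_2\le\nu_3<\nu_4$ приращения $V(\nu_2)-V(\nu_1)$ и $V(\nu_4)-V(\nu_3)$ суть образы индикаторов непересекающихся промежутков $(\nu_1,\nu_2]$ и $(\nu_3,\nu_4]$; эти индикаторы ортогональны в $L_2(S)$, а $\mathcal{I}$ сохраняет скалярное произведение, поэтому $\Exp(V(\nu_4)-V(\nu_3))\overline{(V(\nu_2)-V(\nu_1))}=0$, то есть $V$ — процесс с ортогональными приращениями; кроме того, $\Exp|V(\nu_2)-V(\nu_1)|^2=S\big((\nu_1,\nu_2]\big)$. Наконец, чтобы установить равенство $X_n=\int_{-\pi}^{\pi}e^{in\nu}\,dV(\nu)$, я бы проверил, что для ступенчатой функции $g=\sum_j g_j\mathsf{I}_{(\nu_j,\nu_{j+1}]}$ соответствующая интегральная сумма $\sum_j g_j\big(V(\nu_{j+1})-V(\nu_j)\big)$ совпадает с $\mathcal{I}(g)$; после этого переход к пределу по измельчающимся разбиениям для подынтегральной функции $e^{in\nu}$ обеспечивается изометрией: $\|\mathcal{I}(g)-X_n\|_{CL_2}=\|g-e^{in\cdot}\|_{L_2(S)}\to 0$ в силу непрерывности $e^{in\nu}$. Тем самым $\int_{-\pi}^{\pi}e^{in\nu}\,dV(\nu)=\mathcal{I}(e^{in\cdot})=X_n$, что и требовалось.

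Главным препятствием я ожидаю аккуратное согласование определения стохастического интеграла Римана--Стилтьеса по процессу $V$ с ортогональными приращениями (как с.к.-предела сумм для произвольных измельчающихся разбиений и произвольного выбора промежуточных точек) с уже построенной изометрией $\mathcal{I}$: нужно убедиться, что $\mathcal{I}(g)$ корректно определено на ступенчатых функциях (не зависит от представления $g$), что структурная мера приращений $V$ есть в точности $S$, и что отсюда вытекает сходимость сумм Римана независимо от способа разбиения. Некоторого внимания потребует также плотность тригонометрических многочленов в $L_2(S)$, когда мера $S$ имеет атомы (в частности, в точках $\pm\pi$). Альтернативно дискретный случай можно было бы получить из непрерывного (теорема~\ref{th:WeakStatRepresent}) подходящим сужением, однако прямое построение через изометрию короче и концептуально прозрачнее.
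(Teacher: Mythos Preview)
The paper does not prove this theorem: it merely states it and refers to \cite[гл.~7, теорема~5]{BulinskyShiryaev2005}. Your proposal is correct and is precisely the standard argument given there (and, in the continuous-time case, for теорема~\ref{th:WeakStatRepresent}): build the isometry $\mathcal{I}:L_2([-\pi,\pi],S)\to H_X$ on trigonometric polynomials using теорему Герглотца, extend by density, and set $V(\nu)=\mathcal{I}(\mathsf{I}_{[-\pi,\nu]})$.

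A couple of comments on the concerns you raised. The compatibility of $\mathcal{I}$ with the Riemann--Stieltjes definition of $\int g\,dV$ is straightforward here: for a continuous $g$ (in particular $e^{in\nu}$) the step approximations $g_\Delta=\sum_j g(\tau_j)\mathsf{I}_{(\nu_j,\nu_{j+1}]}$ converge to $g$ in $L_2(S)$ uniformly in the choice of intermediate points, since $|g_\Delta-g|\le\omega_g(\text{mesh})$ and $S$ is finite; the isometry then converts this into the required с.к.-convergence of Riemann sums. The atom issue at $\pm\pi$ is real but standard: one identifies $-\pi$ and $\pi$ (i.e., works on the circle $\mathbb{T}$), or equivalently takes $S$ supported on $(-\pi,\pi]$; since $e^{in(-\pi)}=e^{in\pi}$, this costs nothing and restores density of trigonometric polynomials in $L_2(S)$.
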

}

Ранее нами была выведена формула корреляционной функции $$R_X(t)=\sum\limits_{k=1}^n \mathbb{E}|\xi_k|^2 e^{i\omega_k t}$$ стационарного случайного процесса $$X(t)=\sum\limits_{k=1}^n \xi_k e^{i\omega_k t}.$$ 
Выведем теперь связь между корреляционной функцией $R_X(t)$ и процессом $V_X(\nu)$ для произвольного стационарного процесса $X(t)$. По оп\-ределению
\begin{equation*}
    \accentset{\circ}{X}(t) = \int\limits_{-\infty}^{+\infty}e^{i\nu t}dV(\nu) = \underset{\substack{N\to\infty \\ h \to +0}}{\liminmean}\sum\limits_{k=1}^{2N/h}e^{i\nu_kt}(V(\nu_k) - V(\nu_{k-1})),
\end{equation*}
где рассматривается разбиение отрезка $[-N,N]$ на отрезки длины $h$ (не умаляя общности, считаем, что $2N/h$~-- целое число), $\nu_k$~-- точка из $k$-го отрезка разбиения. Тогда корреляционную функцию случайного процесса $X(t)$ можно записать в виде 
\elena{
\begin{equation*}
\begin{split}
    &R_X(\tau) = \Exp \accentset{\circ}{X}(t+\tau) \overline{\accentset{\circ}{X}(t)} = \\
    &=\!\Exp\left( \underset{\substack{N\to\infty \\ h \to +0}}{\liminmean}\sum\limits_{k=1}^{2N/h}e^{i\nu_k (t+\tau)}(V(\nu_k) - V(\nu_{k-1})) \sum\limits_{j=1}^{2N/h} e^{-i\nu_jt} \overline{(V(\nu_j) - V(\nu_{j-1}))}\right)\!\!\!=\\
    &=\underset{\substack{N\to\infty \\ h \to +0}}{\lim}\sum\limits_{k=1}^{2N/h}\sum\limits_{j=1}^{2N/h}e^{i(\nu_k (t+\tau) - \nu_jt)}\underbrace{\Exp (V(\nu_k) - V(\nu_{k-1}))\overline{(V(\nu_{j}) - V(\nu_{j-1}))}}_{=0,\; \text{если}\; k\neq j} =\\
    &=\underset{\substack{N\to\infty \\ h \to +0}}{\lim}\sum\limits_{k=1}^{2N/h}e^{i\nu_k \tau }\Exp |V(\nu_k) - V(\nu_{k-1})|^2 .
\end{split}
\end{equation*}}
Полученное выражение представляет собой интеграл по мере $$dS(\nu)=\mathbb{E}|dV(\nu)|^2$$ и может быть использовано в качестве определения такого интеграла.

\elena{
\subsection{Закон больших чисел для вещественнозначных стационарных в широком смысле процессов в  дискретном и непрерывном времени.}
Закон больших чисел для стационарных в широком смысле последовательностей имеет вид
\begin{equation}
\label{LBN}
    \frac{1}{N}\sum_{k=1}^N X_k \xrightarrow[N\to\infty]{L_2}m_X + V(0).
\end{equation}
Отметим, что в отличие от обычного закона больших чисел для независимых с.в. в общем случае может быть еще дополнительная случайная компонента $V(0)$. \ag{Формула}
\eqref{LBN} следует из спектрального разложения:
$$
\frac{1}{N}\sum_{k=1}^N X_k = \frac{1}{N}\sum_{k=1}^N \int_{-\pi}^\pi e^{i\nu k} dV(\nu) = \int_{-\pi}^\pi \Psi_N(\nu) dV(\nu),
$$
где  функция $\Psi\ag{_N}(\nu) = \frac{1}{N}\frac{e^{i\nu } - e^{i\nu (N+1)}}{1-e^{i\nu }} $ такая что $|\Psi\ag{_N}(\nu) | \le 1$, $\ag{\Psi}_N(0) = 1$, но  если $\nu\neq 0$, то $|\Psi\ag{_N}(\nu) | \to 0$ \ag{при $N \to \infty$}.
Несложно показать, используя спектральное представление для корреляционной функции, что аналогично \eqref{LBN} имеет место
\begin{equation}
    \frac{1}{N}\sum_{k=1}^N R\ag{_X}(k) \xrightarrow[N\to\infty]{} S(\{0\}).
\end{equation}
Можно показать, что для стационарной в широком смысле последовательности $X_1, X_2, \ldots $ следующие условия эквивалентны:
\begin{itemize}
    \item  последовательность $X_1, X_2, \ldots $ является эргодичной по математическому ожиданию (см. далее определение \ref{def:ErgodicDefin}), то есть $$ \frac{1}{N}\sum_{k=1}^N X_k \xrightarrow[N\to\infty]{L_2}m_X;$$
    \item соответствующая спектральная мера непрерывна в нуле, то есть $S(\{0\}) =0$, а значит $$  \frac{1}{N}\sum_{k=1}^N R\ag{_X}(k) \xrightarrow[N\to\infty]{} 0$$ (см. далее теорему Слуцкого \ref{Slutsky});
    \item $V(0) = 0$ п.н.
\end{itemize}
Детали доказательства см. \cite[гл. 7 ]{BulinskyShiryaev2005}.
Аналогичные результаты можно выписать и для стационарных в широком смысле с.к.-непрерывных случайных процессов, если заменить суммы на интегралы. Отметим также, что понятие эргодичности по математическому ожиданию  можно вводить и для нестационарных процессов, как будет показано в  разделе~\ref{ergodic}. В этом  разделе будет также приведен закон больших чисел для стационарных в узком смысле процессов.
\begin{example}
Для процесса из примера \ref{14} написать закон больших чисел.
\end{example}
Заметим, что представление $$Z(t)=A\cos{(Bt + \phi)} = \frac{A}{2} e^{i\phi}e^{Bt} + \frac{A}{2} e^{-i\phi}e^{-Bt}$$ даёт спектральное разложение процесса. А именно, если $B\neq 0$, то  
\begin{equation*}
    V(\nu) = \left \{ 
    \begin{array}{*{20}{c}}
	 {\frac{A}{2} e^{i\phi},}&{\nu = B}, \\ \\
	 {\frac{A}{2} e^{-i\phi},}&{\nu = -B},
	 \\ \\
	 {0,}&{\text{иначе}}.
	\end{array}\right.
\end{equation*}
В этом случае
\begin{equation*}
    \frac{1}{T}\int_0^T Z(t)\,dt \xrightarrow[T\to\infty]{L_2}0.
\end{equation*}
Если $B=0$, то 
\begin{equation*}
    V(\nu) = \left \{ 
    \begin{array}{*{20}{c}}
	 {A\cos{\pi},}&{\nu =0}, \\ \\
	 {0,}&{\text{иначе}}.
	\end{array}\right.
\end{equation*}
В этом случае
\begin{equation*}
    \frac{1}{T}\int_0^T Z(t)\,dt \xrightarrow[T\to\infty]{L_2}A\cos{\phi}. \text{ \EndEx}
\end{equation*}
}
\subsection{Линейные преобразования процессов}

Спектральное представление случайных процессов встречается в основном в инженерных приложениях, когда задана некоторая линейная система, на ее вход поступает стационарный случайный процесс, а на выходе получается преобразованный стационарный процесс. Эта линейная система осуществляет линейное преобразование над входным процессом, обычно это какая-то комбинация производных и интегралов от входного процесса. Оказывается, свойства преобразованного процесса удается сравнительно легко описать в терминах спектральных функций входного процесса. Для того чтобы можно было корректно описывать преобразованный процесс в терминах входного процесса, кроме теоремы \ref{th:WeakStatRepresent} нам потребуются следующие два факта (см., например, \cite{MillerPankov}).

\begin{theorem}\label{th:SpectralCharact}
\textit{Пусть даны стационарный процесс ${X(t)\in CL_2}$ \ag{с} спектральной функцией $S_X(\nu)$ и представлением $$X(t)=\int\limits_{-\infty}^{+\infty}e^{i\nu t}\,dV_X(\nu)$$ и комплекснозначная функция $\Phi(\nu)$, удовлетворяющая условию $$\int\limits_{-\infty}^{+\infty}|\Phi(\nu)|^2\,dS_X(\nu)<\infty.$$ Пусть некоторый центрированный стационарный случайный процесс ${Y(t)\in CL_2}$ допускает представление $$Y(t)=\int\limits_{-\infty}^{+\infty}e^{i\nu t}\Phi(\nu)\,dV_X(\nu).$$ Тогда спектральная функция $S_X(\nu)$ процесса $X(t)$ связана со спектральной функцией $S_Y(t)$ процесса $Y(t)$ соотношением} $$dS_Y(\nu)=|\Phi(\nu)|^2\,dS_X(\nu).$$
\end{theorem}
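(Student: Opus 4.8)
The plan is to reduce the statement to facts already established in the excerpt: the Kramér spectral representation (Theorem~\ref{th:WeakStatRepresent}), the orthogonality of the increments of $V_X$, the relation $dS_X(\nu)=\mathbb{E}|dV_X(\nu)|^2$ derived just above, and the uniqueness of the spectral measure in the Bochner--Khinchin theorem (Theorem~\ref{BochnerHinchin}). Concretely, I would compute the correlation function of $Y(t)$ directly from its spectral integral, show it equals $\int_{-\infty}^{+\infty}e^{i\nu\tau}|\Phi(\nu)|^2\,dS_X(\nu)$, and then invoke Fourier uniqueness for finite measures to conclude $dS_Y(\nu)=|\Phi(\nu)|^2\,dS_X(\nu)$.

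First I would note that $Y(t)=\int_{-\infty}^{+\infty}e^{i\nu t}\Phi(\nu)\,dV_X(\nu)$ is well defined as an $L_2$-limit of Riemann--Stieltjes sums precisely because $\int|\Phi(\nu)|^2\,dS_X(\nu)<\infty$; this is the same $\liminmean$ construction carried out above for $\accentset{\circ}{X}(t)$, with the integrand $e^{i\nu t}$ replaced by $e^{i\nu t}\Phi(\nu)$. Taking a partition of $[-N,N]$ into subintervals of length $h$ with nodes $\nu_k$ and using the continuity of the scalar product in $CL_2$ (Theorem~\ref{th:ContScalarProd}) together with $\accentset{\circ}{Y}=Y$ (the process is centered), one gets
$$R_Y(\tau)=\mathbb{E}\,Y(t+\tau)\overline{Y(t)}=\lim_{\substack{N\to\infty\\ h\to+0}}\sum_k\sum_j e^{i(\nu_k(t+\tau)-\nu_j t)}\Phi(\nu_k)\overline{\Phi(\nu_j)}\,\mathbb{E}\bigl(V_X(\nu_k)-V_X(\nu_{k-1})\bigr)\overline{\bigl(V_X(\nu_j)-V_X(\nu_{j-1})\bigr)}.$$
Because $V_X$ has orthogonal increments, every off-diagonal term ($k\neq j$) vanishes, so the double sum collapses to the diagonal:
$$R_Y(\tau)=\lim_{\substack{N\to\infty\\ h\to+0}}\sum_k e^{i\nu_k\tau}|\Phi(\nu_k)|^2\,\mathbb{E}|V_X(\nu_k)-V_X(\nu_{k-1})|^2=\int_{-\infty}^{+\infty}e^{i\nu\tau}|\Phi(\nu)|^2\,dS_X(\nu),$$
where the last equality uses $dS_X(\nu)=\mathbb{E}|dV_X(\nu)|^2$. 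In particular the right-hand side depends only on $\tau$, which is consistent with $Y(t)$ being stationary in the wide sense.

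To finish, I would recall that by the Bochner--Khinchin theorem the correlation function of the s.k.-continuous stationary process $Y$ has the representation $R_Y(\tau)=\int_{-\infty}^{+\infty}e^{i\nu\tau}\,dS_Y(\nu)$ with a finite spectral measure $S_Y$, and that $S_Y$ is recovered from $R_Y$ uniquely (Fourier uniqueness for finite measures). Since $|\Phi(\nu)|^2\,dS_X(\nu)$ is itself a finite measure by the hypothesis $\int|\Phi(\nu)|^2\,dS_X(\nu)<\infty$, comparing it with the formula just derived yields $dS_Y(\nu)=|\Phi(\nu)|^2\,dS_X(\nu)$, as claimed.

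The main obstacle I anticipate is the analytic bookkeeping in the limit passage: one must verify that the mixed Riemann--Stieltjes sums for $Y(t+\tau)$ and $\overline{Y(t)}$ converge in $CL_2$ (so that Theorem~\ref{th:ContScalarProd} is applicable), that the off-diagonal terms vanish exactly rather than merely in the limit, and that the surviving diagonal sums are genuine integral sums for the finite measure $|\Phi|^2\,dS_X$ — this last point uses the integrability hypothesis on $\Phi$ in an essential way. Everything else is a direct transcription of the computation already performed for $R_X$ in the preceding subsection.
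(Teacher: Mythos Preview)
Your proof is correct, and it takes a genuinely different route from the paper's.

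The paper's argument is much shorter but also more informal: it invokes Kramér's theorem to write $Y(t)=\int e^{i\nu t}\,dV_Y(\nu)$ for some orthogonal-increment process $V_Y$, observes that this equals $\int e^{i\nu t}\Phi(\nu)\,dV_X(\nu)$ for all $t$, and then asserts that therefore the ``differentials'' coincide, $dV_Y(\nu)=\Phi(\nu)\,dV_X(\nu)$. Squaring and taking expectation gives $\mathbb{E}|dV_Y(\nu)|^2=|\Phi(\nu)|^2\,\mathbb{E}|dV_X(\nu)|^2$, i.e.\ $dS_Y=|\Phi|^2\,dS_X$. The step from equality of the integrals to equality of the stochastic differentials is really a uniqueness statement for $L_2$-valued spectral measures that the paper does not justify; it is true, but it is glossed over.

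Your approach sidesteps that issue entirely: you work at the level of the deterministic correlation function, reproduce the diagonal-collapse computation already carried out for $R_X$ in the preceding subsection, and then appeal only to the scalar Fourier uniqueness for finite measures on $\mathbb{R}$. This is more labor but more self-contained given what the chapter actually proves. The paper's route is quicker once one accepts the informal identification of stochastic differentials; yours is the honest version of the same idea, pushed down to second moments where the uniqueness is classical.
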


\begin{proof}
Так как процесс $Y(t)\in L_2$ является стационарным, то в силу теоремы \ref{th:WeakStatRepresent} найдется процесс $V_Y(\nu)$ с ортогональными приращениями, такой, что почти всюду будет верно равенство
$$\int\limits_{-\infty}^{+\infty}e^{i\nu t}\Phi(\nu)\,dV_X(\nu) = \int\limits_{-\infty}^{+\infty}e^{i\nu t}\,dV_Y(\nu).$$ Но так как процесс $V_Y(\nu)$ определен с точностью до аддитивной случайной величины (см. теорему \ref{th:WeakStatRepresent}), то $$\Phi(\nu)\,dV_X(\nu)=dV_Y(\nu).$$ Остается возмести обе части в квадрат и вычислить математическое ожидание: $$|\Phi(\nu)|^2\,\mathbb{E}|dV_X(\nu)|^2=\mathbb{E}|dV_Y(\nu)|^2,$$ откуда сразу получаем требуемое утверждение. \EndProof
\end{proof}

\begin{theorem}\label{th:SpectralMoments}
\textit{Если для некоторого ${k\ge0}$ существует конечный интеграл $$\int\limits_{-\infty}^{+\infty}\nu^{2k}\,dS_{X}(\nu) < \infty,$$ а процесс $X(t)$ является $k$ раз с.к.-дифференцируемым, то с.к.-произ\-водная $$X^{(k)}(t)=\int\limits_{-\infty}^{+\infty}e^{i\nu t}(i\nu)^{k}\,dV_{X}(\nu),$$ т.е. производную можно вносить под знак стохастического интеграла.}
\end{theorem}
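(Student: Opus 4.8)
The plan is to proceed by induction on $k$, with the whole argument living on the spectral side via the isometry $\mathbb{E}\bigl|\int e^{i\nu t}\Phi(\nu)\,dV_X(\nu)\bigr|^2=\int|\Phi(\nu)|^2\,dS_X(\nu)$, which is exactly what Theorem~\ref{th:SpectralCharact} supplies (combined with the identification $dS(\nu)=\mathbb{E}|dV(\nu)|^2$ noted at the end of the previous subsection). Throughout I use that $X$ is stationary and s.k.-continuous, so that the Cramér representation (Theorem~\ref{th:WeakStatRepresent}) and the spectral function $S_X$ are available.

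First I would do the base case $k=1$. Writing $X(t)=m_X+\int_{-\infty}^{+\infty}e^{i\nu t}\,dV_X(\nu)$, the constant $m_X$ cancels in any increment, so $\frac{X(t+h)-X(t)}{h}=\int_{-\infty}^{+\infty}e^{i\nu t}\,\frac{e^{i\nu h}-1}{h}\,dV_X(\nu)$. Since $\bigl|\frac{e^{i\nu h}-1}{h}\bigr|=\frac{2}{|h|}\bigl|\sin\frac{\nu h}{2}\bigr|\le|\nu|$ and $\int\nu^{2}\,dS_X(\nu)<\infty$ by hypothesis, both this integral and $Y(t):=\int_{-\infty}^{+\infty}e^{i\nu t}(i\nu)\,dV_X(\nu)$ are well-defined centred stationary processes of the form $\int e^{i\nu t}\Phi(\nu)\,dV_X(\nu)$ with $t$-independent $\Phi$; so is their difference, corresponding to $\Phi_h(\nu):=\frac{e^{i\nu h}-1}{h}-i\nu$. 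The isometry then gives
\[
\Bigl\|\frac{X(t+h)-X(t)}{h}-Y(t)\Bigr\|_2^{2}=\int_{-\infty}^{+\infty}\Bigl|\frac{e^{i\nu h}-1}{h}-i\nu\Bigr|^{2}dS_X(\nu).
\]
On the right, the integrand tends to $0$ pointwise as $h\to0$ (it is the remainder in the derivative of $t\mapsto e^{i\nu t}$ at $t=0$) and is bounded by $(|\nu|+|\nu|)^{2}=4\nu^{2}$, which is $S_X$-integrable; since $S_X$ is a finite measure ($S_X(\mathbb{R})=R_X(0)=\mathbb{D}X(t)<\infty$), dominated convergence forces the right-hand side to $0$. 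Hence $\frac{X(t+h)-X(t)}{h}\xrightarrow{L_2}Y(t)$, and by uniqueness of mean-square limits together with the hypothesis that $X$ is s.k.-differentiable, $X'(t)=Y(t)=\int_{-\infty}^{+\infty}e^{i\nu t}(i\nu)\,dV_X(\nu)$ almost surely.

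Then I would run the induction. Assuming $X^{(k-1)}(t)=\int_{-\infty}^{+\infty}e^{i\nu t}(i\nu)^{k-1}\,dV_X(\nu)$, the proof of Theorem~\ref{th:SpectralCharact} shows that the stationary, s.k.-continuous process $X^{(k-1)}$ has Cramér representation $X^{(k-1)}(t)=\int e^{i\nu t}\,dV^{(k-1)}(\nu)$ with $dV^{(k-1)}(\nu)=(i\nu)^{k-1}dV_X(\nu)$ and spectral measure $dS_{X^{(k-1)}}(\nu)=\nu^{2(k-1)}\,dS_X(\nu)$ (note $\nu^{2(k-1)}\le1+\nu^{2k}$, so the relevant integrability holds). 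The hypothesis $\int\nu^{2k}\,dS_X(\nu)<\infty$ is precisely $\int\nu^{2}\,dS_{X^{(k-1)}}(\nu)<\infty$, so the base case applied to $X^{(k-1)}$ (which is s.k.-differentiable because $X$ is $k$ times s.k.-differentiable) yields $X^{(k)}(t)=\bigl(X^{(k-1)}\bigr)'(t)=\int e^{i\nu t}(i\nu)\,dV^{(k-1)}(\nu)=\int_{-\infty}^{+\infty}e^{i\nu t}(i\nu)^{k}\,dV_X(\nu)$, closing the induction.

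The main obstacle is not any of the above bookkeeping but the legitimacy of the isometry for stochastic integrals against $dV_X$ with unbounded integrands: one must make sense of $\int e^{i\nu t}(i\nu)^{k}\,dV_X(\nu)$ and of $\int e^{i\nu t}\Phi_h(\nu)\,dV_X(\nu)$ as mean-square limits of integral sums (using $\int|\Phi|^{2}dS_X<\infty$ and the finiteness of $S_X$) and then establish $\mathbb{E}\bigl|\int\Phi\,dV_X\bigr|^{2}=\int|\Phi|^{2}\,dS_X$ at this level of generality. Granting that — which Theorem~\ref{th:SpectralCharact} and the construction at the end of the spectral-representation subsection provide — everything else is the routine dominated-convergence estimate and the tracking of spectral measures under s.k.-differentiation.
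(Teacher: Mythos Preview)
Your argument is correct and is the standard route: pass to the spectral side via the isometry, bound the difference-quotient kernel by $|\nu|$, and invoke dominated convergence, then induct. The paper, however, states this theorem without proof (it moves directly to the example of a linear system), so there is nothing to compare against; your write-up would serve as the missing proof. One small remark: the clause ``since $S_X$ is a finite measure'' is superfluous --- dominated convergence only needs the integrable majorant $4\nu^{2}$, which you already have from the hypothesis $\int\nu^{2}\,dS_X<\infty$.
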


\begin{example}
Дана линейная система $$a_1\dot{X}(t)+a_0 X(t) = Y(t)$$ с входным сигналом $Y(t)$ и выходным сигналом $X(t)$. Пусть ${\mathbb{E}Y(t)=0}$ для любого $t\in\mathbb{R}$. Считая известной спектральную плотность $\rho_{Y}(\nu)$ процесса $Y(t)$, вычислить спектральную плотность процесса $X(t)$.
\end{example}

\begin{solution}
Будем искать стационарное с.к.-дифференцируемое решение $X(t)$ указанного уравнения. Для этого представим процесс $X(t)$ в виде
$$X(t)=\int\limits_{-\infty}^{+\infty}e^{i\nu t}dV_X(\nu)$$ и подставим это выражение в уравнение, воспользовавшись теоремой~\ref{th:SpectralMoments}:
$$\int\limits_{-\infty}^{+\infty}(a_1 i\nu+a_0)e^{i\nu t}dV_X(\nu)=Y(t),$$ 
предположив, что для искомого решения выполняются условия
\begin{equation}\label{eq:Cond1}
\int\limits_{-\infty}^{+\infty}|a_1 i \nu + a_0|^2\, dS_X(\nu) < \infty,
\end{equation}
\begin{equation}\label{eq:Cond2}
\int\limits_{-\infty}^{+\infty}\nu^2\,dS_X(\nu) < \infty.
\end{equation}
По теореме \ref{th:SpectralCharact} получаем, что спектральные функции $S_Y(\nu)$ и $S_X(\nu)$ связаны друг с другом соотношением $$dS_Y(\nu)=|a_1 i \nu + a_0|^2 dS_X(\nu),$$ откуда следует
\begin{equation}\label{eq:rhoX}
\rho_X(\nu) = \frac{\rho_Y(\nu)}{|a_1 i \nu + a_0|^2}=\frac{\rho_Y(\nu)}{a_1^2\nu^2+a_0^2}.
\end{equation}
В результате мы получаем, что если условия~\eqref{eq:Cond1} и~\eqref{eq:Cond2} выполнены, то спектральная плотность процесса $X(t)$ существует и удовлетворяет~\eqref{eq:rhoX}. Заметим, что для функции~\eqref{eq:rhoX} оба условия оказываются выполнены. Действительно, условие~\eqref{eq:Cond1} равносильно условию абсолютной интегрируемости спектральной плотности $\rho_Y(\nu)$, а условие~\eqref{eq:Cond2} выполнено, как следует из оценки $$\frac{\nu^2\rho_Y(\nu)}{a_1^2\nu^2+a_0^2}\le\frac{1}{a_1^2}\,\rho_Y(\nu)$$ и, опять же, интегрируемости функции $\rho_Y(\nu)$.

Заметим, наконец, что в принципе могут существовать решения, которые не удовлетворяют условию~\eqref{eq:Cond2}. Более того, спектральная плотность не однозначно определяет стационарный процесс. Другими словами, спектральной плотности, которую мы нашли, может соответствовать много разных процессов $X(t)$. \EndEx
\end{solution}

Закрепим предыдущий пример, рассмотрев содержательную задачу из курса физики.
\begin{example}
Рассмотрим колебательный контур, состоящий из последовательно соединенных катушки индуктивности, конденсатора, сопротивления и источника сторонних эдс. Эдс $\mathcal{E}(t)$, заряд $q(t)$ на обкладках конденсатора и производные $\dot q(t)$, $\ddot q(t)$ считаются достаточно с.к.-гладкими стационарными в широком смысле случайными процессами с нулевым математическим ожиданием. Считая известной спектральную плотность $\rho_{\mathcal{E}}(\nu)$ процесса $\mathcal{E}(t)$, вычислить спектральную плотность $\rho_{q}(\nu)$ процесса $q(t)$.
\end{example}

\begin{solution}
Уравнение тока в цепи имеет вид (см. \cite{Kingsep2001}): $$L\ddot q + R\dot q + \frac{q}{C}=\mathcal{E}.$$ Так как $\mathcal{E}(t)$ и $q(t)$ считаются стационарными процессами с нулевым математическим ожиданием, то они могут быть представлены в виде $$\mathcal{E}(t)=\int\limits_{-\infty}^{+\infty}e^{i\nu t}dV_{\mathcal{E}}(\nu), \ \ q(t)=\int\limits_{-\infty}^{+\infty}e^{i\nu t}dV_{q}(\nu).$$ Теперь вычислим поочередно первую и вторую производную $q(t)$: $$\dot q(t)=\int\limits_{-\infty}^{+\infty}(i\nu)e^{i\nu t}dV_{q}(\nu), \ \ \ddot q(t)=\int\limits_{-\infty}^{+\infty}(i\nu)^2e^{i\nu t}dV_{q}(\nu)$$
в предположении условий
\begin{equation}\label{eq:Cond3}
\int\limits_{-\infty}^{+\infty}|i\nu|^2dS_q(\nu)<\infty, \ \ \int\limits_{-\infty}^{+\infty}|i\nu|^4dS_q(\nu)<\infty.
\end{equation}
Подставляя выражения для $q(t)$, $\dot q(t)$ и $\ddot q(t)$ в исходное уравнение, мы получаем
$$\int\limits_{-\infty}^{+\infty}\left[ L(i\nu)^2 + R(i\nu) + C^{-1} \right]e^{i\nu t}dV_q(\nu)=\mathcal{E}(t).$$ Теперь предположим, что
\begin{equation}\label{eq:Cond4}
\int\limits_{-\infty}^{+\infty}\left| L(i\nu)^2 + R(i\nu) + C^{-1} \right|^2\,dS_q(\nu)<\infty.
\end{equation}
Тогда по теореме \ref{th:SpectralCharact} получаем, что спектральные функции $S_q(\nu)$ и $S_{\mathcal{E}}(\nu)$ связаны соотношением
$$dS_{\mathcal{E}}(\nu) = \left| L(i\nu)^2 + R(i\nu) + C^{-1} \right|^2 dS_q(\nu),$$
что означает существование спектральной плотности $$\rho_q(\nu)=\frac{\rho_{\mathcal{E}}(\nu)}{\left|L(i\nu)^2+R(i\nu)+C^{-1}\right|^2}.$$ Легко видеть, что условия~\eqref{eq:Cond3} и~\eqref{eq:Cond4} для этой функции выполнены.

Итак, среди всех достаточно с.к.-гладких стационарных в широком смысле процессов, удовлетворяющих условиям~\eqref{eq:Cond3} и~\eqref{eq:Cond4}, решения исходного уравнения (если существуют) обладают спектральной плотностью, записанной выше. Вопрос о существовании таких решений здесь не рассматривается. \EndEx
\end{solution}

За более полным изложением спектральной теории случайных процессов мы отправляем читателя к классическим источникам~\cite{Kramer1969,Yaglom1987}. В частности, из важного, но не вошедшего в данное пособие, отметим \textit{разложение Вольда}~\cite{BulinskyShiryaev2005, Shiryaev2} 
и его использование при предсказании поведения случайного процесса по имеющейся истории.

\section{Эргодические процессы}
\label{ergodic}

В этом разделе мы 
введем одно из важнейших понятий теории случайных процессов -- понятие эргодичности. Свойство эргодичности процессов нам будет встречаться до конца этой книги.
\elena{С результатами эргодического типа читатели уже знакомы из курса теории вероятностей: (усиленный) закон больших чисел (ЗБЧ, УЗБЧ) для последовательности независимых одинаково распределенных случайных величин $X_1, X_2,\ldots$, которые утверждают, что среднее по времени $\frac{1}{N}\sum_{n=1}^N X_n$ в соответствующем смысле (по вероятности или п.н.) близко к среднему по вероятностному пространству $\mathbb E X_1$. В курсе случайных процессов результаты типа ЗБЧ тоже уже возникали для стационарных в широком смысле процессов, см. раздел 4.5. В этом разделе 5 будут получены результаты эргодического типа, но 
\begin{itemize}
    \item для более широкого класса процессов (см. \ag{раздел} 5.1). При этом теоремы, касающиеся стационарных в широком смысле процессов будут получены здесь без использования спектрального представления (см. теорему Слуцкого \ref{Slutsky}).
    \item для стационарных в узком смысле последовательностей (теория которых тесно связана с понятием динамических систем, а именно с соответствующим преобразованием, сохраняющим вероятностную меру, и его свойствами (эргодичности, перемешивания и т.д.), см. раздел 5.2.
\end{itemize}
}

\subsection{Эргодические случайные процессы}\label{sec:ergodic_processes_subsection}

\elena{Рассмотрим следующую прикладную задачу курса случайных процессов. Пусть имеется единственная, но достаточно длинная реализация некоторого случайного процесса $X(t)$, где $t\in [0,T]$, если время предполагается непрерывным, или $t\in\{1,2,\ldots,N\}$, если время дискретно. Требуется оценить какие-то характеристики этого процесса.}

\begin{definition}\label{def:ErgodicDefin}
Вещественный случайный процесс ${X(t)\in L_2}$ называется \textit{\shmaxg{эргодическим} по математическому ожиданию}, если его математическое ожидание постоянно ${m_X(t) \equiv m_X\in \Rbb}$ и $$\langle X\rangle_T \overset{\text{def}}{=} \frac{1}{T}\int\limits_{0}^{T}X(t)\,dt\xrightarrow[T\to\infty]{L_2} m_X.$$
\end{definition}

В выражении выше подразумевается, что процесс $X(t)$ с.к.-интег\-ри\-руем на любом отрезке вида $[0,T]$, ${T>0}$, а интеграл понимается в смысле с.к.-интеграла.

\elena{Таким образом для \shmaxg{эргодического} по математическому ожиданию процесса <<хорошей>> оценкой параметра $m_X$ служит среднее по времени единственной наблюдаемой реализации $\langle X\rangle_T$.}

\begin{theorem}[ (критерий эргодичности)]\label{th:ErgodicCriteria}
\textit{Процесс ${X(t)\in L_2}$ с постоянным математическим ожиданием ${m_X(t)\equiv m_X}$ является эргодическим по математическому ожиданию тогда и только тогда, когда }
\begin{equation}\label{eq:ErgodCriteria}
    \underset{T\to\infty}{\lim} \,\frac{1}{T^2}\int\limits_{0}^{T}\int\limits_{0}^{T}R_X(t_1,t_2)\,dt_1dt_2 = 0.
\end{equation}
\end{theorem}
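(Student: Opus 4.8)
Ключевая идея -- свести всё к простому геометрическому факту из пространства $L_2$: среднее по времени $\langle X\rangle_T$ сходится к $m_X$ в среднем квадратичном тогда и только тогда, когда $\Exp|\langle X\rangle_T - m_X|^2 \to 0$ при $T\to\infty$, а это выражение надо аккуратно выписать через корреляционную функцию. Первым делом я бы заметил, что $\Exp\langle X\rangle_T = \frac1T\int_0^T \Exp X(t)\,dt = m_X$ (здесь используется формула $\Exp\int_a^b X(t)\,dt = \int_a^b \Exp X(t)\,dt$ из раздела~\ref{correlation} и постоянство математического ожидания), так что центрированная величина есть $\langle X\rangle_T - m_X = \frac1T\int_0^T \accentset{\circ}{X}(t)\,dt$, где $\accentset{\circ}{X}(t) = X(t) - m_X$.

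Далее я бы вычислил второй момент этой величины, пользуясь непрерывностью скалярного произведения (теорема~\ref{th:ContScalarProd}) для перестановки математического ожидания и с.к.-интегралов, или эквивалентной формулой $\mathrm{cov}\left(\int_a^b X(t)\,dt,\int_a^b X(t)\,dt\right) = \int_a^b\int_a^b R_X(t,s)\,dt\,ds$ из списка полезных соотношений в разделе~\ref{correlation}. Применяя её к процессу $\accentset{\circ}{X}(t)$ (у которого корреляционная функция совпадает с $R_X$) на отрезке $[0,T]$, получаю
$$
\Exp\left|\langle X\rangle_T - m_X\right|^2 = \frac{1}{T^2}\int\limits_0^T\int\limits_0^T R_X(t_1,t_2)\,dt_1\,dt_2.
$$
После этого утверждение становится почти очевидным: по определению с.к.-сходимости $\langle X\rangle_T \xrightarrow{L_2} m_X$ равносильно тому, что левая часть стремится к нулю при $T\to\infty$, что и есть условие~\eqref{eq:ErgodCriteria}. Остаётся лишь в обе стороны процитировать это равенство.

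Основное препятствие здесь -- чисто техническое: нужно аккуратно обосновать, что математическое ожидание можно внести под знак двойного с.к.-интеграла (точнее, что квадрат с.к.-интеграла имеет математическое ожидание, равное повторному интегралу от корреляционной функции). Это требует аппроксимации интеграла интегральными суммами $\frac1T\sum_i \accentset{\circ}{X}(\tau_i)\Delta t_i$, вычисления $\Exp\left|\frac1T\sum_i \accentset{\circ}{X}(\tau_i)\Delta t_i\right|^2 = \frac{1}{T^2}\sum_{i,j} R_X(\tau_i,\tau_j)\Delta t_i\Delta t_j$ и предельного перехода с опорой на теорему~\ref{th:ContScalarProd} (непрерывность скалярного произведения обеспечивает сходимость и слева, и справа к нужным пределам, причём интеграл $\int_0^T\int_0^T R_X\,dt_1\,dt_2$ существует как раз потому, что процесс с.к.-интегрируем). Впрочем, поскольку соответствующее соотношение для ковариаций уже приведено в разделе~\ref{correlation} как упражнение, в тексте доказательства достаточно на него сослаться, и тогда всё доказательство занимает несколько строк.
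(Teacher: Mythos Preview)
Your proposal is correct and follows essentially the same approach as the paper: the paper also proves the identity $\Exp|\langle X\rangle_T - m_X|^2 = \frac{1}{T^2}\int_0^T\int_0^T R_X(t_1,t_2)\,dt_1\,dt_2$ by writing $\langle X\rangle_T - m_X$ as an $\operatorname{l.i.m.}$ of Riemann sums, applying теорему~\ref{th:ContScalarProd} to push the expectation inside, and recognising the resulting double sum as the Riemann approximation of the double integral. The only cosmetic difference is that the paper carries out this computation explicitly rather than citing the ready-made covariance formula from раздела~\ref{correlation}.
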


\begin{proof}
Покажем, что
$$
\frac{1}{T^2}\int\limits_{0}^{T}\int\limits_{0}^{T}R_X(t_1,t_2)dt_1dt_2 = \Exp\Big|\langle X \rangle_T - m_X\Big|^2.
$$
Для этого разложим выражение $\Exp\left|\langle X \rangle_T - m_X\right|^2$ на два множителя, представим в обоих множителях $\langle X \rangle_T$ в виде с.к.-предела, при этом разбиения отрезка $[0,T]$ выберем одинаковыми, а точки между ними различными, и воспользуемся теоремой~\ref{th:ContScalarProd}:
	$$
	\Exp\Big|\langle X \rangle_T - m_X\Big|^2 = \Exp\Big(\underset{\substack{0=t_0 < t_1<\ldots<t_m=T \\
	\underset{i=\overline{1,m}}{\max}\Delta t_i \xrightarrow[n\to\infty]{}0\\
	t_i'\in[t_i,t_{i+1})}}{\liminmean}\frac{1}{T} \sum_{i=1}^{m}(X(t_i')-m_X)\Delta t_i\Big)\times
	$$
	$$
	\times\Big(\underset{\substack{0=t_0 < t_1<\ldots<t_m=T \\
	\underset{j=\overline{1,m}}{\max}\Delta t_j \xrightarrow[n\to\infty]{}0\\
	t_j'\in[t_j,t_{j+1})}}{\liminmean}\frac{1}{T} \sum_{j=1}^{m}(X(t_j')-m_X)\Delta t_j\Big) \overset{\text{Теорема \ref{th:ContScalarProd}}}{=} 
	$$
	$$
	= \underset{\substack{0=t_0 < t_1<\ldots<t_m=T \\
	\underset{i=\overline{1,m}}{\max}\Delta t_i \xrightarrow[n\to\infty]{}0\\
	t_i'\in[t_i,t_{i+1})\\
	t_j'\in[t_j,t_{j+1})}}{\lim}\Exp\frac{1}{T^2}\sum_{i=1}^{m}\sum_{j=1}^{m}(X(t_i')-m_X)(X(t_j')-m_X)\Delta t_i\Delta t_j =
	$$
	$$
	= \underset{\substack{0=t_0 < t_1<\ldots<t_m=T \\
	\underset{i=\overline{1,m}}{\max}\Delta t_i \xrightarrow[n\to\infty]{}0\\
	t_i'\in[t_i,t_{i+1})\\
	t_j'\in[t_j,t_{j+1})}}{\lim}\frac{1}{T^2}\sum_{i=1}^{m}\sum_{j=1}^{m}R_X(t_i',t_j')\Delta t_i\Delta t_j = 
	$$
	$$
	= \frac{1}{T^2}\int\limits_{0}^{T}\int\limits_{0}^{T}R_X(t_1,t_2)\,dt_1dt_2,
	$$
	откуда и получаем требуемое, т.е.
	$$
	\langle X\rangle_T \xrightarrow[T\to\infty]{L_2} m_X\; \Leftrightarrow \; \underset{T\to\infty}{\lim} \frac{1}{T^2}\int\limits_{0}^{T}\int\limits_{0}^{T}R_X(t_1,t_2)\,dt_1dt_2 = 0. \text{ \EndProof}
	$$
\end{proof}
	
Обратим еще внимание на то, что из условия~\eqref{eq:ErgodCriteria} следует существование с.к.-интеграла  $\int_0^T X(t)\,dt$ для любого $T>0$.

\begin{example}\label{ex:Ex7.2}
Пусть $\{X(t) \in L_2, t \ge 0\}$ -- случайный процесс с нулевым математическим ожиданием и корреляционной функцией $$R_X(t_1,t_2) = e^{-|t_1-t_2|}.$$ Исследовать процесс $X(t)$ на эргодичность по математическому ожиданию.
\end{example}

\begin{solution}
Математическое ожидание данного процесса есть константа, а так как функция ${R_X(t_1,t_2)}$ интегрируема по Риману на любом квадрате ${[0,T]^2}$, ${T>0}$, то $X(t)$ является с.к.-интегрируемым на любом отрезке ${[0,T]}$, ${T>0}$. Воспользуемся теперь критерием эргодичности по математическому ожиданию (теорема \ref{th:ErgodicCriteria}). Для этого рассмотрим интеграл 
\begin{eqnarray*}
\int\limits_{0}^T \int\limits_0^T R_X(t_1, t_2) \,dt_1 dt_2 &=& \int\limits_{0}^T\int\limits_0^T e^{-|t_1-t_2|}\,dt_1dt_2= \\
&=& \int\limits_0^T \left( \int\limits_0^{t_2} e^{t_1-t_2}\,dt_1 + \int\limits_{t_2}^T e^{t_2-t_1}\,dt_1 \right)\,dt_2= \\
&=& \int\limits_0^T \left( 1-e^{-t_2}+1-e^{t_2-T} \right)\,dt_2 =\\
&=& 2T+e^{-T}-1+e^{-T}-1 =\\
&=& 2T+2e^{-T}-2.
\end{eqnarray*}
Очевидно, $T^{-2}(2T+2e^{-T}-2)\to0$ при $T\to\infty$, поэтому процесс $X(t)$ является эргодическим по математическому ожиданию. \EndEx
\end{solution}

\begin{example}\label{ex:Ex7.3}
Пусть $\{W(t)$, $t\ge0\}$ -- винеровский процесс. Исследовать на эргодичность по математическому ожиданию процесс $X(t)=W(t)/t$, $t\ge1$.
\end{example}

\begin{solution}
Так как винеровский процесс в каждом сечении имеет конечные моменты произвольного порядка, то и процесс $X(t)$ имеет моменты произвольного порядка. Следовательно, ${X(t)\in L_2}$. Далее, очевидно, $m_X(t)=0$ всюду и $$R_X(t_1,t_2)=\frac{\min(t_1,t_2)}{t_1t_2}=\frac{1}{\max(t_1,t_2)}, \ t_1\ge1, \ t_2\ge1.$$ Теперь остается заметить, что $\max(t_1,t_2) \ge t_1$ всюду, следовательно, $$\int\limits_1^T\int\limits_1^T \frac{1}{\max(t_1,t_2)}\,dt_1dt_2 \le \int\limits_1^T \int\limits_1^T \frac{1}{t_1}\,dt_1dt_2=(T-1)\ln{T}.$$ Очевидно, что ${(T-1)^{-2}(T-1)\ln{T}\to0}$ при $T\to\infty$, поэтому процесс ${X(t)=W(t)/t}$, ${t\ge1}$, является эргодическим по математическому ожиданию. \EndEx
\end{solution}

\begin{example}
На вход некоторой вычислительной машины поступают зашумленные значения функции вида $e^{at}$ с неизвестным значением параметра $a$; $t$ интерпретируется как время. Модель фактически получаемого сигнала принята следующей: $$S(t)=e^{at+W(t)},$$ где $W(t)$ -- винеровский процесс. Ставится задача оценки параметра $a$.
\end{example}

\begin{solution}
Для оценивания параметра $a$ воспользуемся теорией эргодических процессов. \elena{Для этого найдем такой процесс $X(t)$, являющийся преобразованием исходного процесса, чтобы $\mathbb E X(t) = a$ и $X(t)$ был эргодичен по математическому ожиданию. Тогда хорошей оценкой параметра $a$ будет служить его среднее по времени. В качестве такого процесса рассмотрим}  случайный процесс: $$X(t)=\frac{\ln{S(t)}}{t}=a+\frac{W(t)}{t}, \ t\ge1.$$ Каждое сечение этого процесса имеет моменты произвольного порядка, поэтому $X(t)\in L_2$. Математическое ожидание этого процесса постоянно и равно неизвестному параметру $a$. Что касается корреляционной функцией, то она совпадает с корреляционной функции процесса из прошлого примера, т.е. ${R_X(t_1,t_2)=1/\max(t_1,t_2)}$, т.к. аддитивные постоянные (в нашем случае это $a$) не влияют на значения корреляционной функции или дисперсии. Из выкладок предыдущего примера следует, что $X(t)$ является эргодическим по математическому ожиданию процессом. Следовательно, наблюдая процесс $S(t)$ на достаточно продолжительном интервале времени $[1,T]$, мы можем оценить параметр $a$ по формуле $$a \approx \frac{1}{T-1}\int\limits_{1}^T X(t)\,dt=\frac{1}{T-1}\int\limits_1^T\frac{\ln{S(t)}}{t}\,dt. \text{ \EndEx}$$
\end{solution}

\shmaxg{\begin{theorem}[ (достаточное условие эргодичности)]\label{th:ErgodicSuffCond}
\textit{Для того, чтобы процесс второго порядка $X(t)$, с.к.-интегрируемый на любом отрезке и с постоянным математическим ожиданием, был эргодичен по математическому ожиданию, достаточно, чтобы} $$\lim\limits_{|t-s|\to\infty} R_X(t,s)=0, \ \lim\limits_{T\to\infty}\frac{1}{T}\max\limits_{t\in[0,T]}R_X(t,t) =0.$$
\end{theorem}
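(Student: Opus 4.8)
The plan is to reduce everything to the criterion of Theorem~\ref{th:ErgodicCriteria}: since $X(t)$ has constant mean and is s.k.-integrable on every $[0,T]$, it is ergodic in mean if and only if
$$\frac{1}{T^2}\int\limits_0^T\int\limits_0^T R_X(t_1,t_2)\,dt_1\,dt_2 \xrightarrow[T\to\infty]{} 0.$$
Since $\left|\frac{1}{T^2}\int_0^T\int_0^T R_X(t_1,t_2)\,dt_1\,dt_2\right| \le \frac{1}{T^2}\int_0^T\int_0^T |R_X(t_1,t_2)|\,dt_1\,dt_2$, it suffices to prove that the right-hand side tends to $0$. First I would record the pointwise bound coming from the Cauchy--Bunyakovsky inequality applied to the centered process: $|R_X(t_1,t_2)| = |\mathrm{cov}(X(t_1),X(t_2))| \le \sqrt{R_X(t_1,t_1)\,R_X(t_2,t_2)}$, hence $|R_X(t_1,t_2)| \le M(T) := \max_{t\in[0,T]} R_X(t,t)$ for all $t_1,t_2\in[0,T]$.

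The core of the argument is a splitting of the square $[0,T]^2$ into a near-diagonal strip and its complement. Fix $\varepsilon>0$. By the hypothesis $\lim_{|t-s|\to\infty}R_X(t,s)=0$ there is $\Delta=\Delta(\varepsilon)>0$ such that $|R_X(t,s)|<\varepsilon$ whenever $|t-s|>\Delta$. Decompose $[0,T]^2 = A_T \cup B_T$ with $A_T = \{(t_1,t_2)\in[0,T]^2 : |t_1-t_2|\le \Delta\}$ and $B_T = [0,T]^2\setminus A_T$. On $B_T$ the integrand is bounded by $\varepsilon$, so the contribution of $B_T$ to $\frac{1}{T^2}\int\int|R_X|$ is at most $\varepsilon$. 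The set $A_T$ is a strip of width $2\Delta$ about the diagonal, so its Lebesgue measure is at most $2\Delta T$ (for fixed $t_1$ the admissible $t_2$ fill a set of measure $\le 2\Delta$, and one integrates over $t_1\in[0,T]$); combined with the bound $|R_X|\le M(T)$ on $A_T$, the contribution of $A_T$ is at most $\frac{2\Delta T\,M(T)}{T^2} = 2\Delta\cdot\frac{M(T)}{T}$.

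Putting the two pieces together gives
$$\frac{1}{T^2}\int\limits_0^T\int\limits_0^T |R_X(t_1,t_2)|\,dt_1\,dt_2 \le \varepsilon + 2\Delta\,\frac{M(T)}{T}.$$
Now $\Delta$ is held fixed while $T\to\infty$, and by the second hypothesis $\frac{M(T)}{T} = \frac{1}{T}\max_{t\in[0,T]}R_X(t,t)\to 0$; hence $\limsup_{T\to\infty}\frac{1}{T^2}\int_0^T\int_0^T|R_X|\le\varepsilon$. Since $\varepsilon>0$ was arbitrary, the double integral divided by $T^2$ tends to $0$, which by Theorem~\ref{th:ErgodicCriteria} means that $X(t)$ is ergodic in mean.

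The only mildly delicate points I anticipate are (i) justifying that the Riemann double integral over $[0,T]^2$ exists and splits additively over $A_T$ and $B_T$, which follows from the s.k.-integrability hypothesis together with the integrability of $R_X$ already used in the criterion, and (ii) making sure the diagonal strip has area $O(\Delta T)$ rather than something worse --- settled by the elementary estimate $\int_0^T \lambda\big(\{t_2 : |t_1-t_2|\le\Delta\}\big)\,dt_1 \le \int_0^T 2\Delta\,dt_1 = 2\Delta T$ indicated above. Everything else is a routine $\varepsilon$-$\Delta$ bookkeeping.
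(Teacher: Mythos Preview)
Your proof is correct and follows essentially the same approach as the paper: both invoke the criterion of Theorem~\ref{th:ErgodicCriteria}, split the square $[0,T]^2$ into a diagonal strip $\{|t_1-t_2|\le\Delta\}$ and its complement, bound the off-diagonal part by $\varepsilon$ using the first hypothesis, and bound the strip via the Cauchy--Bunyakovsky inequality $|R_X(t_1,t_2)|\le\max_{t\in[0,T]}R_X(t,t)$ together with the area estimate $\le 2\Delta T$ and the second hypothesis. The only cosmetic difference is notation ($\Delta$ versus $T_0$, $A_T,B_T$ versus $G_2,G_1$).
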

\begin{proof}. Пусть дан процесс с нужными свойствами. Тогда для любого ${\eps>0}$ найдется $T_0$ такой, что для любого $T=|t_2-t_1|>T_0$ выполнено $|R_X(t_1,t_2)|<\eps$. Зафиксируем $\eps>0$, возьмем подходящий $T_0$ и разобьем квадрат $[0,T]^2$ на два множества: $$G_1=\{(t_1,t_2)\in T\times T:|t_2-t_1|>T_0\},$$ $$G_2=\{(t_1,t_2)\in T\times T:|t_2-t_1|\le T_0\}.$$ Обозначим за $S_1$ и $S_2$ площади множеств $G_1$ и $G_2$ соответственно. Тогда
$$\left| \frac{1}{T^2}\int\limits_0^T\int\limits_0^T R_X(t_1,t_2)\,dt_1dt_2 \right|=$$
$$=\left| \frac{1}{T^2}\iint\limits_{G_1} R_X(t_1,t_2)\,dt_1dt_2  +  \frac{1}{T^2}\iint\limits_{G_2} R_X(t_1,t_2)\,dt_1dt_2 \right|\le$$
$$\le\frac{1}{T^2}\left( \iint\limits_{G_1} |R_X(t_1,t_2)|\,dt_1dt_2  +  \iint\limits_{G_2} |R_X(t_1,t_2)|\,dt_1dt_2 \right)\le$$
$$\le \frac{\eps S_1 + \max_{G_2}{|R_X(t_1,t_2)|}S_2}{T^2}\le \eps + 2\max_{G_2}{|R_X(t_1,t_2)|}\frac{T_0}{T},$$ так как $S_1 \le T^2$ и $S_2 = T^2 - 2(1/2)(T-T_0)^2 = 2TT_0-T_0^2 \le 2T_0T$. Из неравенства Коши--Буняковского следует, что $$|R_X(t_1,t_2)|^2\le R_X(t_1,t_1)R_X(t_2,t_2),$$ поэтому $\max_{G_2}{|R_X(t_1,t_2)|}\le \max_{t\in[0,T]}|R_X(t,t)|$. Теперь достаточно взять $T \ge T_0$ и такое, чтобы $\max_{t\in[0,T]}|R_X(t,t)|/T \le \eps$. \EndProof
\end{proof}}

\shmaxg{Из этой теоремы следует, что для того, чтобы стационарный в широком смысле процесс второго порядка $X(t)$, с.к.-интегрируемый на любом отрезке и с постоянным математическим ожиданием, был эргодичен по математическому ожиданию достаточно, чтобы $$\lim\limits_{t\to\infty} R_X(t)=0.$$
Заметим, что у стационарного в широком смысле процесса дисперсия постоянна и поэтому условие $\lim_{T\to\infty}\max_{t\in[0,T]}R_X(t,t)/T=0$ выполнено автоматически.}

Эргодические процессы не обязательно являются стационарными в каком-либо смысле. Например, процесс из примера \ref{ex:Ex7.3} не является стационарным в широком смысле, так как его корреляционная функция не является функцией разности $t_1$ и $t_2$. Однако в случаях, когда процесс является стационарным в широком смысле, достаточное условие становится использовать особенно легко. Например, в примере \ref{ex:Ex7.2} дан процесс с корреляционной функцией $R_X(t_1,t_2)=\exp(-|t_1-t_2|)$. Очевидно, что ${R_X(t_1,t_2)\to0}$ при ${|t_1-t_2|\to\infty}$, поэтому с учетом теоремы \ref{th:ErgodicSuffCond} легко заключаем, что этот процесс является эргодическим по математическому ожиданию.


\begin{theorem}[ (Слуцкий, см. \cite{Yaglom1987})] 
\label{Slutsky}
\textit{Пусть случайный процесс $X(t)$ стационарен в широком смысле и при этом ${R_X(t_1, t_2) = R\ag{_X}(t_2-t_1)}$. Тогда}
\begin{equation}
\label{Slutsky 1}
    J = \frac{1}{T^2}\int\limits_{0}^{T}\int\limits_{0}^{T}R_X(t_1,t_2)\,dt_1dt_2 = \frac{2}{T}\int\limits_{0}^{T}\left(1-\frac{\tau}{T}\right)R\ag{_X}(\tau)\,d\tau.
\end{equation}
\elena{При этом  процесс  $X(t)$ эргодичен по математическому ожиданию тогда и только тогда, когда
\begin{equation}
    \label{Slutsky 2}
     \frac{1}{T}\int\limits_{0}^{T}R\ag{_X}(\tau)\,d\tau \to 0,\quad T\to\infty.
\end{equation}
Иначе говоря, \eqref{Slutsky 2} равносильно тому, что \ag{$J$} из формулы \eqref{Slutsky 1} стремится к нулю при $T\to\infty$.}
\end{theorem}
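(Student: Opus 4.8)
The plan is to split the statement into the algebraic identity \eqref{Slutsky 1} and the ergodicity equivalence, the latter obtained by combining \eqref{Slutsky 1} with the criterion of Theorem~\ref{th:ErgodicCriteria} and an elementary lemma about running averages.

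\emph{Identity \eqref{Slutsky 1}.} Since $R_X(t_1,t_2)=R_X(t_2-t_1)$ and $R_X(-\tau)=R_X(\tau)$ for a real process, I would pass to the variable $\tau=t_2-t_1$ in $\int_0^T\!\int_0^T R_X(t_2-t_1)\,dt_1\,dt_2$ by Fubini: for fixed $\tau\in[-T,T]$ the section $\{(t_1,t_2)\in[0,T]^2:\,t_2-t_1=\tau\}$ has length $T-|\tau|$, so the double integral equals $\int_{-T}^{T}(T-|\tau|)R_X(\tau)\,d\tau=2\int_0^{T}(T-\tau)R_X(\tau)\,d\tau$; dividing by $T^2$ gives \eqref{Slutsky 1}. (Here and below one uses that $R_X$ is bounded, $|R_X|\le R_X(0)$ by Cauchy--Bunyakovsky, and locally integrable, exactly as needed for Theorem~\ref{th:ErgodicCriteria} to apply.)

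\emph{Ergodicity.} By Theorem~\ref{th:ErgodicCriteria} the process is mean-ergodic iff the double integral $J=J(T)$ tends to $0$, and by \eqref{Slutsky 1} this equals $\frac{2}{T}\int_0^{T}(1-\tau/T)R_X(\tau)\,d\tau$; hence it suffices to prove
$$J(T)\to 0\iff I(T):=\frac{1}{T}\int_0^{T}R_X(\tau)\,d\tau\to 0.$$
Set $F(s)=\int_0^{s}R_X(\tau)\,d\tau=sI(s)$. From $\int_0^T\tau R_X(\tau)\,d\tau=\int_0^T\int_s^T R_X(\tau)\,d\tau\,ds=TF(T)-\int_0^T F(s)\,ds$ one gets the key relation $J(T)=\frac{2}{T^2}\int_0^{T}F(s)\,ds$. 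If $I(s)\to 0$, then $F(s)=o(s)$, and splitting $\int_0^T F$ at a large $s_0$ (beyond which $|F(s)|\le\delta s$) shows $\frac{1}{T^2}\int_0^T F(s)\,ds\to 0$, i.e. $J(T)\to 0$. For the converse, suppose $J(T)\to 0$ but $I(T_n)\not\to 0$ along some $T_n\uparrow\infty$; passing to a subsequence we may assume $F(T_n)\ge\varepsilon T_n$ for a fixed $\varepsilon>0$ (the case $F(T_n)\le-\varepsilon T_n$ is symmetric). Since $|F(b)-F(a)|\le M(b-a)$ with $M=R_X(0)$, we have $F(s)\ge\frac{\varepsilon}{2}T_n$ on $[T_n,(1+\frac{\varepsilon}{2M})T_n]$, hence $\int_{T_n}^{(1+\varepsilon/(2M))T_n}F(s)\,ds\ge\frac{\varepsilon^2}{4M}T_n^2$. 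On the other hand $J(T)\to 0$ forces $\int_0^{T}F(s)\,ds=o(T^2)$ both at $T=T_n$ and at $T=(1+\frac{\varepsilon}{2M})T_n$, so their difference is $o(T_n^2)$ --- contradicting the lower bound. Therefore $I(T)\to 0$, and together with the chain above this proves the theorem.

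The delicate point is exactly the implication $J(T)\to 0\Rightarrow I(T)\to 0$: a formal inversion of the averaging is impossible in general, and one must use the rigidity of a correlation function --- the uniform bound $|R_X|\le R_X(0)$, which limits how fast $F$ can move and so forbids $F(T)/T$ from staying bounded away from $0$ while its Ces\`aro average vanishes.
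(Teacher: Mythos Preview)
Your proof is correct. The identity \eqref{Slutsky 1} and the implication $I(T)\to 0\Rightarrow J(T)\to 0$ are handled essentially as in the paper: the same change of variables gives $J(T)=\frac{2}{T^2}\int_0^T F(s)\,ds$ with $F(s)=\int_0^s R_X(\tau)\,d\tau$, and then $F(s)=o(s)$ forces the Ces\`aro mean to vanish.

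The genuinely different point is the converse $J(T)\to 0\Rightarrow I(T)\to 0$. The paper does not argue analytically at all here: it observes the probabilistic identity
\[
\frac{1}{T}\int_0^T R_X(\tau)\,d\tau=\mathbb{E}\bigl[(X(0)-m_X)(\langle X\rangle_T-m_X)\bigr],
\]
so that by Cauchy--Bunyakovsky $|I(T)|\le\sqrt{R_X(0)}\,\sqrt{\mathbb{E}(\langle X\rangle_T-m_X)^2}$, and the latter factor is exactly $\sqrt{J(T)}$. This gives the implication in one line. Your route is purely analytic: you exploit that $F$ is $R_X(0)$-Lipschitz, so any persistent spike $F(T_n)\ge\varepsilon T_n$ must be sustained over an interval of length $\sim T_n$, forcing a contribution $\sim T_n^2$ to $\int_0^{cT_n}F$, incompatible with $J\to 0$. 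Both arguments hinge on the same ingredient $|R_X|\le R_X(0)$, but the paper's packaging is shorter and explains \emph{why} the implication holds (it is Cauchy--Schwarz between a fixed section and the running average), while your argument has the merit of staying entirely within real analysis and would apply verbatim to any bounded function in place of $R_X$.
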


\begin{proof}
Выполним замену переменных: $\tau=t_2\!-\!t_1,$ $s = t_2$; модуль определителя матрицы Якоби такого преобразования равен единице. Область интегрирования в новых координатах $(\tau,s)$~--- это два прямоугольных треугольника:
\begin{center}
    $I_1$ с вершинами в точках $(0,T)$, $(0,0)$, $(-T,0)$,
\end{center}
\begin{center}
    $I_2$ с вершинами в точках $(0,0)$,  $(0,T)$, $(T,T)$.
\end{center}
\begin{center}
\begin{tikzpicture}[scale=1.5]
\definecolor{tempcolor}{RGB}{255,204,0}
    \draw [fill=tempcolor] (0, 0) -- (0, 1) -- (-1, 0) -- (0, 0);
    \draw [fill=tempcolor] (0, 0) -- (0, 1) -- (1, 1) -- (0, 0);
    
    \draw[thick, ->] (-1.5,0)--(1.5,0) node[above] {$\tau$}; 
    \draw[thick, ->] (0,0)--(0,1.5) node[left] {$s$}; 
    
    \draw[thick] (0, 1) -- (-1, 0);
    \draw[thick] (0, 1) -- (1, 1);
    \draw[thick] (0, 0) -- (1, 1);
    \draw[dashed] (1, 0) -- (1, 1);
    
    \draw (-0.15, 1.1) node {$T$};
    \draw (1, -0.15) node {$T$};
    \draw (-1, -0.15) node {$-T$};
    
    \draw (-0.3, 0.3) node {$I_1$}; 
    \draw (0.3, 0.6) node {$I_2$}; 
\end{tikzpicture}
\end{center}

Следовательно,
\begin{eqnarray*}
J &=& \frac{1}{T^2}\iint_{I_1}R\ag{_X}(\tau)\,d\tau ds + \frac{1}{T^2}\iint_{I_2}R\ag{_X}(\tau)\,d\tau ds= \\
&=& \frac{1}{T^2}\int\limits_{-T}^{0}R\ag{_X}(\tau)\,d\tau\int\limits_{0}^{\tau+T}\,ds + \frac{1}{T^2} \int\limits_{0}^{T}R\ag{_X}(\tau)\,d\tau\int\limits_{\tau}^{T}\,ds= \\
&=& \frac{1}{T^2}\int\limits_{-T}^{0}(T+\tau)R\ag{_X}(\tau)\,d\tau + \frac{1}{T^2}\int\limits_{0}^{T}(T-\tau)R\ag{_X}(\tau)\,d\tau= \\
&=& \frac{1}{T^2}\int\limits_{0}^{T}(T-\tau)R\ag{_X}(-\tau)\,d\tau + \frac{1}{T^2}\int\limits_{0}^{T}(T-\tau)R\ag{_X}(\tau)\,d\tau= \\
&=& \frac{2}{T}\int\limits_{0}^{T}\left(1-\frac{\tau}{T}\right)R\ag{_X}(\tau)\,d\tau,
\end{eqnarray*}
где было учтено равенство $R\ag{_X}(-\tau)=R\ag{_X}(\tau)$ для вещественных процессов.

\elena{Для доказательства второй части теоремы, заметим, что $$\frac{1}{T}\int\limits_{0}^{T}R\ag{_X}(\tau)\,d\tau = \mathbb E \left [ ( X(0) -m_X) (\langle X\rangle_T - m_X)  \right]. $$ Отсюда и из неравенства Коши--Буняковского следует, что если \linebreak ${\mathbb E (\langle X\rangle_T - m_X)^2  \to 0}$, то ${ \frac{1}{T}\int_{0}^{T}R\ag{_X}(\tau)\,d\tau \to 0 }$. Это доказывает необходимость.}

\elena{Для доказательства достаточности воспользуемся критерием  \ref{th:ErgodicCriteria} и сделанной выше выкладкой. Удобнее поменять порядок интегрирования: }
\elena{
\begin{equation*}
J = \frac{2}{T^2}\iint_{I_2}R\ag{_X}(\tau)\,d\tau ds= \frac{2}{T^2}\int\limits_{0}^{T}\int\limits_{0}^{s} R\ag{_X}(\tau)\,d\tau\,ds.
\end{equation*}
Далее из условия $\frac{1}{T}\int_{0}^{T}R\ag{_X}(\tau)\,d\tau \to 0$ следует, что для любого $\epsilon > 0$ существует $T_\circ(\epsilon)$ такое, что для любого $s > T_\circ$ $\int_0^s R\ag{_X}(\tau)\,d\tau < \epsilon s$. При $s< T_\circ$ воспользуемся неравенством (которое на самом деле справедливо для любых $s$) $\int_0^s R\ag{_X}(\tau)\,d\tau \le R\ag{_X}(0) s$. 
\begin{eqnarray*}
J &=& \frac{2}{T^2}\int\limits_{0}^{T}\int\limits_{0}^{s} R\ag{_X}(\tau)\,d\tau\,ds =\\
&=& \frac{2}{T^2}\int\limits_{0}^{T_\circ}\int\limits_{0}^{s} R\ag{_X}(\tau)\,d\tau\,ds  + \frac{2}{T^2}\int\limits_{T_\circ}^{T}\int\limits_{0}^{s} R\ag{_X}(\tau)\,d\tau\,ds \le\\
& \le & \frac{2}{T^2}\int\limits_{0}^{T_\circ}R\ag{_X}(0)s\,ds + \frac{2}{T^2}\int\limits_{T_\circ}^{T}\int\epsilon s\,ds =\\
&=&  \frac{ T_\circ^2}{T^2}R\ag{_X}(0) + \frac{T^2 - T_\circ^2}{T^2}t\epsilon < \frac{ T_\circ^2}{T^2}R\ag{_X}(0) + \epsilon.
\end{eqnarray*}
Откуда следует, что $J\to 0$ при $T\to\infty$. Что и доказывает достаточность.}~\EndProof 
\end{proof}


Аналогично тому, как мы ввели ранее определение эргодичности по математическому ожиданию, можно ввести понятия \textit{эргодичности по дисперсии} и \textit{эргодичности по корреляционной функции}.

\begin{definition}
Случайный процесс $X(t)\in L_2$ называют \textit{\shmaxg{эргодическим} по дисперсии}, если случайный процесс $${Y(t) = \elena{\accentset{\circ}{X}^2(t)}= (X(t)-m_X(t))^2}$$ эргодичен по математическому ожиданию.
\end{definition}

\elena{Таким образом, для процесса $X(t)$ с постоянной дисперсией $\sigma^2_X = \mathbb E \accentset{\circ}{X}^2(t)$ хорошей оценкой дисперсии будет
$$\langle Y\rangle_T \overset{\text{def}}{=} \frac{1}{T}\int\limits_{0}^{T}\accentset{\circ}{X}^2(t)\,dt\xrightarrow[T\to\infty]{L_2} \sigma^2_X.$$}
\begin{definition}
Случайный процесс ${X(t)\in L_2}$ называют \textit{\shmaxg{эргодическим} по корреляционной функции}, если для каждого ${\tau\ge0}$ случайный процесс ${Y_\tau(t) = \accentset{\circ}{X}(t) \accentset{\circ}{X}(t+\tau)}$ эргодичен по математическому ожиданию.
\end{definition}

\elena{Таким образом, для процесса $X(t)$ с корреляционной функцией, зависящей от разности аргументов $R_X(\tau) = \mathbb E \accentset{\circ}{X}(t)\accentset{\circ}{X}(t+\tau)$ хорошей оценкой корреляционной функции в каждой фиксированной точке $\tau$ будет
$$\langle Y_\tau\rangle_T \overset{\text{def}}{=} \frac{1}{T}\int\limits_{0}^{T}\accentset{\circ}{X}(t) \accentset{\circ}{X}(t+\tau) \,dt\xrightarrow[T\to\infty]{L_2} R_X(\tau).$$}

	
	

\subsection[Эргодические динамические системы\vspace{2mm}]{Эргодические динамические системы \elena{и закон больших чисел для стационарных в узком смысле случайных последовательностей}*}\label{sec:ErgodicDS}

В данном разделе вводится новое понятие -- \textit{динамическая система}, а вместе с ней и понятие \textit{эргодической динамической системы}. Здесь мы постараемся продемонстрировать, как теория эргодических процессов может помочь в понимании жемчужины теории динамических систем \textit{эргодической теоремы Биркгофа--Хинчина--Фон~Неймана} \cite{KoralovSinai,SinaiLections1996,Shiryaev2}. 

\begin{definition}
Совокупность $M = (\Omega, \F, \mathbb P = \mu, T)$, 
где $(\Omega, \F, \mathbb P=\mu)$~--- вероятностное пространство, и \elena{измеримое} отображение $T:\Omega\to \Omega$ удовлетворяет свойству
	$$
	\forall B\in\F \hookrightarrow \mu\left(T^{-1}(B)\right) = \mu(B)
	$$
	(вероятностная мера $\mu$ инвариантна относительно преобразования $T$\elena{, либо иначе говоря, $T$ -- сохраняющее меру $\mu$ преобразование}), будем называть \textit{динамической системой}.
\end{definition}

\elena{На самом деле для теории динамических систем мера $\mu$ не обязана быть вероятностной, однако для связи результатов со случайными процессами, мы будем все же предполагать,что $\mu$ вероятностная мера.}

\elena{Пусть $\xi$ некоторая случайная величина, заданная на вероятностном пространстве  $(\Omega, \F, \mu)$. Рассмотрим случайный процесс, порожденный динамической системой (преобразованием $T$) $X_0(\omega) = \xi(\omega)$, $X_1(\omega) = \xi(T \omega)$, $X_2(\omega) = \xi(T^2 \omega)$, \ldots Несложно убедиться, что такая последовательность является стационарной в узком смысле. На самом деле верен и обратный результат, что для каждой стационарной в узком смысле последовательности найдется динамическая система, порождающая её (см. \cite[гл. V, \S 1]{Shiryaev2}). 
}

\begin{definition}
Динамическая система ${M = (\Omega, \F, \mu, T)}$ называется \textit{эргодической}, если из равенства ${T^{-1}(B)=B}$ ($\mu$-п.н.) следует, что либо $B=\varnothing$ ($\mu$-п.н.), либо $B=\Omega$ ($\mu$-п.н.).
\end{definition}

Здесь и далее равенство множеств ${A=B}$ ($\mu$-п.н.) означает равенство ${\mu((A\cup B) \setminus (A\cap B))=0}$, т.е. что мера симметрической разности множеств равна нулю.



\begin{example}\label{ex:Ex8.1}
Пусть $\Omega = S^1$~--- окружность единичной длины на плоскости (будем пользоваться ее гомеоморфностью отрезку $[0,1]$, концы которого отождествлены), $\F = \B(S^1)$~--- борелевская $\sigma$-алгебра на $S^1$, ${\mu=\lambda}$~--- классическая мера Лебега на $S^1$, ${T(\omega) = \{\log_{10}{2}+\omega\}}$, где $\{y \}$ обозначает дробную часть числа $y$, ${\omega\in\Omega}$. Показать, что $M\!\!=$\linebreak $=(\Omega,\F,\mu,T)$ является динамической системой.
\end{example}

\begin{solution}
Из постановки задачи сразу следует, что тройка $(\Omega,\F,\mu)$ является вероятностным пространством. Остается доказать, что 
$$
\forall B\in\F \hookrightarrow \mu\left(T^{-1}(B)\right) = \mu(B).
$$
Заметим, что преобразование $T:\Omega\to\Omega$ осуществляет сдвиг на величину $\log_{10}2$ \gav{по часовой стрелке} вдоль окружности. Обратное же преобразование осуществляет сдвиг на ту же величину\gav{, но} в обратном направлении \gav{(против часовой стрелке). Так} как мера Лебега не зависит от преобразования сдвига, то соотношение выше действительно имеет место. \EndEx
\end{solution}

\begin{theorem}\label{th:CircErgodic}
\textit{Динамическая система ${M=(S^1,\B(S^1),\lambda,T)}$ с преобразованием ${T(\omega)=\{\alpha+\omega\}}$, $\omega\in S^1$, является эргодической тогда и только тогда, когда число $\alpha$ иррационально.}
\end{theorem}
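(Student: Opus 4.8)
Утверждение состоит из двух импликаций: (1) если $\alpha$ рационально, то система не эргодична; (2) если $\alpha$ иррационально, то система эргодична. Первая часть лёгкая: пусть $\alpha = p/q$ с $q > 1$. Тогда преобразование $T^q$ -- тождественное, и можно построить инвариантное относительно $T$ множество, не совпадающее (по мере) ни с $\varnothing$, ни с $S^1$. Например, взять $B = \bigcup_{k=0}^{q-1} T^k(E)$, где $E = [0, 1/(2q))$; это множество инвариантно относительно $T$ по построению, а его мера равна $1/2$. Следовательно, система не эргодична.

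Основная работа -- вторая импликация. План: пусть $B \in \mathcal{B}(S^1)$ и $T^{-1}(B) = B$ ($\lambda$-п.н.); надо показать, что $\lambda(B) \in \{0, 1\}$. Рассмотрим индикатор $f = \mathbb{I}_B \in L_2(S^1, \lambda)$. Инвариантность $B$ означает, что $f(T\omega) = f(\omega)$ п.н. Разложим $f$ в ряд Фурье: $f(\omega) = \sum_{n \in \mathbb{Z}} c_n e^{2\pi i n \omega}$, где сходимость понимается в $L_2$. Тогда $f(T\omega) = f(\{\alpha + \omega\}) = \sum_n c_n e^{2\pi i n \alpha} e^{2\pi i n \omega}$. Из равенства $f \circ T = f$ и единственности коэффициентов Фурье получаем $c_n e^{2\pi i n \alpha} = c_n$ для всех $n$, то есть $c_n (e^{2\pi i n \alpha} - 1) = 0$. При иррациональном $\alpha$ для любого $n \ne 0$ число $n\alpha$ не целое, значит $e^{2\pi i n\alpha} \ne 1$, откуда $c_n = 0$ при $n \ne 0$. Следовательно $f \equiv c_0$ п.н., то есть индикатор $\mathbb{I}_B$ почти всюду постоянен, а это возможно лишь при $\lambda(B) = 0$ (если $c_0 = 0$) или $\lambda(B) = 1$ (если $c_0 = 1$). Это завершает доказательство.

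Главное препятствие (точнее, место, требующее аккуратности) -- это обоснование шага с рядами Фурье: нужно корректно сослаться на то, что тригонометрическая система $\{e^{2\pi i n \omega}\}_{n \in \mathbb{Z}}$ полна в $L_2(S^1, \lambda)$ и что равенство двух функций в $L_2$ равносильно равенству всех их коэффициентов Фурье. Это стандартный факт гармонического анализа, на него достаточно сослаться. Альтернативный путь -- вовсе избежать рядов Фурье и воспользоваться эргодической теоремой Биркгофа--Хинчина (которая упоминается в тексте ниже) в обратную сторону, но это менее элементарно, поэтому фурье-аргумент предпочтительнее. Ещё одна мелочь: аккуратно провести редукцию от ``$T^{-1}(B) = B$ п.н.'' к ``$\mathbb{I}_B \circ T = \mathbb{I}_B$ п.н.'' -- здесь используется лишь то, что $\lambda(T^{-1}(B) \triangle B) = 0$ влечёт совпадение индикаторов почти всюду, что очевидно.
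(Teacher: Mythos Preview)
The paper does not prove this theorem --- it only states it and refers the reader to \cite{Katok1999, SinaiLections1996, Shiryaev2}. Your argument is precisely the standard Fourier-coefficient proof found in those references, and it is correct.

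One tiny cosmetic point in the rational case: you write $\alpha = p/q$ with $q>1$ and claim $\lambda(B)=1/2$. Strictly, this needs $\gcd(p,q)=1$; otherwise the orbit of $E$ under $T$ is shorter and the union has measure $1/(2d)$ where $d=\gcd(p,q)$. Either way $0<\lambda(B)<1$, so the conclusion stands. (The case $\alpha\in\mathbb Z$, i.e.\ $T=\mathrm{id}$, is trivial and worth a one-line mention.)
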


Доказательство этой теоремы можно найти, например, в книгах \cite{Katok1999, SinaiLections1996, Shiryaev2}; мы же оставляем здесь только формулировку.




\elena{
\begin{definition}
Динамическая система $(\Omega,\F,\mu,T)$  обладает свойством \textit{перемешивания}, если для любых $A_1,A_2\in\mathcal F$ при $n\to\infty$
$$
\mu\left( A_1 \bigcap T^{-n}A_2\right) \to \mu(A_1) \mu(A_2). 
$$
\end{definition}
Рассмотрим теперь порожденную динамической системой стационарную в узком смысле последовательность $X_k(\omega) $, $k=0,1,2,\ldots$.
\begin{definition}
Стационарная в узком смысле последовательность $X_k(\omega) $, $k=0,1,2,\ldots$ обладает свойством \textit{слабой зависимости}, если для любого $k$ $X_k$ и $X_{k+n}$ асимптотически независимы при $n\to\infty$, то есть для любых борелевских множеств $B_1,B_2$ \ag{при $n\to\infty$}:
$$
\mu \left(X_k\in B_1, X_{k+n}\in B_2 \right) \to \mu(X_0\in B_1) \mu (X_0\in B_2).
$$
\end{definition}
Стоит отметить, что в случае конечного второго момента $\mathbb E \xi^2 < \infty $ из свойства  слабой зависимости следует свойство асимптотической некоррелированности: $R_X(n)\to 0$ \ag{при $n\to\infty$}. Для  гауссовской меры эти понятия эквивалентны.
}

\elena{
На самом деле, любая последовательность $X_k(\omega) $, $k=0,1,2,\ldots$, порожденная динамической системой обладает свойством слабой зависимости  тогда и только тогда, когда динамическая система обладает свойством перемешивания.
\begin{example}
\label{ex:5.5}
Забегая немного вперед, заметим, что можно рассмотреть  
в качестве иллюстрации предыдущих определений конечную, неразложимую, апериодическую однородную марковскую цепь, на множестве состояний $E$. Это означает, что  цепь <<забывает>>  любое свое начальное распределение и сходится к стационарному распределению с положительными компонентами $\{ \pi_i \} _{i\in E}$  : \ag{при $n \to \infty$}
$$
\mathbb P \left( X_{n}\in B | X_0 = i\right) \to \pi(B)=\sum_{j\in B} \pi_j
$$
для любых $i\in E$ и $B\subset E$.
Положим начальное распределение равное инвариантному, тогда $X_n $, $n=0,1,2,\ldots$ образует стационарный в узком смысле процесс. Несложно проверить, что $X_n $, $n=0,1,2,\ldots$ обладает свойством слабой зависимости (а значит, порождающая цепь динамическая система -- свойством перемешивания). Действительно, для любых подмножеств состояний $B_1, B_2 \subset E$ выполнено \ag{при $n\to\infty$}
\begin{equation*}
    \begin{split}
        &\mathbb P \left(X_0\in B_1, X_{n}\in B_2 \right) = \sum_{i\in B_1}\mathbb P  \left(X_{n}\in B_2 | X_0 = i \right) \pi_i\to\\
        &\to\pi(B_2) \sum_{i\in B_1}\pi_i = \pi(B_1)\pi(B_2).\quad \text{\EndEx}
    \end{split}
\end{equation*}
\end{example}
\begin{definition}
Динамическая система $(\Omega,\F,\mu,T)$  обладает свойством \textit{перемешивания в среднем}, если для любых $A_1,A_2\in\mathcal F$ при $n\to\infty$
$$
\frac{1}{n}\sum_{m=1}^n\mu\left( A_1 \bigcap T^{-m}A_2\right) \to \mu(A_1) \mu(A_2). 
$$
\end{definition}
\begin{definition}
Стационарная в узком смысле последовательность $X_k(\omega) $, $k=0,1,2,\ldots$ обладает свойством \textit{слабой зависимости в среднем}, если  для любых борелевских множеств $B_1,B_2$ \ag{при $n \to \infty$}:
$$
\frac{1}{n}\sum_{m=1}^n\mu \left(X_0\in B_1, X_{m}\in B_2 \right) \to \mu(X_0\in B_1) \mu (X_0\in B_2).
$$
\end{definition}
Аналогично предыдущему, любая последовательность $X_k(\omega)$, $k=0,1,2,\ldots$, порожденная динамической системой обладает свойством слабой зависимости в среднем тогда и только тогда, когда динамическая система обладает свойством слабой зависимости в среднем.}

\elena{
\begin{example}
Пусть цепь из предыдущего примера \ref{ex:5.5} обладает периодом $d>1$. Тогда цепь уже не будет обладать свойством слабой зависимости, но будет обладать свойством слабой зависимости в среднем.~\EndEx
\end{example}
Доказательство следующих  теорем можно посмотреть в книге \cite[гл.~16, \S~2]{Borovkov1999}.
\begin{theorem}
Динамическая система эргодична тогда и только тогда, когда она обладает свойством перемешивания в среднем.
\end{theorem}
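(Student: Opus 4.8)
Требуется показать эквивалентность двух свойств динамической системы: эргодичности и перемешивания в среднем. Я докажу две импликации по отдельности, причём каждая из них естественно раскладывается на работу с характеристическими (индикаторными) функциями множеств и предельный переход к общему случаю $L_2$-функций. Прежде всего замечу, что свойство перемешивания в среднем, записанное для индикаторов $A_1,A_2\in\mathcal F$ в виде $\frac1n\sum_{m=1}^n\mu(A_1\cap T^{-m}A_2)\to\mu(A_1)\mu(A_2)$, эквивалентно тому, что для любых $f,g\in L_2(\Omega,\mu)$ справедливо $\frac1n\sum_{m=1}^n\int_\Omega f\cdot(g\circ T^m)\,d\mu\to\int f\,d\mu\int g\,d\mu$; переход от индикаторов к простым функциям делается по линейности, а от простых к произвольным $L_2$-функциям — аппроксимацией в $L_2$-норме с использованием того, что оператор $U$, $(Uf)(\omega)=f(T\omega)$, сохраняет $L_2$-норму (это прямое следствие инвариантности меры). Именно в терминах средних по сдвигам свойство перемешивания в среднем удобно связывать с эргодичностью.

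\textbf{Перемешивание в среднем влечёт эргодичность.} Пусть $B\in\mathcal F$ таково, что $T^{-1}(B)=B$ ($\mu$-п.н.), то есть $\mathsf{I}_B\circ T=\mathsf{I}_B$ почти наверное. Тогда $T^{-m}(B)=B$ для всех $m$, и применяя определение перемешивания в среднем к $A_1=A_2=B$, получаем $\frac1n\sum_{m=1}^n\mu(B\cap T^{-m}B)=\frac1n\sum_{m=1}^n\mu(B)=\mu(B)$, а по условию этот предел равен $\mu(B)^2$. Значит $\mu(B)=\mu(B)^2$, откуда $\mu(B)\in\{0,1\}$, то есть $B=\varnothing$ или $B=\Omega$ ($\mu$-п.н.). Это в точности определение эргодичности; данная импликация не представляет трудностей.

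\textbf{Эргодичность влечёт перемешивание в среднем.} Это основная и более тонкая часть. Ключевым инструментом здесь служит эргодическая теорема фон Неймана (точнее, её $L_2$-вариант, входящий в круг упомянутой в тексте эргодической теоремы Биркгофа--Хинчина--фон Неймана): для любой $g\in L_2(\Omega,\mu)$ средние $\frac1n\sum_{m=1}^n g\circ T^m$ сходятся в $L_2$ к условному математическому ожиданию $\bar g=\mathbb E(g\mid\mathcal I)$ относительно $\sigma$-алгебры $\mathcal I$ инвариантных множеств; в эргодическом случае $\mathcal I$ тривиальна ($\mu$-п.н.), поэтому $\bar g=\int g\,d\mu$ — константа. План таков: взять произвольные $f,g\in L_2$, записать $\frac1n\sum_{m=1}^n\int f\cdot(g\circ T^m)\,d\mu=\int f\cdot\bigl(\frac1n\sum_{m=1}^n g\circ T^m\bigr)\,d\mu$, затем воспользоваться непрерывностью скалярного произведения в $L_2$ (неравенство Коши--Буняковского: $|\int f\cdot(S_n g-\int g\,d\mu)\,d\mu|\le\|f\|_2\cdot\|S_n g-\int g\,d\mu\|_2\to0$, где $S_n g=\frac1n\sum_{m=1}^n g\circ T^m$), и получить предел $\int f\,d\mu\cdot\int g\,d\mu$. Специализируя $f=\mathsf{I}_{A_1}$, $g=\mathsf{I}_{A_2}$, получаем требуемое соотношение для множеств. Основное препятствие здесь — это именно обоснование/применение эргодической теоремы фон Неймана; впрочем, поскольку в тексте она уже упоминается как доступный результат (и есть ссылка на \cite{Borovkov1999}), её можно использовать как чёрный ящик, и тогда всё сводится к аккуратному применению непрерывности скалярного произведения и стандартной аппроксимации.

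\textbf{Итог.} Таким образом, схема состоит из трёх шагов: (1) переформулировать перемешивание в среднем в терминах чезаровских средних скалярных произведений $\langle f, S_n g\rangle$, используя плотность простых функций в $L_2$ и изометричность сдвига; (2) тривиальная импликация «перемешивание в среднем $\Rightarrow$ эргодичность» подстановкой инвариантного множества; (3) обратная импликация через $L_2$-эргодическую теорему фон Неймана, дающую $S_n g\to\int g\,d\mu$ в $L_2$, плюс неравенство Коши--Буняковского. Трудным местом является лишь шаг (3), и его трудность целиком инкапсулирована в эргодической теореме, которую мы цитируем.
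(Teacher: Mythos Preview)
Your argument is correct. Note that the paper does not supply its own proof of this equivalence --- it simply refers the reader to \cite[гл.~16, \S~2]{Borovkov1999} --- so there is no ``paper's proof'' to compare against; your proposal fills in what the text omits.

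Both implications are standard and sound. The direction ``mixing in mean $\Rightarrow$ ergodic'' is immediate from plugging an invariant set into the definition, exactly as you do. For the converse, invoking the $L_2$ ergodic theorem of von~Neumann is the canonical route: once $S_n g\to\int g\,d\mu$ in $L_2$, continuity of the scalar product (Cauchy--Buniakowski) gives $\langle f,S_n g\rangle\to\int f\,d\mu\int g\,d\mu$, and specialising to indicators yields the set-based condition. One small point of logical hygiene: in the paper's order of presentation, von~Neumann's theorem is stated \emph{after} this equivalence, so you are forward-referencing. This is harmless, because von~Neumann's theorem is proved independently (projection onto the $U$-invariant subspace of $L_2$) and its ergodic conclusion $\eta=\mathbb E\xi$ uses only the definition of ergodicity (triviality of the invariant $\sigma$-algebra), not the mixing-in-mean characterisation --- there is no circularity. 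Your preliminary step~(1), lifting the property from indicators to general $L_2$ functions, is correct but not actually needed for the proof: it is enough to apply von~Neumann with $g=\mathsf I_{A_2}$ and then pair with $f=\mathsf I_{A_1}$.
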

\begin{theorem}
Динамическая система $(\Omega, \F, \mu, T)$ эргодична тогда и только тогда, когда для любого $A\in\F$ такого, что $\mu(A) > 0$, выполняется
$$
\mu \left( \bigcup_{n=0}^\infty T^{-n}A \right)
=1,
$$
что означает, что множества $T^{-n}A$, $n=0,1,2,\ldots$ <<заметают>> все пространство $\Omega$.
\end{theorem}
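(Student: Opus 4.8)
The statement to prove is the characterization: a measure-preserving dynamical system $(\Omega,\F,\mu,T)$ is ergodic if and only if for every $A\in\F$ with $\mu(A)>0$ one has $\mu\left(\bigcup_{n=0}^\infty T^{-n}A\right)=1$.

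\textbf{Plan of proof.} The natural strategy is to prove both implications by examining the set $C = \bigcup_{n=0}^\infty T^{-n}A$ and showing it is (almost) invariant, then invoking the definition of ergodicity. First I would establish the key observation that $T^{-1}C \subseteq C$: indeed $T^{-1}C = \bigcup_{n=0}^\infty T^{-(n+1)}A = \bigcup_{n=1}^\infty T^{-n}A \subseteq C$. Since $T$ is measure-preserving, $\mu(T^{-1}C) = \mu(C)$, and combined with $T^{-1}C\subseteq C$ this forces $\mu(C\setminus T^{-1}C)=0$, i.e. $T^{-1}C = C$ ($\mu$-a.s.). This almost-invariance of $C$ is the linchpin connecting the two directions.

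\textbf{Direction ($\Rightarrow$): ergodicity implies the covering property.} Suppose the system is ergodic and $\mu(A)>0$. By the previous paragraph $T^{-1}C = C$ ($\mu$-a.s.), so by the definition of an ergodic system either $\mu(C)=0$ or $\mu(C)=1$. But $A\subseteq C$, hence $\mu(C)\ge\mu(A)>0$, which rules out $\mu(C)=0$. Therefore $\mu(C)=1$, which is exactly the desired conclusion $\mu\left(\bigcup_{n=0}^\infty T^{-n}A\right)=1$.

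\textbf{Direction ($\Leftarrow$): the covering property implies ergodicity.} Suppose the covering property holds for all sets of positive measure, and let $B\in\F$ satisfy $T^{-1}B=B$ ($\mu$-a.s.). I want to show $\mu(B)\in\{0,1\}$. Assume $\mu(B)>0$; I will show $\mu(B)=1$. Apply the hypothesis to $A=B$: then $\mu\left(\bigcup_{n=0}^\infty T^{-n}B\right)=1$. But since $T^{-1}B=B$ ($\mu$-a.s.), an easy induction gives $T^{-n}B=B$ ($\mu$-a.s.) for every $n\ge0$, so the countable union $\bigcup_{n=0}^\infty T^{-n}B$ equals $B$ up to a null set, whence $\mu(B)=1$. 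If instead one started from $\mu(B)<1$, apply the same argument to the complement: $\mu(\Omega\setminus B)>0$ and $T^{-1}(\Omega\setminus B)=\Omega\setminus B$ ($\mu$-a.s.), so by the same reasoning $\mu(\Omega\setminus B)=1$, i.e. $\mu(B)=0$. Either way $\mu(B)\in\{0,1\}$, so the system is ergodic.

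\textbf{Main obstacle.} There is no deep obstacle here; the proof is essentially bookkeeping with $\mu$-almost-sure equalities of sets. The one point requiring a little care is handling the ``$\mu$-a.s.'' qualifiers consistently — in particular justifying that $T^{-1}B=B$ ($\mu$-a.s.) propagates to $T^{-n}B=B$ ($\mu$-a.s.) for all $n$, which uses that $T$ is measure-preserving so that $T^{-1}$ maps null sets to null sets, together with the symmetric-difference characterization of a.s.\ equality noted just before Example~\ref{ex:Ex8.1}. I would state that propagation as a short lemma or inline remark before carrying out the two implications.
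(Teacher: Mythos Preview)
Your proof is correct and is the standard argument for this characterization. The paper itself does not give a proof of this theorem; it simply refers the reader to \cite[гл.~16, \S~2]{Borovkov1999}, so there is nothing to compare against. Your handling of the $\mu$-a.s.\ equalities (in particular the propagation $T^{-n}B=B$ a.s.\ via measure-preservation) is exactly the care that is needed and matches the conventions set up in the text around the definition of ergodicity.
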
}

\elena{
Пусть $X_k(\omega) = \xi(T^k\omega)$, $k=0,1,2,\ldots$ -- стационарная в узком смысле последовательность, порожденная динамической системой  $(\Omega, \F, \mu, T)$. Доказательство следующей теоремы можно посмотреть в \cite[Теорема 1.3]{BillingsleyErgod}, или \cite[гл.~16, \S~3]{Borovkov1999},  \cite[гл.~V, \S~3]{Shiryaev2}. 
\begin{theorem}[~(Биркгоф, Хинчин)]
\label{th: Bir-Hin}
Пусть $\mathbb E |\xi| < \infty$, тогда $\mu-$п.н. существует предел $\eta$, $\mathbb E |\eta| < \infty$
$$
\lim\limits_{n\to\infty} \frac{1}{n}\sum_{k=0}^{n-1}X_k(\omega)=\lim\limits_{n\to\infty} \frac{1}{n}\sum_{k=0}^{n-1}\xi(T^k\omega)=\eta(\omega).
$$
При этом $ \mathbb E \eta = \mathbb E \xi $ и $\eta$ -- инвариантная случайная величина, то есть $\mu-$п.н. выполнено 
$$
\eta(T \omega) = \eta(\omega).
$$
Если к тому же динамическая система эргодична, то $\eta = \mathbb E \xi$.
\end{theorem}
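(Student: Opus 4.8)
Теорема Биркгофа--Хинчина --- классический глубокий результат, и полное доказательство требует заметной работы; здесь я опишу стандартную схему, следуя, например, изложению в \cite{Borovkov1999} или \cite{Shiryaev2}. Основной трудностью является доказательство существования предела $\mu$-п.н.; равенство $\mathbb{E}\eta=\mathbb{E}\xi$, инвариантность $\eta$ и заключительное утверждение для эргодического случая после этого получаются сравнительно легко.

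Первый шаг --- перейти к изучению \emph{инвариантной $\sigma$-алгебры} $\mathcal{I}=\{B\in\mathcal{F}:T^{-1}(B)=B\ (\mu\text{-п.н.})\}$ и заметить, что случайная величина измерима относительно $\mathcal{I}$ тогда и только тогда, когда она инвариантна ($\mu$-п.н.). Предельная величина $\eta$, если она существует, обязана быть инвариантной: действительно, $\frac{1}{n}\sum_{k=0}^{n-1}\xi(T^{k+1}\omega)=\frac{n+1}{n}\cdot\frac{1}{n+1}\sum_{k=0}^{n}\xi(T^k\omega)-\frac{1}{n}\xi(\omega)$, и оба члена справа имеют тот же предел $\eta(\omega)$, что даёт $\eta(T\omega)=\eta(\omega)$ п.н. Поэтому естественный кандидат на роль $\eta$ --- это $\mathbb{E}(\xi\mid\mathcal{I})$; именно его совпадение с пределом средних и надо установить. Также сразу отметим, что из сохранения меры следует $\mathbb{E}\xi(T^k\cdot)=\mathbb{E}\xi$ для всех $k$, откуда $\mathbb{E}\bigl(\frac{1}{n}\sum_{k=0}^{n-1}X_k\bigr)=\mathbb{E}\xi$; если удастся дополнительно обосновать равномерную интегрируемость средних (что несложно при $\mathbb{E}|\xi|<\infty$ по стандартному усечению $\xi=\xi\mathsf{I}(|\xi|\le N)+\xi\mathsf{I}(|\xi|>N)$), то из сходимости п.н. автоматически получится $\mathbb{E}\eta=\mathbb{E}\xi$ и $\mathbb{E}|\eta|<\infty$.

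Ядро доказательства --- это \emph{максимальная эргодическая теорема} (лемма о максимуме). Обозначим $S_n=\sum_{k=0}^{n-1}\xi(T^k\omega)$, $M_n=\max(0,S_1,\dots,S_n)$. Утверждается, что $\int_{\{M_n>0\}}\xi\,d\mu\ge 0$ для всех $n$. Доказательство этой леммы короткое: для любого $1\le j\le n$ на множестве $\{M_n>0\}$ выполнено $M_n(T\omega)\ge S_j(T\omega)$, откуда $\xi(\omega)+M_n(T\omega)\ge \xi(\omega)+S_j(T\omega)=S_{j+1}(\omega)$, а также $\xi(\omega)\ge S_1(\omega)-M_n(T\omega)$ тривиально; комбинируя, получаем $\xi(\omega)\ge S_j(\omega)-M_n(T\omega)$ для $1\le j\le n$ на $\{M_n>0\}$, то есть $\xi(\omega)\ge M_n(\omega)-M_n(T\omega)$; интегрируя по $\{M_n>0\}$ и пользуясь $M_n\ge M_{n-1}$ и сохранением меры (так что $\int M_n(T\omega)\,d\mu=\int M_n\,d\mu$), приходим к неравенству. Из этой леммы, применённой к случайной величине $(\xi-a)\mathsf{I}_B$ при $B\in\mathcal{I}$ и $a\in\mathbb{R}$, стандартным образом выводится, что множества $\{\limsup_n \frac{S_n}{n}>a\}$ имеют нужные свойства: положив $a>b$ и рассмотрев $B=\{\limsup\frac{S_n}{n}>a,\ \liminf\frac{S_n}{n}<b\}\in\mathcal{I}$, получаем через лемму два неравенства $\int_B\xi\,d\mu\ge a\,\mu(B)$ и $-\int_B\xi\,d\mu\ge -b\,\mu(B)$, что при $a>b$ возможно лишь если $\mu(B)=0$. Перебирая рациональные $a>b$, заключаем, что $\limsup\frac{S_n}{n}=\liminf\frac{S_n}{n}$ п.н., то есть предел $\eta(\omega)$ существует $\mu$-п.н.

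Остаётся идентифицировать предел и разобрать эргодический случай. Пользуясь тем, что $\eta$ инвариантна, то есть $\mathcal{I}$-измерима, для любого $B\in\mathcal{I}$ имеем $\int_B \frac{S_n}{n}\,d\mu=\frac{1}{n}\sum_{k=0}^{n-1}\int_B\xi(T^k\omega)\,d\mu=\int_B\xi\,d\mu$ (снова сохранение меры и $T^{-1}B=B$); переходя к пределу при $n\to\infty$ (здесь нужна равномерная интегрируемость, обеспеченная усечением), получаем $\int_B\eta\,d\mu=\int_B\xi\,d\mu$ для всех $B\in\mathcal{I}$, то есть $\eta=\mathbb{E}(\xi\mid\mathcal{I})$ $\mu$-п.н.; в частности $\mathbb{E}\eta=\mathbb{E}\xi$. Наконец, если система эргодична, то $\mathcal{I}$ состоит только из множеств меры $0$ и $1$, значит любая $\mathcal{I}$-измеримая интегрируемая случайная величина п.н. постоянна, и эта константа равна её математическому ожиданию, то есть $\eta=\mathbb{E}\xi$ п.н., что завершает доказательство. Главное препятствие, как уже сказано, --- это максимальная эргодическая лемма и аккуратный вывод из неё существования предела; всё остальное --- манипуляции с сохранением меры, условным математическим ожиданием и усечением.
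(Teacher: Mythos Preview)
The paper does not actually prove this theorem: it simply refers the reader to \cite[Теорема 1.3]{BillingsleyErgod}, \cite[гл.~16, \S~3]{Borovkov1999}, and \cite[гл.~V, \S~3]{Shiryaev2}. Your sketch follows precisely the standard route taken in those references (the maximal ergodic lemma of Hopf, then the $\limsup/\liminf$ argument over invariant sets $B_{a,b}$, then identification of the limit as $\mathbb{E}(\xi\mid\mathcal{I})$), so there is nothing to contrast. The outline is correct; the one place that deserves a bit more care than you give it is the passage from the maximal inequality to $\int_B \xi\,d\mu \ge a\,\mu(B)$, where you need that on the invariant set $B=\{\limsup S_n/n>a,\ \liminf S_n/n<b\}$ the maximal function for $\xi-a$ (restricted to $B$) is eventually positive everywhere on $B$, so that the set $\{M_n>0\}$ exhausts $B$ --- but this is routine once stated.
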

Другими словами, случайная величина $\eta$ есть условное математическое ожидание случайной величины $\xi$ относительно сигма-алгебры, порожденной инвариантными множествами. Если $T$ эргодично, то такая сигма-алгебра тривиальна и тогда $\eta$ равно константе, равной безусловному математическому ожиданию $\xi$.}

\elena{Вместо доказательства теоремы \ref{th: Bir-Hin} читатель может легко убедиться в более слабом утверждении.
\begin{example}
Если динамическая система обладает свойством перемешивания, то для $\xi(\omega) = I\{\omega\in A\}$, где $A\in\F$
$$
\frac{1}{n}\sum_{k=0}^{n-1} I \{ T^k\omega\in A\}
$$
сходится по вероятности при $n\to\infty$ к $\mathbb P (A)$.
Схема доказательства:  свойство перемешивания динамической системы эквивалентно свойству слабой зависимости для рассматриваемого процесса, откуда следует асимптотическая некоррелированность сечений этого процесса. Тогда дисперсия $\frac{1}{n}\sum_{k=0}^{n-1} I \{ T^k\omega\in A\}$ стремится к нулю при $n\to\infty$. Применяя неравенство Чебышева получаем нужное утверждение. \EndEx
\end{example}
\begin{example}[ Следствие теоремы \ref{th: Bir-Hin}]
Если динамическая система эргодична, то для $\xi(\omega) = I\{\omega\in A\}$, где $A\in\F$
$$
\lim\limits_{n\to\infty}\frac{1}{n}\sum_{k=0}^{n-1} I \{ T^k\omega\in A\} = \mathbb P (A) \text{$\mu$-п.н.~\EndEx}
$$
\end{example}
}

\elena{
Доказательство следующей теоремы можно посмотреть в \cite[Теорема 2.1]{BillingsleyErgod}
\begin{theorem}[ (фон Нейман)] 
\label{Newmann}
\textit{Пусть $\xi\in L_2(\Omega,\F,\mu)$, тогда
существует  $\eta\in L_2(\Omega,\F,\mu)$, 
$$
 \frac{1}{n}\sum_{k=0}^{n-1}\xi(T^k\omega) \xrightarrow[N\to\infty]{L_2} \eta(\omega).
$$
При этом $ \mathbb E \eta = \mathbb E \xi $ и $\eta$ -- инвариантная случайная величина, то есть $\mu-$п.н. выполнено 
$$
\eta(T \omega) = \eta(\omega).
$$
Если к тому же динамическая система эргодична, то $\eta = \mathbb E \xi$.
}
\end{theorem}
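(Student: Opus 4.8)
Теорема фон Неймана (эргодическая теорема в $L_2$) -- это, по сути, утверждение об асимптотическом поведении средних Чезаро для степеней изометрического оператора в гильбертовом пространстве. Основной план -- перейти на язык операторов. Определим оператор сдвига $U$ на $L_2(\Omega,\F,\mu)$, положив $(U\zeta)(\omega)=\zeta(T\omega)$. Так как $T$ сохраняет меру $\mu$, оператор $U$ является изометрией: $\|U\zeta\|_2=\|\zeta\|_2$ для любого $\zeta\in L_2$ (это проверяется заменой переменных $\int|\zeta(T\omega)|^2\,d\mu=\int|\zeta|^2\,d\mu$). Требуется показать, что средние $A_n\xi=\frac1n\sum_{k=0}^{n-1}U^k\xi$ сходятся в $L_2$ к некоторому пределу $\eta$, и идентифицировать $\eta$ как проекцию $\xi$ на подпространство инвариантных элементов.

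Ключевые шаги следующие. Во-первых, рассмотрю замкнутое подпространство $\mathcal I=\{\zeta\in L_2: U\zeta=\zeta\}$ неподвижных точек оператора $U$ и его ортогональное дополнение. Центральное алгебраическое наблюдение: замыкание линейной оболочки множества $\{U\eta-\eta:\eta\in L_2\}$ совпадает с $\mathcal I^\perp$. Одно включение: если $\zeta=U\eta-\eta$, то для любого инвариантного $\chi$ имеем $\langle\zeta,\chi\rangle=\langle U\eta,\chi\rangle-\langle\eta,\chi\rangle=\langle\eta,U^*\chi\rangle-\langle\eta,\chi\rangle$; поскольку $U$ изометрия, $U^*\chi=\chi$ на $\mathcal I$ (для изометрии $U^*U=\mathrm{Id}$, откуда $U^*\chi=U^*U\chi=\chi$), значит $\langle\zeta,\chi\rangle=0$. Обратное включение: если $\zeta\perp\{U\eta-\eta\}$ для всех $\eta$, то в частности $\langle\zeta,U\zeta-\zeta\rangle=0$, откуда $\|U\zeta-\zeta\|_2^2=\|U\zeta\|_2^2-2\Re\langle U\zeta,\zeta\rangle+\|\zeta\|_2^2=2\|\zeta\|_2^2-2\Re\langle U\zeta,\zeta\rangle=0$ (используя изометричность и равенство $\langle\zeta,U\zeta\rangle=\overline{\langle U\zeta,\zeta\rangle}$ вместе с $\langle\zeta,U\zeta-\zeta\rangle=0$), то есть $U\zeta=\zeta$, так что $\zeta\in\mathcal I$, но раз $\zeta\perp\mathcal I^\perp$ и $\zeta\in\mathcal I$, противоречия нет -- это и значит, что дополнение к $\{U\eta-\eta\}$ есть ровно $\mathcal I$.

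Во-вторых, проверю сходимость $A_n$ на каждом из двух подпространств отдельно и затем соберу всё вместе с помощью плотности. На $\mathcal I$ очевидно $A_n\xi=\xi$ при всех $n$, предел есть $\xi$. На элементах вида $\zeta=U\eta-\eta$ сумма телескопируется: $A_n\zeta=\frac1n(U^n\eta-\eta)$, и $\|A_n\zeta\|_2\le\frac1n(\|U^n\eta\|_2+\|\eta\|_2)=\frac{2\|\eta\|_2}{n}\to0$; по линейности то же верно для конечных линейных комбинаций. Для произвольного $\zeta\in\mathcal I^\perp$ используется $\eps/3$-аргумент: приблизим $\zeta$ элементом $\zeta'$ из линейной оболочки $\{U\eta-\eta\}$ с $\|\zeta-\zeta'\|_2<\eps$, тогда $\|A_n\zeta\|_2\le\|A_n(\zeta-\zeta')\|_2+\|A_n\zeta'\|_2\le\|\zeta-\zeta'\|_2+\|A_n\zeta'\|_2<\eps+\|A_n\zeta'\|_2$, где я воспользовался тем, что каждый $A_n$ сам является сжатием ($\|A_n\|\le1$ по неравенству треугольника и изометричности $U^k$). Устремляя $n\to\infty$, получаю $\limsup_n\|A_n\zeta\|_2\le\eps$, то есть $A_n\zeta\to0$. Разложив $\xi=\xi_{\mathcal I}+\xi_\perp$ по ортогональному разложению $L_2=\mathcal I\oplus\mathcal I^\perp$, получаю $A_n\xi\to\xi_{\mathcal I}=:\eta$, и $\eta$ инвариантна как элемент $\mathcal I$, то есть $\eta(T\omega)=\eta(\omega)$ $\mu$-п.н.

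В-третьих, остаётся идентификация предела. Равенство $\Exp\eta=\Exp\xi$ следует из непрерывности функционала $\zeta\mapsto\Exp\zeta=\langle\zeta,\mathbbm 1\rangle$ на $L_2$ (функция-константа $\mathbbm 1$ принадлежит $L_2$, так как $\mu$ вероятностная) и из того, что $\Exp(A_n\xi)=\frac1n\sum_{k=0}^{n-1}\Exp(U^k\xi)=\Exp\xi$ при всех $n$ (каждое $\Exp(U^k\xi)=\Exp\xi$ в силу сохранения меры), а $L_2$-сходимость влечёт сходимость $\Exp$. В эргодическом случае $\mathcal I$ состоит только из констант: если $U\zeta=\zeta$, то множества уровня $\{\zeta<c\}$ инвариантны относительно $T^{-1}$ с точностью до множеств меры нуль, значит имеют меру $0$ или $1$, откуда $\zeta=\const$ $\mu$-п.н.; тогда $\eta=\Exp\xi$. Основным техническим препятствием я считаю аккуратное обоснование равенства $\overline{\Lin\{U\eta-\eta\}}=\mathcal I^\perp$ и особенно корректное использование свойства $U^*|_{\mathcal I}=\mathrm{Id}$ для изометрии (а не унитарного оператора) -- здесь важно не перепутать $U$ и $U^*$ и аккуратно провести выкладку с нормой $\|U\zeta-\zeta\|_2^2$; остальное -- стандартная гильбертова техника с приближением и $\eps/3$.
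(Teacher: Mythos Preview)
Your proof is correct and follows the classical Riesz approach: the Koopman operator $U\zeta=\zeta\circ T$ is an isometry, $L_2$ decomposes as $\mathcal I\oplus\overline{\operatorname{Lin}\{U\eta-\eta\}}$, Ces\`aro means are constant on $\mathcal I$ and telescope to zero on coboundaries, and a density plus $\|A_n\|\le 1$ argument finishes. The paper does not give its own proof of this theorem; it only refers the reader to \cite[Theorem~2.1]{BillingsleyErgod}, and Billingsley's argument there is exactly this Riesz decomposition, so your approach matches the intended one. One minor remark: your phrasing around ``противоречия нет'' is a bit misleading, since no contradiction argument is happening --- you have shown $\overline{\operatorname{Lin}\{U\eta-\eta\}}\subset\mathcal I^\perp$ and $(\overline{\operatorname{Lin}\{U\eta-\eta\}})^\perp\subset\mathcal I$, which together give equality by taking orthogonal complements; the logic is fine, just the wording could be cleaner.
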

}

\elena{
\textbf{Замечание 1.}  Интересно еще раз отметить, что в сравнении с классическим усиленным законом больших чисел для независимых случайных величин, в случае стационарной последовательности предел   в общем случае является случайной величиной (ср. с законом больших чисел для стационарных в широком смысле последовательностей). Случай, когда предел является константой является эргодическим. }

\elena{Рассмотрим такой
\begin{example}
Пусть $\Omega = \{\omega_1,\omega_2,\ldots,\omega_d\}$, $d=2l$, мера $\mu$ -- равномерная, 
$T\omega_i = \omega_{(i+2)\mod d}$. Несложно проверить, что такая динамическая система не является эргодичной, так как множество $A=\{1,3,5,2l-1 \}$ будет инвариантным относительно $T$: $A = T^{-1}A$, при этом $\mu(A) = 1/2$. Для любой $\xi$ пределом суммы $ \frac{1}{n}\sum_{k=0}^{n-1}\xi(T^k\omega) $ будет случайная величина $\eta$ равновероятно принимающая два значения:
$ \frac{1}{l}\sum_{j=0}^{l-1}\xi(2j+1) $, если $\omega$ нечетное, и $ \frac{1}{l}\sum_{j=1}^{l}\xi(2j) $, если $\omega$ четное.~\EndEx
\end{example}
}

\elena{Еще раз резюмируем написанное выше. Следующие условия являются эквивалентными:
\begin{itemize}
    \item Динамическая система является эргодической.
    \item Динамическая система обладает свойством перемешивания в среднем.
    \item Для любого $A\in\F$ такого, что $\mu(A) > 0$, выполняется \linebreak
$
\mu \left( \bigcup_{n=0}^\infty T^{-n}A \right)
=1
$.
\item Любая последовательность, порожденная рассматриваемой динамической системой, обладает свойством слабой зависимости в среднем.
\item Любая последовательность, порожденная рассматриваемой динамической системой, с дополнительным условием $\mathbb E \xi^2 < \infty$ имеет непрерывную в нуле спектральную меру, а корреляционная функция имеет нулевой предел (в смысле Чезаро): $\frac{1}{n }\sum_{k=0}^{n-1} R\ag{_X}(k) \to 0$, $n\to\infty$ (см. раздел 4.5).
\end{itemize}
}

\elena{Далее мы будем вместо $\xi$ использовать  $f$, как правило предполагая, что функция $f \in L_2(\Omega,\F, \mu)$.}

Теперь рассмотрим один содержательный пример, который нам потребуется далее в этой главе. Пусть ${M = (\Omega, \F, \mu, T)}$~--- эргодическая динамическая система. Рассмотрим функцию $f \in L_2(\Omega,\F, \mu)$ и случайную последовательность $\{f(T^k\omega)\}_{k=0}^{\infty}$.

Заметим, что математическое ожидание величины $f(T^k \omega)$, $\omega\in\Omega$ не зависит от $k$ и совпадает с математическим ожиданием величины $f(T^0\omega)=f(\omega)$, т.е.
$$
\int\limits_{\Omega}f(T^k\omega)\,d\mu(\omega)=\int\limits_{\Omega}f(\omega)\,d\mu(\omega)=m.
$$
Кроме того, по теореме фон Неймана
$$
\frac{1}{N}\sum_{k=1}^{N} f(T^k\omega) \xrightarrow[N\to\infty]{L_2} m=\int\limits_{\Omega}f(\omega')\,d\mu(\omega').
$$
Итак, мы выяснили, что случайная последовательность $\{f(T^k\omega)\}$ имеет постоянное математическое ожидание, а ее среднее по времени сходится к среднему по пространству (т.е. по мере). Напомним, что аналогичным образом было дано определение эргодичности по математическому ожиданию случайного процесса с непрерывным временем (см. определение \ref{def:ErgodicDefin}). 

Приведем теперь ряд задач, которые могут быть решены с помощью изложенных выше фактов теории эргодических динамических систем.

\begin{example}[ (псевдослучайная последовательность)]\label{ex:MC}
Вернемся к динамической системе из примера \ref{ex:Ex8.1}. Введем случайную последовательность ${X_k(\omega)=T^k\omega}$, ${\omega\in\Omega=S^1}$, ${k\ge1}$\gav{, $\mu$ -- равномерная мера (Лебега) на $S^1$, что можно понимать, как $\omega \in \mathrm{R}[0,1]$}. Покажем, что для каждого $k$ случайная величина $X_k$ имеет равномерное на отрезке $[0,1]$ распределение. Рассмотрим множество ${B=\{\omega:\omega<x\}}$, ${x\in(0,1)}$. Тогда ${T^{-k}B=\{T^{-k}\omega:\omega<x\}=\{\omega':T^k\omega'<x\}}$. Но так как $\mu(T^{-k}B)=$\linebreak $=\mu(B)$, то ${\mu(\{\omega:X_k(\omega)<x\})=\mu(\{\omega:\omega<x\})=x}$, т.е. функция распределения случайной величины $X_k$ равна ${F_{X_k}(x)=x}$ на интервале $(0,1)$. Очевидно, что ${F_{X_k}(x)=0}$ при $x\le0$ и ${F_{X_k}(x)=1}$ при $x\ge1$. Следовательно, $X_k$ имеет равномерное распределение на отрезке $[0,1]$. \gav{Более того, аналогичным образом можно показать, что случайный процесс $\{X_k\}_{k\ge 1}$ -- стационарный в узком смысле. При этом случайные величины $\{X_k\}_{k\ge 1}$ не будут независимыми!} \EndEx
\end{example}

\begin{example}[ (метод Монте-Карло)]
Пусть дана интегрируемая на отрезке $[0,1]$ функция ${f=f(x)}$ и требуется вычислить интеграл $\int_{0}^{1}f(y)dy$. Рассмотрим последовательность независимых одинаково распределенных случайных величин \gav{$\{X_k\}_{k\ge1}$} из распределения $\mathrm{R}(0,1)$. Тогда последовательность $\{f(X_k)\}$ также будет состоять из независимых одинаково распределенных случайных величин и, в силу усиленного закона больших чисел,
\begin{equation}\label{Pseudo}
\frac{1}{N}\sum_{k=1}^{N}f(X_k) \xrightarrow[n\to\infty]{\text{п.н.}} \Exp f(X_1)  = \int\limits_{0}^{1}f(y)\,dy.
\end{equation}
Это значит, что среднее арифметическое чисел $f(X_k)$ может служить оценкой для искомого интеграла и чем выше $N$, тем ближе (на множестве исходов единичной меры) будет значение оценки к истинному значению интеграла.

\gav{Получить на практике по настоящему независимые одинаково распределенные на отрезке $[0,1]$ случайные величины, в свою очередь, представляет собой сложную задачу. Поэтому, как правило, используют <<псевдослучайные последовательности>>. В качестве такой последовательности можно взять стационарную последовательность $\{X_k\}$ из примера \ref{ex:MC}. Согласно эргодической теореме Биркгофа--Хинчина для почти всех точек старта $\omega \in [0,1]$ будет иметь место формула \eqref{Pseudo}, в которой сходимость почти наверное по равномерной мере $\mu$ как раз и понимается с точки зрения выбора $\omega \in [0,1]$. Впрочем, для сдвига окружности эти оговорки несущественны, см. следующий пример.}
\EndEx
\end{example}


\begin{example}[ (задача Вейля)]
Рассмотрим последовательность степеней двойки $2^k, k\in \Nbb$. Зададимся вопросом: как часто встречается цифра $m\in\{1,\dots,9\}$ в качестве первой цифры степени двойки? 

Пусть $a_k$ -- первая цифра числа $2^k$, тогда найдется целое $r$, такое, что $2^k=a_k\cdot 10^r + b_k$, где $b_k<10^r$. Отсюда следует, что $$\log_{10} 2^k=\log_{10}a_k+r+\log_{10}\left(1+\frac{b_k}{a_k 10^r}\right),$$ и, следовательно, $$\{\log_{10}2^k\}=\left\{\log_{10}a_k+\log_{10}\left(1+\frac{b_k}{a_k10^r}\right)\right\},$$ где фигурные скобки обозначают дробную часть числа. Заметим, что выражение, которое стоит справа под фигурными скобками, не превышает единицы, поэтому фигурные скобки можно убрать:
$$\{\log_{10}2^k\}=\log_{10}a_k+\log_{10}\left(1+\frac{b_k}{a_k10^r}\right).$$

Пусть $a_k=m$, тогда $\log_{10}a_k=\log_{10}m$, а $$\log_{10}\left(1+\frac{b_k}{a_k10^r}\right)\in\left[ 0, \log_{10}\left(1+\frac{1}{m}\right) \right),$$ т.к. $b_k$ лежит в интервале $[0,10^r)$. Это значит, что если ${a_k=m}$, то ${\{\log_{10}2^k\}\in[\log_{10}m,\log_{10}(m+1))}$. Нетрудно установить и обратное: из ${\{\log_{10}2^k\}\in[\log_{10}m,\log_{10}(m+1))}$ следует ${a_k=m}$.

Пусть $\Omega = S^1$~--- окружность единичной длины на плоскости; как и прежде, будем пользоваться ее гомеоморфностью отрезку $[0,1]$, концы которого отождествлены. Пусть ${\F = \B}$~--- борелевская $\sigma$-алгебра на $S^1$, $\mu$~--- классическая мера Лебега на $S^1$, ${T(x) = \{\log_{10}2+x\}}$. Как следует из теоремы \ref{th:CircErgodic}, $M=(\Omega,\F,\mu,T)$ является эргодической динамической системой. Введем квадратично интегрируемую функцию 
$$f(x)=\left\{ {\begin{array}{*{20}{c}}
  {1,}&{x\in[\log_{10}m,\log_{10}(m+1)),} \\ 
  {0,}&{x\notin [\log_{10}m,\log_{10}(m+1)).} 
\end{array}} \right.$$
Тогда согласно теореме Биркгофа\gav{--}Хинчина: $$
\frac{1}{N}\sum_{k=1}^{N}f(T^kx) \xrightarrow[N\to\infty]{\mu\text{-п.н.}} \int\limits_{\Omega}f(\omega)\,d\mu(\omega)=\log_{10}\left(1+\frac{1}{m}\right).$$ Заметим, что теорема устанавливает лишь сходимость почти наверное. Из нее не следует сходимость, например, при $x=0$. Однако для сдвигов по окружности верен и более сильный результат:
$$
\frac{1}{N}\sum_{k=1}^{N}f(T^kx) \xrightarrow[N\to\infty]{\text{всюду}} \int\limits_{\Omega}f(\omega)\,d\mu(\omega)=\log_{10}\left(1+\frac{1}{m}\right).$$
Этот факт, как и его доказательство, можно найти в книге \cite{SinaiLections1996} (см.~лек\-цию~3). Значит, сходимость имеет место в том числе и для ${x=0}$. Но~$T^k0=\{\log_{10}2^k\}$, поэтому сумма $\sum_{k=1}^{N}f(T^kx)$ представляет собой не что иное, как количество степеней двойки, начинающихся с цифры $m$. Получается, что в пределе доля единиц среди первых цифр будет равна $\log_{10}2$, доля двоек будет равна $\log_{10}(3/2)$, доля троек -- $\log_{10}(4/3)$, и т.д. \EndEx
\end{example}


\begin{example}[ (Гаусс--Гильден--Виман--Кузьмин)]
Рассмотрим цепную дробь для числа $\omega\in[0,1)$:
$$
\omega = \cfrac{1}{a_1(\omega)+\cfrac{1}{a_2(\omega)+\cfrac{1}{a_3(\omega)+\ldots}}},
$$
где $(a_1(\omega),a_2(\omega),\ldots)$ будем называть \textit{цепным разложением} числа $\omega$, а $a_k(\omega)$~-- \textit{$k$-м основанием числа $\omega$}. Известно, что если $\omega$~-- рациональное число, то его цепное разложение конечно. Если $\omega$~-- квадратичная иррациональность, т.е. ${\omega = \sqrt{\alpha}}$ для рационального $\alpha$, то цепное разложение числа $\omega$ периодично. Оказывается, что для любого заданного натурального числа $m$ для почти всех (по мере Лебега на $[0,1]$) чисел на $[0,1)$ частота встречаемости $m$ в цепном разложении будет одной и той же.

Для того чтобы это показать, рассмотрим динамическую систему ${M=(\Omega,\B,\mu,T)}$, где ${\Omega=[0,1)}$, $\B$ -- борелевская $\sigma$-алгебра на $[0,1)$, мера $\mu$ определяется как $$d\mu(x)=\frac{1}{\ln 2}\frac{dx}{1+x},$$ а преобразование $T(\omega)=\{1/\omega\}$, $\omega\in\Omega$.

Мы опускаем доказательство того, что эта система является динамической, как и то, что она является эргодической.

Теперь заметим, что отображение $T$ удаляет первый элемент цепного разложения, т.е. если
$$
\omega = \cfrac{1}{a_1(\omega)+\cfrac{1}{a_2(\omega)+\cfrac{1}{a_3(\omega)+\ldots}}},
$$
то
$$
T(\omega) = \cfrac{1}{a_2(\omega)+\cfrac{1}{a_3(\omega)+\cfrac{1}{a_4(\omega)+\ldots}}}.
$$
Кроме того, легко видеть, что $$a_1(\omega) = m \Leftrightarrow \omega\in\left(\frac{1}{m+1},\frac{1}{m}\right].$$ Это значит, что $$a_k(\omega)=m \Leftrightarrow T^k\omega \in \left(\frac{1}{m+1},\frac{1}{m}\right].$$ Остается рассмотреть квадратично интегрируемую функцию: $$f(x)=\left\{ {\begin{array}{*{20}{c}}
  {1,}&{x\in(1/(m+1),1/m],} \\ 
  {0,}&{x\notin(1/(m+1),1/m],} 
\end{array}} \right.$$ и тогда по теореме Биркгофа--Хинчина
$$
	\frac{1}{N}\sum_{k=1}^{N}f(T^k\omega) \xrightarrow[n\to\infty]{\mu-\text{п.н.}} \int\limits_{0}^{1}f(x)\,d\mu(x) =
	$$
	$$
	= \int\limits_{\frac{1}{m+1}}^{\frac{1}{m}}\frac{1}{\ln2}\cdot\frac{dx}{1+x} = \log_2\left(1+ \frac{1}{m(m+2)}\right).
	$$
	Это и значит, что для почти всех $\omega\in[0,1)$ частота встречаемости числа $m\in\Nbb$ в цепном разложении $\omega$ не зависит от $\omega$ и равна
	$$
	\log_2\left(1+ \frac{1}{m(m+2)}\right). \text{ \EndEx}
	$$
\end{example}

\section{Дискретные цепи Маркова}
\label{sec:DiscreteChains}
\setlength{\parskip}{0pt}
\subsection{Базовые определения и примеры}

Рассмотрим\,случайную\,последовательность\,$\{\xi_k\}\!=\!\{\xi_0,\xi_1,\dots\}$,\,ком\-поненты которой могут принимать лишь конечное или счетное множество значений, например: $$E=\{0,1,2,\dots\}, \ |E|\le\infty.$$ 
Такие случайные последовательности будем называть \textit{дискретными цепями} (\textit{ДЦ}), множество $E$ -- \textit{множеством состояний}, а его элементы -- \textit{состояниями} (дискретными, так как время дискретно; цепями, так как множество состояний тоже дискретно). Если множество состояний конечно, т.е. ${|E|<\infty}$, то цепь называют \textit{конечной}. Термин \textit{цепь} обосновывает дискретность множества состояний, а прилагательное \textit{дискретная} цепь относится ко времени (время тоже принимает дискретные значения).

\begin{definition}\label{def:DiscrMarkChain}
Дискретная цепь $\{\xi_k\}$ называется \textit{дискретной цепью Маркова} или \textit{дискретная марковская цепь} (\textit{ДМЦ}), если равенство
\begin{equation}\label{eq:MarkovPropertyChain}
    \mathbb{P}(\xi_n=x_n \,|\, \xi_{n-1}=x_{n-1},\dots,\xi_0=x_0) = \mathbb{P}(\xi_n=x_{n}\,|\,\xi_{n-1}=x_{n-1})
\end{equation}
выполнено для любых ${n\ge1}$ и всех состояний $x_0,\dots,x_n\in E$, для которых указанные условные вероятности существуют.

Свойство~\eqref{eq:MarkovPropertyChain} также называют \textit{марковским свойством}. Случайная величина $\xi_n$ характеризует состояние цепи в момент времени ${n\ge0}$, поэтому событие $\{\xi_n=j\}$ читается как <<на шаге $n$ цепь находится в состоянии $j\in E$>>.
\end{definition}

\begin{definition}
    Число ${p_{k,j}(n)=\mathbb{P}(\xi_n=j \,|\, \xi_{n-1}=k)}$ называется \textit{вероятностью перехода} из состояния $k\in E$ в состояние $j\in E$ за один шаг в момент $n\ge1$. Числа $p_{k,j}(n)$ образуют \textit{переходную матрицу} $$P(n)=\left[ {\begin{array}{*{20}{c}}
	{{p_{0,0}(n)}}&{{p_{0,1}(n)}}& \cdots &{{p_{0,N}(n)}} \\ 
	{{p_{1,0}(n)}}&{}&{}&{} \\ 
	\vdots &{}& \ddots &{} \\ 
	{{p_{N,0}(n)}}&{}&{}&{{p_{N,N}(n)}} 
	\end{array}} \right],$$ где ${N=|E|-1\le\infty}$. Сумма компонент произвольной переходной матрицы в любой строке равна единице, т.е. $$\forall k\in E \ \ \sum\limits_{j=0}^N p_{k,j}=1.$$
\end{definition}

\begin{definition}
    Если $P(n)$ не зависит от $n$, то соответствующую цепь называют \textit{однородной}. Для краткости будем писать ОДМЦ.
\end{definition}


\begin{definition}
    Вероятность ${p_k(n)=\mathbb{P}(\xi_n=k)}$, ${k\in E}$, называется \textit{вероятностью состояния} $k$ в момент ${n\ge 0}$. Вектор $$p(n)=[p_0(n), p_1(n),\dots]^\top$$ называют \textit{распределением вероятностей состояний} в момент $n\ge0$.
\end{definition}

Следующие две теоремы являются прямыми следствиями формулы полной вероятности и марковского свойства цепи.

\begin{theorem}\label{thm:kom_capm_1}
\textit{Распределение вероятностей состояний $p(n)$ связано с распределением вероятностей состояний $p(n-1)$ соотношением $$p(n)=P^\top(n) p(n-1),$$ где $P(n)$ -- матрица перехода на шаге $n\ge1$. Если цепь однородная, то }$$p(n)=\left(P^\top\right)^n p(0).$$
\end{theorem}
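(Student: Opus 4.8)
Утверждение теоремы~\ref{thm:kom_capm_1} -- это прямое следствие формулы полной вероятности и марковского свойства~\eqref{eq:MarkovPropertyChain}. Первым делом я зафиксирую момент времени $n\ge1$ и произвольное состояние $j\in E$ и распишу вероятность $p_j(n)=\mathbb{P}(\xi_n=j)$, разбивая пространство исходов по значению $\xi_{n-1}$: в силу того, что события $\{\xi_{n-1}=k\}$, $k\in E$, образуют полную группу, получаем
$$p_j(n)=\mathbb{P}(\xi_n=j)=\sum\limits_{k\in E}\mathbb{P}(\xi_n=j\,|\,\xi_{n-1}=k)\,\mathbb{P}(\xi_{n-1}=k)=\sum\limits_{k\in E}p_{k,j}(n)\,p_k(n-1),$$
где суммирование по тем $k$, для которых $p_k(n-1)>0$ (для остальных слагаемые естественно полагаем равными нулю). Здесь уже использовано определение вероятности перехода $p_{k,j}(n)$; марковское свойство в явном виде тут ещё не нужно, достаточно формулы полной вероятности, но оно понадобилось бы, если бы в определении $p_{k,j}(n)$ стояла условная вероятность относительно всей предыстории.

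Далее я замечу, что полученное равенство $p_j(n)=\sum_{k\in E}p_{k,j}(n)\,p_k(n-1)$ -- это в точности $j$-я компонента произведения матрицы $P^\top(n)$ на вектор-столбец $p(n-1)$: действительно, элемент матрицы $P^\top(n)$ в позиции $(j,k)$ равен $p_{k,j}(n)$, так что $\bigl(P^\top(n)\,p(n-1)\bigr)_j=\sum_{k\in E}p_{k,j}(n)\,p_k(n-1)$. Поскольку это верно для каждого $j\in E$, отсюда следует векторное равенство $p(n)=P^\top(n)\,p(n-1)$.

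Наконец, для однородного случая, когда $P(n)\equiv P$ не зависит от $n$, остаётся применить полученное рекуррентное соотношение по индукции: $p(1)=P^\top p(0)$, $p(2)=P^\top p(1)=(P^\top)^2 p(0)$, и вообще $p(n)=P^\top p(n-1)=(P^\top)^n p(0)$; формально индукционный переход от $n-1$ к $n$ даёт $p(n)=P^\top p(n-1)=P^\top(P^\top)^{n-1}p(0)=(P^\top)^n p(0)$. Единственная тонкость, которую стоит оговорить, -- корректность перестановки суммы и (в счётном случае) законность всех манипуляций с рядами: все слагаемые неотрицательны, ряды сходятся (суммы вероятностей не превосходят единицы), поэтому формула полной вероятности и перегруппировка применимы без оговорок. Никакого серьёзного препятствия здесь нет -- основная «работа» сводится к аккуратной записи формулы полной вероятности и распознаванию её как матрично-векторного произведения.
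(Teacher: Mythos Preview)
Your proof is correct and follows essentially the same approach as the paper: apply the formula of total probability to $p_j(n)$ conditioning on $\xi_{n-1}$, recognize the resulting sum as the $j$-th component of $P^\top(n)\,p(n-1)$, and then iterate in the homogeneous case. Your additional remarks about the Markov property and about convergence in the countable case are fine but inessential here.
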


\begin{proof}
    Выразим вероятность $p_i(n)$ оказаться в состоянии $i$ на $n$-м шаге через $p(n-1)$ и переходную матрицу в момент времени $n$. По формуле полной вероятности
    \[
        p_i(n) = \sum\limits_{j\in E} \PP(\xi_n = i\,\mid\, \xi_{n-1}=j)\PP(\xi_{n-1}=j) = \sum\limits_{j\in E} p_{j,i}(n) p_j(n-1).
    \]
    Если записать равенство выше в матричной форме, то получим
    \[
        p(n) = P^\top(n) p(n-1),
    \]
    что и требовалось доказать. В случае, когда марковская цепь однородна, имеем $P(n) = P$ и $p(n) = P^\top p(n-1) = \left(P^\top\right)^2 p(n-2) = \ldots =$\linebreak $= \left(P^\top\right)^n p(0)$. \EndProof
\end{proof}

Введем следующее обозначение.
Вероятность перехода за $n$ шагов из состояния ${k\in E}$ в состояние ${j\in E}$ для однородной марковской цепи обозначим символом $$p_{k,j}(n)=\mathbb{P}(\xi_n=j \,|\, \xi_0=k), \ n\ge \gav{1},$$ где $E$ -- множество состояний.

\begin{theorem}[(уравнение\,Колмогорова--Ч\gav{э}пмена)]\textit{Для\,ОДМЦ\linebreak} (\textit{однородной дискретной марковской цепи}) \textit{для любых $n,m\ge \gav{1}$ справедливо равенство}
    $$p_{i,j}(n+m) = \sum\limits_{k\in E} p_{i,k}(n) p_{k,j}(m).$$
\end{theorem}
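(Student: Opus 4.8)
Утверждение, которое требуется доказать -- это уравнение Колмогорова--Чэпмена для однородной дискретной марковской цепи:
$$p_{i,j}(n+m) = \sum_{k\in E} p_{i,k}(n)\, p_{k,j}(m), \quad n,m\ge 1.$$
Основная идея -- применить формулу полной вероятности, разбивая событие <<цепь попала из $i$ в $j$ за $n+m$ шагов>> по тому, в каком состоянии $k$ цепь находилась в промежуточный момент времени $n$. Всюду ниже будем предполагать, что условия, выписываемые в условных вероятностях, имеют положительную вероятность; случай нулевых вероятностей можно разобрать отдельно (либо сослаться на то, что равенство в нём тривиально, либо доопределить соответствующие вероятности перехода произвольно, что не влияет на итоговое равенство).

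Первым шагом я бы воспользовался однородностью, чтобы свести всё к вероятностям вида $\mathbb{P}(\xi_{s}=\cdot\mid \xi_0=\cdot)$. По определению $p_{i,j}(n+m)=\mathbb{P}(\xi_{n+m}=j\mid \xi_0=i)$. Далее применяю формулу полной вероятности по значению $\xi_n$:
$$p_{i,j}(n+m) = \sum_{k\in E} \mathbb{P}(\xi_{n+m}=j,\ \xi_n = k\mid \xi_0=i) = \sum_{k\in E} \mathbb{P}(\xi_{n+m}=j\mid \xi_n=k,\ \xi_0=i)\,\mathbb{P}(\xi_n=k\mid \xi_0=i).$$
Здесь второй сомножитель в каждом слагаемом -- это в точности $p_{i,k}(n)$. Остаётся показать, что первый сомножитель равен $p_{k,j}(m)$, то есть что
$$\mathbb{P}(\xi_{n+m}=j\mid \xi_n=k,\ \xi_0=i) = \mathbb{P}(\xi_{n+m}=j\mid \xi_n=k) = p_{k,j}(m).$$

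Этот последний шаг -- ключевой, и именно здесь нужно аккуратно использовать марковское свойство. Строго говоря, соотношение~\eqref{eq:MarkovPropertyChain} из определения~\ref{def:DiscrMarkChain} формулируется только для обуславливания на непосредственно предыдущий момент времени; для вывода <<многошагового>> марковского свойства (независимость будущего от прошлого при фиксированном настоящем, даже когда прошлое и будущее разделены несколькими шагами) нужно провести индуктивное рассуждение. План такой: сначала по индукции с использованием~\eqref{eq:MarkovPropertyChain} и формулы полной вероятности доказать, что для любых $0\le s_1<\dots<s_r<n<t$ выполнено $\mathbb{P}(\xi_t=j\mid \xi_n=k,\xi_{s_r}=x_r,\dots,\xi_{s_1}=x_1)=\mathbb{P}(\xi_t=j\mid\xi_n=k)$; в частном случае $r=1$, $s_1=0$, $t=n+m$ это и есть нужное равенство. Затем однородность даёт $\mathbb{P}(\xi_{n+m}=j\mid\xi_n=k)=\mathbb{P}(\xi_m=j\mid\xi_0=k)=p_{k,j}(m)$. Подставляя это в полученную выше сумму, приходим к требуемому уравнению. Главным препятствием (точнее, главным местом, требующим аккуратности, а не изобретательности) я считаю именно переход от одношагового марковского свойства~\eqref{eq:MarkovPropertyChain} к его многошаговой форме: это стандартная, но не совсем автоматическая индукция по числу <<прошлых>> моментов времени, в которой на каждом шаге применяются формула полной вероятности и~\eqref{eq:MarkovPropertyChain}. Всё остальное -- прямое применение формулы полной вероятности и определений.
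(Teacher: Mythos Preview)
Your proposal is correct and follows essentially the same route as the paper's direct argument: apply the formula of total probability with respect to the state at the intermediate time $n$, then use the Markov property and homogeneity to identify the two factors as $p_{i,k}(n)$ and $p_{k,j}(m)$. The paper additionally remarks that the identity is immediate from the matrix equality $P^{n+m}=P^nP^m$ together with the previous theorem, which gives a one-line alternative; on the other hand, you are more careful than the paper in flagging that the passage from the one-step Markov property~\eqref{eq:MarkovPropertyChain} to $\mathbb{P}(\xi_{n+m}=j\mid\xi_n=k,\xi_0=i)=\mathbb{P}(\xi_{n+m}=j\mid\xi_n=k)$ strictly speaking requires a short inductive argument, which the paper simply attributes to ``the Markov property''.
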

\begin{proof}
    Из теоремы~\ref{thm:kom_capm_1} следует, что $p_{k,j}(n)$~--- это элемент матрицы $P^n$, стоящий в $k$-й строке и $j$-м столбце. Если записать равенство $P^{n+m} = P^{n}P^{m}$ покомпонентно, то получим
    \[
        p_{i,j}(n) = \sum\limits_{k\in E} p_{i,k}(n) p_{k,j}(m).
    \]
    Это же равенство можно доказать через формулу полной вероятности. Действительно, так как любая траектория из $n+m$ шагов из $i$-го состояния в $j$-е через $n$ шагов окажется в некотором состоянии $k$, а потом из этого состояния через $m$ шагов попадет в $j$, то достаточно рассмотреть все возможные состояния $k$ (воспользоваться формулой полной вероятности):
\begin{equation*} 
\begin{split}
    p_{i,j}(n+m) &= \PP(\xi_{n+m} = j \,\mid\, \xi_0 = i)= \\ 
    &= \sum\limits_{k\in E}\PP(\xi_{n+m} = j \,\mid\, \xi_n = k, \xi_0 = i) \PP(\xi_{n} = k \,\mid\, \xi_0 = i) =\\ 
    &= \sum\limits_{k\in E} \underbrace{\PP(\xi_{n+m} = j \,\mid\, \xi_n =k)}_{p_{k,j}(m)} \underbrace{\PP(\xi_{n} = k \,\mid\, \xi_0 = i)}_{p_{i,k}(n)}.
\end{split}
\end{equation*}
    Последнее равенство следует из марковского свойства цепи. \EndProof
\end{proof}

Дискретные цепи удобно представлять в виде \textit{стохастического графа}.
Это ориентированный граф, в вершинах которого расположены состояния цепи, а веса ребер между состояниями равны вероятностям перехода за один шаг между этими состояниями. Рассмотрим, к примеру, стохастический граф

\begin{figure}[!h]
	\centering
	\includegraphics[scale=0.50]{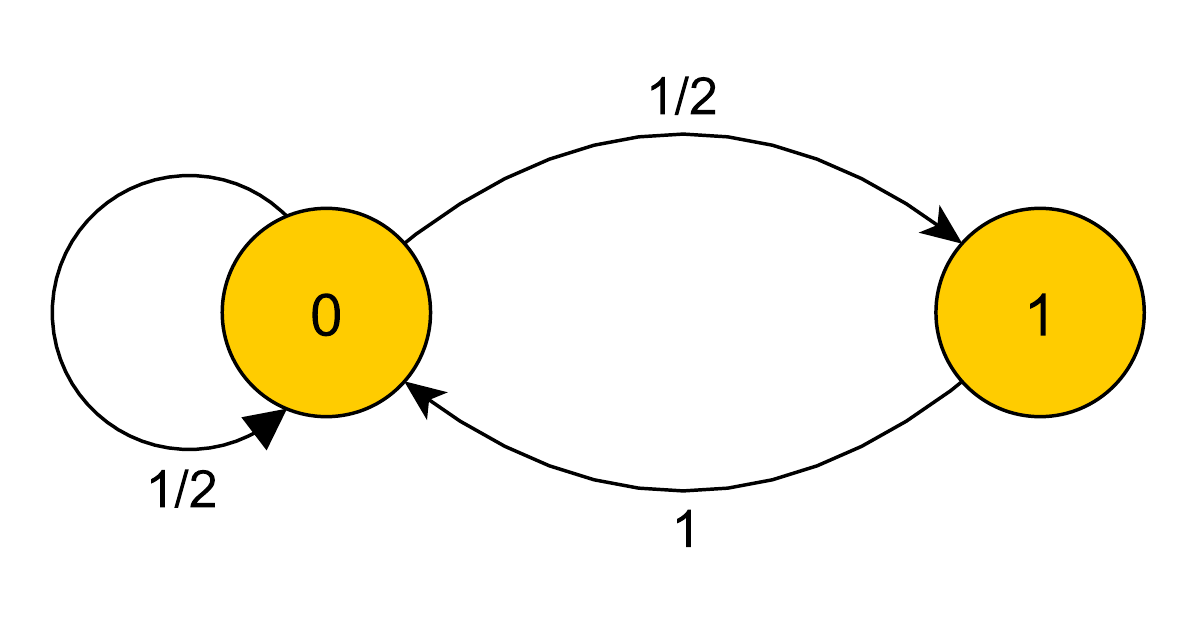}  
	\label{fig:DiscreteChainGraphExample}
\end{figure}

\noindent Ему соответствует матрица переходов $$P=\left[ {\begin{array}{*{20}{c}}
	{1/2}&{1/2} \\
	{1}&{0} \\ 
	\end{array}} \right].$$
Пусть начальное распределение ${p(0)=[0,1]^\top}$. Это значит, что в момент времени ${n=0}$ цепь находится в состоянии $0$ с вероятностью $0$ и в состоянии $1$ с вероятностью $1$. Найдем распределение на следующем шаге: $$p(1)=P^\top p(0)=\left[ {\begin{array}{*{20}{c}}
	{1/2}&{1} \\
	{1/2}&{0} \\ 
	\end{array}} \right]\left[ {\begin{array}{*{20}{c}}
	{0} \\
	{1} \\ 
	\end{array}} \right]=\left[ {\begin{array}{*{20}{c}}
	{1} \\
	{0} \\ 
	\end{array}} \right].$$
Еще через шаг распределение станет таким:
$$p(2)=P^\top p(1)=\left[ {\begin{array}{*{20}{c}}
	{1/2}&{1} \\
	{1/2}&{0} \\ 
	\end{array}} \right]\left[ {\begin{array}{*{20}{c}}
	{1} \\
	{0} \\ 
	\end{array}} \right]=\left[ {\begin{array}{*{20}{c}}
	{1/2} \\
	{1/2} \\ 
	\end{array}} \right].$$
А еще через шаг таким:
$$p(3)=P^\top p(2)=\left[ {\begin{array}{*{20}{c}}
	{1/2}&{1} \\
	{1/2}&{0} \\ 
	\end{array}} \right]\left[ {\begin{array}{*{20}{c}}
	{1/2} \\
	{1/2} \\ 
	\end{array}} \right]=\left[ {\begin{array}{*{20}{c}}
	{3/4} \\
	{1/4} \\ 
	\end{array}} \right].$$
Попробуем теперь вычислить распределение \gav{в} произвольный момент времени, т.е. $\pi(n)$, $n\ge0$. Для этого воспользуемся следующим соображением: если матрица $A$ может быть представлена в виде ${A=SDS^{-1}}$, где $D$ -- диагональная матрица, то ${A^n=SD^nS^{-1}}$. Иногда это удается сделать, решив задачу на собственные числа и векторы матрицы $A$. Так как в нашем случае $$p(n)=\left(P^\top\right)^n p(0),$$ то попробуем найти собственные числа и собственные векторы матрицы $P^\top$: $$\left|  {\begin{array}{*{20}{c}}
	{1/2-\lambda}&{1} \\
	{1/2}&{-\lambda} \\ 
	\end{array}}  \right|=\lambda^2 - \frac{1}{2}\lambda - \frac{1}{2}=0 \Rightarrow \lambda_1=1, \ \lambda_2=-\frac{1}{2}.$$
Стоит отметить, что у  стохастической матрицы единица всегда является собственным значением (см. далее). Еще одно собственное значение можно найти из следа матрицы.

При $\lambda=1$ получаем $$P^\top-\lambda I=\left[ {\begin{array}{*{20}{r}}
	{-1/2}&{1} \\
	{1/2}&{-1} \\ 
	\end{array}} \right],$$
откуда собственный вектор $v_1=[2,1]^\top$. При $\lambda=-1/2$ получаем $$P^\top-\lambda I=\left[ {\begin{array}{*{20}{c}}
	{1}&{1} \\
	{1/2}&{1/2} \\ 
	\end{array}} \right],$$
откуда собственный вектор $v_2=[1,-1]^\top$. Значит, в нашем случае $$P^\top=\underbrace{\left[ {\begin{array}{*{20}{r}}
		{2}&{1} \\
		{1}&{-1} \\ 
		\end{array}} \right]}_{S}\underbrace{\left[ {\begin{array}{*{20}{c}}
	{1}&{0} \\
	{0}&{-1/2} \\ 
	\end{array}} \right]}_{D}\underbrace{\left[ {\begin{array}{*{20}{r}}
	{2}&{1} \\
	{1}&{-1} \\ 
	\end{array}} \right]^{-1}}_{S^{-1}}.$$ Далее 
$$\left(P^\top\right)^n=\left[ {\begin{array}{*{20}{r}}
		{2}&{1} \\
		{1}&{-1} \\ 
		\end{array}} \right]\left[ {\begin{array}{*{20}{c}}
		{1}&{0} \\
		{0}&{(-1/2)^n} \\ 
		\end{array}} \right]\left[ {\begin{array}{*{20}{r}}
		{2}&{1} \\
		{1}&{-1} \\ 
		\end{array}} \right]^{-1}=$$$$=\frac{1}{3}\left[ {\begin{array}{*{20}{r}}
		{2+(-1/2)^n}&{2-2(-1/2)^n} \\
		{1-(-1/2)^n}&{1+2(-1/2)^n} \\ 
		\end{array}} \right].$$
Итак, мы получили, что для $p(0)=[0,1]^\top$ распределение на шаге $n\ge0$ $$p(n)=\frac{1}{3}\left[ {\begin{array}{*{20}{r}}
	{2-2(-1/2)^n} \\
	{1+2(-1/2)^n} \\ 
	\end{array}} \right].$$
Заметим, что при $n\to\infty$ $$\left(P^\top\right)^n \to \left[ {\begin{array}{*{20}{r}}
	{2/3}&{2/3} \\
	{1/3}&{1/3} \\ 
	\end{array}} \right].$$
Поэтому, если взять произвольное начальное распределение $p(0)=$\linebreak $=[p_0(0),p_1(0)]$, то в пределе $$p(n)\to \left[ {\begin{array}{*{20}{r}}
	{2/3}&{2/3} \\
	{1/3}&{1/3} \\ 
	\end{array}} \right]\left[ {\begin{array}{*{20}{r}}
	{\pi_0(0)} \\
	{\pi_1(0)} \\ 
	\end{array}} \right]=\left[ {\begin{array}{*{20}{r}}
	{2/3} \\
	{1/3} \\ 
	\end{array}} \right],$$ так как $p_0(0) + p_1(0)=1$. Это значит, что в каком бы состоянии не находилась цепь изначально и каким бы ни было начальное распределение вероятностей состояний, в пределе вероятность оказаться в состоянии 0 равна $2/3$, а в состоянии 1 она равна $1/3$. Это проявление \textit{эргодичности} цепи, строгое определение и свойства будут дан\gav{ы} позднее.

Теперь несколько слов о матрице перехода $P$. Как уже было сказано, сумма компонент в каждой ее строке равна единице, и каждая ее компонента неотрицательна и не превышает единицу. Матрицы с такими свойствами называются \textit{стохастическими}. Общих свойств таких матриц крайне мало, вот одно из них \cite{KelbertSukhov2010}, \cite{Nikaydo}.

\begin{theorem}
\label{eigen value 1}
\textit{Матрицы $P$ и $P^\top$ всегда имеют собственное значение, равное $1$. Остальные собственные значения по модулю не превосходят $1.$}
\end{theorem}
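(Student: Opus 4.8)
Утверждение состоит из двух частей: что число $1$ является собственным значением матриц $P$ и $P^\top$, и что все остальные собственные значения лежат в замкнутом единичном круге. Для первой части я воспользуюсь тем, что $P$ — стохастическая матрица, то есть сумма элементов каждой строки равна единице: $\sum_{j\in E}p_{i,j}=1$. Это в точности означает, что вектор $\mathbf{1}=(1,1,\dots,1)^\top$ является собственным вектором $P$ с собственным значением $1$: действительно, $i$-я компонента произведения $P\mathbf{1}$ равна $\sum_j p_{i,j}\cdot 1 = 1$, так что $P\mathbf{1}=\mathbf{1}$. Поскольку $P$ и $P^\top$ имеют один и тот же характеристический многочлен $\det(P-\lambda I)=\det(P^\top-\lambda I)$, число $1$ является собственным значением и для $P^\top$ (хотя соответствующий собственный вектор, стационарное распределение, уже не обязан быть $\mathbf{1}$).

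Для второй части — оценки $|\lambda|\le 1$ для всех собственных значений — я рассмотрю произвольное собственное значение $\lambda$ матрицы $P$ (оно же собственное значение $P^\top$) и соответствующий ненулевой собственный вектор $v=(v_0,v_1,\dots)^\top$, то есть $Pv=\lambda v$. Выберу индекс $k$, на котором $|v_k|=\max_{i}|v_i|>0$. Тогда из $k$-й строки равенства $Pv=\lambda v$ получаю
\[
|\lambda|\,|v_k| = |(\lambda v)_k| = \left|\sum_{j\in E}p_{k,j}v_j\right| \le \sum_{j\in E}p_{k,j}\,|v_j| \le \sum_{j\in E}p_{k,j}\,|v_k| = |v_k|,
\]
где первое неравенство — неравенство треугольника, второе использует $|v_j|\le|v_k|$ и $p_{k,j}\ge 0$, а последнее равенство — стохастичность строки. Деля на $|v_k|>0$, получаю $|\lambda|\le 1$.

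Я бы также отметил, что в случае счётного множества состояний $E$ следует аккуратно оговорить, что ряды $\sum_j p_{k,j}v_j$ сходятся абсолютно (что обеспечено ограниченностью $v$ и условием $\sum_j p_{k,j}=1$), и что максимум $\max_i|v_i|$ достигается либо работать с супремумом и переходом к пределу; в конечном случае $|E|<\infty$ никаких сложностей нет. Главное (хотя и небольшое) препятствие — именно этот переход от конечномерной ситуации к счётномерной и корректное обоснование существования индекса с максимальным модулем компоненты; для учебных целей можно ограничиться конечным случаем, на который теорема в тексте и сформулирована по существу (матрица $P$ записана как конечная $(N+1)\times(N+1)$). Замечу ещё, что часть про $P^\top$ не требует отдельной работы сверх совпадения характеристических многочленов, поскольку спектр матрицы и её транспонированной совпадает.
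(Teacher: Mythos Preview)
Your proof is correct. The first part — that $1$ is an eigenvalue of both $P$ and $P^\top$ — is handled exactly as in the paper: $P\mathbf{1}=\mathbf{1}$ from stochasticity, and the spectra of $P$ and $P^\top$ coincide because their characteristic polynomials are equal.

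For the bound $|\lambda|\le 1$, however, you take a different route. The paper argues by contradiction through powers of the matrix: if $|\lambda|>1$ and $P^\top v=\lambda v$, then $(P^\top)^n v=\lambda^n v$ grows without bound, forcing some entry of $(P^\top)^n=(P^n)^\top$ to exceed $1$, which is impossible since $P^n$ is again stochastic. Your argument is the direct Gershgorin-type estimate on the row of maximal-modulus component of an eigenvector of $P$. Your approach is more self-contained — it does not require the auxiliary observation that powers of a stochastic matrix are stochastic — and it gives the inequality in one line without contradiction. The paper's argument, on the other hand, makes visible the dynamical fact that the iterates $(P^\top)^n$ stay entrywise bounded, which is itself a useful observation reused later in the text. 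Your remark about the countable case is apt: the Gershgorin argument needs the maximum of $|v_i|$ to be attained, whereas the paper's contradiction argument also implicitly relies on finiteness (to conclude that unbounded growth of $\lambda^n v$ forces an entry of the matrix to exceed $1$); in the context where the theorem appears, $P$ is explicitly written as a finite $(N+1)\times(N+1)$ matrix, so both proofs are on solid ground.
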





\begin{proof}
Так как сумма компонент матрицы $P$ в каждой ее строке равна 1, то достаточно взять вектор $1=[1,\dots,1]^\top$ и увидеть, что $$P1=1.$$ Значит, матрица $P$ имеет собственное значение $\lambda=1$, а соответствующий собственный вектор равен $1$. Отсюда следует, что и матрица $P^\top$ имеет собственное значение $\lambda=1$, так как детерминанты произвольной матрицы и ее транспонированной совпадают: $$\det(P-\lambda I)=\det(P - \lambda I)^\top=\det(P^\top - \lambda I) = 0.$$ То есть характеристические многочлены совпадают, а значит, и набор собственных значений матриц $P$ и $P^\top$ совпадает. Отметим, что вектор $1$ не обязан быть собственным вектором матрицы $P^\top$. Теперь предположим, что нашлось собственное значение $\lambda$ матрицы $P^\top$ с ${|\lambda|>1}$, а $v$ -- соответствующий собственный вектор. Но тогда ${\left(P^\top\right)^nv=\lambda^nv}$ и правая часть растет экспоненциально с ростом $n$. Следовательно, найдется элемент матрицы $\left(P^\top\right)^n$, который превышает 1, но это невозможно, так как матрица $\left(P^\top\right)^n=\left(P^n\right)^\top$ равна транспонированной стохастической матрице\footnote{Если $P$ -- стохастическая матрица, то и $P^n$ -- стохастическая матрица. Матрица $P^n$ есть \textit{матрица перехода за} $n$ \textit{шагов} и состоит из соответствующих условных вероятностей.}, элементы которой не могут превышать 1. Мы пришли к противоречию, значит, для всех собственных значений $\lambda$ выполнено $|\lambda|\le 1$. \EndProof
\end{proof}

Попытки простым способом описать общие свойства матриц $P^\top$ встречают серьезное сопротивление. Это подтверждают следующие примеры.
\begin{enumerate}[topsep=-5pt,itemsep=5pt]
	\item Матрица $P^\top$ может быть вырожденной: $$P^\top=\left[ {\begin{array}{*{20}{r}}
		{0}&{0} \\
		{1}&{1} \\ 
		\end{array}} \right].$$
	\item Матрица $P^\top$ может иметь отрицательные собственные значения и отрицательный детерминант: $$P^\top=\left[ {\begin{array}{*{20}{r}}
		{0}&{1} \\
		{1}&{0} \\ 
		\end{array}} \right].$$
	\item Матрица $P^\top$ может иметь комплексные собственные значения и быть ортогональной $(P^\top=P^{-1})$: $$P^\top=\left[ {\begin{array}{*{20}{r}}
		{0}&{1}&{0} \\
		{0}&{0}&{1} \\
		{1}&{0}&{0} 
		\end{array}} \right].$$
	\item Кратность собственного значения $\lambda=1$ может быть больше единицы:$$P^\top=\left[ {\begin{array}{*{20}{r}}
		{1}&{0} \\
		{0}&{1} \\ 
		\end{array}} \right].$$
	\item Не всякая матрица $P^\top$ диагонализуема, например, $$P^\top=\left[ {\begin{array}{*{20}{r}}
		{5/12}&{1/4}&{1/3} \\
		{5/12}&{1/4}&{1/3} \\
		{1/6}&{1/2}&{1/3} 
		\end{array}} \right].$$
\end{enumerate}

\subsection{Классификация состояний}

Перейдем теперь к классификации состояний однородной дискретной цепи Маркова. Напомним, что вероятность перехода за $n$ шагов из состояния ${k\in E}$ в состояние ${j\in E}$ мы обозначили символом $$p_{k,j}(n) = \mathbb{P}(\xi_n=j \,|\, \xi_0=k), \ n\ge 0,$$ где $E$ -- множество состояний. Введем также обозначение для вероятности первого возвращения за $n$ шагов в состояние $k\in E$: $$f_k(n)=\mathbb{P}(\xi_n=k,\xi_{n-1}\ne k,\dots,\xi_1\ne k \,|\, \xi_0=k), \ n\ge 1,$$ и вероятность возвращения в $k\in E$ за конечное число шагов:
\begin{equation}
    \label{F_k}
    F_k=\sum\limits_{n=1}^{\infty}f_k(n).
\end{equation}

\textbf{Замечание}. Вероятности $p_{k,k}(n)$ и $f_k(n)$ не обязаны совпадать. Вероятность $f_k(n)$ -- это вероятность \textit{первого} возвращения из $k$ в $k$ (при условии, что цепь вышла из состояния $k$). Вероятность же $p_{k,k}(n)$ подсчитывается с учетом того, что на промежуточных шагах цепь может находиться в состоянии $k$. В общем случае 
$$p_{k,k}(n) = \sum\limits_{m=1}^n f_k(m)p_{k,k}(n-m) = f_k(n)+\sum\limits_{m=1}^{n-1} f_k(m)p_{k,k}(n-m) \ge f_k(n).$$

\subsubsection{Классификация состояний на основе арифметических свойств $p_{i,j}(n)$}

\begin{definition}
    Состояния $k,j\in E$ называются \textit{сообщающимися}, если $$\left(\exists m\ge1: \ p_{k,j}(m) > 0\right) \ \wedge \ \left(\exists n\ge1: \ p_{j,k}(n)>0\right).$$ Иначе состояния $k,j\in E$ называются \textit{несообщающимися}: $$\left(\forall m\ge1: \ p_{k,j}(m)=0\right) \ \vee \ \left(\forall n\ge1: \ p_{j,k}(n)=0\right).$$
\end{definition}

\begin{definition}
\label{nesucshest}
    Состояние $k\in E$ называется \textit{несущественным}, если $$\exists j\in E: \ \left( \exists m\ge1 \ p_{k,j}(m)>0 \right) \wedge \left( \forall n\ge1 \ p_{j,k}(n)=0 \right).$$
Иначе состояние $k\in E$ называется \textit{существенным}: $$\forall j\in E \ \left( \left( \forall m\ge1 \ p_{k,j}(m)=0 \right) \vee \left( \exists n\ge1 \ p_{j,k}(n)>0 \right) \right).$$
\end{definition}

\begin{definition}\label{def:per1}
    Пусть ${\exists n_0\ge 1:\,p_{k,k}(n_0)>0}$ и $d_k$ -- наибольший общий делитель чисел $$\{n\ge1: f_k(n)>0\}.$$ Тогда если ${d_k>1}$, то состояние $k$ называется \textit{периодическим} с периодом $d_k$. Если же $d_k=1$, то состояние $k$ называется \textit{апериодическим}.
\end{definition}

\begin{definition}\label{def:per2}
    Пусть ${\exists n_0\ge 1:\,p_{k,k}(n_0)>0}$ и $d_k$ -- наибольший общий делитель чисел $$\{n\ge1: p_{k,k}(n)>0\}.$$ Тогда если ${d_k>1}$, то состояние $k$ называется \textit{периодическим} с периодом $d_k$. Если же $d_k=1$, то состояние $k$ называется \textit{апериодическим}.
\end{definition}

\elena{Со схемой доказательства следующей теоремы можно познакомиться в абзаце после определения 1.6 на с.~361 в~\cite{Borovkov1999}.}
\begin{theorem}
Определения~\ref{def:per1} и~\ref{def:per2} эквивалентны.
\end{theorem}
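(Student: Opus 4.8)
The plan is to reduce the claim to an elementary number-theoretic comparison of two sets of return times. Write $A=\{n\ge 1: f_k(n)>0\}$ and $B=\{n\ge 1: p_{k,k}(n)>0\}$, and let $d_A=\gcd A$, $d_B=\gcd B$; Definition~\ref{def:per1} uses $d_A$ and Definition~\ref{def:per2} uses $d_B$, so the theorem amounts to $d_A=d_B$. The standing hypothesis $p_{k,k}(n_0)>0$ guarantees $B\neq\varnothing$, and the argument below will show this also forces $A\neq\varnothing$, so both greatest common divisors are well defined.

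First I would prove the inclusion $A\subseteq B$, which gives one divisibility. This is immediate from the renewal relation recalled in the remark above,
$$p_{k,k}(n)=\sum_{m=1}^{n}f_k(m)\,p_{k,k}(n-m)\ \ge\ f_k(n),$$
so $f_k(n)>0$ implies $p_{k,k}(n)>0$. Since $d_B$ divides every element of $B$, it divides every element of the subset $A$, hence $d_B\mid\gcd A=d_A$.

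The substantive step is the reverse divisibility: $d_A\mid n$ for every $n\in B$, which I would establish by strong induction on $n$. If $p_{k,k}(n)>0$, then the renewal relation forces at least one summand on the right to be positive, i.e. there is an $m$ with $1\le m\le n$, $f_k(m)>0$ and $p_{k,k}(n-m)>0$, using the convention $p_{k,k}(0)=1$. Then $m\in A$, so $d_A\mid m$. If $n-m=0$ then $d_A\mid(n-m)$ trivially; if $n-m\ge 1$, then $n-m<n$ and $p_{k,k}(n-m)>0$, so the induction hypothesis yields $d_A\mid(n-m)$. In either case $d_A\mid m+(n-m)=n$. Applying this to the least element of $B$ shows that element lies in $A$, which is why $A\neq\varnothing$. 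Thus $d_A$ divides every element of $B$, so $d_A\mid\gcd B=d_B$, and combined with $d_B\mid d_A$ we get $d_A=d_B$; in particular the dichotomy "periodic with period $d_k$ versus aperiodic" is identical under both definitions.

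I do not expect a genuine obstacle here; the only delicate point is the bookkeeping in the induction — treating the term $m=n$ (equivalently $n-m=0$, where $p_{k,k}(0)=1$) separately so that the base case is subsumed, and noting that the renewal identity itself is exactly the one quoted in the preceding remark, so it may be invoked without reproof.
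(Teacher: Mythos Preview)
Your proof is correct. The paper does not actually prove this theorem; it only refers the reader to the scheme of proof in Borovkov's textbook (the paragraph after Definition~1.6 on p.~361 of \cite{Borovkov1999}). Your argument via the renewal relation $p_{k,k}(n)=\sum_{m=1}^{n}f_k(m)\,p_{k,k}(n-m)$ --- using $A\subseteq B$ for one divisibility and strong induction on $n\in B$ for the other --- is exactly the standard route and almost certainly what Borovkov's scheme amounts to. One minor remark on presentation: since your induction statement ``$d_A\mid n$ for every $n\in B$'' presupposes that $d_A$ is defined, it would be cleaner to first observe $A\neq\varnothing$ (your argument applied to the least element of $B$ gives this immediately, as you note) and only then run the induction, rather than extracting non-emptiness as a by-product afterward.
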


\textbf{Замечание}. Если состояние несущественное, то найдется состояние, с которым оно не сообщается. Если все состояния цепи сообщающиеся, то все они существенные. В конечной цепи всегда есть существенное состояние. Оно может быть одно. В бесконечной цепи может не быть существенных состояний (см. пример~\ref{1} ниже).

\textbf{Замечание}. Понятие периодичности вводится только для состояний, для которых  ${\exists n_0\ge 1:\,p_{k,k}(n_0)>0}$. 

Введем следующие обозначения. Если для какого-то ${m\ge 1}$ вероятность $p_{i,j}(m)>0$, то будем писать  $i\to j$ (состояние $j$ \textit{следует} за $i$). Если для любого ${m\ge 1}$ вероятность ${p_{i,j}(m)=0}$, то будем писать ${i \not\to j}$. Кроме того, если ${i\to j}$ и ${j\to i}$, то будем писать ${i \leftrightarrow j}$. По определению, состояния $i$ и $j$ называются сообщающимися, если ${i \leftrightarrow j}$. Состояние $i$ несущественное, если $\exists j\in E:(i\to j)\wedge (j \not\to i)$.

{
	\renewcommand{\baselinestretch}{0.95}
	\selectfont
	
Отношение ${i \leftrightarrow j}$ является отношением эквивалентности (действительно, 1) ${i \leftrightarrow i}$, так как $p_{i,i}(0) = 1$, 2) из ${i \leftrightarrow j}$ следует ${j \leftrightarrow i}$, 3) если ${i \leftrightarrow j}$ и ${j \leftrightarrow k}$, то ${i \leftrightarrow k}$). Таким образом, все множество состояний разбивается на классы эквивалентности. При этом может оказаться, что некоторые классы эквивалентности будут замкнутые (то есть из них нельзя с положительной вероятностью выйти), а некоторые -- открытые (см.~рис.~\ref{fig:classes}). Состояния, принадлежащие открытым классам эквивалентности, будем называть \textit{несущественными}, а из замкнутых -- \textit{существенными} (см. ~другое определение~\ref{nesucshest}). В конечной цепи всегда найдется хотя бы один замкнутый класс эквивалентности. В случае счетной цепи (то есть с бесконечным числом состояний) может не оказаться замкнутого класса эквивалентности. Так, в примере~\ref{1} каждое состояние образует открытый класс эквивалентности (из одного состояния) и число классов бесконечно. Если замкнутый класс состоит из одного состояния, то такое состояние называют \textit{поглощающим}. Если есть только один класс эквивалентности (он автоматически замкнут), то цепь называют \textit{неразложимой} (см. другое определение~\ref{reducable} ниже).

}

\begin{figure}[h]
    \centering
    \includegraphics[width=0.7\textwidth]{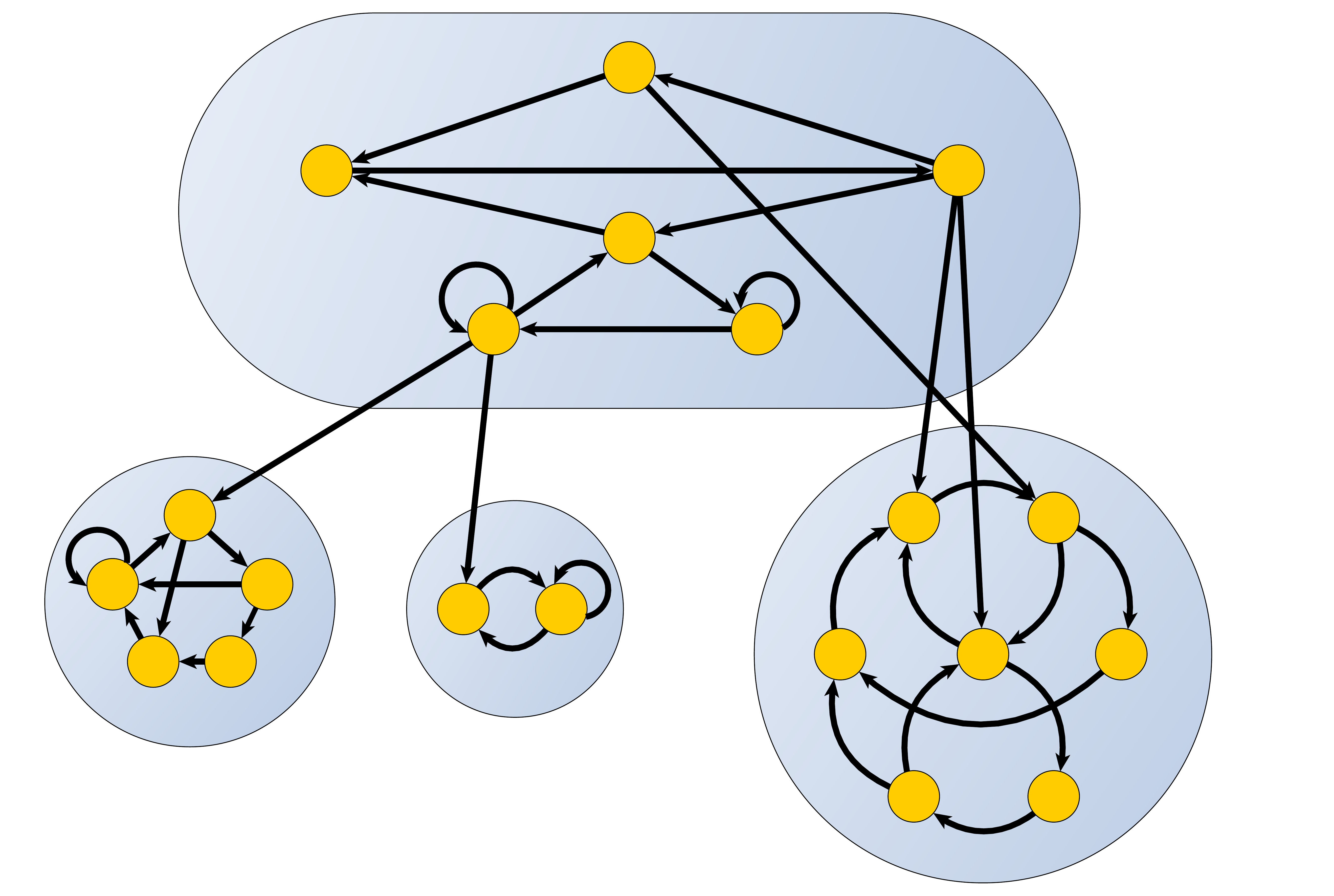}
    \caption{Граф конечной марковской цепи с одним открытым классом эквивалентности и тремя замкнутыми.}
    \label{fig:classes}
\end{figure}

{
\renewcommand{\baselinestretch}{0.95}
\selectfont

Классификация состояний внутри класса эквивалентности одинакова для всех состояний из этого класса (см.~далее \gav{теорему~\ref{solidar} о солидарности}).
	
Разбиение множества состояний на классы эквивалентности приводит к блочной структуре матрицы переходных вероятностей, если правильно занумеровать состояния (см.~рис.~\ref{fig:block_matrix}). На диагонали будут стоять квадратные блоки, отвечающие за вероятности перехода внутри замкнутых классов эквивалентности (если такие есть). Из замкнутости класса следует, что остальные вероятности перехода (вне диагонального блока) суть нули. Для состояний из открытых классов эквивалентности блочная структура немного сложнее: вне диагонали в~общем случае стоят не нули, а вероятности перехода в состояния из~других классов.

}

\begin{figure}[h]
    \centering
    \begin{blockmatrixtabular}
    \valignbox{\mblockmatrix{0.15in}{2.6in}{$P =$}}
    \valignbox{\leftparen{2.6in}}
    \end{blockmatrixtabular}
    \begin{blockmatrixtabular}
    \valignbox{
    \begin{blockmatrixtabular}
    \fblockmatrix     [1.0,1.0,0.7]{2.8in}{0.37in}{несущественные состояния}\\
    \fblockmatrix     {0.5in}{0.5in}{нули}
    \fblockmatrix     [0.8,0.8,1.0]{0.5in}{0.5in}{$P_1$}
    \fblockmatrix     {1.6in}{0.5in}{нули}\\
    \fblockmatrix     {1.1in}{0.3in}{нули}
    \fblockmatrix     [1.0,0.8,0.8]{0.3in}{0.3in}{$P_2$}
    \fblockmatrix     {1.2in}{0.3in}{нули}\\
    \fblockmatrix     {1.5in}{1.2in}{нули}
    \fblockmatrix     [0.8,1.0,0.8]{1.2in}{1.2in}{$P_3$}
    \end{blockmatrixtabular}}
    \end{blockmatrixtabular}
    \begin{blockmatrixtabular}
    \valignbox{\rightparen{2.6in}}
    \end{blockmatrixtabular}
    \caption{Блочная структура матрицы переходных вероятностей $P$ c тремя замкнутыми классами эквивалентности. Здесь $P_1,P_2,P_3$~--- \gav{неразложимые} \elena{квадратные матрицы, стоящие на диагонали общей матрицы $P$,} соответствующие вероятностям перехода внутри трех замкнутых классов, блок <<несущественные состояния>> соответствует либо вероятностям перехода из несущественных состояний в состояния из замкнутых классов эквивалентности, либо вероятностям остаться в этом открытом классе, блоки <<нули>>~--- матрицы правильных размеров, состоящие из одних нулей}
    \label{fig:block_matrix}
\end{figure}


Пусть имеется замкнутый класс эквивалентности с периодом $d>1$ и подмножеством состояний $E_\ast$.  Тогда существует $d$ групп состояний $C_0$, $C_1$, \ldots, $C_{d-1}$ (так называемых \textit{циклических подклассов}): $E_\ast = \cup_{j=0}^{d-1} C_j$, если $X_n \in C_m$, то для любого целого $l$ $X_{\elena{n+l}} \in C_{(m+l) \mod d}$, см. рис.~\ref{fig:cyclic_diagram}.

Если перенумеровать состояния, перечислив последовательно состояния из $C_0$, $C_1$, \ldots, $C_{d-1}$, то подматрица переходных вероятностей, соответствующая данному замкнутому классу состояний, будет иметь вид, как на рис.~\ref{fig:block_period_matrix}.

\textbf{Замечание.} Если такую матрицу возвести в степень $d$, то новая матрица $\tilde{P} = P^d$ будет блочно-диагональной. Если рассмотреть \mbox{ОДМЦ} с матрицей переходных вероятностей $\tilde{P}$, то она  является разложимой и имеет $d$ замкнутых апериодических классов эквивалентности $C_0$, $C_1$, \ldots, $C_{d-1}$. Таким образом, при исследовании асимптотического поведения $p_{i,j}(n)$ можно сразу предполагать, что цепь неразложима и апериодична.

\begin{figure}[h]
	\centering
	\includegraphics[width=0.4\textwidth]{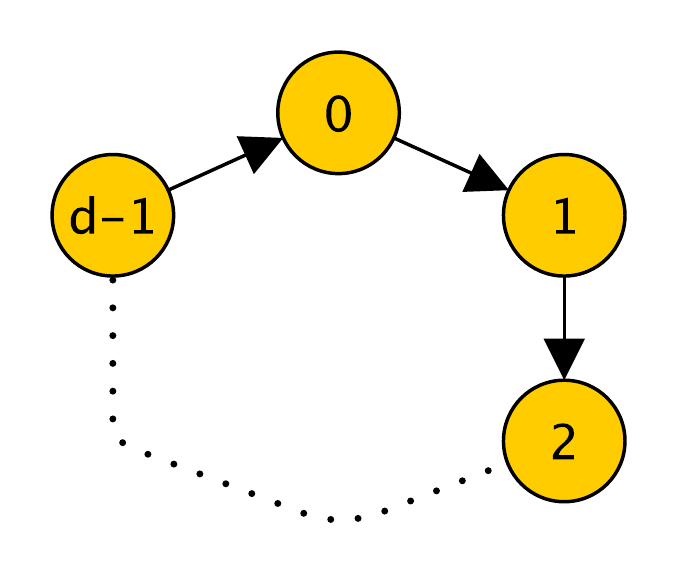} 
	\caption{Диаграмма движения по циклическим подклассам. На диаграмме $i$-я вершина соответствует подклассу $C_i$. Из вершин подкласса $C_0$ можно попасть только в вершины подкласса $C_1$, \dots, из вершин подкласса $C_{d-1}$ можно попасть только в вершины подкласса $C_0$}
	\label{fig:cyclic_diagram}
\end{figure}

\begin{figure}[h!]
	\centering
	\begin{blockmatrixtabular}
		\valignbox{\leftparen{2.8in}}
	\end{blockmatrixtabular}
	\begin{blockmatrixtabular}
		\valignbox{
			\begin{blockmatrixtabular}
				\fblockmatrix     {0.4in}{0.4in}{нули}
				\fblockmatrix     [0.8,0.8,1.0]{0.7in}{0.4in}{$C_0$}
				\fblockmatrix     {1.6in}{0.4in}{нули}\\
				\fblockmatrix     {1.2in}{0.7in}{нули}
				\fblockmatrix     [1.0,0.8,0.8]{0.3in}{0.7in}{$C_1$}
				\fblockmatrix     {1.2in}{0.7in}{нули}\\
				\fblockmatrix     {1.6in}{0.3in}{нули}
				\fblockmatrix     [0.8,1.0,0.8]{1.2in}{0.3in}{$C_2$}\\
				\fblockmatrix     [0.8,1.0,1.0]{0.4in}{1.2in}{$C_3$}
				\fblockmatrix     {2.4in}{1.2in}{нули}
		\end{blockmatrixtabular}}
	\end{blockmatrixtabular}
	\begin{blockmatrixtabular}
		\valignbox{\rightparen{2.8in}}
	\end{blockmatrixtabular}
	\caption{Блочная структура подматрицы переходных вероятностей, соответствующей замкнутому классу эквивалентности с четырьмя циклическими подклассами. Здесь блок $C_i$ содержит вероятности переходов из вершин подкласса $C_i$ в вершины подкласса $C_{i+1 \mod 4}$, блоки <<нули>>~--- матрицы правильных размеров, состоящие из одних нулей}
	\label{fig:block_period_matrix}
\end{figure}

\clearpage

\subsubsection{Классификация состояний на основе асимптотических свойств $p_{i,i}(n)$}

\begin{definition}
    Состояние ${k\in E}$ называется \textit{нулевым}, если существует $$\lim\limits_{n\to\infty}p_{k,k}(n)=0.$$ Если указанный предел не существует или не равен нулю, то состояние $k$ называется \textit{ненулевым}.
\end{definition}

\begin{definition}\label{def:defrec1}
    Состояние $k\in E$ называется \textit{возвратным}, если вероятность возвращения за конечное число шагов $F_k$ (см.~\eqref{F_k}) равна единице. Иначе состояние $k$ называется \textit{невозвратным}.
\end{definition}

\begin{definition}\label{def:defrec2}
    Состояние $k\in E$ называется \textit{возвратным}, если 
$$\mathbb{P}\left( \bigcap\limits_{m=1}^{\infty}\bigcup\limits_{n\ge m}\{\xi_n = k\} \,\Big\rvert\, \xi_0 = k \right)= 1$$ Если же эта вероятность равна нулю, то состояние $k$ называется \textit{невозвратным}.
\end{definition}
Заметим, что $\bigcap_{m=1}^{\infty} \bigcup_{n\ge m}\{\xi_n = k\}$ означает, что цепь бесконечно часто возвращается в состояние $k$. Это событие из остаточной сигма-алгебры, а значит, по теореме Колмогорова <<закон нуля или единицы>> рассматриваемая вероятность не может принимать промежуточных значений.

\elena{Доказательство следующей теоремы можно найти в \cite{KaiLai1964} (следствие теоремы 4.3, с.~39).}
\begin{theorem}
\textit{Определения}~\ref{def:defrec1} \textit{и}~\ref{def:defrec2} \textit{эквивалентны.}
\end{theorem}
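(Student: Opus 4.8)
Требуется показать эквивалентность двух определений возвратности состояния $k$: в определении~\ref{def:defrec1} возвратность означает $F_k = \sum_{n\ge1} f_k(n) = 1$, а в определении~\ref{def:defrec2} --- что $\mathbb{P}(\xi_n = k \text{ бесконечно часто} \mid \xi_0 = k) = 1$. Идея состоит в том, чтобы ввести для каждого $m\ge1$ величину $F_k^{(m)}$ --- вероятность (при условии $\xi_0 = k$) того, что цепь вернётся в $k$ по крайней мере $m$ раз за конечное время, и с помощью марковского свойства получить рекуррентное соотношение $F_k^{(m)} = F_k \cdot F_k^{(m-1)}$, откуда $F_k^{(m)} = F_k^m$. Тогда вероятность бесконечного числа возвращений есть $\lim_{m\to\infty} F_k^{(m)} = \lim_{m\to\infty} F_k^m$, что равно $1$ при $F_k = 1$ и равно $0$ при $F_k < 1$; это и даёт требуемую эквивалентность (причём попутно подтверждается упомянутый в тексте дихотомический характер вероятности из определения~\ref{def:defrec2}).

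Первый шаг --- аккуратно формализовать моменты последовательных возвращений. Положим $\tau_1 = \min\{n\ge1 : \xi_n = k\}$ (время первого возвращения, возможно $+\infty$), и по индукции $\tau_{j+1} = \min\{n > \tau_j : \xi_n = k\}$, если $\tau_j < \infty$. По определению $f_k(n)$ имеем $\mathbb{P}(\tau_1 = n \mid \xi_0 = k) = f_k(n)$, следовательно $\mathbb{P}(\tau_1 < \infty \mid \xi_0 = k) = F_k$. Событие <<цепь возвращается в $k$ бесконечно часто>> совпадает с событием $\{\tau_j < \infty \ \forall j\}$, то есть с $\bigcap_{j\ge1}\{\tau_j < \infty\}$, а так как эти события убывают, его вероятность равна $\lim_{j\to\infty}\mathbb{P}(\tau_j < \infty \mid \xi_0 = k)$.

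Второй (основной) шаг --- доказать соотношение $\mathbb{P}(\tau_{j+1} < \infty \mid \xi_0 = k) = F_k\cdot\mathbb{P}(\tau_j < \infty \mid \xi_0 = k)$. Здесь я бы разбил событие $\{\tau_{j+1} < \infty\}$ по значению $\tau_j = m$ и по траектории до момента $m$, воспользовался марковским свойством~\eqref{eq:MarkovPropertyChain} (точнее его следствием о том, что условное распределение будущего при фиксированном $\xi_m = k$ не зависит от прошлого) и однородностью цепи: при условии $\tau_j = m$, $\xi_m = k$ вероятность того, что найдётся $n > m$ с $\xi_n = k$, равна $\mathbb{P}(\tau_1 < \infty \mid \xi_0 = k) = F_k$. Суммируя по $m$, получаем $\mathbb{P}(\tau_{j+1}<\infty\mid\xi_0=k) = \sum_m \mathbb{P}(\tau_j = m\mid\xi_0=k)\cdot F_k = F_k\cdot\mathbb{P}(\tau_j<\infty\mid\xi_0=k)$. Отсюда индукцией $\mathbb{P}(\tau_j<\infty\mid\xi_0=k) = F_k^{\,j}$.

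Заключительный шаг --- подставить это в предел из первого шага: вероятность бесконечного числа возвращений равна $\lim_{j\to\infty} F_k^{\,j}$, который равен $1$ при $F_k=1$ и $0$ при $0\le F_k < 1$. Значит, $F_k = 1$ тогда и только тогда, когда эта вероятность равна $1$; при $F_k<1$ обе характеристики дают <<невозвратность>>. Это доказывает эквивалентность определений~\ref{def:defrec1} и~\ref{def:defrec2}. Главным техническим препятствием будет аккуратная запись применения марковского свойства к случайному моменту $\tau_j$ (фактически нужен вариант строгого марковского свойства для моментов остановки, либо --- что проще в дискретном случае --- явное суммирование по всем конечным траекториям длины $m$, оканчивающимся первым попаданием в $k$ на шаге $\tau_j = m$); всё остальное --- рутинные выкладки с условными вероятностями и пределами монотонных последовательностей событий.
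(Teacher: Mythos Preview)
Предложенный вами план корректен и представляет собой стандартное доказательство эквивалентности: ключевая лемма $\mathbb{P}(\tau_j<\infty\mid\xi_0=k)=F_k^{\,j}$ вместе с монотонностью событий $\{\tau_j<\infty\}$ даёт всё необходимое, а отмеченная вами тонкость с применением марковского свойства к случайному моменту $\tau_j$ действительно обходится явным суммированием по конечным траекториям --- это именно то рассуждение, которое обычно приводится в учебниках.

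Сравнивать же с доказательством из пособия невозможно: в тексте теорема сформулирована без доказательства, а читатель отсылается к следствию теоремы~4.3 в книге Чжуна~\cite{KaiLai1964}. Ваш аргумент по существу совпадает с классическим доказательством из этой книги, так что расхождений нет.
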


\begin{theorem}[ (критерий возвратности)]
\label{criteria transience}
\textit{Состояние $j\in E$ является возвратным тогда и только тогда, когда расходится ряд} $$P_j=\sum\limits_{n=1}^{\infty} p_{j,j}(n) = \infty.$$ \textit{При этом для невозвратного состояния} $$F_j=\frac{P_j}{1+P_j}.$$
\end{theorem}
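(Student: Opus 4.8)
Основная идея состоит в том, чтобы связать производящие функции последовательностей $\{p_{j,j}(n)\}$ и $\{f_j(n)\}$ и воспользоваться замечанием, приведённым перед формулировкой теоремы. Введём для $|s|<1$ производящие функции $P_j(s)=\sum_{n=1}^{\infty}p_{j,j}(n)s^n$ и $F_j(s)=\sum_{n=1}^{\infty}f_j(n)s^n$, а также положим $p_{j,j}(0)=1$. Из рекуррентного соотношения $p_{j,j}(n)=\sum_{m=1}^{n}f_j(m)p_{j,j}(n-m)$, $n\ge 1$ (это соотношение есть в замечании перед теоремой и доказывается разбиением по моменту первого возвращения), умножая обе части на $s^n$ и суммируя по $n\ge 1$, получаем, что правая часть есть коэффициентное свёрточное произведение, то есть $\sum_{n\ge 1}p_{j,j}(n)s^n = F_j(s)\cdot\bigl(1+\sum_{n\ge 1}p_{j,j}(n)s^n\bigr)$, откуда
$$
P_j(s) = \frac{F_j(s)}{1-F_j(s)}, \qquad \text{или эквивалентно}\qquad F_j(s)=\frac{P_j(s)}{1+P_j(s)}.
$$

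Дальше применяется теорема Абеля о предельном переходе в степенном ряде с неотрицательными коэффициентами: при $s\uparrow 1$ имеем $P_j(s)\to P_j=\sum_{n\ge 1}p_{j,j}(n)\in[0,+\infty]$ и $F_j(s)\to F_j=\sum_{n\ge 1}f_j(n)\in[0,1]$ (оба предела существуют в $[0,+\infty]$ как пределы монотонных функций). Теперь разбираем два случая. Если ряд $P_j$ расходится, то $P_j(s)\to+\infty$, и из тождества $F_j(s)=P_j(s)/(1+P_j(s))$ получаем $F_j(s)\to 1$, то есть $F_j=1$ — состояние возвратно. Если же ряд $P_j$ сходится (то есть $P_j<\infty$), то переход к пределу в том же тождестве даёт $F_j=P_j/(1+P_j)<1$, то есть состояние невозвратно, и заодно получена требуемая формула для невозвратного случая. Объединяя оба случая, получаем равносильность «состояние возвратно $\Leftrightarrow$ $P_j=\infty$», а также формулу $F_j=P_j/(1+P_j)$ при $P_j<\infty$.

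Технически наиболее аккуратного внимания требуют два момента. Во-первых, корректность перехода к производящим функциям: нужно проверить, что свёртка последовательностей действительно соответствует произведению рядов, что законно, так как все члены неотрицательны и ряды сходятся абсолютно при $|s|<1$ (здесь удобно сослаться на теорему о произведении абсолютно сходящихся рядов — теорему Мертенса или просто теорему Коши о произведении рядов). Во-вторых, граничный предельный переход $s\uparrow 1$: здесь достаточно теоремы Абеля в форме для рядов с неотрицательными коэффициентами (предел по Абелю совпадает с суммой ряда, в том числе когда сумма равна $+\infty$). Оба эти шага рутинны; единственная содержательная тонкость — аккуратно разобрать случай $P_j=\infty$, где $F_j(s)\to 1$, но при этом $F_j$ как сумма ряда не превосходит $1$, так что предел действительно равен $1$, а не чему-то большему. Альтернативно, случай возвратности можно было бы вывести напрямую из эквивалентности определений~\ref{def:defrec1} и~\ref{def:defrec2} и леммы Бореля–Кантелли (ожидаемое число возвращений $\sum_n p_{j,j}(n)=\mathbb{E}(\#\{n:\xi_n=j\}\mid\xi_0=j)$ конечно тогда и только тогда, когда цепь почти наверное возвращается лишь конечное число раз), но производящие функции дают сразу и количественную формулу $F_j=P_j/(1+P_j)$, поэтому этот путь предпочтительнее.
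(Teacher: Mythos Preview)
Your proposal is correct and follows essentially the same route as the paper: introduce generating functions for $\{p_{j,j}(n)\}$ and $\{f_j(n)\}$, derive the identity $1+P_j(s)=1/(1-F_j(s))$ from the first-return decomposition, and let $s\uparrow 1$ using Abel's theorem to conclude. The only cosmetic difference is that the paper's series $V(z)$ starts at $n=0$ (so $V=1+P_j$) while yours starts at $n=1$; the resulting identities are equivalent, and your treatment of the limiting step is in fact slightly more explicit than the paper's.
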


\textbf{Замечание.} Данная теорема является обобщением леммы Бореля--Кантелли. Напомним, что если события $A_n$, $n\ge 1$ являются независимыми, то расходимость ряда $\sum_{n=1}^\infty P(A_n)$ эквивалентна тому, что события $A_n$ происходят бесконечно часто с вероятностью 1. Рассматриваемая теорема обобщает ситуацию, когда события $A_n$ <<слабо>> зависимы: если $j$ -- фиксированное состояние, то пусть $A_n$ означает, что на $n$-м шаге цепь возвращается в $j$-е состояние. Тогда расходимость ряда $\sum_{n=1}^\infty P(A_n) = \sum_{n=1}^{\infty}p_{j,j}(n)$ эквивалентна тому, что с вероятностью 1 цепь бесконечно часто возвращается в состояние $j$, то есть состояние $j$ -- возвратное.

\begin{proof}
    Из формулы полной вероятности и марковского свойства цепи получаем, что
    \[
        p_{j,j}(n) = \sum\limits_{k=0}^n f_j(n-k)p_{j,j}(k).
    \]
    Заметим, что данное выражение напоминает формулу для коэффициента многочлена, равного произведению двух многочленов. Развивая эту идею, рассмотрим два степенных ряда:
    \[
        V(z) = \sum\limits_{k=0}^\infty p_{j,j}(k) z^k,\quad U(z) = \sum\limits_{k=0}^\infty f_j(k)z^k.
    \]
    Тогда в силу того, что $f_j(0) = 0$ и $p_{j,j}(0)=1$, получим, что
    \[
        V(z) - 1 = U(z)V(z) \Longrightarrow V(z) = \frac{1}{1-U(z)}.
    \]
    Заметим, что $U(1) = F_j$~--- вероятность возвращения за конечное число шагов, а ряд $V(z)$ сходится при всех $|z|<1$ (при этом формально $V(1) = \sum\limits_{n=0}^\infty p_{j,j}(n) = 1 + P_j$). Переходя к пределу при $z\to 1 -0$ в полученном равенстве, приходим к следующему условию: 
    \[
        F_j = 1\text{ (состояние возвратно) } \Longleftrightarrow P_j = \infty.
    \]
    Если $F_j < 1$ (состояние невозвратно), то мы дополнительно доказали соотношение: $F_j = \frac{V(1)-1}{V(1)} = \frac{P_j}{1+P_j}$. \EndProof
\end{proof}

Из необходимого условия сходимости ряда получаем следующее следствие.

\textbf{Следствие}.  \textit{Если состояние является невозвратным, то оно нулевое. Если состояние ненулевое, оно является возвратным.}

Ниже мы вернемся к связям между нулевыми/ненулевыми и возвратными/невозвратными состояниями и увидим, что ситуации сильно отличаются в зависимости от конечности или счетности числа состояний.

\begin{theorem}
\textit{Несущественное состояние невозвратно.}
\end{theorem}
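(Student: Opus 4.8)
Утверждение состоит в том, что несущественное состояние $k\in E$ является невозвратным. Естественный путь --- воспользоваться определением~\ref{nesucshest} несущественного состояния и определением~\ref{def:defrec1} возвратности (через вероятность $F_k$ возвращения за конечное число шагов). По определению несущественности для состояния $k$ найдётся такое состояние $j\in E$, что $k\to j$ (то есть $p_{k,j}(m_0)>0$ для некоторого $m_0\ge1$), но $j\not\to k$ (то есть $p_{j,k}(n)=0$ для всех $n\ge1$). Идея в том, что с положительной вероятностью цепь, выйдя из $k$, за $m_0$ шагов попадает в состояние $j$, а оттуда уже никогда не сможет вернуться в $k$; значит, с положительной вероятностью цепь не возвращается в $k$ никогда, откуда $F_k<1$.

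Первый шаг --- аккуратно записать это рассуждение через формулу полной вероятности и марковское свойство. Рассмотрим событие <<цепь когда-либо (за конечное число шагов) возвращается в $k$>>, стартуя из $k$; его вероятность есть $F_k$ (точнее, $F_k=\mathbb{P}(\exists n\ge1:\xi_n=k\mid\xi_0=k)$, что совпадает с суммой~\eqref{F_k} вероятностей первого возвращения). Обозначим через $A$ событие $\{\xi_{m_0}=j\}$. Тогда $\mathbb{P}(A\mid\xi_0=k)=p_{k,j}(m_0)>0$. Далее, по марковскому свойству, условно при $\xi_{m_0}=j$ вероятность того, что $\xi_n=k$ хотя бы для одного $n>m_0$, не превосходит $\sum_{l\ge1}p_{j,k}(l)=0$. Значит, на событии $A$ цепь после момента $m_0$ в $k$ не возвращается, а до момента $m_0$ (при $1\le n\le m_0$) --- здесь нужно либо чуть усилить выбор $j$, либо заметить, что можно выбрать $m_0$ минимальным с условием $p_{k,j}(m_0)>0$ и $j\not\to k$; тогда первое попадание в $k$ заведомо не происходит на траектории, ведущей в такое $j$. Более чистый способ: воспользоваться тем, что вероятность никогда не вернуться в $k$ оценивается снизу вероятностью события <<на некотором шаге попасть в $j$ и после этого не вернуться>>, а это событие по доказанному имеет вероятность не меньше $p_{k,j}(m_0)\cdot 1>0$ за вычетом вклада траекторий, которые успели вернуться в $k$ до попадания в $j$; но такие траектории уже учтены в $F_k$, и аккуратная декомпозиция даёт $1-F_k\ge p_{k,j}(m_0)\prod(\dots)>0$.

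Чтобы избежать этих технических тонкостей, я бы оформил доказательство через критерий возвратности (теорема~\ref{criteria transience}): состояние $k$ возвратно тогда и только тогда, когда $\sum_{n=1}^\infty p_{k,k}(n)=\infty$. Предположим противное --- что несущественное состояние $k$ возвратно. Тогда с вероятностью $1$ цепь, стартующая из $k$, бесконечно часто возвращается в $k$ (определение~\ref{def:defrec2}, эквивалентное~\ref{def:defrec1}). Но тогда рассмотрим момент первого попадания в $j$: с вероятностью $p_{k,j}(m_0)>0$ цепь за $m_0$ шагов оказывается в $j$, а из $j$ по условию $j\not\to k$ возвращение в $k$ невозможно ни за какое число шагов. Значит, с положительной вероятностью цепь возвращается в $k$ лишь конечное число раз (не более $m_0$ раз до попадания в $j$), что противоречит возвратности. Следовательно, $k$ невозвратно.

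\textbf{Основное препятствие.} Единственная тонкость --- корректно учесть, что <<попадание в $j$>> может произойти не сразу, и что до этого цепь могла несколько раз побывать в $k$; нужно аккуратно выделить событие положительной вероятности, на котором цепь уходит в $j$ и больше не возвращается. Это решается либо применением марковского свойства в момент первого попадания в $j$ (строго --- в момент остановки, но здесь достаточно фиксированного момента $m_0$, если $j$ выбрано как любое состояние, достижимое из $k$ и не возвращающее в $k$), либо ссылкой на закон нуля-единицы из определения~\ref{def:defrec2}: вероятность бесконечного возвращения в $k$ равна либо $0$, либо $1$, и наличие события положительной вероятности <<уйти навсегда>> исключает случай $1$. Остальные шаги --- прямое применение формулы полной вероятности и уже доказанных теорем.
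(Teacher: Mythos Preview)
Your argument is correct. The paper does not actually prove this theorem --- it simply refers the reader to \cite[теорема~4.4, с.~39]{KaiLai1964} --- so there is no internal proof to compare against. The contradiction route via определение~\ref{def:defrec2} that you settle on at the end is the standard one and works cleanly: on the event $\{\xi_{m_0}=j\}$, which has probability $p_{k,j}(m_0)>0$ given $\xi_0=k$, the Markov property together with $p_{j,k}(n)=0$ for all $n\ge1$ gives $\mathbb{P}(\exists\,n>m_0:\xi_n=k\mid\xi_{m_0}=j)=0$; hence on this event the chain visits $k$ only finitely many times, contradicting $\mathbb{P}(\xi_n=k\text{ бесконечно часто}\mid\xi_0=k)=1$. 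Your worry about visits to $k$ before time $m_0$ is, as you yourself observe, harmless --- there are at most $m_0$ of them, so the total remains finite. The earlier attempt to bound $1-F_k$ directly is messier and unnecessary once you invoke определение~\ref{def:defrec2}.
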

 С доказательством можно познакомиться в~\cite[теорема~4.4, с.~39]{KaiLai1964}.
\begin{definition}
\label{reducable}
    Цепь Маркова называется \textit{неразложимой}, если все ее состояния являются сообщающимися. В противном случае цепь называется \textit{разложимой}.
\end{definition}

\begin{theorem}[ (свойство солидарности)]
\label{solidar}
\textit{Для неразложимой\linebreak ОДМЦ справедливо, что}
\begin{enumerate}
	\item \textit{если хотя бы одно состояние возвратное, то все состояния возвратные,}
	\item \textit{если хотя бы одно состояние нулевое, то все состояния нулевые,}
	\item \textit{если хотя бы одно состояние имеет период ${d>1}$, то все остальные состояния периодичные с периодом $d$; если хотя бы одно состояние апериодично, то все состояния апериодичны.}
	\end{enumerate}
\end{theorem}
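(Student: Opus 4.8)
Речь идёт о свойстве солидарности для неразложимой ОДМЦ. План состоит в том, чтобы для каждого из трёх пунктов показать, что соответствующее свойство ``переносится'' с одного состояния на любое другое, пользуясь тем, что любые два состояния $i$ и $j$ сообщаются, то есть найдутся $m,n\ge 1$ с $p_{i,j}(m)>0$ и $p_{j,i}(n)>0$. Ключевой технический приём во всех трёх случаях один и тот же: неравенство
$$
p_{j,j}(m+k+n) \ge p_{j,i}(n)\, p_{i,i}(k)\, p_{i,j}(m),
$$
которое следует из уравнения Колмогорова--Чэпмена (вероятность попасть из $j$ в $j$ за $m+k+n$ шагов не меньше вероятности конкретного ``маршрута'' $j\to i$ за $n$ шагов, $i\to i$ за $k$ шагов, $i\to j$ за $m$ шагов). Обозначим $c = p_{j,i}(n)\, p_{i,j}(m) > 0$; тогда $p_{j,j}(m+k+n) \ge c\, p_{i,i}(k)$ для всех $k\ge 0$, и симметрично $p_{i,i}(m+k+n) \ge c'\, p_{j,j}(k)$ с $c' = p_{i,j}(m)\,p_{j,i}(n)=c>0$.

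\textbf{Пункт 1 (возвратность).} Пусть $i$ возвратно, то есть по критерию возвратности (теорема~\ref{criteria transience}) ряд $\sum_{k} p_{i,i}(k)$ расходится. Тогда из неравенства $p_{j,j}(m+k+n) \ge c\, p_{i,i}(k)$ следует
$$
\sum_{k=1}^{\infty} p_{j,j}(k) \ge \sum_{k=1}^{\infty} p_{j,j}(m+k+n) \ge c \sum_{k=1}^{\infty} p_{i,i}(k) = \infty,
$$
поэтому $j$ также возвратно по тому же критерию. Симметрия рассуждения даёт обратное, так что либо все состояния возвратны, либо все невозвратны.

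\textbf{Пункт 2 (нулевость).} Пусть $i$ нулевое, то есть $p_{i,i}(k)\to 0$ при $k\to\infty$. Из $p_{j,j}(m+k+n)\ge c\, p_{i,i}(k)$ прямого заключения не выходит (нужна оценка сверху), поэтому я переверну неравенство: из $p_{i,i}(m+k+n)\ge c\, p_{j,j}(k)$ получаем $p_{j,j}(k) \le c^{-1} p_{i,i}(m+k+n) \to 0$, откуда $\lim_k p_{j,j}(k) = 0$, то есть $j$ нулевое. Здесь придётся аккуратно обсудить случай периодических состояний, где $p_{j,j}(k)$ может равняться нулю для $k$, не кратных периоду, но это сходимости к нулю не мешает; также стоит заметить, что в определении нулевого состояния требуется \emph{существование} предела, равного нулю, и полученная оценка его как раз и обеспечивает.

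\textbf{Пункт 3 (период).} Пусть $i$ имеет период $d_i$. Сначала покажу, что $p_{i,j}(\cdot)$ и $p_{j,i}(\cdot)$ можно выбрать так, что $m+n$ кратно $d_i$: действительно, $p_{i,i}(m+n)\ge p_{i,j}(m)p_{j,i}(n)>0$, значит $d_i \mid (m+n)$. Далее, если $p_{j,j}(k)>0$ для некоторого $k$, то $p_{i,i}(m+k+n)\ge c\,p_{j,j}(k)>0$, откуда $d_i \mid (m+k+n)$, и вместе с $d_i\mid(m+n)$ получаем $d_i \mid k$. Значит $d_i$ делит все $k$ с $p_{j,j}(k)>0$, поэтому $d_i \le d_j$ (где $d_j$ --- период $j$, то есть НОД таких $k$). Симметрично $d_j \le d_i$, следовательно $d_i = d_j$. Отсюда: если одно состояние периодично с периодом $d>1$, то все остальные имеют тот же период $d$; если одно апериодично ($d_i=1$), то $d_j=1$ для всех $j$.

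\textbf{Основное препятствие.} Главная тонкость --- не сами оценки, а корректное обращение с определениями периода (определения~\ref{def:per1} и~\ref{def:per2} эквивалентны, поэтому удобно работать с $\{k: p_{k,k}(k)>0\}$) и проверка, что для неразложимой цепи каждое состояние действительно допускает возврат, то есть $\{k\ge 1: p_{k,k}(k)>0\}$ непусто --- это следует из того, что $i\leftrightarrow i$, а значит $p_{i,i}(m+n)>0$ для подходящих $m,n$. Остальное --- прямое применение уравнения Колмогорова--Чэпмена и критерия возвратности; никаких существенных вычислений не требуется. \EndProof
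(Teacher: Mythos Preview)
Your proof is correct and follows essentially the same route as the paper: for each of the three properties you exploit the Kolmogorov--Chapman inequality $p_{j,j}(m+k+n)\ge p_{j,i}(n)\,p_{i,i}(k)\,p_{i,j}(m)$ (and its symmetric counterpart) together with the recurrence criterion, the definition of a null state, and a gcd argument for the period, exactly as the paper does. The only differences are cosmetic (your labels $m,n$ versus the paper's $L,M$, and your explicit remark that $\{k\ge1:p_{i,i}(k)>0\}$ is nonempty for an irreducible chain), and there is a harmless typo in that remark where the index $k$ is used both as state and as time.
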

\begin{proof}
    \begin{enumerate}
        \item Пусть нашлось состояние $i$, являющееся возвратным. Из критерия возвратности следует, что это равносильно тому, что $\sum_{n=1}^\infty p_{i,i}(n)\!\!=$\linebreak $=\infty$. Рассмотрим произвольное состояние $j \neq i$ и покажем, что оно тоже является возвратным. Из неразложимости следует, что существуют такие числа $L$ и $M$, что $p_{j,i}(L) > 0$ и $p_{i,j}(M) > 0$. Поэтому 
        $$
        \sum\limits_{n=1}^\infty p_{j,j}(n) \ge \sum\limits_{n=1}^\infty p_{j,j}(L+n + M) \ge
        $$
        $$
        \ge\sum\limits_{n=1}^\infty p_{j,i}(L) p_{i,i}(n) p_{i,j}(M) = p_{j,i}(L) p_{i,j}(M) \sum\limits_{n=1}^\infty p_{i,i}(n) = \infty,
        $$
        а значит, $j$ тоже является возвратным.
        
        \item Рассмотрим два произвольных состояния $i,j\in S$. Из неразложимости следует, что существуют такие числа $L$ и $M$, что $p_{j,i}(L) > 0$ и $p_{i,j}(M) > 0$. Если состояние $j$~--- нулевое, то из неравенства
        \[
            p_{j,j}(L+n+ M) \ge p_{j,i}(L) p_{i,i}(n) p_{i,j}(M)
        \]
        следует, что $p_{i,i}(n) \xrightarrow[]{n\to\infty}0$, т.е. $i$ тоже является нулевым. Если же $j$ не является нулевым, то из неравенства
        \[
            p_{i,i}(M+n+L) \ge p_{i,j}(M) p_{j,j}(n) p_{j,i}(L)
        \]
        следует, что $p_{i,i}(M+n+L)$ не стремится к нулю при $n\to\infty$, т.е. $i$ тоже не является нулевым.
        
        \item Рассмотрим два произвольных состояния $i,j\in S$. Из неразложимости следует, что существуют такие числа $L$ и $M$, что $p_{j,i}(L) > 0$ и $p_{i,j}(M) > 0$. Пусть $\{n: p_{i,i}(n)>0\} = \{n_1^i,n_2^i,n_3^i,\ldots\}$ и $\{n: p_{j,j}(n)>$\linebreak $>0\} = \{n_1^j,n_2^j,n_3^j,\ldots\}$, причем НОД первого множества равен $d_i$, а~НОД второго~-- $d_j$. Заметим, что из неравенств $p_{i,i}(M+L) \ge$\linebreak $\ge p_{i,j}(M) p_{j,i}(L)> 0$ и $p_{j,j}(M+L) \ge p_{j,i}(L) p_{i,j}(M) > 0$ следует, что $M+L$ лежит в обоих множествах, а~значит, $M+L$ делится на $d_i$ и~на~$d_j$. Кроме того, для любого числа $n_k^i$ выполняется $p_{j,j}(L+n_k^i + M) \ge$\linebreak $\ge p_{j,i}(L) p_{i,i}(n_k^i) p_{i,j}(M) > 0$, а~значит, $n_k^i + M + L$ делится на $d_j$. Но,~поскольку $M+L$ тоже делится на $d_j$, заключаем, что и $n_k^i$ делится на~$d_j$, причем это верно для всех $k$. Отсюда следует, что для всех $k$ числа $n_k^i$ делятся на $d_j$, а~значит, $d_j \le d_i$, ведь $d_i$~--- наибольший общий делитель чисел $n_k^i$. Аналогичными рассуждениями можно показать, что для всех $k$ числа $n_k^j$ делятся на $d_i$, а значит, $d_i \le d_j$. В результате получили, что $d_j \le d_i$ и $d_i \le d_j$, т.е. $d_i = d_j$. Это означает, что периоды у всех состояний цепи одинаковы (в силу произвольности выбора $i$ и~$j$).
        \EndProof
    \end{enumerate}
\end{proof}

Отметим, что предыдущая теорема доказана без использования предположения о конечности цепи.

\begin{theorem}
\textit{В неразложимой конечной ОДМЦ все состояния ненулевые} (\textit{а значит, и возвратные}).
\end{theorem}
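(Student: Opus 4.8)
Утверждение состоит в том, что в неразложимой конечной ОДМЦ все состояния ненулевые (а значит и возвратные). Идея в том, чтобы от противного: если все состояния нулевые, то $p_{i,j}(n)\to 0$ при $n\to\infty$ для всех $i,j$. Но это противоречит тому, что при каждом $n$ распределение вероятностей состояний нормировано: сумма по $j$ величин $p_{i,j}(n)$ равна единице.

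\textbf{Шаг 1.} Предположим, что хотя бы одно состояние цепи нулевое. По теореме о солидарности свойств (теорема~\ref{solidar}, пункт~2), которая уже доказана без предположения о конечности цепи, отсюда следует, что все состояния цепи нулевые, то есть $\lim_{n\to\infty}p_{k,k}(n)=0$ для каждого $k\in E$.

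\textbf{Шаг 2.} Покажем, что тогда и $p_{i,j}(n)\to 0$ при $n\to\infty$ для любой пары состояний $i,j\in E$. Действительно, из неразложимости найдётся число $M\ge 1$ такое, что $p_{j,i}(M)>0$. Тогда по уравнению Колмогорова--Чэпмена
$$p_{j,j}(M+n)=\sum_{k\in E}p_{j,k}(M)p_{k,j}(n)\ge p_{j,i}(M)\,p_{i,j}(n)\ge 0,$$
поэтому $0\le p_{i,j}(n)\le p_{j,j}(M+n)/p_{j,i}(M)\to 0$ при $n\to\infty$, так как состояние $j$ нулевое.

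\textbf{Шаг 3.} Зафиксируем произвольное $i\in E$. Поскольку цепь конечная, множество состояний $E$ конечно, и сумма $\sum_{j\in E}p_{i,j}(n)=1$ содержит лишь конечное число слагаемых; каждое из них стремится к нулю при $n\to\infty$ по шагу~2. Следовательно,
$$1=\lim_{n\to\infty}\sum_{j\in E}p_{i,j}(n)=\sum_{j\in E}\lim_{n\to\infty}p_{i,j}(n)=0,$$
что невозможно. Полученное противоречие показывает, что ни одно состояние не является нулевым, то есть все состояния ненулевые. По следствию из критерия возвратности (теорема~\ref{criteria transience}) ненулевое состояние возвратно, поэтому все состояния цепи возвратны. \EndProof

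\textbf{Основное препятствие.} Существенных трудностей здесь нет: всё опирается на теорему о солидарности и на перестановку предела с конечной суммой, законную лишь благодаря конечности $E$ (именно здесь и нужна конечность цепи — в счётном случае утверждение неверно). Единственное место, требующее аккуратности, — это шаг~2, где нужно правильно сослаться на неразложимость, чтобы гарантировать достижимость $i$ из $j$ за конечное число шагов, и воспользоваться уравнением Колмогорова--Чэпмена для получения нужной оценки сверху.
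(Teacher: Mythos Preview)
Your proof is correct and follows essentially the same route as the paper: assume a null state, show via the Kolmogorov--Chepman inequality and irreducibility that $p_{i,j}(n)\to 0$ for all $j$, then contradict the normalization $\sum_{j\in E}p_{i,j}(n)=1$ using finiteness of $E$. The only minor difference is that you first invoke solidarity to make \emph{all} states null and then use nullity of $j$ in Step~2, whereas the paper works directly from a single null state $i$ (bounding $p_{i,j}(n)$ by $p_{i,i}(n+N)/p_{j,i}(N)$) and thus does not need to cite solidarity at all --- but this is purely a presentational choice, not a substantive one.
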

\begin{proof}
    Докажем теорему от противного. Пусть состояние $i$ является нулевым: ${\lim_{n\to\infty} p_{i,i}(n)= 0}$. Для произвольного состояния $j\neq i$ из условия неразложимости цепи найдется такое ${N = N(i,j)}$, что $p_{j,i}(N) > 0$. Тогда в следующей сумме из неотрицательных слагаемых оставим только одно, получив оценку снизу:
    \[
    p_{i,i}(n + N) = \sum_{k\in E} p_{i,k}(n) p_{k,i}(N) \ge p_{i,j}(n) p_{j,i}(N).
    \]
    С учетом нашего предположения $\lim_{n\to\infty} p_{i,i}(n)= 0$ и $p_{j,i}(N) > 0$ получаем, что $\lim_{n\to\infty} p_{i,j}(n)= 0$, или, иначе говоря, для любого $\epsilon > 0$ найдется $M_\epsilon = M_\epsilon(i,j)$, что для любого $n > M_\epsilon$ $p_{i,j} (n) < \epsilon$. Но тогда из условия конечности цепи существует $\bar M_\epsilon = \max_{j\in E} M_\epsilon(i,j)$, что для любого $n > \bar M_\epsilon$ $\sum_{j\in E} p_{i,j} (n) < \epsilon |E |$. Но последнее в силу произвольности выбора $\epsilon$ и $|E| < \infty$ противоречит нормировке $\sum_{j\in E} p_{i,j}(n) = 1$. Значит, наше предположение о нулевости некоторого состояния $i$ не верно. \EndProof
\end{proof}

\textbf{Следствие.} \textit{В замкнутом классе эквивалентности с  конечным числом состояний все состояния ненулевые и возвратные.}

Таким образом, мы получаем, что классификация состояний из \textbf{конечных} классов ОДМЦ делится только на \textbf{два} типа, а именно справедлива
\begin{theorem}
\label{finite_class}
\textit{Если $S_\ast$ -- множество состояний ОДМЦ, образующий конечный класс эквивалентности, тогда все состояния из }$S_\ast$
\begin{enumerate}
    \item \textit{либо нулевые и невозвратные, при этом $S_\ast$ является открытым классом} (\textit{состояния несущественные}).
    \item \textit{либо ненулевые и возвратные, при этом $S_\ast$ является замкнутым классом} (\textit{состояния существенные}).
\end{enumerate}
\end{theorem}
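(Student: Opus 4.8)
The plan is to split on the dichotomy "$S_*$ открытый класс" versus "$S_*$ замкнутый класс", and in each case invoke the two structural facts established just above together with the solidarity theorem (теорема~\ref{solidar}). Since a communicating class either can be left with positive probability (open) or cannot (closed), and these alternatives are exhaustive and mutually exclusive, it suffices to handle the two cases separately.

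In the closed case, $S_*$ is a finite closed class of communicating states, so by the corollary to the theorem on finite irreducible chains every state of $S_*$ is non-null and therefore recurrent; moreover, since from $S_*$ one cannot reach any state outside $S_*$, Definition~\ref{nesucshest} shows every state of $S_*$ is essential. This is exactly alternative~2 of теорема~\ref{finite_class}.

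In the open case there exist $j\in S_*$ and $k\notin S_*$ with $j\to k$, and necessarily $k\not\to j$, since otherwise $k$ would communicate with $j$ and hence lie in $S_*$. Thus $j$ is inessential, and by the theorem "несущественное состояние невозвратно" $j$ is non-recurrent. Now I would transfer this to all of $S_*$ using teoрема~\ref{solidar}: if some $i\in S_*$ were recurrent, then by the recurrence criterion (теорема~\ref{criteria transience}) $\sum_n p_{i,i}(n)=\infty$, and picking $L,M$ with $p_{j,i}(L)>0$, $p_{i,j}(M)>0$ (possible since $i\leftrightarrow j$) we would get $\sum_n p_{j,j}(n)\ge p_{j,i}(L)\,p_{i,j}(M)\sum_n p_{i,i}(n)=\infty$, contradicting the non-recurrence of $j$. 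Hence every state of $S_*$ is non-recurrent, and then by the corollary "невозвратное $\Rightarrow$ нулевое" every state of $S_*$ is null; each such state reaches $k$, from which it cannot return, so each is inessential. This is alternative~1.

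The point requiring the most care is the legitimacy of applying теорема~\ref{solidar} "inside" a single equivalence class rather than to a globally irreducible chain. I would simply remark that the proof of that theorem uses only communication between the particular pair of states considered, so it applies verbatim to any two states of $S_*$; no global irreducibility is needed. The remaining steps — the open/closed dichotomy and the identification of "inessential" with the situation produced in the open case (reaching a state with no return, i.e. Determination~\ref{nesucshest}) — are routine. $\square$
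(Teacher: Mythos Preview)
Your argument is correct and matches what the paper does. The paper itself does not give a standalone proof of теорема~\ref{finite_class}; it introduces the statement with ``Таким образом, мы получаем\ldots'', presenting it as an immediate summary of the preceding results (теорема~\ref{solidar}, the corollary that in a finite closed class all states are non-null and recurrent, and the theorem that несущественное состояние невозвратно). Your proof makes this implicit derivation explicit via the open/closed dichotomy, and your remark that the solidarity argument only uses pairwise communication---hence applies inside any communicating class---is exactly the right justification for that step.
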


Иначе говоря, для \textbf{конечных} неразложимых подцепей
\begin{equation*}
    \text{нулевое состояние} \Longleftrightarrow \text{невозвратное} \Longleftrightarrow \text{несущественное}
\end{equation*}
\begin{equation*}
    \text{ненулевое состояние} \Longleftrightarrow \text{возвратное} \Longleftrightarrow \text{существенное}
\end{equation*}

\begin{example}\label{1}
Классифицировать состояния цепи, считая $p\in(0,1)$, $q\in(0,1)$.
\end{example}
\begin{figure}[!h]
	\centering
	\includegraphics[scale=0.40]{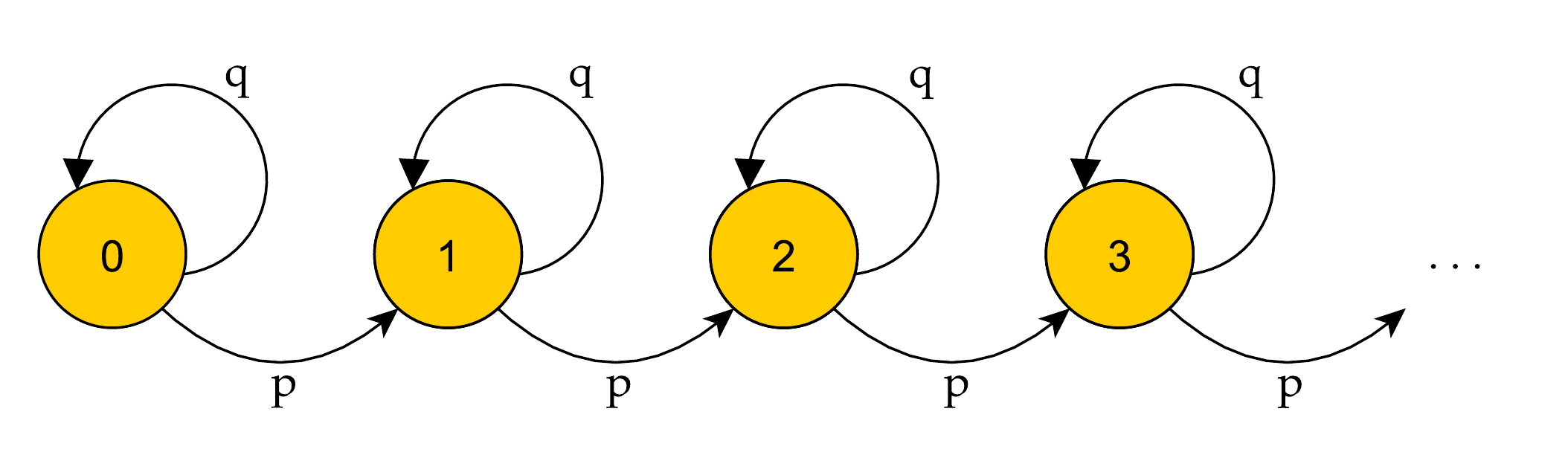}
	\caption{К примеру~\ref{1}}
	\label{fig:Problem9.1}
\end{figure}

\textbf{Решение}. Здесь ${E=\{0,1,2,\dots\}}$ -- счетное множество. Однако каждое состояние является открытым классом эквивалентности (при этом число классов счетно),  а значит, является нулевым и невозвратным. 

\elena{Данный пример демонстрирует, что в цепи может вовсе не быть существенных состояний (такая ситуация возможна, только если число состояний счетно).} \EndEx

В общей ситуации, когда класс имеет счетное число состояний, ситуация усложняется. Рассмотрим следующий
\begin{example}\label{randomWalk}
Классифицировать состояния цепи, считая $p\in(0,1)$, $q\in(0,1)$.
\begin{figure}[!h]
	\centering
	\includegraphics[scale=0.4]{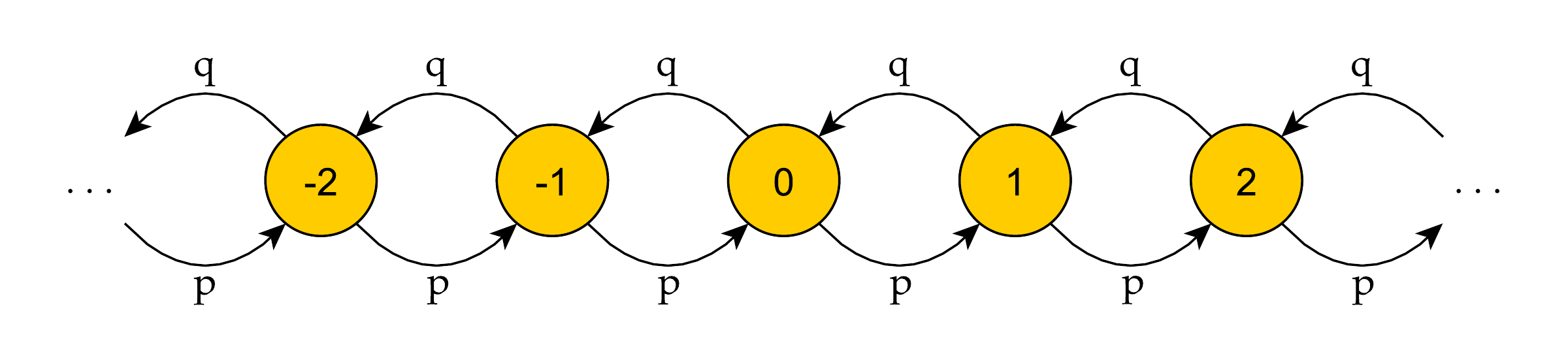}
	\caption{К примеру~\ref{randomWalk}}
	\label{fig:Problem92}
\end{figure}
\end{example}

\textbf{Решение}. Здесь ${E=\{\dots,-1,0,1,\dots\}}$. Все состояния цепи являются сообщающимися. Поэтому все они являются существенными. Возвращение в любое состояние возможно лишь за четное число шагов, следовательно ${\forall j\in E}$ период ${d_j=2}$, поэтому все состояния являются периодическими с периодом 2.

Теперь выясним, являются ли состояния нулевыми. Для любого $j\in E$: $$p_{j,j}(2n)=C_{2n}^n\,p^n\,q^n=\frac{(2n)!}{(n!)^2}\,p^n\,q^n.$$ По формуле Стирлинга, $n! \sim n^ne^{-n}\sqrt{2\pi n}$, $n\to\infty$. Отсюда $$p_{j,j}(2n) \sim \frac{(2n)^{2n}e^{-2n}\sqrt{2\pi\cdot 2n}}{\left(n^n e^{-n} \sqrt{2\pi n}\right)^2} p^nq^n = \frac{4^n\sqrt{2\pi\cdot2n}}{2\pi n}p^nq^n = \frac{(4p(1-p))^n}{\sqrt{\pi n}}$$ при $n\to\infty$. Но так как ${p(1-p)\le1/4}$ для всех ${p\in(0,1)}$, то ${p_{j,j}(2n)\to0}$, ${n\to\infty}$. Учитывая, что $p_{j,j}(2n+1)=0$ для всех $n$, получаем, что все состояния являются нулевыми.

Теперь исследуем состояния на свойство возвратности. Для этого рассмотрим ряд $$P_j = \sum\limits_{n=1}^{\infty} p_{j,j}(n) = \sum\limits_{n=1}^{\infty} p_{j,j}(2n).$$ Если ${p=q=1/2}$, то ${4pq=1}$ и ряд расходится. В этом случае все состояния являются возвратными. Если же ${p\ne q}$, то ряд сходится и ${P_j<\infty}$. В этом случае все состояния являются невозвратными. Вероятность вернуться в состояние $j$ при условии, что система вышла из него в начальный момент времени, равна $F_j=P_j/(1+P_j)<1$. \EndEx

\textbf{Замечание.} Симметричное случайное блуждание по целочисленной решетке возвратно в пространствах одного и двух измерений и невозвратно в пространстве трех и более измерений. Это утверждение есть теорема Пойа~\cite{Polya}.
В одномерном случае утверждение можно обобщить на случай произвольных симметричных случайных блужданий на целочисленной прямой. Пусть теперь шаг блуждания $\xi$ принимает произвольные значения, не только $+1$ и $-1$. Если $\xi$ -- симметричная целочисленная случайная величина с конечным математическим ожиданием ${\mathbb{E} \xi = 0}$, то случайное блуждание ${X(n) = \sum_{j=1}^n \xi_j}$, где $\xi_j$ -- независимые одинаково распределенные случайные величины, образует возвратную ОДМЦ с нулевыми состояниями. Если при этом предположить, что наибольший общий делитель неотрицательных значений $\xi$ равен $1$, то ОДМЦ бесконечное число раз побывает в любом состоянии $i$. С доказательствами этих фактов можно познакомиться, например, в книге~\cite{Borovkov1999}.

Из рассмотренного примера~\ref{randomWalk} видно, что в общем случае (счетного числа состояний) нулевое состояние может быть как возвратным 
(при этом существенным), так и невозвратным (при этом тоже существенным).

\begin{definition}
    Возвратное ненулевое состояние называется \textit{положительно возвратным}.
    
        Возвратное нулевое состояние называется \textit{нуль возвратным}.
\end{definition}

В случае \textbf{счетного} класса эквивалентности существенное состояние может принадлежать к одному из \textbf{трех} типов:
\begin{enumerate}
    \item либо положительно возвратное состояние,
    \item либо нуль возвратное,
    \item либо невозвратное (как следствие, нулевое).
\end{enumerate}

Таким образом, в общей ситуации справедливы только импликации
\begin{equation*}
    \text{ненулевое состояние} \Longrightarrow \text{возвратное}\Longrightarrow \text{существенно}
\end{equation*}
\begin{equation*}
    \text{несущественно}\Longrightarrow \text{невозвратное состояние} \Longrightarrow \text{нулевое}
\end{equation*}
как уже и утверждалось в следствии к теореме~\ref{criteria transience}.

\elena{\textbf{Замечание 1.}}
Отметим, что в случае возвратности состояния $i$, последовательность $f_i(n)$, $n \ge 1$, образует распределение вероятности. А значит, можно говорить, например, о математическом ожидании времени \textit{первого} возвращения в $i$-е состояние: 
\elena{ 
$$\mu_i:=\mathbb{E}_i\tau_i = \mathbb{E} [\tau_i|X_0 = i]= \sum_{n=1}^\infty n f_i (n),$$
где $\tau_i = \min\{ n\ge 1:\,\, X_n = i\}$}. Оказывается, что если состояние  нуль-возвратное, то $\mathbb{E}\elena{_i} \tau_i = \infty$, а если состояние  положительно возвратное, то $\mathbb{E}\elena{_i} \tau_i =  \frac{1}{\pi_i}$, где $\pi$ -- стационарное (инвариантное) распределение
\elena{(про которое будет подробно рассказано в следующем подразделе), то есть распределение вероятностей, являющееся решением уравнения $P^T\pi=\pi$. Этот результат несложно показать из формулы полной вероятности. Введём следующие обозначения
\begin{equation}
\label{mu ij}
    \mu_{ij}:=\mathbb{E}_i\tau_j = \mathbb{E} [\tau_j|X_0 = i],
\end{equation}
в частности $\mu_i = \mu_{ii}$. Тогда из формулы полной вероятности (усредняя по всем возможным состояниям на первом шаге $X_1=k$, $k\in E$) имеем систему линейных уравнений для любых состояний $i$ и $j$
\begin{equation*}
    \mu_{ij} = \sum_{k\neq j} p_{i,k}\mu_{kj}  + 1.
\end{equation*}
Умножим на $\pi_i$ и просуммируем по всем состояниям $i\in E$, воспользовавшись равенствами $\sum_{i\in E} \pi_i p_{i,j} = \pi_j$ и $\sum_{i\in E} \pi_i =1$:
\begin{equation*}
    \mu_j\pi_j = 1.
\end{equation*}
}

\elena{Другое доказательство равенства $\mu_i \pi_i = 1$ на основе предельных законов см. в примере~\ref{ex:mu j pi j =1}.}


\begin{landscape}
\begin{figure}[!h]
\begin{subfigure}[b]{0.99\textwidth}

\tikzset{every picture/.style={line width=0.75pt}} 

\begin{tikzpicture}[x=0.75pt,y=0.75pt,yscale=-1,xscale=1]

\draw    (160,10) -- (160,28) ;
\draw [shift={(160,30)}, rotate = 270] [color={rgb, 255:red, 0; green, 0; blue, 0 }  ][line width=0.75]    (10.93,-3.29) .. controls (6.95,-1.4) and (3.31,-0.3) .. (0,0) .. controls (3.31,0.3) and (6.95,1.4) .. (10.93,3.29)   ;

\draw   (155,52.5) -- (160,48) -- (165,52.5) -- (162.5,52.5) -- (162.5,61.5) -- (165,61.5) -- (160,66) -- (155,61.5) -- (157.5,61.5) -- (157.5,52.5) -- cycle ;
\draw    (160,10) -- (280,10) ;

\draw    (500,10) -- (500,28) ;
\draw [shift={(500,30)}, rotate = 270] [color={rgb, 255:red, 0; green, 0; blue, 0 }  ][line width=0.75]    (10.93,-3.29) .. controls (6.95,-1.4) and (3.31,-0.3) .. (0,0) .. controls (3.31,0.3) and (6.95,1.4) .. (10.93,3.29)   ;

\draw    (380,10) -- (500,10) ;

\draw   (495,53.5) -- (500,49) -- (505,53.5) -- (502.5,53.5) -- (502.5,62.5) -- (505,62.5) -- (500,67) -- (495,62.5) -- (497.5,62.5) -- (497.5,53.5) -- cycle ;
\draw   (155,89.5) -- (160,85) -- (165,89.5) -- (162.5,89.5) -- (162.5,98.5) -- (165,98.5) -- (160,103) -- (155,98.5) -- (157.5,98.5) -- (157.5,89.5) -- cycle ;
\draw   (495,89.5) -- (500,85) -- (505,89.5) -- (502.5,89.5) -- (502.5,98.5) -- (505,98.5) -- (500,103) -- (495,98.5) -- (497.5,98.5) -- (497.5,89.5) -- cycle ;
\draw   (155,149.5) -- (160,145) -- (165,149.5) -- (162.5,149.5) -- (162.5,158.5) -- (165,158.5) -- (160,163) -- (155,158.5) -- (157.5,158.5) -- (157.5,149.5) -- cycle ;
\draw   (495,149.5) -- (500,145) -- (505,149.5) -- (502.5,149.5) -- (502.5,158.5) -- (505,158.5) -- (500,163) -- (495,158.5) -- (497.5,158.5) -- (497.5,149.5) -- cycle ;
\draw    (80,226) -- (80,244) ;
\draw [shift={(80,246)}, rotate = 270] [color={rgb, 255:red, 0; green, 0; blue, 0 }  ][line width=0.75]    (10.93,-3.29) .. controls (6.95,-1.4) and (3.31,-0.3) .. (0,0) .. controls (3.31,0.3) and (6.95,1.4) .. (10.93,3.29)   ;

\draw    (80,226) -- (130,226) ;

\draw    (240,226) -- (240,244) ;
\draw [shift={(240,246)}, rotate = 270] [color={rgb, 255:red, 0; green, 0; blue, 0 }  ][line width=0.75]    (10.93,-3.29) .. controls (6.95,-1.4) and (3.31,-0.3) .. (0,0) .. controls (3.31,0.3) and (6.95,1.4) .. (10.93,3.29)   ;

\draw    (190,226) -- (240,226) ;

\draw (160.5,38) node   {возвратное};
\draw (160,75.5) node   {$\mathbb{P}( X_{n} = i$ \text{бесконечно часто} $|\, X_{0}  = i)  = 1$};
\draw (330.5,7) node   {$i$-е состояние};
\draw (500.5,39) node   {невозвратное};
\draw (159.5,127) node   [align=left] {\ \ Вероятность возвращения за  конечное \\ \ \ \ \ \ \ \ число шагов: $F_{i} =\sum\limits^{\infty}_{n=1} f_{i} (n)= 1$};
\draw (500,75.5) node   {$\mathbb{P}( X_{n} = i$ \text{бесконечно часто} $|\, X_{0}  = i)  = 0$};
\draw (160.5,208.5) node   {$\mathbb{E}\sum\limits ^{\infty }_{n=1}\mathsf{I} (X_{n} =i)=\sum\limits ^{\infty }_{n=1} p_{ii} (n)=\infty $};
\draw (80.5,274) node   {$\lim\limits _{n\rightarrow \infty } p_{ii} (n)=0$};
\draw (80.5,302) node   {$\mu _{i} =\sum\limits ^{\infty }_{n=1} nf_{i} (n)=\infty $};
\draw (240.5,274) node   {$\lim\limits _{n\rightarrow \infty } p_{ii} (n)\neq 0$};
\draw (240,303.5) node   {$\mu _{i} =\sum\limits ^{\infty }_{n=1} nf_{i} (n)=\frac{1}{\pi _{i}}$};
\draw (240.5,350) node   {$\lim\limits _{n\rightarrow \infty } p_{ii} (n)=\pi _{i}  >0$};
\draw (240,374.69) node   {Иначе $\lim\limits _{n\rightarrow \infty }\frac{1}{n}\sum\limits ^{n}_{k=1} p_{ii} (k)=\pi _{i}  >0$};
\draw (159.5,179) node   [align=left] {Математическое ожидание числа \\ \ \ \ \ \ посещений $i$-го состояния:};
\draw (499.5,179) node   [align=left] {Математическое ожидание числа \\ \ \ \ \ \ посещений $i$-го состояния:};
\draw (500.5,208.5) node   {$\mathbb{E}\sum\limits^{\infty }_{n=1}\mathsf{I} (X_{n} =i)=\sum\limits ^{\infty }_{n=1} p_{ii} (n)< \infty $};
\draw (240.5,253.5) node   {положительно возвратное};
\draw (80.5,253.5) node   {нуль возвратное};
\draw (240.5,328.5) node   {Если $i$-е состояние апериодическое, то};
\draw (500.5,246.5) node   [align=left] {\qquad \qquad \ \ \ (как следствие: \\ $\lim\limits_{n \to \infty } p_{ii} (n)=0$ -- нулевое состояние)};
\draw (500.5,127) node  [align=left] {\ \ Вероятность возвращения за  конечное \\ \ \ \ \ \ \ \ число шагов: $F_{i} =\sum\limits^{\infty}_{n=1} f_{i} (n)< 1$};

\end{tikzpicture}
\end{subfigure}
    \caption{Классификация состояний ОДМЦ по асимптотическим свойствам $p_{i,i}(n)$} 
    \label{fig:StatesClassification}
\end{figure}
\end{landscape}

\textbf{Замечание \elena{2.}}
\begin{enumerate}
    \item Нуль-возвратные состояния могут быть только в случае счетных классов (классическим примером является симметричное случайное блуждание на одномерной решетке -- пример~\ref{randomWalk}).
    \item В случае конечных замкнутых классов состояния положительно возвратные.
    \item Примеры невозвратных состояний (а значит, нулевых) в случае
    \begin{enumerate}
        \item $|E| < \infty$: такими являются только открытые классы (то есть несущественные состояния).
    \item $|E| = \infty$: такими могут быть, как несущественные состояния, так и существенные (несимметричное блуждание из примера~\ref{randomWalk}).
    \end{enumerate}
\end{enumerate}


\begin{figure}
    \centering
    \includegraphics[width=0.6\textwidth]{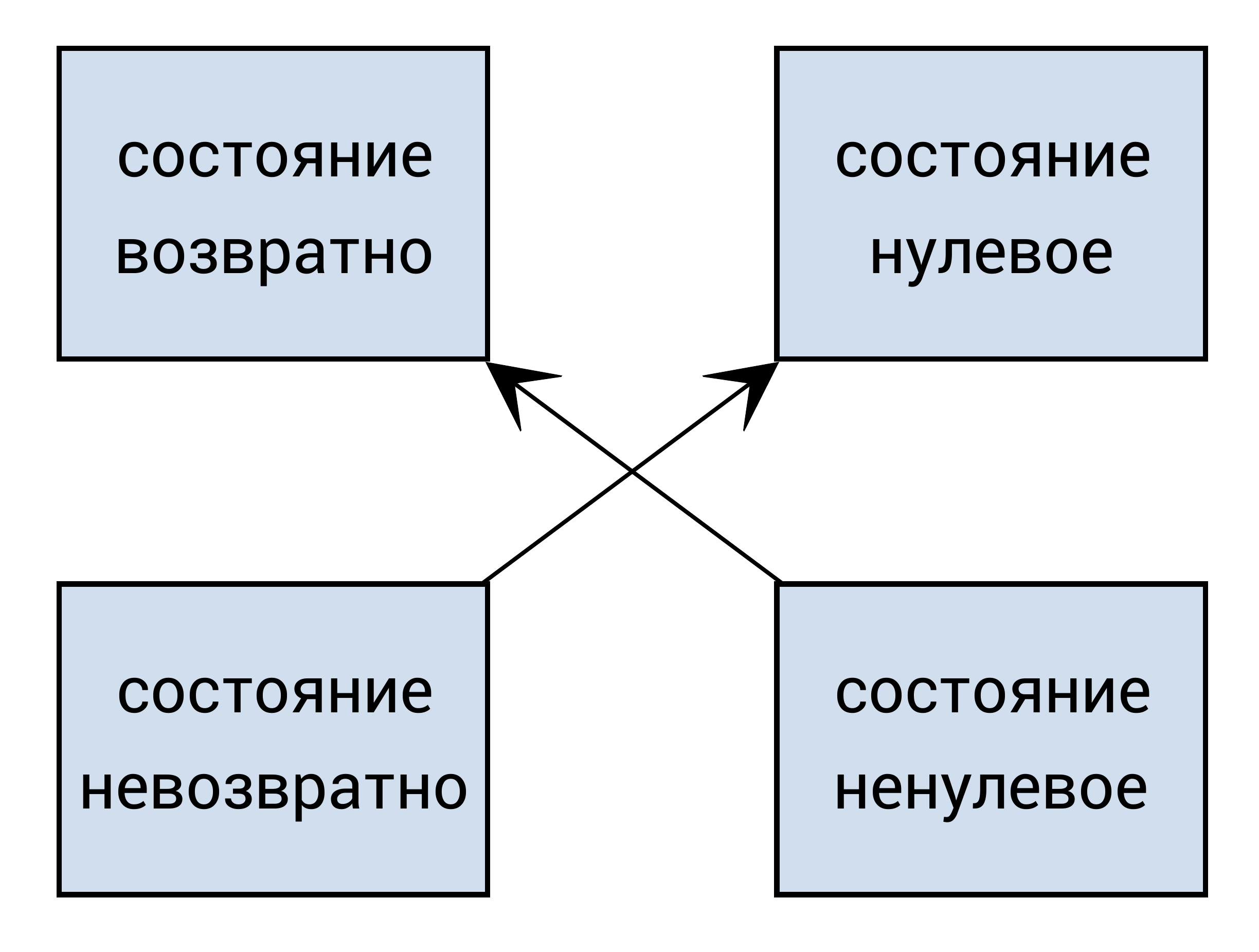}
    \caption{Связь возвратности и нулевости состояния ОДМЦ. Стрелки означают логические импликации. Обратные стрелки в общем случае провести нельзя, если число состояний счетно. Если цепь конечна, то можно провести и обратные стрелки}
    \label{fig:zero_return}
\end{figure}

\subsection{Эргодические дискретные цепи Маркова}

Перейдем теперь к важнейшим понятиям стационарности распределения и эргодичности цепи.

\begin{definition}
    Распределение вероятностей ${\pi = (\pi_0, \pi_1,\dots)}$ называется \textit{стационарным, или инвариантным}, если $$P^\top \pi = \pi,$$ где $P$ -- переходная матрица \gav{однородной дискретной марковской цепи (ОДМЦ)}.
\end{definition}

\textbf{Замечание.} ОДМЦ является стационарным в узком смысле случайным процессом тогда и только тогда, когда распределение вероятностей на множестве состояний является стационарным. 

Как мы увидим позже, стационарного распределения может и не быть. Такая ситуация возникает в случае счетных цепей.

\begin{example}
\label{Ex1}
Рассмотрим несколько примеров.

\end{example}


\begin{figure}[h]
\begin{minipage}[h]{0.41\linewidth}
\center{\includegraphics[width=\textwidth]{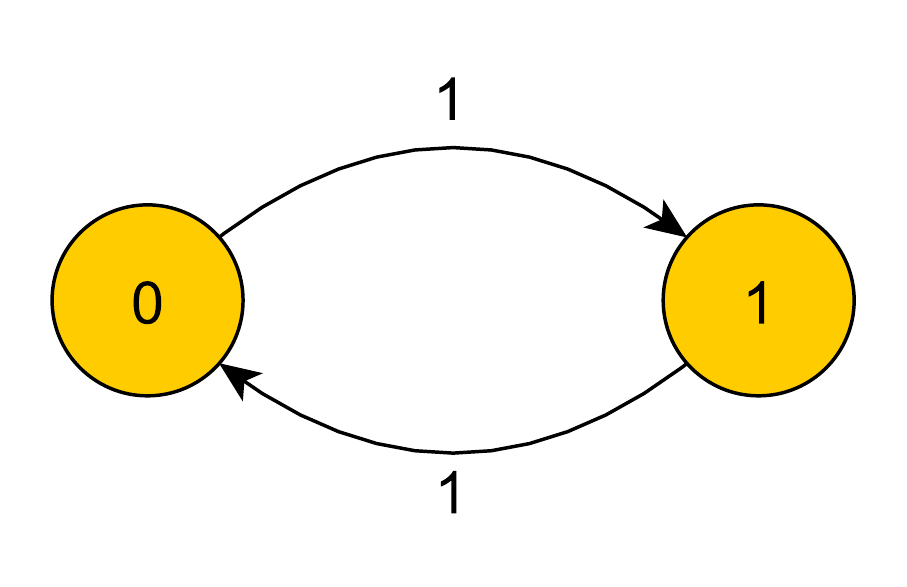}}
\end{minipage}
\hfill
\begin{minipage}[h]{0.58\linewidth}
$P=\left[ {\begin{array}{*{20}{r}}
	{0}&{1} \\
	{1}&{0} \\ 
	\end{array}} \right], \ \ \pi=\left[ {\begin{array}{*{20}{r}}
	{1/2} \\
	{1/2} \\ 
	\end{array}} \right]$
\end{minipage}
\vfill
\begin{minipage}[h]{0.41\linewidth}
\center{\includegraphics[width=\textwidth]{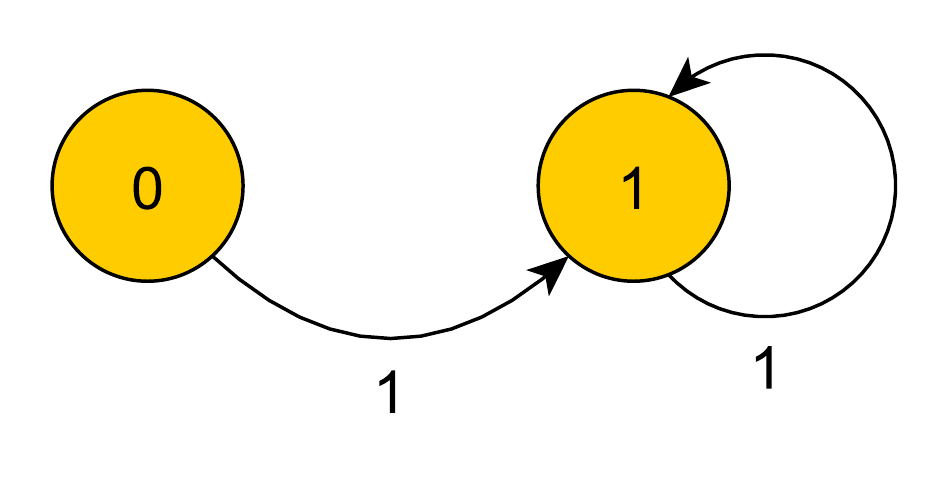}}
\end{minipage}
\hfill
\begin{minipage}[h]{0.58\textwidth}
$P=\left[ {\begin{array}{*{20}{r}}
	{0}&{1} \\
	{0}&{1} \\ 
	\end{array}} \right], \ \ 
	\pi=\left[ {\begin{array}{*{20}{c}}
	{0} \\
	{1} \\ 
	\end{array}} \right]$
\end{minipage}
\vfill
\begin{minipage}[h]{0.41\linewidth}
\center{\includegraphics[width=\textwidth]{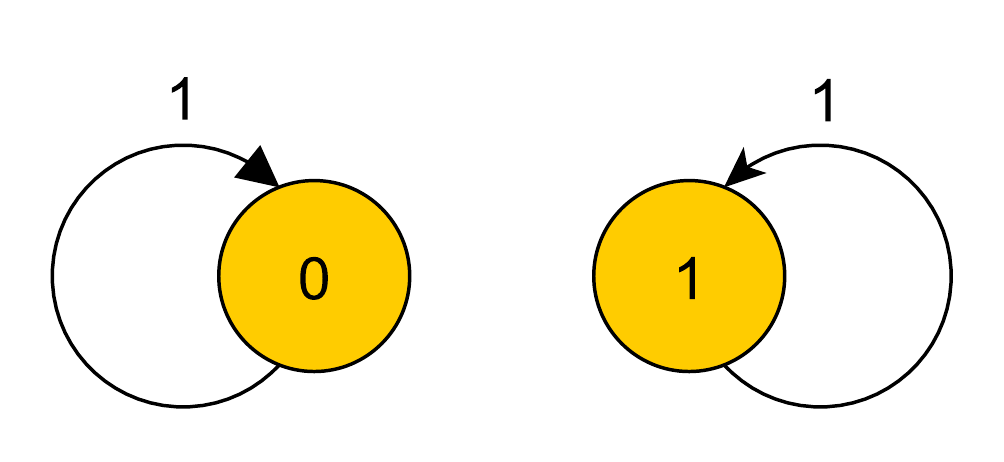}}
\end{minipage}
\hfill
\begin{minipage}[h]{0.58\textwidth}
$
P=\left[ {\begin{array}{*{20}{r}}
{1}&{0} \\
{0}&{1} \\ 
\end{array}} \right], \ \ 
\pi=\left[ {\begin{array}{*{20}{c}}
{p} \\
{1-p} \\ 
\end{array}} \right] 
\forall p \in[0,1]
$
\end{minipage}
\end{figure}
$$$$
\begin{theorem}
\label{existence invar}
\textit{В любой конечной цепи Маркова найдется хотя бы одно стационарное распределение.}
\end{theorem}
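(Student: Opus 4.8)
Обозначим ${N+1=|E|<\infty}$ и рассмотрим вероятностный симплекс
$$\Delta=\Big\{x=(x_0,\dots,x_N)\in\mathbb{R}^{N+1}:\ x_i\ge0,\ \textstyle\sum_{i=0}^N x_i=1\Big\}.$$
Поскольку матрица $P$ стохастическая, линейное отображение ${x\mapsto P^\top x}$ переводит $\Delta$ в $\Delta$: оно сохраняет неотрицательность компонент и равенство их суммы единице. Искомое стационарное распределение -- это в точности неподвижная точка этого отображения. Ключевое наблюдение: $\Delta$ -- компактное выпуклое подмножество $\mathbb{R}^{N+1}$, и неподвижную точку можно добыть усреднением по Чезаро, не прибегая к теореме Брауэра.

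Конкретно я бы действовал так. Возьму произвольное начальное распределение ${p(0)\in\Delta}$ (например, сосредоточенное в одном состоянии) и положу ${p(n)=(P^\top)^n p(0)}$ в силу теоремы~\ref{thm:kom_capm_1}. Введу чезаровские средние
$$q_T=\frac{1}{T}\sum_{n=0}^{T-1}p(n),\qquad T\ge1.$$
Как выпуклая комбинация точек $\Delta$, каждая точка $q_T$ снова лежит в $\Delta$. Так как $\Delta$ ограничено, по теореме Больцано--Вейерштрасса найдётся подпоследовательность ${q_{T_k}\to\pi}$ при ${k\to\infty}$; поскольку $\Delta$ замкнуто, ${\pi\in\Delta}$, то есть $\pi$ -- вероятностное распределение.

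Остаётся проверить стационарность $\pi$, и здесь работает телескопическое сокращение:
$$P^\top q_T-q_T=\frac{1}{T}\sum_{n=0}^{T-1}\big(P^\top p(n)-p(n)\big)=\frac{1}{T}\sum_{n=0}^{T-1}\big(p(n+1)-p(n)\big)=\frac{1}{T}\big(p(T)-p(0)\big),$$
а норма правой части не превосходит ${2/T\to0}$. Переходя к пределу по подпоследовательности $T_k$ и пользуясь непрерывностью линейного отображения ${x\mapsto P^\top x}$, получаю ${P^\top\pi-\pi=0}$, то есть ${P^\top\pi=\pi}$.

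Основная тонкость -- не сами выкладки (они элементарны), а те два места, где существенно используется \textit{конечность} цепи: замкнутость и, главное, компактность симплекса $\Delta$ (именно она даёт сходящуюся подпоследовательность и не позволяет "массе уйти на бесконечность", сохраняя нормировку предела). При счётном числе состояний симплекс уже не компактен в подходящей топологии, и стационарного распределения может не быть -- как в примере~\ref{1}. Замечу также, что из теоремы~\ref{eigen value 1} известно, что $1$ -- собственное значение матрицы $P^\top$, однако она не гарантирует существования \textit{неотрицательного} собственного вектора, а это и есть содержание доказываемого утверждения; поэтому эйлеровский "алгебраический" путь требует дополнительной работы, которую приведённая схема аккуратно обходит. Альтернативно можно сослаться прямо на теорему Брауэра, применённую к непрерывному отображению ${x\mapsto P^\top x}$ компакта $\Delta$ в себя, но это привлекает нетривиальный топологический факт, которого мы здесь избегаем.
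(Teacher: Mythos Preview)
Your proposal is correct and follows essentially the same route as the paper: Ces\`aro averaging of the iterates $p(n)=(P^\top)^n p(0)$ inside the compact simplex, extraction of a convergent subsequence by Bolzano--Weierstrass, and the telescoping estimate $P^\top q_T-q_T=\tfrac{1}{T}\big(p(T)-p(0)\big)\to0$ to conclude that the limit is a fixed point. The paper frames this as ``a variant of Brouwer's theorem'' but then carries out exactly the averaging argument you wrote; your added remarks on where finiteness is essential and why Theorem~\ref{eigen value 1} alone does not suffice are to the point.
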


\begin{proof} 
Утверждение теоремы следует из одного варианта теоремы Брауэра о неподвижной точке, и для нашего случая доказательство проводится следующим образом. Рассмотрим множество всех стохастических векторов: 
$$\mathcal{P}=\{p=(p_1, \dots, p_N):  \ p_1 \ge 0, \, \dots, \, p_N \ge 0, \, p_1 + \dots + p_N = 1\}.$$ 
Легко видеть, что это множество выпуклое, замкнутое и ограниченное. Возьмем произвольный вектор ${p \in \mathcal{P}}$, введем обозначение ${A=P^\top}$ и рассмотрим последовательность $$w_n=\frac{1}{n+1}\sum\limits_{k=0}^n A^k p.$$ Так как преобразование $A$ переводит стохастические векторы в стохастические, а множество $\mathcal{P}$ выпуклое, то $\{w_n\}\subset\mathcal{P}$. Далее, так как последовательность $\{w_n\}$ ограничена (она лежит в ограниченном множестве), то по теореме Больцано--Вейерштрасса из нее можно выделить сходящуюся подпоследовательность $\{w_{n_m}\}$, причем предел $w$ этой подпоследовательности лежит в $\mathcal{P}$, так как множество $\mathcal{P}$ замкнутое. Из~оценок $$|Aw_{n_m}-w_{n_m}|=\frac{1}{n_m+1}\left|\sum\limits_{k=0}^{n_m} \left(A^{k+1}p - A^k p\right)\right|=$$ $$=\frac{1}{n_m+1}\left|A^{n_m+1}p - p\right|\le\frac{\mathrm{diam}(\mathcal{P})}{n_m+1}$$ следует, что ${|Aw_{n_m}-w_{n_m}|\to0}$, $m\to\infty$; здесь $\mathrm{diam}(\mathcal{P})$ -- диаметр множества $\mathcal{P}$, он конечен в силу ограниченности $\mathcal{P}$. Так как $$A(w_{n_m}-w)-(w_{n_m}-w)\to0, \ {Aw_{n_m}-w_{n_m}\to0},$$ то необходимо ${Aw=w}$. Значит, мы нашли стационарное распределение $p^0=w$. \EndProof

\end{proof} 

\textbf{Замечание.}  Теорема~\ref{existence invar} является следствием теоремы~\ref{eigen value 1}, так как собственное пространство конечной  матрицы $P^T$, отвечающее собственному значению 1, порождается стационарным распределением. Более того, если цепь содержит единственный замкнутый конечный класс эквивалентности, то $1$ является собственным значением матриц $P$ и $P^T$, причем алгебраическая и геометрическая кратность равны единице, и соответствующее собственное пространство матрицы $P$ порождается вектором из одних единиц, а соответствующее собственное пространство матрицы $P$ порождается стационарным распределением, см. теорему о спектре стохастической матрицы 1.12.3~\cite{KelbertSukhov2010}. Если матрица бесконечная, то собственный вектор матрицы $P^T$ с неотрицательными компонентами для собственного значения 1 тоже существует, но из-за бесконечности размера не всегда существует нормировочная константа, переводящая этот вектор в распределение вероятности. 

\begin{theorem}
\textit{\textit{Пусть $\pi$ -- стационарное распределение конечной цепи Маркова, а сос\-то\-яние $i$ является несущественным. Тогда}} $\pi_i=0$.
\end{theorem}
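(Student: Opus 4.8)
Идея в том, чтобы показать, что вероятность находиться в несущественном состоянии $i$ в момент времени $n$ стремится к нулю при $n\to\infty$, а стационарность этого распределения не оставляет места для ненулевого значения. Первым шагом я бы воспользовался тем, что стационарное распределение $\pi$ как начальное распределение цепи не меняется со временем: если взять $p(0)=\pi$, то $p(n)=\pi$ для всех $n\ge0$ по теореме~\ref{thm:kom_capm_1}. В частности, $\pi_i=\PP(\xi_n=i)$ для любого $n$.

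Далее я бы показал, что $p_{j,i}(n)\to0$ при $n\to\infty$ для любого начального состояния $j$. Поскольку состояние $i$ несущественное, оно невозвратно (по теореме об этом из текста), а невозвратное состояние является нулевым (следствие к критерию возвратности~\ref{criteria transience}), то есть $p_{i,i}(n)\to0$. Чтобы получить $p_{j,i}(n)\to0$ для всех $j$, можно рассуждать так: из цепочки
$$p_{j,j}(m) \ge p_{j,i}(k)p_{i,i}(m-k-l)p_{i,j}(l)$$
для подходящих $k,l$ с $p_{j,i}(k)>0$, $p_{i,j}(l)>0$ (если такие существуют; если $i\not\to j$, то вклад от $j$ в сумму ниже вообще отсутствует начиная с некоторого места) и того, что $p_{j,j}(m)$ ограничена, вывести нужное. Более аккуратный путь — воспользоваться тем, что $\sum_{n}p_{j,i}(n)<\infty$ для невозвратного $i$ (это тоже следует из теории, развитой вокруг теоремы~\ref{criteria transience}, через представление $p_{j,i}(n)=\sum_k f_{j\to i}(k)p_{i,i}(n-k)$ и конечность обеих сумм), откуда автоматически $p_{j,i}(n)\to0$.

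Заключительный шаг: по формуле полной вероятности, раскрывая по начальному состоянию $\xi_0$ относительно самого распределения $\pi$,
$$\pi_i = \PP(\xi_n=i) = \sum_{j\in E}\pi_j\, p_{j,i}(n).$$
Так как цепь конечна, сумма конечна, все слагаемые ограничены единицей, и каждое $p_{j,i}(n)\to0$; значит правая часть стремится к нулю при $n\to\infty$, тогда как левая часть от $n$ не зависит. Следовательно $\pi_i=0$.

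Основным препятствием я считаю аккуратное обоснование того, что $p_{j,i}(n)\to0$ \emph{для всех} $j$, а не только для $j=i$: нулевость состояния $i$ напрямую даёт лишь $p_{i,i}(n)\to0$. Это устраняется либо ссылкой на сходимость ряда $\sum_n p_{j,i}(n)$ (которая есть уже потому, что число посещений несущественного состояния конечно п.н., а значит имеет конечное математическое ожидание), либо аргументом солидарного типа через неразложимые подцепи; в конечной цепи оба способа работают без осложнений.
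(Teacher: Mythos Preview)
Your argument is correct and complete for the finite case, but it follows a different route from the paper's proof of this particular theorem. The paper instead writes the transition matrix in block form
\[
P=\begin{pmatrix}Q & R\\ O & S\end{pmatrix},
\]
with $Q$ governing transitions among the non-essential states, and then shows directly that $\|Q^{n}\|_\infty\to 0$ by a finite-chain argument (there is some $n_0$ with all row sums of $Q^{n_0}$ strictly below $1$). From $\pi=(P^\top)^n\pi$ and the block structure it follows that the non-essential components of $\pi$ vanish. This is a self-contained linear-algebra argument that uses nothing about recurrence or nullity.

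Your approach, by contrast, passes through the chain of implications ``non-essential $\Rightarrow$ non-recurrent $\Rightarrow$ null'', then establishes $p_{j,i}(n)\to 0$ for every $j$ via the first-passage decomposition $p_{j,i}(n)=\sum_{m\le n}f_{j,i}(m)p_{i,i}(n-m)$, and finishes with $\pi_i=\sum_j\pi_j p_{j,i}(n)\to 0$ (a finite sum). This is exactly the method the paper uses for the \emph{next}, more general theorem (null state $\Rightarrow$ $\pi_i=0$, without assuming finiteness). So you have essentially rediscovered the proof of the stronger statement and specialized it. What you gain is generality and conceptual clarity; what the paper's block-matrix proof gains is independence from the recurrence machinery.

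One small caveat: the parenthetical ``число посещений\dots\ конечно п.н., а значит имеет конечное математическое ожидание'' is not a valid implication in general. It happens to be true here (the number of returns to a non-recurrent state is geometrically dominated), but your cleaner justification via $\sum_n p_{j,i}(n)=\bigl(\sum_m f_{j,i}(m)\bigr)\bigl(\sum_k p_{i,i}(k)\bigr)<\infty$ is the one to keep.
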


\begin{proof}
Все состояния цепи разобьем на два класса:\linebreak класс несущественных состояний $S_0$ и класс всех остальных состояний $S_1$. Перенумеровав подходящим образом состояния цепи, мы можем записать переходную матрицу: $$P=\left[ {\begin{array}{*{20}{c}}
{Q} & {R} \\
{O} & {S} \\ \end{array}} \right],$$ где матрица $Q$ отвечает переходам между несущественными состояниями, матрица $R$ отвечает переходам от несущественных состояний к существенным, матрица $O$ нулевая, а матрица $S$ отвечает переходам между существенными состояниями. 
$$P = \left[ {\begin{array}{*{20}{c}}
{Q} & {R} \\
{O} & {S} \\ 
\end{array}} \right], \ \ P^2 = \left[ {\begin{array}{*{20}{c}}
{Q^2} & {QR+RS} \\
{O} & {S^2} \\ 
\end{array}} \right], \ \dots, \ P^n = \left[ {\begin{array}{*{20}{c}}
	{Q^n} & {A} \\
	{O} & {S^n} \\ 
	\end{array}} \right],$$
 где $A$ -- некоторая матрица. Заметим, что в силу конечности цепи найдется $n_0$ такое, что для всех несущественных состояний $i\in{1,\dots,k}$ выполнено неравенство
\begin{equation}\label{eq:ineq}
\sum\limits_{j=1}^k Q_{ij}^{n_0} < 1,
\end{equation}
где ${k<|E|}$ -- количество несущественных состояний. Теперь рассмотрим матричную норму для произвольных матриц $M$ размера $$\left\| M \right\|_{\infty} = \max\limits_{1\le i \le n}\sum\limits_{j=1}^n |M_{ij}|.$$ Тогда из~\eqref{eq:ineq} следует неравенство $\left\| Q^{n_0} \right\|_{\infty} < 1$. 

Докажем, что ${\left\| Q^{n} \right\|_{\infty} \to 0}$, ${n\to\infty}$. Из неравенства ${\left\| Q \right\|_{\infty}\le 1}$ следует, что $${\left\| Q^n \right\|_{\infty}\le \left\| Q \right\|_{\infty}^n \le 1}$$ для всех ${n\le n_0}$. Далее, для всех $n\in[n_0,2n_0)$ имеем $${\left\| Q^n \right\|_{\infty}\le \left\| Q^{n_0} \right\|_{\infty} \left\| Q \right\|_{\infty}^{n-n_0} \le \left\| Q^{n_0} \right\|_{\infty}}.$$ Для $n\in[2n_0,3n_0)$ получаем аналогично
$${\left\| Q^n \right\|_{\infty}\le \left\| Q^{n_0} \right\|_{\infty}^{2} \left\| Q \right\|_{\infty}^{n-2n_0} \le \left\| Q^{n_0} \right\|_{\infty}^{2}}.$$ В общем случае получаем, что 
$$\left\| Q^n \right\|_{\infty}\le\left\| Q^{n_0} \right\|_{\infty}^{[n/n_0]}\to0, \ n \to\infty,$$ где квадратные скобки означают взятие целой части. Это означает, что с ростом $n$ каждый элемент матрицы $Q$ стремится к нулю.

Теперь транспонируем матрицу $P$: $$\left(P^\top\right)^n = \left[ {\begin{array}{*{20}{c}}
{\left(Q^\top\right)^n} & {O} \\
{A^\top} & {\left(S^\top\right)^n} \\ 
\end{array}} \right]. $$
Произвольное стационарное распределение $\pi$ удовлетворяет равенствам $$\pi = P^\top \pi = \cdot = (P^\top)^n \pi = \dots = \lim\limits_{n\to\infty}\left(P^\top\right)^n\pi.$$ Так как первые $k$ компонент вектора $\lim_{n\to\infty}\left(P^\top\right)^n\pi$ равны нулю, то нулю равны и первые $k$ компонент вектора $\pi$, как следует из равенства выше. Значит, для всех несущественных состояний $i=1,\dots,k$ вероятности $\pi_i = 0$. \EndProof
\end{proof}

\elena{На самом деле справедливо более общее утверждение.} 

\elena{
 \begin{theorem}
 \textit{Пусть $\pi$ -- стационарное распределение ОДМЦ, а сос\-то\-яние $i$ является нулевым. Тогда $\pi_i=0$. }
 \end{theorem}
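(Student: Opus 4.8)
План состоит в том, чтобы воспользоваться инвариантностью распределения $\pi$ и перейти к пределу при $n\to\infty$ в соответствующем покомпонентном равенстве. Из $P^\top\pi=\pi$ немедленно следует $(P^\top)^n\pi=\pi$ для всех $n\ge1$, то есть
$$\pi_i=\sum_{k\in E}\pi_k\,p_{k,i}(n),\qquad n\ge1.$$
Если удастся показать, что $p_{k,i}(n)\to0$ при $n\to\infty$ для каждого состояния $k\in E$, то, пользуясь мажорантной оценкой $0\le \pi_k p_{k,i}(n)\le\pi_k$ и сходимостью ряда $\sum_{k\in E}\pi_k=1$, по теореме о мажорируемой сходимости (применённой к счётной мере на $E$) можно внести предел под знак суммы и получить $\pi_i=\sum_{k\in E}\pi_k\cdot 0=0$, что и требуется.

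Следующий, ключевой шаг -- проверка того, что $p_{k,i}(n)\to0$ при всех $k$. Для $k=i$ это в точности определение нулевого состояния. Для $k\ne i$ я бы воспользовался разложением вероятности перехода по моменту первого попадания в $i$: обозначив через $f_{k,i}(m)$ вероятность впервые попасть из $k$ в $i$ ровно за $m$ шагов, имеем (как следствие формулы полной вероятности и марковского свойства, аналогично соотношению $p_{k,k}(n)=\sum_m f_k(m)p_{k,k}(n-m)$ из раздела о классификации состояний)
$$p_{k,i}(n)=\sum_{m=1}^{n}f_{k,i}(m)\,p_{i,i}(n-m),$$
причём $\sum_{m=1}^{\infty}f_{k,i}(m)=F_{k,i}\le1$ как сумма вероятностей попарно несовместных событий. Зафиксируем $\varepsilon>0$ и подберём $N$ так, чтобы $p_{i,i}(j)<\varepsilon$ при всех $j>N$ (это возможно в силу нулевости состояния $i$). Разбивая сумму на слагаемые с $n-m>N$ и с $n-m\le N$, получаем при достаточно больших $n$
$$p_{k,i}(n)\le\varepsilon\sum_{m=1}^{\infty}f_{k,i}(m)+\sum_{m=n-N}^{n}f_{k,i}(m)\le\varepsilon+\sum_{m\ge n-N}f_{k,i}(m).$$
Хвост сходящегося ряда стремится к нулю, поэтому $\limsup_{n\to\infty}p_{k,i}(n)\le\varepsilon$; в силу произвольности $\varepsilon$ отсюда $p_{k,i}(n)\to0$.

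Основной трудностью я считаю аккуратное обоснование предельного перехода под знаком бесконечной суммы в случае счётного множества состояний: здесь существенно используются конечность полной массы $\sum_{k}\pi_k=1$ (дающая равномерную по $n$ суммируемую мажоранту) и ограниченность $p_{k,i}(n)\le1$. Для конечной цепи этот шаг тривиален, и всё доказательство сводится к лемме о том, что из нулевости $i$ следует $p_{k,i}(n)\to0$. Отметим, что попутно полученное утверждение ($p_{k,i}(n)\to0$ для нулевого $i$ и произвольного $k$) само по себе полезно и согласуется с результатами о предельном поведении $p_{i,j}(n)$, обсуждаемыми далее; частный случай с несущественным (а значит, нулевым) состоянием $i$ уже разобран в предыдущей теореме, и настоящий приём даёт единое рассуждение для всех нулевых состояний, включая нуль-возвратные.
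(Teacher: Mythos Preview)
Ваше доказательство верно и следует той же схеме, что и в пособии: записать $\pi_i=\sum_{k}\pi_k p_{k,i}(n)$, затем через разложение по моменту первого попадания $p_{k,i}(n)=\sum_{m=1}^{n}f_{k,i}(m)p_{i,i}(n-m)$ получить $p_{k,i}(n)\to0$ для каждого $k$ и перейти к пределу. Отличие лишь в финальном переходе: в пособии используется оценка $\pi_i\le\sup_{k}p_{k,i}(n)$ и затем утверждается, что супремум стремится к нулю, тогда как вы аккуратнее применяете теорему о мажорируемой сходимости с мажорантой $\pi_k$. Ваш вариант предпочтительнее для счётного $E$, поскольку поточечная сходимость $p_{k,i}(n)\to0$ сама по себе равномерности по $k$ не даёт, а суммируемость $\sum_k\pi_k=1$ делает применение мажорантной теоремы совершенно прозрачным; кроме того, вы полностью выписали $\varepsilon$-рассуждение для сходимости $p_{k,i}(n)\to0$, которое в тексте лишь обозначено.
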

 }
 
 \elena{
 \begin{proof}
     Стационарное распределение удовлетворяет системе уравнений для любого $n\ge 1$
$$
\left(P^T \right)^n \pi = \pi.
$$
В частности, 
\begin{equation}
\label{pi=0}
    \pi_i  = \sum_{k\in E}\pi_k p_{ki}(n)\le  \sup_{k\in E} p_{ki}(n).
\end{equation}
Заметим, что если $i$ состояние нулевое, то для любого состояния $k$ $p_{ki}(n)\to 0$ при $n\to\infty$. Это  следует, если перейти к пределу $n\to\infty$ в формуле полной вероятности
 $$
 p_{ki}(n) = \sum_{m=1}^n f_{ki}(m)p_{ii}(n-m),
 $$
 где $f_{ki}(m)$ --- вероятность первого попадания из $k$ состояния в $i$ на шаге  $m$. 
 Возвращаясь к доказательству теоремы, возьмем предел $n\to\infty$ в формуле~\eqref{pi=0}, получим требуемое утверждение.
 \EndProof
 \end{proof}
 }

\textbf{Замечание 1.} В случае конечной ОДМЦ с единственным замкнутым классом эквивалентности существует единственное стационарное распределение, причем его значения на несущественных состояниях (т.е. тех состояниях, что принадлежат открытым классам эквивалентности) нулевые (см. пример~\ref{Ex1}, первую и вторую ситуации). 

\textbf{Замечание 2.} Рассмотрим теперь ОДМЦ с $m$ конечными замкнутыми классами эквивалентности: $E_1$, $E_2$, \ldots, $E_m$ (число открытых классов и их размер не важны, так стационарное распределение на несущественных состояниях все равно нулевое -- обозначим класс несущественных состояний через $E_0$). Каждый замкнутый класс эквивалентности, если его рассматривать как самостоятельную цепь, имеет единственное стационарное распределение: $\pi^{(k)}$ -- вектор размерности $|E_k|$, $k=1,\ldots,m$. Введем также вектор $\pi^{(0)}$ размерности $|E_0|$, состоящий из нулей: $\pi^{(0)} = \left(0,\dotsc,0\right)^T$. Этот вектор отвечает несущественным состояниям. Тогда стационарным распределением всей цепи $\pi$ является любая 
выпуклая комбинация стационарных распределение классов (см. пример~\ref{Ex1}, третью ситуацию).
Формально можно записать это так:
\begin{equation}\label{erg_conv}
\pi = \left(\pi^{(0)},\alpha_1 \pi^{(1)},\dotsc,\alpha_m \pi^{(m)}\right)^T,
\end{equation}
 где $\alpha_k \ge 0$, $\sum_{k=1}^m \alpha_k = 1$.
 
 \begin{myquote}
     Важно заметить\footnote{Собственно, вся последующая часть данного раздела и весь следующий раздел можно понимать как обоснование и обсуждение того, что написано в этом абзаце. Этот абзац является, пожалуй, самым главным материалом, который рекоменудется вынести из теории марковских процессов.}, что формула~\eqref{erg_conv} описывает всевозможные (в~зависимости от начального распределения $p(0)$) предельные (финальные) распределения $\lim_{t\to\infty} p(t)$ для однородных марковских цепей в дискретном и непрерывном времени (в дискретном времени, в случае периодической цепи, сходимость следует понимать в смысле~\eqref{Chesaro}). Набор чисел $\{\alpha_k\}$ однозначно определяется начальным распределением, см. пример \ref{ex:casino}.  Причем все это верно и для $m = \infty$ с счетными классами эквивалентности. Только в~случае счетного числа состояний в общем случае вместо $\alpha_k \ge 0$, $\sum_{k=1}^m \alpha_k = 1$ можно лишь утверждать, что $\alpha_k \ge 0$, $\sum_{k=1}^m \alpha_k \le 1$. Пример \ref{ex:casino} при $m=1$ как раз описывает такую ситуацию, в~которой неравенство может быть строгим. Наиболее интересный случай (эргодический), когда предельное распределение не зависит от начального, т.е. когда $m=1$, $\alpha_1 = 1$. Далее мы в основном будем рассматривать только этот случай.
 \end{myquote}

\textbf{Замечание 3.}  В случае ОДМЦ со счетным множеством состояний может оказаться, что вовсе нет стационарного распределения (см. пример~\ref{birth-death discr} ниже). В этой ситуации необходимо дополнительное условие: существование положительно возвратного класса, где состояния ненулевые и возвратные. Если такой класс единственен, то и стационарное распределение единственно. Иначе возникает смесь стационарных распределений, как и в предыдущем замечании.
\raggedbottom
Есть несколько частных случаев, когда поиск стационарного распределения для замкнутого класса эквивалентности достаточно легок. 

а) Пусть матрица переходов  конечной ОДМЦ дважды  стохастическая, то есть сумма ее элементов и по каждой строке, и по каждому столбцу равна единице. Тогда, как несложно проверить, равномерное распределение на множестве состояний является стационарным, то есть $\pi_i = 1/|E|$ для любого $i\in  E$.

б) Пусть ОДМЦ описывает случайное блуждание на некотором неориентированном связном графе из $n$ вершин \elena{и $N$ ребер}: находясь в произвольной вершине, на следующем шаге выбирается равновероятно одна из смежных вершин. Легко убедиться, что стационарное распределение на множестве вершин следующее:
$$ \pi_i = \frac{\text{deg}_i}{2\elena{N}},$$
где $\text{deg}_i$ -- степень вершины $i$, т.е. число смежных с ней вершин.

в) \elena{ Важным классом марковских цепей образуют цепи, обратимые во времени, а именно введем следующее определение.
\begin{definition}
\label{reversible chain}
    ОДМЦ (возможно со счетным множеством состояний) обладает свойством \textit{обратимости}, если существует нетривиальное решение следующего уравнения \textit{детального баланса} (см., например, главу~6, \cite{StochAn2016})\footnote{Как бы выполнен закон сохранения массы: сколько вышло из $i$-го состояний в $j-$е, столько и вошло из $j$-го в $i$-е состояние.}:
для любых 
\begin{center}
$i,j \in E$ $\pi_i P_{i,j} = \pi_j P_{j,i}$,
\end{center}
где $P_{i,j}$ -- вероятность перейти из состояния $i$ в $j$.
\end{definition} 
}
Просуммируем уравнение детального баланса по $j\in E$, получим уравнение на поиск стационарного распределения. Таким образом, \elena{распределение вероятностей, являющееся} решением уравнения детального баланса, является стационарным распределением (обратной импликации нет). 

\elena{Стоит также заметить (см.\cite[\S~1.10]{KelbertSukhov2010}), что свойство обратимости можно понимать следующим образом. Из уравнения детального баланса следует, что для любого $n$ и для любых $i_0, i_1, \ldots, i_n \in E$
\begin{equation*}
    \pi_{i_0} P_{i_0, i_1}  P_{i_1, i_2} \cdots  P_{i_{n-1}, i_n} = \pi_{i_n} P_{i_n, i_{n-1}}  P_{i_{n-1}, i_{n-2}} \cdots  P_{i_1, i_0}
\end{equation*}
Иначе говоря, для марковской цепи, обладающей свойством обратимости и имеющей в качестве начального распределения стационарное выполнено
\begin{equation*}
    \left(X_0 X_1 \ldots X_n \right) \overset{d}{=} \left(X_n X_{n-1} \ldots X_0 \right),
\end{equation*}
то есть у цепи с обратимым временем $\left( X_{n-k}\right)_{k=0}^n$ такое же распределение как и у исходной цепи  $\left( X_{k}\right)_{k=0}^n$.
}

\begin{example}
\label{random walk B-cube}
В качестве примера рассмотрим случайное блуждание на булевом кубе $\{0,1\}^N$: находясь в вершине $v = (v_1,\ldots,v_N)$, где $v_m\in\{0,1\}$ для всех $m=1,\ldots,N$, равновероятно выбирается одна их $N$ компонент и меняется на противоположную. Ясно, что матрица переходных вероятностей будет дважды стохастической, а значит, и стационарное распределение равномерное: \gav{$\pi_v \equiv 2^{-N}$}. \elena{Тот же ответ можно получить, воспользовавшись формулой выше из пункта б) для поиска стационарного распределения для симметричного случайного блуждания на неориентированном графе.}
\end{example}

\begin{example}
\label{Erenfest invar}
В качестве следующего примера рассмотрим дискретную модель диффузии Эренфестов, см. также раздел~\ref{EhrenfestModel}. Пусть имеется сосуд с $N$ молекулами, разделенный мембраной на две части. В каждый момент времени $n$ выбирается равновероятно одна из молекул и переводится в противоположную часть сосуда. Пусть номер состояния $i$ -- число молекул в фиксированной части сосуда, ${i \in \{0, 1, \ldots, N\}}$. \elena{Тогда
\begin{equation*}
    P_{i,j} = \begin{cases}
    1-\frac{i}{N}, & \text{если $j=i+1$;} \\ 
    \frac{i}{N}, & \text{если $j=i-1$;} \\ 0, & \text{иначе.} \\ 
    \end{cases}
\end{equation*} } Несложно показать, что ОДМЦ является обратимой, и найти $\pi_i = C_N^i2^{-N}$. Заметим, что этот результат можно получить из следующих рассуждений. Предположим, что молекулы пронумерованы. Тогда динамика диффузии эквивалентна симметричному случайному блужданию на булевом кубе $\{0,1\}^N$ \elena{ (см. пример~\ref{random walk B-cube})} с равномерным стационарным распределением. Далее, осталось перейти от пронумерованных молекул к неразличимым, объединив подмножества состояний в одно (по числу ненулевых компонент). \elena{Иначе говоря, если $X_n$ -- марковская цепь, описывающая симметричное случайное блуждание на булевом кубе  $\{0,1\}^N$, то $W(X_n)$ -- марковская цепь, описывающая диффузию в модели Эренфестов, где $W (V) = \sum_{i=1}^N v_i$, $v_i\in\{0,1\}$, (компоненты вектора $V$) -- хэммингов вес (число ненулевых компонент в булевом слове). }
\end{example}

Отметим также, что условие детального баланса активно используется в подходе Markov Chain Monte Carlo (см., например,~\cite[глава 7]{StochAn2016} и раздел \ref{sec:PageRank} ниже).

Наличие стационарного распределения указывает на то, каким может быть предельное (финальное) распределение, если последнее существует.
\begin{definition}\label{def:ergodicity}
    Конечная ОДМЦ называется \textit{эргодической}, если стационарное распределение единственно.  
\end{definition}
Как мы увидим в дальнейшем, единственность стационарного распределения (и для конечной, и для счетной цепи) обеспечивается наличием ровно одного
\gav{замкнутого} класса сообщающихся состояний при возможном наличии несущественных состояний. 


Оказывается единственность стационарного распределения $\pi$ обеспечивает эргодичность конечной ОДМЦ $\{X_n\}$ в смысле, который обсуждался ранее в пунктах \ref{sec:ergodic_processes_subsection}, \ref{sec:ErgodicDS}: для любой собственной (конечной) функции $f(x)$, заданной на состояниях ОДМЦ, \gav{почти наверное (с вероятностью 1)} имеет место 
\begin{equation}\label{GeneralChesaro}
    \lim_{N\to\infty}\frac{1}{N}\sum\limits_{n=1}^{N}f(X_n) = \EE_{\pi} f(X) = \sum_{k} \pi_k f(k),
\end{equation}
в частности, при $f(x) = \mathsf{I}(x = k)$, имеем
\begin{equation}\label{Chesaro}
    \lim_{N\to\infty}\frac{1}{N}\sum\limits_{n=1}^{N} p_k(n) =  \pi_k.
\end{equation}


Приведем, следуя~\cite[глава 13]{Borovkov1999} и~\cite[глава 11]{Vereschyagin}, пример приложения \gav{небольшого обобщения} формулы~\eqref{GeneralChesaro} (эргодической теоремы) для конечных ОДМЦ в теории информации. 
\begin{example}
В теории информации одной из основных задач является эффективное сжатие информации. В частности, если по каналу передавать сигнал, в котором равновероятно (с одинаковыми частотами) может встретиться одна из $m$ букв, то для передачи $n$ букв потребуется $n\log_2 m$ битов, т.е. $\log_2 m$ битов на букву. Если же вероятности появления букв в тексте (частоты) не равновероятны $p =$\linebreak $= \{p_k\}_{k=1}^m$, то передачу букв можно организовать эффективнее: так, чтобы на одну букву приходилось лишь $H(p) = -\sum_{k=1}^m p_k\log_2 p_k \le$\linebreak $\le\log_2 m$ битов. Покажите, что если буквы в тексте появляются согласно конечной однородной эргодической марковской цепи с матрицей переходных вероятностей $P = \|p_{ij}\|_{i,j=1}^{m,m}$ (альтернативная интерпретация -- известна матрица частот появления всевозможных пар букв $(i,j)$ друг за другом), то передачу букв можно организовать еще эффективнее: так, чтобы на одну букву приходилось лишь $H(\{p_{ij}\}_{i,j=1}^{m,m}) =$\linebreak $= -\sum_{i=1}^m \pi_i\sum_{j=1}^m p_{ij}\log_2 p_{ij}$ битов, где $\pi$ -- стационарное распределение марковской цепи с матрицей переходных вероятностей $P$. 
\end{example}

\begin{solution}
 Рассмотрим вероятностных подход (частотный подход см. в~\cite[глава 11]{Vereschyagin}). Рассмотрим $n$-буквенные случайные последовательности  $X = \left(X_1,\dotso,X_n\right)$, сгенерированные согласно введенной марковской цепи. Будем называть последовательность \ag{$x = \left(x_1,\dotso,x_n\right)$} $\delta$-типичной, если вероятность ее появления лежит в специальном диапазоне:  
 \begin{equation}\label{Shanon}
 \PP(X\ag{ = x}) \in \left[2^{-n\left(H(\{p_{ij}\}_{i,j=1}^{m,m}) + \delta\right)},2^{-n\left(H(\{p_{ij}\}_{i,j=1}^{m,m}\}) - \delta\right)}\right].
  \end{equation}
 Число таких последовательностей будет $O\left(2^{n\left(H(\{p_{ij}\}_{i,j=1}^{m,m}) + \delta\right)}\right)$, что может быть много меньше общего числа возможных последовательностей $2^n$. Если будет показано, что для любых (сколь угодно малых) $\delta > 0$ и $\sigma > 0$ можно подобрать такое (достаточно большое) $n(\delta,\sigma)$, начиная с которого событие~\eqref{Shanon} имеет место с вероятностью $\ge 1 - \sigma$, то  эффективное кодирование можно организовать, кодируя только\linebreak $\delta$-типичные последовательности.
 
 Итак, рассмотрим на траектории марковского процесса функцию от двух соседних сечений \ag{$f(x_k,x_{k+1}) = - \log_2 p_{x_k x_{k+1}}$}. \ag{Поскольку} \ag{$$\PP(X\ag{=x}) = p_{\ag{x}_1}\prod_{k=1}^{n-1} p_{\ag{x}_k \ag{x}_{k+1}},$$ то ф}ормула~\eqref{Shanon} будет выполняться для достаточно больших $n$, если \ag{почти наверное}
 $$\frac{1}{n}\sum_{k=1}^n f(X_k,X_{k+1})\xrightarrow[n\to\infty]{}H \left(\{p_{ij}\}_{i,j=1}^{m,m}\right).$$
 Но поскольку рассматриваемый марковский процесс по предположению эргодический, то предел левой части существует и равняется математическому ожиданию $f(X_k,X_{k+1})$ в предположении, что $X_k$ имеет стационарное (инвариантное) распределение $\pi$, а $X_{k+1}$ случайно порождается по $X_k$ согласно матрице переходных вероятностей $P$, т.е.
\begin{eqnarray*}
&\EE_{X_k\sim\pi} f(X_k,X_{k+1}) = -\EE_{X_k\sim\pi} \log_2 p_{X_k X_{k+1}} = \\
&=- \sum_{i=1}^m \pi_i\sum_{j=1}^m p_{ij} \log_2 p_{ij} = H(\{p_{ij}\}_{i,j=1}^{m,m}).
\end{eqnarray*}
 
 Приведенные здесь рассуждения можно провести более точно. В~частности, можно оценить $n(\delta,\sigma)$~\cite{Cover}. \EndEx
\end{solution}

Для изучаемого класса случайных процессов (ОДМЦ) при некоторых дополнительных предположениях сходимость в чезаровском смысле~\eqref{Chesaro} можно превратить в обычную сходимость. Оказывается, для этого нужно дополнительно предположить, что ОДМЦ апериодическая. 

В связи с некоторыми особенностями перенесения результатов об эргодичности конечных ОДМЦ на ОДМЦ со счетным числом состояний, оказывается, удобно также ограничиться классом ОДМЦ с отсутствием нулевых состояний. Далее об этом будет написано подробнее. 

В связи с написанным выше также используют более сильное определение эргодичности ОДМЦ \cite{NGG2}.
\begin{definition}
    Конечная или счетная ОДМЦ называется \textit{сильно эргодической}, если для любого ${j\in E}$ существует не зависящий от $i\in E$ положительный предел: $$\lim\limits_{n\to\infty}p_{i,j}(n)=p_j^*>0.$$ 
\end{definition}

Несложно заметить, что если ОДМЦ сильно эргодическая, то стационарное распределение $\pi$ единственное и $p^* = \pi$. 

\begin{example}
Рассмотрим несколько примеров.
\end{example}

\begin{minipage}{0.4\textwidth}
\includegraphics[width=\textwidth]{Section06/Problem94_1-eps-converted-to.pdf}
\end{minipage}
\begin{minipage}{0.54\textwidth}
Здесь ${p_{0,0}(2n)=1,\,p_{0,0}(2n+1)=0}$, $n\ge1$. Поэтому пре\-де\-л $\lim_{n\to\infty}p_{0,0}(n)$ не существует, цепь не будет сильно эргодической, но будет эргодической.
\end{minipage}


\begin{minipage}{0.4\textwidth}
\includegraphics[width=\textwidth]{Section06/Problem94_2-eps-converted-to.pdf}
\end{minipage}
\begin{minipage}{0.54\textwidth}
${P=P^2=P^3=\dots}$, значит, для всех $i,j$ предел $\lim_{n\to\infty}p_{i,j}(n)$ существует, но $\lim_{n\to\infty}p_{0,0}(n)=0$, поэтому цепь не будет сильно эргодической, но будет эргодической.
\end{minipage}


$$$$

\setstretch{1.0}
Рассмотрим теперь такую цепь:
\begin{figure}[!h]
	\centering
	\includegraphics[width=0.4\textwidth]{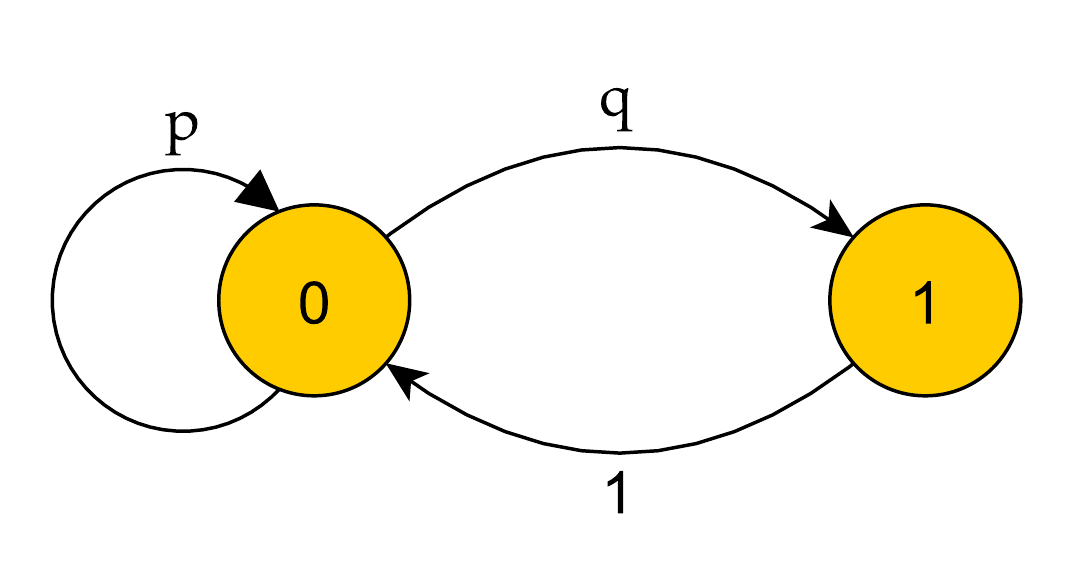}
\end{figure}

\setlength{\parskip}{-0.5em}
\noindent Ранее мы получили, что $$P^n \to \frac{1}{1+q}\left[ {\begin{array}{*{20}{r}}
	{1}&{q} \\
	{1}&{q} \\ 
	\end{array}} \right], \ n\to\infty,$$ значит, существуют пределы $\lim\limits_{n\to\infty}p_{i,j}(n)=p_j^*>0$, причем $$p_0=\frac{1}{1+q}, \ p_1=\frac{q}{1+q}, \ p_0+p_1=1.$$ По определению, данная цепь сильно эргодическая.  \EndEx

\setlength{\parskip}{1.0em}


\setlength{\parskip}{0pt}
Следующая теорема является ключевой для доказательства критерия сильной эргодичности конечной ОДМЦ (однородной дискретной марковской цепи). Отметим, что доказательство теоремы ниже остается верным, если число состояний счетно, а время непрерывно. Марковские цепи с непрерывным временем подробнее изложены в разделе~\ref{sec:ContChainMarkov}.
\begin{theorem}\label{th:technical_ergodic}
    \textit{Пусть для ОДМЦ с множеством состояний $S$ и~переходными вероятностями $p_{i,j}$ выполняется условие}:
    \[
        \exists \delta > 0\; \exists j_0\in S\; \exists n_0 :\; \forall i\in S\; p_{i,j_0}(n_0) \ge \delta.
    \]
   \textit{ Тогда $\exists \lim\limits_{n\to\infty}p_{i,j}(n) = p_j^* = \pi_j$ для всех $i,j\in S$, причем $p_{j_0}\elena{^*} > 0$, и}
    $$
        |p_{i,j}(n) - p_j^*| \le (1-\delta)^{[n/n_0]} \quad \forall i,j\in S.
    $$
\end{theorem}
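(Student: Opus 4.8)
`\textbf{План доказательства.}` Сначала я бы ввёл обозначение для «расстояния» между строками $n$-шаговой переходной матрицы: положу для состояний $i,k\in S$ и числа шагов $m$ величину $\Delta_{i,k}(m)=\sup_{j\in S}|p_{i,j}(m)-p_{k,j}(m)|$, а также $\Delta(m)=\sup_{i,k\in S}\Delta_{i,k}(m)$. Цель — показать, что $\Delta(n)\to 0$ геометрически; отсюда последовательности $\{p_{i,j}(n)\}_n$ при каждом фиксированном $j$ будут фундаментальны равномерно по $i$, а их предел $p_j^*$ не зависит от $i$. Ключевой шаг — одношаговое (точнее, $n_0$-шаговое) сжатие: с помощью уравнения Колмогорова--Чэпмена $p_{i,j}(m+n_0)=\sum_{l}p_{i,l}(n_0)p_{l,j}(m)$ я оценю
$$
p_{i,j}(m+n_0)-p_{k,j}(m+n_0)=\sum_{l\in S}\bigl(p_{i,l}(n_0)-p_{k,l}(n_0)\bigr)p_{l,j}(m).
$$
Стандартный приём для таких разностей: разбить сумму по знаку множителя $p_{i,l}(n_0)-p_{k,l}(n_0)$. Положительная часть $\sum_{l:\,p_{i,l}(n_0)\ge p_{k,l}(n_0)}(p_{i,l}(n_0)-p_{k,l}(n_0))$ равна по модулю отрицательной (обе равны половине $\ell_1$-расстояния между строками, так как строки — вероятностные векторы с суммой $1$), обозначим это общее значение $\gamma_{i,k}$. Тогда
$$
|p_{i,j}(m+n_0)-p_{k,j}(m+n_0)|\le \gamma_{i,k}\Bigl(\max_{l\in S^+}p_{l,j}(m)-\min_{l\in S^-}p_{l,j}(m)\Bigr)\le \gamma_{i,k}\,\Delta(m),
$$
где $S^+,S^-$ — множества индексов с неотрицательной и отрицательной разностью соответственно, а последняя оценка использует, что разность максимума по одному множеству и минимума по другому не превосходит $\Delta(m)$.

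Далее я использую условие теоремы, чтобы оценить $\gamma_{i,k}$ сверху числом $1-\delta$. Действительно, $\gamma_{i,k}=\tfrac12\sum_l|p_{i,l}(n_0)-p_{k,l}(n_0)|=1-\sum_l\min(p_{i,l}(n_0),p_{k,l}(n_0))$. Но по условию $p_{i,j_0}(n_0)\ge\delta$ и $p_{k,j_0}(n_0)\ge\delta$ для всех $i,k$, так что $\min(p_{i,j_0}(n_0),p_{k,j_0}(n_0))\ge\delta$, откуда $\sum_l\min(\cdots)\ge\delta$ и потому $\gamma_{i,k}\le 1-\delta$. Подставляя, получаю $\Delta(m+n_0)\le(1-\delta)\Delta(m)$. Поскольку $\Delta(m)\le 1$ всегда (разность двух вероятностей по модулю $\le 1$), итерация по блокам длины $n_0$ даёт $\Delta(n)\le(1-\delta)^{[n/n_0]}$.

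Остаётся извлечь из сжатия существование предела и оценку из формулировки. Для фиксированного $j$ и любых $i,k$ имеем $|p_{i,j}(n)-p_{k,j}(n)|\le\Delta(n)\to 0$; кроме того, для фиксированного $i$ сравню $p_{i,j}(n)$ и $p_{i,j}(n+r)$: записав $p_{i,j}(n+r)=\sum_l p_{i,l}(r)p_{l,j}(n)$ как выпуклую комбинацию чисел $p_{l,j}(n)$, получу $|p_{i,j}(n+r)-p_{i,j}(n)|\le\max_l|p_{l,j}(n)-p_{i,j}(n)|\le\Delta(n)$, то есть последовательность $\{p_{i,j}(n)\}_n$ фундаментальна и имеет предел $p_j^*$; из первой оценки $p_j^*$ не зависит от $i$. Переходя к пределу в $|p_{i,j}(n)-p_{k,j}(n)|\le\Delta(n)$ при $n\to\infty$ по любой из переменных (точнее, беря $k$ так, чтобы $p_{k,j}(n)$ было близко к $p_j^*$, — а именно, фиксируя $i$ и используя $|p_{i,j}(n)-p_j^*|=\lim_{r\to\infty}|p_{i,j}(n)-p_{i,j}(n+r)|\le\Delta(n)$), получаю требуемое неравенство $|p_{i,j}(n)-p_j^*|\le(1-\delta)^{[n/n_0]}$. Наконец, $p_{j_0}^*>0$, так как $p_{i,j_0}(n)$ можно оценить снизу: применяя Колмогорова--Чэпмена, $p_{i,j_0}(n+n_0)=\sum_l p_{i,l}(n)p_{l,j_0}(n_0)\ge\delta\sum_l p_{i,l}(n)=\delta$ при $n\ge 0$, так что в пределе $p_{j_0}^*\ge\delta>0$. То, что $p^*=\pi$ — стационарное распределение — следует из предельного перехода в $p(n+1)=P^\top p(n)$ с начальным распределением, сосредоточенным в $i$: вектор $p^*$ удовлетворяет $P^\top p^*=p^*$ и имеет неотрицательные компоненты с суммой $1$ (сумма сохраняется в пределе в силу равномерной по $j$ оценки при конечном $S$, а при счётном — требует дополнительного аргумента, которого, впрочем, формулировка не требует, поскольку утверждается лишь $p_j^*=\pi_j$ при уже доказанном существовании $\pi$).

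Основное препятствие, на мой взгляд, — аккуратное проведение разбиения суммы по знаку и обоснование неравенства $\max_{l\in S^+}p_{l,j}(m)-\min_{l\in S^-}p_{l,j}(m)\le\Delta(m)$ в случае, когда одно из множеств $S^+$, $S^-$ может быть «не тем»; здесь полезно заметить, что если $\gamma_{i,k}=0$, то оценка тривиальна, а если $\gamma_{i,k}>0$, то оба множества непусты, и для любых $l\in S^+$, $l'\in S^-$ выполнено $p_{l,j}(m)-p_{l',j}(m)\le\Delta(m)$ по определению $\Delta$. Остальные шаги — рутинные манипуляции с уравнением Колмогорова--Чэпмена и выпуклыми комбинациями.
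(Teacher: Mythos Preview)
Your argument is correct and follows essentially the same route as the paper: both proofs use the Kolmogorov--Chapman relation to write $p_{i,j}(m+n_0)-p_{k,j}(m+n_0)=\sum_l(p_{i,l}(n_0)-p_{k,l}(n_0))p_{l,j}(m)$, split the sum by the sign of $p_{i,l}(n_0)-p_{k,l}(n_0)$, and show that the total positive mass is at most $1-\delta$, yielding the contraction $(1-\delta)^{[n/n_0]}$. The differences are only in bookkeeping: the paper tracks the column oscillation $M_j(n)-m_j(n)$ as two monotone sequences and bounds the positive mass $A_{i,k}\le 1-\delta$ by a case split on whether $j_0\in I_{i,k}^+$ or $j_0\in I_{i,k}^-$, whereas you track $\Delta(n)=\sup_j(M_j(n)-m_j(n))$, obtain the same bound in one line via the identity $\gamma_{i,k}=1-\sum_l\min(p_{i,l}(n_0),p_{k,l}(n_0))\le 1-\delta$, and extract the limit by a Cauchy argument rather than monotonicity.
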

\begin{proof}
    Пусть 
    \begin{center}
    ${M_j(n) = \max_{i\in S}p_{i,j}(n)}$ и $m_j(n) = \min_{i\in S}p_{i,j}(n)$.
    \end{center}
    Из определения чисел $M_j(n)$ и $m_j(n)$ следует, что $m_j(n) \le p_{i,j}(n) \le$\linebreak $\le M_j(n)$ для всех $i\in S$. Кроме того, из уравнения Колмогорова--Чепмена имеем
    \[
        p_{i,j}(n+1) = \sum\limits_{k\in S} p_{i,k} \underbrace{p_{k,j}(n)}_{\le M_j(n)} \le M_j(n) \sum\limits_{k\in S}p_{i,k} = M_j(n)\quad \forall i,j\in S,
    \]
    \[
        p_{i,j}(n+1) = \sum\limits_{k\in S} p_{i,k} \underbrace{p_{k,j}(n)}_{\ge m_j(n)} \ge m_j(n) \sum\limits_{k\in S}p_{i,k} = m_j(n)\quad \forall i,j\in S.
    \]
    Отсюда следует, что
    $$
        M_j(n+1) = \max\limits_{i\in S}p_{i,j}(n+1) \le M_j(n), 
    $$$$
        m_j(n+1) =\min\limits_{i\in S}p_{i,j}(n+1) \ge m_j(n). 
    $$
    Таким образом, мы показали, что для каждого $j\in S$ последовательность $\{M_j(n)\}_{n=1}^\infty$ возрастает, а последовательность $\{m_j(n)\}_{n=1}^\infty$ убывает. Следовательно, обе последовательности имеют предел. Покажем, что эти пределы совпадают, т.е. покажем, что
    \[
        M_j(n) - m_j(n) \xrightarrow[n\to\infty]{} 0.
    \]
    Отсюда будет следовать, что существуют числа 
    $$p_j^* = \lim\limits_{n\to\infty}p_{i,j}(n) = 
    \lim\limits_{n\to\infty}M_j(n) = \lim\limits_{n\to\infty}m_j(n)\ge 0,$$ причем в силу того, что $p_{i,j_0}(n_0) \ge \delta > 0$ для всех $i\in S$, получаем
    \[
        m_{j_0}(n_0) \ge \delta > 0 \Longrightarrow \forall n \ge n_0\; m_{j_0}(n) \ge \delta \Longrightarrow p_{j_0}\elena{^*} \ge \delta > 0.
    \]
    Начнем с того, что оценим разность $M_j(n) - m_j(n)$:
    \begin{eqnarray*}
         M_j(n) - m_j(n) &=& \max\limits_{i\in S}p_{i,j}(n) + \max\limits_{k\in S}\left(-p_{k,j}(n)\right)= \\ 
         &=&\max\limits_{i,k\in S}\left(p_{i,j}(n) - p_{k,j}(n) \right).
    \end{eqnarray*}
    Далее будем считать, что $n > n_0$. Тогда, продолжая предыдущую цепочку равенств и используя уравнение Колмогорова--Чепмена, получаем
    \begin{eqnarray*}
         M_j(n) - m_j(n) &=& \max\limits_{i,k\in S}\sum\limits_{l\in S}\left(p_{i,l}(n_0) - p_{k,l}(n_0) \right) p_{l,j}(n-n_0).
    \end{eqnarray*}
    Рассмотрим теперь два множества индексов: 
    $$I_{i,k}^+ = \{l\in S : p_{i,l}(n_0) - p_{k,l}(n_0) \ge 0\},$$  $$I_{i,k}^- = \{l\in S : p_{i,l}(n_0) - p_{k,l}(n_0) < 0\}.$$ Тогда
    \begin{eqnarray*}
         M_j(n) - m_j(n) &=& \max\limits_{i,k\in S}\Bigg(\sum\limits_{l\in I_{i,k}^+}\left(p_{i,l}(n_0) - p_{k,l}(n_0)\right)p_{l,j}(n-n_0)+ \\
         &&\quad\quad\quad + \sum\limits_{l\in I_{i,k}^-}\left(p_{i,l}(n_0) - p_{k,l}(n_0)\right) p_{l,j}(n-n_0)\Bigg)\le
    \end{eqnarray*}
    \vspace{-0.5cm}
    \begin{eqnarray*}
         {\color{white} M_j(n) - m_j(n)}&\le& \max\limits_{i,k\in S}\Bigg(\sum\limits_{l\in I_{i,k}^+}\left(p_{i,l}(n_0) - p_{k,l}(n_0)\right)M_j(n-n_0)\,+
    \end{eqnarray*}
    \vspace{-0.5cm}
    \begin{eqnarray*}
         {\color{white} M_j(n) - m_j(n)}&&\quad\quad\quad + \sum\limits_{l\in I_{i,k}^-}\left(p_{i,l}(n_0) - p_{k,l}(n_0)\right)m_j(n-n_0)\Bigg).
    \end{eqnarray*}
    Введем обозначения: 
    $$A_{i,k} = \sum\limits_{l\in I_{i,k}^+}\left(p_{i,l}(n_0) - p_{k,l}(n_0)\right) \text{ и }  B_{i,k} = \sum\limits_{l\in I_{i,k}^-}\left(p_{i,l}(n_0) - p_{k,l}(n_0)\right).$$ Заметим, что
    \begin{eqnarray*}
        A_{i,k} + B_{i,k} &=& \sum\limits_{l\in S}\left(p_{i,l}(n_0) - p_{k,l}(n_0)\right)=\\
        &=& \sum\limits_{l\in S}p_{i,l}(n_0) - \sum\limits_{l\in S}p_{k,l}(n_0)=\\
        &=& 1 - 1 = 0 \Longrightarrow A_{i,k} = - B_{i,k}.
    \end{eqnarray*}
    Поэтому
    \begin{eqnarray*}
         M_j(n) - m_j(n) &\le& \left(M_j(n-n_0) - m_j(n-n_0)\right)\max\limits_{i,k\in S} A_{i,k}.
    \end{eqnarray*}
    Наконец, оценим сверху $A_{i,k}$. Рассмотрим два случая: когда $j_0\in I_{i,k}^+$ и когда $j_0\in I_{i,k}^-$. В первом случае $A_{i,k}$ оценивается сверху следующим образом:
    \begin{eqnarray*}
        A_{i,k} &=& \underbrace{\sum\limits_{l\in I_{i,k}^+}p_{i,l}(n_0)}_{\le 1} - \underbrace{\sum\limits_{l\in I_{i,k}^+}p_{k,l}(n_0)}_{\ge p_{k,j_0}(n_0)} \le 1 - p_{k,j_0}(n_0) \le 1 - \delta. 
    \end{eqnarray*}
    Во втором случае немного другая выкладка приводит к той же самой оценке сверху:
    \begin{eqnarray*}
        A_{i,k} &=& \sum\limits_{l\in I_{i,k}^+}\left(p_{i,l}(n_0) - p_{k,l}(n_0)\right) \le \sum\limits_{l\in I_{i,k}^+}p_{i,l}(n_0)\le\\
        &\le& 1 - \underbrace{\sum\limits_{l\in I_{i,k}^-}p_{i,l}(n_0)}_{\ge p_{i,j_0}(n_0)} \le 1 - p_{i,j_0}(n_0) \le 1-\delta.
    \end{eqnarray*}
    Возвращаясь к полученной оценке на разность $M_j(n)$ и $m_j(n)$, получаем
    \begin{eqnarray*}
        M_j(n) - m_j(n) &\le& \left(M_j(n-n_0) - m_j(n-n_0)\right)(1-\delta).
    \end{eqnarray*}
    Отметим, что данное неравенство было доказано для произвольного $n > n_0$. Следовательно, верно и следующее неравенство:
    \begin{eqnarray*}
        M_j(n) - m_j(n) &\le& \left(M_j(n-n_0) - m_j(n-n_0)\right)(1-\delta)\le\\
        &\le& \left(M_j(n-2n_0) - m_j(n-2n_0)\right)(1-\delta)^2\le\\
        &\le& \ldots\\
        &\le& \underbrace{\left(M_j(n\mod n_0) - m_j(n\mod n_0)\right)}_{\le 1 - 0 = 1}(1-\delta)^{[n/n_0]}\le\\
        &\le& (1-\delta)^{[n/n_0]}.
    \end{eqnarray*}
    Отметим, что полученное неравенство при $n<n_0$ становится тривиальным. Отсюда следует, что $M_j(n) - m_j(n) \xrightarrow[n\to\infty]{} 0$, т.е. существуют числа $$p_j^* = \lim\limits_{n\to\infty}p_{i,j}(n) = \lim\limits_{n\to\infty}M_j(n) = \lim\limits_{n\to\infty}m_j(n) \ge 0,$$ причем, как было отмечено в начале доказательства, $p_{j_0}\elena{^*} > 0$. Осталось теперь заметить, что для всех $i,j\in S$ и всех $n\ge 1$ числа $p_{i,j}(n)$ и $p_j^*$ лежат на отрезке $[m_j(n), M_j(n)]$. Поэтому из полученной оценки на $M_j(n) - m_j(n)$ следует, что
    \[
        |p_{i,j}(n) - p_j^*| \le (1-\delta)^{[n/n_0]},
    \]
    что и требовалось доказать. \EndProof
\end{proof}

\elena{\textbf{Замечание 1.} Согласно теореме~\ref{th:technical_ergodic}  скорость сходимости к стационарному распределению экспоненциальная (иначе, геометрическая). Для конечных цепей условия этой теоремы всегда выполнены, если цепь содержит ровно один замкнутый класс эквивалентности (в частности, неразложима) и апериодична (см. далее теорему~\ref{Th:CritErg}). Однако, в этом случае оценка скорости сходимости, приведённая в теореме~\ref{th:technical_ergodic} как правило оказывается грубой. Более аккуратно: оценка скорости сходимости определяется вторым по модулю максимальным собственным значением стохастической матрицы переходных вероятностей $P$:
\begin{equation}
    \label{exp converges}
    |p_{i,j}(n) - p_j^*| = O\left( (1-\delta)^{n}\right),
\end{equation}
где
$$
\delta = \min\left\{ 1 - |\lambda|,\text{ где $\lambda$ -- собственное значение $P$, отличное от 1}  \right\}
$$
-- спектральная щель матрицы $P$ (см. рис.~\ref{fig: spectr P}). Действительно, предположим, что матрица переходных вероятностей неразложимой апериодической (то есть сильно эргодической) ОДМЦ $P$ размера $N+1$ является диагонализуемой, то есть имеет $N+1$ линейно независимых собственных векторов $u_0$,  $u_1$, \ldots $u_N$ соответствующих (необязательно попарно различным) собственным значениям  $\lambda_0$, $\lambda_1$, \ldots $\lambda_N$, (при чём в силу неразложимости и апериодичности $|\lambda_k|< 1$ для всех $k\ge 1$, поскольку кратность собственного значения $1$ равна числу замкнутых классов эквивалентности, а при наличие периода $d$ на единичной окружности имеются $d$ собственных значений $e^{i\frac{2\pi}{d}m}$, $m=0,1,\ldots,d-1$). Аналогично матрица $P^T$ имеет $N+1$ линейно независимых собственных векторов $v_0$,  $v_1$, \ldots $v_N$ соответствующих  тем же собственным значениям. Поскольку собственные векторы определяются с точностью до множителя, выберем их так, чтобы при этом для любых $i$, $j$ $u_i^Tv_j=\delta_{ij}$. Выберем $v_0 = \pi$ (в действительности, как уже было сказано $v_0 = c \pi$ для любого $c\neq 0$). Тогда $u_0 = \mathbf{1} $ -- вектор из одних единиц.
Тогда
\begin{equation*}
\begin{split}
    &P =\sum_{k=0}^N \lambda_k u_k v_k^T,\\
       & P^n=\sum_{k=0}^N \lambda_k^n u_k v_k^T = \Pi + \sum_{k=1}^N \lambda_k^n u_k v_k^T,
\end{split}
\end{equation*}
где $\Pi = \mathbf{1} \pi^T$ -- матрица, у которой по строкам стоит стационарное распределение вероятностей. Тогда
\begin{equation*}
    |p_{i,j}(n) - \pi_j|\le  \sum_{k=1}^N |\lambda_k|^n | u_{k,i} | | v_{k,j} | = O\left(  N (1-\delta)^n \right).
\end{equation*}
}

\elena{В общем случае недиагонализуемых матриц тоже можно выписать формулы для степеней матрицы переходных вероятностей (см., например, формулу Перрона в~\cite[5.I]{Romanovsky}). Но мы только отметим важный частный случай диагонализуемых матриц -- матрицы переходных вероятностей для обратимых цепей (см. определение~\ref{reversible chain}). В этом случае все  собственные значения действительны и попарно различны (то есть спектр лежит на отрезке $[-1, 1]$, а в предположении апериодичности -- на полуоткрытом интервале $(-1,1]$), а собственные векторы действительны и образуют ортонормированный базис. Это следует из теории симметричных матриц и  того факта, что матрица $A$ с элементами $A_{i,j} = \sqrt{\frac{\pi_i}{\pi_j}} P_{i,j}$ является симметричной (см.~\cite[Теорема~1.12.4]{KelbertSukhov2010}).}

\begin{figure}[!h]
	\centering
	\tikzset{every picture/.style={line width=0.75pt}} 

    \begin{tikzpicture}[x=0.75pt,y=0.75pt,yscale=-1,xscale=1]
    
    \draw   (110,147.25) .. controls (110,80.29) and (164.29,26) .. (231.25,26) .. controls (298.21,26) and (352.5,80.29) .. (352.5,147.25) .. controls (352.5,214.21) and (298.21,268.5) .. (231.25,268.5) .. controls (164.29,268.5) and (110,214.21) .. (110,147.25) -- cycle ;
    \draw  [dash pattern={on 4.5pt off 4.5pt}] (138.88,147.25) .. controls (138.88,96.23) and (180.23,54.88) .. (231.25,54.88) .. controls (282.27,54.88) and (323.63,96.23) .. (323.63,147.25) .. controls (323.63,198.27) and (282.27,239.63) .. (231.25,239.63) .. controls (180.23,239.63) and (138.88,198.27) .. (138.88,147.25) -- cycle ;
    \draw    (352.5,147.25) -- (231.25,147.25) (231.5,151.25) -- (231.5,143.25) ;
    \draw  [draw opacity=0] (324.08,147.16) .. controls (326.16,141.81) and (331.53,137.99) .. (337.83,137.99) .. controls (344.28,137.99) and (349.75,141.98) .. (351.72,147.53) -- (337.83,152.06) -- cycle ; \draw   (324.08,147.16) .. controls (326.16,141.81) and (331.53,137.99) .. (337.83,137.99) .. controls (344.28,137.99) and (349.75,141.98) .. (351.72,147.53) ;
    \draw  [fill={rgb, 255:red, 0; green, 0; blue, 0 }  ,fill opacity=1 ] (248.5,147) .. controls (248.5,145.34) and (249.84,144) .. (251.5,144) .. controls (253.16,144) and (254.5,145.34) .. (254.5,147) .. controls (254.5,148.66) and (253.16,150) .. (251.5,150) .. controls (249.84,150) and (248.5,148.66) .. (248.5,147) -- cycle ;
    \draw  [fill={rgb, 255:red, 0; green, 0; blue, 0 }  ,fill opacity=1 ] (349.5,147.25) .. controls (349.5,145.59) and (350.84,144.25) .. (352.5,144.25) .. controls (354.16,144.25) and (355.5,145.59) .. (355.5,147.25) .. controls (355.5,148.91) and (354.16,150.25) .. (352.5,150.25) .. controls (350.84,150.25) and (349.5,148.91) .. (349.5,147.25) -- cycle ;
    \draw  [fill={rgb, 255:red, 0; green, 0; blue, 0 }  ,fill opacity=1 ] (235.5,216) .. controls (235.5,214.34) and (236.84,213) .. (238.5,213) .. controls (240.16,213) and (241.5,214.34) .. (241.5,216) .. controls (241.5,217.66) and (240.16,219) .. (238.5,219) .. controls (236.84,219) and (235.5,217.66) .. (235.5,216) -- cycle ;
    \draw  [fill={rgb, 255:red, 0; green, 0; blue, 0 }  ,fill opacity=1 ] (138.5,169) .. controls (138.5,167.34) and (139.84,166) .. (141.5,166) .. controls (143.16,166) and (144.5,167.34) .. (144.5,169) .. controls (144.5,170.66) and (143.16,172) .. (141.5,172) .. controls (139.84,172) and (138.5,170.66) .. (138.5,169) -- cycle ;
    \draw  [fill={rgb, 255:red, 0; green, 0; blue, 0 }  ,fill opacity=1 ] (158.5,157) .. controls (158.5,155.34) and (159.84,154) .. (161.5,154) .. controls (163.16,154) and (164.5,155.34) .. (164.5,157) .. controls (164.5,158.66) and (163.16,160) .. (161.5,160) .. controls (159.84,160) and (158.5,158.66) .. (158.5,157) -- cycle ;
    \draw  [fill={rgb, 255:red, 0; green, 0; blue, 0 }  ,fill opacity=1 ] (218.5,67) .. controls (218.5,65.34) and (219.84,64) .. (221.5,64) .. controls (223.16,64) and (224.5,65.34) .. (224.5,67) .. controls (224.5,68.66) and (223.16,70) .. (221.5,70) .. controls (219.84,70) and (218.5,68.66) .. (218.5,67) -- cycle ;
    \draw  [fill={rgb, 255:red, 0; green, 0; blue, 0 }  ,fill opacity=1 ] (268.5,127) .. controls (268.5,125.34) and (269.84,124) .. (271.5,124) .. controls (273.16,124) and (274.5,125.34) .. (274.5,127) .. controls (274.5,128.66) and (273.16,130) .. (271.5,130) .. controls (269.84,130) and (268.5,128.66) .. (268.5,127) -- cycle ;
    \draw    (321.5,63) -- (321.5,71) ;
    \draw [shift={(321.5,67)}, rotate = 90] [color={rgb, 255:red, 0; green, 0; blue, 0 }  ][line width=0.75]    (-5.59,0) -- (5.59,0)(0,5.59) -- (0,-5.59)   ;
    \draw    (281.5,149.33) -- (355.5,222.33) ;
    \draw    (371.17,101.33) -- (340.17,136.33) ;
    \draw    (329.83,20.33) -- (302.83,46.33) ;
    
    \draw (168,95.4) node [anchor=north west][inner sep=0.75pt]    {$|\lambda |\leq \delta ( P)$};
    \draw (227,152.4) node [anchor=north west][inner sep=0.75pt]    {$0$};
    \draw (330,56.4) node [anchor=north west][inner sep=0.75pt]    {$\varkappa $};
    \draw (364.5,139.73) node [anchor=north west][inner sep=0.75pt]    {$\lambda _{0} =1$};
    \draw (357.5,214.73) node [anchor=north west][inner sep=0.75pt]    {$\rho ( P) =1$};
    \draw (373.5,90.73) node [anchor=north west][inner sep=0.75pt]    {$\delta ( P) ,$};
    \draw (330.83,9.33) node [anchor=north west][inner sep=0.75pt]   [align=left] {спектральный радиус};
    \draw (411.83,94.33) node [anchor=north west][inner sep=0.75pt]   [align=left] {спектральная щель};
    \end{tikzpicture}
    \caption{$P$ неприводима и апериодична}
    \label{fig: spectr P}
\end{figure}


\begin{theorem}[ (критерий сильной эргодичности)]\label{Th:CritErg}
\textit{Конечная ОДМЦ является сильно эргодической тогда и только тогда, когда она неразложима и апериодична. }
\end{theorem}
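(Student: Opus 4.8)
Утверждение естественно разбить на две импликации, причём содержательной является лишь достаточность: из неразложимости и апериодичности я буду выводить сильную эргодичность, опираясь на уже доказанную теорему~\ref{th:technical_ergodic}, а обратная импликация получается почти немедленно из определения. Для достаточности нужно проверить условие теоремы~\ref{th:technical_ergodic}, то есть найти $\delta>0$, $n_0$ и состояние $j_0$ с $p_{i,j_0}(n_0)\ge\delta$ для всех $i\in E$. Я бы доказал более сильный факт --- примитивность матрицы $P$: существует $n_0$, при котором $p_{i,j}(n_0)>0$ сразу для всех $i,j\in E$. Ключевой шаг здесь чисто арифметический: для фиксированного $j$ множество $\{n\ge1:\,p_{j,j}(n)>0\}$ замкнуто относительно сложения (из $p_{j,j}(m)>0$ и $p_{j,j}(k)>0$ следует $p_{j,j}(m+k)\ge p_{j,j}(m)p_{j,j}(k)>0$), а в силу апериодичности (определение~\ref{def:per2}) его наибольший общий делитель равен $1$; всякое же аддитивное подмножество натуральных чисел с наибольшим общим делителем $1$ содержит все достаточно большие натуральные числа. Значит, найдётся $M_j$ такое, что $p_{j,j}(n)>0$ при всех $n\ge M_j$. По неразложимости для каждой пары $(i,j)$ есть $m_{ij}$ с $p_{i,j}(m_{ij})>0$, откуда $p_{i,j}(n)\ge p_{i,j}(m_{ij})\,p_{j,j}(n-m_{ij})>0$ при $n\ge M_j+m_{ij}$. Так как состояний конечное число, можно взять $n_0=\max_{i,j}(M_j+m_{ij})$ и $\delta=\min_{i,j}p_{i,j}(n_0)>0$.

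Имея $n_0$ и $\delta$, я применяю теорему~\ref{th:technical_ergodic}: её условие выполнено при любом выборе $j_0=j$, поэтому для каждого $j$ существует предел $\lim_{n\to\infty}p_{i,j}(n)=p_j^*$, не зависящий от $i$ и удовлетворяющий $p_j^*\ge\delta>0$. Это в точности определение сильной эргодичности; попутно $p^*$ оказывается стационарным распределением $\pi$.

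Для необходимости пусть ОДМЦ сильно эргодична: для каждого $j$ имеется предел $p_{i,j}(n)\to p_j^*>0$, не зависящий от $i$. Тогда при больших $n$ выполнено $p_{i,j}(n)>0$ для любых $i,j$, то есть $i\to j$; в частности $i\leftrightarrow j$ для всех $i,j\in E$, и цепь неразложима. Апериодичность: поскольку $p_{j,j}(n)\to p_j^*>0$, найдётся $n$ с $p_{j,j}(n)>0$, так что период $d_j$ определён; если бы было $d_j>1$, то по определению~\ref{def:per2} $p_{j,j}(n)=0$ при всех $n$, не кратных $d_j$, то есть для бесконечно многих $n$, откуда $\liminf_{n}p_{j,j}(n)=0\ne p_j^*$ --- противоречие. Значит $d_j=1$ для всех $j$ (это согласуется и с теоремой о солидарности~\ref{solidar}), цепь апериодична.

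\textbf{Основная трудность.} Она целиком в импликации ``достаточность'': аккуратный вывод примитивности $P$ из неразложимости и апериодичности, а именно доказательство упомянутой теоретико-числовой леммы и последующая ``склейка'' семейства порогов $\{M_j+m_{ij}\}$ в один общий показатель $n_0$, годный для всех пар состояний одновременно. После этого сильная эргодичность получается прямой ссылкой на теорему~\ref{th:technical_ergodic}, а импликация ``необходимость'' практически тривиальна.
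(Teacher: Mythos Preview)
Your proof is correct and follows the same overall scaffold as the paper: necessity is argued directly from the positive limits, and sufficiency is reduced to the hypothesis of Theorem~\ref{th:technical_ergodic}. The necessity arguments are essentially identical.

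The genuine difference lies in how the hypothesis of Theorem~\ref{th:technical_ergodic} is verified in the sufficiency direction. You prove primitivity of $P$ via the standard numerical-semigroup lemma: the return set $\{n:\,p_{j,j}(n)>0\}$ is additively closed with $\gcd=1$, hence contains all sufficiently large integers; then irreducibility plus finiteness gives a single $n_0$ with $p_{i,j}(n_0)>0$ for every pair. The paper instead fixes a target state $s$, picks two \emph{coprime} return times $A,B$ to $s$, hitting times $M_i$ from each $i$ to $s$, and then solves a system of linear Diophantine equations $n_0=M_i+Ak_i+Bm_i$ (using that $Ax+By=1$ is solvable) to manufacture a common $n_0$ with $p_{i,s}(n_0)>0$ for all $i$; this is repeated for each target $s$. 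Your route is the cleaner and more standard textbook argument and yields full primitivity in one stroke; the paper's Diophantine construction is more explicit about the arithmetic but is longer, and its assertion that ``aperiodicity yields two coprime return times'' itself implicitly relies on the same semigroup fact you invoke directly.
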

\begin{proof}
    Сначала покажем, что неразложимость и апериодичность являются необходимыми условиями сильной эргодичности конечной однородной цепи Маркова. Итак, рассмотрим произвольную конечную ОДМЦ с множеством состояний $S$ и переходными вероятностями $p_{i,j}$. По определению сильной эргодичности имеем
    \[
        \forall i,j \; \lim\limits_{n\to\infty} p_{i,j}(n) = p_j^*  > 0.
    \]
    В частности, отсюда следует, что
    \[
        \forall i,j \;\forall \varepsilon > 0\; \exists n_0 = n_0(i,j,\varepsilon):\;\forall n > n_0 p_{i,j}(n) > p_j^* - \varepsilon.
    \]
    Пусть
    \begin{center}
    $\overline{\varepsilon} = \frac{1}{2}\min\limits_{j\in S}p_j$ и $\overline{n}_0 = \max\limits_{i,j\in S}n_0(i,j,\overline{\varepsilon})$.
      \end{center}
      Тогда для всех $n > \overline{n}_0$ и для всех $i,j$ вероятность $p_{i,j}(n) > p_j^* - \overline{\varepsilon} > 0$. Отсюда следует, что все состояния сообщаются, то есть цепь неразложима. Кроме того, цепь апериодична, т.к. для всех $n > \overline{n}_0$ и для всех $i$ вероятности $p_{i,i}(n) > 0$ и $p_{i,i}(n+1) > 0$.
    
    Теперь с помощью теоремы~\ref{th:technical_ergodic} покажем, что неразложимость и апериодичность являются и достаточными условиями для сильной эргодичности конечной ОДМЦ. Идея состоит в том, чтобы показать, что условие теоремы~\ref{th:technical_ergodic} выполняется для всех вершин такой цепи. Если это показать, то получим, что для всех $i,j\in S$ существуют пределы $$\lim_{n\to\infty} p_{i,j}(n) = p_j^* > 0,$$ не зависящие от $i$, что и означает сильную эргодичность цепи.
    
    Итак, рассмотрим, не умаляя общности, вершину $s$ (считаем, что $S = \{1,2,\ldots,s\}$). Из неразложимости следует, что существуют такие числа $M_1, M_2, \ldots, M_{s-1}$, что $$p_{1,s}(M_1) > 0, p_{2,s}(M_2) > 0, \ldots, p_{s-1,s}(M_{s-1}) > 0$$ (для удобства будем считать, что ${M_s = 0}$). Кроме того, из апериодичности следует, что найдутся два числа $A$ и $B$, что $A$ и $B$ взаимно просты, и $p_{s,s}(A) > 0$ и $p_{s,s}(B) > 0$. Следовательно, чтобы выполнялось условие теоремы~\ref{th:technical_ergodic}, достаточно доказать, что существуют такие неотрицательные целые числа $n_0,k_1,m_1,k_2,m_2,\ldots, k_s,m_s$, что $n_0 = M_1 + Ak_1 + Bm_1 = M_2 + Ak_2 + Bm_2 = \ldots = M_{s-1} + $\linebreak $+Ak_{s-1} + Bm_{s-1} = Ak_s + Bm_s$ (ведь в таком случае $p_{i,s}(M_i+Ak_i + $\linebreak $+ Bm_i) \ge p_{i,s}(M_i) p_{s,s}(Ak_i) p_{s,s}(Bm_i) \ge p_{i,s}(M_i) \left(p_{s,s}(A)\right)^{k_i} \left( p_{s,s}(B) \right)^{m_i} > $\linebreak $> 0$ для всех $i\in S$, а в силу конечности множества $S$ можно заключить, что найдется такое $\delta > 0$, что $p_{i,s}(M_i+Ak_i + Bm_i) \ge \delta$ для всех $i\in S$). Это эквивалентно системе линейных уравнений в целых числах:
    \begin{eqnarray*}
        A(k_2-k_1) + B(m_2-m_1) &=& M_1 - M_2,\\
        A(k_3-k_2) + B(m_3-m_2) &=& M_2 - M_3,\\
        \ldots&&\\
        A(k_s - k_{s-1}) + B(m_s - m_{s-1}) &=& M_{s-1}.
    \end{eqnarray*}
    Так как $A$ и $B$ взаимно просты, то уравнение $Ax + By = 1$ разрешимо в целых числах. Пусть $(x,y)$~--- решение этого уравнения. Тогда
    \begin{eqnarray*}
        k_2-k_1 = (M_1 - M_2)x,& m_2-m_1 = (M_1 - M_2)y,\\
        k_3-k_2 = (M_2 - M_3)x,& m_3-m_2 = (M_2 - M_3)y,\\
        \ldots&\\
        k_s-k_{s-1} = M_{s-1}x,& m_s-m_{s-1} = M_{s-1}y.
    \end{eqnarray*}
    Из полученных уравнений можно выразить все $k_i,m_i$ для $i\ge 2$ через $k_1,m_1$. Если выбрать $k_1$ и $m_1$ достаточно большими, то все $k_i,m_i$ для $i\ge 2$ получатся неотрицательными. Но это и означает, как было отмечено выше, что ${\exists \delta > 0\; \exists n_0:\forall i\in S\; p_{i,s}(n_0) \ge \delta}$. Поэтому из теоремы~\ref{th:technical_ergodic} получаем, что существует такое число $p_s^* > 0$, что предел $\lim_{n\to\infty}p_{i,s}(n) = p_s^*$ для всех $i\in S$. Аналогично можно доказать, что существуют числа $p_j^* > 0$, что $\lim_{n\to\infty}p_{i,j}(n) = p_j^*$ для всех $i\in S$.
    \EndProof
\end{proof}

Приведем еще несколько очевидных фактов.

а) В конечной сильно эргодической цепи существует такой шаг $N$, что $\forall n\ge N$ вероятности $p_{i,j}(n)>0$ для всех $i,\,j$. Другими словами, начиная с некоторого момента все элементы матрицы перехода станут строго положительными.

б) Если цепь неразложимая и на диагонали переходной матрицы стоит хотя бы один положительный элемент, то цепь является апериодической. Действительно, в этом случае для соответствующего состояния ${f_i(1)=p_{i,i}>0}$, поэтому наибольший общий делитель множества ${\{n\ge1:f_i(n)>0\}}$ равен 1 и состояние $i$ будет апериодическим. По свойству солидарности все состояния цепи будут апериодическими. \elena{ Для конечной цепи это будет означать сильную эргодичность.}

в) Если в какой-то момент все элементы переходной матрицы строго положительны, то все состояния являются сообщающимися и цепь является поэтому неразложимой. А так как на диагонали переходной матрицы стоят положительные элементы, то цепь является апериодической. Для конечной цепи это будет означать сильную эргодичность.

Из этих фактов вытекает следующая теорема.

\begin{theorem}
\textit{Конечная цепь Маркова является сильно эргодической тогда и только тогда, когда }$$\exists n_0\ge1 \ \ \forall i,j\in E \ \ p_{i,j}(n_0)>0.$$
\end{theorem}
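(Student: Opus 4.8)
The plan is to prove both implications directly, leaning on Theorem~\ref{th:technical_ergodic} for the non-trivial direction and on a short finiteness argument for the other.

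First I would handle sufficiency. Suppose there is $n_0 \ge 1$ with $p_{i,j}(n_0) > 0$ for all $i,j \in E$. Since $E$ is finite, the number $\delta = \min_{i,j \in E} p_{i,j}(n_0)$ is strictly positive. Fix an arbitrary state $j_0 \in E$; then $p_{i,j_0}(n_0) \ge \delta > 0$ for every $i \in E$, so the hypothesis of Theorem~\ref{th:technical_ergodic} holds with this $n_0$, this $\delta$, and this $j_0$. Applying that theorem yields the existence of $\lim_{n\to\infty} p_{i,j_0}(n) = p_{j_0}^*$, a value independent of $i$, with $p_{j_0}^* > 0$. Running this argument once for each of the (finitely many) states $j_0 \in E$, I conclude that for every $j$ the limit $p_j^* = \lim_{n\to\infty} p_{i,j}(n) > 0$ exists and does not depend on $i$, which is precisely the definition of strong ergodicity.

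Next, necessity. Assume the chain is strongly ergodic, i.e.\ for all $i,j$ there is $p_j^* > 0$ with $p_{i,j}(n) \to p_j^*$. Put $\overline{\varepsilon} = \frac{1}{2} \min_{j\in E} p_j^* > 0$, which is positive because it is a minimum of finitely many positive numbers. By the definition of a limit, for each pair $(i,j)$ there is $N(i,j)$ with $p_{i,j}(n) > p_j^* - \overline{\varepsilon} \ge \overline{\varepsilon} > 0$ for all $n > N(i,j)$. Taking $n_0 = 1 + \max_{i,j \in E} N(i,j)$ (again a maximum over finitely many numbers, hence finite) gives $p_{i,j}(n_0) > 0$ for all $i,j \in E$, as required.

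A remark on where the real content sits: the forward direction is essentially bookkeeping plus finiteness, and all the analytic work is concentrated in Theorem~\ref{th:technical_ergodic}, which is already available. The only point needing care is that Theorem~\ref{th:technical_ergodic} certifies positivity of the limit $p_{j_0}^*$ only for the distinguished state $j_0$, so one must invoke it separately for every target state to obtain $p_j^* > 0$ for all $j$; this is harmless precisely because $E$ is finite. An alternative route, if one wishes to avoid re-running the theorem, is to observe that strong positivity of $P^{n_0}$ makes the chain irreducible (all states communicate) and aperiodic (a positive diagonal entry forces period $1$), and then to quote Theorem~\ref{Th:CritErg}; but the direct argument above is shorter and self-contained.
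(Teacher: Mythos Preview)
Your proof is correct. The necessity direction is identical to the paper's argument (it is exactly fact~а) stated just before the theorem, and the same computation appears verbatim in the necessity part of the proof of Theorem~\ref{Th:CritErg}).

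For sufficiency the paper takes a slightly different route: it observes (fact~в)) that a strictly positive $P^{n_0}$ forces irreducibility and aperiodicity, and then invokes the criterion Theorem~\ref{Th:CritErg}. You instead apply Theorem~\ref{th:technical_ergodic} directly, once per target state, and you even flag the paper's route as an alternative in your closing remark. Your path is marginally more economical, since Theorem~\ref{Th:CritErg} itself ultimately rests on Theorem~\ref{th:technical_ergodic}; the paper's path has the small advantage of reusing an already packaged criterion rather than re-running the technical theorem $|E|$ times. Either way the substance is the same.
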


В случае ОДМЦ со счетным множеством состояний ситуация нем\-ного усложняется. Мы уже видели, что в счетных цепях может вовсе не быть стационарного распределения (например, в простом блуждании на целочисленной решетке). Поэтому для эргодичности счетных ОДМЦ требуется дополнительное условие (которое автоматически выполняется в случае конечных ОДМЦ). А именно, справедлива следующая

\begin{theorem} [ (см.~{\cite[теорема 3.5]{NGG2}})] 
\textit{В неразложимой  ОДМЦ либо все состояния нулевые, либо все состояния ненулевые. ОДМЦ со счетным числом состояний является сильно эргодической тогда и~только тогда, когда она неразложима, апериодична и положительно возвратна} (\textit{то есть у нее все состояния ненулевые}). 
\end{theorem}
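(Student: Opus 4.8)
Первая часть утверждения --- <<солидарность>> состояний по свойству нулевости в неразложимой цепи --- это в точности пункт~2 теоремы~\ref{solidar}, причём её доказательство было проведено без предположения о конечности цепи: для любых $i,j$ из неразложимости найдутся $L,M$ с $p_{j,i}(L)>0$ и $p_{i,j}(M)>0$, и из неравенств $p_{j,j}(L+n+M)\ge p_{j,i}(L)p_{i,i}(n)p_{i,j}(M)$ и $p_{i,i}(M+n+L)\ge p_{i,j}(M)p_{j,j}(n)p_{j,i}(L)$ следует эквивалентность $p_{i,i}(n)\to 0\ \Leftrightarrow\ p_{j,j}(n)\to 0$. Поэтому здесь достаточно сослаться на теорему~\ref{solidar}.

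Необходимость во втором утверждении устанавливается непосредственно. Пусть ОДМЦ сильно эргодична, то есть для всех $i,j\in E$ существует положительный предел $\lim_{n\to\infty}p_{i,j}(n)=p_j^*>0$. Тогда для каждой пары $(i,j)$ найдётся $n$ с $p_{i,j}(n)>0$, значит, любые два состояния сообщаются --- цепь неразложима. Далее, для фиксированного $i$ при всех достаточно больших $n$ выполнено $p_{i,i}(n)>0$; в частности $p_{i,i}(n_0)>0$ и $p_{i,i}(n_0+1)>0$ для некоторого $n_0$, откуда наибольший общий делитель множества $\{n:p_{i,i}(n)>0\}$ равен единице --- цепь апериодична. Наконец, из $p_{j,j}(n)\to p_j^*>0$ по определению состояние $j$ ненулевое, а ненулевое состояние возвратно (следствие к теореме~\ref{criteria transience}); значит, все состояния положительно возвратны.

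Достаточность --- основная и технически наиболее содержательная часть. Пусть цепь неразложима, апериодична и положительно возвратна. Во-первых, из положительной возвратности и неразложимости следует существование единственного стационарного распределения $\pi$ с $\pi_j=1/\mu_j>0$ для всех $j$ (это уже обсуждалось в связи с соотношением $\mu_j\pi_j=1$; при желании $\pi$ строится стандартной конструкцией через экскурсии: $\pi_j$ пропорционально ожидаемому числу посещений состояния $j$ между двумя последовательными визитами в фиксированное состояние). Во-вторых, применяем метод склейки (coupling). На общем вероятностном пространстве рассмотрим две независимые копии цепи $\{X_n\}$ и $\{Y_n\}$ с $X_0=i$ и $Y_0\sim\pi$. Пара $\{(X_n,Y_n)\}$ --- однородная марковская цепь на $E\times E$ с переходными вероятностями $p_{(a,b),(c,d)}=p_{a,c}\,p_{b,d}$. Именно здесь существенна апериодичность: из неё и из неразложимости исходной цепи следует, что для любых $a,b,c,d$ найдётся общее $n$, для которого одновременно $p_{a,c}(n)>0$ и $p_{b,d}(n)>0$, то есть цепь-произведение неразложима. Кроме того, $\pi\otimes\pi$ --- её стационарное распределение, а неразложимая цепь со стационарным распределением положительно возвратна, поэтому $\{(X_n,Y_n)\}$ с вероятностью единица бесконечно часто попадает на диагональ $\Delta=\{(x,x):x\in E\}$; обозначим момент первого попадания на $\Delta$ через $T$, тогда $T<\infty$ п.н.

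Определим склеенный процесс $Z_n=X_n$ при $n<T$ и $Z_n=Y_n$ при $n\ge T$. По строго марковскому свойству цепи-произведения процесс $\{Z_n\}$ снова является копией исходной цепи со стартом из $i$, так что $\mathbb{P}(Z_n=j)=p_{i,j}(n)$, тогда как $\mathbb{P}(Y_n=j)=\pi_j$ в силу стационарности. Поскольку $Z_n=Y_n$ на событии $\{T\le n\}$,
\[
|p_{i,j}(n)-\pi_j|=|\mathbb{P}(Z_n=j)-\mathbb{P}(Y_n=j)|\le\mathbb{P}(Z_n\ne Y_n)\le\mathbb{P}(T>n)\to 0,\quad n\to\infty.
\]
Следовательно, $\lim_{n\to\infty}p_{i,j}(n)=\pi_j>0$, и предел не зависит от $i$ --- это и есть сильная эргодичность. Главным препятствием является аккуратное обоснование двух моментов: неразложимости цепи-произведения (без апериодичности произведение может распадаться, например, на <<чётную>> и <<нечётную>> компоненты, и склейка не работает) и законности самой склейки (применение строго марковского свойства к $\{(X_n,Y_n)\}$ в случайный момент $T$ и проверка того, что $\{Z_n\}$ имеет нужное распределение).
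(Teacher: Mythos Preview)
В пособии эта теорема приведена со ссылкой на~\cite[теорема~3.5]{NGG2} и собственного доказательства не содержит, так что сравнивать не с чем; оценю само предложение.

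Первая часть и необходимость у вас в порядке: ссылка на пункт~2 теоремы~\ref{solidar} корректна (её доказательство действительно не использовало конечность множества состояний), а вывод неразложимости, апериодичности и положительной возвратности из сильной эргодичности полностью повторяет рассуждение, которое в пособии проведено для конечного случая в теореме~\ref{Th:CritErg}.

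Достаточность вы доказываете методом каплинга --- это стандартный и правильный путь, и именно он нужен в счётном случае: схема пособия для конечных цепей опирается на теорему~\ref{th:technical_ergodic}, условие которой $\exists\,\delta>0\ \exists\,j_0\ \exists\,n_0:\ \inf_i p_{i,j_0}(n_0)\ge\delta$ в счётной цепи, вообще говоря, не выполнено даже при неразложимости, апериодичности и положительной возвратности, так что её непосредственное перенесение невозможно. Оба отмеченных вами тонких места --- неразложимость произведения (здесь используется, что при апериодичности для любого $a$ имеется такое $N(a)$, что $p_{a,a}(n)>0$ при всех $n\ge N(a)$, и тогда для произвольных $a,b,c,d$ можно подобрать общее $n$) и корректность склейки через строго марковское свойство в момент $T$ --- это действительно те точки, которые требуют аккуратной проверки, и вы их правильно идентифицировали. Единственное, что стоило бы чуть развернуть: само существование стационарного распределения $\pi$ с $\pi_j>0$ вы берёте как данность через конструкцию по экскурсиям; в пособии эта конструкция явно не проведена, а соотношение $\mu_j\pi_j=1$ в замечании~1 раздела~6.2.2 выведено в предположении, что $\pi$ уже есть, --- поэтому формально здесь нужно либо сослаться на внешний источник, либо дать набросок построения $\pi_j$ как нормированного ожидаемого числа посещений состояния $j$ за один цикл возвращения в фиксированное состояние.
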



\textbf{Замечание 2.} В случае конечных неразложимых ОДМЦ все состояния всегда являются положительно возвратными. Проверять положительную возвратность счетных цепей можно с помощью существования стационарного распределения. Справедлива следующая
\begin{theorem}
\textit{ОДМЦ является сильно эргодической тогда и только тогда, когда цепь апериодична, существует единственное стационарное распределение, все компоненты этого стационарного р\ag{ac}пре\-де\-ления  строго больше нуля.}
\end{theorem}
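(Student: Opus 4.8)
Замысел -- свести утверждение к уже установленному критерию: ОДМЦ (в том числе со счётным числом состояний) сильно эргодична тогда и только тогда, когда она неразложима, апериодична и положительно возвратна. Поэтому достаточно доказать равносильность набора условий (A) <<цепь апериодична, существует единственное стационарное распределение $\pi$ и $\pi_i>0$ для всех $i$>> и набора (B) <<цепь неразложима, апериодична и положительно возвратна>>, после чего остаётся применить упомянутый критерий (B) $\Leftrightarrow$ <<сильно эргодична>>.

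Сначала я бы разобрал импликацию (B) $\Rightarrow$ (A), она лёгкая. Апериодичность входит в оба набора. Для неразложимой положительно возвратной цепи существует единственное стационарное распределение $\pi$ (см. соотношение $\mu_i\pi_i=1$ и замечание о единственности стационарного распределения при единственном положительно возвратном классе), а положительная возвратность означает $\mu_i<\infty$, откуда $\pi_i=1/\mu_i>0$ для каждого состояния $i$; значит, (A) выполнено.

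Обратная импликация (A) $\Rightarrow$ (B) -- основная часть. Пусть цепь апериодична, $\pi$ -- единственное стационарное распределение и $\pi_i>0$ при всех $i$. Поскольку у нулевого состояния стационарная вероятность равна нулю, нулевых состояний нет; по следствию из критерия возвратности ненулевое состояние возвратно, так что все состояния возвратны и ненулевы, то есть положительно возвратны; по теореме <<несущественное состояние невозвратно>> несущественных состояний тоже нет. Осталось получить неразложимость: существенные состояния разбиваются на замкнутые классы сообщающихся состояний, причём хотя бы один такой класс существует (иначе все состояния были бы несущественными, что исключено). Если бы замкнутых классов было не меньше двух, скажем $C_1\neq C_2$, то каждый из них, будучи неразложимой положительно возвратной подцепью, имел бы своё стационарное распределение; их продолжения нулём на $E\setminus C_j$ давали бы два различных стационарных распределения всей цепи, вопреки единственности $\pi$. Значит, замкнутый класс ровно один, а так как несущественных состояний нет, он совпадает со всем $E$: цепь неразложима. Вместе с данной апериодичностью и установленной положительной возвратностью это и есть набор (B).

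\textbf{Ожидаемая главная трудность.} Она вся в счётном случае. Во-первых, в импликации (B) $\Rightarrow$ (A) нужна ссылка на существование и единственность стационарного распределения у неразложимой положительно возвратной цепи, опирающаяся на формулу $\pi_i=1/\mu_i$. Во-вторых, в импликации (A) $\Rightarrow$ (B) требуется аккуратно провести вывод неразложимости из положительности всех $\pi_i$ и единственности $\pi$: здесь используются структурные теоремы о классах эквивалентности и тот факт, что разные замкнутые положительно возвратные классы порождают различные стационарные распределения. В конечном случае всё существенно проще: конечная неразложимая цепь автоматически положительно возвратна и всегда имеет стационарное распределение, так что (A) и (B) превращаются в <<неразложима и апериодична>>.
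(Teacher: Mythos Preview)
Твоё доказательство корректно и по существу совпадает с тем, как теорема подаётся в тексте: пособие не приводит отдельного доказательства, а помещает утверждение сразу после критерия <<неразложима + апериодична + положительно возвратна>> и сопровождает его замечанием, что единственность стационарного распределения с ненулевыми компонентами равносильна неразложимости, --- именно эту редукцию ты и разворачиваешь. Все опорные факты (нулевое состояние имеет $\pi_i=0$; ненулевое $\Rightarrow$ возвратное; несущественное $\Rightarrow$ невозвратное; $\pi_i=1/\mu_i$ для положительно возвратного класса; продолжение нулём стационарного распределения замкнутого класса на всю цепь) в пособии есть, так что план полностью обоснован.
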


\textbf{Замечание.} Единственность стационарного распределения с ненулевыми компонентами равносильна неразложимости цепи (то есть все состояния сообщаются, иначе говоря,  образуют единственный класс эквивалентности).

\gav{Выше мы ввели определение эргодичности конечной ОДМЦ (см. определение \ref{def:ergodicity}), обеспечивающее выполнение условия~\eqref{GeneralChesaro}. Определение по сути свелось к требованию единственности стационарного распределения. Для счетных ОДМЦ ситуация сложнее. Если под эргодичностью по-прежнему понимать выполнение условия~\eqref{GeneralChesaro} с ограниченной функцией $f(k)$, то единственность стационарного распределения является только необходимым условием эргодичности. Более того, счетные ОДМЦ могут вообще не иметь стационарных распределений, в отличие от конечных ОДМЦ. Ранее мы уже это упоминали. Разберем теперь эту ситуацию более подробно.}

В следующем примере рассматриваемая \gav{счетная} ОДМЦ описывает так называемый процесс рождения/гибели в дискретном времени.

\begin{example}
\label{birth-death discr}
Рассмотрим простое случайное блуждание на неотрицательной части одномерной целочисленной решетки с частично отражающим экраном в состоянии $0$: $p_{i,i+1} = p$, $p_{i,i-1} =q= 1-p$, $i \ge 1$, $p\in (0,1)$, $p_{0,1} = p_\circ$, $p_{0,0} = q_\circ=1 - p_\circ $, $p_\circ\in (0,1).$


\begin{figure}[!h]
	\centering
	\includegraphics[scale=0.55]{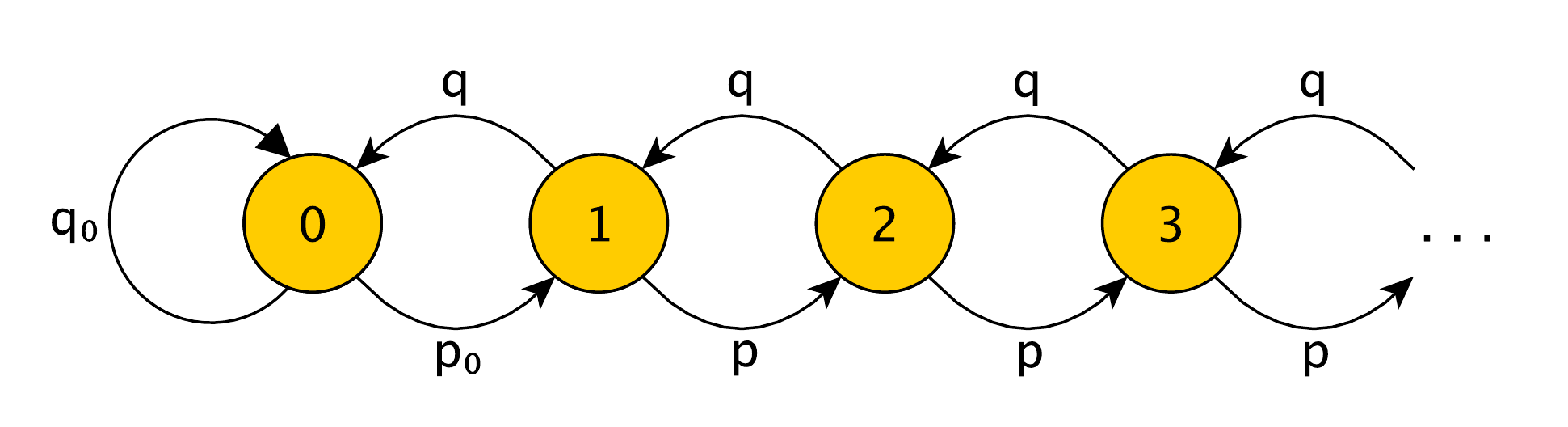}
	\label{fig:BirthDeathBorder}
	\caption{К примеру~\ref{birth-death discr}}
\end{figure}

Все состояния сообщаются, то есть все множество состояний образует единственный класс эквивалентности, значит цепь неразложима. Наличие <<петельки>> в состоянии $0$ говорит об апериодичности цепи.

Попробуем найти стационарное распределение.
\begin{equation*}
\left \{
\begin{aligned}
        &\pi_k = p \pi_{k-1} + q \pi_{k+1}, \quad k \ge 2,\\
    &\pi_1 = p_\circ \pi_0 + q \pi_2,\\
    &\pi_0 = q_\circ \pi_0 + q \pi_1.
\end{aligned}
    \right .
\end{equation*}
Несложно увидеть, что оно существует только при условии $p<1/2$: $\pi_k = A \left ( \frac{p}{q}\right)^k$, $k\ge 1$, $\pi_0 = A \frac{p}{p_\circ}$, где нормировочная константа $A =$\linebreak $= \frac{p_\circ (q-p)}{p(p_\circ + q-p)}$. То есть эта ситуация \gav{сильно} эргодическая.

В случае $p>1/2$ формальное решение уравнения инвариантности то же: $\pi_k = A \left ( \frac{p}{q}\right)^k$, $k\ge 1$, $\pi_0 = A \frac{p}{p_\circ}$, но теперь $\frac{p}{q}>1$, поэтому такое решение не может давать распределение вероятностей. В случае $p=1/2$ формальное решение уравнения стационарности $\pi_k = A$, $k\ge 1$, $\pi_0 = A / (2 p_\circ)$,
что тоже не может давать вектор распределения вероятностей никаким выбором нормировочной константы $A$.
Значит, при $p\ge 1/2$ цепь неэргодична.

Убедимся, что при $p> 1/2$ цепь невозвратна. Интуитивно это понятно: есть тенденция <<ухода>> вправо, на бесконечность. Вычислим вероятность вернуться в состояние $0$ за конечное число шагов $f_0$. Согласно формуле полной вероятности 
$$
f_0 = p_\circ f_{1,0} + q_\circ,
$$
где $f_{1,0}$ -- вероятность достигнуть состояния $0$, стартуя из состояния $1$. Но $f_{1,0}$ есть не что иное, как вероятность разорения игрока в игре в <<орлянку>> с казино, если начальный капитал равен $1$. Если обозначить $\rho_k$ -- вероятность разорения при начальном капитале $k$, то из формулы полной вероятности имеем
$$
\rho_k = p \rho_{k+1} + q \rho_{k-1}, \quad k\ge 1,
$$
с граничным условием $\rho_0 = 1$. Если $q < p$, то $\rho_k = \left (\frac{q}{p}  \right)^k$, если $q \ge p$, то $\rho_k = 1$. Поэтому в нашей задаче при $p > 1/2$ цепь невозвратна, а при $p \le 1/2$ цепь возвратна. Но, как мы уже убедились, цепь положительно возвратна (состояния возвратные, ненулевые) только при $p < 1/2$, так как в этой ситуации  имеется стационарное решение. При $p=1/2$ цепь нуль-возвратна (состояния возвратные, нулевые). \gav{\EndEx}
\end{example}

Как уже ранее отмечалось, к эргодическим теоремам также относят утверждения типа <<усреднение по времени на большом временном интервале совпадает с усреднением по пространству состояний относительно стационарного распределения>>. Приведем далее одну из таких теорем, которую можно также понимать, как переформулировку формулы~\eqref{Chesaro}.

Введем случайные величины обозначающие число посещений ОДМЦ состояния $i$ до момента времени $n$:
$$
V_i(n) = \sum_{k=0}^n \mathsf{I}(X_k = i).
$$


Если существует предел
$$
\lim\limits_{n\to\infty} \frac{V_i(n)}{n},
$$
то его называют \textit{предельной пропорцией} времени, проведенном в состоянии $i$.
\begin{theorem} \textit{Рассматривается эргодическая ОДМЦ с конечным или счетным числом состояний, т.е. ОДМЦ с единственным стационарным распределением $\pi$.
Для любого состояния $i$ такой цепи выполняется равенство}
$$
P \left( \lim_{n\to\infty} \frac{V_i(n)}{n} = r_i \right) = 1,
$$
\textit{где $r_i = \pi_i$, если состояние положительно возвратно, и $r_i = 0$, если состояние невозвратно или имеет нулевую возвратность.}
\end{theorem}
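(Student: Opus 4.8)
The argument is a regeneration (renewal) argument combined with the strong Markov property, carried out separately according to the classification of the state $i$ (Figure~\ref{fig:StatesClassification}). First suppose $i$ is невозвратно (transient). By the recurrence criterion (Theorem~\ref{criteria transience}) we have $F_i<1$, and by the strong Markov property the total number of visits $N_i:=\lim_{n\to\infty}V_i(n)=\sup_n V_i(n)$ is, whatever the initial state, stochastically dominated by a geometric variable with parameter $1-F_i$; hence $N_i<\infty$ almost surely. Consequently $V_i(n)\le N_i<\infty$ a.s., so $V_i(n)/n\to 0=r_i$ a.s., which settles the transient (and, in particular, the non-essential) case.

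Now suppose $i$ is возвратно (recurrent), so by Definition~\ref{def:defrec2} the chain visits $i$ infinitely often a.s. and $V_i(n)\to\infty$ a.s. Assume first that $X_0=i$. Put $\tau_i(0)=0$ and $\tau_i(k)=\inf\{n>\tau_i(k-1):X_n=i\}$ for $k\ge1$. By the strong Markov property the increments $\eta_k=\tau_i(k)-\tau_i(k-1)$, $k\ge1$, are i.i.d.\ with distribution $\{f_i(n)\}_{n\ge1}$ and common mean $\mu_i=\sum_{n\ge1}n f_i(n)$, which is finite when $i$ is положительно возвратно and $+\infty$ when $i$ is нуль-возвратно. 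I would then apply the strong law of large numbers to $(\eta_k)$: in the positively recurrent case this is Kolmogorov's SLLN directly, and in the null-recurrent case one truncates, using $\tfrac1m\sum_{k=1}^m\eta_k\ge\tfrac1m\sum_{k=1}^m\min(\eta_k,M)\xrightarrow[m\to\infty]{}\mathbb{E}\min(\eta_1,M)$ and $\mathbb{E}\min(\eta_1,M)\uparrow\mu_i=\infty$ as $M\to\infty$; in both cases $\tfrac1m\tau_i(m)\to\mu_i$ a.s.

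Next I would invert this. Since $\tau_i(V_i(n)-1)\le n<\tau_i(V_i(n))$ and $V_i(n)\to\infty$ a.s., dividing by $V_i(n)$ and squeezing yields $n/V_i(n)\to\mu_i$ a.s., i.e.\ $V_i(n)/n\to 1/\mu_i$ a.s.\ (with the convention $1/\infty:=0$). For a positively recurrent $i$ we have $1/\mu_i=\pi_i$ by the identity $\mu_i\pi_i=1$ established above (see the derivation around~\eqref{mu ij} and Example~\ref{ex:mu j pi j =1}), so $V_i(n)/n\to\pi_i=r_i$; for a null-recurrent $i$, $1/\mu_i=0=r_i$. Finally, for an arbitrary initial distribution I would decompose at the first hitting time $\sigma$ of $i$: on $\{\sigma<\infty\}$ the post-$\sigma$ process is, by the strong Markov property, a chain started at $i$, and the almost surely finite initial segment $[0,\sigma]$ does not affect the Ces\`aro limit, so the conclusion carries over; on $\{\sigma=\infty\}$ one has $V_i(n)\equiv0$ and the limit is $0$, and this event has positive probability only when $r_i=0$, since in an ergodic chain every state outside the (unique) essential class reaches that class a.s.

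\textbf{Main obstacle.} The delicate points are (i) the renewal step in the infinite-mean, null-recurrent case — justifying $\tau_i(m)/m\to+\infty$ a.s.\ by truncation rather than by a direct SLLN, and then pushing this through the inversion to get $V_i(n)/n\to0$; and (ii) handling the random but a.s.\ finite shift $\sigma$ so that the argument is valid uniformly in the initial law, including the degenerate case $\sigma=\infty$. The identification $1/\mu_i=\pi_i$ in the positively recurrent case is not an obstacle, as the relation $\mu_i\pi_i=1$ is already at our disposal; all else reduces to the classical SLLN together with the classification of states via Theorems~\ref{criteria transience} and~\ref{solidar}.
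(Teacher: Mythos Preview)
Your regeneration argument is the standard one and is precisely what the paper deploys in Example~\ref{ex:mu j pi j =1} (the paper does not prove the theorem itself, deferring to~\cite{KelbertSukhov2010}). The transient case via a geometric bound on the total number of visits, the excursion decomposition $\tau_i(m)=\sum_{k\le m}\eta_k$ with i.i.d.\ $\eta_k$, the truncation argument for the null-recurrent case, and the sandwich inversion $\tau_i(V_i(n)-1)\le n<\tau_i(V_i(n))$ are all correct.

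There is, however, a genuine gap in your last step. You claim that ``in an ergodic chain every state outside the (unique) essential class reaches that class a.s.'', so that $\{\sigma=\infty\}$ can have positive probability only when $r_i=0$. Under the hypothesis actually stated in the theorem --- merely the uniqueness of the stationary distribution --- this fails for countable chains. The paper's own Example~\ref{ex:casino} with $p>q$ is a counterexample: state $0$ is positively recurrent with $\pi_0=1$, yet from $k\ge1$ the chain reaches $0$ only with probability $(q/p)^k<1$, and on the complementary event $V_0(n)/n\to 0\ne\pi_0$. Thus the conclusion (and your proof of it) holds as written only for finite chains, or for countable chains under the additional assumption that the positively recurrent class is hit a.s.\ from every state --- exactly the extra condition (finitely many transient states) that the paper itself imposes later in its algorithm for countable chains and in Figure~\ref{fig:countable_ergodic}. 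Once you make that hypothesis explicit, your argument is complete; without it, your final sentence conceals a real obstruction rather than dispatching it.
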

С доказательством можно ознакомиться в~\cite{KelbertSukhov2010}.

В качестве применения этой теоремы рассмотрим асимптотическое решение следующей задачи.
\begin{example}\label{bandit}
Рассмотрим следующую <<наивную>> (но тем не менее  осмысленную) стратегию задачи о двуруком бандите, см. также раздел~\ref{bandit_problem}. Имеется игральный автомат с двумя ручками. При нажатии первой приз появляется с вероятностью $p_1$ (будем называть это событие успехом). Аналогично при нажатии второй успех будет с вероятностью $p_2$. Стратегия заключается в повторении той же ручки, если на предыдущем шаге она давала успех, и смене ручки, если был неуспех. Ясно, что такая игра представима в виде ОДМЦ на множестве состояний $\{1,2\}$, где номер состояния -- номер ручки на данном шаге игры.


\begin{figure}[!h]
	\centering
	\includegraphics[scale=0.55]{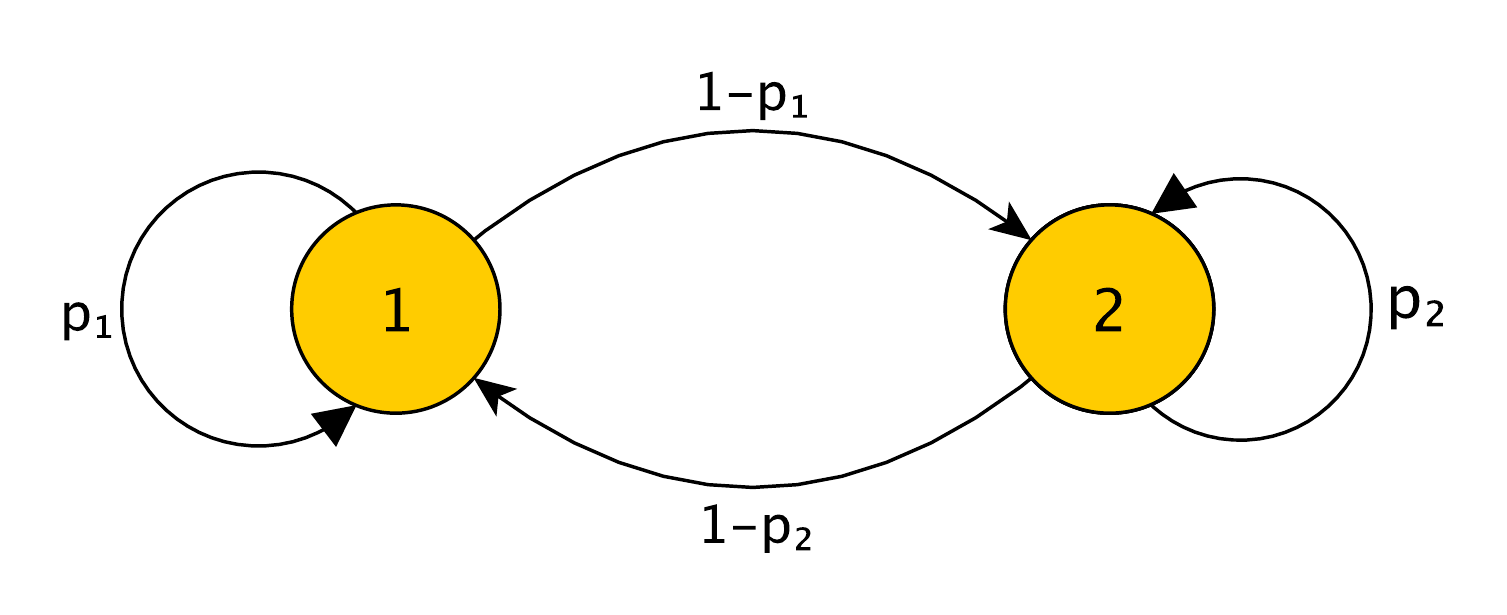}
	\label{fig:TwoArmedBandit}
	\caption{К примеру~\ref{bandit}}
\end{figure}

Вычислим приближенно (когда игра продолжается достаточно долго) среднее число успехов при данной стратегии. Обозначим  $a_1$ -- число успехов при выборе первой ручки, аналогично $a_2$ -- число успехов при выборе второй ручки. Тогда 
$$
a_1 = \sum_{k=1}^n \mathsf{I} \left( X_{k-1} = 1, X_k = 1\right),
$$
$$
a_2 = \sum_{k=1}^n \mathsf{I} \left( X_{k-1} = 2, X_k = 2 \right).
$$
После усреднения получаем
$$
\mathbb{E} a_1 = \sum_{k=1}^n  \mathbb{P} (X_{k-1} = 1, X_k = 1) = p_1  \sum_{k=1}^n  \mathbb{P} \left( X_{k-1} = 1 \right).
$$
Но согласно предыдущей теореме $ \sum_{k=1}^n  \mathbb{P} \left( X_{k-1} = 1 \right) = \mathbb{E} V_1(n) \approx n \pi_1$. Аналогично, для $a_2$. В итоге среднее число успехов приближенно равно
$$
n(p_1 \pi_1 + p_2 \pi_2) = \frac{p_1 + p_2 - 2p_1 p_2}{2-p_1 - p_2}.
$$
Рассмотренная стратегия тем больше отличается от стратегии с равновероятным выбором ручки на каждом шаге, чем больше $|p_1 - p_2|$.~ \EndEx
\end{example}


Сформулируем, наконец, аналоги закона больших чисел и центра\-льной предельной теоремы для ОДМЦ. Введем случайную величину $V_i(n)$, равную числу попаданий цепи $\xi_n$ в фиксированное состояние $i \in E$ за время $n$, т.е. $$V_i(n)=\sum\limits_{k=1}^n \mathsf{I}(\xi_k=i),$$ где $\mathsf{I}(A)$ -- индикаторная функция события $A$. Пусть $\tau_j$ есть время возвращения цепи в состояние $i$, т.е. $$\mathbb{P}(\tau_i = k) = f_i(k).$$ Следующая теорема справедлива как для конечных, так и для счетных цепей~\cite[с. 263]{Borovkov1999}.
\begin{theorem}\label{th:clt_lln}
\textit{Пусть ОДМЦ эргодическая, т.е. существует единственное стационарное распределение $\pi$. Тогда независимо от того, где находилась ОДМЦ в начальный момент, при }$n\to\infty$ $$\frac{\mathbb{E}V_i(n)}{n} \to \pi_i, \ \frac{V_i(n)}{n} \overset{\mathrm{\text{п.н.}}}{\longrightarrow} \pi_i.$$
\textit{Если дополнительно} $\mathbb{D}\tau_i = \sigma_i^2 < \infty$, $\pi_i > 0$, то
$$\mathbb{P}\left(\frac{V_i(n) - n\pi_i}{\sigma_i\sqrt{n\pi_i^3}} < x \,\Big\rvert\, \xi_0 = s \right) \to \Phi(x)$$
\textit{при $n\to\infty$, где $\Phi(x)$ -- функция распределения} $\mathrm{N}(0,1)$.
\end{theorem}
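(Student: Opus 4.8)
Надо доказать теорему \ref{th:clt_lln}: для эргодической ОДМЦ выполнены закон больших чисел и ЦПТ для доли времени $V_i(n)/n$, проведённого в состоянии $i$.

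\textbf{План доказательства.} Основная идея — свести вопрос о поведении $V_i(n)$ к изучению \emph{моментов возвращения} цепи в состояние $i$ и применить к ним классические предельные теоремы для сумм независимых одинаково распределённых случайных величин. Пусть $\tau_i$ — время возвращения в $i$ с распределением $\mathbb P(\tau_i=k)=f_i(k)$; поскольку цепь эргодична, состояние $i$ положительно возвратно (единственный замкнутый класс состоит из ненулевых состояний), а значит $F_i=\sum_k f_i(k)=1$, последовательность $\{f_i(k)\}$ — это распределение вероятностей, и $\mathbb E\tau_i=\mu_i=1/\pi_i<\infty$ (см. замечание перед примером \ref{ex:mu j pi j =1} и формулу $\mu_i\pi_i=1$). Первый шаг: рассмотреть моменты последовательных посещений состояния $i$: $T_1$ — первый момент попадания в $i$, $T_{m+1}=T_m+\tau_i^{(m)}$, где $\tau_i^{(1)},\tau_i^{(2)},\dots$ — времена между последовательными посещениями. По строго марковскому свойству (применённому к моментам попадания в $i$, которые являются марковскими моментами) величины $\tau_i^{(1)},\tau_i^{(2)},\dots$ независимы и одинаково распределены с тем же распределением, что $\tau_i$ при старте из $i$; при этом $T_1$ может иметь другое распределение (зависит от начального состояния $s$), но $T_1<\infty$ п.н. в силу эргодичности, и это <<краевое>> слагаемое не повлияет на асимптотику после деления на $n$ или на $\sqrt n$.

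\textbf{Закон больших чисел.} Заметим тождество, связывающее $V_i(n)$ и моменты посещений: $V_i(n)=m$ равносильно $T_m\le n<T_{m+1}$. Отсюда $T_{V_i(n)}\le n< T_{V_i(n)+1}$, то есть
\[
\frac{T_{V_i(n)}}{V_i(n)}\le \frac{n}{V_i(n)}\le \frac{T_{V_i(n)+1}}{V_i(n)}.
\]
Поскольку $V_i(n)\to\infty$ п.н. (возвратность $i$), по усиленному закону больших чисел для i.i.d. слагаемых $\tfrac1m\sum_{k=1}^m\tau_i^{(k)}\xrightarrow{\text{п.н.}}\mathbb E\tau_i=\mu_i$, и, учитывая что $T_1/V_i(n)\to0$ п.н., обе границы стремятся к $\mu_i$. Значит $n/V_i(n)\xrightarrow{\text{п.н.}}\mu_i$, то есть $V_i(n)/n\xrightarrow{\text{п.н.}}1/\mu_i=\pi_i$. Сходимость $\mathbb E V_i(n)/n\to\pi_i$ следует из $\mathbb E V_i(n)=\sum_{k=1}^n p_{s,i}(k)$ и чезаровской сходимости $p_{s,i}(k)\to\pi_i$ (в чезаровском смысле — это как раз формула \eqref{Chesaro}), либо из п.н.-сходимости $V_i(n)/n\le 1$ и теоремы Лебега о мажорируемой сходимости.

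\textbf{Центральная предельная теорема.} Здесь дополнительно предполагается $\mathbb D\tau_i=\sigma_i^2<\infty$. Применяем классическую ЦПТ к i.i.d. последовательности $\{\tau_i^{(k)}\}$: $\frac{T_m-m\mu_i}{\sigma_i\sqrt m}\xrightarrow{d}\mathrm N(0,1)$ (слагаемое $T_1$ снова асимптотически несущественно после деления на $\sqrt m$). Затем нужно <<обратить>> это соотношение, перейдя от $T_m$ к $V_i(n)$ — это стандартный приём для процессов восстановления. Используя соотношение $\{V_i(n)\ge m\}=\{T_m\le n\}$ и подставляя $m=m_n=\lfloor n\pi_i + x\sigma_i\sqrt{n\pi_i^3}\,\rfloor$, получаем, что $\mathbb P(V_i(n)\ge m_n)=\mathbb P(T_{m_n}\le n)$; остаётся аккуратно преобразовать событие $\{T_{m_n}\le n\}$ к виду $\{(T_{m_n}-m_n\mu_i)/(\sigma_i\sqrt{m_n})\le \text{что-то}\}$, проверить что правая часть стремится к $-x$ (с учётом $\mu_i=1/\pi_i$ и $m_n\sim n\pi_i$), и воспользоваться уже доказанной ЦПТ для $T_m$ плюс непрерывностью $\Phi$. Нормировка $\sigma_i\sqrt{n\pi_i^3}$ возникает именно из этой замены: $\sigma_i\sqrt{m_n}\approx\sigma_i\sqrt{n\pi_i}$, а множитель $\pi_i$ при переходе от приращения по $m$ к приращению по $n$ даёт ещё $\pi_i$, итого $\pi_i^3$ под корнем.

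\textbf{Основное препятствие.} Технически самое тонкое место — аккуратное обоснование <<обращения>> предельной теоремы для моментов восстановления в предельную теорему для считающей функции $V_i(n)$: нужно следить за целыми частями, за тем что $m_n\to\infty$ и $m_n/n\to\pi_i$, и за равномерностью малости краевого слагаемого $T_1$; кроме того, для строгого марковского свойства следует убедиться, что моменты попадания в $i$ конечны п.н. и что обновлённая цепь после момента попадания не зависит от прошлого — это уже по существу содержится в определении ОДМЦ и в возвратности $i$. Все эти шаги рутинны, но требуют внимательности; идейная часть полностью исчерпывается переходом к i.i.d. временам между посещениями и ссылками на УЗБЧ и ЦПТ для независимых величин.
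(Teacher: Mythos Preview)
Your approach is correct and standard. Note, however, that the paper does \emph{not} prove this theorem: it is stated with a reference to~\cite[с.~263]{Borovkov1999} and no proof is given. Immediately afterwards, in Example~\ref{ex:mu j pi j =1}, the paper does carry out precisely your renewal argument for the LLN portion (sandwich $n$ between $T_{V_i(n)}$ and $T_{V_i(n)+1}$, apply SLLN to the i.i.d.\ excursion lengths, invert), so your LLN proof is essentially identical to what appears there. Your CLT sketch via the inversion $\{V_i(n)\ge m\}=\{T_m\le n\}$ is the standard renewal-CLT argument from Borovkov and is correctly outlined, including the origin of the $\sigma_i\sqrt{n\pi_i^3}$ normalization.

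One small remark: your LLN argument assumes $i$ is positively recurrent (so $\mu_i<\infty$), which covers the case $\pi_i>0$. The theorem as stated also asserts $V_i(n)/n\to 0$ when $\pi_i=0$ (e.g.\ $i$ inessential); that case needs a separate, easier argument and is not covered by your renewal bounds.
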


\begin{example}
\label{ex:mu j pi j =1}
Пусть ОДМЦ сильно эргодическая\footnote{В действительности, достаточно предположить единственность стационарного распределения. Результат $\mu_j^{-1}=\pi_j\ge 0$ останется верным.}, т.е. существует единственное стационарное распределение $\pi$ и все компоненты этого распределения положительны. Доказать, что для всех $j\in E$ выполнено $\mu_j^{-1}=\pi_j>0$, где $$\mu_j=\sum\limits_{n=1}^{\infty}nf_j(n)$$ -- среднее время до первого возвращения в состояние $j$.
\end{example}
\begin{solution}
    Рассмотрим случайное блуждание по ОДМЦ, которое начинается в вершине $j$. Пусть $T_j^1$~--- длина первой <<прогулки>>, то есть число шагов, сделанных до первого возвращения в состояние $j$; $T_j^2$~--- длина второй <<прогулки>> и т.д. Получили последовательность $\{T_j^k\}_{k=1}^\infty$ независимых одинаково распределенных случайных величин. Из усиленного закона больших чисел следует, что
    \[
    \frac{\sum\limits_{k=1}^n T_j^k}{n} \xrightarrow[n\to\infty]{\text{п.н.}} \Exp T_j^1 = \mu_j.
    \]
    Из свойств сходимости почти наверное следует, что
    \[
    \frac{n}{\sum\limits_{k=1}^nT_j^k} \xrightarrow[n\to\infty]{\text{п.н.}} \frac{1}{\mu_j}.
    \]
    Заметим, кроме того, что
    \[
    \frac{n}{\sum\limits_{k=1}^{n+1}T_j^k} \xrightarrow[n\to\infty]{\text{п.н.}} \frac{1}{\mu_j}.
    \]
    Рассмотрим следующие случайные величины:
    \[
        \xi_j^k = \begin{cases}1,\text{ если на $k$-м шаге <<прогулка>> вернулась в состояние $j$},\\ 0 \text{ иначе.} \end{cases}
    \]
    Если ввести обозначение $\tilde{N} = \sum_{k=1}^n T_{j}^k$, то нетрудно видеть, что $n =$\linebreak $= \sum_{k=1}^{\tilde{N}}\xi_j^k$. Рассмотрим теперь случайную величину $\tilde{n} = \sum_{k=1}^N\xi_j^k$~--- число возвращений за первые $N$ шагов. Отсюда следует, что
    \[
        \sum\limits_{k=1}^{\tilde{n}}T_j^k \le N \le \sum\limits_{k=1}^{\tilde{n}+1}T_j^k,
    \]
    а значит,
    \[
        \frac{\tilde{n}}{\sum\limits_{k=1}^{\tilde{n}+1}T_j^k} \le \frac{1}{N}\sum\limits_{k=1}^N\xi_j^k \le \frac{\tilde{n}}{\sum\limits_{k=1}^{\tilde{n}}T_j^k}.
    \]
    Теперь заметим, что с вероятностью $1$ $\tilde{n} \to \infty$ при $N\to\infty$. Следовательно,
    \[
        \frac{1}{N}\sum\limits_{k=1}^N\xi_j^k \xrightarrow[N\to\infty]{\text{п.н.}} \frac{1}{\mu_j}.
    \]
    Так как $\frac{1}{\mu_j}$~--- константа, то указанная последовательность сходится и в среднем к $\frac{1}{\mu_j}$:
    \[
        \Exp \left(\frac{1}{N}\sum\limits_{k=1}^N\xi_j^k \right) \xrightarrow[N\to\infty]{} \frac{1}{\mu_j}.
    \]
    Пользуясь линейностью математического ожидания, получим
    \begin{eqnarray*}
        \Exp \left(\frac{1}{N}\sum\limits_{k=1}^N\xi_j^k \right) &=& \frac{1}{N}\sum\limits_{k=1}^N \Exp \xi_j^k=\\
        &=& \frac{1}{N}\sum\limits_{k=1}^N p_{j,j}(k) \xrightarrow[N\to\infty]{} \frac{1}{\mu_j}.
    \end{eqnarray*}
    В силу эргодичности цепи имеем
    \[
    \forall j\in S\; p_{j,j}(k) \xrightarrow[k\to\infty]{} \pi_j,
    \]
    а значит,
    \[
        \frac{1}{N}\sum\limits_{k=1}^N p_{j,j}(k) \xrightarrow[N\to\infty]{} \pi_j,
    \]
    откуда следует, что $\frac{1}{\mu_j} = \pi_j$ для всех $j\in S$.
    \EndEx
    
    \elena{Другое решение см. в замечании 1 раздела 6.2.2.}
\end{solution}

Как уже отмечалось, в случае \textbf{конечных} ОДМЦ эргодичность эквивалентна единственности стационарного распределения (которое и будет предельным). Другими словами, в конечных цепях для эргодичности должен быть только один замкнутый класс эквивалентности, при этом несущественных состояний может \gav{быть} любое конечное число. Это оправдывает термин \textit{несущественных состояний}: они не влияют на предельное поведение ОДМЦ: $\lim_{n\to\infty} p_i(n) = 0$, если $i$-е состояние несущественное.

В случае \textbf{счетных} ОДМЦ ситуация усложняется. Во-первых, напомним, что в случае счетных цепей может и не быть стационарного распределения: существование единственного стационарного распределение эквивалентно существованию и единственности положительно возвратного замкнутого класса эквивалентности. Однако это по-прежнему недостаточно для того, чтобы предельное распределение не зависело от начального. 




\begin{example}[ (см.~{\cite[гл. VII, \S~8, пример 4]{Shiryaev2}})]\label{ex:casino}
\eduard{Рассмотрим следующую игру. Человек изначально имеет $k$ рублей, а на каждом шаге игры подбрасывается несимметричная монетка (вероятность выпадения орла равна $p$, вероятность выпадения решки равна $q = 1-p$). Если выпадает орёл, то человек получает рубль от казино, иначе~--- отдаёт свой рубль. Игра продолжается до тех пор, пока либо человек не разорится, либо не накопит $M$ рублей (можно интерпретировать $M$ как общую сумму денег человека и казино на момент начала игры, если считать, что общее количество денег у человека и казино не меняется во время игры).}
\eduard{Пусть $\xi(t)$~--- количество рублей у человека в момент времени $t$. Последовательность $\{\xi(t)\}_{t=0}^\infty$ является марковской цепью, которая задаётся графом на~рис.~\ref{fig:casino}.}

\begin{figure}[!h]
	\centering
	\includegraphics[scale=0.45]{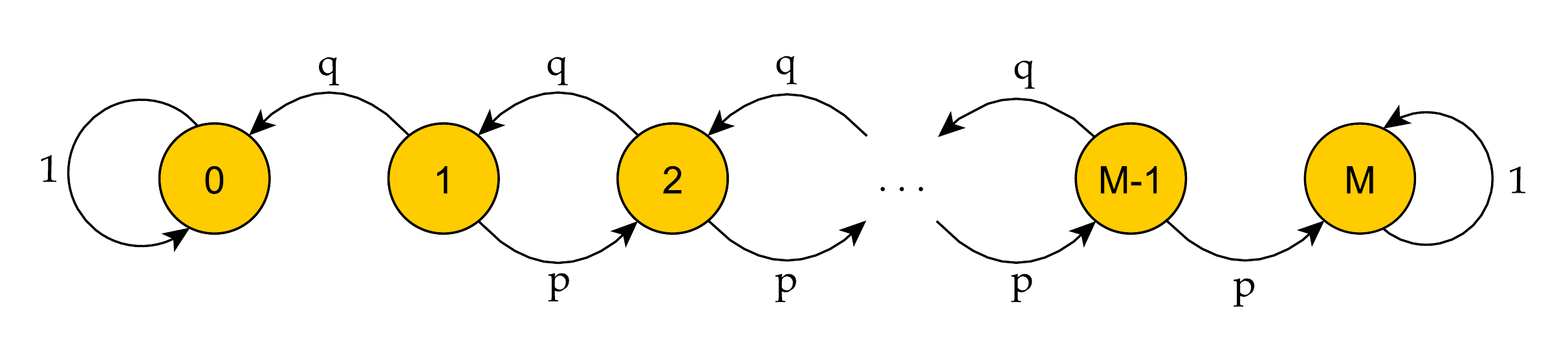}
	\caption{Игра в <<орлянку>> с казино}
	\label{fig:casino}
\end{figure}

\eduard{Во-первых, заметим, что все состояния, кроме $0$ и $M$, образуют несущественный класс с периодом $2$, состояния $0$ и $M$ положительно возвратны и образуют $2$ замкнутых класса. Поэтому в случае $M < \infty$ мы получаем, что цепь не может быть эргодической. Более интересен случай, когда $M = \infty$ (бесконечно богатое казино). Тем не менее случай конечного $M$ будет полезен, чтобы перейти к случаю $M = \infty$.}

\eduard{Рассмотрим теперь $M = \infty$. В этом случае состояния $1,2,\dots$ являются несущественными, а состояние $0$ является положительно возвратным и образует единственный замкнутый класс эквивалентности. Нетрудно заметить, что $\lim_{n\to\infty}p_{i,j}(n) = 0$ для $i,j = 1,2,\dots$. Пусть $\pi_k$~--- вероятность разорения игрока, если изначально у него было $k$ рублей. Так как состояние $0$ является поглощающим, то $p_{k,0}(n) =$\linebreak $= \sum_{m<n}f_{k,0}(m)$, где $f_{k,0}(m)$~--- вероятность, что через $m$ шагов цепь окажется в состоянии $0$ в первый раз, стартуя из состояния $k$. Но тогда получаем, что $$\lim\limits_{n\to\infty}p_{k,0}(n) = \sum\limits_{m=0}^\infty f_{k,0}(m) = \pi_k.$$
Из уравнения Колмогорова--Чэпмена получаем
$$
\pi_k = p\pi_{k+1} + q\pi_{k-1},\quad \pi_0 = 1.
$$
Записывая характеристический многочлен
$$
\lambda = \lambda^2 p + q
$$
и находя его корни (при $p=q=\frac{1}{2}$ корни $\lambda_1 = \lambda_2 = 1$, при $p\neq q$ корни $\lambda_1 = \frac{q}{p}$), получаем общий вид решения:
$$
\pi_k = C_1\left(\frac{q}{p}\right)^k + C_2 \ \text{ при } p\neq q,
$$
$$
\pi_k = C_1k + C_2\ \text{ при } p= q=\frac{1}{2}.
$$
Рассмотрим сначала случай $p=q=\frac{1}{2}$. В силу ограниченности $\pi_k$ для всех $k$ получаем, что $C_1 = 0$. Если учесть начальное условие $\pi_0 = 1$, то получим, что $C_2 = 1$, т.е.\ $\pi_k = 1$ для всех $k$, а значит, не зависит от $k$. В этом случае цепь эргодична. Если $q > p$, то, как и в первом случае, получаем, что из ограниченности $\pi_k$ необходимо, чтобы $C_1 = 0$, а из начального условия следует, что $C_2 = 1$, т.е.\ в случае $q > p$ цепь тоже получается эргодической. Это интуитивно понятно, ведь в случае $q > p$ есть <<тенденция>> движения <<влево>> по графу и в пределе человек разоряется, какова бы не была его стартовая сумма денег. В обоих случаях, разобранных выше, предельное распределение есть вектор $\pi = [1,0,0,\dots]^\top$. Если же $p > q$, то есть тендеция движения вправо, а значит, чем больше было изначально денег у человека, тем меньше вероятность, что он разорится, т.е.\ $\pi_k \to 0$ при $k \to \infty$, а значит, $C_2 = 0$, и из начального условия получаем $\pi_k = \left(\frac{q}{p}\right)^k$. Это означает, что предел $\lim_{n\to\infty}p_{k,0}(n) = \pi_k$ зависит от $k$, т.е.\ цепь не является эргодической.}

\eduard{Докажем теперь формулу $\pi_k = \left(\frac{q}{p}\right)^k$ для $q < p$ формально. Вот тут-то и оказывается полезна ситуация с $M < \infty$. Вернёмся теперь к случаю конечного $M$. Пусть $\pi_{k,M}$~--- это вероятность того, что стартуя из состояния $k$, цепь раньше окажется в состоянии $0$, чем в состоянии $M$ за конечное число шагов. Тогда
$$
\pi_{k,M} = p\pi_{k+1,M} + q\pi_{k-1,M},\ \pi_{0,M} = 1, \ \pi_{M,M} = 0. 
$$
Решая аналогичным способом данное рекуррентное уравнение, получим
$$
\pi_{k,M} = \frac{\left(\frac{q}{p}\right)^k - \left(\frac{q}{p}\right)^M}{1 - \left(\frac{q}{p}\right)^M},\ 0\le i\le M.
$$
Следовательно, $\lim_{M\to\infty}\pi_{k,M} = \left(\frac{q}{p}\right)^k$ (устремляем бюджет казино на бесконечность), т.е.\ нам достаточно показать, что $\pi_k = \lim_{M\to\infty}\pi_{k,M}$. Как и раньше, интуитивно это понятно, но мы покажем это строго. Пусть $A_k$~--- событие, состоящее в том, что найдётся такое $M$, что человек, имея в начале $k$ рублей, разорится раньше, чем накопит $M$ рублей. Понятно, что $\pi_k = \PP(A_k)$. Кроме того, рассмотрим событие $A_{k,M}$~--- событие, состоящее в том, что человек, имея в начале $k$ рублей, разорится раньше, чем накопит $M$ рублей (для конкретного $M$). Тогда $$A_k = \bigcup\limits_{M = k+1}^\infty A_{k,M} \text{ и } A_{k,M} \subseteq A_{k,M+1}.$$ По теореме непрерывности вероятности получаем
$$
\pi_k =\PP(A_k) = \PP\left(\bigcup\limits_{M = k+1}^\infty A_{k,M}\right) = \lim\limits_{M\to\infty}\PP(A_{k,M}) = \lim\limits_{M\to\infty}\pi_{k,M}.
$$
Итак, если $p > q$, то предел $\lim_{n\to\infty}p_{k,0}(n) = \left(\frac{q}{p}\right)^k$ зависит от $k$. \EndEx}
\end{example}

\eduard{В заключение зафиксируем основные моменты, связанные с понятием эргодичности для конечных и счетных марковских цепей.}

\eduard{Начнем с конечных цепей Маркова. Общая картина отражена на рис.~\ref{fig:finite_ergodic}. При этом, как уже было отмечено ранее,
\begin{enumerate}
    \item[1)] в случае непрерывных марковских цепей и апериодичных дискретных марковских цепей $\lim_{t\to\infty}p(t) = \pi$, где $t\in\R_+$ или $t\in \NN$, \item[2)] в случае же периодических ОДМЦ предел нужно понимать в более слабом смысле\footnote{\eduard{Как известно из курса математического анализа, из обычной сходимости числовой последовательности следует и сходимость по Чезаро. Обратное в общем случае неверно.}} (по Чезаро): $\lim_{n\to\infty}\frac{1}{n}\sum_{k=0}^{n-1}p(k) = \pi$.
\end{enumerate}
Если состояние $i$ не принадлежит единственному замкнутому классу эквивалентности $S_*$, то $\pi_i = 0$. Если же $i\in S_*$, то $\pi_i > 0$.}

\begin{figure}
    \centering
    \includegraphics[width=0.5\textwidth]{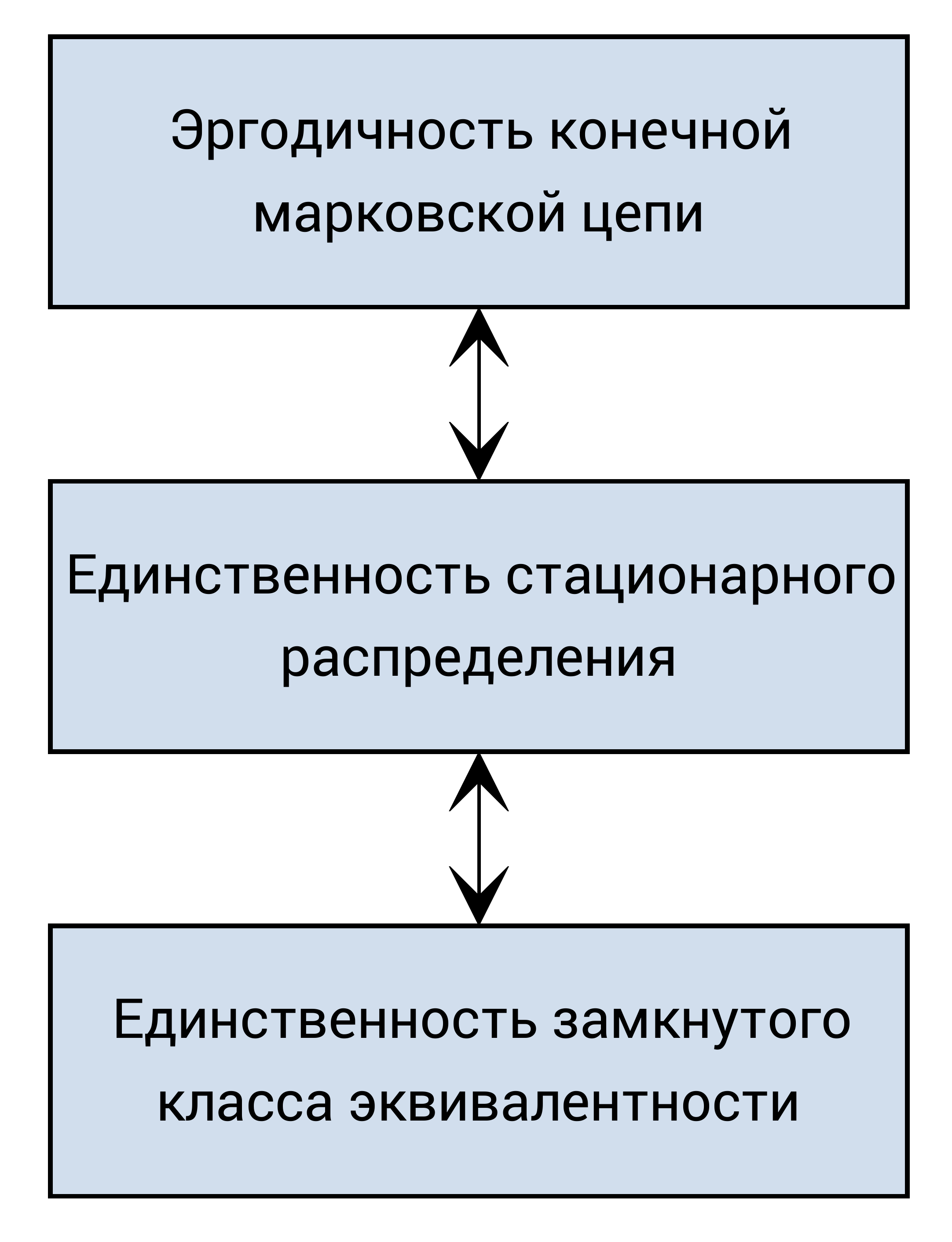}
    \caption{\eduard{Эргодичность для конечных марковских цепей. Стрелки означают логические импликации}}
    \label{fig:finite_ergodic}
\end{figure}

\eduard{\textbf{Алгоритм анализа конечных марковских цепей\\ на эргодичность}}
\eduard{\begin{enumerate}
    \item[1.] Выделить классы эквивалентности.
    \item[2.] Проверить единственность замкнутого класса $S_*$.
    \item[3.] Найти стационарное (инвариантное) распределение на подцепи $S_*$, т.е.\ найти вектор $\widetilde{\pi}\in\R^{|S_*|}$, что $\widetilde{P}^\top \widetilde{\pi} = \widetilde{\pi}$, $\sum_{i\in S_*}\widetilde{\pi}_i = 1$, $\widetilde{\pi}_i > 0$, \mbox{$i\in S_*$} (не умаляя общности, здесь мы считаем, что состояния $1,2,\ldots,|S_*|$ образуют класс $S_*$), где $\widetilde{P} \in \R^{|S_*|\times |S_*|}$, $\widetilde{P}_{ij} = P_{ij}$ $\forall i,j \in S_*$.
    \item[4.] Предельное распределение (в смысле 1) или 2), указанных выше): $$\lim\limits_{t\to\infty}p_i(t) = \begin{cases}\widetilde{\pi}_i, \text{ если } i\in S_*,\\ 0 \text{ иначе.} \end{cases}$$
\end{enumerate}
}

\eduard{Теперь перейдем к вопросу о счетных марковских цепях. Общая картина отражена на рис.~\ref{fig:countable_ergodic}. Сходимость понимается в том же смысле, как для конечных марковских цепей.}

\begin{figure}
    \centering
    \includegraphics[width=0.8\textwidth]{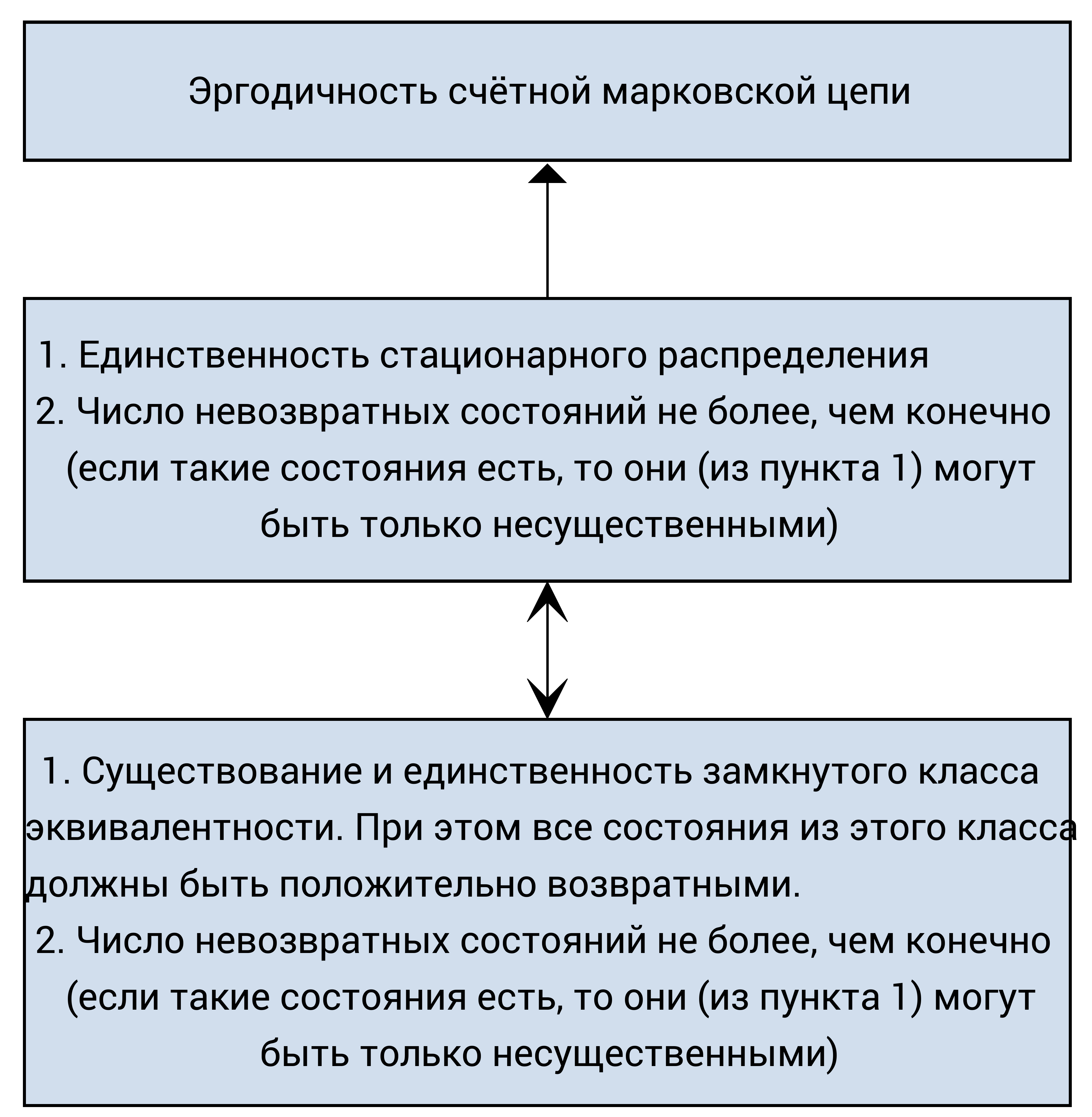}
    \caption{\eduard{Эргодичность для счетных марковских цепей. Стрелки означают логические импликации}}
    \label{fig:countable_ergodic}
\end{figure}

\eduard{\textbf{Алгоритм анализа счетных марковских цепей на эргодичность} полностью аналогичен соответствующему алгоритму для случая конечных марковских цепей, за исключением пункта~2, который в счетном случае будет следующим (курсивом выделены отличия от случая конечной марковской цепи).
\begin{enumerate}
    \item[2.1.] Проверить \textit{существование} и единственность  замкнутого \elena{положительно возвратного} класса $S_*$.
    \item[2.2.] \textit{Проверить конечность числа невозвратных состояний.}
\end{enumerate}}

\eduard{\textbf{Сравнение случаев конечных и счетных марковских цепей}
\begin{enumerate}
\item[1)] В конечных марковских цепях замкнутый класс эквивалентности всегда существует. Более того, любой конечный замкнутый класс является положительно возвратным. В счетных цепях может быть не так (см. пример~\ref{randomWalk}).
\item[2)] Если существует счетное число несущественных невозвратных состояний, цепь может остаться <<жить>> в этом <<плохом>> подмножестве состояний, не попав в замкнутый класс. Примеры с нарушением требования конечности числа невозвратных состояний: пример~\ref{1}, пример~\ref{ex:casino}.
\item[3)] Как для конечных, так и для счетных марковских цепей в случае, когда цепь разложима, финальное распределение всегда есть, однако в таком случае оно зависит от начального, поэтому цепь не является эргодической.
\end{enumerate}}


\section{Непрерывные цепи Маркова}\label{sec:ContChainMarkov}
\setlength{\parskip}{0pt}

\subsection{Базовые понятия и предположения}
\gav{Начнем с обобщения определения \ref{def:DiscrMarkChain} на случай непрерывного времени.}
\begin{definition}
Случайная функция $\{\xi(t),t\ge0\}$, принимающая при каждом $t$ значения из множества ${E=\{0,1,\dots\}}$, конечного или бесконечного, называется \textit{марковским процессом с непрерывным временем и дискретным множеством значений} (или непрерывной цепью Маркова), если для любых ${n\ge 2}$, любых моментов времени $0\le t_1\le\dots\le t_{n-1}\le t_n$ и значений ${x_1,\dots,x_n\in E}$ выполняется равенство $$\mathbb{P}(\xi(t_n)=x_n\,|\,\xi(t_{n-1})=x_{n-1},\dots,\xi(t_1)=x_1)=$$$$=\mathbb{P}(\xi(t_n)=x_n\,|\,\xi(t_{n-1})=x_{n-1}).$$
Как и прежде, предполагается, что это равенство выполнено лишь в тех случаях, когда указанные условные вероятности существуют. Это равенство также называют \textit{марковским свойством} непрерывной цепи Маркова. Множество $E$ называется \textit{множеством состояний}, а его элементы -- \textit{состояниями} цепи.
\end{definition}

\begin{definition}
\textit{Вероятностью перехода из состояния ${i\in E}$ в состояние ${j\in E}$} непрерывной цепи Маркова называется функция двух аргументов: $$p_{i,j}(s,t)=\mathbb{P}(\xi(t)=j\,|\,\xi(s)=i), \ \ 0\le s \le t.$$
Набор чисел $P(s,t)=\{p_{i,j}(s,t)\}$ (конечный или бесконечный) образует \textit{матрицу перехода}. Матрица перехода непрерывной цепи, как и матрица перехода дискретной цепи, обладает следующими очевидными свойствами:
\begin{enumerate}[topsep=0pt,itemsep=5pt]
	\item $\sum_{j\in E}p_{i,j}(s,t)=1$ для всех $i\in E$ и $0\le s \le t,$
	\item $p_{i,i}(t,t)=1$ для всех $i\in E$ и $t\ge0$,
	\item $p_{i,j}(t,t)=0$ для всех $i\ne j$ и $t\ge0$,
	\item справедливо \textit{уравнение Колмогорова--Ч\gav{э}пмена}: $$P(s,t)=P(s,u)P(u,t), \ \ 0\le s\le u \le t.$$
\end{enumerate}
Отметим, что уравнение Колмогорова--Ч\gav{э}пмена прямо вытекает из формулы полной вероятности.
\end{definition}

\begin{definition}
\textit{Вероятностью $i$-го состояния цепи $\xi(t)$ в момент $t\ge0$} называется величина $$p_i(t)=\mathbb{P}(\xi(t)=i).$$ Очевидно, что $${p_i(t)\ge0}, \ {\sum\limits_{i\in E}p_i(t)=1}$$ для любого момента ${t\ge0}$. Набор вероятностей ${p(t)=\{p_i(t)\}}$ называется \textit{распределением вероятностей состояний}. Распределение $p(t)$ удовлетворяет уравнению $$p(t)=P^T(s,t)p(s), \ \ 0\le s \le t,$$ что, как и в случае дискретных цепей Маркова, является прямым следствием применения формулы полной вероятности и марковского свойства.
\end{definition}
\begin{definition}
Непрерывная цепь Маркова с переходной матрицей $P$ называется \textit{однородной}, если для любых $s,t\ge0$ $$P(s,s+t)=P(0,t).$$ Для однородных цепей мы введем также обозначение $P(t)=P(0,t)$, а элементы этой матрицы будем обозначать просто $p_{i,j}(t)$. Всюду далее, если не оговорено противное, непрерывные цепи Маркова будут предполагаться однородными.
\end{definition}

\elena{Стоит сразу отметить, что так же, как и в случае общих случайных процессов в непрерывном времени, может возникнуть проблема с неизмеримостью некоторых множеств, например, $\sup_{t\in[0,1]} \xi(t)$.  В случае дискретного времени таких проблем, конечно, не было, так как соответствующие множества выражались через не более, чем счетное число объединений и пересечений измеримых множеств, а значит были тоже измеримы. Напомним также, что в случае непрерывного времени возникают сложности в <<неоднозначности>> задания случайных процессов по семейству конечномерных распределений, так как появляется возможность <<испортить>> процесс в конечном и даже в счётном множестве случайных моментов времени. Чтобы избежать трудностей такого типа, мы (как и всегда) предполагаем сепарабельность процессов. В частности, для непрерывных цепей Маркова, везде далее предполагается, что реализации суть кусочно-постоянные функции непрерывные справа.}

\elena{Формальное понимание непрерывных цепей связано полугрупповым свойством семейства матриц $P(t)$: $P(t+s) = P(t) P(s)$ для любых $t,s\ge 0$, и требует языка теории групп (инфинитезимального оператора, порождающего группу и т.д.) Мы будем придерживаться более прикладного способа понимания непрерывных марковских цепей. Заметим прежде всего, что время между скачками марковской цепи в непрерывном времени имеет показательный закон распределения. Действительно, пусть в некоторый момент времени $t_0$ цепь находится в состоянии $i$ и случайная величина $\tau_i(t_0)$ есть время ожидания до выхода из этого состояния:
$$
\tau_i(t_0) = \inf\{ u> 0:\, X(u+ t_0)\neq i\}
$$
Пусть в течение интервала наблюдения $[t_0,t_1]$, где $t_1 = t_0+s$, цепь не покидала $i$-го состояния, то есть $\tau_i(t_0) > s $. Тогда в силу однородности и марковости рассматриваемого процесса заключаем, что случайная величина $\tau_i(t_0)$ не зависит от $t_0$ и обладает свойством отсутствия памяти, то есть остаточное время пребывания цепи в $i$-ом состоянии в момент времени $t_1$ (то есть $\tau_i(t_1)$) имеет такой же закон распределения, как и в момент времени $t_0$ (то есть $\tau_i(t_1)$), иначе говоря $\tau_i(t_0) = \tau_i(t_1)=\tau_i$, где равенство понимается как равенство по распределению, и:
$$
\mathbb{P} \left ( \tau_i(t_0)> s+t | \tau_i(t_0) > s \right)= \mathbb{P} \left ( \tau_i(t_1)> t \right) = \mathbb{P} \left ( \tau_i(t_0)> t \right).
$$
В классе невырожденных непрерывных законов распределения этим свойством обладает только показательное распределение (с некоторым параметром, зависящем только от номера состояния):
$$
 \mathbb{P} \left ( \tau_i > t \right) = e^{-\lambda_i t}.
$$
Если $\lambda_i\in(0,\infty)$, то цепь за малый промежуток времени $\Delta t$ изменит свое $i$-е состояние с вероятностью
$$
\lambda_i \Delta t + o(\Delta t),\quad \Delta t\to 0+.
$$
Если $\lambda_i = 0$, то цепь остается в $i$-ом состоянии навсегда (этот вырожденный случай соответствует $\tau_i = \infty$ с вероятностью единица, то есть несобственная случайная величина). В этом случае состояние $i$ называют \textit{поглощающим}. Формально, возможна также ситуация, когда $\lambda_i = \infty$ (этот вырожденный случай соответствует $\tau_i = 0$ с вероятностью единица). В этом случае состояние $i$ называют \textit{мгновенным}, но мы не будем рассматривать такие ситуации (см. ниже). 
}

\elena{ По прошествии случайного времени $\tau_i$ с $\lambda_i$-показательным законом ($\lambda_i\in(0,\infty)$) цепь совершает скачок в новое состояние $j$ с  вероятностью, зависящей только от $i$ и $j$, обозначим эту вероятность (при условии, что цепь совершает прыжок в новое состояние) $\rho_{i,j}$:
$$
\rho_{i,j} = \lim_{\Delta t \to 0+} \mathbf{P} \left (  X(t+\Delta t) = j | X(t) = i,\, X(t+\Delta t) \neq i\right ).
$$
Безусловная вероятность того, что цепь за малый промежуток времени $\Delta t$ окажется в состоянии $j\neq i$ равна $(1 - e^{-\lambda_i \Delta t} ) \rho_{i,j}$. В связи с этим будем далее предполагать, что переходные вероятность однородной марковской цепи в непрерывном времени удовлетворяют следующим условиям:
\begin{equation}
    \label{condition 1 cont MC}
    \lim_{\Delta t\to 0}\frac{1-p_{i,i}(\Delta t)}{\Delta t} = \lambda_i
\end{equation}
\begin{equation}
    \label{condition 2 cont MC}
    \lim_{\Delta t\to 0}\frac{p_{i,j}(\Delta t)}{\Delta t} = \lambda_i \rho_{i,j},\quad i\neq j,\text{ при этом } \sum_{j\neq i} \rho_{i,j} = 1
\end{equation}
}

\elena{Отметим, что
вместо~\eqref{condition 1 cont MC},~\eqref{condition 2 cont MC} можно предполагать 
\begin{equation}
    \label{strong standart}
    \lim_{\Delta t\to 0} p_{i,j}(\Delta t) = \delta_{i,j} \quad \text{равномерно по $i,j$}.
\end{equation}
Тогда~\eqref{condition 1 cont MC},~\eqref{condition 2 cont MC} будут выполнены автоматически (см.~\cite{Afanas'evaBulinskaya}).
}

\elena{Другими словами, эволюция непрерывной марковской цепи  при сделанных предположениях задаётся локальными характеристиками 1) $\lambda_i$, которые суть \textit{интенсивности выхода} из состояния $i$, и 2) $\rho_{i,j}$. $i\neq j$, которые суть условные вероятности того, что если цепь меняет свое состояние $i$, то скачок переводит цепь в состояние $j\neq i$, либо 2') $\Lambda_{i,j}=\lambda_i \rho_{i,j}$, которые суть \textit{интенсивности перехода} из состояния $i$ в состояние $j\neq i$.
\begin{definition}
Матрица ${\Lambda=\left\| \Lambda_{i,j} \right\|_{i,j=1}^{|E|}}$ с компонентами  $\Lambda_{i,j}\ge 0$ для ${i\ne j}$, $\Lambda_{i,i}=-\lambda_i\le 0$ называется \textit{инфинитезимальной матрицей} непрерывной марковской цепи или \textit{генератором}.
\end{definition}
\begin{definition}
\label{def: jump chain}
Для марковской цепи в непрерывном времени определим \textit{вложенную цепь} или \textit{цепь скачков}, как ОДМЦ на том же множестве состояний $E$ с вероятностями перехода за один шаг $p_{i,j} = \Lambda_{i,j} / \lambda_i$, $p_{i,i} = 0$, если $\lambda_i > 0$ и $p_{i,j} = 0$, $p_{i,i}=1$,  если $\lambda_i =0$.
\end{definition} 
Вложенная цепь или цепь скачков попросту фиксирует последовательность состояний, через которые проходит $\xi(t)$ безотносительно к длительностям пребывания в различных состояниях.}

\elena{Таким образом, задание непрерывной марковской цепи сводится к заданию двух независимых объектов: 1) дискретной цепи скачков $X_k$, $k=0,1,\ldots$; 2)  последовательности независимых стандартных показательных случайных величин $\tau_0,\tau_1,\tau_2,\ldots$
Тогда формально
\begin{equation}
    \label{jump chain}
    \xi(t) = X_{\eta(t)}, \quad \eta(t) = \max\left \{ n: \, \sum_{k=0}^n \frac{\tau_k}{\lambda_{X_k}} \le t \right\}.
\end{equation}
}
 
\elena{\textbf{Ограничения на инфинитезимальную матрицу.} Отметим ещё раз, что в данном пособии рассматриваются \textit{консервативные} цепи, то есть все элементы инфинитезимальной матрицы конечны и сумма элементов каждой строки равна нулю: $\lambda_i = \sum_{j\neq i} \Lambda_{i,j} < \infty$ для любого $i\in E$. Теория марковских цепей, не удовлетворяющих условиям консервативности, гораздо сложнее. Например, если разрешить для некоторого состояния $i$ $\lambda_i = \infty$, то такое состояние называют \textit{мгновенным}, так как, попав в такое состояние, цепь мгновенно его покидает: вероятность того, что время нахождения в мгновенном состоянии положительно, равна нулю. Если разрешить инфенитезимальной матрице сумму по строке иметь не нулевую, а отрицательную: $\sum_{j\in E} \Lambda_{i,j} \le 0$, то соответствующая матрица переходных вероятностей $P(t)$ будет субстохастической: $\sum_{j\in E} P_{i,j}(t) \le 1$. В этом случае цепь с положительной вероятностью <<уходит>> на бесконечность. Также в данном пособии не будут рассматриваться так называемые \textit{взрывные} цепи: когда на конечном интервале времени цепь может совершить бесконечно много прыжков и <<уйти>> из любого конечного подмножества  $E$ (такая ситуация невозможна, если цепь конечная, т.е. $|E|<\infty$). Другими словами, марковская цепь в непрырывном времени будет невзрывной, если с вероятностью один выполнено
$$
\lim_{n\to\infty} T_n = \infty,
$$
где $T_n$ --  момент времени $n-$го прыжка $\xi(t)$.
Для невзрывных цепей  в формуле \eqref{jump chain} для любого конечного $t$ $\eta(t) < \infty $. Достаточным условием отсутствия взрыва является равномерная ограниченность параметров $\lambda_i$, то есть $\sup_{i\in E} \lambda_i \le c <\infty$ (см. далее теорему~\ref{th:gener}). Действительно, если $\lambda = \sup_{i\in E} \lambda_i \le c < \infty$, то 
$$
\mathbb{P}\left( T_{n+1} - T_n \ge t  \right) \ge e^{-\lambda t}.
$$
 Тогда по лемме Бореля--Кантелли с единичной вероятностью произойдет бесконечно много событий $ \left\{ T_{n+1} - T_n \ge t  \right\}$. Откуда будет следовать, что 
$$
\lim_{n\to\infty} T_n = \infty.
$$
Примером взрывной марковской цепи является процесс чистого размножения, где интенсивности перехода зависят от номера состояния так, что $\sum_{i\in E} \Lambda_{i,i+1}^{-1} < \infty$ (см.~\cite[теорема 2.5.3]{KelbertSukhov2010} или \cite[Т.~1, гл.~XVII, \S~4]{Feller}).}

\elena{Отметим также, что вместо требуемых ограничений на инфинитезимальную матрицу можно накладывать условия на переходные вероятности\eqref{strong standart}}.

\elena{Разобьём промежуток времени $[0, t + \Delta t)$ на два интервала $[0, \Delta t)$ и $[\Delta t, t + \Delta t)$. В зависимости от того, был или не был скачок в малом промежутке времени $[0, \Delta t)$ мы можем записать по формуле полной вероятности:
$$p_{i,j}(t+\Delta t) = \sum\limits_{k\neq i}p_{k,j}(t)\Lambda_{i,k} \Delta t + p_{i,j}(t) (1-\lambda_i\Delta t) + o(\Delta t)$$
или
$$\frac{p_{i,j}(t+\Delta t) -p_{i,j}(t) }{\Delta t } = \sum\limits_{k\neq i}p_{k,j}(t)\Lambda_{i,k} - p_{i,j}(t)\lambda_i  + o(1).$$
Переходя к пределу $\Delta t\to 0$, получаем матричное дифференциальное уравнение первого порядка
\begin{equation}
    \label{eq:forwardKolmogFeller}
    \dot P(t) =\Lambda  P(t),
    \end{equation}
(где точка означает дифференцирование по времени $t$) c начальным условием $P(0) = I$. 
Полученное уравнение называют \textit{обратными уравнением Колмогорова--Феллера}. Стоит отметить, что переход к пределу $\Delta t\to 0$ при выводе обратных уравнений законен в рамках принятых предположений~\eqref{condition 1 cont MC} и \eqref{condition 2 cont MC}.}

\elena{Разобьём теперь промежуток времени $[0, t + \Delta t)$ на два интервала $[0, t)$ и $[t, t + \Delta t)$. В зависимости от того, был или не был скачок в малом промежутке времени $[t, t + \Delta t)$ мы можем записать по формуле полной вероятности:
$$p_{i,j}(t+\Delta t) = \sum\limits_{k\neq j}p_{i,k}(t)\Lambda_{k,j} \Delta t + p_{i,j}(t) (1-\lambda_j\Delta t) + o(\Delta t)$$
или
$$\frac{p_{i,j}(t+\Delta t) -p_{i,j}(t) }{\Delta t } = \sum\limits_{k\neq j}p_{i,k}(t)\Lambda_{k,j} - p_{i,j}(t)\lambda_j  + o(1).$$
Стоит отметить, что теперь для для существования предела при $\Delta t\to0$ в правой части последнего выражения помимо принятых условий~\eqref{condition 1 cont MC} и \eqref{condition 2 cont MC} требуется дополнительное предположение о том, что при фиксированном $j$ переход к пределу в~\eqref{condition 2 cont MC} равномерен по $i$. В итоге получаем матричное дифференциальное уравнение первого порядка
\begin{equation}
    \label{eq:forwardKolmogFeller}
    \dot P(t) = P(t) \Lambda, \,P(0) = I.
    \end{equation}
Полученное уравнение называют \textit{прямым уравнением Колмогорова--Феллера}. }

\elena{
    Ясно, что если $|E|<\infty$, то единственным решением этих уравнений является матричная экспонента $${P(t)=\exp(t\Lambda)}.$$
    Этот факт будет верен и для случая бесконечных матриц с учётом принятых выше ограничений на инфинитезимальную матрицу, а именно, что $\sup_{i\in E} \lambda_i < \infty$ (см. теорему Колмогорова~\ref{th:Kolmogorov}). То есть на самом деле, в предположении, что цепь невзрывная (на конечном интервале времени происходит лишь конечное число скачков) обе системы прямых и обратных  уравнений эквивалентны. При этом наше дополнительное предположение о равномерности предела в условии~\eqref{condition 2 cont MC} при выводе прямых уравнений носит технический характер, его можно избежать, если уравнения записывать в терминах преобразования Лапласа (см.~\cite[Т.~2, гл.~XIV, \S~7]{Feller}). }
    
    \elena{
Под уравнениями Колмогорова--Феллера также понимают уравнения не на матрицу $P(t)$ , а на вектор распределения вероятностей $p(t)$:
$$
\dot{p}(t) = \Lambda^T p(t).
$$}

\begin{theorem}
\label{th:gener}
\textit{Пусть матрица $\Lambda$ такова, что} $$\Lambda_{i,i} \elena{\le} 0,\quad \Lambda_{i,j} \ge 0, \quad i\ne j,$$
$$\sum_{j}\Lambda_{i,j} = 0, \quad \elena{\sup_{i\in E}}\sum_{j}|\Lambda_{i,j}| \le C < \infty. $$
\textit{Тогда для любого }$t\ge 0$
\begin{enumerate}
    \item \textit{ряд} $\sum_{k=0}^\infty \frac{t^k \Lambda^k}{k!}$ \textit{сходится};
    \item \textit{матричная экспонента $P(t) = e^{t \Lambda} = \sum_{k=0}^\infty \frac{t^k \Lambda^k}{k!}$ является стохастической матрицей};
    \item $P(t)$ \textit{дифференцируемо по $t$, причем } $\frac{d P(t)}{d t} = \Lambda P(t) = P(t) \Lambda$, $P(0) = I$. 
\end{enumerate}
\end{theorem}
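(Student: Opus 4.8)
The plan is to carry out everything inside the normed space of matrices over $E$ (equivalently, bounded operators on $\ell^\infty(E)$) equipped with the operator norm $\|M\|_\infty = \sup_i \sum_j |M_{ij}|$, which is submultiplicative and complete. The hypothesis gives $\|\Lambda\|_\infty \le C$, hence $\|\Lambda^k\|_\infty \le C^k$ for all $k\ge 0$, so $\sum_{k\ge 0}\frac{t^k}{k!}\|\Lambda^k\|_\infty \le \sum_{k\ge 0}\frac{(tC)^k}{k!} = e^{tC} < \infty$. This already settles part (1): the series defining $P(t)$ converges absolutely in norm, in particular entrywise, and uniformly on every bounded interval $[0,T]$.

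For part (2) I would treat the two defining properties of a stochastic matrix separately. Row sums: the condition $\sum_j \Lambda_{ij}=0$ says $\Lambda\mathbf 1 = 0$ for the all-ones vector $\mathbf 1$, hence $\Lambda^k\mathbf 1 = 0$ for every $k\ge 1$, and therefore $P(t)\mathbf 1 = \mathbf 1 + \sum_{k\ge 1}\frac{t^k}{k!}\Lambda^k\mathbf 1 = \mathbf 1$, i.e. each row of $P(t)$ sums to $1$. Nonnegativity is the one genuinely nontrivial point. Since $\Lambda_{ij}\ge 0$ for $i\ne j$ and $\lambda_i := -\Lambda_{ii} = \sum_{j\ne i}\Lambda_{ij} \le \tfrac12\|\Lambda\|_\infty \le C$, the number $c := \sup_i \lambda_i$ is finite. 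Put $Q := \Lambda + cI$; then every entry of $Q$ is nonnegative, hence so is every entry of each $Q^k$ (sums and products of nonnegative numbers), and since $cI$ commutes with $Q$ one gets $P(t) = e^{t\Lambda} = e^{-ct}\,e^{tQ}$, where $e^{tQ} = \sum_{k\ge 0}\frac{t^k}{k!}Q^k$ has nonnegative partial sums, hence nonnegative entries in the limit; multiplying by $e^{-ct}>0$ preserves this. Thus $P(t)\ge 0$ and $P(t)\mathbf 1 = \mathbf 1$, so $P(t)$ is stochastic. (An alternative is $P(t) = \lim_{n\to\infty}\bigl(I+\tfrac tn\Lambda\bigr)^n$: for $n\ge tc$ the matrix $I+\tfrac tn\Lambda$ is itself stochastic, hence so are all its powers, and the row-sum computation forces the entrywise limit to be stochastic too.)

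For part (3), fix indices $i,j$. The scalar function $t\mapsto P(t)_{ij} = \sum_{k\ge 0}\frac{(\Lambda^k)_{ij}}{k!}t^k$ is a power series with infinite radius of convergence, because $|(\Lambda^k)_{ij}|\le\|\Lambda^k\|_\infty\le C^k$; hence it may be differentiated term by term, giving $\frac{d}{dt}P(t)_{ij} = \sum_{k\ge 1}\frac{(\Lambda^k)_{ij}}{(k-1)!}t^{k-1} = \sum_{m\ge 0}\frac{(\Lambda^{m+1})_{ij}}{m!}t^m$. It remains to identify the right-hand side with $(\Lambda P(t))_{ij}$ and with $(P(t)\Lambda)_{ij}$, which amounts to interchanging the matrix-product sum with the exponential series; this is legitimate since $\sum_l|\Lambda_{il}|\sum_{k\ge 0}\frac{t^k}{k!}|(\Lambda^k)_{lj}| \le \|\Lambda\|_\infty\,e^{tC} < \infty$, and symmetrically on the other side. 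Equivalently, I can argue directly in norm: the formally differentiated series $\sum_{k\ge1}\frac{t^{k-1}}{(k-1)!}\Lambda^k = \Lambda\sum_{m\ge0}\frac{t^m}{m!}\Lambda^m$ converges in $\|\cdot\|_\infty$ uniformly on $[0,T]$ (dominated by $C\,e^{TC}$), so the standard term-by-term differentiation theorem in a Banach space gives $\dot P(t) = \Lambda P(t) = P(t)\Lambda$; and $P(0) = \Lambda^0 = I$ by convention.

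The main obstacle is not finite dimensions, where all three claims are classical facts about the matrix exponential, but keeping every estimate uniform enough to survive the passage to countably many states. The single hypothesis $\sup_i\sum_j|\Lambda_{ij}|\le C$ does precisely this: it makes $\Lambda$ a bounded operator on $\ell^\infty$, legitimizes the norm bounds used throughout, and guarantees $c=\sup_i\lambda_i<\infty$ so that the shift $\Lambda = Q - cI$ with $Q\ge 0$ is available. Nonnegativity of $P(t)$ is the step where this boundedness matters most, and it is the part of the argument I would write out with the most care.
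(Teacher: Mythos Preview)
Your proof is correct and follows essentially the same line as the paper: the $\|\cdot\|_\infty$ bound $\|\Lambda^k\|_\infty\le C^k$ for convergence, the shift $P(t)=e^{-ct}e^{t(\Lambda+cI)}$ with $\Lambda+cI\ge 0$ for nonnegativity together with $\Lambda\mathbf 1=0$ for the row sums, and term-by-term differentiation of the exponential series for the derivative formula. Your write-up is in fact more careful than the paper's sketch about the countable-state issues (legitimizing the interchange of sums in $(\Lambda P(t))_{ij}$), and the alternative via the Euler limit $\bigl(I+\tfrac{t}{n}\Lambda\bigr)^n$ is a nice supplementary argument that the paper does not mention.
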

В пособии рассматриваются только инфинитезимальные мат\-ри\-цы~$\Lambda$, удовлетворяющие этой теореме.

\elena{\textbf{Доказательство.}
Пункт а) следует из верхней оценки для нормы (например, $l_\infty-$нормы)
$$
\| e^{t\Lambda} \|_\infty = \left\| \sum_{k=0}^\infty \frac{t^k \Lambda^k}{k!} \right\|_\infty\le \sum_{k=0}^\infty \frac{t^k \| \Lambda \|_\infty^k}{k!} = e^{t\|\Lambda\|_\infty} \le e^{tC}.
$$
Для доказательства пункта б) заметим, что неотрицательность элементов матричной экспоненты следует из
$$
P(t) = e^{t\Lambda} = e^{-tC} e^{t(\Lambda + C I)},
$$
где матрица $\Lambda + C I$ состоит из неотрицательных элементов. Сумма в каждой строке матрицы $P(t)$ равна единице, поскольку
$$
P(t) = I + \sum_{k\ge 1} \frac{t^k \Lambda^k}{k!},
$$
где сумма в каждой строке матрицы $\Lambda^k$, $k\ge 1$, равна нулю.
Пункт в) следует  из дифференцирования представления матричной экспоненты в виде сходящегося ряда (пункт а) и коммутативности степеней матрицы $\Lambda$ (в условиях теоремы). \EndProof
}

Сгруппируем все полученные результаты в рамках одной теоремы, \elena{доказательство которой можно посмотреть в \cite [гл.~21, \S~2, теорема~2.2]{Borovkov1999}, либо \cite[Т.~2, гл.~XIV, \S~7, следствие~1]{Feller}}

\begin{theorem}
\label{th:Kolmogorov}
\textit{В однородных непрерывных цепях Маркова в 
предположениях \elena{теоремы~\ref{th:gener}} справедливы следующие уравнения}:
$$\dot{P}(t)=P(t) \Lambda, \ P(0) = I,$$
$$\dot{P}(t)=\Lambda P(t), \ P(0) = I,$$
$$\dot{p}(t) = \Lambda^T p(t),$$
\textit{где $\Lambda$ -- инфинитезимальная матрица, $P(t)$ -- матрица перехода, $p(t)$ -- распределение вероятностей состояний, $I$ -- единичная матрица. \elena{Единственным р}ешением первых двух уравнений является матричная экспонента }$${P(t)=\exp(t\Lambda)}.$$
\end{theorem}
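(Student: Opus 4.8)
Теорема объединяет три утверждения: (i) прямое и обратное уравнения Колмогорова--Феллера для переходной матрицы $P(t)$; (ii) уравнение $\dot p(t)=\Lambda^{\top}p(t)$ для распределения вероятностей состояний; (iii) единственность решения этих уравнений с начальным условием $P(0)=I$ и его совпадение с матричной экспонентой $e^{t\Lambda}$. Планирую отталкиваться от двух уже установленных фактов: полугруппового (Колмогорова--Чэпмена) свойства $P(t+s)=P(t)P(s)=P(s)P(t)$ и теоремы~\ref{th:gener}, гарантирующей при наложенных ограничениях на $\Lambda$ сходимость ряда для $e^{t\Lambda}$, его стохастичность и то, что он удовлетворяет обоим матричным уравнениям с $P(0)=I$.

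Первый шаг --- показать дифференцируемость $P(t)$ в нуле с производной $\Lambda$, то есть что $\dfrac{P(\Delta t)-I}{\Delta t}\to\Lambda$ при $\Delta t\to 0+$. Поэлементно это в точности предположения~\eqref{condition 1 cont MC} и~\eqref{condition 2 cont MC} (а в конечном случае или при условии~\eqref{strong standart} сходимость автоматически равномерна по индексам). Второй шаг --- воспользоваться полугрупповым свойством и записать разностное отношение
$$\frac{P(t+\Delta t)-P(t)}{\Delta t}=P(t)\,\frac{P(\Delta t)-I}{\Delta t}=\frac{P(\Delta t)-I}{\Delta t}\,P(t),$$
после чего, переходя к пределу $\Delta t\to 0$ (левую производную разбирая отдельно с помощью $P(t)=P(t-\Delta t)P(\Delta t)$), получить прямое уравнение $\dot P(t)=P(t)\Lambda$ и обратное $\dot P(t)=\Lambda P(t)$, $P(0)=I$. Уравнение для $p(t)$ получается дифференцированием уже известного соотношения $p(t)=P^{\top}(t)p(0)$: $\dot p(t)=\dot P^{\top}(t)p(0)=\Lambda^{\top}P^{\top}(t)p(0)=\Lambda^{\top}p(t)$.

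Третий шаг --- единственность и отождествление с экспонентой. В случае конечного $E$ это стандартная теорема о единственности решения линейного ОДУ с постоянными коэффициентами; при счётном $E$ в силу условия $\sup_i\sum_j|\Lambda_{i,j}|\le C$ оператор $\Lambda$ ограничен в банаховом пространстве матриц с конечной $\|\cdot\|_{\infty}$-нормой, и единственность следует из оценки Гронуолла (либо из сходимости пикаровских итераций). Поскольку по теореме~\ref{th:gener} матрица $e^{t\Lambda}$ удовлетворяет той же задаче Коши, заключаем $P(t)=e^{t\Lambda}$.

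Основную трудность вижу во втором шаге в счётном случае: поэлементной сходимости $\dfrac{P(\Delta t)-I}{\Delta t}\to\Lambda$ недостаточно для законного перехода к пределу в произведении бесконечных матриц, нужна мажорация рядов по второму индексу, равномерная по $\Delta t$, --- именно здесь существенно используется ограниченность сумм модулей элементов $\Lambda$ (отсутствие <<взрыва>> и мгновенных состояний). Для обратного уравнения дополнительно потребуется аккуратность с равномерностью предела в~\eqref{condition 2 cont MC} по начальному состоянию, о чём упоминалось при выводе прямых уравнений; эти тонкости можно обойти, переписав соотношения в терминах преобразования Лапласа, как указано в тексте, либо сослаться на~\cite[гл.~21, \S~2, теорема~2.2]{Borovkov1999}.
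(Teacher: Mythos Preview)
Ваш план корректен и по существу совпадает с тем, что делает статья. В тексте перед теоремой уравнения выводятся через формулу полной вероятности (разбиение $[0,t+\Delta t)$ на $[0,\Delta t)\cup[\Delta t,t+\Delta t)$ для обратного и $[0,t)\cup[t,t+\Delta t)$ для прямого), что есть в точности ваше полугрупповое разностное отношение $P(t)\cdot\frac{P(\Delta t)-I}{\Delta t}$; существование решения в виде $e^{t\Lambda}$ берётся из теоремы~\ref{th:gener}, а за строгим доказательством единственности статья отсылает к тем же источникам~\cite{Borovkov1999,Feller}, которые вы и упоминаете. Единственное стилистическое отличие --- вы формулируете вывод уравнений более абстрактно через полугрупповое свойство, тогда как статья делает это в вероятностных терминах (обусловливание по наличию скачка), но содержание идентично.
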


\elena{Отметим, что при отсутствии ограничений на инфинитезимальную матрицу, либо при отсутствии условия~\eqref{strong standart} ситуация сильно усложняется, см., например,~\cite{Afanas'evaBulinskaya}.}

Рассмотрим теперь несколько конкретных примеров. Непрерывные цепи Маркова удобно обозначать в виде ориентированных графов, в вершинах которых расположены состояния цепи, а веса ребер между состояниями равны интенсивностям перехода между состояниями (в дискретных цепях там стоит вероятность перехода за один шаг).


\begin{example}
Записать \elena{и решить} уравнения Колмогорова--Феллера для процесса Пуассона с интенсивностью $\lambda$.
\end{example}
\begin{solution}
\elena{Инфинитезимальная матрица $\Lambda = \left\|\Lambda_{i,j}\right\|_{i,j=1}^\infty$ имеет следующий вид:
$$
\Lambda = \begin{bmatrix}-\lambda & \lambda & 0 & 0 & 0 & \dots \\
0 & -\lambda & \lambda & 0 & 0 & \ddots \\
0 & 0 & -\lambda & \lambda & 0 & \ddots \\
\vdots & \ddots & \ddots & \ddots & \ddots & \ddots\end{bmatrix},
$$
т.е.\ для элементов матрицы $\Lambda$ справедлива следующая формула:
\begin{center}
$\Lambda_{i,j} = \begin{cases}-\lambda, \text{ если } j=i,\\
\lambda, \text{ если } j = i+1,\\ 
0 \text{ иначе. }\end{cases}$
\end{center}
Обратные уравнения Колмогорова--Феллера записываются в виде
$$
    \dot{p}_{i,j}(t) = -\lambda p_{i,j}(t) + \lambda p_{i+1,j}(t),
$$
с начальным условием $p_{i,j}(t) =\delta_{ij}$. Ясно, что для всех $j < i$ $p_{i,j}(t) = 0$, а при $i\le j$ $p_{i,j}(t) = p_{0,j-i}(t)$. Таким образом, достаточно найти $p_{0,i}(t)$. 
}

\elena{Сделаем замену, обозначив $f_i(t) = p_{0,i}(t) e^{\lambda t}$. Тогда 
$$
\left\{ 
\dot f_0(t) = 0,
f_i(0) = 1,
\right .
$$
а для $i\ge1 $
$$
\left\{ 
\dot f_i(t) = \lambda f_{i-1}(t),
f_i(0) = 0.
\right .
$$
Откуда 
\begin{equation*}
\begin{split}
        &f_0(t) = 1,\\
        &f_1(t) = \lambda t,\\
        &f_2(t) = \frac{(\lambda t)^2}{2!},\\
        &\ldots\\
        &f_i(t) = \frac{(\lambda t)^i}{i!}.
\end{split}
\end{equation*}
А значит
\begin{equation*}
    p_{i,j}(t) = 
    \begin{cases}
    \frac{(\lambda t)^{j-i}}{(j-i)!}, &\text{ если $j\ge i$,}\\
    0, &\text{иначе.}
    \end{cases}
\end{equation*}
}

\elena{Прямые уравнения имеют вид
$$
    \dot{p}_{i,j}(t) = -\lambda p_{i,j-1}(t) + \lambda p_{i+1,j}(t),
$$
с начальным условием $p_{i,j}(t) =\delta_{ij}$. Решение будет таким же. \EndEx
}

\end{solution}

\elena{В следующих примерах приведены  методы получения распределения в момент времени $t$ и переходной матрицы в момент времени $t$. Важно заметить, что при решении задачи о предельном поведении марковской цепи, не разумно честно вычислять $p(t)$ и $P(t)$. Правильно воспользоваться результатами ниже из раздела~\ref{sec: Erg Cont MC}.}

\begin{example}
Найти распределение вероятностей состояний $p(t)$ и переходную матрицу $P(t)$ для следующей непрерывной цепи Маркова ($\lambda>0$, $\mu>0$).
\end{example}

\textbf{Решение}. В первую очередь выпишем матрицы $\Lambda$ и $\Lambda^\top$ для данной цепи. По условию задачи, $\Lambda_{0,1}=\lambda$, $\Lambda_{1,0}=\mu$. Так как $\Lambda_{0,0}+\Lambda_{0,1}=0$, то~$\Lambda_{0,0}=-\lambda$. Аналогично, так как $\Lambda_{1,0}+\Lambda_{1,1}=0$, то~$\Lambda_{1,1}=-\mu$. Поэтому
$$\Lambda = \left[ {\begin{array}{*{20}{r}}
{-\lambda}&{\lambda} \\ 
{\mu}&{-\mu} 
\end{array}} \right], \ \ \Lambda^\top = \left[ {\begin{array}{*{20}{r}}
{-\lambda}&{\mu} \\ 
{\lambda}&{-\mu} 
\end{array}} \right].$$

\begin{figure}[!h]
	\centering
	\includegraphics[scale=0.6]{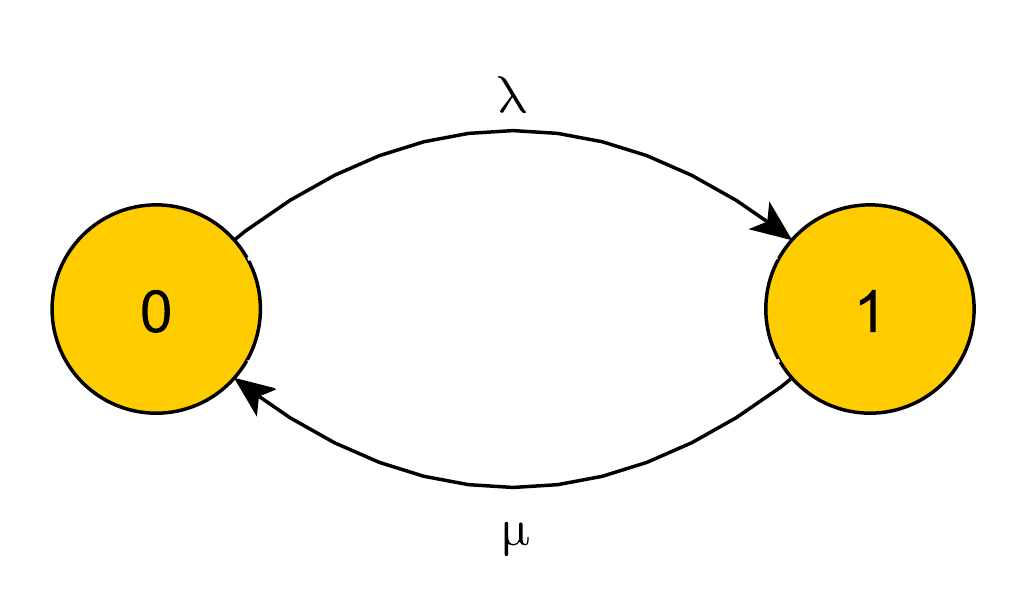}
	\label{fig:ContChainMarkov_1}
\end{figure}

Теперь запишем дифференциальное уравнение Колмогорова: $$\dot{p}(t)=\Lambda^\top p(t),$$ где $p(t)=[p_0(t),p_1(t)]^\top$. Это однородная линейная система дифференциальных уравнений, которую можно решить двумя способами.

\textbf{Способ 1 (общий)}. Как известно, решение линейных систем уравнений представляет собой линейную комбинацию линейно независимых решений. Линейно независимые решения можно искать через собственные (и, быть может, присоединенные) векторы и собственные числа матрицы системы. Найдем собственные числа матрицы $\Lambda^\top$: $$\left| {\begin{array}{*{20}{c}}
{-\lambda-r}&{\mu} \\ 
{\lambda}&{-\mu-r} 
\end{array}} \right|=r^2 + (\lambda+\mu)r = 0,$$ откуда ${r_1=0}$ и ${r_2=-(\lambda+\mu)<0}$. В действительности, для матрицы размера $2$ даже не нужно решать квадратное уравнение для поиска собственных значений, так как одно собственное значение $0$, а второе находится, зная след матрицы. Так как собственные числа различные, то собственных векторов будет достаточно, чтобы сформировать линейно не\-за\-ви\-си\-мые решения. При $r=0$ получаем $$\Lambda^\top-r I=\left[ {\begin{array}{*{20}{r}}
{-\lambda}&{\mu} \\
{\lambda}&{-\mu} \\ 
\end{array}} \right],$$
откуда собственный вектор $v_1=[\mu,\lambda]^\top$. При $r=-(\lambda+\mu)$ получаем $$\Lambda^\top- r I=\left[ {\begin{array}{*{20}{r}}
{\mu}&{\mu} \\
{\lambda}&{\lambda} \\ 
\end{array}} \right],$$
откуда собственный вектор $v_2=[1,-1]^\top$. Отсюда получаем общий вид решения: $$p(t)=C_1 v_1 + C_2 v_2 e^{-t(\lambda+\mu)}=\left[ {\begin{array}{*{20}{c}}
{C_1 \mu + C_2 e^{-t(\lambda+\mu)}} \\ 
{C_1 \lambda - C_2 e^{-t(\lambda+\mu)}} 
\end{array}} \right].$$ 

\noindent Пусть в начальный момент времени известно распределение $p(0)=$\linebreak $=[p_0(0),p_1(0)]^\top$. Получаем систему линейных уравнений относительно $C_1$ и $C_2$: $$\left\{ \begin{gathered}
C_1\mu+C_2=p_0(0)\hfill \\
C_1\lambda-C_2=p_1(0)\hfill \\ 
\end{gathered}  \right. \Rightarrow C_1=\frac{1}{\lambda+\mu}, \ C_2=\frac{\lambda p_0(0) - \mu p_1(0)}{\lambda+\mu}.$$

\noindentЗначит, искомые вероятности $$p_0(t)=\frac{\mu}{\lambda+\mu}+\frac{\lambda p_0(0) - \mu p_1(0)}{\lambda+\mu} e^{-t(\lambda+\mu)},$$  $$p_1(t)=\frac{\lambda}{\lambda+\mu}-\frac{\lambda p_0(0) - \mu p_1(0)}{\lambda+\mu} e^{-t(\lambda+\mu)}.$$

\noindent\textbf{Способ 2 (простой)}. Запишем сначала уравнения по отдельности: $$\dot p_0 = \mu p_1 -\lambda p_0,$$ $$\dot p_1 = \lambda p_0 - \mu p_1.$$ Затем воспользуемся равенством $p_0(t)+ p_1(t)=1$, которое выполнено в любой момент времени $t$, и сведем задачу к одному уравнению: $$\dot p_0=-(\lambda+\mu) p_0 + \mu,$$ решение которого имеет вид $$p_0(t)=C e^{-t(\lambda+\mu)}+\frac{\mu}{\lambda+\mu}.$$ 


\noindentКонстанта $C$ находится из начального условия $p_0(0)$. Далее определяется $$p_1(t)=1- p_0(t).$$
Теперь перейдем к вычислению переходной матрицы $P(t)$. Для этого возьмем уравнения $$\dot P(t)=P(t)Q$$
и заметим, что каждая строка $P_i(t)$ матрицы $P(t)$ удовлетворяет уравнениям $$\dot P_i^\top(t)=\Lambda^\top P_i^\top(t).$$ 

\noindentОбщий вид этих уравнений нам уже известен, и мы можем сразу написать:
$$P_{0,0}(t)=\frac{{\mu}}{\lambda+\mu}+\frac{{\lambda}P_{0,0}(0) - {\mu}P_{0,1}(0)}{\lambda+\mu} e^{-t(\lambda+\mu)},$$
$$P_{0,1}(t)=\frac{{\lambda}}{\lambda+\mu}-\frac{{\lambda}P_{0,0}(0)- {\mu}P_{0,1}(0)}{\lambda+\mu} e^{-t(\lambda+\mu)},$$
$$P_{1,0}(t)=\frac{{\mu}}{\lambda+\mu}+\frac{{\lambda}P_{1,0}(0) - {\mu}P_{1,1}(0)}{\lambda+\mu} e^{-t(\lambda+\mu)},$$
$$P_{1,1}(t)=\frac{{\lambda}}{\lambda+\mu}-\frac{{\lambda}P_{1,0}(0)- {\mu}P_{1,1}(0)}{\lambda+\mu} e^{-t(\lambda+\mu)}.$$

\noindentТеперь остается вспомнить, что $P_{0,0}(0)=P_{1,1}(0)=1$, $P_{0,1}(0)=P_{1,0}(0)=$\linebreak $=0$, тогда
$$P_{0,0}(t)=\frac{{\mu}}{\lambda+\mu}+\frac{{\lambda}}{\lambda+\mu} e^{-t(\lambda+\mu)},$$
$$P_{0,01}(t)=\frac{{\lambda}}{\lambda+\mu}-\frac{ {\lambda}}{\lambda+\mu} e^{-t(\lambda+\mu)},$$
$$P_{1,0}(t)=\frac{{\mu}}{\lambda+\mu}-\frac{  {\mu}}{\lambda+\mu} e^{-t(\lambda+\mu)},$$
$$P_{1,1}(t)=\frac{{\lambda}}{\lambda+\mu}+\frac{ {\mu}}{\lambda+\mu} e^{-t(\lambda+\mu)}. \text{ \EndEx}$$

\pagebreak

\begin{example}
Найти распределения вероятностей состояний $p(t)$ и~переходную матрицу $P(t)$ для следующей непрерывной цепи Маркова.
\end{example}
\begin{figure}[!h]
	\centering
	\includegraphics[scale=0.6]{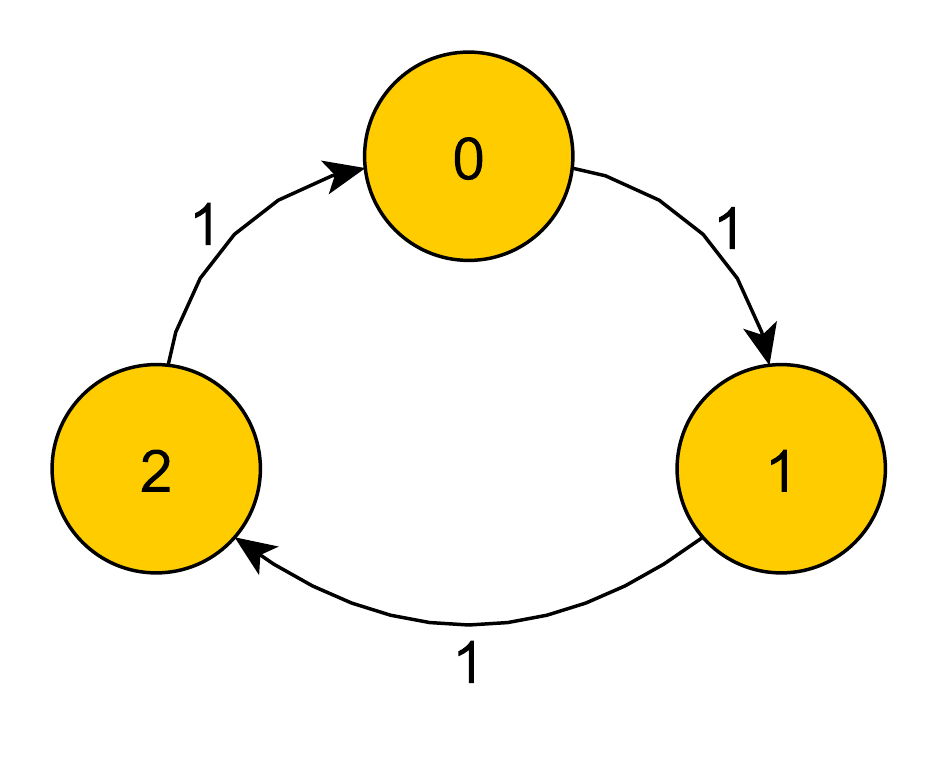}
	\label{fig:Graph2}
\end{figure}

\textbf{Решение}. Выпишем матрицы $\Lambda$ и $\Lambda^\top$ данной цепи:
$$\Lambda = \left[ {\begin{array}{*{20}{r}}
	{-1}&{1}&{0} \\ 
	{0} &{-1}&{1} \\
	{1}&{0}&{-1}
	\end{array}} \right], \ \ \Lambda^\top = \left[ {\begin{array}{*{20}{r}}
	{-1}&{0}&{1} \\ 
	{1} &{-1}&{0} \\
	{0}&{1}&{-1}
	\end{array}} \right],$$ составим характеристическое уравнение:
$$\left[ {\begin{array}{*{20}{r}}
	{-1-\lambda}&{0}&{1} \\ 
	{1} &{-1-\lambda}&{0} \\
	{0}&{1}&{-1-\lambda}
	\end{array}} \right]=-(1+\lambda)^3+1=0$$ и найдем собственные числа $$\lambda_1=0, \ \ \lambda_2 = -\frac{3}{2}+i\frac{\sqrt{3}}{2}, \ \ \lambda_3 = -\frac{3}{2}-i\frac{\sqrt{3}}{2}.$$
Так как все собственные числа различные, то общий вид уравнения $$\dot{p}(t)=\Lambda^\top p(t)$$ будет выглядеть следующим образом: $$p(t)=C_1 v_1 + C_2\mathrm{Re}\left(v_2e^{t\lambda_2}\right)+C_3\mathrm{Im}\left( v_2 e^{t\lambda_2} \right),$$ где $v_1$ и $v_2$ -- собственные векторы, отвечающие собственным значениям $\lambda_1$ и $\lambda_2$ соответственно. Явное выражение для $p(t)$ можно получить двумя способами.

\textbf{Способ 1 (общий)}. Найдем все собственные векторы. При $\lambda=0$ $$\Lambda^\top-\lambda I=\left[ {\begin{array}{*{20}{r}}
	{-1}&{0}&{1} \\ 
	{1} &{-1}&{0} \\
	{0}&{1}&{-1}
	\end{array}} \right],$$
откуда собственный вектор $v_1=[1,1,1]^\top$. Для $\lambda=-\frac{3}{2}+i\frac{\sqrt{3}}{2}$ введем обозначение $\rho=1+\lambda=e^{2\pi i/3}$, тогда
$$\Lambda^\top-\lambda I=\left[ {\begin{array}{*{20}{r}}
	{-\rho}&{0}&{1} \\ 
	{1} &{-\rho}&{0} \\
	{0}&{1}&{-\rho}
	\end{array}} \right] \sim \left[ {\begin{array}{*{20}{r}}
	{1}&{0}&{-\rho^{-1}} \\ 
	{1} &{-\rho}&{0} \\
	{0}&{1}&{-\rho}
	\end{array}} \right]  \sim \left[ {\begin{array}{*{20}{r}}
	{1}&{0}&{-\rho^{-1}} \\ 
	{0} &{-\rho}&{\rho^{-1}} \\
	{0}&{1}&{-\rho}
	\end{array}} \right]\sim $$ $$\sim \left[ {\begin{array}{*{20}{r}}
	{1}&{0}&{-\rho^{-1}} \\ 
	{0} &{1}&{-\rho^{-2}} \\
	{0}&{1}&{-\rho}
	\end{array}} \right] \sim [-\rho+\rho^{-2}=0] \sim \left[ {\begin{array}{*{20}{r}}
	{1}&{0}&{-\rho^{-1}} \\ 
	{0} &{1}&{-\rho^{-2}} \\
	{0}&{0}&{0}
	\end{array}} \right],$$ откуда $v_2=[\rho^{-1},\rho^{-2},1]^\top$, или можно взять $v_2=[\rho,1,\rho^2]^\top$. Далее, $$v_2e^{t\lambda_2}=\left[ {\begin{array}{*{20}{c}}
	{-\frac{1}{2}+i\frac{\sqrt{3}}{2}} \\ 
	{1} \\
	{-\frac{1}{2}-i\frac{\sqrt{3}}{2}}
	\end{array}} \right]e^{-\frac{3}{2}t}\left(\cos{\frac{\sqrt{3}}{2}t}+i\sin{\frac{\sqrt{3}}{2}t}\right),$$ откуда
$$\mathrm{Re}\left(v_2e^{t\lambda_2}\right)=e^{-\frac{3}{2}t} \left( \left[ {\begin{array}{*{20}{c}}
	{-1/2} \\ 
	{1} \\
	{-1/2}
	\end{array}} \right]\cos{\frac{\sqrt{3}}{2}t}+\left[ {\begin{array}{*{20}{c}}
	{-\sqrt{3}/2} \\ 
	{0} \\
	{\sqrt{3}/2}
	\end{array}} \right]\sin{\frac{\sqrt{3}}{2}t} \right), $$ 
$$\mathrm{Im}\left(v_2e^{t\lambda_2}\right)=e^{-\frac{3}{2}t} \left( \left[ {\begin{array}{*{20}{c}}
	{\sqrt{3}/2} \\ 
	{0} \\
	{-\sqrt{3}/2}
	\end{array}} \right]\cos{\frac{\sqrt{3}}{2}t}+\left[ {\begin{array}{*{20}{c}}
	{-1/2} \\ 
	{1} \\
	{-1/2}
	\end{array}} \right]\sin{\frac{\sqrt{3}}{2}t} \right). $$ Отсюда можно получить выражение для распределения вероятностей $p(t)$. Остается лишь воспользоваться начальным условием $p(0)$ для поиска $C_1$, $C_2$ и $C_3$.

\textbf{Способ 2 (простой)}. Найдем собственный вектор $v_1=[1,1,1]^\top$, это было сделано в предыдущем пункте. Далее, так как $\mathrm{Re}(\lambda_{2,3})<0$, то в пределе $$p(t)\to C_1 \cdot \left[ {\begin{array}{*{20}{c}}
	{1} \\ 
	{1} \\
	{1}
	\end{array}} \right].$$ Но так как сумма компонент вектора $p(t)$ равна 1 для любого $t$, то же самое справедливо и для предела. Значит, $C_1=1/3$, причем независимо от начального распределения $p(0)$. Далее заметим, что решение может быть представлено в виде $$p(t)=C_1 + C_2\cdot e^{-\frac{3}{2}t}\cos{\frac{\sqrt{3}}{2}t}+ C_3\cdot e^{-\frac{3}{2}t}\sin{\frac{\sqrt{3}}{2}t}$$ с некоторыми постоянными векторами $C_1$, $C_2$ и $C_3$, причем мы уже нашли $C_1=[1/3,1/3,1/3]^\top$. Но тогда в начальный момент времени $$p(0)=C_1+C_2 \Rightarrow C_2=\left[ {\begin{array}{*{20}{c}}
	{p_0(0)-1/3} \\ 
	{p_1(0)-1/3} \\
	{p_2(0)-1/3}
	\end{array}} \right].$$ Константу $C_3$ находим из соотношения $$\dot{p}(0)=\Lambda^\top p(0)=\left[ {\begin{array}{*{20}{c}}
	{p_2(0)-p_0(0)} \\ 
	{p_0(0)-p_1(0)} \\
	{p_1(0)-p_2(0)}
	\end{array}} \right]=-\frac{3}{2}C_2+\frac{\sqrt{3}}{2}C_3,$$ откуда получаем $$C_3=\frac{2}{\sqrt{3}}\dot{p}(0)+\sqrt{3}\cdot C_2.$$
Что касается компонент переходной матрицы, то, как мы уже поняли ранее, ее строки удовлетворяют тем же самым дифференциальным уравнениям, что и $p(t)$, а значит, вид решения для них тот же самый. Отличие будет лишь в том, что надо воспользоваться начальным условием $P(0)=I$ и соотношением $\dot P(0)= P(0)\Lambda=\Lambda$ для определения констант интегрирования. \EndEx

\subsection{Классификация состояний}
\label{sec:Classification ContMCh}

Перейдем теперь к классификации состояний непрерывной цепи Маркова. Сделать это можно аналогично тому, как это делалось для дискретных цепей Маркова~(см., например,~\cite{KaiLai1964}).

\begin{definition}
Состояния ${k,j\in E}$ называются \textit{сообщающимися}, если $$\left(\exists t>0: \ p_{k,j}(t)>0\right) \ \wedge \ \left(\exists \tau>0: \ p_{j,k}(\tau)>0\right).$$ Иначе состояния $k,j\in E$ называются \textit{несообщающимися}: $$\left(\forall t>0 \ p_{k,j}(t)=0\right) \ \vee \ \left(\forall \tau>0 \ p_{j,k}(\tau)=0\right).$$
\end{definition}

\begin{definition}
Состояние $k\in E$ называется \textit{несущественным}, если $$\exists j\in E: \ \left( \exists t>0 \ p_{k,j}(t)>0 \wedge \forall \tau\ge1 \ p_{j,k}(\tau)=0 \right).$$
Иначе состояние $k\in E$ называется \textit{существенным}: $$\forall j\in E \ \left( \forall t>0 \ p_{k,j}(t)=0 \right) \vee \left( \exists \tau>0 \ p_{j,k}(\tau)>0 \right).$$
\end{definition}

\begin{definition}
Состояние $k\in E$ называется \textit{возвратным}, если $$\mathbb{P}(\omega:\{t:\xi(\omega,t)=k\}\text{ не ограничено})=1.$$ Состояние $k\in E$ называется \textit{невозвратным}, если $$\mathbb{P}(\omega:\{t:\xi(\omega,t)=k\}\text{ не ограничено})=0.$$
\end{definition}

\begin{definition}
Состояние $k\in E$ называется \textit{нулевым}, если $$\lim\limits_{t\to\infty}p_{k,k}(t)=0.$$ Иначе состояние $k$ называется \textit{ненулевым}.
\end{definition}

Заметим, что для непрерывных цепей Маркова понятия периодического состояния нет.

Для непрерывных цепей Маркова известен критерий возвратности состояния, аналогичный критерию для дискретных цепей Маркова. 
\begin{theorem}[ (критерий возвратности)]
\textit{Состояние ${k\in E}$ является возвратным тогда и только тогда, когда} $$\int\limits_{0}^{+\infty}p_{k,k}(t)\,dt=+\infty.$$
\end{theorem}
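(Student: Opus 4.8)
Если $\lambda_k=0$, то состояние $k$ поглощающее, а значит, по определению возвратное, и при этом $p_{k,k}(t)=1$ для всех $t\ge 0$, так что $\int_0^{+\infty}p_{k,k}(t)\,dt=+\infty$; обе части эквивалентности в этом случае тривиальны. Поэтому далее считаю $0<\lambda_k<\infty$. Введу \emph{время пребывания} цепи в состоянии $k$:
$$V_k=\int\limits_{0}^{+\infty}\mathsf{I}(\xi(t)=k)\,dt .$$
Подынтегральная функция $(\omega,t)\mapsto\mathsf{I}(\xi(\omega,t)=k)$ измерима (траектории кусочно-постоянны и непрерывны справа), и по теореме Тонелли при условии $\xi(0)=k$ имеем
$$\mathbb{E}_k V_k=\int\limits_{0}^{+\infty}\mathbb{P}_k(\xi(t)=k)\,dt=\int\limits_{0}^{+\infty}p_{k,k}(t)\,dt .$$
Тем самым утверждение теоремы сводится к равносильности $\{\mathbb{E}_k V_k=+\infty\}\Leftrightarrow\{k$ возвратно$\}$.

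Дальше воспользуюсь представлением $\xi(t)=X_{\eta(t)}$ через вложенную цепь скачков $X_n$ (определение~\ref{def: jump chain}) и независимые стандартные показательные времена. Максимальные интервалы, на которых $\xi(t)=k$, суть промежутки между последовательными посещениями состояния $k$ вложенной цепью; их длительности $\sigma_1,\sigma_2,\dots$ независимы в совокупности, имеют распределение $\mathrm{Exp}(\lambda_k)$ и не зависят от самой цепи скачков. В силу невзрывности $\eta(t)<\infty$ при каждом конечном $t$, поэтому структура посещений у $\xi(t)$ в точности повторяет структуру посещений у $X_n$; в частности, обозначив через $N_k$ число посещений состояния $k$ вложенной цепью (считая начальное), получаю $V_k=\sum_{i=1}^{N_k}\sigma_i$, и $\{t:\xi(t)=k\}$ не ограничено тогда и только тогда, когда $N_k=\infty$.

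Теперь разберу два случая, опираясь на дихотомию для дискретной цепи (напомню: вероятность $F_k$ возвращения вложенной цепи в $k$ за конечное число шагов либо равна $1$, и тогда $N_k=\infty$ п.н., либо строго меньше $1$, и тогда $N_k$ имеет геометрическое распределение с параметром $1-F_k$, см. также теорему~\ref{criteria transience}). Если $k$ возвратно, то $N_k=\infty$ п.н., поэтому $V_k=\sum_{i=1}^{\infty}\sigma_i=+\infty$ п.н. как сумма бесконечного числа положительных независимых одинаково распределённых слагаемых (по лемме Бореля--Кантелли $\sigma_i\ge\varepsilon$ бесконечно часто); значит, $\mathbb{E}_k V_k=+\infty$, и интеграл расходится. Если же $k$ невозвратно, то $\mathbb{E}_k N_k=(1-F_k)^{-1}<\infty$, и по тождеству Вальда (законному ввиду независимости $\sigma_i$ от $N_k$ и конечности $\mathbb{E}_k N_k$) получаю $\mathbb{E}_k V_k=\mathbb{E}_k N_k\cdot\lambda_k^{-1}<\infty$, то есть интеграл сходится. Вместе с тем, что каждое состояние возвратно либо невозвратно, это и даёт требуемую эквивалентность.

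\textbf{Основное препятствие.} Тонкое место -- аккуратное обоснование того, что времена пребывания $\sigma_i$ независимы в совокупности и не зависят от вложенной цепи $X_n$, а также использование невзрывности для отождествления структуры посещений $\xi(t)$ и $X_n$; именно здесь существенны принятые ограничения на инфинитезимальную матрицу и предположение о сепарабельности. В качестве альтернативы можно вовсе избежать occupation-time рассуждения, заметив, что возвратность $k$ для $\xi(t)$ равносильна возвратности $k$ для $X_n$, и применить уже доказанный дискретный критерий~\ref{criteria transience}, предварительно установив, что ряд из $n$-шаговых переходных вероятностей вложенной цепи расходится тогда и только тогда, когда расходится $\int_0^{+\infty}p_{k,k}(t)\,dt$ (это следует из тауберова сравнения ряда и интеграла через преобразование Лапласа $\int_0^{\infty}e^{-st}p_{k,k}(t)\,dt$ при $s\to 0+$).
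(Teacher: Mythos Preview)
В пособии это утверждение приводится без доказательства --- после формулировки стоит лишь ссылка на \cite[теорема~2.7.11]{KelbertSukhov2010} и \cite{KaiLai1964}. Поэтому сравнивать твой подход с авторским не с чем; можно лишь оценить корректность.

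Твоё рассуждение верно и по существу совпадает со стандартным доказательством из цитируемых источников. Ключевые шаги --- переход к времени пребывания $V_k$ через тождество $\mathbb{E}_k V_k=\int_0^\infty p_{k,k}(t)\,dt$ по Тонелли и разложение $V_k=\sum_{i=1}^{N_k}\sigma_i$ через вложенную цепь --- аккуратны; конструкция~\eqref{jump chain} в пособии как раз и даёт независимость стандартных показательных $\tau_n$ от $(X_n)$, откуда немедленно следует независимость $(\sigma_i)$ от $N_k$ (индексы посещений $n_i$ измеримы относительно $(X_n)$, а $\sigma_i=\tau_{n_i}/\lambda_k$). Равносильность $\{N_k=\infty\}=\{\,\{t:\xi(t)=k\}$ не ограничено$\,\}$ корректно опирается на невзрывность, гарантированную в пособии условием $\sup_i\lambda_i<\infty$. Дихотомия для $N_k$ --- это в точности дискретная теорема~\ref{criteria transience} и эквивалентность определений~\ref{def:defrec1}, \ref{def:defrec2}, уже доказанные в разделе~\ref{sec:DiscreteChains}.

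Единственное мелкое замечание: ссылка на тождество Вальда избыточна --- раз $N_k$ и $(\sigma_i)$ независимы, достаточно обусловить по $N_k$ и применить линейность. Альтернатива через преобразование Лапласа и тауберову теорему, которую ты упоминаешь, работоспособна, но заметно тяжелее; твой основной путь проще и самодостаточен.
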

С доказательством этой теоремы ознакомиться здесь~\cite[теорема 2.7.11, \S~2.7, глава~2]{KelbertSukhov2010} или в~\cite{KaiLai1964}.

\begin{definition}
Непрерывная цепь Маркова называется \textit{неразложимой}, если все ее состояния сообщаются.
\end{definition}

\begin{definition}
Распределение $\pi(t)$ называется \textit{ста\-ци\-о\-на\-р-\\ным}, если $$\forall t\ge 0 \quad \pi=P^\top(t)\pi,$$ или ${\dot{\pi}(t)=0 \Leftrightarrow \Lambda^\top \pi = 0}$. Уравнения ${\Lambda^\top \pi = 0}$ называются \textit{алгебраическими уравнениями Колмогорова}. 
\end{definition}
\eduard{Иными словами,} стационарное распределение -- собственный вектор матрицы $\Lambda^\top$ для собственного значения $0$, удовлетворяющий условиям нормировки $\sum_{k\ge 1} \pi_k = 1$ и неотрицательности компонент. Как и было уже сказано,  $0$ всегда является собственным значением для матриц $\Lambda$ и $\Lambda^\top$. Но так же, как и в случае дискретных марковских цепей, соответствующий собственный вектор может не удовлетворять условию нормировки (такая ситуация может возникнуть только для цепей со счетным числом состояний).

\elena{\textbf{Замечание 1.} Стоит отметить, что классификация состояний непрерывных цепей  эквивалентна классификации состояний соответствующей дискретной цепи скачков (или вложенной цепи) (см. определение~\ref{def: jump chain}).  Стоит отметить также, как связаны стационарные распределения непрерывной марковской цепи и дискретной цепи её скачков (в предположении, что стационарное распределение единственно). Если $\tilde \pi$ -- стационарное распределение цепи скачков, т.е. $\tilde P^T \tilde\pi = \tilde\pi$ или $\sum_{j\neq i} \tilde\pi_j \frac{\Lambda_{j,i}}{\Lambda_j} = \pi_i$ для всех $i\in E$, то стационарное распределение непрерывной цепи  $\pi_i = \tilde\pi_i \Lambda_i^{-1}/ \sum_{j\in E} \tilde\pi_j \Lambda_j^{-1}$. Например, если непрерывная цепь задается стохастическим графом, как на рис.~\ref{fig:BirthDeathProcess}, то соответствующая дискретная цепь -- простое блуждание на полуоси с отражающим \mbox{$0$-сос}\-тоя\-нием и вероятностями шага вправо $\lambda / (\lambda + \mu)$ и влево $\mu / (\lambda + \mu)$. При $\lambda / (\lambda + \mu) > 1/2$, т.е. $\lambda > \mu$, цепь невозвратна, а значит, все состояния нулевые, при  $\lambda / (\lambda + \mu) = 1/2$, т.е. $\lambda = \mu$, цепь возвратна, но все состояния нулевые (нуль-возвратность, то есть время возвращение имеет бесконечное математическое ожидание), и при  $\lambda / (\lambda + \mu) < 1/2$, т.е. $\lambda < \mu$, существует стационарное распределение $\tilde\pi_k = \frac{\mu^2 - \lambda^2}{2\lambda\mu} \left( \frac{\lambda}{\mu} \right)^k$, $k\ge 1$, $\tilde\pi_0 = \frac{\mu - \lambda}{2\mu} $ и цепь возвратна с ненулевыми состояниями (положительная возвратность) (см. пример \ref{birth-death discr}). В последнем случае стационарное распределение непрерывной цепи $\pi_k = \frac{\mu^2 - \lambda^2}{\mu} \left( \frac{\lambda}{\mu} \right)^k $, $k\ge 1$ и $\pi_0 = \frac{\mu-\lambda}{\mu}$.
\begin{figure}[!h]
	\centering
	\includegraphics[scale=0.50]{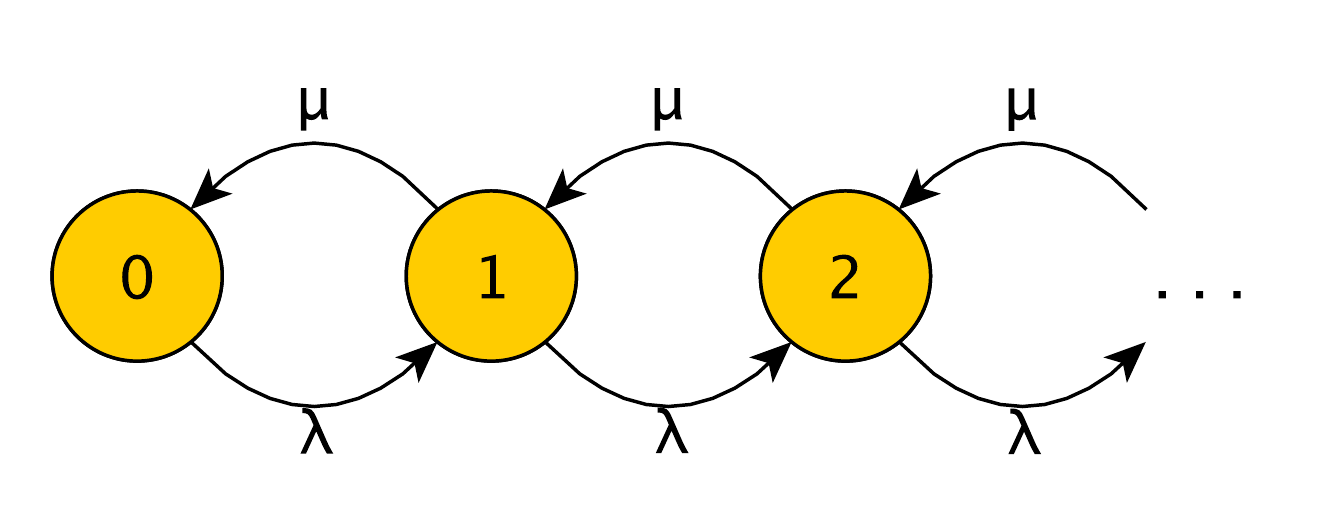}
	\caption{Процесс рождения/гибели с параметрами $\lambda/\mu$}
	\label{fig:BirthDeathProcess}
\end{figure}
\newpage}

\subsection{Эргодические непрерывные цепи Маркова}
\label{sec: Erg Cont MC}

\begin{theorem}\textbf{(см.~\cite[теорема 2.8.1]{KelbertSukhov2010})} 
\textit{В непрерывных цепях Маркова всегда существуют пределы} $$\lim\limits_{t\to\infty}p_{i,j}(t)=p_{i, j}^*.$$ 
\end{theorem}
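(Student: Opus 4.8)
Работаем в рамках принятых в этом разделе предположений (цепь консервативна, $\sup_i\lambda_i<\infty$, а значит невзрывна), так что переходные вероятности $p_{i,j}(t)$ корректно определены, удовлетворяют уравнению Колмогорова--Чэпмена и, как уже отмечалось, $p_{i,i}(t)\to 1$ при $t\to 0$. Первым делом я установлю, что каждая функция $t\mapsto p_{i,j}(t)$ равномерно непрерывна на $[0,\infty)$. Это вытекает из тождества $p_{i,j}(t+s)=\sum_{k\in E}p_{i,k}(s)p_{k,j}(t)$: отсюда
$$|p_{i,j}(t+s)-p_{i,j}(t)|\le (1-p_{i,i}(s))+\sum_{k\neq i}p_{i,k}(s)=2\bigl(1-p_{i,i}(s)\bigr),$$
и правая часть стремится к нулю при $s\to 0+$ равномерно по $t$. В частности, равномерно непрерывная и интегрируемая на $[0,\infty)$ функция обязана стремиться к нулю на бесконечности; это понадобится для невозвратных состояний.

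Основной шаг --- доказать существование предела $\lim_{t\to\infty}p_{j,j}(t)$ для каждого состояния $j$ средствами теории восстановления. Рассмотрим цепь, выходящую из $j$; пусть $\tau$ --- момент первого возвращения в $j$ с функцией распределения $G$. Если $j$ невозвратно (в частности, несущественно), то по критерию возвратности $\int_0^\infty p_{j,j}(t)\,dt<\infty$, и вместе с равномерной непрерывностью это даёт $p_{j,j}(t)\to 0$. Если же $j$ возвратно, то $G$ --- собственное распределение на $(0,\infty)$, причём нерешётчатое, так как первое время пребывания в $j$ имеет абсолютно непрерывный закон $\mathrm{Exp}(\lambda_j)$. Функция $Z(t)=p_{j,j}(t)$ удовлетворяет уравнению восстановления $Z(t)=e^{-\lambda_j t}+\int_0^t Z(t-s)\,dG(s)$ (слагаемое $e^{-\lambda_j t}$ отвечает событию <<цепь всё ещё в начальном пребывании>>, интеграл --- разложению по моменту последнего возвращения в $j$). Поскольку $e^{-\lambda_j t}$ непосредственно интегрируема по Риману, а $G$ нерешётчато, по ключевой теореме восстановления
$$\lim_{t\to\infty}p_{j,j}(t)=\frac{1}{\mu_j}\int_0^\infty e^{-\lambda_j s}\,ds=\frac{1}{\lambda_j\mu_j},$$
где $\mu_j=\mathbb{E}\tau$; при $\mu_j=\infty$ (нуль-возвратность) предел равен нулю, при $\mu_j<\infty$ (положительная возвратность) --- положителен. В любом случае число $p_j^{*}:=\lim_{t\to\infty}p_{j,j}(t)$ существует.

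Третий шаг --- перенести существование предела на произвольные $p_{i,j}(t)$. Если $i\not\to j$, то $p_{i,j}(t)\equiv 0$. Если $i\to j$, обозначим через $T$ момент первого попадания в $j$ при старте из $i$; тогда по строго марковскому свойству $p_{i,j}(t)=\int_{0}^{t}p_{j,j}(t-s)\,\mathbb{P}_i(T\in ds)$. При $t\to\infty$ подынтегральная функция $p_{j,j}(t-s)$ при каждом фиксированном $s$ стремится к $p_j^{*}$ и ограничена единицей, поэтому по теореме о мажорированной сходимости $p_{i,j}(t)\to p_j^{*}\,\mathbb{P}_i(T<\infty)$ (при невозвратном $j$ это попросту $0$). Тем самым предел $p_{i,j}^{*}$ существует во всех случаях, что и требовалось.

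Главной технической трудностью я ожидаю аккуратное обоснование уравнения восстановления для $p_{j,j}(t)$ и проверку условий ключевой теоремы восстановления (нерешётчатость $G$ и непосредственная риманова интегрируемость ядра) --- для этого удобно явно описать траекторию через цепь скачков в духе формулы~\eqref{jump chain}. Альтернативный путь, избегающий теории восстановления в непрерывном времени: при каждом $h>0$ скелетная цепь $X_n=\xi(nh)$ имеет матрицу переходов $P(h)$ со строго положительной диагональю ($p_{i,i}(h)\ge e^{-\lambda_i h}>0$), а значит апериодична, откуда по результатам раздела о дискретных цепях $P(h)^n$ сходится покомпонентно; пределы вдоль разных арифметических прогрессий $nh$ согласованы (для $h,h'$ с рациональным отношением они равны, так как оба совпадают с пределом вдоль общего измельчения), и тогда равномерная непрерывность $p_{i,j}(\cdot)$ из первого шага позволяет заключить существование предела уже по непрерывному параметру $t$.
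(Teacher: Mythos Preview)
The paper does not supply a proof of this theorem at all; it simply cites \cite[теорема 2.8.1]{KelbertSukhov2010}. So there is nothing in the text to compare your argument against, and the question becomes whether your proposal stands on its own.

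Your main route---uniform continuity of $t\mapsto p_{i,j}(t)$, then the key renewal theorem for $p_{j,j}(t)$, then first-passage conditioning for general $p_{i,j}(t)$---is correct and is in fact the classical proof (this is essentially the argument in Chung and in Kelbert--Sukhov). The renewal equation $Z(t)=e^{-\lambda_j t}+\int_0^t Z(t-s)\,dG(s)$ is right (decompose on whether the chain has yet completed its first excursion from $j$), $G$ is non-lattice because it is the convolution of $\mathrm{Exp}(\lambda_j)$ with the post-jump return-time law, and $e^{-\lambda_j t}$ is directly Riemann integrable. The only omission is the trivial absorbing case $\lambda_j=0$, where $p_{j,j}(t)\equiv 1$; mention it explicitly since your formula $1/(\lambda_j\mu_j)$ degenerates there. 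In step~3 your use of dominated convergence is fine (the sub-probability measure $\mathbb{P}_i(T\in\cdot)$ is finite and the integrand is bounded by $1$).

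Your alternative via the $h$-skeleton is sound in principle but has two loose ends. First, the convergence of $P(h)^n$ componentwise for a general (countable, possibly transient or null-recurrent) aperiodic discrete chain is the Erd\H{o}s--Feller--Pollard theorem; it is \emph{not} among the results proved in the paper's discrete-chain section, which centre on the Doeblin condition and finite irreducible chains (теорема~\ref{th:technical_ergodic} and its corollaries). So ``по результатам раздела о дискретных цепях'' overstates what is available. Second, the talk of matching limits along commensurable step sizes $h,h'$ is unnecessary: once you know the limit along a \emph{single} grid $nh$ exists, pick $h$ small enough that $\sup_{0\le s\le h}2(1-p_{i,i}(s))<\varepsilon$ and use your step-1 estimate to conclude $\limsup_t p_{i,j}(t)-\liminf_t p_{i,j}(t)\le 2\varepsilon$.
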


\begin{definition}
\label{strong ergod continious}
Непрерывная однородная цепь Маркова  называется  \textit{сильно эргодической}, если для любого состояния $j\in E$ существует не зависящий от состояния $i\in E$ положительный предел: $$\lim\limits_{t\to\infty}p_{i,j}(t)=p_j^*>0.$$
\end{definition}
Несложно заметить, что если непрерывная однородная цепь Марко\-ва сильно эргодическая, то стационарное распределение $\pi$ единственное и $p^* = \pi$.  

\elena{Поскольку асимптотическое поведение непрерывных марковских цепей в случае отсутствия взрыва (то есть, удовлетворяющих условиям теоремы~\ref{th:gener}) равносильно асимптотическому поведению дискретной цепи скачков, то с}ледствием эргодических теорем из предыдущего раздела является

\begin{theorem}[ (эргодическая теорема)]
\label{ErgodicTh}
\textit{Для непрерывной однородной цепи Маркова с конечным \elena{или счётным} числом состояний следующие утверждения эквивалентны}:
\begin{enumerate}
    \item \textit{цепь сильно эргодична, т.е. для любых состояний} $i,j$ $\lim\limits_{t\to\infty}p_{i,j}(t)=\pi_j>0$;
    \item \textit{существует такое $t_0$, что все элементы матрицы $P(t_0)$ положительны};
    \item \textit{существует единственное стационарное распределение, все компоненты которого положительны;}
    \elena{\item \textit{существует единственное стационарное распределение у цепи скачков, все компоненты которого положительны.}}
\end{enumerate}
\textit{Кроме того, для  сильно эргодической цепи выполнено следующее}:
\begin{enumerate}
	\item \textit{для любого начального распределения и для любого $j\in E$ имеет место сходимость $p_j(t)\to\pi_j$ при} $t\to\infty$;
	\item \textit{для любого начального распределения и для любого} $j\in E$ $$\frac{1}{T}\int\limits_{0}^T \mathrm{I}(\xi(s)=j)\,ds \overset{\mathrm{\text{п.н.}}}{\longrightarrow} \pi_j, \ T\to\infty.$$
\end{enumerate}
\end{theorem}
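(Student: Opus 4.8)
Основная идея — перенести уже построенную дискретную эргодическую теорию (теоремы~\ref{th:technical_ergodic}, \ref{Th:CritErg} и теорему существования пределов $\lim_{t\to\infty}p_{i,j}(t)=p_{i,j}^*$) на непрерывный случай при помощи двух дискретизаций: \emph{скелетной цепи} $\{\xi(nt_0)\}_{n\ge0}$ — это однородная дискретная цепь Маркова с матрицей перехода $P(t_0)$ — и вложенной цепи скачков из определения~\ref{def: jump chain}. Всюду работаем при стандартных предположениях (теорема~\ref{th:gener}), так что $P(t)=e^{t\Lambda}$ — корректно определённая непрерывная стохастическая полугруппа и цепь невзрывна. Я бы установил цикл импликаций $(2)\Rightarrow(1)\Rightarrow(3)\Rightarrow(2)$ и отдельно эквивалентность $(3)\Leftrightarrow(4)$, после чего вывел оба дополнительных утверждения.

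$(2)\Rightarrow(1)$: если все элементы $P(t_0)$ положительны, то скелетная цепь $X_n=\xi(nt_0)$ неразложима и апериодична, значит по теореме~\ref{th:technical_ergodic} (или~\ref{Th:CritErg}) сильно эргодична, имеет единственное положительное стационарное распределение $\pi$ и $|p_{i,j}(nt_0)-\pi_j|\le(1-\delta)^{[n/n_0]}$ равномерно по $i$. Чтобы перейти от последовательности $nt_0$ к непрерывному параметру, записываю $t=nt_0+s$, $s\in[0,t_0)$, и применяю уравнение Колмогорова--Чэпмена: $p_{i,j}(t)=\sum_k p_{i,k}(s)\,p_{k,j}(nt_0)$; так как $p_{k,j}(nt_0)\to\pi_j$ равномерно по $k$ и $\sum_k p_{i,k}(s)=1$, правая часть стремится к $\pi_j>0$, что и есть $(1)$. $(1)\Rightarrow(3)$: при наличии $p_{i,j}(t)\to\pi_j>0$ перехожу к пределу $t\to\infty$ в равенстве $p_{i,j}(t+s)=\sum_k p_{i,k}(t)p_{k,j}(s)$ (перестановка предела и суммы очевидна в конечном случае и следует из теоремы о мажорируемой сходимости в счётном), получая $\pi_j=\sum_k\pi_k p_{k,j}(s)$, т.е.\ $P^\top(s)\pi=\pi$ при всех $s\ge0$; вместе с $\sum_j\pi_j=1$ и $\pi_j>0$ это даёт положительное стационарное распределение; единственность: любое стационарное $\nu$ удовлетворяет $\nu_j=\sum_k\nu_k p_{k,j}(t)\to\pi_j$, значит $\nu=\pi$.

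$(3)\Leftrightarrow(4)$: воспользуюсь связью стационарных распределений непрерывной цепи и её цепи скачков из замечания~1 раздела~\ref{sec:Classification ContMCh}: если $\tilde\pi$ стационарно для цепи скачков, то $\pi_i=\tilde\pi_i\lambda_i^{-1}/\sum_j\tilde\pi_j\lambda_j^{-1}$ стационарно для $\xi(t)$, и обратно. Стандартные предположения исключают мгновенные состояния, а условия $(3)$, $(4)$ при $|E|\ge2$ исключают поглощающие состояния (иначе $\delta_i$ было бы стационарным и не положительным), поэтому все $\lambda_i\in(0,\infty)$ и указанное соответствие — биекция между положительными стационарными распределениями; значит существование, единственность и положительность переносятся в обе стороны. $(3)\Rightarrow(2)$: единственность положительного стационарного распределения вынуждает неразложимость непрерывной цепи (при наличии двух замкнутых классов стационарное распределение не единственно, при наличии невозвратных состояний — не положительно). Ключевое наблюдение, отличающее непрерывный случай от дискретного: неразложимость непрерывной цепи уже влечёт $p_{i,j}(t)>0$ для всех $i,j$ и всех $t>0$ — периодичности здесь нет. Действительно, $p_{i,i}(t)\ge e^{-\lambda_i t}>0$; если $\Lambda_{i,j}>0$, то $p_{i,j}(s)=\Lambda_{i,j}s+o(s)>0$ при малых $s>0$, откуда $p_{i,j}(t)\ge p_{i,i}(t-s)p_{i,j}(s)>0$ при всех $t>s$; наконец, если $i\to j$ вдоль пути скачков $i=i_0\to\dots\to i_m=j$, то $p_{i,j}(t)\ge\prod_\ell p_{i_\ell,i_{\ell+1}}(t/m)>0$. Взяв любое $t_0>0$, получаем $(2)$.

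Дополнительные утверждения: сходимость маргиналов $p_j(t)=\sum_i p_i(0)p_{i,j}(t)\to\sum_i p_i(0)\pi_j=\pi_j$ — по мажорируемой сходимости ($p_{i,j}(t)\le1$, $\sum_i p_i(0)=1$). Сходимость $\frac{1}{T}\int_0^T\mathsf{I}(\xi(s)=j)\,ds\to\pi_j$ — единственное утверждение, не сводящееся к одному лишь существованию пределов $p_{i,j}^*$: здесь я бы использовал регенеративную структуру в моментах возвращения в $j$ (цепь положительно возвратна, так что последовательные возвращения образуют процесс восстановления с конечным средним временем цикла, а времена пребывания в $j$ внутри цикла — независимые одинаково распределённые с конечным средним) и применил бы теорему восстановления/усиленный закон больших чисел; это повторяет рассуждение примера~\ref{ex:mu j pi j =1}, где так же было получено $\mu_j\pi_j=1$, и попутно отождествляет предел с $\pi_j$. Основная трудность — счётный случай: нужно исключить <<уход массы>> при переходе к пределу под знаком бесконечной суммы, а именно здесь незаменима положительная возвратность (существование настоящего вероятностного стационарного распределения, а не просто неотрицательного инвариантного вектора), и нужно использовать невзрывность, чтобы редукция к цепи скачков была законной; утверждение об эргодическом среднем дополнительно требует регенеративной/восстановительной техники, а не элементарных манипуляций с уравнением Колмогорова--Чэпмена.
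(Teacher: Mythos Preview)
Your proposal is correct and is in fact considerably more detailed than what the paper offers. The paper does not give a self-contained proof of this theorem: it simply remarks, just before the statement, that the asymptotic behaviour of a non-explosive continuous-time chain is equivalent to that of its jump chain, so the result is a consequence of the discrete ergodic theorems of Section~\ref{sec:DiscreteChains}; and it adds, just after, that Theorem~\ref{th:technical_ergodic} and all the classification results carry over to continuous time without the aperiodicity requirement.

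Your argument fleshes out precisely this programme, but with a slightly different emphasis. The paper points mainly to the \emph{jump chain} as the bridge to the discrete theory; you use it only for $(3)\Leftrightarrow(4)$ and instead handle $(2)\Rightarrow(1)$ through the \emph{skeleton chain} $\{\xi(nt_0)\}$, which is exactly the continuous-time instance of Theorem~\ref{th:technical_ergodic} the paper alludes to. Your direct verification for $(3)\Rightarrow(2)$ that irreducibility forces $p_{i,j}(t)>0$ for every $t>0$ (via $p_{i,i}(t)\ge e^{-\lambda_i t}$ and propagation along jump-chain paths) is the concrete content behind the paper's remark that ``периодичности здесь нет''. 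Your treatment of the time-average statement via the regenerative structure and the identity $\mu_j\pi_j=1$ also matches the paper's indicated route (cf.\ Example~\ref{ex:mu j pi j =1} and the last paragraph of Section~\ref{sec:ContChainMarkov}). In short, your proof and the paper's pointers describe the same strategy; you have simply made explicit the steps the paper leaves to the reader.
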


\textbf{Замечание}.  В случае конечных непрерывных цепей для сильной эргодичности необходимо и достаточно неразложимости, т.е. не нужна апериодичность \elena{(как было в дискретном случае)}, которую  к тому же и непонятно как в непрерывном времени и определять.  \elena{По этим же причинам наличие периода у цепи скачков не влияет на эргодичность непрерывной цепи.}
Подробности можно найти в конце раздела~\ref{sec:DiscreteChains}. В частности, отметим, что теорема~\ref{th:technical_ergodic} верна и в случае марковских цепей с непрерывным временем. Более того, все результаты раздела~\ref{sec:DiscreteChains} переносятся на непрерывные однородные марковские цепи без изменения, за исключением отсутствия необходимости требовать аппериодичность. Сходимость в непрерывном времени всегда имеет место в~обычном смысле (не только по Чезаро). \elena{Таким образом, более слабое условие эргодичности, не требующее положительности предела из определения~\ref{strong ergod continious}, равносильно эргодичности (не в сильном смысле) дискретной цепи скачков, то есть что при любом начальном распределении цепь с единичной вероятностью попадает в единственный положительно-возвратный класс эквивалентности, так что предельное распределение совпадает со стационарным, которое равно нулю вне замкнутого положительно-возвратного класса эквивалентности. }

В качестве примера использования марковских цепей в прикладных областях рассмотрим задачи из теории систем массового обслуживания (СМО). 
\begin{example}\label{ex:queuing system}
Пусть имеется $N$ серверов и входной поток заявок, который предполагается пуассоновским с параметром $\lambda$. Каждая заявка вне зависимости от остальных, поступив в систему, обслуживается любым из свободных серверов в течение случайного времени, распределенного по показательному закону с параметром $\mu$, или безвозвратно покидает систему, если свободных серверов нет. Пусть $X(t)$ -- марковская цепь на множестве состояний $S$, равных числу занятых серверов, или, иначе говоря, числу заявок в системе, т.е. $S = \{0,1,\ldots,N\}$. Требуется найти предельное распределение числа заявок в системе. Стохастический граф для этой задачи представлен на рис.~\ref{fig:ServerProblem}.

\begin{figure}[!h]
	\centering
	\includegraphics[scale=0.46]{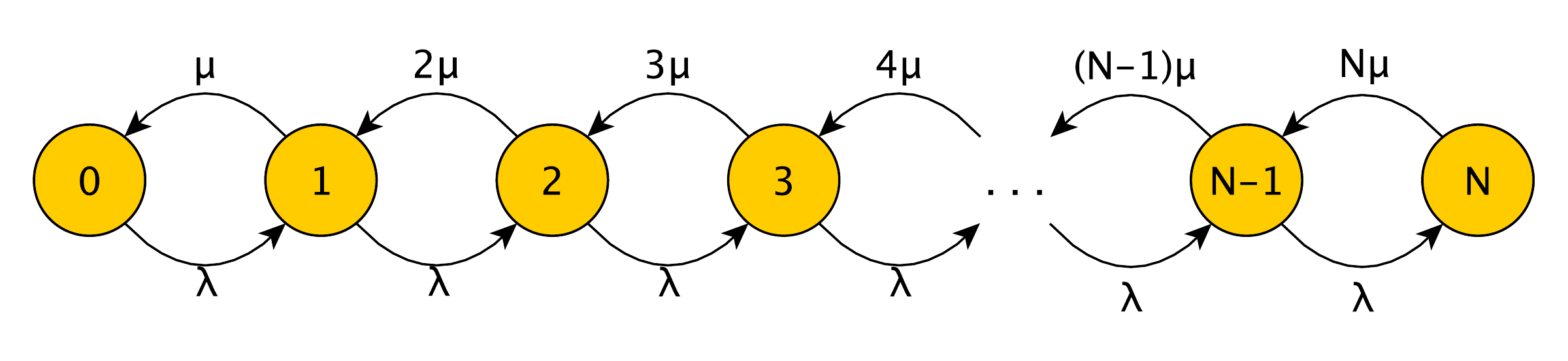}
	\caption{К примеру~\ref{ex:queuing system}}
	\label{fig:ServerProblem}
\end{figure}

Действительно, если в текущий момент времени в системе $k\le N$ заявок, каждая из них имеет остаточное время обслуживания -- показательное с тем же параметром $\mu$ (в силу свойства отсутствия памяти у показательного закона). Минимум из $k$ независимых показательных случайных величин имеет также показательный закон с суммарной интенсивностью, то есть в нашем случае $k\mu$.

Так как цепь конечна и неразложима, то согласно теореме~\ref{ErgodicTh} и~замечания к ней существует предельное распределение, которое совпадает с единственным стационарным распределением, которое находится из решения
$\Lambda^\top \pi = 0$, $\sum_{j=0}^N \pi_j = 1$, $\pi_j \ge 0$. Легко проверить, что решением является неполное пуассоновское распределение: 
$$
\pi_j = \left ( \sum_{k=0}^N \frac{(\lambda/ \mu )^k}{k!}  \right)^{-1} \frac{(\lambda/ \mu )^j}{j!}.
$$

Можно этой задаче дать немного другую интерпретацию. Пусть серверы -- это служащие некоторой компании, заявки -- клиенты. Каждый обслуженный клиент (есть еще клиенты, которые если не могут сразу переговорить с представителем компании, то больше не приходят) приносит доход компании в $A$ \gav{рублей}. Заработная плата служащего составляет $B$ \gav{рублей} в час. Считается, что параметры $\lambda^{-1}$ и $\mu^{-1}$ имеют единицы измерения часы. Требуется найти оптимальное число служащих.

Будем считать, что директор компании рассчитывает доход своей фирмы в предположении длительной ее работы. Тогда согласно эргодической теоремы предельное распределение занятых служащих совпадает со стационарным, и число клиентов, обслуженных в течение часа, можно оценить, как $\sum_{j=1}^N j \pi_j$. Доход компании в течение часа составляет $J(N) = A \sum_{j=1}^N j \pi_j - B N$. Остается решить задачу оптимизации $J(N) \to \max$\gav{, учитывая, что $\pi_j := \pi_j(N)$.} \EndEx
\end{example}

\begin{example}\label{ex:buses}
Пусть на остановку приходят автобусы согласно пуас\-сновскому процессу с параметром $\mu$. Поток пассажиров, приходящих на остановку, тоже считается пуассоновским с параметром $\lambda$ и независящим от прихода автобусов. Предполагается, что автобус, подошедший к остановке, забирает всех пассажиров, ожидающих транспорт. Требуется найти предельное распределение числа пассажиров, ожидающих  на остановке автобус. Для решения задачи рассмотрим марковскую цепь $X(t)$, где $X(t)\in \{0,1,2,\ldots\}$ -- число людей на остановке. Стохастический граф, описывающий динамику цепи, представлен на рис.~\ref{fig:BusProblem}. 

\begin{figure}[!h]
	\centering
	\includegraphics[scale=0.58]{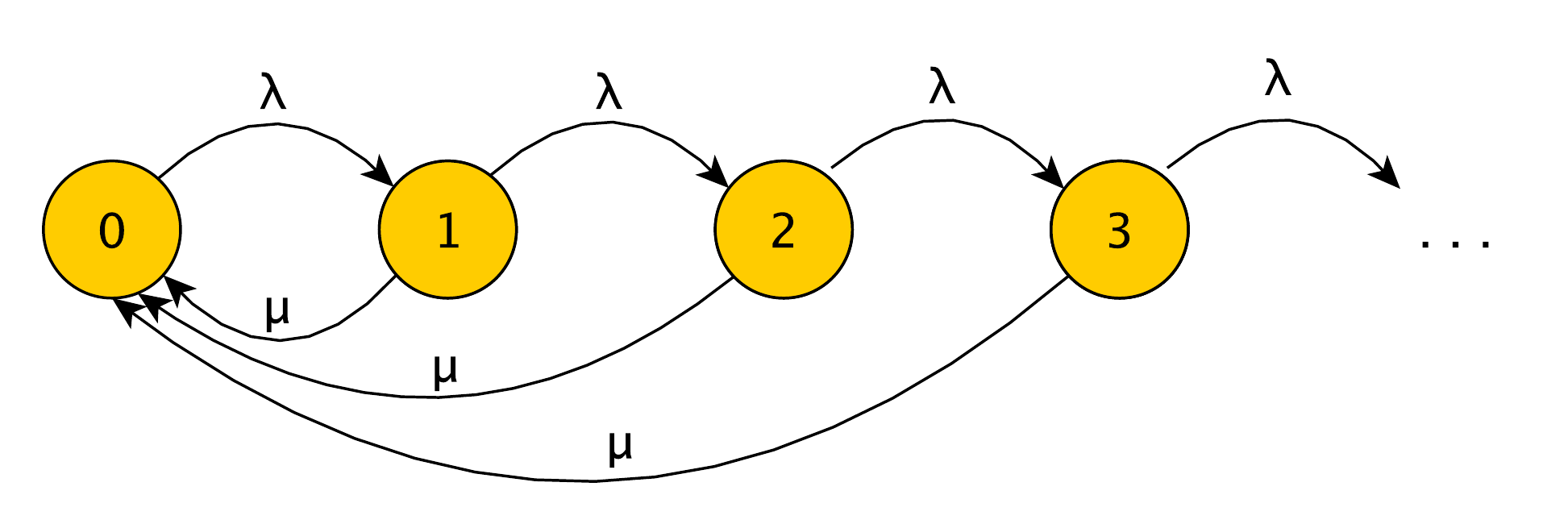}
	\caption{К примеру \ref{ex:buses}}
	\label{fig:BusProblem}
\end{figure}

Это пример счетной цепи. Легко проверить, что геометрическое распределение $\pi_j = \frac{\lambda + \mu }{ \mu } \left (  \frac{\lambda}{ \lambda + \mu }\right)^j$ является стационарным распределением. С учетом неразложимости оно будет единственным, и согласно теореме~\ref{ErgodicTh} это и будет предельным распределением. \EndEx
\end{example}


Очень важный класс цепей образуют \textit{процессы гибели и рождения}. По определению, это цепи со стохастическим графом, как на рис.~\ref{fig:ContChainMarkov_3}.

\begin{figure}[!h]
	\centering
	\includegraphics[scale=0.6]{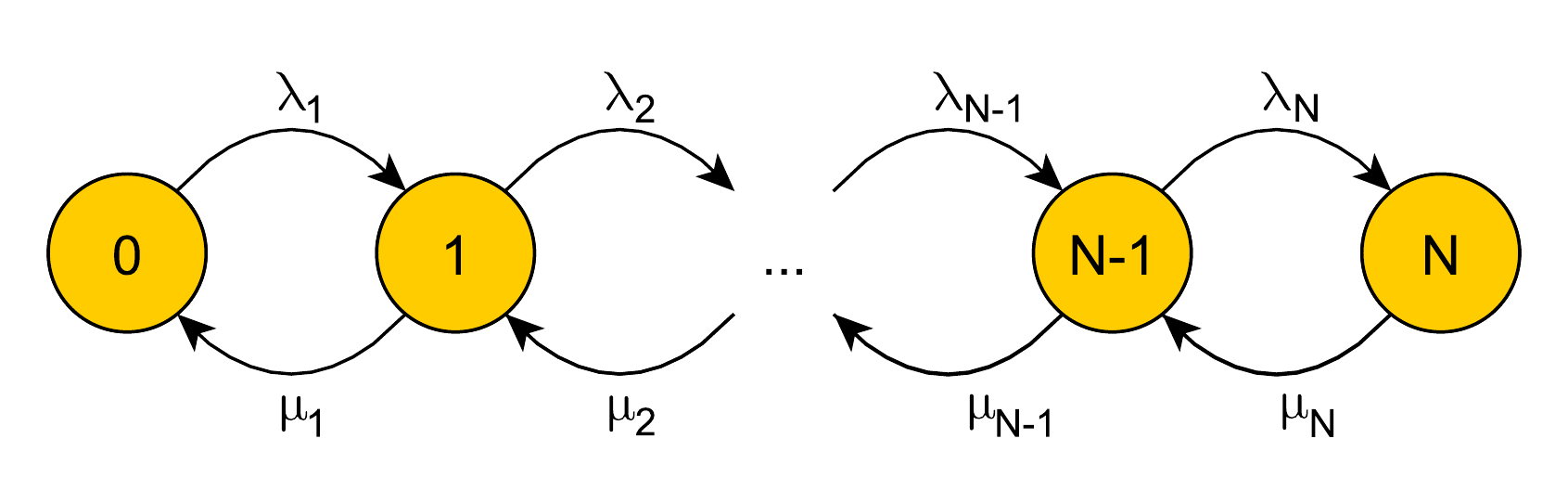}
	\caption{Стохастический граф процесса гибели и рождения}
	\label{fig:ContChainMarkov_3}
\end{figure}

Известны формулы для вычисления стационарного распределения состояний таких процессов: 
$$\pi_0 = \left(1 + \frac{\lambda_1}{\mu_1} + \frac{\lambda_1 \lambda_2}{\mu_1 \mu_2} + \dots \frac{\lambda_1\dots\lambda_N}{\mu_1\dots\mu_N} \right)^{-1},$$
\begin{center}
$\pi_k=\frac{\lambda_k}{\mu_k}\pi_{k-1}, \ k=1,\dots,N.$ \gav{\EndEx}
\end{center}

\subsection{Поведение цепи и время возвращения}

В заключение данного раздела приведем несколько более формальную сторону факта о том, как ведет себя непрерывная цепь Маркова при базовых предположениях.

Введем для состояния ${i\in E}$ на множестве ${\Omega_i=\{\omega:\xi(\omega,0) = i\}}$ случайную величину: $$\tau_i(\omega) = \inf\{t:t>0, \ \xi(\omega,t)\ne i\}.$$ Она называется \textit{моментом первого выхода} из состояния $i$. Тогда если ${\lambda_i=-\Lambda_{i,i}<\infty}$ (в конечных цепях в базовых предположениях это выполнено), то $$\mathbb{P}(\tau_i \ge t \,|\, \xi(0)=i)=e^{-\lambda_i t}, \ t \ge 0,$$ т.е. время, проведенное в каждом состоянии цепи, имеет показательное распределение с параметром $\lambda_i$. Поэтому среднее время пребывания в состоянии $i$ есть $\lambda_i^{-1}$.

Введем на $\Omega_i=\{\omega:\xi(\omega,0) = i\}$ случайную величину $$\eta_i(\omega)=\xi(\omega,\tau_i(\omega)).$$ Это состояние, в котором окажется цепь после прыжка из состояния $i$. Тогда в наших обозначениях $$\mathbb{P}(\eta_i = j\,|\,\xi(0)=i) = \frac{(1-\delta_{ij})\Lambda_{i,j}}{\Lambda_{i}} = p_{i,j},$$ где $\delta_{ij}$ -- символ Кронекера. Это  вероятность того, что после состояния $i$ цепь примет состояние $j$.

Введем теперь на множестве ${\Omega_i=\{\omega:\xi(\omega,0) = i\}}$ случайные величины: $$\alpha_{ij}(\omega)=\inf\{t: t>\tau_i(\omega), \ \xi(\omega,t)=j\}, \ j\in E.$$ Оказывается, что если состояние $i$ возвратное и для него ${0<\lambda_i<\infty}$ (все это выполнено для произвольного состояния в неразложимой конечной цепи), то $$\lim\limits_{t\to\infty}p_{i,i}(t)=\frac{1}{\Lambda_i \mathbb{E}(\alpha_{ii}\,|\,\Omega_i)}.$$ Этой формулой можно пользоваться для того, чтобы в \gav{сильно} эргодической цепи вычислить среднее время возвращения в состояние $i$, т.е. $$\mathbb{E}(\alpha_{ii}\,|\,\Omega_i)=\frac{1}{\Lambda_i \pi_i}.$$

\section{Непрерывные процессы Маркова}\label{sec:ContProcMarkov}

\subsection{Уравнения Колмогорова}

\begin{definition}
    Случайный процесс $X(t)$, ${t\in \R},$ называется \textit{марковским процессом}, если $\forall n\in \NN\; \forall t_1 < t_2 < \ldots < t_n < t_{n+1}\; \forall x_n\in\R$ и для любых борелевских множеств $B_1, B_2,\ldots, B_{n-1}, B_{n+1}$ выполняется
    $$
        \PP\left(X(t_{n+1})\in B_{n+1}\mid X(t_n)=x_n, X(t_{n-1})\in B_{n-1},\ldots, X(t_1)\in B_1\right)=$$$$
        = \PP\left(X(t_{n+1})\in B_{n+1}\mid X(t_n) = x_n\right).
    $$
\end{definition}

Говоря неформальным языком, данное определение можно интерпретировать следующим образом: марковский процесс~--- это случайный процесс, у которого <<будущее>> определяется только <<настоящим>> и не зависит от <<прошлого>>.

\begin{example}
    Любой процесс Леви является марковским процессом, что гарантирует свойство независимости приращений. В частности, винеровский и пуассоновский процессы являются марковскими. 
\end{example}

Оставшуюся часть данного раздела мы посвятим изучению мар\-ковских процессов с непрерывным временем и \textit{континуальным} числом состояний, а именно, мы приведем так называемые \shmaxg{\textit{первое} и \textit{второе уравнения Колмогорова}}, на которые можно смо\-треть, как на обобщения и следствия уравнений Колмогорова--Ф\egor{е}\-ллера и Колмогорова--Ч\gav{э}пмена.

Рассмотрим условную функцию распределения
\[
    F_X(x,t \,|\, x_0,t_0) = \PP\left(X(t) < x \,|\, X(t_0) = x_0\right)
\]
и условную плотность распределения (в предположении, что она существует)
\[
    p_X(x,t\,|\,x_0,t_0) = \frac{d}{dx}F_X(x,t\,|\,x_0,t_0).
\]
При выводе уравнения Колмогорова--Ф\egor{е}ллера мы пользовались формулой полной вероятности
\begin{equation*}
\begin{split}
    &\PP\left(X(t) = j\,|\, X(0) = i\right) \underset{t_1 <t}{=} \\
    &=\sum\limits_{k}\PP\left(X(t_1)=k\,|\, X(0) = i\right)\PP\left(X(t) = j\,|\, X(t_1) = k\right),
\end{split}
\end{equation*}
которая в случае однородных марковских цепей эквивалентна очевидному равенству
\[
    P^t = P^{t_1}P^{t-t_1},
\]
где $P$~--- матрица переходных вероятностей. Отметим, что момент времени $t_1$ можно выбрать произвольным из интервала $(0,t)$, а суммирование ведётся по всем возможным состояниям. Данный подход легко переносится и на случай континуального числа состояний: обуславливая функцию распределения по всем возможным состояниям $\shmaxg{v}$ в промежуточный момент времени $t_1$, получаем уравнение
\begin{eqnarray*}
    F_X(x,t\,|\,x_0,t_0) &=& \int\limits_\R F_X(x,t\,|\,\shmaxg{v},t_1)d_{\shmaxg{v}}F_X(\shmaxg{v},t_1\,|\,x_0,t_0)=\\
    &=& \int_\R F_X(x,t\,|\,\shmaxg{v},t_1)p_X(\shmaxg{v},t_1\,|\,x_0,t_0)\,d\shmaxg{v}.
\end{eqnarray*}
Дифференцируя левую и правую части предыдущего равенства по $x$, получаем непрерывный аналог уравнений Колмогорова--Ф\egor{е}ллера и Колмогорова--Ч\gav{э}пмена
\begin{equation}\label{eq:cont_kolmogorov}
    p_X(x,t\,|\,x_0,t_0) = \int\limits_\R p_X(x,t\,|\,\shmaxg{v},t_1)p_X(\shmaxg{v},t_1\,|\,x_0,t_0)\,d\shmaxg{v}.
\end{equation}
Эти уравнения на функции $F_X$ и $f_X$ называются в литературе \textit{обобщенными уравнениями Маркова}.

\begin{theorem}[ (первое уравнение Колмогорова)] \shmaxg{\textit{Пусть выполнены следующие условия:}}

\shmaxg{\textit{1) марковский случайный процесс $X(t)$ непрерывен в том смысле, что каково бы ни было постоянное $\delta>0$, имеет место соотношение}
\begin{equation}\label{eq13:Cond1}
    \lim\limits_{\Delta t \to 0}\frac{1}{\Delta t}\int_{|x-x_0|\ge\delta}d_xF_X(x,t\,|\,x_0,t_0-\Delta t)=0.
\end{equation}}

\shmaxg{\textit{2) частные производные}
$$\frac{\partial F_X(x,t\,|\,x_0,t_0)}{\partial x_0}, \ \frac{\partial^2 F_X(x,t\,|\,x_0,t_0)}{\partial x_0^2}$$ \textit{существуют и непрерывны при любых значениях $t_0$, $x_0$ и $t>t_0$.}}

\shmaxg{\textit{3) каково бы ни было $\delta>0$, существуют пределы}
\begin{equation}\label{eq13:Cond2}
    \lim\limits_{\Delta t \to 0}\frac{1}{\Delta t}\int_{|x-x_0|<\delta}(x-x_0)\,d_xF_X(x,t\,|\,x_0,t_0-\Delta t)=a(t,x),
\end{equation}
\begin{equation}\label{eq13:Cond3}
    \lim\limits_{\Delta t \to 0}\frac{1}{\Delta t}\int_{|x-x_0|<\delta}(x-x_0)^2\,d_xF_X(x,t\,|\,x_0,t_0-\Delta t)=b(t,x),
\end{equation}
\textit{и эта сходимость равномерна относительно $x_0$.}}

\shmaxg{\textit{Тогда функция $F_X(x,t\,|\,x_0,t_0)$ удовлетворяет уравнению}
\begin{equation}\label{eq13:FirstEqKolmF}
    \frac{\partial F_X(x,t\,|\,x_0,t_0)}{\partial t_0}=-a(t_0,x_0)\frac{\partial F_X(x,t\,|\,x_0,t_0)}{\partial x_0}-\frac{b(t_0,x_0)}{2}\frac{\partial^2 F_X(x,t\,|\,x_0,t_0)}{\partial x_0^2}.
\end{equation}}
\end{theorem}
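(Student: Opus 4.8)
The plan is to start from the generalized Markov equation~\eqref{eq:cont_kolmogorov}, written for distribution functions rather than densities, with the intermediate instant chosen to be $t_0$ itself, placed between the earlier instant $t_0-\Delta t$ and the terminal instant $t$:
$$
F_X(x,t\,|\,x_0,t_0-\Delta t) = \int_{\R} F_X(x,t\,|\,v,t_0)\, d_v F_X(v,t_0\,|\,x_0,t_0-\Delta t).
$$
Since $\int_{\R} d_v F_X(v,t_0\,|\,x_0,t_0-\Delta t) = 1$, subtracting $F_X(x,t\,|\,x_0,t_0)$ from both sides gives
$$
F_X(x,t\,|\,x_0,t_0-\Delta t) - F_X(x,t\,|\,x_0,t_0) = \int_{\R}\bigl[F_X(x,t\,|\,v,t_0) - F_X(x,t\,|\,x_0,t_0)\bigr]\, d_v F_X(v,t_0\,|\,x_0,t_0-\Delta t).
$$
Dividing by $\Delta t$ and letting $\Delta t\to 0$, the left-hand side tends to $-\,\partial F_X(x,t\,|\,x_0,t_0)/\partial t_0$ (the minus sign appears because the earlier time enters as $t_0-\Delta t$), so the whole problem reduces to identifying the limit of the right-hand side.

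The next step is to split the integral on the right into the region $|v-x_0|\ge\delta$ and the region $|v-x_0|<\delta$. On the far region the bracket $F_X(x,t\,|\,v,t_0)-F_X(x,t\,|\,x_0,t_0)$ is bounded in absolute value by $1$, so by the continuity hypothesis~\eqref{eq13:Cond1} this contribution, divided by $\Delta t$, tends to $0$. On the near region I would invoke hypothesis~2) and Taylor-expand $F_X(x,t\,|\,v,t_0)$ in its argument $v$ about $v=x_0$:
$$
F_X(x,t\,|\,v,t_0) - F_X(x,t\,|\,x_0,t_0) = (v-x_0)\frac{\partial F_X}{\partial x_0} + \frac{(v-x_0)^2}{2}\frac{\partial^2 F_X}{\partial x_0^2} + r(v,x_0),
$$
where the derivatives are evaluated at $(x,t\,|\,x_0,t_0)$ and the remainder obeys $|r(v,x_0)| = o\bigl((v-x_0)^2\bigr)$ as $v\to x_0$, by continuity of the second derivative. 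Substituting this expansion, integrating against $d_v F_X(v,t_0\,|\,x_0,t_0-\Delta t)$ over $|v-x_0|<\delta$ and dividing by $\Delta t$, the linear and quadratic terms converge by~\eqref{eq13:Cond2} and~\eqref{eq13:Cond3} to $a(t_0,x_0)\,\partial F_X/\partial x_0$ and $\tfrac12 b(t_0,x_0)\,\partial^2 F_X/\partial x_0^2$ respectively.

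The delicate point, and the step I expect to be the \emph{main obstacle}, is to show that the Taylor remainder contributes nothing in the limit. Here the argument I have in mind runs as follows: fix $\varepsilon>0$; by continuity of $\partial^2 F_X/\partial x_0^2$ choose $\delta$ so small that $|r(v,x_0)|\le \varepsilon(v-x_0)^2$ for all $|v-x_0|<\delta$; then the remainder term is bounded in absolute value by $\varepsilon\,\Delta t^{-1}\int_{|v-x_0|<\delta}(v-x_0)^2\, d_v F_X(v,t_0\,|\,x_0,t_0-\Delta t)$, which by~\eqref{eq13:Cond3} tends to $\varepsilon\, b(t_0,x_0)$; since $\varepsilon$ is arbitrary, the $\limsup$ of this term is $0$. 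The uniformity assumed in the convergence in~\eqref{eq13:Cond2}--\eqref{eq13:Cond3}, together with the fact (a consequence of~\eqref{eq13:Cond1}) that the truncated first and second moments have limits independent of the cut-off $\delta$, is precisely what makes this juggling of the three regions legitimate. Collecting the pieces yields
$$
-\frac{\partial F_X(x,t\,|\,x_0,t_0)}{\partial t_0} = a(t_0,x_0)\frac{\partial F_X(x,t\,|\,x_0,t_0)}{\partial x_0} + \frac{b(t_0,x_0)}{2}\frac{\partial^2 F_X(x,t\,|\,x_0,t_0)}{\partial x_0^2},
$$
which is exactly~\eqref{eq13:FirstEqKolmF} after transposing the two terms to the opposite side.
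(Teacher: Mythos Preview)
Your proof is correct and follows essentially the same route as the paper: Chapman--Kolmogorov with intermediate time $t_0$, subtraction of $F_X(x,t\,|\,x_0,t_0)$ via the ``умная единица'', splitting into $|v-x_0|\ge\delta$ and $|v-x_0|<\delta$, Taylor expansion on the near region, and the $\delta$-independence argument to kill the remainder. Your handling of the remainder via the explicit $\varepsilon$--$\delta$ bound is a slightly more detailed version of what the paper sketches in one sentence, but the logic is the same.
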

\begin{proof}
\shmaxg{Согласно обобщенному уравнению Маркова
$$F_X(x,t\,|\,x_0,t_0-\Delta t)=\int F_X(x,t\,|\,v,t_0)\,d_vF_X(v,t_0\,|\,x_0,t_0-\Delta t).$$ С другой стороны, если ввести <<умную единицу>>
$$F_X(x,t\,|\,x_0,t_0)=\int F_X(x,t\,|\,x_0,t_0)\,d_vF_X(v,t_0\,|\,x_0,t_0-\Delta t).$$ Из этих равенств заключаем, что
$$\frac{F_X(x,t\,|\,x_0,t_0-\Delta t)-F_X(x,t\,|\,x_0,t_0)}{\Delta t}=$$
$$=\frac{1}{\Delta t}\int (F_X(x,t\,|\,v,t_0)-F_X(x,t\,|\,x_0,t_0))\,d_vF_X(v,t_0\,|\,x_0,t_0-\Delta t).$$
Воспользуемся формулой Тейлора:
$$F_X(x,t\,|\,v,t_0)=F_X(x,t\,|\,x_0,t_0) + (v-x_0)\frac{\partial F_X(x,t\,|\,x_0,t_0)}{\partial x_0} + $$
$$+\frac{1}{2}(v-x_0)^2\frac{\partial^2 F_X(x,t\,|\,x_0,t_0)}{\partial x_0^2} + o((v-x_0)^2).$$ Далее все просто:
$$\frac{F_X(x,t\,|\,x_0,t_0-\Delta t)-F_X(x,t\,|\,x_0,t_0)}{\Delta t}=$$
$$=\frac{1}{\Delta t}\int_{|v-x_0|\ge\delta} (F_X(x,t\,|\,v,t_0)-F_X(x,t\,|\,x_0,t_0))\,d_vF_X(v,t_0\,|\,x_0,t_0-\Delta t)+$$
$$+\frac{1}{\Delta t}\int_{|v-x_0|<\delta} (F_X(x,t\,|\,v,t_0)-F_X(x,t\,|\,x_0,t_0))\,d_vF_X(v,t_0\,|\,x_0,t_0-\Delta t)=$$
$$=\frac{1}{\Delta t}\int_{|v-x_0|\ge\delta} (F_X(x,t\,|\,v,t_0)-F_X(x,t\,|\,x_0,t_0))\,d_vF_X(v,t_0\,|\,x_0,t_0-\Delta t)+$$
$$+\frac{\partial F_X(x,t\,|\,x_0,t_0)}{\partial x_0}\frac{1}{\Delta t}\int_{|v-x_0|<\delta}(v-x_0)\,d_vF_X(v,t_0\,|\,x_0,t_0-\Delta t)+$$
$$+\frac{1}{2}\frac{\partial^2 F_X(x,t\,|\,x_0,t_0)}{\partial x_0^2}\frac{1}{\Delta t}\int_{|v-x_0|<\delta}[(v-x_0)^2+o((v-x_0)^2)]\,d_vF_X(v,t_0\,|\,x_0,t_0-\Delta t).$$
Перейдем теперь к пределу при ${\Delta t\to0}$. Первое слагаемое правой части в силу~\eqref{eq13:Cond1} имеет своим пределом 0. Второе слагаемое, согласно~\eqref{eq13:Cond2}, в пределе равно $a(t_0,x_0)\partial F_X/\partial x_0$. Наконец, третье слагаемое может отличаться от $1/2 b(t_0,x_0)\partial^2 F_X/\partial x_0^2$ только на слагаемое, стремящееся к нулю при $\delta\to0$. Но так как левая часть последнего равенства от $\delta$ не зависит и указанные предельные значения от $\delta$ не зависят, то предел правой части существует и равен
$$a(t_0,x_0)\frac{\partial F_X(x,t\,|\,x_0,t_0)}{\partial x_0}+\frac{1}{2}b(t_0,x_0)\frac{\partial^2 F_X(x,t\,|\,x_0,t_0)}{\partial x_0^2}.$$ Отсюда мы заключаем о существовании предела:
$$\lim\limits_{\Delta t\to0}\frac{F_X(x,t\,|\,x_0,t_0-\Delta t)-F_X(x,t\,|\,x_0,t_0)}{\Delta t}=-\frac{\partial F_X(x,t\,|\,x_0,t_0)}{\partial t_0}.$$ Отсюда следует утверждение теоремы. \EndProof}
\end{proof}

\shmaxg{Если существует плотность распределения $p_X(x,t\,|\,x_0,t_0)$, то дифференцирование уравнения~\eqref{eq13:FirstEqKolmF} показывает, что она удовлетворяет уравнению
\begin{equation}\label{eq13:FirstEqKolmf}
    \frac{\partial p_X(x,t\,|\,x_0,t_0)}{\partial t_0}=-a(t_0,x_0)\frac{\partial p_X(x,t\,|\,x_0,t_0)}{\partial x_0}-\frac{b(t_0,x_0)}{2}\frac{\partial^2 p_X(x,t\,|\,x_0,t_0)}{\partial x_0^2}.
\end{equation}
Уравнение \eqref{eq13:FirstEqKolmF} или \eqref{eq13:FirstEqKolmf} и называется в литературе \textit{первым уравнением Колмогорова} или \textit{обратным уравнением Колмогорова}. Это дифференциальные уравнения в частных производных, независимыми переменными здесь являются переменные $x_0$ и $t_0$, а переменные $x$, $t$ играют роль фиксированных параметров. Эти уравнения были впервые строго доказаны А.Н. Колмогоровым.}

\begin{theorem}[ (второе уравнение Колмогорова)]
\shmaxg{\textit{Добавим к условиям 1)--3) предыдущей теоремы еще два условия:}}

\shmaxg{\textit{4) существует плотность распределения вероятностей}
$$p_X(x,t\,|\,x_0,t_0)=\frac{\partial F_X(x,t\,|\,x_0,t_0)}{\partial x}.$$}

\shmaxg{\textit{5) существуют непрерывные производные}
$$\frac{\partial p_X(x,t\,|\,x_0,t_0)}{\partial t}, \ \frac{\partial}{\partial x}[a(t,x)p_X(x,t\,|\,x_0,t_0)], \ \frac{\partial^2}{\partial x^2}[b(t,x)p_X(x,t\,|\,x_0,t_0)].$$}

\shmaxg{\textit{Тогда плотность $p_X(x,t\,|\,x_0,t_0)$ удовлетворяет уравнению}
\begin{equation}\label{eq13:SecondEqKolm}
   \frac{\partial p_X(x,t\,|\,x_0,t_0)}{\partial t}=-\frac{\partial}{\partial x}[a(t,x)p_X(x,t\,|\,x_0,t_0)] + \frac{1}{2}\frac{\partial^2}{\partial x^2}[b(t,x)p_X(x,t\,|\,x_0,t_0)].
\end{equation}}
\end{theorem}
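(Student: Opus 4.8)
The plan is to derive the forward equation from the generalized Markov equation~\eqref{eq:cont_kolmogorov} by testing against an arbitrary smooth function and transferring the derivatives onto $a p_X$ and $b p_X$ via integration by parts. Fix $x_0$, $t_0$ and treat them as parameters throughout; abbreviate $p(x,t)=p_X(x,t\,|\,x_0,t_0)$. Let $R(x)$ be an arbitrary twice continuously differentiable function with compact support, so that $R$, $R'$, $R''$ are bounded and vanish outside a fixed bounded set. The strategy is to compute $\dfrac{d}{dt}\displaystyle\int_{\mathbb{R}} R(x)p(x,t)\,dx$ in two different ways and compare.

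First I would apply~\eqref{eq:cont_kolmogorov} with intermediate time $t$ and final time $t+\Delta t$ to write
\[
\int_{\mathbb{R}} R(x)p(x,t+\Delta t)\,dx = \int_{\mathbb{R}}\Bigl(\int_{\mathbb{R}} R(x)\,p_X(x,t+\Delta t\,|\,v,t)\,dx\Bigr)p(v,t)\,dv .
\]
Then I would expand $R(x)=R(v)+(x-v)R'(v)+\tfrac12(x-v)^2R''(v)+o((x-v)^2)$ inside the inner integral, split the integration region into $|x-v|<\delta$ and $|x-v|\ge\delta$, and apply conditions~\eqref{eq13:Cond1}--\eqref{eq13:Cond3} (with the roles of the initial and final arguments played by $(v,t)$ and $(x,t+\Delta t)$). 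The tail $|x-v|\ge\delta$ contributes $o(\Delta t)$ by~\eqref{eq13:Cond1}, since $R$ is bounded; the linear and quadratic terms produce $\Delta t\,a(t,v)R'(v)$ and $\tfrac{\Delta t}{2}\,b(t,v)R''(v)$, plus remainders that vanish as $\delta\to 0$ and as $\Delta t\to 0$. This gives
\[
\int_{\mathbb{R}} R(x)\,p_X(x,t+\Delta t\,|\,v,t)\,dx = R(v)+\Delta t\Bigl[a(t,v)R'(v)+\tfrac12 b(t,v)R''(v)\Bigr]+o(\Delta t).
\]
Substituting back, subtracting $\int_{\mathbb{R}} R(x)p(x,t)\,dx=\int_{\mathbb{R}} R(v)p(v,t)\,dv$, dividing by $\Delta t$ and letting $\Delta t\to 0$, I obtain
\[
\int_{\mathbb{R}} R(x)\,\frac{\partial p(x,t)}{\partial t}\,dx = \int_{\mathbb{R}}\Bigl[a(t,v)R'(v)+\tfrac12 b(t,v)R''(v)\Bigr]p(v,t)\,dv .
\]

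Next I would integrate by parts on the right-hand side. Because $R$ has compact support, all boundary terms vanish; condition~5 guarantees that $\frac{\partial}{\partial x}[a(t,x)p(x,t)]$ and $\frac{\partial^2}{\partial x^2}[b(t,x)p(x,t)]$ exist and are continuous, so the manipulation is legitimate, and it yields
\[
\int_{\mathbb{R}} R(x)\,\frac{\partial p(x,t)}{\partial t}\,dx = \int_{\mathbb{R}} R(x)\Bigl[-\frac{\partial}{\partial x}\bigl(a(t,x)p(x,t)\bigr)+\frac12\frac{\partial^2}{\partial x^2}\bigl(b(t,x)p(x,t)\bigr)\Bigr]dx .
\]
Since $R$ is an arbitrary $C^2$ function of compact support, and both integrands are continuous in $x$ (again by condition~5 together with condition~4), the fundamental lemma of the calculus of variations forces them to coincide pointwise, which is exactly~\eqref{eq13:SecondEqKolm}.

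The main obstacle I expect is the careful bookkeeping of the error terms in the estimate producing the $o(\Delta t)$ term: one must control the Taylor remainder $o((x-v)^2)$ uniformly in $v$ and $x$ on $|x-v|<\delta$, and then argue that the $\delta$-dependent leftovers may be discarded only after taking the $\Delta t\to 0$ limit, because the left-hand side does not depend on $\delta$ while the candidate limit does not either — this is precisely where the uniformity of convergence postulated in~\eqref{eq13:Cond2}--\eqref{eq13:Cond3} and the continuity hypotheses of conditions~2 and~5 are indispensable. A secondary technical point is justifying differentiation under the integral sign and the interchange of the $\Delta t\to0$ limit with the outer integration in $v$; with test functions $R$ of compact support everything is confined to a fixed compact set, on which the required uniform bounds follow from the stated continuity and local boundedness.
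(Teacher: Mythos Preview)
The paper does not actually prove this theorem; it simply refers the reader to \cite[с.~301]{Gnedenko}. Your argument --- test against an arbitrary $C^2$ function of compact support, use the Chapman--Kolmogorov relation to expand the short-time increment via Taylor and conditions~\eqref{eq13:Cond1}--\eqref{eq13:Cond3}, then integrate by parts and invoke the fundamental lemma --- is exactly the classical proof given in Gnedenko, so your proposal matches the intended approach. Your closing remarks about the order of the $\Delta t\to 0$ and $\delta\to 0$ limits and the uniformity in $v$ are precisely the points that need care there.
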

\shmaxg{Доказательство этого утверждения можно найти в книге \cite[С. 301]{Gnedenko}. Уравнение~\eqref{eq13:SecondEqKolm} называется \textit{вторым уравнением Колмогорова} или \textit{прямым уравнением Колмогорова}. Это уравнение было также независимо получено физиками Фоккером и Планком, поэтому часто их называют также уравнениями Фоккера--Планка или уравнениями Колмо\-горова--Фоккера--Планка. В этих уравнениях независимыми переменными являются $x$ и $t$, а переменные $x_0$ и $t_0$ играют роль параметров.}

\shmaxg{Выясним теперь физический смысл коэффициентов $a(t,x)$ и $b(t,x)$. Для этого мы предположим вместо~\eqref{eq13:Cond1}, что при любом $\delta > 0$ имеет место соотношение
$$\lim\limits_{\Delta t \to 0}\frac{1}{\Delta t}\int_{|x-x_0|\ge \delta}(x-x_0)^2\,d_x F_X(x,t\,|\,x_0,t_0-\Delta t)=0.$$ Это более сильное предположение, из него следует~\eqref{eq13:Cond1}. Ограничения~\eqref{eq13:Cond2} и~\eqref{eq13:Cond3} в силу этого требования упрощаются до
$$\lim\limits_{\Delta t \to 0}\frac{1}{\Delta t}\int(x-x_0)\,d_xF_X(x,t\,|\,x_0,t_0-\Delta t)=a(t,x),$$
$$\lim\limits_{\Delta t \to 0}\frac{1}{\Delta t}\int(x-x_0)^2\,d_xF_X(x,t\,|\,x_0,t_0-\Delta t)=b(t,x),$$ т.е. теперь интегрирование осуществляется по всей числовой оси. Остальные ограничения не изменятся. Теперь видно, что
$$\int(x-x_0)\,d_xF_X(x,t\,|\,x_0,t_0-\Delta t)=$$$$=\mathbb{E}(X(t)-X(t-\Delta t)\,|\,X(t_0-\Delta t)=x_0)=\mathbb{E}(X(t)-X(t-\Delta t))$$ является математическим ожиданием изменения процесса за время $\Delta t$, а
$$\lim\limits_{\Delta t \to 0}\frac{1}{\Delta t}\int(x-x_0)^2\,d_xF_X(x,t\,|\,x_0,t_0-\Delta t)=\mathbb{E}(X(t)-X(t-\Delta t))$$ есть математическое ожидание квадрата изменения $X(t)$. Теперь предположим, что $X(t)$ -- это координата точки, движущейся под влиянием случайных воздействий. Тогда $a(t,x)$ есть средняя скорость изменения этой координаты, а $b(t,x)$ пропорциональна средней кинетической энергии.}

\shmaxg{\begin{example}
Например, у винеровского процесса
$$a(t,x)=\lim\limits_{\Delta t\to0}\frac{\mathbb{E}(W(t)-W(t-\Delta t))}{\Delta t}=0,$$
$$b(t,x)=\lim\limits_{\Delta t\to0}\frac{\mathbb{E}(W(t)-W(t-\Delta t))^2}{\Delta t}=\lim\limits_{\Delta t\to0}\frac{\Delta t}{\Delta t}=1.$$ Кроме того, у винеровского процесса существует условная плотность распределения $p_W(x,t\,|\,x_0,t_0)$ как плотность распределения невырожденного подвектора (из 1 компоненты) нормального вектора (из 2 компонент) и для этой функции, как можно проверить, все условия регулярности выполнены. Поэтому первым уравнением Колмогорова для винеровского процесса будет
$$\frac{\partial p_W(x,t\,|\,x_0,t_0)}{\partial t_0}=-\frac{1}{2}\frac{\partial^2 p_W(x,t\,|\,x_0,t_0)}{\partial x_0^2},$$ а вторым уравнением Колмогорова будет
$$\frac{\partial p_W(x,t\,|\,x_0,t_0)}{\partial t}=\frac{1}{2}\frac{\partial^2 p_W(x,t\,|\,x_0,t_0)}{\partial x^2}.$$ Легко убедиться, что функция
$$p_W(x,t\,|\,x_0,t_0) = \frac{1}{\sqrt{2\pi(t-t_0)}}\exp{\left(-\frac{(x-x_0)^2}{2(t-t_0)}\right)}$$ удовлетворяет обоим уравнениям. Эту функцию условной плотности распределения можно было вывести и независимым образом как функцию плотности подвектора нормального вектора. \EndEx
\end{example}}

Также заметим, что для выписанных уравнений в частных производных для возможности их решения необходимо еще задать начальные / граничные условия. Для прямого  уравнения задают следующее начальное условие (Коши):
$$p_X(x,t_0\,|\,x_0,t_0) = \delta(x - x_0),$$
где $\delta(x)$ -- дельта функция Дирака. Такое начальное условие вызывает сразу много вопросов, главный из которых: как понимать решение, если начальное условие -- обобщенная функция? Ключевую роль в ответе на этот вопрос играет линейность рассматриваемых уравнений и возможность их переписать в терминах $F_X(x,t_0\,|\,x_0,t_0)$~\cite{Gnedenko}. Другой важный вопрос: можно ли задавать, какие-то другие начальные условия, задаваемые произвольными функциями распределения? Ответ на этот вопрос для прямого уравнения положительный. Только тогда требуется изменить обозначения и вместо $p_X(x,t_0\,|\,x_0,t_0)$ или (в общем случае) $F_X(x,t_0\,|\,x_0,t_0)$ писать просто $p_X(x)$ или $F_X(x)$, не подчеркивая в виде функции начальное условие. 

\subsection[Процесс Ито и формула Дынкина]{Процесс Ито и формула Дынкина*}

В конце раздела~\ref{correlation} был введен стохастический диффузионный процесс Ито, который является важным представителем рассматриваемого в этой главе класса марковских процессов в непрерывном времени с несчетным (континуальным) множеством состояний. Далее конспективно (без доказательств) будет продемонстрирована связь этих процессов (Ито) с изложенными выше результатами.

    \begin{definition}
        \textit{Однородный по времени диффузионный процесс Ито}~--- это случайный процесс $X_t(\omega) = X(\omega, t):\Omega \times [0,+\infty) \to \R^n$, удовлетворяющий стохастическому дифференциальному уравнению вида
        \begin{equation}\label{eq:diff_ito_def2}
        dX_t = a(X_t)dt + \sigma(X_t)dW_t,\quad t\ge 0,\quad X_0 = x,
    \end{equation}
    где $W_t$~--- $n$-мерный винеровский процесс (в каждой компоненте которого независимая реализация винеровского процесса), а векторная и матричная функции $a:\R^n \to \R^n$ и $\sigma:\R^n \to \R^{n\times n}$ -- равномерно липшицевы.
    \end{definition}
     \begin{definition}\label{infinitesimal}
        \textit{Производящий} (\textit{инфинитезимальный}) оператор $A$ однородного по времени диффузионного процесса Ито $X_t$ определяется, как
        \begin{equation*}
            Af(x) = \lim\limits_{t\to 0+}\frac{\EE^{x}f(X_t) - f(x)}{t}, \ \EE^{x}f(X_t) = \EE\left(f(X_t)\,|\,X_0 = x\right), x\in \R^n,
        \end{equation*}
        если этот предел существует.
    \end{definition}
    
    \begin{definition}
    Случайная величина $\tau$ называется \textit{моментом остановки} (относительно случайного процесса ${X_t = X(t)}$), если для любого  $t \ge 0$ событие $\left\{\tau\le
    t \right\}$ измеримо относительно сигма-алгебры, порожденной
    $\left\{X_s\right\}_{0\le s \le t}$.
    \end{definition}
    
    В простейшем случае $\tau = t$ -- не случайная величина. Нетривиальным примером является первый момент выхода случайного процесса из множества $D$:
    \begin{equation}
    \label{tau_D}
         \tau_D = \inf\left\{t\ge 0: X_t \notin D \right\}.
    \end{equation}

    Следующая теорема является аналогом основной теоремы из математического анализа о связи интеграла и производной.
    \begin{theorem}[ (формула Дынкина)]
        \textit{Пусть $X_t$~-- однородный по времени диффузионный процесс Ито, $f$~-- дважды непрерывно дифференцируемая функция с компактным носителем, $\tau$~-- момент остановки, причём $\EE^{x}\tau < \infty$. Тогда справедлива формула}
        \begin{equation}\label{eq:dynkin_formula}
            \EE^{x}\left(f(X_\tau)\right) = f(x) + \EE^x\left(\int\limits_{0}^\tau Af(X_s) ds\right).
        \end{equation}
    \end{theorem}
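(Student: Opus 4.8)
Формула Дынкина~\eqref{eq:dynkin_formula} является следствием формулы Ито и теоремы об остановленном мартингале (опциональная теорема Дуба). План состоит в следующем. Сначала я бы применил многомерную формулу Ито к процессу $Y_t = f(X_t)$, где $X_t$ удовлетворяет~\eqref{eq:diff_ito_def2}. В многомерном случае формула Ито даёт
\begin{equation*}
    df(X_t) = \sum_{i=1}^n \frac{\partial f}{\partial x_i}(X_t)\,dX_t^{(i)} + \frac{1}{2}\sum_{i,j=1}^n \frac{\partial^2 f}{\partial x_i \partial x_j}(X_t)\,dX_t^{(i)}dX_t^{(j)},
\end{equation*}
где при вычислении квадратичной вариации используются правила $dt\,dt = 0$, $dt\,dW_t^{(k)} = 0$, $dW_t^{(i)}dW_t^{(j)} = \delta_{ij}\,dt$. Подставляя $dX_t = a(X_t)\,dt + \sigma(X_t)\,dW_t$, получаем, что $dX_t^{(i)}dX_t^{(j)} = (\sigma(X_t)\sigma^\top(X_t))_{ij}\,dt$, откуда в интегральной форме
\begin{equation*}
    f(X_t) = f(x) + \int_0^t Af(X_s)\,ds + \int_0^t \nabla f(X_s)^\top \sigma(X_s)\,dW_s,
\end{equation*}
где $Af(x) = \sum_i a_i(x)\frac{\partial f}{\partial x_i}(x) + \frac{1}{2}\sum_{i,j}(\sigma(x)\sigma^\top(x))_{ij}\frac{\partial^2 f}{\partial x_i \partial x_j}(x)$ — именно тот производящий оператор из определения~\ref{infinitesimal} (этот факт — что предел в определении~\ref{infinitesimal} совпадает с указанным дифференциальным выражением — я бы либо процитировал, либо кратко вывел, взяв $t\to 0+$ в только что полученном тождестве и пользуясь тем, что стохастический интеграл имеет нулевое математическое ожидание).

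Следующий шаг — взять математическое ожидание $\EE^x$ от обеих частей при $t$, заменённом на момент остановки $\tau$. Здесь ключевой момент: стохастический интеграл $M_t = \int_0^t \nabla f(X_s)^\top \sigma(X_s)\,dW_s$ является мартингалом (относительно естественной фильтрации винеровского процесса), поскольку подынтегральная функция неупреждающая и квадратично интегрируема — последнее обеспечивается компактностью носителя $f$ (а значит, ограниченностью $\nabla f$) и равномерной липшицевостью $\sigma$, что даёт линейный рост $\sigma$ и, вместе с конечностью $\EE^x\tau$, интегрируемость. По теореме Дуба об остановке (optional stopping) для мартингала с равномерно интегрируемым остановленным процессом имеем $\EE^x M_\tau = \EE^x M_0 = 0$. Поэтому после взятия математического ожидания от остановленного тождества
\begin{equation*}
    \EE^x f(X_\tau) = f(x) + \EE^x\!\left(\int_0^\tau Af(X_s)\,ds\right) + \underbrace{\EE^x M_\tau}_{=0},
\end{equation*}
что и есть~\eqref{eq:dynkin_formula}.

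Основное препятствие — это аккуратное обоснование двух предельных/технических переходов. Во-первых, нужно строго проверить, что остановленный стохастический интеграл $M_{t\wedge\tau}$ действительно равномерно интегрируем (или хотя бы что $\EE^x M_\tau = 0$): стандартный приём — сначала работать с ограниченными моментами остановки $\tau\wedge N$, для которых всё тривиально, а затем перейти к пределу $N\to\infty$, используя $\EE^x\tau<\infty$, теорему о мажорируемой сходимости для члена $\int_0^{\tau\wedge N}Af(X_s)\,ds$ (здесь $Af$ ограничена в силу компактности носителя $f$ и непрерывности коэффициентов) и оценку второго момента $\EE^x M_{\tau\wedge N}^2 = \EE^x\int_0^{\tau\wedge N}|\nabla f(X_s)^\top\sigma(X_s)|^2\,ds$, равномерно ограниченную. Во-вторых — менее существенный, но требующий внимания — момент: связать дифференциальный оператор $Af$ из формулы Ито с пределом из определения~\ref{infinitesimal}; это делается подстановкой $\tau=t$ (детерминированный момент остановки) в уже доказанное для ограниченных $t$ тождество, делением на $t$ и переходом $t\to 0+$ с использованием непрерывности траекторий $X_s$ и непрерывности $Af$. Остальные шаги — рутинное применение формулы Ито и линейности математического ожидания.
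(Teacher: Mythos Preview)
The paper does not actually prove this theorem: the section explicitly states that results are presented ``конспективно (без доказательств)'', and after the statement of the Dynkin formula the text simply cites \cite{Oksendal2003} for the explicit form of $Af$ and moves on to applications. So there is nothing to compare your argument against in the paper itself.

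That said, your outline is the standard proof (essentially the one in Øksendal, which the paper cites): apply the multidimensional It\^o formula to $f(X_t)$, identify the drift part as $Af(X_s)\,ds$, observe that the stochastic integral $M_t=\int_0^t(\nabla f)^\top\sigma\,dW_s$ is a true martingale because $\nabla f$ has compact support (hence $(\nabla f)^\top\sigma$ is bounded), then pass from deterministic $t$ to the stopping time $\tau$ via truncation $\tau\wedge N$ and dominated convergence, using that $Af$ is bounded and $\EE^x\tau<\infty$. Your identification of the two technical points --- equality $\EE^x M_\tau=0$ via uniform integrability of $M_{t\wedge\tau}$, and the match between the limit definition of $A$ and the differential expression --- is accurate, and the remedies you sketch are the right ones. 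One small sharpening: the uniform bound on $\EE^x M_{\tau\wedge N}^2$ follows directly from boundedness of $(\nabla f)^\top\sigma$ (since $f$ has compact support and $\sigma$ is continuous, hence bounded on that support) together with $\EE^x\tau<\infty$, so you do not need the linear-growth argument for $\sigma$ at this step.
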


Пусть $X_t$ удовлетворяет стохастическому дифференциальному ура\-внению Ито \eqref{eq:diff_ito_def2}.
Тогда по теореме 7.3.3~\cite{Oksendal2003}
$$Af = \sum_{i}a_i\frac{\partial f}{\partial x_i} + \frac{1}{2}\sum_{i,j}\left(\sigma\sigma^T\right)_{ij}\frac{\partial^2 f}{\partial x_i \partial x_j}.$$

Если $f$ -- дважды гладкая функция с компактным носителем (отлична от нуля на ограниченном множестве), то согласно формуле Дынкина можно ввести функцию
\begin{equation}
\label{u_def}
    u(t,x) = \EE^x\left(f(X_t)\right),
\end{equation}
$$u(0,x) = f(x),$$
которая будет удовлетворять  следующему уравнению:
$$\frac{\partial u}{\partial t} = \EE^x\left(Af(X_t)\right),$$
что в свою очередь можно переписать следующим образом (аналог обратного уравнения Колмогорова(--Фоккера--Планка)):
$$\frac{\partial u}{\partial t} = \EE^x\left(Af(X_t)\right) = A\EE^x\left(f(X_t)\right) = Au,$$
см. теорему 8.1.1~\cite{Oksendal2003}.

Заметим также, что в введенных ранее обозначениях $$p_X(x,t\,|\,x_0,t_0=0)=p_X(x,t\,|\,x_0)$$ имеем
$$\EE^{x_0}\left(f(X_t)\right) = \int\limits_{\mathbb{R}^n} f(x)p_X(x,t\,|\,x_0)dx.$$
Отсюда по формуле Дынкина получим
$$\int\limits_{\mathbb{R}^n}f(x)p_X(x,t\,|\,x_0)dx = f(x_0) + \int\limits_{0}^t\int\limits_{\mathbb{R}^n}Af(x)p_X(x,s\,|\,x_0)dxds.$$
Если продифференцировать это тождество по $t$ и воспользоваться равенством (в $L_2$) 
$$\langle A\phi, \psi\rangle = \langle \phi, A^*\psi\rangle,$$
где функции $\phi,\psi$ дважды гладкие и имеют компактный носитель, а 
$$A^*\phi = -\sum_{i}\frac{\partial (a_i \phi)}{\partial x_i} + \frac{1}{2}\sum_{i,j}\frac{\partial^2 (\sigma_{ij} \phi)}{\partial x_i \partial x_j},$$
то получим прямое уравнение Колмогорова(--Фоккера--Планка)
$$\frac{\partial p_X(x,t\,|\,x_0)}{\partial t} = A^{*}p_X(x,t\,|\,x_0).$$

В заключении этого раздела, следуя~\cite[глава 9]{Oksendal2003}, покажем, как с помощью диффузионных процессов Ито решать краевые задачи для эллиптических уравнений: 
\begin{center}
$Au = 0$ в $D$,  \\
$u = g$ на $\partial D$
\end{center}
с оператором $A$, определенным ранее. Рассмотрим соответствующий $ A$, диффузионный процесс $X_t$ \eqref{eq:diff_ito_def2}. Положим подобно \eqref{u_def} 
\begin{equation}\label{ux}
    u(x) = \EE^x\left(g(X_{\tau_D})\right),
\end{equation}
где $\tau_D$ определяется согласно \eqref{tau_D}. Тогда по формуле Дынкина с $f(x) =$\linebreak $= u(x)$
при $x\in D$ и $0\le t\le\tau_D$: 
$$\EE^{x}\left(u(X_t)\right) = u(x) + \EE^x\left(\int\limits_{0}^t Au(X_s) ds\right).$$

Из определения~\eqref{ux} следует, что 
$$\EE^{x}\left(u(X_t)\right) = u(x).$$
Таким образом, для $0\le t\le\tau_D$
$$\EE^x\left( Au(X_t)\right) = 0.$$
Отсюда при некоторых дополнительных предположениях (матрица $\sigma$ -- положительно определенная) уже можно получить, что
$$Au=0.$$ Осталось только заметить, что $u(x) = g(x)$ при $x \in \partial D$.

Следовательно, на формулу \eqref{ux} можно смотреть, как на способ представления решения рассматриваемой краевой задачи для эллиптического уравнения.  Представление \eqref{ux} можно использовать для численного вычисления решения. Для этого траектории диффузионного процесса заменяются соответствующими случайными блужданиям (с соотношением размера скачков с шагом по времени соответствующим диффузионному скейлингу из раздела \ref{Winer}), выпущенными из точки, в которой хочется оценить решение. Траектории блужданий отслеживаются до первого момента  попадания на границу $\partial D$. Если взять достаточно много таких траекторий и считать, что масштаб скачков в блужданиях достаточно мал, то среднее арифметическое полученных значений функции $g$ в точках $\partial D$, в которые пришли траектории, дает хорошую оценку оцениваемого значения (метод Монте-Карло).

В заключение  заметим, что здесь мы в основном исследовали марковские процессы в непрерывном времени с континуальным числом состояний на всем пространстве. Последний пример демонстрирует важность рассмотрения марковских процессов, <<живущих>> внутри множеств с поглощающей границей. В целом следует отметить, что в приложениях большой интерес вызывают марковские процессы (в~непрерывном и дискретном времени), которые живут на компактных множествах. В этом случае существование условной  плотности вероятностей $p_X(x,t\,|\,x_0,t_0)$ обеспечивает эргодичность процесса. Это утверждение является аналогом эргодических теорем из предыдущих разделов -- наличие плотности обеспечивает единственность стационарной меры, а компактность носителя -- условие положительной возвратности, что достаточно для эргодичности в общем случае~\cite{Malyshev}. Приведем пару примеров.

В приложениях методов Монте-Карло важную роль играет возможность генерировать точки, равномерно распределенные в заданном конечномерном \ag{выпуклом (это условие можно ослабить)} связном компакте в предположении, что задан граничный оракул для рассматриваемого компакта. Последнее означает, что для любой прямой такой оракул выдает отрезок из этой прямой, по которому прямая пересекается с компактом. С помощью такого оракула строится марковский процесс в дискретном времени с непрерывным множеством состояний (точки компакта): в текущем положении процесса равновероятно и независимо выбирается направление, это направление и текущее положение задают уравнение прямой; граничный оракул по этому уравнению выдает отрезок; на отрезке равновероятно и независимо выбирается новая точка, которая принимается за новое состояние процесса. Несложно понять, что инвариантной (стационарной) мерой здесь будет равномерная мера на компакте. Таким образом, после достаточно большого числа шагов (полиномиально зависящего от размерности пространства) можно гарантировать, что положение описываемого процесса будет с хорошей точностью равновероятно распределено на компакте, независимо от точки старта. В~действительности, по ходу итераций гененируется много точек, с похожими свойствами. Проблема только в том, что они зависимы между собой. Однако корреляционная функция этого процесса (и любого другого марковского процесса в условиях эргодичности) экспоненциально убывает с увеличением числа шагов между сечениями\footnote{Показатель убывания пропорционален spectral gap -- расстоянию между максимальным собственным значением оператора, отвечающего за переходные вероятности (это число всегда равно 1), и следующим (по величине модуля) собственным значением.}, поэтому при должном прореживании точек, генерируемых описанным процессом, можно получить не одну (конечную), а целый набор почти независимых точек, распределенных почти равномерно в заданном компакте. Описанный здесь алгоритм генерирования точек называется \textit{Hit and Run}~\cite{Lovasz}.

Второй пример связан с популярной в последние десятилетия теорией \textit{случайных матриц}, см., например,~\cite{Vershynin,Vershynin2}. Пусть матрицы $$G_1,\dotsc,G_n,\dotsc$$ независимы и одинаково распределены (с распределением, имеющим плотность $\pi_G$ \gav{с финитным носителем}). Требуется доказать, что существует (максимальный показатель Ляпунова): \begin{equation*}\label{lmax}
\lambda_{\max} = \lim_{n\to\infty} \frac{1}{n} \ln \| G_n\cdot\dotsc\cdot G_1\|_2.
\end{equation*}
\gav{Введем  $Y_n = G_n\cdot\dotsc\cdot G_1 X_0$. Положим  $X_n = Y_n/\|Y_n\|_2$. Заметим, что $\{X_k\}$ -- дискретный марковский процесс, множества состояний которого -- единичная сфера (компактное множество).} Далее следует заметить (см. п. 3.2~\cite{Malyshev}), что $$\lambda_{\max} = \lim_{n\to\infty} \frac{1}{n} \sum\limits_{k=1}^n \ln \| G_k X_{k-1}\|_2.$$
\gav{При сделанных предположениях} $\{X_k\}$ -- эргодический марковский процесс, \gav{поэтому}
$$\lambda_{\max} = \EE_{\pi_G,\pi}\left(\ln\|GX\|_2\right),$$
где случайная матрица $G$ распределена согласно плотности $\pi_G$, а независимый от нее случайный вектор $X$ имеет плотность распределения $\pi$\gav{, где $\pi$ -- стационарное распределение для $\{X_k\}$.}





\begin{appendix}
    \renewcommand\thesection{\Asbuk{section}}
    
    \newpage

\section*{\Large\centering ПРИЛОЖЕНИЯ}\addcontentsline{toc}{section}{ПРИЛОЖЕНИЯ\vspace{-1.5mm}} 	
\section[Модель Эренфестов\vspace{-1mm}]{Модель Эренфестов}
\label{EhrenfestModel}

\gav{В этом разделе на простейшем примере (модель Эренфестов) мы постараемся продемонстрировать основы теории макросистем. Более подробно о теории макросистем будет написано в следующем разделе.}

Различают две модели Эренфестов: дискретную~\gav{\cite{KelbertSukhov2010}} и непрерывную \elena{\cite[раздел~6,~задача~1]{StochAn2016}}. 

\begin{definition}
\textit{Непрерывной цепью Эренфестов} будем называть непрерывную цепь Маркова со следующим стохастическим графом\elena{, задающим инфинитезимальные вероятности переходов}:
\end{definition}

\begin{figure}[!h]
	\centering
	\includegraphics[scale=0.6]{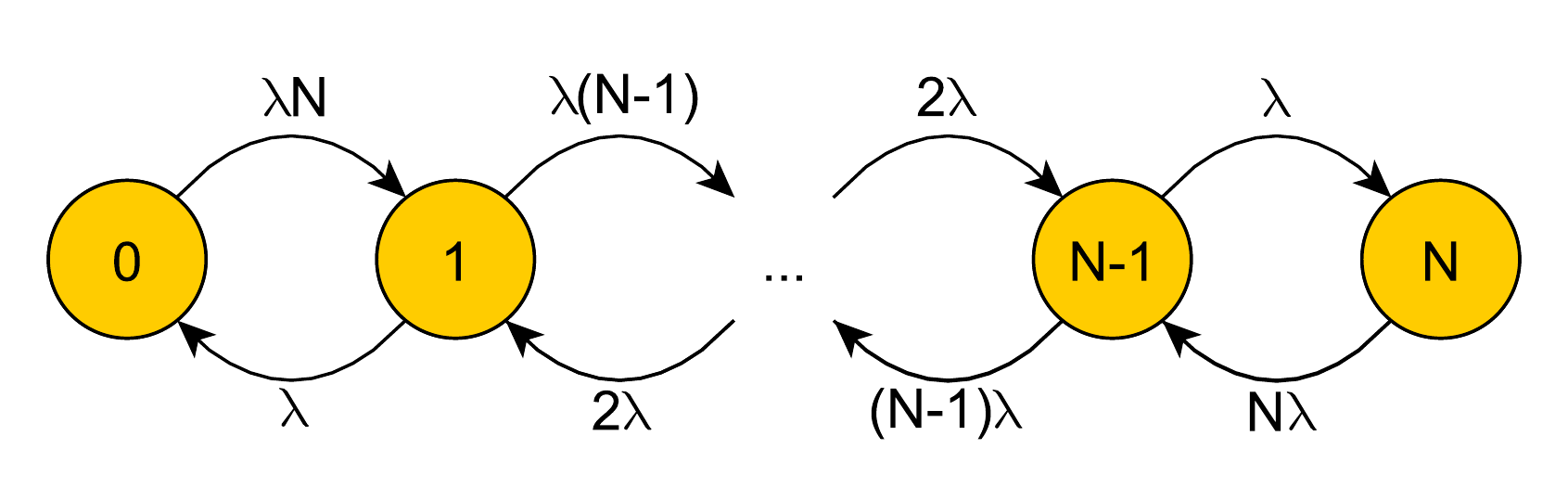}
\end{figure}

\elena{
\begin{definition}
\textit{Дискретной цепью Эренфестов} будем называть дискретную цепь Маркова со следующим стохастическим графом, задающим вероятности переходов:
\end{definition}
}

\begin{figure}[!h]
    \centering
    \includegraphics[scale=0.6]{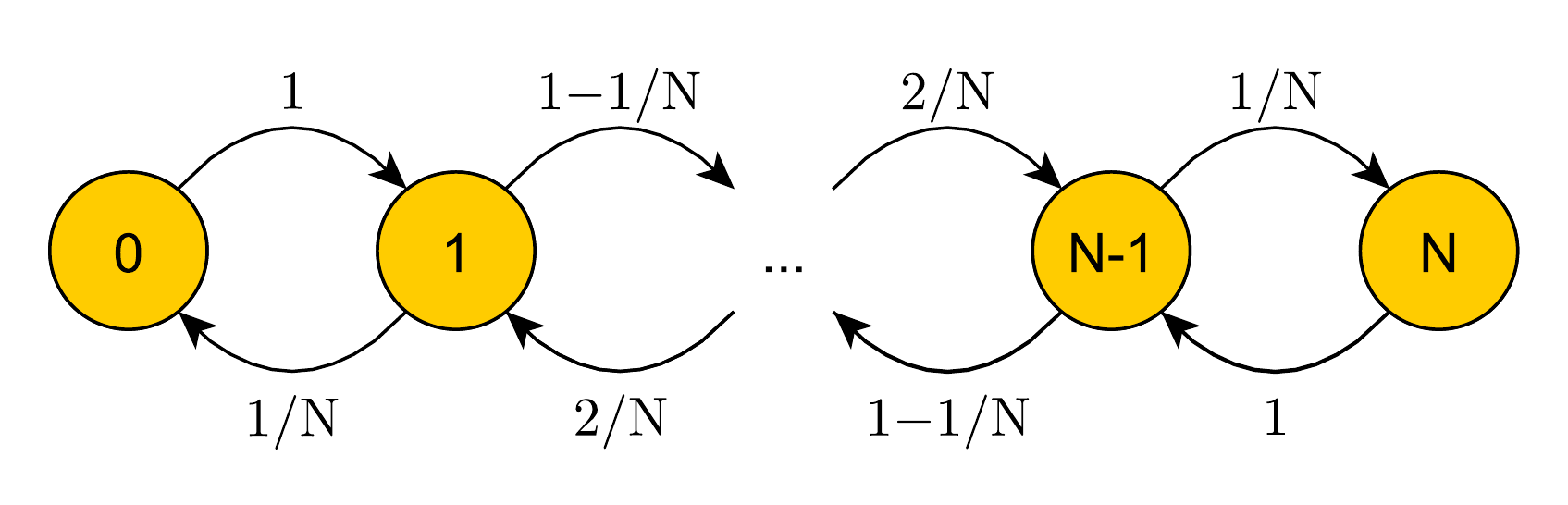}
\end{figure}


\elena{В этом разделе нашей целью будет  анализ непрерывной модели Эренфестов, но поскольку (как несложно убедиться) дискретная цепь Эренфестов является цепью скачков (см. определение~\ref{jump chain}) для непрерывной цепи Эренфестов, то нам потребуется и дискретная модель.}

Непрерывная цепь Эренфестов $\ag{\xi(t)}$\elena{, $t\ge 0$,} имеет конечное число состояний $N+1$, причем интенсивности перехода между состояниями определяются формулами:
$$\lambda_{i,i+1}=\lambda (N-i), \ 0 \le i \le N-1,$$
$$\lambda_{i,i-1} = \lambda i, \ 1 \le i \le N,$$
где $\lambda > 0$, и $\lambda_{i,j}=0$ во всех остальных случаях.

\elena{Дискретная цепь Эренфестов $X_n$, $n=0,1,2,\ldots$, также имеет конечное число состояний $N+1$, причем вероятности перехода между состояниями определяются формулами:
$$p_{i,i+1}=\frac{N-i}{N}=1-\frac{i}{N}, \ 0 \le i \le N-1,$$
$$p_{i,i-1} =\frac{i}{N}, \ 1 \le i \le N,$$
и $p_{i,j}=0$ во всех остальных случаях.}

\elena{П. и Т. Эренфесты в 1907 году предложили данную модель для описания диффузии через мембранную перегородку в сосуде или теплообмена между двумя изолированными телами. Мы будем придерживаться немного другой (шутливой) интерпретации  данной цепи  (см. \cite[раздел~6,~задача~1]{StochAn2016}). Две собаки сидят рядом друг с другом и страдают от $N$ блох. Каждая блоха независимо от остальных в течение малого промежутка времени длины $h$ перескакивает на соседнюю собаку с вероятностью $\lambda h + o(h)$. Тогда $\xi(t)$ -- число блох на первой собаке в момент времени $t$, $N-\xi(t)$ -- число блох на второй собаке в момент времени $t$.    }

\begin{example}\label{eq:EhrenfestProb1}
Найти для непрерывной цепи Эренфест\elena{ов} стационарное распределение, среднее время возвращения в состояние 0. Оценить предельную (по времени и количеству состояний $N$) вероятность отклонения состояния цепи от $N/2$. \elena{Оцените время выхода на стационарный режим.}
\end{example}
\textbf{Решение}. Данная цепь является процессом гибели и рождения, в которой интенсивности ${\lambda_k = \lambda (N-k+1)}$ и $\mu_{k} = \lambda k$, $k=1,\dots,N$. Для вычисления стационарного распределения сначала заметим, что $$\frac{\lambda_1\dots\lambda_k}{\mu_1\dots\mu_k}=\frac{N(N-1)\dots(N-k+1)}{k!}=C_N^k.$$ Отсюда получаем $$\pi_0=\left( 1 + C_N^1 + \dots + C_N^N \right)^{-1}=2^{-N}.$$ Далее получаем $\pi_k = 2^{-N}C_N^k$, $k=1,\dots,N$. Итак, для произвольного состояния $$\pi_k = 2^{-N} C_N^k = C_N^k \left(\frac{1}{2}\right)^{k} \left(\frac{1}{2}\right)^{N-k}, \ k=0,\dots,N.$$ 

\elena{ Отметим, что данный результат можно получить, рассмотрев дискретную модель Эренфестов, найти для неё стационарное распределение (см. пример~\ref{Erenfest invar}) $\tilde\pi_k = 2^{-N} C_N^k$ и воспользоваться замечанием~1 в конце раздела~\ref{sec:Classification ContMCh}.}

\elena{Заметим, что стационарное распределения является биномиальным распределением}  с параметрами $N$ и $p=1/2$, т.е. $\textrm{Bi}(N,1/2)$. \elena{Иначе говоря, стационарное распределение соответствует ситуации, когда все $N$ блох независимо и  равновероятно распределились по обеим собакам (классическая схема Бернулли).} При больших значениях $N$ \elena{согласно теореме Муавра--Лапласа} это распределение приближенно совпадает с распределением $\mathrm{N}(N/2,N/4)$. Отсюда следует, что $$\lim\limits_{N\to\infty}\lim\limits_{t\to\infty}\mathbb{P}\left( \elena{2} \frac{|\xi(t)-N/2|}{\sqrt{N}} \le \mathrm{N}_{\frac{1+\gamma}{2}}(0,1) \right) = \elena{\int\limits_{-\mathrm{N}_{\frac{1+\gamma}{2}}(0,1)}^{\mathrm{N}_{\frac{1+\gamma}{2}}(0,1)}\frac{e^{-\frac{u^2}{2}}}{\sqrt{2\pi}}\,du=}\gamma, $$ где $\mathrm{N}_{\frac{1+\gamma}{2}}(0,1)$ -- это $(1+\gamma)/2$-квантиль распределения $\mathrm{N}(0,1)$. Например, если $\gamma=0.99$, то $\mathrm{N}_{\frac{1+\gamma}{2}}(0,1)=2.5758$. \elena{Итого, если время $t$ и число блох $N$ достаточно велики, то блохи почти поровну распределятся на собаках (поскольку $2|\xi(t) - N/2 |$ -- модуль  разности числа блох на собаках), при этом флуктуации не велики, а именно $O(\sqrt{N})$. На самом деле, этот результат является только предельным и не даёт скорости сходимости.}

\elena{Для получения более аккуратных оценок рассмотрим сначала дискретную цепь скачков $X_n$, $n=0,1,2,\ldots$ Обозначим математическое ожидание разности числа блох на первой и второй собаке в момент времени $n$ как
$$
a_n = \Exp (2X_n - N).
$$
Применив формулу полной вероятности, получаем, что
\begin{equation*}
    \begin{split}
        &\Exp X_n = \sum_{k=0}^N \Exp [X_n|X_{n-1}=k ]p_{k}(n-1) = \\
        &=\sum_{k=0}^N\left((k-1) \frac{k}{N} +(k+1) (1-\frac{k}{N})  \right) p_{k}(n-1) = \\
        &=\left( 1-\frac{2}{N}\right)\sum_{k=0}^N k p_{k}(n-1) + 1 = \left( 1-\frac{2}{N}\right)\Exp X_{n-1} + 1,
    \end{split}
\end{equation*}
откуда
\begin{equation*}
    \begin{split}
        &a_n = 2\Exp X_n - N = 2 \left( 1-\frac{2}{N}\right)\Exp X_{n-1} + 2-N=\\
        &=\left( 1-\frac{2}{N}\right)\left(2\Exp X_{n-1}-N \right) = \left( 1-\frac{2}{N}\right) a_{n-1},
    \end{split}
\end{equation*}
а значит
$$
a_n = \left( 1-\frac{2}{N}\right)^n a_{0}.
$$
}

\elena{ Таким образом, первый момент для случайной величины, равной разности числа блох между собаками, с экспоненциальной скоростью стремится к нулю, однако, чем больше $N$, тем медленнее. Это утверждение имеет место для случая, когда $a_0\neq 0$. Иначе, когда $a_0 = 0$, то есть в начальный момент времени распределение блох между собаками симметрично, то очевидно, что в последующие моменты времени распределение блох между собаками будет также симметричным, что даст $a_n = 0$ для любого $n$.}

\elena{
Проведем аналогичные вычисления для второго момента.
Обозначим математическое ожидание квадрата разности числа блох на первой и второй собаке в момент времени $n$ как
$$
b_n = \Exp (2X_n - N)^2.
$$
Применив формулу полной вероятности, получаем, что
\begin{equation*}
    \begin{split}
        &\Exp X_n^2 = \sum_{k=0}^N \Exp [X_n^2|X_{n-1}=k ]p_{k}(n-1) = \\
        &=\sum_{k=0}^N\left((k-1)^2 \frac{k}{N} +(k+1)^2 (1-\frac{k}{N})  \right) p_{k}(n-1) = \\
        &=\left( 1-\frac{4}{N}\right)\Exp X_{n-1}^2 + 2\Exp X_{n-1} + 1,
    \end{split}
\end{equation*}
откуда
\begin{equation*}
    \begin{split}
        &b_n = \Exp \left(2 X_n - N\right)^2 = \left( 1-\frac{4}{N}\right) b_{n-1} + 4,
    \end{split}
\end{equation*}
а значит
$$
b_n = \left( 1-\frac{4}{N}\right)^n b_{0}+ 4\sum_{k=0}^{n} \left( 1-\frac{4}{N}\right)^k = \left( 1-\frac{4}{N}\right)^n b_{0}+ N\left [1-\left( 1-\frac{4}{N}\right)^n \right].
$$
}

Теперь найдем среднее время возвращения в состояние 0. 
\elena{Рассмотрим сначала дискретную модель. Согласно принятым в замечании 1 раздела 6.2.2  обозначениям средние времена первого достижения состояния $j$ из состояния $i$ $\mu_{ij}$ удовлетворяют системе линейных уравнений~\eqref{mu ij}. При чем $\mu_i = \pi_i^{-1}$. В частности, 
$$
\mu_0 = 2^N. $$
 Для качественного понимания полученных результатов заметим, что 
$$
\mu_{N/2} = \frac{2^N}{C_N^{\frac{N}{2}}}\sim \frac{1}{\lambda}\sqrt{\frac{\pi N}{2}},\quad N\to\infty.
$$
Таким образом, для больших значений $N$, если цепь находится в состоянии $0$, то за разумное время наблюдения цепь не возвращается туда. И напротив, если цепь находится в состоянии макроравновесия $N/2 = \arg\max_{k} \pi_k$, то циклы возвращения очень короткие.
}

\elena{
Для перенесения результатов на непрерывную модель нужно заметить, что время между скачками в непрерывном случае имеет показательное распределение со средним $1/(\lambda N)$. Иначе говоря, ответы для непрерывной модели получаются из дискретной делением на  $\lambda N$ (см. также теорему 12.3 на с. 289 в~\cite{KaiLai1964} или раздел~\ref{sec:ContChainMarkov} данного пособия).
}

Перейдем теперь к \ag{немного другому описанию}
модели Эренфестов. Нас \ag{по-прежнему} будет интересовать поведение макросистемы ($N\to\infty$) на продолжительных временах ($t\to\infty$) и ее равновесное состояние.

Пусть дана непрерывная цепь Эренфестов $\xi(t)$ с ${N+1}$ состоянием \elena{и начальным распределением $\mathbb{P}(\xi(0)=0)$ (все блохи в начальный момент времени находятся на второй собаке)}. Рассмотрим $N$ независимых непрерывных цепей Маркова $\eta_i$, $i=1,\dots,N$ (см.~рис.~\ref{fig:ForEhrenfestModel}) 
с одинаковым начальным распределением $\mathbb{P}(\eta_i(0)=0)$. \elena{Иначе говоря, каждая цепь $\eta_i$ соответствует поведению $i-$ой блохи, $i=1,\dots,N$ (поведения всех блох независимы друг от друга), так что $\eta_i(t) = 1$, если в момент времени $t$ $i-$ая блоха находится на первой собаке, и $\eta_i(t) = 0$, если в момент времени $t$ $i-$ая блоха находится на второй собаке.} Тогда  $$\xi(t)\overset{d}{=}\sum\limits_{i=1}^N \eta_i(t).$$ Отсюда следует, в частности, что $$\mathbb{P}(\xi(t)=0)=\prod\limits_{i=1}^N\mathbb{P}(\eta_i(t)=0)=\mathbb{P}(\eta_1(t)=0)^N.$$ В примере~\ref{eq:EhrenfestProb1} мы выяснили, что при $t\to\infty$ вероятность $\mathbb{P}(\xi(t)=0)$ сходится к $(1/2)^N$, следовательно, $\mathbb{P}(\eta_i(t)=0)\to1/2$ при $t\to\infty$ для любого $i=1,\dots,N$.

\begin{figure}[!h]
	\centering
	\includegraphics[scale=0.6]{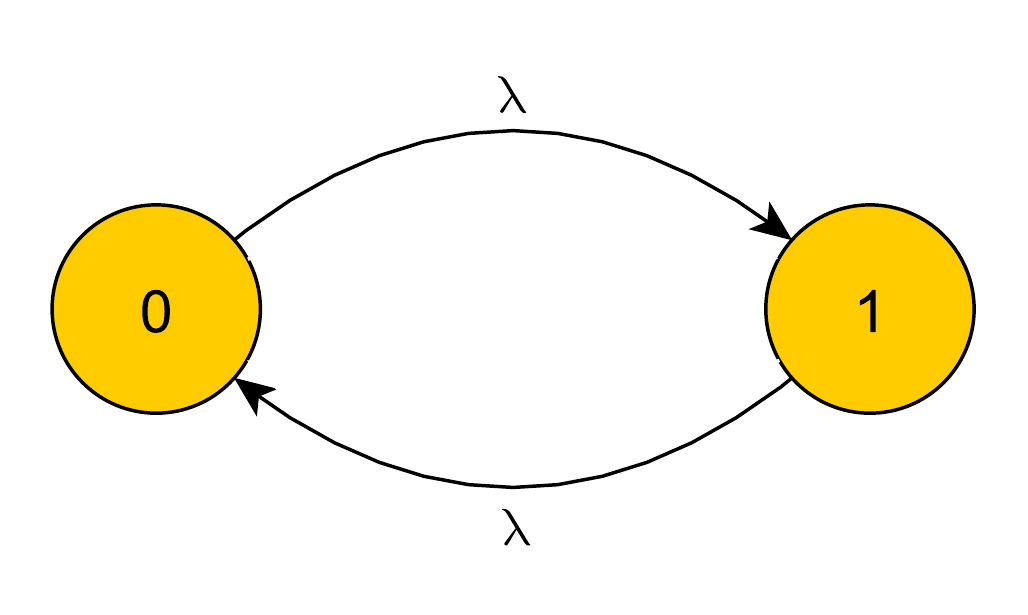}
	\caption{Стохастический граф цепи Маркова $\eta_i$.}
	\label{fig:ForEhrenfestModel}
\end{figure}

Теперь пусть ${\nu_0(t)=1-\xi(t)/N}$, ${\nu_1(t)=\xi(t)/N}$ -- доли цепей $\eta_i$, которые в момент $t$ находятся в состоянии 0 и в состоянии 1 соответственно. Тогда
$$\mathbb{P}(\nu_0(t)=c_1,\nu_1(t)=c_2)=\frac{N!}{(c_1N)!(c_2N)!}\mathbb{P}(\eta_1(t)=0)^{c_1N}\mathbb{P}(\eta_1(t)=1)^{c_2N}.$$ Перейдем к пределу при $t\to\infty$ и получим
$$\lim\limits_{t\to\infty}\mathbb{P}(\nu_0(t)=c_1,\nu_1(t)=c_2)=\frac{N!}{(c_1N)!(c_2N)!}\frac{1}{2^N}.$$ Далее перейдем к пределу по $N\to\infty$ и, воспользовавшись формулой Стирлинга $n! = n^n e^{-n} \sqrt{2\pi n} (1+O(1/n))$, получим
$$\lim\limits_{t\to\infty}\mathbb{P}(\nu_0(t)=c_1,\nu_1(t)=c_2) \approx \frac{2^{-N}}{\sqrt{2\pi c_1 c_2}\sqrt{N}}\frac{1}{c_1^{c_1N}c_2^{c_2N}},$$ откуда
$$\lim\limits_{t\to\infty}\mathbb{P}(\nu_0(t)=c_1,\nu_1(t)=c_2) \approx \frac{2^{-N}}{\sqrt{2\pi c_1 c_2}\sqrt{N}} \exp{(-N H(c_1,c_2))},$$ где $H(c_1,c_2)=-c_1\ln{c_1}-c_2\ln{c_2}$. 

Зададимся теперь вопросом, при каких $c_1$ и $c_2$ система оказывается равновесной. Под равновесием будем понимать такое состояние, т.е. такой вектор $(c_1,c_2)$, в малой окрестности которого концентрируется стационарная мера, т.е. вероятность принятия частотами $\nu_0(t)$ и $\nu_1(t)$ значений $c_1$ и $c_2$ при $t\to\infty$. Оценка этой вероятности при больших $N$ выписана выше. Максимизация этого выражения при $c_1\ge0$, $c_2\ge0$, $c_1+c_2=1$ равносильна минимизации функции $H(c_1,c_2)$ при тех же $c_1$, $c_2$. Решением являются значения ${c_1=c_2=1/2}$.

В более общих случаях макросистем оценку стационарной меры при большом числе степеней свободы $N$ удается произвести с использованием теорем И.\,Н.~Санова~\cite{Sanov1957} о сходимости эмпирических мер на произвольных алфавитах (в нашем случае алфавитом являлось множество значений $\{0,1\}$ цепей $\eta_i(t)$).

Интересно отметить, что функция $-H(c_1,c_2)$ (с минусом) оказывается функцией энтропии рассматриваемой системы, а равновесное состояние отвечает максимуму энтропии. Это проявление принципа максимума энтропии Больцмана--Джейнса охватывает и более общие случаи макросистем\gav{, см., например,~\cite{Baimurzina2015} и следующ}\eduard{ий раздел}.

Тот же результат можно получить, рассматривая пределы в обратном порядке: сначала по ${N\to\infty}$, затем по ${t\to\infty}$. А именно, предположим, что при $t=0$ существует предел 
\begin{equation}\label{eq:cilimit}
    \lim\limits_{N\to\infty} \nu_i(t) \overset{\text{п.н.}}{=} c_i(t).
\end{equation}
Оказывается (теорема Т. Куртца~\cite{EthierKurtz}), что в этом случае предел~\eqref{eq:cilimit} существует и при любом другом $t>0$, причем $c_1(t)$ и $c_2(t)$ -- неслучайные функции, удовлетворяющие системе обыкновенных дифференциальных уравнений
$$\frac{dc_1}{dt}=\lambda(c_2-c_1),$$
$$\frac{dc_2}{dt}=\lambda(c_1-c_2).$$
Положение равновесия этой системы существует, единственно и соответствует ${c_1=c_2=1/2}$. Интересно отметить, что функция $H(c_1,c_2)$ является функцией Ляпунова этой системы (убывает на траекториях этой системы и имеет минимум в точке $c_1=c_2=1/2$).

Итак, мы \gav{продемонстрировали} три точки зрения на изучение эволюции и равновесия макросистемы на больших временах: 1) стохастическую (концентрация меры), \gav{2) механическую (функция Ляпунова) и 3) термодинамическую (максимум энтропии). Причем к 3) можно прийти как из 1), так и из 2).}

\gav{В следующем} \eduard{разделе} \gav{описанная выше схема для графа специального вида, изображенного на рисунке~\ref{fig:ForEhrenfestModel}, будет перенесена на произвольные графы.}

Оценки времени выхода марковских цепей на стационарное распределение~\cite{KelbertSukhov2010} (mixing time) особенно  важны в изучении эффективности алгоритмов, в основе которых лежит Markov Chain Monte Carlo, см. следующее приложение.

    \section{Вектор PageRank и Google Problem}\label{sec:PageRank}

В данном разделе 
на примере ранжирования web-страниц приводится наглядный способ интерпретации основного уравнения (Кол\-мо\-го\-ро\-ва--Чэпмена) и основной теоремы (эргодической) теории однородных дискретных марковских цепей, изложенной в предыдущих разделах пособия. Также в данном разделе на примере задачи ранжирования web-страниц демонстрируется важный в различных современных приложениях алгоритм Markov Chain Monte Carlo~\cite{Diaconis} и закрепляется материал из предыдущего раздела про то, как можно понимать равновесия макросистем. 
Для более глубокого погружения в описываемые далее вопросы рекомендуем~\cite{pagerank1,pagerank2} и литературу, на которую в этих статьях ссылаются. Для закрепления материала мы умышленно повторяем некоторые результаты, изложенные ранее.

\subsection{Google problem и эргодическая теорема}




В 1998 г. Ларри Пейджем и Сергеем Брином был предложен специальный способ 
ранжирования web-страниц, который и лёг в основу поисковой системы Google.

Рассмотрим ориентированный взвешенный граф (см.~рис.~\ref{fig:classes}). Граф имеет $n$ 
вершин. Каждой паре вершин соответствует некоторый вес $p_{i,j} \ge 
0$. Ребро, выходящее из вершины $i$ в вершину $j$, имеет вес $p_{i,j} > 
0$. Если из вершины $i$ в вершину $j$ ребра нет, то полагаем $p_{i,j} =0$. 
Число $p_{i,j} $ интерпретируется как вероятность перейти из вершины $i$ в 
вершину $j$.  Набор чисел $\left\{ {p_{i,j} } \right\}_{i,j=1,1}^{n,n} $ 
удобно будет записать в виде матрицы $P=\left\| {p_{i,j} } 
\right\|_{i,j=1,1}^{n,n}$. 
Поскольку распределение вероятностей должно быть нормировано на 
единицу, то для любой вершины $i$ имеет место равенство $\sum_{j=1}^n 
{p_{i,j} } =1$, и матрица $P$ является стохастической по строкам. 



На граф можно посмотреть как на город, вершинами которого являются различные 
районы города, а ребрами -- дороги (вообще говоря, с односторонним движением). 
Предположим, что в городе имеется <<Красная площадь>> -- такой район, в 
который можно попасть по дорогам из любого другого. Оказывается, в этом 
предположении верен следующий результат (\textbf{\textit{эргодическая 
теорема для марковских процессов}} \gav{в варианте теоремы~\ref{th:technical_ergodic}}): \textit{если пустить блуждать человека по городу в течение длительного времени так, что человек будет случайно перемещаться из района в район согласно весам ребер графа, то доли }$\left\{ \nu_k \right\}_{k=1}^n $\textit{ времени, которые человек провел в разных районах, будут удовлетворять следующему уравнению}: $\sum_{i=1}^n {\nu_i p_{i,j} } =\nu_j $\textit{ или в~векторном виде }$\nu^\top P=\nu^\top$\textit{, имеющему единственное решение, удовлетворяющее} $\sum_{i=1}^n 
{\nu _i } =1$. В связи с последним представлением говорят, что
$\nu = [\nu_1, \dots, \nu_n]^\top$ является левым собственным вектором 
матрицы $P$. Вектор $\nu$ также называют инвариантным или стационарным распределением вероятности в марковском процессе. В теории неотрицательных матриц~\cite{Nikaydo}, используемой в математической экономике, вектор $\nu^T$ также называют \textit{вектором Фробениуса--Перрона}. 
Предположение о <<Красной площади>> обеспечивает единственность $\nu $. 
 Если это предположение не 
верно, то вектор $\nu $ существует, 
но он не единственен и зависит от района, из которого стартовал человек. В общем случае город 
распадается на отдельные несвязанные между собой существенные 
кластеры\footnote{ То есть из каждого такого кластера в любой другой кластер 
дорог нет.} из районов, связанных между собой внутри кластера, и отдельного 
кластера несущественных районов. Последний определяется тем, что из 
любого его района можно попасть в один из существенных кластеров, но 
попасть обратно в несущественный кластер из существенных кластеров 
невозможно. Из такого несущественного кластера человек в 
конечном итоге попадет в один из существенных кластеров, откуда уже не 
выйдет. Рисунок~\ref{fig:classes} из раздела~\ref{sec:DiscreteChains} поясняет общий случай.


Чтобы понять, откуда получается приведенная выше в 
вольной трактовке эргодическая теорема, получим с помощью формулы полной вероятности уравнение 
Колмогорова--Чэмпена. Это уравнение является основным при описании \textit{дискретных однородных марковских цепей}. Обозначим через 
$p_i \left( t \right)$  вероятность того, что человек находится в момент 
времени $t$ в районе $i$. Тогда по формуле полной вероятности для любого 
$j=1,...,n$
$$
p_j(t+1) = \sum\limits_{i=1}^n {\underbrace {\PP\left( 
{\begin{array}{c}
 \mbox{\tiny человек в момент времени $t$} \\ 
 \mbox{\tiny находился в районе $i$} \\ 
 \end{array}} \right)}_{p_i(t)}}
 \underbrace {\PP\left( 
{\begin{array}{c}
 \mbox{\tiny человек перешел в район $j$ при} \\ 
 \mbox{\tiny условии, что был в районе $i$} \\ 
 \end{array}} \right)}_{p_{i,j}}.
$$

Или в матричном виде
\begin{equation}
\label{eq1}
p^\top(t+1)=p^\top(t)P.
\end{equation}
Последнее равенство и называют \textit{уравнением 
Колмогорова--Чэп\-мена}. 

Предположим теперь, что существует предел $\mathop {\lim }_{t\to 
\infty } p(t)=\nu $. Какому уравнению должен удовлетворять 
вектор $\nu $? Переходя к пределу в обеих частях равенства~(\ref{eq1}) и учитывая, 
что $\mathop {\lim }_{t\to \infty } \left( {p^\top( t )P} 
\right)=$\linebreak $=\mathop {\lim }_{t\to \infty } \left( {p^\top\left( t \right)} 
\right)P$, получим уже известное нам соотношение
\begin{equation}
\label{eq2}
\nu^TP=\nu ^T.
\end{equation}
Именно из соотношения~ \eqref{eq2} и было предложено искать вектор ранжирования 
web-страниц, также назваемый \textit{вектор PageRank}, в модели Брина--Пейджа. 
Отличие этой модели от описанной выше только в интерпретации. Теперь вершины графа -- это  
web-страницы, ребра графа это гиперссылки. Под \textit{Google problem} будем понимать задачу поиска вектора $\nu$ в этой модели.


Отметим, что приведенные выше рассуждения справедливы в 
предположении существования предела $\mathop {\lim }_{t\to \infty } 
p\left( t \right)=\nu $. Казалось бы, что предположения о <<Красной 
площади>> будет достаточно и тут.\footnote{В непрерывном времени так оно и есть, см.~раздел~9.} Однако, как показывает простейший пример 
(рис.~\ref{fig3}), в котором $n=2$, $p_{1,1} =p_{2,2} =0$, $p_{1,2} =p_{2,1} =1$, хотя 
вектор $\nu =\left( {1 \mathord{\left/ {\vphantom {1 2}} \right. 
\kern-\nulldelimiterspace} 2,1 \mathord{\left/ {\vphantom {1 2}} \right. 
\kern-\nulldelimiterspace} 2} \right)^\top$ существует и единственен, 
предел $\mathop {\lim }_{t\to \infty } p\left( t \right)$ не существует, 
поскольку с ростом $t$ будет происходить периодическое чередование нулей и 
единиц в каждой компоненте вектора $p\left( t \right)$.

\begin{figure}[htbp]
\centerline{\includegraphics[width=0.5\textwidth]{Section06/Problem94_1-eps-converted-to.pdf}}
\caption{Периодическая марковская цепь c периодом 2}
\label{fig3}
\end{figure}

Оказывается, что если выполняется условие <<непериодичности>>, то предел 
$\mathop {\lim }_{t\to \infty } p\left( t \right)=\nu $ 
действительно существует. Более того, эти два условия (существования 
<<Красной площади>> и <<непериодичности>>) являются не только достаточными, 
но и необходимыми для существования предела. Опишем, в чем заключается 
условие <<непериодичности>>. Из <<Красной площади>> выходит много различных 
маршрутов, которые в конце снова приводят на <<Красную площадь>>. Условие 
<<непериодичности>> означает, что наибольший общий делитель 
последовательности длин всевозможных маршрутов (начинающихся и 
заканчивающихся на <<Красной площади>>) равен 1. Уточним, что длина маршрута 
равна числу ребер, вошедших в маршрут. В типичных web-графах оба отмеченных 
условия выполняются, поэтому в дальнейшем мы считаем эти два условия выполненными.

\subsection{Стандартные численные подходы к решению задачи}


С одной стороны, вектор $\nu$ является решением уравнения~\eqref{eq2} $\nu^\top\! P\!\!=$\linebreak $= \nu^\top$. С другой стороны, он является пределом $\nu =\mathop {\lim }_{t\to 
\infty } p{\left( t \right)}$, где $p{\left( t \right)}$ рекуррентно 
рассчитывается согласно уравнению Колмого\-рова--Чепмена $p{\left( t+1 \right)} = P^\top p{\left( t \right)}$. Значит, это соотношение можно использовать для численного расчета $p{(t)}$ как приближения к вектору PageRank (иными словами, можно применить метод простой итерации). Перейдем к анализу скорости сходимости такого метода.

Назовем \textit{спектральной щелью} матрицы $P$ наименьшее такое число $\alpha =1-\left| \beta 
\right|>0$, где (вообще говоря, комплексное) число $\beta $ удовлетворяет 
условию: $\left| \beta \right|<1$ и существует такой вектор $\eta $, что 
$\eta ^T P=\beta \eta ^T$. Хотя бы одно такое $\beta $ существует. Другими 
словами,  $\alpha $ -- расстояние между максимальным собственным значением 
матрицы $P$ (для стохастических матриц оно всегда равно 1) и следующим по 
величине модуля. Отсюда и название -- спектральная щель (спектральный зазор 
-- от англ. spectral gap). Оказывается, что имеет место следующий результат:
\begin{equation}
\label{eq3}
\left\| {p\left( t \right)-\nu } \right\|_1 \mathop =\limits^{def} 
\sum\limits_{k=1}^n {\left| {p_k \left( t \right)-\nu _k } \right|} \le 
C\exp \left( {-{\alpha t} / \tilde{C}} \right).
\end{equation}
Здесь и далее константы $C$, $\tilde {C}$ будут обозначать некоторые (каждый 
раз свои) универсальные константы, которые зависят от некоторых 
дополнительных деталей постановки и которые, как правило, ограничены числом 
10. 

Заметим, что часто факт сходимости процесса $p{\left( t+1 \right)} = P^\top p{\left( t \right)}$ к решению $\nu^\top P = \nu^\top$ также называют 
эргодической теоремой. В~предыдущем разделе мы видели, что для этого есть основания. Однако сходимость процесса $p{\left( t+1 \right)} = P^\top p{\left( t \right)}$ больше связана с принципом сжимающих 
отображений. Если под пространством понимать всевозможные лучи неотрицательного 
ортанта, а под метрикой на этом пространстве, в которой матрица $P$ 
<<сжимает>>, понимать метрику Биркгофа--Гильберта, то можно показать, что сходимость процесса $p{\left( t+1 \right)} = P^\top p{\left( t \right)}$ к решению $\nu^\top P = \nu^\top$ следует из принципа сжимающих отображений. К сожалению, детали здесь не совсем 
элементарны, поэтому мы ограничимся только следующей аналогией. Формула~(\ref{eq3}) 
отражает геометрическую скорость сходимости, характерную для принципа 
сжимающих отображений, а коэффициент $\alpha $ как раз и характеризует 
степень сжатия, осуществляемого матрицей $P$. Геометрически себе это можно 
представлять (правда, не очень строго) как сжатие с коэффициентом, не меньшим 
$1-\alpha $ к инвариантному направлению, задаваемому вектором $\nu $.

С одной стороны, оценка~(\ref{eq3}) всего лишь сводит задачу оценки скорости 
сходимости метода  $p{\left( t+1 \right)} = P^\top p{\left( t \right)}$ к задаче оценки спектральной щели $\alpha $. С другой стороны,
последняя задача в ряде случаев может быть эффективно решена. В частности, к 
эффективным инструментам оценки $\alpha $ относятся изопериметрическое 
неравенство Чигера и неравенство Пуанкаре, которое также 
можно понимать как неравенство концентрации меры. Имеются и другие способы оценки 
$\alpha $, однако все эти 
способы далеко выходят за рамки нашего курса. Поэтому здесь мы 
ограничимся простыми, но важными в контексте рассматриваемых приложений 
случаями.

Предположим сначала, что для рассматриваемого web-графа существует 
такая web-страница, на которую есть ссылка из любой web-страницы, в том числе 
из самой себя (усиленный аналог предположения о наличии <<Красной площади>> 
-- поскольку теперь на <<Красную площадь>> из любого района есть прямые дороги), 
более того, предположим, что на каждой такой ссылке стоит вероятность, не 
меньшая, чем $\gamma $. Для такого web-графа имеет место неравенство $\alpha 
\ge \gamma $.

Предположим далее, что в модели блуждания по web-графу имеется 
<<телепортация>>: с вероятностью $1-\delta $ человек действует как 
в исходной модели, а с вероятностью $\delta $ <<забывает про все 
правила>> и случайно равновероятно выбирает среди $n$ вершин одну, в которую и 
переходит. 
Тогда, если ввести квадратную матрицу $E$ размера $n$ на $n$, состоящую из 
одинаковых элементов $1 \mathord{\left/ {\vphantom {1 n}} \right. 
\kern-\nulldelimiterspace} n$, уравнение $p{\left( t+1 \right)} = P^\top p{\left( t \right)}$ примет вид
\begin{equation}
\label{eq4}
p{\left( {t+1} \right)}=\left( {\left( {1-\delta }\right)P^\top+\delta E} \right)p{\left( t \right)}.
\end{equation}
В таком случае вектор PageRank необходимо будет искать из уравнения
\begin{equation}
\label{eq5}
\nu =\left( {\left( {1-\delta } \right)P^\top+\delta E} \right)\nu.
\end{equation}
При $0<\delta <1$ уравнение~(\ref{eq5}) гарантированно 
имеет единственное в классе распределений 
вероятностей решение. Более того, для спектральной щели матрицы $\left( {1-\delta } \right)P+\delta E$ имеет место оценка $\alpha 
\ge \delta $. На практике для вычисления вектора 
PageRank обычно используют уравнение~(\ref{eq5}) c $\delta =0.15$.

Поскольку матрица $P$ для реальных web-графов обычно сильно разреженная -- большая часть элементов равна нулю, то 
использовать формулу~(\ref{eq4}) в таком виде не рационально: одна 
итерация будет стоить $3n^2$ арифметических операций типа сложения и умножения чисел. Однако формулу~(\ref{eq4}) можно переписать следующим образом: 
\begin{equation}
\label{eq6}
p{\left( {t+1} \right)}=\left( {1-\delta } \right)P^\top p{\left( {t} \right)}+\delta \cdot \left( {1 \mathord{\left/ {\vphantom {1 n}} \right. 
\kern-\nulldelimiterspace} n,...,1 \mathord{\left/ {\vphantom {1 n}} \right. 
\kern-\nulldelimiterspace} n} \right)^\top.
\end{equation}
Расчет по этой формуле требует по порядку лишь $2sn$ арифметических операций, где 
$s$ -- среднее число ненулевых элементов в строке матрицы $P$. 
Для Интернета $n\approx 10^{10}$, а $s\ll 10^4$. 

Выше был описан исторически самый первый алгоритм, использовавшийся 
для расчета вектора PageRank. Он получил название метода простой итерации 
(МПИ). Как было отмечено, МПИ эффективен, если $\alpha $ не очень близко к нулю. 
В частности, как в модели с телепортацией. Действительно, современный 
ноутбук в состоянии выполнять до $10^{10}$ арифметических операций в секунду. 
С учетом программной реализации и ряда других ограничений этот порядок на 
практике обычно уменьшается до $10^8$. Из оценки~(\ref{eq3}) видно, что 
сходимость, например, при $\alpha \ge 0.15$ очень быстрая. Хорошая точность 
получается уже после нескольких десятков итераций. С учетом того, что одна 
итерация требует по порядку $2sn\le 10^{13}$ арифметических операций, описанный метод позволяет за день найти вектор PageRank для всего Интернета. К 
сожалению, на самом деле все не так просто. Проблема в памяти, в которую 
необходимо загружать матрицу $P$. Разумеется, $P$ необходимо загружать не 
как матрицу из $n^2$ элементов, а в виде, так называемых, списков смежностей 
по строкам.
Однако это все 
равно не решает проблемы. Быстрая память компьютера -- кэш-память разных 
уровней. Кэш-память процессора совсем маленькая, ее совершенно недостаточно. 
Более медленная -- оперативная память. Тем не менее 
если удалось бы выгрузить матрицу $P$ в такую память, то производительность 
программы соответствовала бы оценке, приведенной выше. Обычно оперативной памяти в современном персональном компьютере не более нескольких десятков Гигабайт ($\sim 10^{10}$ байт), 
чего, очевидно, недостаточно. Следующая память -- жесткий диск. Обращение 
программы к этой памяти, по сути, останавливает нормальную работу программы. 
Как только кончаются ресурсы оперативной памяти, мы видим, что программа 
либо не работает совсем, либо начинает работать очень медленно. Отмеченную 
проблему можно решать, увеличивая оперативную память, либо используя 
распределенную память.

Но что делать, если $\alpha$ оказалось достаточно малым или мы не можем 
должным образом оценить снизу $\alpha$, чтобы гарантировать быструю 
сходимость МПИ? В таком случае полезным оказывается следующий  результат~\cite{PolyakTremba}, 
не предполагающий, кстати, выполнения условия <<непериодичности>>,
\begin{equation}
\label{eq7}
\left\| {P^\top\bar {p}_T -\bar {p}_T } \right\|_1 \le \frac{C}{T},
\quad
\bar {p}_T \stackrel{def}{=} \frac{1}{T}\sum\limits_{t=1}^T {p{\left( t \right)}} .
\end{equation}
Эта оценка уже никак не зависит от $\alpha $. Параллельное процедуре $$p{\left( t+1 \right)} = P^\top p{\left( t \right)}$$ суммирование получающихся векторов позволяет вычислять вектор $\bar {p}_T $ за время, по порядку величины равное времени расчета $p{\left( T \right)}$. К 
сожалению, для ряда задач, например, исследования степенного закона убывания 
компонент вектора PageRank, метод Поляка--Трембы~\gav{\cite{PolyakTremba}} не подходит. Причина связана 
с видом оценки~(\ref{eq7}). Имеется принципиальная разница с оценкой~(\ref{eq3}) в том, что 
в оценке~(\ref{eq3}) мы можем гарантировать близость найденного вектора $p{\left( T 
\right)}$ к вектору PageRank $\nu $, обеспечив малость $\left\| {p{\left( T 
\right)}-\nu } \right\|_1 $. Что касается соотношения~(\ref{eq7}), то в типичных 
случаях из $\left\| {P^\top\bar {p}_T -\bar {p}_T } \right\|\approx \varepsilon 
$ можно при больших $n$ лишь получить, что $\left\| {\bar {p}_T -\nu } \right\| \approx \varepsilon / \alpha$, т.е. опять возникает <<нехорошая>> зависимость в оценке от $\alpha$. Для симметричной матрицы $P$ и 2-нормы этот результат был получен Красносельским--Крейном~\cite{KrasnoselskyCrane} в 1952 г.

К сожалению, и другие способы поиска вектора PageRank, которые на первый 
взгляд не используют в своих оценках $\alpha $, на деле оказываются 
методами, выдающими такой вектор $\tilde {p}_T $, что $\left\| {P^\top\tilde 
{p}_T -\tilde {p}_T } \right\|\approx \varepsilon $ в некоторой норме 
(обычно это 1-норма, 2-норма и бесконечная норма -- в следующем пункте мы 
поясним, что имеется в виду под этими нормами), что приводит к той же сложности, что и в методе Поляка--Трембы. Тем не менее хочется отметить большой прогресс, 
достигнутый за последнее время (во многом благодаря разработкам Б.\,Т.~Поляка, 
А.\,С.~Немировского, Ю.\,Е.~Нестерова), в создании 
эффективных численных методов решения задач выпуклой оптимизации вида 
($b=1,2$; $l=1,2,\infty )$
\[
\left\| {P^\top p-p} \right\|_l^b \to \mathop {\min }\limits_{p\ge 
0:\;\sum\limits_{k=1}^n {p_k } =1} .
\]
Эти наработки оказываются полезными, поскольку вектор PageRank можно также 
понимать, как решение такой задачи. Действительно, всегда 
имеет место неравенство  $\left\|{P^\top p-p} 
\right\|~\ge~0$, и только на $\nu $ имеет место равенство $\left\|{P^\top \nu 
-\nu } \right\|=0$.

Отметим, в частности, метод условного градиента, который позволяет получить вектор $\tilde {p}_T $, удовлетворяющий $\left\| 
{P^\top\tilde {p}_T -\tilde {p}_T } \right\|_2 \le \varepsilon $,\linebreak  за~$C\cdot \left( {n+s^2\varepsilon ^{-2}\ln n} \right)$ 
арифметических операций~\gav{\cite{GasnikovOpt}}. Удивительно, что в эту оценку не 
входит $sn$ -- число ненулевых элементов матрицы~$P$. В~частности, при 
$s\approx \sqrt n $ получается сложность пропорциональная $n$, а~не~$ n^{3 
\mathord{\left/ {\vphantom {3 2}} \right. \kern-\nulldelimiterspace} 2}$, 
как можно было ожидать (к сожалению, за это есть и плата в виде множителя $ 
\varepsilon ^{-2})$. Тем не менее еще раз повторим, что несмотря на все 
возможные ускорения вычислений использование таких методов в наших целях, к 
сожалению, не представляется возможным.

Отмеченные в этом разделе методы (МПИ и Поляка--Трембы) не исчерпывают 
множество методов, гарантирующих малость $\left\| {\tilde {p}_T -\nu } 
\right\|$, в оценку скорости сходимости которых входит $\alpha $. К таким 
методам можно отнести, например, метод Д. Спилмана, являющийся вариацией описанного выше метода 
Поляка--Трембы. 

\subsection{Markov Chain Monte Carlo и Google problem}\label{MCMC}

Далее мы сосредоточимся на так называемых методах Монте-Карло. Наряду с тем, что эти методы являются 
численными методами поиска вектора PageRank, они также позволяют по-новому 
проинтерпретировать вектор PageRank.

С одной стороны, в предположении достаточной величины спектральной щели МПИ обеспечивает очень быструю сходимость по норме приближенного решения к вектору PageRank. С другой стороны, одна итерация этого метода требует порядка $sn$ арифметических операций, что может привести к невозможности его применения, если не выполнено неравенство $s \ll n$. К тому же каждому поисковому запросу обычно соответствует свой web-граф релевантных страниц. Поэтому, как правило, для каждого запроса  нужно вычислять свой вектор PageRank. 
Следовательно, для эффективной работы реальной поисковой системы скорость вычисления вектора PageRank оказывается важнее точности его вычисления.
МПИ не позволяет в полной мере пожертвовать точностью в угоду скорости вычисления. В самом деле, согласно оценке~(\ref{eq3}), переход, скажем, от точности $\varepsilon \approx 10^{-6}$ к точности $\varepsilon \approx 10^{-3}$ уменьшит время счета всего лишь вдвое. 
Предложим другой алгоритм, который будет более чувствителен к точности по сравнению с МПИ, но при этом будет иметь меньшую стоимость итерации.

В основе этого алгоритма лежит метод \textit{Markov Chain Monte Carlo} (MCMC). Основная идея заключается в практическом использовании 
эргодической теоремы. То есть, грубо говоря, нужно запустить 
человека и достаточно долго подождать, собирая статистику посещенных им 
вершин графа, т.е. web-страниц. При оценке эффективности работы такого метода 
важно ответить на два вопроса: насколько эффективным можно сделать шаг и сколько 
шагов надо сделать человеку.

Наиболее вычислительно затратная операция на каждом шаге метода -- это случайный выбор ребра для перехода из текущей вершины. 
Случай, в котором из каждой вершины (web-страницы) выходит не более $s\ll n$ ребер, является простым. На каждом шаге за порядка $s$ арифметических операций разыгрывается случайная величина в соответствии с вероятностями переходов. При этом память после каждой операции 
освобождается, т.е. не требуется хранить в памяти ничего, кроме матрицы $P$ и 
текущего вектора частот. Отметим, тем не менее, как можно организовать вычисления в сложном случае, когда $s$ является 
достаточно большим. Можно до старта итераций алгоритма подготовить 
специальным образом память за порядка $sn$ арифметических операций, например, следующим образом. 
Каждой вершине ставится в соответствие свое разбиение отрезка $\left[ {0,1} 
\right]$ так, чтобы число подотрезков равнялось числу выходящих ребер, а длины 
подотрезков были пропорциональны вероятностям перехода по этим ребрам. Тогда на каждом 
шаге достаточно один раз генерировать равномерно распределенную на отрезке 
$\left[ {0,1} \right]$ случайную величину (стандартные генераторы, как 
правило, именно это в первую очередь и предлагают), и нанести ее на 
соответствующий текущей вершине подотрезок. В зависимости от того, в какой 
из подотрезков она попала, выбрать исходящее ребро и сдвинуть человека по нему. 
Недостаток этого подхода 
-- использование дополнительно памяти для хранения порядка $sn$ чисел и высокая стоимость подготовки этой памяти. 
К счастью, такая подготовка, требующая порядка $sn$ арифметических операций, нужна только один раз, в отличие от МПИ, в котором таких затрат требует каждая итерация. При дополнительных 
предположениях о компактном описании формул расчета $\left\{ {p_{i,j} } \right\}_{j=1}^n $ существуют быстрые способы случайного выбора ребра для перехода из текущей вершины, которые приводят 
к тому, что шаг выполняется за $\sim \log _2 s\le \log _2 n$ арифметических 
операций. 
В частности, если 
имеет место равенство между собой отличных от нуля внедиагональных элементов 
матрицы $P$ в каждой строке, то сложность шага $\sim \log _2 s$ достигается, 
например, методом Кнута--Яо.

Перейдем теперь к обсуждению оценки числа шагов, которое нужно сделать человеку для достижения заданной точности приближения вектора PageRank. Постараемся грубо сформулировать основной результат. \textit{Пусть} $\nu \left( T \right)$\textit{ -- вектор частот} (\textit{случайный вектор}) \textit{пребывания в различных вершинах блуждающего человечка после} $T$\textit{ шагов и $T\gg T_0 \mathop =\limits^{def} C\alpha 
^{-1}\ln \left( {n \mathord{\left/ {\vphantom {n \varepsilon }} \right. 
\kern-\nulldelimiterspace} \varepsilon } \right)$.  Тогда с вероятностью не меньше }$1-\sigma 
$\textit{ (здесь и везде в дальнейшем }$\sigma \in \left( {0,1} \right)$\textit{ -- произвольный доверительный уровень) имеет место неравенство}
\begin{equation}
\label{eq8}
\left\| {\nu \left( T \right)-\nu } \right\|_2  \le C\sqrt {\frac{\ln n+\ln 
\left( {\sigma ^{-1}} \right)}{\alpha T}}\gav{.} 
\end{equation}

Приведем сравнительные характеристики упомянутых к настоящему 
моменту методов. Под сложностью понимается количество арифметических 
операций типа умножения двух чисел, которые достаточно осуществить, чтобы (в 
случае MCMC: с вероятностью не меньше $1-\sigma )$ достичь точности решения 
$\varepsilon $ по целевой функции\footnote{ Заметим, что для метода MCMC 
имеет смысл рассматривать только $\varepsilon \ll n^{{-1} \mathord{\left/ 
{\vphantom {{-1} 2}} \right. \kern-\nulldelimiterspace} 2}$, поскольку 
$\left\| {\left( {n^{-1},...,\;n^{-1}} \right)^\top} \right\|_2 =n^{{-1} 
\mathord{\left/ {\vphantom {{-1} 2}} \right. \kern-\nulldelimiterspace} 
2}$.}, которая в табл.~1 обозначается целью. 

\begin{flushright}
	{Т\,а\,б\,л\,и\,ц\,а\; 1}
\end{flushright}

\begin{center}
	{\bf Сравнение методов решения задачи поиска вектора~PageRank}*
\end{center}
\vskip-5mm

{
	\begin{table}[h!]
		\begin{center}
			\begin{tabular}{|c|c|c|}
				\hline
				\textbf{Метод}& 
				\textbf{Сложность}& 
				\textbf{Цель} \\
				\hline
				1& 
				$\frac{sn}{\alpha }\ln \left( {\frac{2}{\varepsilon }} \right)$& 
				$\left\| {\tilde {p}_T -\nu } \right\|_1 $ \\
				\hline
				2& 
				$\frac{2sn}{\varepsilon }$& 
				$\left\| {P^T\tilde {p}_T -\tilde {p}_T } \right\|_1 $ \\
				\hline
				3& 
				$C \cdot \left( {n+\frac{s^2}{\alpha \varepsilon }\ln \left( {\frac{1}{\varepsilon }} \right)} \right)$& 
				$\left\| {\tilde {p}_T -\nu } \right\|_\infty  $ \\
				\hline
				4 & 
				$C\cdot \left( {n+\frac{\log _2 n\cdot \ln \left( {n \mathord{\left/ {\vphantom {n \sigma }} \right. \kern-\nulldelimiterspace} \sigma } \right)}{\alpha \varepsilon ^2}} \right)$& 
				$\left\| {\tilde {p}_T -\nu } \right\|_2 $ \\
				\hline
				5& 
				$C\cdot \left( {n+\frac{s^2\ln n}{\varepsilon ^2}} \right)$& 
				$\left\| {P^T\tilde {p}_T -\tilde {p}_T } \right\|_2 $ \\
				\hline
			\end{tabular}
			\label{tab1}
		\end{center}
	\end{table}
}
\vspace*{-5mm}\noindent{\small*Методы: 1 -- МПИ, 2 -- метод Поляка--Трембы, 3 -- метод Д. Спилмана,\\\hspace*{1.55cm}4 -- MCMC, 5 -- вариация метода  условного градиента.}
\vskip5mm

Напомним, что вектор $\tilde {p}_T $ -- это результат работы метода, а $\nu $ --\linebreak вектор PageRank. Стоит 
также пояснить обозначение $\left\| x \right\|_\infty\!\!\!\!\!\!\!=$\linebreak $=\mathop {\max 
}_{k=1,...,n} \left| {x_k } \right|$.
Сравнение затрудняется тем, что у всех методов разные целевые функции, а значит и разные критерии точности. 
Кроме того, разные методы могут по-разному быть ускорены с использованием параллельных вычислений.
Для задач огромных размеров,\linebreak к~которым, безусловно, относится и Google 
problem, этот вопрос выходит на передний план. Если умножение матрицы на 
вектор в МПИ хорошо параллелится, то, например, возможность применения параллельных вычислений к~MCMC в описанном здесь варианте не ясна.


\subsection{Параллелизуемый вариант MCMC}
\label{Fisher}

Идею распараллеливания, собственно так же, как и идеи остальных описанных 
методов (МПИ, MCMC), подсказывает <<жизнь>>. А именно, в реальном Интернете 
<<блуждает>> не один человек (пользователь), а много пользователей. 
Обозначим число независимо блуждающих пользователей через $N$. Рассмотрим 
сначала для простоты ситуацию, когда в графе всего две вершины, т.е. $n=2$. 
Поскольку пользователи блуждают независимо, то для каждого из них можно 
ожидать, что на шаге $T_0 =C\alpha ^{-1}\ln \left( {2 \mathord{\left/ 
{\vphantom {2 \varepsilon }} \right. \kern-\nulldelimiterspace} \varepsilon 
} \right)$ вероятность обнаружить пользователя в вершине 1 с 
хорошей точностью ($\sim \varepsilon )$ равна $\nu _1 $. Соответственно, вероятность обнаружить его в 
вершине 2 равна $\nu _2 =1-\nu _1 $. 
Таким образом, посмотрев на то, в какой вершине 
находился каждый человек на шаге $T_0 $, мы с хорошим приближением получим 
так называемую \textit{простую выборку} $X$ (независимые одинаково распределенные случайные 
величины) из \textit{распределения Бернулли} с вероятностью успеха (выпадения орла), равной $\nu _1 $. 
Последнее означает, что каждый человек (подобно монетке) на шаге $T_0 $ 
независимо от всех остальных с вероятностью $\nu _1 $ будет обнаружен в 
состоянии 1 (<<выпал орлом>>), а с вероятностью $1-\nu _1 $ будет обнаружен 
в состоянии 2 (<<выпал решкой>>). Чтобы оценить вектор PageRank в данном 
простом случае, достаточно оценить по этой выборке $\nu _1 $ -- частоту 
выпадения орла. Интуиция подсказывает, что в качестве оценки неизвестного 
параметра $\nu _1 $ следует использовать $r \mathord{\left/ {\vphantom {r 
N}} \right. \kern-\nulldelimiterspace} N$ -- долю людей, которые оказались в 
состоянии 1. Интуиция подсказывает правильно! Далее будет показано, что 
такой способ действительно оптимальный.

Вероятность, что число людей $r$, которые наблюдались в состоянии 1, равно 
$k$, может быть посчитана по формуле (\textit{биномиальное распределение})
\[
\PP\left( {r=k} \right)=C_N^k \nu _1^k \left( {1-\nu _1 } \right)^{N-k}.
\]
Используя грубый вариант \textit{формулы Стирлинга} $k!\simeq \left( {k \mathord{\left/ {\vphantom 
{k e}} \right. \kern-\nulldelimiterspace} e} \right)^k$, получаем \textit{оценку типа Санова}:
\[
C_N^k \nu _1^k \left( {1-\nu _1 } \right)^{N-k}\simeq \exp \left( {-N\cdot 
KL\left( {k \mathord{\left/ {\vphantom {k N}} \right. 
\kern-\nulldelimiterspace} N,\nu _1 } \right)} \right),
\]
где
\[
KL\left( {p,q} \right)=-p\ln \left( {\frac{p}{q}} \right)-\left( {1-p} 
\right)\ln \left( {\frac{1-p}{1-q}} \right).
\]
Отсюда, по \textit{неравенству Пинскера}:
\[
KL\left( {p,q} \right)\ge 2\left( {p-q} \right)^2,
\]
следует, что \textit{с вероятностью не меньше }$1-\sigma $\textit{ имеет место неравенство}
\begin{equation}
\label{eq9}
\left| {\frac{r}{N}-\nu _1 } \right|\le C\sqrt {\frac{\ln \left( {\sigma 
^{-1}} \right)}{N}} ,
\end{equation}
которое иллюстрируется на рисунке~\ref{fig:fig4}. 

\begin{center}
\begin{figure}[htbp]
\centering
\hspace{1.6cm} 
\begin{tikzpicture}
\definecolor{tempcolor}{RGB}{255,204,0}

    \def\normaltwo{\x,{4*1/exp(((\x-3)^2)/1.5)}}
 
    \def\y{5}
    \def\x{1}
 
    \def\fy{4*1/exp(((\y-3)^2)/1.5)}
    \def\fx{4*1/exp(((\x-3)^2)/1.5)}
 
    \fill[fill=tempcolor] ({\x},-0.5) -- plot[domain={\x}:{\y}, samples=200] (\normaltwo) -- ({\y},-0.5) -- cycle;
 
    \draw[color=black, thick, domain=0:6, samples=200] plot (\normaltwo) node[right] {};
 
    \draw[thick,dashed] (3,4) -- (3,-0.5) node[below] {};
    \draw[thick] (3, -0.55-0.5) node {$\nu_1$};
    
    \draw[thick,dashed] ({\y},{\fy}) -- ({\y},-0.5) node[below] {$\nu_1 - C \sqrt{\frac{\ln(\sigma^{-1})}{N}}$};
    \draw[thick,dashed] ({\x},{\fx}) -- ({\x},-0.5) node[below] {$\nu_1 + C \sqrt{\frac{\ln(\sigma^{-1})}{N}}$};
 
     \draw[thick] (2,-.5) node[right] {$ $};
 
    \draw[thick,->] (-0.3,-0.5) -- (6.5,-0.5) node[right] {$r/N$};
    
    \draw[black,thick,<-] (3.8, 2.65) -- (4.5, 3) node[right,align=left]
    {\ \ \ площадь под \\ графиком $\geq 1 - \sigma$};
\end{tikzpicture}
\caption{График зависимости $P\left( r \right)=C_N^r \nu _1^r \left( {1-\nu_1} \right)^{N-r}$ при большом значении $N$}
\label{fig:fig4}
\end{figure}
\end{center}

По сути, этот рисунок отражает тот факт, что биномиальное распределение 
(биномиальная мера) с ростом числа исходов (людей) $N$ \textit{концентрируется} вокруг $\nu _1 $.

Вектор $\nu = [{\nu_1 ,\nu_2}]^\top$, в малой окрестности 
которого с большой вероятностью на больших временах находится реальный 
вектор распределения людей по web-страницам (в данном случае двум), 
естественно называть \textit{равновесием макросистемы}, описываемой 
блуждающими по web-графу людьми. \textit{Таким образом, мы пришли к еще одному пониманию вектора PageRank }$\nu $\textit{, как равновесию описанной выше макросистемы}. 

На соотношение~(\ref{eq9}) можно посмотреть и с другой стороны -- с точки зрения 
математической статистики. А именно, вспомним, 
что $\nu _1 $ нам не известно. В то время как реализацию случайной величины 
(статистики) $r \mathord{\left/ {\vphantom {r N}} \right. 
\kern-\nulldelimiterspace} N$ мы можем измерить. Соотношение~(\ref{eq9}) от такого 
взгляда на себя не перестанет быть верным. 

В общем случае для $n$ вершин можно провести аналогичные рассуждения: с 
заменой биномиального распределения \textit{мультиномиальным}. Чтобы получить аналог оценки~(\ref{eq9}), для 
\textit{концентрации мультиномиальной меры}, можно использовать 
векторное неравенство Хефдинга. Оказывается, \textit{с вероятностью не меньше }$1-\sigma $\textit{ имеет место следующее неравенство}:
\begin{equation}
\label{eq10}
\left\| {\frac{r}{N}-\nu } \right\|_2 \le C\sqrt {\frac{\ln \left( {\sigma 
^{-1}} \right)}{N}} ,
\end{equation}
\textit{где вектор }$r=\left( {r_1 ,...,r_n } \right)^\top$\textit{ описывает распределение людей по web-страницам в момент наблюдения }$T_0 $. Заметим, что константу $C$ здесь 
можно оценить сверху числом 4. Неравенства~(\ref{eq9}),~(\ref{eq10}) являются представителям 
класса \textit{неравенств концентрации меры}, играющего важную роль 
в современной теории вероятностей и ее приложениях.

Если запустить $N\sim \varepsilon ^{-2}\ln \left( {\sigma ^{-1}} \right)$ 
человек, дав каждому сделать $T_0 \sim \alpha ^{-1}\ln \left( {n 
\mathord{\left/ {\vphantom {n \varepsilon }} \right. 
\kern-\nulldelimiterspace} \varepsilon } \right)$ шагов (стоимость шага 
$\sim \log _2 n)$, то (вообще говоря, случайный) вектор $r \mathord{\left/ 
{\vphantom {r N}} \right. \kern-\nulldelimiterspace} N$, характеризующий 
распределение людей по web-страницам на шаге $T_0 $, с вероятностью не 
меньше $1-\sigma $ обладает таким свойством: $\left\| {r \mathord{\left/ 
{\vphantom {r N}} \right. \kern-\nulldelimiterspace} N-\nu } \right\|_2 \le 
\varepsilon $. Это следует из оценки~(\ref{eq10}). Таким образом, мы приходим к 
оценке сложности (параллельного варианта) метода MCMC:
\[
C\cdot \left( {n+\frac{\log _2 n\cdot \ln \left( {n \mathord{\left/ 
{\vphantom {n \varepsilon }} \right. \kern-\nulldelimiterspace} \varepsilon 
} \right)\ln \left( {\sigma ^{-1}} \right)}{\alpha \varepsilon ^2}} 
\right),
\]
которая с точностью до логарифмического множителя совпадает с оценкой, 
приведенной в табл.~1. Однако отличие данного подхода в том, 
что можно пустить блуждать людей параллельно. То есть можно распараллелить 
работу программы на $N$ процессорах. Разумеется, можно распараллелить и на 
любом меньшем числе процессоров, давая процессору следить сразу за 
несколькими людьми. Можно еще немного <<выиграть>>, если сначала запустить 
меньшее число людей, но допускать, что со временем люди случайно производят 
клонов, которые с момента рождения начинают жить независимой жизнью так, 
чтобы к моменту $T_0 $ наблюдалось, как и раньше, $N$ человек.

Пока мы не пояснили, в 
каком смысле и почему выбранный способ рассуждения оптимален. 
Для большей наглядности снова вернемся к 
случаю $n=2$ и заметим, что оценка $r \mathord{\left/ {\vphantom {r N}} 
\right. \kern-\nulldelimiterspace} N$ неизвестного параметра $\nu _1 $ может 
быть получена следующим образом:
\begin{equation}
\label{eq11}
r \mathord{\left/ {\vphantom {r N}} \right. \kern-\nulldelimiterspace} 
N=\arg \mathop {\max }\limits_{\nu \in \left[ {0,1} \right]} \;\nu ^r\cdot 
\left( {1-\nu } \right)^{N-r}.
\end{equation}

Действительно, максимум у функций $f_1 \left( \nu \right)=\nu ^r\cdot \left( 
{1-\nu } \right)^{N-r_1 }$ и
\[
f_2 \left( \nu \right)=\ln \left( {\nu ^r\cdot \left( {1-\nu } 
\right)^{N-r}} \right)=r\ln \nu +\left( {N-r} \right)\ln \left( {1-\nu } 
\right)
\]
достигается в одной и той же точке, поэтому рассмотрим задачу
$$f_2 \left( 
\nu \right)\to \mathop {\max }_{\nu \in \left[ {0,1} \right]}. $$
Из \textbf{\textit{принципа Ферма}} (максимум 
лежит либо на границе, либо среди точек, в 
которых производная обращается в ноль) получаем условие ${df_2 \left( \nu 
\right)} \mathord{\left/ {\vphantom {{df_2 \left( \nu \right)} {d\nu }}} 
\right. \kern-\nulldelimiterspace} {d\nu }=0$, которое в данном случае 
примет вид
\[
\frac{r}{\nu }-\frac{N-r}{1-\nu }=0,
\]
т.е.
\[
\frac{r \mathord{\left/ {\vphantom {r N}} \right. \kern-\nulldelimiterspace} 
N}{\nu }=\frac{1-r \mathord{\left/ {\vphantom {r N}} \right. 
\kern-\nulldelimiterspace} N}{1-\nu }.
\]
Отсюда и следует формула~(\ref{eq11}).

\textit{Функция правдоподобия}, стоящая в правой части~(\ref{eq11}), отражает вероятность конкретного\footnote{ 
Поскольку <<конкретного>>, то не нужно писать биномиальный коэффициент.} 
распределения людей по вершинам (web-страницам), для которого число людей в 
первой вершине равно $\nu $. То~есть оценка $r \mathord{\left/ {\vphantom 
{r N}} \right. \kern-\nulldelimiterspace} N$ может быть проинтерпретирована 
как \textit{оценка максимального правдоподобия}. Указанный выше 
способ построения оценок является общим способом получения хороших оценок 
неизвестных параметров. 
А именно, пусть 
есть схема эксперимента (параметрическая модель), согласно которой можно 
посчитать вероятность того, что мы будем наблюдать то, что наблюдаем. Эта 
вероятность (плотность вероятности) зависит от неизвестного вектора 
параметров $\theta $. Будем максимизировать эту вероятность по параметрам 
этого вектора при заданных значениях наблюдений, т.е. при заданной выборке. Тогда получим 
зависимость неизвестных параметров от выборки $X$. Именно эту зависимость 
$\hat {\theta }^N\left( X \right)$ в общем случае и называют \textit{оценкой максимального правдоподобия} вектора 
неизвестных параметров. Оказывается (\textbf{\textit{теорема Фишера}}), \textit{что в случае выполнения довольно общих условий регулярности используемой параметрической модели такая оценка} 
\textit{является асимптотически оптимальной} (\textbf{\textit{теория ле Кама}}). Последнее 
можно понимать так, что для каждой отдельной компоненты $k$ вектора $\theta 
$ можно написать неравенство, аналогичное~(\ref{eq9}), справедливое с вероятностью не 
меньшей $1-\sigma $:
\[
\left| {\hat {\theta }_k^N \left( X \right)-\theta _k } \right|\le C_{k,N} 
\left( \theta \right)\sqrt {\frac{\ln \left( {\sigma ^{-1}} \right)}{N}} 
\]
c $C_{k,N} \left( \theta \right)\to C_k \left( \theta \right)$ при $N\to 
\infty $, где $N$ -- объем выборки, т.е. число наблюдений. При этом $C_k 
\left( \theta \right)>0$ являются равномерно по $\theta $ и $k$ наименее 
возможными. Имеется в виду, что если как-то по-другому оценивать $\theta $ 
(обозначим другой способ оценивания $\tilde {\theta }^N\left( X \right))$, 
то для любого $\theta $ и $k$ с вероятностью не меньшей $1-\sigma $
\[
\left| {\tilde {\theta }_k^N \left( X \right)-\theta _k } \right|>\tilde 
{C}_{k,N} \left( \theta \right)\sqrt {\frac{\ln \left( {\sigma ^{-1}} 
\right)}{N}} ,
\]
где $\liminf_{N\to \infty } \,\tilde {C}_{k,N} 
\left( \theta \right)\ge C_k \left( \theta \right)$.

Строго говоря, именно такая зависимость правой части от $\sigma $ имеет 
место не всегда. В общем случае при зафиксированном $N$ с уменьшением 
$\sigma $ правая часть может вести себя хуже, однако при не очень маленьких 
значениях\footnote{Пороговое значение $\sigma _0 $ удовлетворяет следующей 
оценке: $\ln N\ll \ln \sigma _0^{-1} \ll N$.} $\sigma $ написанное верно 
всегда. 

К сожалению, приведенная выше весьма вольная формулировка не отражает в 
полной мере всю мощь теоремы Фишера. Ведь в таком виде открытым остается 
вопрос об оценках совместного распределения отклонений сразу нескольких 
компонент оценки максимального правдоподобия от истинных значений. На самом 
деле можно сформулировать результат (делается это чуть более громоздко) об 
асимптотической оптимальности оценки максимального правдоподобия и в таких 
общих категориях. Мы не будем здесь этого делать.

Основным недостатком теоремы Фишера в приведенном нами 
варианте ле Кама заключается в том, что оценка оптимальна только в пределе 
$N\to \infty $. В реальности же нам дана только конечная выборка, пусть и большого объема. 
Сформулированный выше результат ничего не 
говорит, насколько хорошей будет такая оценка при конечном $N$. Также 
теорема не говорит о том, как получать точные оценки на $C_{k,N} \left( 
\theta \right)$. Теорема лишь предлагает эффективный способ расчета $C_k 
\left( \theta \right)$.\footnote{$C_k \left( \theta \right)$ -- не 
универсальны и зависят от использующейся параметрической модели.} Скажем, 
приведенная нами ранее оценка~(\ref{eq9}), хотя по форме и выглядит так, как нужно, 
но все же она далеко не оптимальна. В~частности, в оптимальный вариант 
оценки~(\ref{eq9}) нужно прописывать под корнем в числителе еще множитель $\nu _1 
\cdot \left( {1-\nu _1 } \right)$, оцененный нами сверху в~(\ref{eq9}) константой $1 
\mathord{\left/ {\vphantom {1 4}} \right. \kern-\nulldelimiterspace} 4. $ Мы 
сделали такую замену в~(\ref{eq9}), чтобы в правую часть неравенства не входил неизвестный параметр~$\nu_1 $. При значениях $\nu _1 $ близких к нулю или 
единице, такая <<замена>> оказывается слишком грубой. Мы привели простой 
пример, в котором мы смогли проконтролировать грубость своих рассуждений. В~общем случае, к сожалению, это не так просто сделать (если говорить о 
потерях в мультипликативных константах типа $C)$. Поэтому если мы хотим 
использовать оценки типа~(\ref{eq9}),~(\ref{eq10}), то борьба за <<оптимальные константы>> 
сводится не только к выбору оптимального способа оценивания неизвестных 
параметров, но и к способу оценивания концентрации вероятностного 
распределения выбранной оценки вокруг истинного значения. За последние 
несколько лет теория ле Кама была существенно пересмотрена как раз в 
контексте отмеченных выше проблем. 

Другая проблема -- это зависимость $C_k $ от неизвестного параметра~$\theta $. Грубо, но практически эффективно, проблема решается подстановкой 
$C_k \left( {\hat {\theta }^N\left( X \right)} \right)$. Более точно надо 
писать неравенство концентрации для $C_k \left( {\hat {\theta }^N\left( X 
\right)} \right)$, исходя из неравенства на $\hat {\theta }^N\left( X 
\right)$. Казалось бы, что возникает <<порочный круг>>, и это приводит к 
иерархии  <<зацепляющихся неравенств>>. Однако если выполнить описанное выше 
огрубление\footnote{ Речь идет об оценке $\nu _1 \cdot \left( {1-\nu _1 } 
\right)\le 1 \mathord{\left/ {\vphantom {1 4}} \right. 
\kern-\nulldelimiterspace} 4$.} для того, чтобы обрезать эту цепочку не 
сразу, а на каком-то далеком вложенном неравенстве, то чем оно дальше, тем к 
меньшей грубости это в итоге приведет. Детали мы также вынуждены здесь 
опустить.

\begin{example}
    Оценить, сколько надо опросить человек на выходе с избирательных 
участков большого города, чтобы с точностью 5{\%} (0.05) и с вероятностью 
0.99 оценить победителя во втором туре выборов (два кандидата, графы против 
всех нет). Для решения этой задачи рекомендуется воспользоваться 
неравенством~(\ref{eq9}) с явно выписанными константами:
\[
\left| {\frac{r}{N}-\nu _1 } \right|\le \frac{1}{2}\sqrt {\frac{\ln \left( 
{2 \mathord{\left/ {\vphantom {2 \sigma }} \right. 
\kern-\nulldelimiterspace} \sigma } \right)}{N}}.\text{ \EndEx}
\]
\vskip5mm
\end{example}

\newpage
\subsection{Модель Бакли–-Остгуса и степенные законы}

С точки зрения практических нужд (например, для разработки алгоритмов борьбы со спамом), модели web-графа оказываются зачастую более востребованными, чем сами вебграфы ввиду того, что вебграфы имеют слишком большие для анализа размеры. Кроме того, с помощью моделей можно исследовать различные закономерности, присущие web-графу.
Так,
Интернету и многим социальным сетям присущи определенные хорошо изученные
закономерности: наличие гигантской компоненты, правило пяти рукопожатий,
степенной закон для распределения степеней вершин, специальные свойства
кластерных коэффициентов и т.д. Хотелось бы
предложить такую модель формирования этих сетей, которая
бы объясняла все эти закономерности. Построив такую модель, с помощью
фундаментальной науки мы можем открывать
новые свойства, присущие изучаемой сети, исследуя лишь свойства выбранной модели. Такие
исследования позволяют в дальнейшем использовать полученные результаты при
разработке алгоритмов для реальных сетей, в том числе Интернета.

Одной из лучших на текущий момент моделей web-графа считается модель
Бакли--Остгуса~\cite{Raigor}. Именно ее мы и будем
рассматривать. Новым свойством, которое мы хотим \gav{проверить}, будет степенной
закон распределения компонент вектора PageRank, посчитанного для графа,
сгенерированного по этой модели. Далее мы описываем модель Бакли--Остгуса и
выводим\footnote{ Не строго, а в так называемом \textit{термодинамическом пределе}.} степенной
закон распределения степеней вершин графа, построенного по этой модели.
Именно этот закон приводит к степенному закону распределения компонент
вектора PageRank.

Итак, рассмотрим следующую модель роста сети.

\textit{База индукции.} Сначала имеется всего одна вершина, которая ссылается сама на себя (вершина
с петлей).

\textit{Шаг индукции.} Предположим, что уже имеется некоторый ориентированный граф. Добавим в граф новую вершину. С вероятностью $\beta >0$ из этой вершины
проведем ребро равновероятно в одну из существующих вершин, а с
вероятностью $1-\beta $ из этой вершины проводится ребро в одну из
существующих вершин не равновероятно, а с вероятностями, пропорциональными
входящим степеням вершин\footnote{ Выходящая степень всех вершин одинакова и
равна 1.} -- \textit{правило предпочтительного присоединения} (от англ. preferential attachment).

Другими словами, если уже построен граф из $n-1$ вершины, то~новая $n$-я вершина
сошлется на вершину $i=1,...,n-1$ с вероятностью
\[
\frac{\text{Indeg} _{n-1} \left( i \right)+a}{\left( {n-1} \right)\left(
{a+1} \right)},
\]
где $\text{Indeg}_{n-1} \left( i \right)$ -- входящая степень вершины $i$ в
графе, построенном на шаге $n-1$. Параметры $\beta $ и $a$ связаны следующим
образом:
\[
a=\frac{\beta }{1-\beta }.
\]
При $a=1$ получается известная модель Боллобаша--Риордана. Интернету наилучшим образом
соответствует значение $a=0.277$~\cite{Raigor}.

Далее вводится число $m$ -- среднее число web-страниц на одном сайте, и
каждая группа web-страниц с номерами $km+1,...,\left( {k+1} \right)m$
объединяется в один сайт. При этом все ссылки, имеющиеся между
web-страницами, наследуются содержащими их сайтами -- получается, что с
одного сайта на другой\footnote{ Впрочем, сайты могут совпадать -- внутри
одного сайта web-страницы также могут друг на друга сослаться.} может быть
несколько ссылок. Пусть, скажем, получилось, что для заданной пары сайтов
таких (одинаковых) ссылок оказалось $l\le m$, тогда мы превращаем их в одну
ссылку, но с весом (вероятностью перехода) $l \mathord{\left/ {\vphantom {l
m}} \right. \kern-\nulldelimiterspace} m$. Именно для так построенного
взвешенного ориентированного графа можно изучать закон
распределения компонент вектора PageRank.

К сожалению, строго доказать, что имеет место степенной закон распределения
компонент вектора PageRank в этой модели, насколько нам известно, пока
никому не удалось. Имеется только один специальный результат на эту тему,
касающийся модели, близкой к модели Боллобаша--Риордана. Однако, чтобы у читателей появилась некоторая
интуиция, почему такой закон может иметь место в данном случае, мы приведем
далее некоторые аргументы.

Сначала установим степенной
закон распределения входящих вершин в модели Бакли--Остгуса. При этом
ограничимся случаем $m=1$. Обозначим через $X_k \left( t \right)$ число
вершин с входящей степенью $k$ в момент времени $t$, т.е. когда в графе
имеется всего $t$ вершин. Заметим, что по определению
\[
t=\sum\limits_{k\ge 0} {X_k \left( t \right)} =\sum\limits_{k\ge 1} {kX_k
\left( t \right)} =\sum\limits_{k\ge 0} {kX_k \left( t \right)} .
\]
Поэтому для $k\ge 1$ вероятность того, что $X_k \left( t \right)$
увеличится на единицу при переходе на следующий шаг $t\to t+1$ по формуле
полной вероятности равна
\[
\beta \frac{X_{k-1} \left( t \right)}{t}+\left( {1-\beta }
\right)\frac{\left( {k-1} \right)X_{k-1} \left( t \right)}{t}.
\]
Аналогично, для $k\ge 1$ вероятность того, что $X_k \left( t \right)$
уменьшится на единицу при переходе на следующий шаг $t\to t+1$
\[
\beta \frac{X_k \left( t \right)}{t}+\left( {1-\beta } \right)\frac{kX_k
\left( t \right)}{t}.
\]
Таким образом, <<ожидаемое>> приращение $\Delta X_k( t) = X_k \left( {t+1} \right)-X_k \left( t \right)$ за
$\Delta t=\left( {t+1} \right)-t=1$ будет\footnote{ Корректная запись: 
\begin{equation*}
\begin{split}
&\EE_{X_{k+1}(t)} \left({\left. {\frac{\Delta X_k \left( t \right)}{\Delta t}} \,\right|\,X_0 \left(t\right),...,X_k \left( t \right)} \right) = \\
&\beta \frac{{X_{k-1} \left(t \right) - X_k \left( t \right)}}{t} + \left( {1-\beta}\right) \frac{\left( {k-1} \right)X_{k-1} \left( t \right)-kX_k \left( t
\right)}{t}.
\end{split}
\end{equation*}
Беря от обеих частей математическое ожидание 
$\mathbb{E}_{X_0 \left( t \right), \dots, X_k \left( t \right)} \left( \cdot \right)$, получим
$$\EE \left({\frac{\Delta X_k(t)}{\Delta t}}\right) = 
\beta \frac{\EE\left( {X_{k-1} \left( t \right)} \right) - \EE\left( {X_k \left(t \right)} \right)}{t} + 
(1-\beta) \frac{\left( {k-1}\right)\EE\left( {X_{k-1} \left( t \right)} \right)-k\EE\left({X_k(t)} \right)}{t}.
$$}
\begin{equation}
\label{eq12}
\frac{\Delta X_k \left( t \right)}{\Delta t} = \beta \frac{{X_{k-1}(t) - X_k(t)}}{t} + ({1-\beta}) \frac{\left( {k-1} \right)X_{k-1} \left( t \right)-kX_k \left( t
\right)}{t}.
\end{equation}
Для $X_0 \left( t \right)$ уравнение, аналогичное~(\ref{eq12}), будет иметь вид
\begin{equation}
\label{eq21}
\frac{\Delta X_0 \left( t \right)}{\Delta t}=1-\beta \frac{X_0 \left( t
\right)}{t}.
\end{equation}
К сожалению, соотношения~(\ref{eq12}),~(\ref{eq21}) -- не есть точные уравнения, описывающие
то, как меняется $X_k \left( t \right)$, хотя бы потому, что изменение $X_k
\left( t \right)$ происходит случайно. Динамика же~(\ref{eq12}),~(\ref{eq21}) полностью
детерминированная. Однако для больших значений $t$, когда наблюдается
концентрация случайных величин $X_k \left( t \right)$ вокруг своих
математических ожиданий, реальная динамика поведения $X_k
\left( t \right)$ и динамика поведения средних значений $X_k \left( t
\right)$ становятся близкими\footnote{ Последняя динамика уже является
детерминированной динамикой.} -- вариация на тему \textit{теоремы Куртца}. Таким образом, на систему~(\ref{eq12}),~(\ref{eq21}) можно
смотреть, как на динамику средних значений, вокруг которых плотно
сконцентрированы реальные значения. Под плотной концентрацией имеется в
виду, что разброс значений величины контролируется квадратным корнем из ее
среднего значения.

Будем искать решение системы~(\ref{eq12}),~(\ref{eq21}) на больших временах ($t\to \infty )$
в виде $X_k \left( t \right)\sim c_k \cdot t$ (иногда такого вида режимы
называют \textit{промежуточными асимптотиками}). Подставляя это выражение в формулы
(\ref{eq12}),~(\ref{eq21}), получим
\[
c_0 =\frac{1}{1+\beta },
\quad
\frac{c_k }{c_{k-1} }=1-\frac{2-\beta }{1+\beta +k\cdot \left( {1-\beta }
\right)}\simeq 1-\left( {\frac{2-\beta }{1-\beta }} \right)\frac{1}{k}.
\]
Откуда получаем следующий \textbf{\textit{степенной закон}}:
\begin{equation}
\label{eq31}
c_k \sim k^{-\frac{2-\beta }{1-\beta }}=k^{-2-a}.
\end{equation}
Заметим, что если построить на основе~(\ref{eq31}) \textit{ранговый закон} распределения вершин по их
входящим степеням, т.е. отранжировать вершины по входящей степени, начиная с
вершины с самой высокой входящей степенью, то также получим степенной закон
\begin{equation}
\label{eq41}
\mbox{in}\deg (r) \sim r^{-1-\beta }.
\end{equation}
Действительно, обозначив для краткости $\mbox{in}\deg (r) $ через $x$,
получим, что нам нужно найти зависимость $x\left( r \right)$, если из
формулы~(\ref{eq31}) известно, что
\[
\frac{dr\left( x \right)}{dx}\sim -x^{-\frac{2-\beta }{1-\beta
}}=-x^{-1-\frac{1}{1-\beta }},
\]
где зависимость $r\left( x \right)$ получается из зависимости $x\left( r
\right)$ как решение уравнения $x\left( r \right)=x$. Остается только
подставить сюда и проверить приведенное соотношение~(\ref{eq41}).

Перейдем теперь к пояснению того, почему может иметь место степенной закон
распределения компонент вектора PageRank. Для этого предположим, что матрица
$P$ имеет вид
\[
P\sim \left[ {\begin{array}{l}
 1^{-\lambda }\;2^{-\lambda }\;3^{-\lambda }\;4^{-\lambda }\;5^{-\lambda
}\;... \\
 1^{-\lambda }\;2^{-\lambda }\;3^{-\lambda }\;4^{-\lambda }\;5^{-\lambda
}\;... \\
 1^{-\lambda }\;2^{-\lambda }\;3^{-\lambda }\;4^{-\lambda }\;5^{-\lambda
}\;... \\
 ......................................\\
 \end{array}} \right].
\]
Такой вид матрицы означает, что для каждого сайта имеет место точный (т.е. не в
вероятностном смысле) степенной закон распределения выходящих степеней вершин,
имеющий одинаковый вид для всех сайтов. Конечно, это намного более сильное
предположение, чем то, что мы выше получили для модели Бакли--Остгуса. Тогда
для выписанной матрицы $P$ выполняются условия единственности вектора
PageRank $\nu $, определяющегося формулой (2). Более того, этот вектор, неизбежно должен
совпадать со строчкой (не важно какой именно -- они одинаковые) матрицы $P$,
т.е.
\[
\nu _k \sim k^{-\lambda }.
\]
Но это и означает, что имеет место степенной закон распределения компонент
вектора PageRank. Разумеется, проведенные рассуждения ни в какой степени
нельзя считать доказательством. Тем не менее мы надеемся, что некоторую
интуицию эти рассуждения читателям все-таки смогли дать.

\gav{В заключении этого} \eduard{раздела} заметим, что можно получить закон~(\ref{eq31}) в более точных
вероятностных категориях. Хотя это можно сделать вполне элементарными
комбинаторными средствами,
тем не менее соответствующие выкладки оказываются достаточно громоздкие,
поэтому мы не приводим их здесь.

\subsection[Элементы теории макросистем\vspace{-1mm}]{Элементы теории макросистем}
В контексте написанного выше хотелось бы отметить, что подобно системе~(\ref{eq12}),
(\ref{eq21}) можно записать \textit{динамику средних} (говорят также
\textit{квазисредних}) и для
макросистемы блуждающих по web-графу людей. А именно,
предположим, что людей достаточно много и что каждый человек в любом
промежутке времени $\left[ {t,t+\Delta t} \right)$ независимо от остальных с
вероятностью, \gav{равной $\Delta t + o(\Delta t)$}, совершает переход по одной из
случайно выбранных согласно матрице $P$ ссылок. Обозначив через $c_k
\left( t \right)$ долю людей, находящихся в момент времени $t$ на
web-странице с номером $k$, получим следующую систему:
\begin{equation}
\label{eq51}
\frac{\Delta c^T\left( t \right)}{\Delta t}=c^T\left( t \right)P-c^T\left( t
\right).
\end{equation}
Формула~(\ref{eq51}) подтверждает вывод о том, что вектор PageRank $\nu $,
удовлетворяющий системе \eqref{eq2}, действительно можно понимать как равновесие
макросистемы. В самом деле, если существует предел $\nu =$\linebreak $=\mathop {\lim
}_{t\to \infty } c\left( t \right)$, то из~(\ref{eq51}) следует, что этот
предел должен удовлетворять \eqref{eq2}. Здесь предел всегда существует, но может зависеть от начального условия. Для
того чтобы предел не зависел от начального условия и был единственным, нужно
сделать предположение о наличии в графе <<Красной площади>>.

Имеется глубокая связь между приведенной выше схемой рассуждений и общими
моделями макросистем, которые, с точки зрения математики, можно понимать как
разнообразные модели стохастической химической кинетики. В частности, система \eqref{eq51} соответствует
\textit{закону действующих масс Гульдберга--Вааге}. При этом важно подчеркнуть, что возможность
осуществлять описанный выше канонический скейлинг, по сути заключающийся в замене
концентраций их средними значениями, обоснована теоремой
Куртца (в том
числе и для нелинейных систем, появляющихся, когда имеются не только
унарные реакции, как в примере с PageRank'ом) только на конечных отрезках
времени. Для бесконечного отрезка\footnote{ А именно эта ситуация нам
наиболее интересна, поскольку, чтобы выйти на равновесие, как правило,
необходимо перейти к пределу $t\to \infty $.} требуются дополнительные
оговорки, например, выполнение условия детального баланса и его обобщений. 
\gav{В}~общем
случае использованные нами предельные переходы (по времени и числу агентов)
не перестановочны, см., например,~\cite[глава 6]{StochAn2016}!

Рассуждения предыдущих пунктов
соответствуют следующему порядку предельных переходов: сначала $t\to \infty
$ (выходим на инвариантную меру/стационарное распределение), потом $N\to
\infty$ (концентрируемся вокруг наиболее вероятного макросостояния
инвариантной меры), а рассуждения этого пункта: сначала $N\to \infty $
(переходим на описание макросистемы на языке концентраций, устраняя
случайность с помощью законов больших чисел), потом $t\to \infty $
(исследуем аттрактор полученной при скейлинге детерминированной, т.е. не
стохастической, системы). Для примеров макросистем, рассмотренных в этом и предыдущем приложениях,   получается один и тот же результат. Более того,
если посмотреть на то, как именно концентрируется инвариантная мера для, то получим, что \textit{концентрация экспоненциальная}
$$
P\left( {r=k} \right)=\frac{N!}{k_1 !\cdot ...\cdot k_n !}\nu _1^{k_1 }
\cdot ...\cdot \nu _n^{k_n } \simeq \exp \left( {-N\cdot KL\left( {k
\mathord{\left/ {\vphantom {k N}} \right. \kern-\nulldelimiterspace} N,\nu }
\right)} \right),
$$
где \textit{функция действия} (\textit{функция Санова}) $KL\!\left( {x,y} \right)=\!-\!\!\sum\limits_{k=1}^n {x_k \ln \left( {{x_k }
\mathord{\left/ {\vphantom {{x_k } {y_k }}} \right.
\kern-\nulldelimiterspace} {y_k }} \right)} $. В других контекстах функцию
$KL$ чаще называют \textit{дивергенцией Куль\-бака--Лейблера }или просто\textit{ энтропией}\footnote{ Отсюда, по-видимому, и пошло, что
<<равновесие следует искать из \textit{принципа максимума энтропии}>>~\cite{Baimurzina2015}.}. При этом функция
$KL\!\left({c\!\left(t\right)\!,\!\nu}\right)$, как функция $t$, монотонно
убывает с ростом $t$ на траекториях системы \eqref{eq51}, т.е. является \textit{функцией Ляпунова}.
Оказывается, этот факт\footnote{ Известный из курса
термодинамики/статистической физики, как \textit{H-теорема Больцмана}.} имеет место и при намного более
общих условиях~\cite{StochAn2016}. Точнее говоря,
сам факт о том, что \textit{функция, характеризующая экспоненциальную концентрацию инвариантной меры, будет функцией Ляпунова динамической системы, полученной в результате скейлинга из марковского процесса, породившего исследуемую инвариантную меру}, имеет место всегда, а вот то, что именно такая
$KL$-функция будет возникать, соответствует макросистемам, удовлетворяющим
обобщенному условию детального баланса (условию
Штюкельберга--Батищевой--Пирогова),\linebreak и~только таким макросистемам
\cite{StochAn2016}.

    \section{Задача о разборчивой невесте}\label{secretary_problem}

Рассматриваемая далее задача о разборчивой невесте также изложена с точки зрения оптимальной остановки марковских процессов в~\cite{Shiryaev2}, где можно посмотреть детали. Популярное изложение имеется в брошюре~\cite{Gusein-Zade}.

На примере решения этой задачи  хотелось бы продемонстрировать элементы теории управляемых марковских процессов и основное уравнение данной теории -- \textit{\textbf{уравнение Вальда--Белл\-мана}}.

\subsection{Формулировка задачи}


Примерно $55$ лет тому назад Мартин Гарднер придумал такую задачу:
<<В некотором царстве, в некотором государстве пришло время принцессе выбирать себе жениха. В назначенный день явились $1000$ царевичей. Их построили в очередь в случайном порядке и стали по одному приглашать к принцессе. Про любых двух претендентов принцесса, познакомившись с ними, может сказать, какой из них лучше. Познакомившись с претендентом, принцесса может либо принять предложение (и тогда выбор сделан навсегда), либо отвергнуть его (и тогда претендент потерян: царевичи гордые и не возвращаются). Какой стратегии должна придерживаться принцесса, чтобы с наибольшей вероятностью выбрать лучшего?>>

\subsection{Оптимальная стратегия}

Везде в дальнейшем будем считать, что все $1000!$ способов расстановки претендентов равновероятны (это означает отсутствие изначально у невесты какой-либо информации о том, где может быть наилучший жених). Под оптимальной стратегией невесты понимается такая стратегия, которая максимизирует вероятность выбора наилучшего жениха. Другими словами, максимизирует число расстановок претендентов, на которых невеста выбирает наилучшего жениха.

\textbf{Оптимальная стратегия невесты}: пропустить первых $1/e$ претендентов (приблизительно треть) и затем выбрать первого наилучшего претендента (если такой появится: среди пропущенных претендентов мог быть самый лучший – в таком случае, никого лучше невеста уже не встретит, т.е. такая стратегия не позволит ей сделать выбор). Такая стратегия позволяет невесте выбрать наилучшего жениха с вероятностью $1/e$.


\subsection{Управляемые марковские процессы}\label{MDP}
Рассмотрим некоторую управляемую марковскую систему, которая со временем $(t = 0, 1, 2, \dots)$ претерпевает случайные изменения. Будем считать, что все время система может находиться лишь в конечном числе возможных состояний $S$. На каждом шаге система находится в одном из этих состояний, и на каждом шаге мы должны управлять системой, выбирая свою стратегию из конечного множества стратегий $A$.
Исходя из того, в каком состоянии $s \in S$ находится система в текущий момент $t$ и какое \textit{действие} было выбрано $a \in A$, можно определить, с какой вероятностью система окажется в следующий момент $t+1$ в состоянии
$s' \in S$. Обозначим эту вероятность $p(s,a;s')$. По определению $$ \sum_{s' \in S} p(s,a;s') = 1.$$
В каждый момент времени мы получаем \textit{вознаграждение} $r(s,a)$ ($r$ -- reward), зависящее от состояния системы $s$ и выбранного действия $a$  $(\mathbb{E} \, r(s,a) = R(s,a))$.

Цель (управления) – получить максимальное итоговое вознаграждение:
$$ V^*(s) = \max\limits_{a(\cdot)} \, \mathbb{E} \sum\limits_{t=0}^{\infty} \gamma^t r(s_t, a(s_t)), \ s_0=s.$$
В данной записи $\mathbb{E}(\cdot)$  обозначает математическое ожидание (среднее значение), максимум по $a(\cdot)$ означает, что мы должны оптимизировать по конечному множеству (мощности $|A|^{|S|}$ ) всевозможных функций $a : S \to A$, параметр $\gamma \in (0,1]$ обычно называют \textit{коэффициентом дисконтирования} (этот параметр отвечает, грубо говоря, за то, как обесцениваются деньги – считаем, что вознаграждение денежное), $s_t$ – состояние системы в момент времени $t$. Функция $V^*(s)$ – \textit{функция цены} (ожидаемый выигрыш при использовании оптимальной стратегии, если система в начальный момент находилась в состоянии $s$).

\subsection{Принцип динамического программирования} 

Функция $V^*(s)$ удовлетворяет \textbf{уравнению Вальда–-Беллмана}: $$ V^*(s) = \max\limits_{a \in A} \left( R(s,a) + \gamma \sum\limits_{s' \in S} p(s,a;s') V^*(s') \right),$$
а оптимальная стратегия может быть найдена из условия
\begin{equation}\label{eq:opt_startegy}
 a(s) = \argmax\limits_{a \in A} \left( R(s,a) + \gamma \sum\limits_{s' \in S} p(s,a;s') V^*(s') \right).
\end{equation}
Идея доказательства (здесь $s=s_0, s_1=s'$):
\begin{gather*}
\begin{split}
V^*(s) &= \max\limits_{a(\cdot)} \, \mathbb{E} \sum\limits_{t=0}^{\infty} \gamma^t r(s_t, a(s_t))=
\\&=
\max\limits_{a(\cdot)} \, \mathbb{E} \left( r(s_0, a(s_0)) + \gamma \sum\limits_{t=0}^{\infty} \gamma^t r(s_{t+1}, a(s_{t+1})) \right)= \\&=
\max\limits_{a \in A} \left( R(s_0, a) + \gamma \mathbb{E}_{s_1} V^*(s_1) \right)= 
\\&=
\max\limits_{a \in A} \left( R(s, a) + \gamma \sum\limits_{s_1} p(s,a;s_1) V^*(s_1) \right).
\end{split}
\end{gather*}
В общем случае искать решение \eqref{eq:opt_startegy} сложно! Но для задачи о разборчивой невесте, все можно сделать явно!

\subsection[Поиск оптимальной стратегии невесты\vspace{-1mm}]{Поиск оптимальной стратегии невесты}

Введем управляемую марковскую систему, соответствующую задаче. Положим $\gamma = 1, \ S = \{1,2, \dotsc, N, End \}, \ N = 1000$. Если система оказалась в состоянии $s$, то это соответствует тому, что претендент, вошедший $s$-м по порядку, оказался лучше всех предыдущих. Определим множество стратегий (действий невесты). Довольно очевидно, что возможны всего два действия $A = \{\text{не выбрать, выбрать}\}$. <<Фиктивное>> состояние $End$ наступает либо на следующем шаге, после того как невеста пропустила (не выбрав) лучшего жениха, либо на следующем шаге, после шага, когда невеста сделала свой выбор. Исходя из такого описания, можно посчитать соответствующие \gav{функции вознаграждения и вероятности переходов}: 
\begin{gather*}
R(s=End, \, a = \text{выбрать}) = 0, \ R(s, \, a = \text{не выбрать}) = 0; \\ R(s, \, a = \text{выбрать}) = s / N, \ s=1, \dots, N,
\end{gather*}

\noindent поскольку
$$ 
r(s, a = \text{выбрать}) = \begin{cases} 1, & \text{с вероятностью} \ s/N, \\ 
0, & \text{с вероятностью} \ 1-s/N. \end{cases}
$$
С переходными вероятностями немного сложнее:
\begin{gather*}
p(s, \, a = \text{выбрать}; \, s') = 0, \ s' \neq End, \\
p(s, \, a = \text{выбрать}; \, s' = End) = 1; \\
p(s=End, \, a, \, s' = End) = 1; \\
p(s, \, a = \text{не выбрать}; \, s'=End) = \\
{\displaystyle =\mathbb{P}}\left( 
\begin{split} s\text{-й претендент \ag{лучше} всех, если \ \ } \\ 
\text{известно только, что он лучше предыдущих} \end{split}
\right) = \frac{s}{N}; \\
p(s, \, a = \text{не выбрать}; \, s') = \\
{\displaystyle =\mathbb{P}} \left( 
\begin{split} s'\text{-й претендент -- первый кто, лучше $s$-го,} \\ 
\text{если известно, что $s$-й лучше предыдущих} \end{split}
\right) = \\
=\frac{{\displaystyle \mathbb{P}} \left(
\begin{split} s\text{-й претендент лучше предыдущих и \ \ } \\
s'\text{-й претендент -- первый кто, лучше $s$-го} \end{split}
\right)}{{\displaystyle \mathbb{P}}(s\text{-й претендент лучше предыдущих})} = \frac{s}{s' (s'-1)},
\end{gather*}

\noindent поскольку
\begin{gather*}
{\displaystyle \mathbb{P}} (s\text{-й претендент лучше предыдущих}) = \frac{(s-1)!}{s!} = \frac{1}{s}, \\
{\displaystyle \mathbb{P}} \left(
\begin{split} s\text{-й претендент лучше предыдущих и} \\
s'\text{-й -- первый кто, лучше $s$-го \ \ \ \  } \end{split}
\right) = \frac{(s'-2)!}{s'!} = \frac{1}{s'(s'-1)}.
\end{gather*}
\newpage
\noindent Граф, отвечающий марковской цепи в задаче о разборчивой невесте.
\begin{figure}[!h] 
\centering
    \begin{tikzpicture}[scale=0.2]
    \tikzstyle{every node}+=[inner sep=0pt]
    \draw [black] (15,-26.4) circle (3);
    \draw (15,-26.4) node {$1$};
    \draw [black] (25.9,-26.4) circle (3);
    \draw (25.9,-26.4) node {$2$};
    \draw [black] (38.9,-26.4) circle (3);
    \draw (38.9,-26.4) node {$S$};
    \draw [black] (51.1,-26.4) circle (3);
    \draw (51.1,-26.4) node {$S'$};
    \draw [black] (63.1,-26.4) circle (3);
    \draw (63.1,-26.4) node {$N$};
    \draw [black] (38.9,-42.7) circle (3);
    \draw (38.9,-42.7) node {$End$};
    \draw [black] (18,-26.4) -- (22.9,-26.4);
    \fill [black] (22.9,-26.4) -- (22.1,-25.9) -- (22.1,-26.9);
    \draw [black] (17.424,-24.639) arc (121.309:58.691:18.331);
    \fill [black] (36.48,-24.64) -- (36.05,-23.8) -- (35.53,-24.65);
    \draw [black] (17.24,-24.407) arc (128.29474:51.70526:25.512);
    \fill [black] (48.86,-24.41) -- (48.54,-23.52) -- (47.92,-24.3);
    \draw [black] (16.917,-24.094) arc (137.41614:42.58386:30.061);
    \fill [black] (61.18,-24.09) -- (61.01,-23.17) -- (60.27,-23.84);
    \draw [black] (40.223,-45.38) arc (54:-234:2.25);
    \draw (38.9,-49.95) node [below] {$1$};
    \fill [black] (37.58,-45.38) -- (36.7,-45.73) -- (37.51,-46.32);
    \draw [black] (36.16,-41.478) arc (-115.20876:-133.37987:72.908);
    \fill [black] (36.16,-41.48) -- (35.65,-40.69) -- (35.22,-41.59);
    \draw [black] (36.604,-40.77) arc (-131.9319:-150.92015:45.374);
    \fill [black] (36.6,-40.77) -- (36.34,-39.86) -- (35.67,-40.61);
    \draw [black] (38.9,-29.4) -- (38.9,-39.7);
    \fill [black] (38.9,-39.7) -- (39.4,-38.9) -- (38.4,-38.9);
    \draw (39.4,-34.55) node [right] {$p=\frac{S}{N}$};
    \draw [black] (49.851,-29.127) arc (-26.69552:-46.93171:41.26);
    \fill [black] (41.16,-40.73) -- (42.09,-40.55) -- (41.41,-39.82);
    \draw [black] (60.87,-28.406) arc (-48.9409:-63.13434:94.04);
    \fill [black] (41.6,-41.39) -- (42.54,-41.47) -- (42.09,-40.58);
    \draw (32.4,-25.9) node [above] {$...$};
    \draw (45,-25.9) node [above] {$...$};
    \draw (57.1,-25.9) node [above] {$...$};
    \draw [black] (48.485,-27.867) arc (-64.42214:-115.57786:23.129);
    \fill [black] (48.49,-27.87) -- (47.55,-27.76) -- (47.98,-28.66);
    \draw [black] (60.577,-28.021) arc (-59.95797:-120.04203:32.113);
    \fill [black] (60.58,-28.02) -- (59.63,-27.99) -- (60.13,-28.85);
    \draw [black] (41.268,-24.564) arc (122.98673:57.01327:17.875);
    \fill [black] (60.73,-24.56) -- (60.33,-23.71) -- (59.79,-24.55);
    \draw [black] (35.996,-27.141) arc (-79.88175:-100.11825:20.468);
    \fill [black] (36,-27.14) -- (35.12,-26.79) -- (35.3,-27.77);
    \draw [black] (37.277,-23.896) arc (202.68317:-22.68317:8.37);
    \fill [black] (52.72,-23.9) -- (53.49,-23.35) -- (52.57,-22.97);
    \draw (45,-11.8) node [above] {$p=\frac{S}{S'(S'-1)}$};
    \draw [black] (60.151,-26.939) arc (-83.0576:-96.9424:25.238);
    \fill [black] (60.15,-26.94) -- (59.3,-26.54) -- (59.42,-27.53);
    \end{tikzpicture}
\text{a = не выбрать}
\end{figure}
\begin{figure}[!h]
\centering
    \begin{tikzpicture}[scale=0.2]
    \tikzstyle{every node}+=[inner sep=0pt]
    \draw [black] (16.6,-25.2) circle (3);
    \draw (16.6,-25.2) node {$1$};
    \draw [black] (26.4,-25.2) circle (3);
    \draw (26.4,-25.2) node {$2$};
    \draw [black] (36.8,-25.2) circle (3);
    \draw (36.8,-25.2) node {$S$};
    \draw [black] (47.4,-25.2) circle (3);
    \draw (47.4,-25.2) node {$S'$};
    \draw [black] (57.7,-25.2) circle (3);
    \draw (57.7,-25.2) node {$N$};
    \draw [black] (36.8,-41) circle (3);
    \draw (36.8,-41) node {$End$};
    \draw [black] (18.96,-27.05) -- (34.44,-39.15);
    \fill [black] (34.44,-39.15) -- (34.11,-38.26) -- (33.5,-39.05);
    \draw (25.69,-33.6) node [below] {$1$};
    \draw [black] (28.05,-27.71) -- (35.15,-38.49);
    \fill [black] (35.15,-38.49) -- (35.13,-37.55) -- (34.29,-38.1);
    \draw (32.21,-31.77) node [right] {$1$};
    \draw [black] (36.8,-28.2) -- (36.8,-38);
    \fill [black] (36.8,-38) -- (37.3,-37.2) -- (36.3,-37.2);
    \draw (37.3,-33.1) node [right] {$1$};
    \draw [black] (45.73,-27.69) -- (38.47,-38.51);
    \fill [black] (38.47,-38.51) -- (39.33,-38.12) -- (38.5,-37.57);
    \draw (42.71,-34.44) node [right] {$1$};
    \draw [black] (55.31,-27.01) -- (39.19,-39.19);
    \fill [black] (39.19,-39.19) -- (40.13,-39.11) -- (39.53,-38.31);
    \draw (48.25,-33.6) node [below] {$1$};
    \draw (31.6,-24.7) node [above] {$...$};
    \draw (42.1,-24.7) node [above] {$...$};
    \draw (52.55,-24.7) node [above] {$...$};
    \draw [black] (38.123,-43.68) arc (54:-234:2.25);
    \draw (36.8,-48.25) node [below] {$1$};
    \fill [black] (35.48,-43.68) -- (34.6,-44.03) -- (35.41,-44.62);
    \end{tikzpicture}
    \text{a = выбрать}
\end{figure}


\newpage

Теперь можно выписать уравнение Вальда–Беллмана
$$ V^*(s) = \max\limits_{a \in A} \left( R(s,a) + \gamma \sum\limits_{s' \in S} p(s,a;s') V^*(s') \right), $$
$$ V^*(s) = \max \left( \frac{s}{N}, \sum\limits_{s'=s+1}^{N} \frac{s}{s'(s'-1)} V^*(s') \right), \ s = 1, \dotsc, N-1; \ V^*(N) = 1. $$

Если максимум достигается на первом аргументе, то $a(s) = $ \ag{выбрать}, если на втором, то $a(s)=$ не выбрать. Оказывается, что данное уравнение можно явно разрешить.

Для этого определим $s^*(N)$ из уравнения
$$ \frac{1}{s^*} + \frac{1}{s^*+1} + \dots + \frac{1}{N-1} \leq 1 \leq \frac{1}{s^*-1} + \frac{1}{s^*} + \dots + \frac{1}{N-1}.$$

Можно показать, что $$s^*(N) \simeq \left[ \frac{N}{e} \right].$$

Введем
$$ V^* = \frac{s^*(N)-1}{N} \left( \frac{1}{s^*(N)-1} + \frac{1}{s^*(N)} + \dots + \frac{1}{N-1} \right) \simeq \frac{1}{e}. $$

Тогда
$$ V^*(s) = \begin{cases} V^*, 1 \leq s \leq s^*(N), \\ 
s/N, s \geq s^*(N). \end{cases} $$

    \section{Задача о многоруких бандитах}\label{bandit_problem}

В данном разделе будет рассмотрена другая популярная задача на управляемые марковские процессы -- задача о многоруких бандитах~\cite{Bubeck,Cesa-Bianchi,SuttonBarto,banditalgs}. В отличие от задачи о разборчивой невесте, явно решить здесь уравнение Вальда--Беллмана не получается. Тем не менее существуют различные достаточно эффективные способы приближенного решения данной задачи, которые хотелось бы продемонстрировать.

\subsection{Формулировка задачи} Имеется $n$ ручек (игровых автоматов). Дергая ручку $i = 1,\dotsc,n$, мы каждый раз (независимо ни от чего) с вероятностью $p_i$ получаем один рубль, а с вероятностью $1 - p_i$ ничего не получаем.  Разрешает\eg{с}я сделать $N \gg 1$ шагов, на каждом шаге можно дергать только одну ручку. Величины $p_i$ априорно не известны. Однако известно, что $p_i$ <<были приготовлены>> следующим образом: $p_i\in\mathrm{R}(0,1)$, $i = 1,\dotsc,n$. Требуется найти оптимальную (максимизирующую ожидаемый доход) стратегию выбора ручек на шагах.

\subsection{Уравнение Вальда--Беллмана}


Основной задачей данного раздела является сопоставление описанному процессу выбора ручек управляемого марковского процесса и получение соответствующего уравнения Вальда--Беллмана. 

Прежде всего, определим пространство состояний ($w$ -- win, $l$ -- lose):  $s = \left(w_1,l_1; \dotsc; w_n,l_n\right)$, где $\sum_{i=1}^n (w_i + l_i) = k \le N$, $k$ -- номер шага, $w_i$ -- сколько выигрышей было связано с ручкой $i$ к шагу $k$ (т.е. сколько рублей нам принесла $i$-я ручка к шагу $k$), 
$l_i$ -- сколько неудач было связано с ручкой $i$ к шагу $k$. Стратегией является выбор на каждом шаге одной из ручек $a(s)\in\left\{ 1,\dotsc,n\right\}$, исходя из истории выигрышей, имеющейся к данному шагу. 

В постановке задачи ничего явно не говорится относительно вероятностей переходов из одного состояния в другое:
$$\left(w_1,l_1; \dotsc; w_i,l_i; \dotsc; w_n,l_n\right) \to \left(w_1,l_1; \dotsc; w_i +1,l_i; \dotsc;  w_n,l_n\right),$$
$$\left(w_1,l_1; \dotsc; w_i,l_i; \dotsc; w_n,l_n\right) \to \left(w_1,l_1; \dotsc; w_i,l_i+1; \dotsc;  w_n,l_n\right).$$
Для того чтобы определить вероятности таких переходов (обозначим их соответственно $\rho^w_i(w_i,l_i)$ и $\rho^l_i(w_i,l_i) = 1 - \rho^w_i(w_i,l_i)$), заметим, что если априорно $p_i\in\mathrm{B}(w,l)$, т.е. плотность распределения $p_i$ имеет вид
\begin{equation*}
   \frac{(w+l+1)!}{w! l!}x^{w}(1-x)^{l},  x\in[0,1],
\end{equation*}
то (по формуле Байеса) апостериорное распределение $$p_i\,|\,(w_i,l_i) \in \mathrm{B}(w+w_i,l+l_i).$$ Заметим, что $\mathrm{B}\gav{(0,0)} = \mathrm{R}(0,1)$, поэтому $$\rho^w_i(w_i,l_i)\,|\,(w_i,l_i)\in \mathrm{B}(w_i,l_i).$$ В этой связи говорят, что бета-распределение является сопряженным к схеме испытаний Бернулли. 

Функция вознаграждения также определяется по введенным вероятностям: если была выбрана ручка $i$ (с историей $(w_i,l_i)$), то вознаграждение равно 1 рубль с вероятностью $\rho^w_i(w_i,l_i)$ и 0 рублей с вероятностью $\rho^l_i(w_i,l_i)$. 

Подчеркнем, что появление в постановке задачи априорного распределения на $p_i$ привело к тому, что вероятности $\rho^w_i(w_i,l_i)$, $\rho^l_i(w_i,l_i)$, в свою очередь, являются случайными величинами, и при выводе уравнения Вальда--Беллмана следует это учитывать, беря дополнительное (условное/апостериорное) усреднение.

Итак, уравнение Вальда--Беллмана в данном случае будет иметь следующий вид (по постановке задачи $\gamma = 1$):
$$V^*\left(w_1,l_1; \dotsc; w_n,l_n\right) = $$
$$=\max_{i=1,\dotsc,n}\EE_{\rho^w_i,\rho^l_i}\bigg(\rho^w_i(w_i,l_i)\cdot\left(1 + \gamma V^*(w_1,l_1; \dotsc; w_i +1,l_i; \dotsc;  w_n,l_n)\right)+ $$
$$+\rho^l_i(w_i,l_i) \gamma V^*(w_1,l_1; \dotsc; w_i,l_i+1; \dotsc;  w_n,l_n)\,\big|\,(w_i,l_i)\bigg)=$$
$$=\max_{i=1,\dotsc,n}\bigg(\frac{w_i+1}{w_i+l_i+2}\left( 1+\gamma V^*(w_1,l_1; \dotsc; w_i +1,l_i; \dotsc;  w_n,l_n)\right)+$$
$$+ \frac{l_i+1}{w_i+l_i+2}\gamma V^*(w_1,l_1; \dotsc; w_i,l_i+1; \dotsc;  w_n,l_n)\bigg).$$
Полагая 
\begin{center}
$V^*\left(w_1,l_1; \dotsc; w_n,l_n\right) = 0$ при $\sum_{i=1}^n (w_i + l_i) > N,$ 
\end{center}
можно попробовать получить решение этого уравнения. Однако для этого придется проделать экспоненциально много (по $N$) вычислений и использовать экспоненциально большие ресурсы памяти, что типично для динамического программирования~\cite{Bertsekas}. 

\subsection{Индексы Гиттинса} Возможным способом решения отмеченной выше проблемы является использование специфики задачи, связанное с допустимостью представления решения в виде индексной стратегии. 

Для простоты будем считать в этом и следующем разделах, что $\gamma < 1$ и $N \to \infty$. Рассмотрим сначала случай, когда всего две ручки, вероятность на одной из которых известна и равна $p$. Обозначим функцию выигрыша в этом случае $V^*(w,l;p)$. Уравнение Вальда--Беллмана для такой функции будет иметь вид 
\begin{eqnarray*}
V^*(w,l;p) &=& \max\bigg(\frac{p}{1-\gamma}, \frac{w+1}{w+l+2}\left[1+\gamma V^*(w+1,l;p)\right]+\\
&& + \ \frac{l+1}{w+l+2}\gamma V^*(w,l+1;p)\bigg).
\end{eqnarray*}
Заметим, что при $w+l \gg 1$ значение этой функции с хорошей точностью можно определять, как $V^*(w,l;p) = \left(1-\gamma\right)^{-1}\max\left(p,w/(w+l)\right)$. Заметим также, что неточность, допущенная в этой формуле, при каждом последующем уменьшении номера шага $k = w+l$ на 1 согласно уравнению Вальда--Беллмана будет уменьшаться в $\gamma$ раз, т.е. нет необходимости следовать условию $N \to \infty$, достаточно задать $V^*(w,l;p)$ лишь при больших значениях $N = w + l$, как указано выше. Таким образом, можно определить при каждом $p$ функцию $V^*(w,l;p)$ и сделать это намного эффективнее (с полиномиальной сложностью), чем в предыдущем пункте. 

Далее определим \textit{индекс Гиттинса} ручки с историей $(w,l)$, как $p_{\gamma}(w,l)$ из решения уравнения (относительно $p$):
$$V^*(w,l;p)=\frac{p}{1-\gamma}.$$ При $w+l \to \infty$ имеем $p_{\gamma}(w,l)\to \left(1-\gamma\right)^{-1}w/(w+l)$. Заметим, что индекс $p_{\gamma}(w,l)$ можно считать в целом известной функцией, значения которой хорошо протабулированы. Используя эту функцию (индекс) и принцип Вальда--Беллмана, несложно заметить, что максимум в правой части уравнения Вальда--Беллмана из предыдущего пункта будет достигаться на той ручке, у которой наибольший индекс $p_{\gamma}(w_i,l_i)$~\cite{Gittins}. Таким образом, решение задачи о многоруких бандитах можно получить в предположении наличия таблицы значений $p_{\gamma}(w,l)$.  

\subsection{$Q$-обучение} В предыдущих разделах проблема отсутствия информации об управляемом марковском процессе, отвечающем задаче о многоруких бандитах, была решена в итоге с помощью индексов Гиттинса за счет байесовского подхода -- задания априорного распределения. В общем случае возможность использовать байесовский подход в исследовании управляемых марковских процессов предполагает наличие вероятностной модели для переходных вероятностей и вознаграждений, зависящей от неизвестных параметров. Сам факт наличия такой параметрической модели далеко не всегда имеет место. Но еще более специальное предположение, которое мы существенным образом использовали, -- это некоторая симметричность постановки задачи, позволившая искать решение в виде индексной стратегии. Естественно, возникает вопрос: что же делать в общем случае, когда ничего не известно и все, что можно делать, это только наблюдать за процессом и обучаться?

Для ответа на поставленный вопрос вернемся к достаточно общей модели управляемого марковского процесса с конечным числом состояний и стратегий (действий). Будем использовать общие обозначения, введенные в разделе~\ref{MDP}. Только сделаем для удобства обозначений, одно небольшое уточнение: будем считать, что функция вознаграждения всецело определяется текущим состоянием, выбранной стратегией и состоянием, в которое перейдет процесс на следующем шаге. Таким образом, предполагается, что случайность в функции вознаграждения всецело определяется состоянием, в которое переходит процесс. Заметим, что и примеры с разборчивой невестой и с многорукими бандитами подходят под это предположение. В сделанных предположениях уравнение Вальда--Беллмана будет иметь вид
$$V^*(s) = \max\limits_{a \in A}  \sum\limits_{s' \in S} p(s,a;s') \left(r(s,a;s') + \gamma V^*(s') \right).$$
Введем $Q$-функцию
$$Q(s,a) =   \sum\limits_{s' \in S} p(s,a;s') \left(r(s,a;s') + \gamma V^*(s') \right).$$
Несложно заметить, что
$$V^*(s) = \max\limits_{a \in A}Q(s,a).$$
Следовательно, $Q$-функция должна удовлетворять \gav{\textit{$Q$-уравнению}}: 
$$Q(s,a) =   \sum\limits_{s' \in S} p(s,a;s') \left(r(s,a;s') + \gamma  \max\limits_{a' \in A}Q(s',a') \right).$$
Данное уравнение может быть решено методом последовательных (простых) итераций. Действительно, если смотреть на $Q = \{Q(s,a)\}_{s\in S, a \in A}$, как на вектор, то $Q$-уравнение можно записать в операторном виде $Q = H(Q)$ (метод простых итераций будет иметь вид $Q_{t+1} = H(Q_t)$), где по определению можно показать, что оператор в правой части $H$ является сжимающим с коэффициентом $\gamma$ в бесконечной норме: $$\max_{s\in S, a \in A} \left| H(\tilde{Q}(s,a)) - H(Q(s,a)) \right|\le \gamma \max_{s\in S, a \in A} \left| \tilde{Q}(s,a) - Q(s,a) \right|.$$ Однако это не решает отмеченные проблемы с отсутствием какой-либо информации о функциях $r(s,a;s')$ и $p(s,a;s')$. Основная идея $Q$-обуче\-ния заключается в замене невычислимой правой части в уравнении $Q_{t+1} = H(Q_t)$ на ее вычислимую несмещенную оценку: 
$$
Q_{t+1}(s,a) =   Q_{t}(s,a) +$$
\begin{equation}\label{Q}
+\alpha_t(s,a)\left(r(s,a;s'(s,a)) + \gamma  \max\limits_{a' \in A}Q_t(s'(s,a),a') - Q_{t}(s,a)\right),
\end{equation}
где $s'(s,a)$ -- положение процесса на шаге $t+1$, если на шаге $t$ процесс был в состоянии $s$ и было выбрано действие $a$. Если на шаге $t$ процесс находился в состоянии $s$ и было выбрано действие $a$, то $0<$\linebreak $<\alpha_t(s,a)\le 1$, иначе $\alpha_t(s,a) = 0$. При сделанных предположениях правая часть~\eqref{Q} может быть вычислена просто путем наблюдения того, куда перешел процесс $s'(s,a)$. Действительно, набор $\{Q_t(s,a)\}_{s\in S, a \in A}$ уже известен с прошлой итерации (следовательно, можно посчитать и $\max_{a' \in A}Q_t(s'(s,a),a')$), а вознаграждение $r(s,a;s'(s,a))$ мы также можем всегда наблюдать по условию, если из состояния $s$ при действии $a$ перешли в состоянии $s'(s,a)$. Определять $r(s,a;s'(s,a))$ в ненаблюдаемых состояниях и/или при неиспользуемых действиях нет необходимости ввиду условия $\alpha_t(s,a) = 0$ в этих случаях. Оказывается, что если  используемая стратегия $a(s)$ приводит к тому, что с вероятностью 1 каждая пара $(s,a)$ будет бесконечное число раз встречаться на~бесконечном горизонте наблюдения, то из отмеченного выше условия сжимаемости при
\begin{center}
$\sum\limits_{t=0}^{\infty} \alpha_t(s,a) = \infty$, $\sum\limits_{t=0}^{\infty} \alpha_t^2(s,a) < \infty$
\end{center}
будет следовать сходимость (с вероятностью 1) также и процесса~\eqref{Q} \cite{Jaakkola}: $\lim_{t\to\infty}Q(s,a) = Q(s,a)$, $V^*(s) = \max_{a \in A}Q(s,a)$. Таким образом, после достаточно большого числа шагов, даже в отсутствие какой-либо информации об управляемом марковском процессе, можно определить оптимальную стратегию $a(s) = \text{arg}\max_{a \in A}Q(s,a)$.

Однако приведенный результат ничего не говорит о скорости обучения и о том, когда можно заканчивать обучение, т.е. о том, в какой момент можно уже переходить на стратегию 
$$a_t(s) = \text{arg}\max\limits_{a \in A}Q_t(s,a)$$ 
со стратегии, которой придерживались сначала и которая обеспечивала максимально быструю сходимость процесса~\eqref{Q}. Собственно, наиболее ярко это все можно продемонстрировать как раз на примере задачи о многоруких бандитах, в которой до какого-то момента идет обучение и исследуются все ручки, и только после того, как все ручки были проверены достаточное число раз, выбирается наилучшая из них (\textit{exploration} vs \textit{exploitation}). На самом деле асимптотически оптимальные стратегии выглядят немного по-другому, см. следующий пункт. В~заключение  упомянем, что полученные недавно оценки скорости сходимости процедур, построенных на базе~\eqref{Q} для широкого класса задач, во многом отвечают разобранному здесь примеру задачи о многоруких бандитах~\cite{Jin}. О каких именно оценках идет речь, мы постараемся пояснить далее.  

\subsection[Асимптотически оптимальные оценки\vspace{-1mm}]{Асимптотически оптимальные оценки} Снова вернемся к задаче о многоруких бандитах. Будем считать $\gamma = 1$. Если число шагов $N$, то, зная оптимальную ручку (с наибольшей вероятностью успеха $p_{\max}$), можно получить ожидаемо\gav{е} вознаграждение $p_{\max}N$. Оказывается, что, не имея никакой информации об $n$ ручках, \gav{в общем случае} невозможно получить ожидаемое вознаграждение больше, чем~\cite{Bubeck,Cesa-Bianchi}: 
$$p_{\max}N - 0.05\sqrt{Nn}.$$ 
Этот результат можно объяснить, опираясь на следующее наблюдение, см. раздел~\ref{Fisher}. Пусть имеются всего две ручки. Одной соответствует вероятность $p = 1/2$, второй $p = 1/2 + \epsilon$. Но неизвестно, какой ручке, какая вероятность соответствует. Тогда, для того чтобы определить (скажем, с вероятностью 0.95), какой ручке соответствует большая вероятность успеха, необходимо дернуть каждую ручку не менее $\sim 1/\varepsilon^2$ раз. Возвращаясь к исходной постановке, припишем $n-1$ ручке одинаковую вероятность $p = 1/2$, а одной оставшейся ручке вероятность $p = 1/2 + \epsilon$, где $\epsilon = \sqrt{n/N}$. Поскольку всего шагов $N$, то хотя бы одна ручка выбиралась не более чем на $N/n = 1/\epsilon^2$ шагах. Таким образом, нельзя гарантировать, что на сделанных $N$ шагах можно достоверно определить лучшую ручку. Использование не оптимальной ручки приводит на каждом шаге к средним потерям $\epsilon$. Суммарные потери за $N$ шагов будут $\epsilon N = \sqrt{Nn}$, что и требовалось объяснить. 

Можно ли предложить такой алгоритм выбора ручек, который бы позволял приблизиться к приведенной оценке? Оказывается это можно сделать, не используя описанную в предыдущих пунктах технику,\footnote{Собственно говоря, не очень и понятно, что можно было бы использовать из описанного в предыдущих пунктах, кроме $Q$-обучения. Впрочем, для $Q$-обучения желательно (но не обязательно, см., например,~\cite{Jin}), чтобы $\gamma < 1$. В остальных подходах требуется делать априорные предположения о распределении $p_i$. От чего в данном пункте мы отказались.} а рассматривая задачу как задачу стохастической онлайн оптимизации~\cite{GasnikovOnline,Bubeck}. Мы не будем приводить здесь наилучший известный сейчас алгоритм (детали см. в~\cite{Bubeck,banditalgs}). Тем не менее описываемый далее алгоритм Exp3 гарантирует ожидаемое вознаграждение не меньше, чем $$p_{\max}N - 2\sqrt{Nn\ln n},$$ 
что достаточно близко к нижней оценке, приведенной выше.

Алгоритм Exp3 (Exponential weights for Exploration and Exploitation) предписывает выбирать на шаге $k$ ручку $i$ с вероятностью
 $$p^k_i = \frac{\exp\left(\eta_N R^k_i \right)}{\sum_{j=1}^n \exp\left(\eta_N R^k_j \right)},$$
 где $$\eta_N = \sqrt{\frac{2\ln n}{Nn}}.$$ 
 Если на шаге $t$ была выбрана ручка номер $i$, то
 $$R^{t+1}_i = R^{t}_i + \frac{1}{p^t_i},$$
 если был успех (с вероятностью $p_i$): 
  $$R^{t+1}_i = R^{t}_i,$$
  если была неудача (с вероятностью $1 - p_i$) и 
  $$R^{t+1}_j = R^{t}_j,$$
  для $j\neq i$. Причем $R^1_i = 0$, $i=1,\dotsc,n$.
  
  В некотором смысле аналогичные результаты недавно были получены и для достаточно большого и важного подкласса управляемых марковских процессов (MDP): \textit{episodic MDP}~\cite{Jin}. А именно, было показано, что нижние оценки тут можно получать на базе задачи о многоруких бандитах, а верхние оценки, которые в данном случае все же получаются немного похуже нижних, обеспечивает $Q$-обучение, в варианте очень близком к приведенному выше~\eqref{Q}, с естественной стратегией поведения $a_t(s) = \text{arg}\max_{a \in A}Q_t(s,a)$. 
  
 \ag{Отметим также работы \cite{JinSidford,Wainwright}, в которых показано, что если для MDP разрешается произвольно выбирать следующее состояние $s'$ (и действие), при которых можно наблюдать вознаграждение $r$ или его реализацию (т.е. не обязательно следовать матрице переходных вероятностей $p$ MDP при выборе нового состояния $s'$, в котором следует наблюдать $r$), то для равномерной аппроксимации функции $Q$ (вектора $Q$ в бесконечной норме) с точностью $\varepsilon$ достаточно (с точностью до логарифмических множителей) $\sim|S||A|\varepsilon^{-2}$ переходов управляемого марковского процесса и вычислений вознаграждений при этих переходах. Данная оценка оптимальная. Примечательно, что работа \cite{Wainwright} также базируется на процедуре типа $Q$-обучения.}
  
  В целом затронутые в данном разделе вопросы оказываются сильно завязаны на стохастическую оптимизацию, онлайн оптимизацию, взвешивание экспертных решений, предсказание последовательностей, теорию игр и, конечно, машинное обучение. Заинтересовавшемуся читателю можно порекомендовать следующую литературу~\ag{\cite{GasnikovOpt,GasnikovOnline,Bubeck,Cesa-Bianchi,Rakhlin,Shalev-Shwartz,Slivkins,banditalgs}}. В частности, около трети студентов \gav{ФУПМ} МФТИ продолжат изучение данной темы в рамках курса~\cite{Vyugin}.
  
    \section*{Заключение}\addcontentsline{toc}{section}{Заключение\vspace{-1mm}}
Далее приводится краткое содержание пособия с выделением основных идей и объяснением связей с другими областями.

В курсе теории вероятностей~\cite{Gnedenko, NGG1, ShiryaevT1} 
было показано, что основной объект изучения -- случайные события и подсчет их вероятностей сводится к изучению  случайных величин $X$ и случайных векторов $\left[X_1,...,X_n\right]^\top$ и свойств их функций распределения, которые полностью их определяют\footnote{Для доказательства предельных теорем удобным средством является аппарат характеристических функций (х.ф.). Грубо говоря, вместо изучения функций распределения предлагается изучать преобразования Фурье их производных (производная функции распределения есть плотность распределения). Связано это с тем, что х.ф. суммы независимых с.в. равна произведению соответствующих х.ф.}: $F_X(x) = \mathbb{P}(X < x)$ и $F_{X_1,...,X_n}(x_1,...,x_n) =$\linebreak $= \mathbb{P}(X_1 < x_1, ..., X_n < x_n)$. 
Были установлены естественные (необходимые и достаточные) условия\footnote{Из наиболее нетривиальных отметим непрерывность слева по каждому аргументу и условие вида ($n=2$):  для любых $a_1<a_2$, $b_1<b_2$ выполняется 
$$F_{X_1,X_2}(a_2,b_2) - F_{X_1,X_2}(a_1,b_2) - F_{X_1,X_2}(a_2,b_1) + F_{X_1,X_2}(a_1,b_1) \ge 0.$$} 
(теорема Колмогорова~\cite{Kolmogorov1974}), при которых $F_{X_1,...,X_n}(x_1,...,x_n)$ будет функцией распределения некоторого случайного вектора. Также в курсе теории вероятностей было введено важное понятие слабой сходимости случайных векторов, которое можно понимать следующим образом: $\left[X_1^N,...,X_n^N\right]^\top$ слабо сходится к~$\left[X_1,...,X_n\right]^\top$ при $N \to \infty$ если $$F_{X_1^N,...,X_n^N}(x_1,...,x_n) \to F_{X_1,...,X_n}(x_1,...,x_n)$$ в точках непрерывности $\left(x_1,...,x_n\right)$ последней функции. 

Если в определении случайного вектора считать $n$ счетной бесконечностью, то получим случайный процесс в дискретном времени. Все основные свойства без изменения наследуются из курса теории вероятностей с небольшими техническими оговорками, которые мы изложим далее в более общем случае (для непрерывного времени). 
Сложнее обстоит дело для случайного процесса в непрерывном времени. 
В этом случае (как и в случае счетного времени) случайный процесс задается семейством своих всевозможных конечномерных распределений $F_{X(t_1),...,X(t_n)}(x_1,...,x_n)$, где $n$ пробегает всевозможные натуральные числа, а набор $t_1,...,t_n$ пробегает всевозможные значения с условием: $0\le t_1<...<t_n$. Причем $$F_{X(t_1),...,X(t_n)}(x_1,...x_{n-1},\infty) = F_{X(t_1),...,X(t_{n-1})}(x_1,...x_{n-1}).$$
Слабая сходимость случайных процессов понимается как слабая сходимость всевозможных конечномерных распределений. Проблема однако возникает при  подсчете вероятностей событий вида\footnote{Тут может возникнуть естественное желание не считать вероятности таких событий и ограничиться рассмотрением только дискретного времени. Однако ситуация здесь приблизительно такая же, как и с дифференциальными уравнениями -- переход от изучения динамики в дискретном времени к непрерывному обогащает и упрощает в ряде случаев изучение интересующих процессов. Далее в курсе это можно пронаблюдать на примере случайного блуждания в дискретном времени и броуновского движения.} $\{\sup_{t\in[0,T]} X(t) <$\linebreak $< x\}$. Событие, вероятность которого необходимо посчитать, в общем случае не принадлежит $\sigma$-алгебре событий, порожденной семейством конечномерных распределений. Причина связана с несчетностью множества $[0,T]$. Однако
$X(t)$ имеет сепарабельную стохастическую модификацию $\tilde{X}(t)$~\cite{BulinskyShiryaev2005, VentselAD}. 
Последнее означает, что для каждого $t\ge 0$ выполняется $\mathbb{P}(X(t)=\tilde X(t)) = 1$ и существует такое счетное множество\footnote{Eсли процесс $X(t)$ стохастически непрерывен $\lim_{\epsilon\to 0}\mathbb{P}(|X(t+\epsilon) - X(t)|>\delta) = 0$ для любого $\delta >0$, то в качестве множества $S$ можно брать любое счетное всюду плотное множество~\cite{BulinskyShiryaev2005, VentselAD}  
.} $S$ в $\mathbb{R}_+$, что для любого $Q$ (в том числе и для $Q = [0,T]$) события
$\sup_{t\in Q} \tilde{X}(t) < x$ и $\sup_{t\in S\cap Q} \tilde{X}(t) < x$ совпадают с вероятностью 1,
т.е. для стохастической модификации отмеченной проблемы нет. Отмеченные выше тонкости необходимо знать для понимания объекта изучения (случайного процесса), однако последующее изложение практически не требует понимания написанного выше. 

Намного более важным разделом курса случайных процессов является специальный класс (стохастически непрерывных) процессов, у которых приращения независимы. Особенно важным представителем этого класса процессов являются однородные процессы (процессы Леви~\cite{Applebaum}). 
Однородность процесса $X(t)$ означает, что распределение $X(t+s) - X(t)$ не зависит от $t$, а независимость приращений, что для любого натурального $n$ и $0\le t_1 <...< t_n$ случайные величины $X(t_n)-X(t_n-1)$, ..., $X(t_1)-X(0)$ -- независимы в совокупности. Из этих двух свойств\footnote{Далее еще используется свойство $X(0)=0$ почти наверное.} следует, что для любого $n$ справедливо представление $$X(t) = \sum_{k=0}^{n-1}\left(X\left(\frac{(k+1)t}{n}\right) - X\left(\frac{kt}{n}\right)\right),$$ 
т.е. $X(t)$ для любого $n$ может быть представлена (имеет такое же распределение) как сумма независимых одинаково распределенных случайных величин. Последнее означает, что $X(t)$ -- безгранично-делимая случайная величина\footnote{Другой способ определения безгранично-делимой случайной величины -- это такая случайная величина, которая может возникать в качестве предела сумм независимых одинаково распределенных случайных величин~\cite{GnedenkoKolmogorov}}. 
Несложно понять, что нормальная случайная величина и пуассоновская случайная величина являются безгранично делимыми.\footnote{Собственно, именно этот факт и определяет привилегированность данных распределений, отражающуюся, например, в том, что часто именно эти два закона возникали в качестве предельных распределений сумм случайная величина в курсе теории вероятностей (теорема Муавра--Лапласа, ц.п.т., теорема Пуассона). Другой факт, свидетельствующий о привилегированности нормального распределения -- теорема Максвелла--Клартага~\cite{Zorich, Klartag} 
-- о том, что проекция вектора, равномерно распределенного на шаре (сфере) радиуса $\sqrt{N}$, на фиксированную гиперплоскость малой размерности стремится при $N\to\infty$ к нормальному вектору, причем все это переносится и на равномерные распределения на изотропных выпуклых телах с диаметром, пропорциональным $\sqrt{N}$~\cite{Milman2008}.} 
Более того, они сами себя <<безгранично делят>>. Например, сумма $n$ независимых одинаково распределенных по закону $\mathrm{N}(m/n,\sigma^2/n^2)$ случайных величин имеет закон распределения $\mathrm{N}(m,\sigma^2)$. Аналогично и сумма $n$ независимых случайных величин с распределением $\Po(\lambda/n)$ имеет распределение $\Po(\lambda)$. Есть ли еще какие-то безгранично-делимые случайные величины? Ответ: да -- сложная пуассоновская случайная величина: $\sum_{k=0}^{K} V_k$, где все случайные величины $K\in \Po(\lambda)$, $\left\{V_k\right\}_{k=0}^{K}$ независимы в совокупности, при этом все случайные величины $\left\{V_k\right\}_{k=0}^{K}$ одинаково распределены (не важно, как именно). Отметим, что нормальным распределением и сложным пуассоновским распределением\footnote{C точностью до небольших оговорок относительно возможности того, что случайные величины $\left\{V_k\right\}_{k=0}^{K}$ несобственные.
Отметим также, что при наиболее естественных условиях в качестве предельных распределений возникает распределение Пуассона (теорема Пуассона), нормальное распределение (ц.п.т.) и вырожденный случай нормального распределения ($\sigma = 0$) -- не случайная величина (з.б.ч.). В последнем случае из слабой сходимости (сходимости по распределению) вытекает сходимость по вероятности.  Однако вполне можно себе представить <<жизненные>> ситуации, когда предельное распределение может быть и не из отмеченного набора, например, гравитационное поле, создаваемое равномерно распределенными во Вселенной звездами, на Земле описывается безгранично-делимым распределением Хольцмарка~\cite{KendallMoran, Chebotarev}.}  
исчерпывается класс безгранично-делимых с.в. Из вышенаписанного <<в первом приближении>> можно заключить, что для процесса Леви $X(t+s) - X(t)$ имеет либо нормальное распределение $\mathrm{N}(ms,\sigma^2s)$ -- в этом случае процесс Леви называют броуновским движением (если $m=0$, $\sigma^2 = 1$ -- винеровским процессом), либо сложное распределение Пуассона (с $K\in\Po(\lambda) \to K(s) \in\Po(\lambda s)$).

Броуновское движение, так же, как и нормальный закон в ц.п.т., естественным образом возникает в задачах, в которых изучают предельное поведение других случайных процессов вида суммы. Рассмотрим один естественный пример (случайное блуждание на прямой). Пусть время течет дискретно (один шаг по времени $1/N$) и точка может двигаться вперед с вероятностью  $1/2$ на $\bar{\sigma}/\sqrt{N}$ и назад с вероятностью  $1/2$ на $\bar{\sigma}/\sqrt{N}$. Если рассматривать предел\footnote{Отметим, что скейлинг с другими степенями $N$ не приводит к интересным результатам -- полученный в результате скейлинга случайный процесс либо будет не отделим от нуля с вероятностью 1, либо сразу же <<взорвется>> и уйдет на бесконечность с вероятностью 1.} (диффузионный скейлинг) при $N\to\infty$, то описанное выше случайное блуждание слабо сходится к броуновскому движению с $m = 0$ и $\sigma = \bar{\sigma}$~\cite{BulinskyShiryaev2005}. 
\footnote{Заметим, что из ц.п.т. следует слабая сходимость в каждом сечении.} Другой важный для курса статистики~\cite{BorovkovMatStat, BulinskyShiryaev2005}  
и статистической теории обучения~\cite{Koltchinskii} (теории эмпирических процессов) 
пример\footnote{В данный курс было решено не включать изложение данного примера, однако мы рекомендуем заинтересованным читателям посмотреть указанную литературу ввиду важности примера для последующего более глубокого понимания курса математической статистики.} -- это эмпирическая функция распределения и слабая сходимость функционалов (статистик) от этой функции к функционалам от винеровского процесса. Заметим, что, как правило, такие функционалы включают в себя супремум по несчетному множеству, поэтому отмеченные выше тонкости (о существовании сепарабельной стохастической модификации у стохастически непрерывного случайного процесса) все же необходимо принимать в расчет.

Хорошим примером возникновения пуассоновского процесса является модель случайного бросания (независимого и равновероятного) $N$ точек на отрезок $[0,N]$ и изучение числа точек 
$K^N(t)$, попавших в отрезок $[0,\lambda t]$. 
Если $t$ считать параметром и при зафиксированных $\lambda$ и $t$ устремить $N \to \infty$ (термодинамический предельный переход), то $K^N(t)$ сходится по распределению к $\Po(\lambda t)$ (этот простой факт следует из теоремы Пуассона) и, более того, $K^N(t)$ слабо сходится к пуассоновскому процессу $K(t)$ с параметром $\lambda$. Такое понимание пуассоновского процесса часто используется в статистической физике и различных геометрических задачах с вероятностью~\cite{KendallMoran, Kingman, KoralovSinai, Malyshev}. 
Другой способ определения пуассоновского процесса $K(t)$ -- число звонков <<пуассоновского будильника>> к моменту времени $t$. Пуассоновский будильник определяется следующим образом. Вероятность того, что он зазвонит в промежутке времени $[t,t+\delta t]$, не зависит от $t$ и от того, сколько раз он уже звонил и равна $\lambda \delta t + o(\delta t)$. Другой способ определения пуассоновского будильника 
$$K(t) = \max\left(k\ge 0: \sum_{i=1}^k \xi_i < t \right),$$
где $\xi_i \in \text{Exp} (\lambda)$ -- независимые случайные величины, базируется на свойстве отсутвия последействия $\mathbb{P}(\xi > t +\tau \,|\, \xi > t) = \mathbb{P}(\xi > \tau) $ у показательной случайной величины: $\xi \in \text{Exp} (\lambda)$ если $\mathbb{P}(\xi > t) = \exp(-\lambda t)$. Отмеченное свойство в классе случайных величин, имеющих плотность, имеет место только у показательных случайных величин. 

Понимание пуассоновского процесса посредством пуассоновского будильника позволяет достаточно легко построить и теорию марковских процессов в непрерывном времени с конечным или счетным числом состояний как обобщение пуассоновского процесса. А именно, в момент звонка будильника частица (независимо от предыстории\footnote{В этом и в \eg{отсутствии} последействия и заключается марковское свойство.}) перемещается по (ориентированному) графу марковской цепи\footnote{Каждая вершина графа отвечает состоянию, а наличие ребра -- возможности перехода; над каждым ребром написана вероятность, сумма вероятностей на ребрах, выходящих из каждой вершины, равна 1.} согласно вероятностям, написанным на выходящих из данной вершины (в другие вершины/состояния) ребрах. Отметим, что ввиду независимости приращений, винеровский процесс (броуновское движение) также будет марковским процессом, т.е. процессом, вероятностное описание будущего которого всецело определяется точно заданным настоящим (текущим) положением, но не тем, как в это положение процесс пришел. Марковское свойство является ключевым для возможности построения аналогии между динамическими системами (дифференциальными и разностными уравнениями) и стохастическими процессами. Так же, как и для динамических систем, для марковских процессов можно задать (инфинитезимальный) оператор перехода (с полугрупповым свойством), который определяет эволюцию процесса. Другими словами, для определения вероятностного закона распределения сечений процесса, достаточно решить некоторую систему дифференциальных уравнений Колмогорова--Феллера (для непрерывного времени и конечного/счетного числа состояний) или уравнение в частных производных Колмогорова--Фоккера--Планка (для непрерывного времени и несчетного числа состояний).\footnote{Полезно заметить, что вывод таких уравнений базируется на формуле полной вероятности и различных предельных переходах. Причем пуассоновский будильник хорошо поясняет, как именно осуществлять такие переходы в части рассмотрения непрерывного времени на базе дискретного времени. В частности, именно таким образом наиболее просто объяснить вывод системы уравнений Колмогорова--Феллера из формулы полной вероятности (уравнения Колмогорова--Чэпмена), описывающей эволюцию конечной однородной дискретной марковской цепи.} Огромная популярность марковских процессов в приложениях как раз и обусловлена тем, что это в некотором смысле наиболее хорошо математически изученный  инструмент, с помощью которого можно аппроксимировать и изучать сложные процессы с непонятной структурой. Различные попытки существенно выйти за пределы этого класса случайных процессов хотя и привели к определенным успехам (стационарные процессы, гауссовские процессы, мартингалы и др.), о чем будет сказано далее, тем не менее основным (наиболее важным) классом процессов, безусловно, являются именно марковские процессы, ввиду богатой теории и возможности их глубокого изучения, а также ввиду вполне естественных предположений (будущее определяется настоящим и только настоящим), которые в хорошем приближении выполняются во многих интересных на практике задачах. 

Наиболее важным классом задач, возникающим при изучении случайных процессов, в том числе марковских, является описание поведения процесса при больших значениях времени. Общая идея поиска асимптотики следующая: если, например, изучаем предел числовой последовательности~\cite{Fichtenholz} 
$x^{k+1} = \sqrt{x^k + 2}$ и считаем, что предел существует, то, чтобы найти такой предел $a = \lim_{k\to\infty} x^k$, нужно найти неподвижную (стационарную, инвариантную) точку данной динамики $a=\sqrt{a+2}$, то есть $a = 2$. Аналогичным образом необходимо действовать и в случае марковских процессов. 

Как уже отмечалось, для марковских процессов так же, как и для однородных (время не входит в уравнения) динамических систем, можно говорить об эволюции меры. Только для марковских процессов эту меру можно не вводить, она естественным образом присутствует в самой задаче. Раскроем написанное выше подробнее. Пусть в начальный момент динамическая система с разной вероятностью могла находиться в разных состояний, тогда под эволюцией меры, описывающей заданную систему, понимается просто описание того, с какой вероятностью в каждый момент времени система будет находиться в том или ином состоянии. В пространстве мер однородная динамическая система порождает унитарный (ортогональный) оператор (грубо говоря, поворот пространства). Стационарная (инвариантная) мера, которую данный оператор оставляет неподвижной, в случае единственности (нет других стационарных мер -- это условие называется условием эргодичности системы) и определяет предельное распределение динамической системы (не зависящее от начального!). Скажем, если динамическая система -- это поворот окружности единичного периметра на иррациональный угол, то единственной инвариантной (стационарной) мерой такой эволюции будет равномерная мера (так же говорят <<лебегова мера>>) на данной окружности. Таким образом, с какими бы вероятностями система не была  <<разбросана>> в начальный момент по этой окружности, со временем вероятность найти систему на некотором измеримом подмножестве этой окружности будет равна лебеговой мере этого подмножества. Можно сказать дополнительно, что доля времени, которую система (откуда бы она не стартовала) провела на выделенном множестве на бесконечном промежутке наблюдения за ней, также равна площади этого множества (среднее по времени равно среднему по пространству, если среднее по пространству понимать согласно инвариантной мере).\footnote{Отметим, что описанный пример демонстрирует порождение последовательности со случайными свойствами чисто детерминированными методами. Такого типа последовательности активно используются в методах Монте-Карло~\cite{Sobol}. 
Например, при вычислении многомерных интегралов. Пример вычисления одномерного интеграла с помощью такой последовательности разобран в пособии.} Все эти результаты естественным образом переносятся на марковские случайные процессы (а также класс стационарных процессов; и даже, с некоторыми оговорками, на очень общий класс -- процессы второго порядка с постоянным средним), для которых мы изначально имеем только одно пространство -- вероятностное и сразу изучаем эволюцию мер, заданную линейным оператором марковской полугруппы (унитарность оператора проявляется в том, что при эволюции меры свойство нормировки на единицу и неотрицательности не изменяется).

Ключевым вопросом в описанной выше схеме является выявление необходимых и достаточных условий, которые гарантируют единственность стационарной (инвариантной) меры\footnote{Существование инвариантной (стационарной) меры для динамических систем, действующих на компакте, следует из теоремы Боголюбова--Крылова~\cite{SinaiLections1996}, а для марковских процессов с конечным числом состояний -- из принципа неподвижной точки Брауэра: непрерывное отображение (в нашем случае линейное) выпуклого компакта (в нашем случае симплекса) в себя имеет неподвижную точку. Пример случайного блуждания на прямой показывает, что уже в случае счетного числа состояний стационарной меры может не существовать. Точнее говоря, существует только тривиальная (нулевая) стационарная мера, что не интересно, ввиду необходимости нормировать стационарную (инвариантную) меру на 1.}, а также вопрос о том, что будет, если такая мера не единственная?

Для марковских процессов с конечным или счетным числом состояний в пособии приводится подробный ответ на поставленный вопрос. А~именно, если изоморфным образом понимать марковский процесс как случайное блуждание на графе согласно вероятностям, написанным на ребрах, то стационарное распределение единственное, если существует такое состояние (вершина), в которую можно попасть, двигаясь по этому графу из любой другой вершины~\cite{BulinskyShiryaev2005}. 
Если это условие не выполняется, то стационарное распределение не единственно, и предельное (финальное) распределение уже зависит от стартового (начального)\footnote{Тут следует сделать оговорку. Для марковских цепей с не конечным числом состояний, например, со счетным числом состояний, в ряде случаев существует возможность с ненулевой вероятностью <<уйти на бесконечность>>. Например, в игре в орлянку, когда игрок играет с казино, имеющим неограниченный бюджет, и выигрывает у казино с большей вероятностью, чем проигрывает, существует единственное стационарное распределение, сосредоточенное в состоянии, когда игрок разорится. Тем не менее в финальном состоянии вероятность игрока оказаться в этом состоянии будет зависеть от его стартового капитала и будет меньше 1. С~оставшей вероятностью игра будет продолжаться бесконечно. Таким образом, для счетных цепей даже в случае единственности стационарного распределения, но наличия бесконечного числа несущественных состояний, финальное распределение может зависеть от начального.}. В~этом случае также можно определять асимптотику. Важным примером, демонстрирующим, как это следует делать, является задача об игре в орлянку, разобранная в пособии. В этой игре двое игроков кидают монетку и делают ставки по одному рублю. Игра идет до разорения одного из игроков. Понятно, что в данной игре существует две стационарные меры. Одна сосредоточена в состоянии, когда все деньги у первого игрока, вторая сосредоточена в состоянии, когда все деньги у другого игрока. Поиск вероятностей, с которыми система <<свалится>> из заданного начального положения (состояния) в одно из этих предельных состояний, сводится к выписыванию рекуррентных соотношений, связывающих (с помощью формулы полной вероятности) вероятности разорения в текущем стартовом состоянии с вероятностями разорения для соседних (на графе) стартовых состояний.

Вопрос о сходимости к стационарному распределению в случае непрерывного времени и компактного пространства состояний не стоит (достаточно единственности стационарного распределения). В случае дискретного времени необходимы оговорки о непериодичности марковской цепи, однако эти технические моменты вполне можно опустить в сухом остатке от курса, поскольку в чезаровском смысле (наиболее важном с точки зрения основных приложений) сходимость будет в любом случае (и для периодических дискретных марковских цепей)\footnote{В дискретном времени марковская динамика задается следующим уравнением Колмогорова--Чэпмена эволюции меры (вектора распределения вероятностей по состояниям $p$): $p^{k+1} = P^\top p^k$, где $P$ -- матрица переходных вероятностей ($P_{ij}$ -- вероятность перейти из состояния $i$ в состояние $j$). Принцип сжимающих отображений в классическом варианте здесь не сработает, поскольку $\lambda_{\max}(P^\top) = 1$, и, таким образом, $P^\top$ не может быть сжимающим отображением. С другой стороны, \ag{у $P^{\eg{\top}}$ есть сжимаемость к} собственному вектору $\pi$, отвечающего максимальному собственному значению (стационарному распределению, вектору Фробениуса--Перрона в экономической литературе~\cite{Nikaydo}). 
То есть можно надеяться, что, например, $\left\|\left(P^\top\right)^n(p - \pi)\right\|_1 \le \alpha(P)^n$, где $\alpha(P)<1$. Как искать это $\alpha(P)$ и при каких условиях можно обеспечить условие $\alpha(P)<1$? Ответ на этот вопрос можно попробовать получить, рассмотрев модельный случай, когда у марковской цепи всего два состояния~\cite{Malyshev}. 
Оказывается, что  достаточным (и необходимым в случае отсутствия несущественных состояний) условием, которое назовем условием (Э), для $\alpha(P)<1$ является существование такой натуральной  степени $r$, что у матрицы $P^r$ все элементы положительны. Это условие равносильно тому, что граф марковской цепи неразложим (из любого состояния можно добраться в любое другое по этому графу) и цепь непериодическая. Замечательно, что все эти результаты переносятся и на более общие линейные динамики с матрицами, элементы которых неотрицательны. Такие динамики возникают в динамических моделях  межотраслевого баланса~\cite{Nikaydo}. 
Заметим также, что можно ввести специальную метрику (Биркгофа) на пространстве лучей неотрицательного ортанта так, что в этой метрике линейный оператор $P^\top$ будет сжимающим при том же самом условии (Э)~\cite{Krasnoselsky}. 
Отметим, что оценка $\alpha(P)$  является важной задачей (о некоторых подходах к ее решению см., например,~\cite{KelbertSukhov2010}). 
Чем меньше значение $\alpha(P)$, тем быстрее цепь выходит на стационарное распределение, что может быть важно, например, в методах Монте-Карло, базирующихся на марковских цепях, см. далее.}.

Стоит отметить, что изложенная выше схема, связывающая эргодичность случайных процессов и эргодичность динамических систем~\cite{Katok1999}, 
не является оригинальной и уже неоднократно использовалась в различных местах при разработке курса теории вероятностей и случайных процессов~\cite{Bulinsky2010, KoralovSinai, Malyshev, Shiryaev2}. 
Однако в данном пособии было решено пойти дальше. А именно, на примере изучения эволюции ансамбля конечных однородной дискретных марковских цепей продемонстрировать концепцию равновесия макросистемы. Более точно, вместо того, чтобы изучать вероятность нахождения блуждающей частицы на графе в разные моменты времени, находящейся в начальный момент с разными вероятностями в разных состояниях, предлагается поместить в эти состояния в начальный момент частицы в пропорции, определяемой начальными вероятностями, а далее следить за эволюцией всех частиц одновременно. Для эргодической марковской цепи каждая частица (независимо от стартового состояния) распределится на графе согласно стационарному распределению. Если частиц достаточно много, то со временем по распределению частиц на графе можно будет восстанавливать стационарное распределение. Таким образом, в пособии интерпретируется вектор PageRank (см. также~\cite{BlumHopcroftKannan}) 
как вектор, который. с одной стороны, является стационарным распределением марковской цепи с графом переходных вероятностей, отвечающих интернету (вершины это web-страницы, а ребра -- гиперссылки), а с другой стороны, как вектор, отражающий популярность web-страниц (компоненты вектора пропорциональны числу посетителей web-страницы в единицу времени при установившемся режиме). Важно также отметить, что описанный способ восстановления стационарного распределения марковской цепи (Markov Chain Monte Carlo~\cite{Diaconis}) 
является хорошим практическим/численным способом (ввиду возможности параллелизации блужданий частиц) поиска (генерирования) стационарного распределения. 

Однако в пособии для закрепления материала и дополнительной демонстрации концепции равновесия макросистемы\footnote{Равновесие макросистемы -- это такое ее макросостояние, около которого концентрируется стационарная мера при стремлении числа агентов к бесконечности. В случае задачи поиска вектора Page Rank агентами были блуждающие по web-графу пользователи.} разбирается парадокс Эренфестов в варианте~\cite{KelbertSukhov2010}. 
Пример замечателен во многих аспектах, в частности, своей связью с простейшими системами массового обслуживания (процессы гибели и размножения), в связи с обыгрыванием теоремы Пуанкаре о возвращении, понятием обратимой динамики, демонстрацией того, что математическое ожидание времени первого возвращения марковской цепи в заданное состояние есть величина, обратная к соответствующей (данному состоянию) компоненте стационарной меры. Наконец, пример наглядно демонстрирует другую важную связь случайных процессов (процессов стохастической химической кинетики~\cite{Gardiner,  MalyshevPirogov, EthierKurtz})\footnote{Эволюция распределений которых всегда задана линейными уравнениями!} и систем (не обязательно линейных!) дифференциальных уравнений. А именно, в результате специального (канонического) скейлинга из первого объекта можно получать второй. Аттрактор полученной системы дифференциальных уравнений будет соответствовать равновесию макросистемы, заданной шкалируемым процессом (детали см., например, в~\cite{StochAn2016}). 
Другой достаточно поучительный пример скейлинга (так же, как и ранее на базе теоремы Куртца~\cite{EthierKurtz}) 
стохастической динамики в детерминированную содержится в пособии при описании роста интернета~\cite{Mitzenmacher,Newman}. 

Продолжая тему марковских процессов, отметим, что в пособие было решено включить, пожалуй, наиболее яркий (из известных нам) пример на управляемые марковские процессы -- <<задачу о разборчивой невесте>>~\cite{Gusein-Zade}, 
как демонстрацию основного принципа стохастического динамического программирования -- принципа Вальда--Беллмана. Здесь мы также не были оригинальными, см.~\cite{Rozanov1,Shiryaev2}. 
Ранее такого типа примеры включали в связи с большой популярностью этого принципа в исследовании операций, в том числе в военном деле~\cite{Ventsel1}. 
Однако в нашем случае мотивация была несколько другой -- а именно, в связи с огромной популярностью обучения с подкреплением~\cite{SuttonBarto} (Reinforcement Learning), 
также базирующегося на упомянутом ранее принципе Вальда--Беллмана.

Завершая тему марковских процессов, снова вернемся к винеровскому процессу  $W(t)$ и, следуя П. Самуэльсону, рассмотрим геометрическое броуновское движение  $S(t) = \exp\left(\ag{(a - \frac{\sigma^2}{2})t} +\sigma W(t)\right)$. Таким процессом на заре эры финансовой математики описывали поведение цены акции, причины см., например,~\cite{StochAn2016, Shiryaev1, Ross}. 
Можно показать, что следующее стохастическое дифференциальное уравнение\footnote{Структура которого хорошо проясняет, почему $S(t)$ может описывать поведение цены акции в случае сложной  процентной ставки.}: $$dS(t) = aS(t)dt + \sigma S(t)dW(t)$$ определяет выписанное ранее $S(t)$. Более корректно это уравнение записывается в интегральном виде:
 $$S(t) = a\int\limits_{0}^{t} S(\tau)d\tau + \sigma \int\limits_{0}^t S(\tau)dW(\tau).$$
 Последний объект уже вполне можно определить стандартным образом. Собственно, марковские процессы (обычно их называют диффузионные процессы или процессы Ито), полученные в результате решения стохастических дифференциальных уравнений, представляют естественное обобщение винеровского процесса, и возникают в различных приложениях, из которых наиболее часто встречается  физика (уравнение Ланжевена) и финансовая математика~\cite{Bulinsky2010, Oksendal2003}. 
 Использование в качестве шума именно винеровского процесса представляется настолько же естественным, насколько естественным представляется использование нормального распределения для описания шума в теории вероятностей. Напомним, что нормальное распределение часто наблюдается на практике благодаря ц.п.т. и ее робастности (центральная предельная теорема часто выполняется при намного более общих условиях, чем условия, при которых она изучалась в курсах теории вероятности). Другими словами, наличие большого количества независимых (или даже слабо зависимых) малых шумов одного масштаба (не обязательно одинаково распределенных) в сумме в хорошем приближении приводит к нормальному шуму. Таким образом, с помощью стохастических дифференциальных уравнений вносится (винеровский) шум в обычные детерминированные динамические системы.
  
В качестве одного из интересных следствий данного раздела отметим, что с помощью диффузионных процессов (в частности, случайных блужданий) можно численно решать краевые задачи для полуэллиптических уравнений в частных производных\gav{,} в частности задачу Дирихле для уравнения Лапласа~\cite{BulinskyShiryaev2005, Oksendal2003}. 
Один простой пример для случайного блуждания будет рассмотрен в пособии, следуя~\cite{Sosinsky}. 
Общая идея -- выпускать из данной точки рассматриваемой области независимые траектории случайного блуждания и ждать момента их первого попадания на границу той области, где решается краевая задача.
Усредняя значение граничного условия в точках выхода различных траекторий по этим траекториям, можно восстанавливать решение соответствующей задачи Дирихле для уравнения Лапласа. Идея обоснования этого наблюдения близка к тому, как исследуется задача об игре в орлянку. Отметим также глубокие связи данного результата с ТФКП~\cite{Oksendal2003, Shabbat}.

Выше мы уже отмечали другие классы процессов (мартингалы, гауссовские процессы), для которых удаётся построить достаточно интересную теорию, приносящую много полезного на практике. 

К сожалению, объем курса и соответствующего ему пособия не позволяет никак коснуться такого важного класса процессов, как мартингалы и мартингал-разности. Изложение таких процессов вошло в ряд современных учебников~\cite{BulinskyShiryaev2005, KoralovSinai, Shiryaev2}. 
В основном данные процессы описываются в связи с приложениями к финансовой математике (основная теорема финансовой математики об отсутствии арбитража и мартингальная мера~\cite{StochAn2016, Shiryaev1, Shiryaev2}) 
и страхованию (теорема Лундберга--Крамера), однако хотелось бы отметить, что во многих современных приложениях в Машинном обучении~\cite{Shalev-Shwartz}, 
Стохастической оптимизации~\cite{GasnikovOpt, Duchi, Shapiro}, 
Онлайн оптимизации~\cite{Cesa-Bianchi, Hazan}, Многоруких бандитах~\cite{Bubeck, banditalgs} 
мартингалы (а точнее мартингал-разности) возникают в связи с изучением скорости сходимости итерационных процедур вида стохастического градиентного спуска $$x^{k+1} = x^k - h \nabla_x f(x^k,\xi^k),$$ где  $\mathbb{E}_{\xi^k}  \nabla_x f(x^k,\xi^k) = \nabla f(x^k)$.\footnote{Используется неравенство концентрации меры~\cite{BoucheronLugosiMassart} 
Азумы--Хеффдинга для оценки сумм мартингал-разностей~\cite{LanNemirovskiShapiro}: 
$$\sum_{k=0}^N \langle \nabla_x f(x^k,\xi^k) - \nabla f(x^k), x^k - x_* \rangle.$$} 

Про гауссовские процессы~\cite{Peterbarg} 
 было решено добавить в пособие только самые необходимые сведения. Прежде всего, полезно заметить, что гауссовским процессом, т.е. процессом, любое конечномерное семейство распределений которого -- нормальный случайный вектор, является броуновское движение. Важной особенностью гауссовских процессов (унаследованной от гауссовских/нормальных векторов) является то, что вероятностные свойства такого процесса $X(t)$ полностью определяются его математическим ожиданием $m(t) = \mathbb{E}X(t)$ и корреляционной функцией\footnote{Корреляционная функция любого процесса Леви (в том числе и броуновского движения) может быть представлена в виде $R(t_1,t_2) = \mathbb{D}\left(\min(t_1,t_2)\right)$, 
 где $\mathbb{D}(t) =$\linebreak $= \mathbb{E}\left(X(t)^2\right) - \left(\mathbb{E}X(t)\right)^2$ -- дисперсия $X(t)$.}
 $R(t_1,t_2) = \mathbb{E}\left((X(t_1) - m(t_1))(X(t_2 - m(t_2))\right)$. 
 В~частности, независимость сечений гауссовского процесса эквивалентна некоррелированности, стационарность в широком и узком смыслах совпадают. Но, пожалуй, главным свойством, являющимся следствием эквивалентности независимости и некоррелированности сечений, является то, что условное математическое ожидание одних сечений гауссовского процесса при фиксированных других всегда есть линейная вектор-функция от этих фиксированных сечений. 
 Данный факт заслуживает особого внимания, поскольку играет очень важную роль при прогнозировании и восстановлении значений гауссовских процессов. Для его понимания полезно в целом напомнить, что такое условное математическое ожидание. Далее ограничимся для простоты пространством случайных величин с конечным математическим ожиданием их квадрата~\cite{Rozanov2}. 
 Введем в этом пространстве скалярное произведение по формуле $\langle X,Y \rangle = \mathbb{E}(XY)$. Несложно проверить, что все свойства скалярного произведения выполняются, если под элементом этого пространства понимать не просто случайную величину, а целый класс эквивалентных ей случайных величин (с точностью до почти наверное). 
 В таком (гильбертовом) пространстве условное математическое ожидание можно определять как  решение задачи $$\mathbb{E}\left(Y\,|\,X\right) = \argmin_{\phi(\cdot) \in \mathcal{B}(\R)}\mathbb{E}\ag{|}Y - \phi(X)\ag{|}^2,$$ является просто проекцией элемента (случайной величины) $Y$ на подпространство всевозможных борелевских функций, зависящих только от $X$. В частности, из такого понимания условного математического ожидания сразу же следует формула полного математического ожидания:\footnote{Эта формула используется в курсе, например, при выводе характеристической функции сложного пуассоновского процесса.}
  $$\mathbb{E}\left(\mathbb{E}(Y\,|\,X)\right) = \langle 1,\mathbb{E}(Y|X)\rangle = \langle1,Y\rangle = \EE Y.$$ 
 Формула справедлива, поскольку тождественная единица 1 принадлежит подпространству всевозможных борелевских функций, зависящих только от $X$, следствием чего является $\langle Y-\mathbb{E}(Y|X),1 \rangle =0$.  
 Так вот в случае, когда $Y$ и $X$ -- это сечения гауссовского процесса, то вместо класса всевозможных борелевских функций достаточно использовать класс линейных функций\footnote{Коэффициенты $a$ и $b$ подбираются из условия ортогональности \ag{и несмещенности} 
\begin{center}
    $\langle Y - (aX+b),X \rangle=0$\ag{,  $\mathbb{E} Y = a \mathbb{E} X + b$.} 
\end{center}
 Отсюда (поскольку процесс гауссовский) будет следовать  независимость  $Y - (aX+$\linebreak $+b)$ и $X$, следовательно, и независимость любых функций от этих с.в., в частности, независимость с.в. $Y - (aX+b)$ и $\phi(X)$, где $\phi(x)$ -- любая борелевская функция. 
 Из последнего следует ортогональность: $Y - (aX+b)$ и $\phi(X)$, где $\phi(x)$ -- любая борелевская функция. Но это и означает, по определению, что  $aX+b$ -- проекция $Y$ на подпространство, порожденное борелевскими функциями от $X$, т.е. условное математическое ожидание: $\mathbb{E}(Y\,|\,X) = aX+b$.}   $\mathbb{E}(Y\,|\,X) = aX + b$ (решение всегда удается подобрать в этом, более узком, классе). Аналогично и в векторном случае.
   
Введенная выше корреляционная функция гауссовского процесса и математическое ожидание, очевидным образом, могут быть введены и для любого другого процесса второго порядка, т.е. процесса, у которого существует математическое ожидание от квадратов сечений. Свойства этих функций (непрерывность, дифференцируемость, интегрируемость) определяют аналогичные свойства (в смысле среднего квадратичного) соответствующего случайного процесса. Заметная часть пособия (по сложившейся при изложении данного курса на ФУПМ \mbox{МФТИ} традиции) посвящена  изучению данных свойств. Хотя важность для практики этой части курса в настоящий момент, пожалуй, наименьшая (по сравнению с другими частями курса), все же было решено оставить эту часть (особо не сокращая) ввиду хорошей возможности продемонстрировать с её помощью   элементы <<классического>> стохастического анализа случайных процессов, а также связи с курсом функционального анализа~\cite{Halmos}. 
В частности, доказательства почти всех фактов данного раздела базируются на одном простом наблюдении из функционального анализа:  скалярное произведение как функция своих аргументов непрерывна относительно нормы (топологии), порожденной этим же самым скалярным произведением. 

Важным свойством корреляционной функции $R(t_1,t_2)$ является ее неотрицательная определенность. Последнее означает, что для любого набора $0\le t_1<\dotsc<t_n$ матрица $\|R(t_i,t_j)\|_{i,j=1}^n$ -- неотрицательно определена\footnote{Если случайный процесс (слабо) стационарен, его корреляционная функция будет иметь вид $R(t_i,t_j) = \tilde{R}(t_i - t_j)$. Определение неотрицательной определенности естественным образом переносится и на такую корреляционную функцию (одного аргумента) $\tilde{R}(\tau)$. Оказывается (далее намеренно немного огрубляем формулировку теоремы Бохнера--Хинчина~\cite{BulinskyShiryaev2005, VentselAD, KoralovSinai},  
чтобы результат был понятнее), для того, чтобы $\tilde{R}(\tau)$ была неотрицательной определенной, необходимо и достаточно, чтобы она была преобразованием Фурье неотрицательной функции, которую называют спектральной плотностью. Эта функция (спектральная плотность) играет важную роль в анализе стационарных случайных процессов.}. Это следует из того, что\footnote{Для простоты считаем процесс $X(t)$ центрированным.} $R(t_1,t_2) = \langle X(t_1),X(t_2) \rangle$. Верно и обратное утверждение: любая неотрицательно определенная функция есть корреляционная функция некоторого случайного процесса второго порядка\footnote{Можно даже сузить класс процессов до гауссовских.}. В~действительности, это общий факт\footnote{Теорема о воспроизводящем ядре гильбертова пространства  (Reproducing Kernel Hilbert Space Theorem).}~\cite{BulinskyShiryaev2005, Halmos}, 
что для любого неотрицательно определенного ядра (функции двух аргументов) существует такое преобразование пространства аргументов этого ядра в некоторое гильбертово пространство\footnote{Это пространство (спрямляющее) играет важную роль в задачах обучения машины опорных векторов~\cite{Shalev-Shwartz} 
(Support Vector Machine (SVM)). Простейший линейный классификатор (в виде разделяющий гиперплоскости)  можно использовать для более сложной классификации. Для этого нужно исходную постановку задачи перевести в должным образом выбранное спрямляющее пространство (как правило, существенно большой размерности) и заметить, что для построения разделяющей гиперплоскости в этом пространстве достаточно уметь считать значения ядра на элементах исходного пространства (что предполагается постановкой задачи). Подчеркнем, что сложность такой задачи (квадратичной оптимизации) определяется объемом выборки, но не размерностью спрямляющего пространства\ag{.} 
}, что в этом новом пространстве значение ядра при любых двух заданных аргументах представляется скалярным произведением их образов. В простейшем случае, когда в некотором  конечномерном векторном пространстве выбран новый 
базис, то матрица Грама (матрица скалярных произведений новых базисных векторов) будет положительно определенной и представлять собой простейший пример возможного ядра с конечным набором значений аргументов (каждый аргумент может принимать число значений, равное размерности пространства). В данном случае по построению понятно, что матрица Грама имеет необходимое представление. В обратную сторону -- если задана положительно определенная матрица (ядро), то существует такой
базис, 
что  эта матрица будет матрицей Грама данного базиса (легко и конструктивно выводится из SVD разложения). Для бесконечномерных пространств последнее утверждение называется теоремой Мерсера.


Несколько примеров в данное пособие  были вставлены из замечательной книги~\cite{Sekey} 
(например, парадокс времени ожидания автобуса), которая уже на протяжении нескольких десятилетий является, пожалуй, основным источником парадоксальных задач к курсам стохастических дисциплин. \ag{Также много тонких контпримеров по теории вероятностей и теории случайных процессов можно найти в книге \cite{Stoyanov}.}


Цель данного заключения --  правильным образом структурировать пройденный материал, выделяя главные идеи и ранжируя результаты по важности.
Авторы надеются, что это может помочь в освоении курса, дополнительно мотивировав изучение ряда разделов.

\end{appendix}

 \addcontentsline{toc}{section}{Литература}

\renewcommand\refname{Литература}
\makeatletter
\renewcommand{\@biblabel}[1]{#1.}
\makeatother

{\small

}

\clearpage

\thispagestyle{empty}

{ \center \large  Учебное издание
	
	\vspace{21mm}

	{\small
		{\bfseries Гасников}~\,Александр\;Владимирович\\
		{\bfseries Горбунов}~\,Эдуард\;Александрович\\
		{\bfseries Гуз}~\,Сергей\;Анатольевич\\
		{\bfseries Черноусова}~\,Елена\;Олеговна\\
		{\bfseries Широбоков}~\,Максим\;Геннадьевич\\
		{\bfseries Шульгин}~\,Егор\;Владимирович\\
		
	}
	\vspace{15mm}
	
	{\large ЛЕКЦИИ\\[1pt] ПО СЛУЧАЙНЫМ\\[1pt]     ПРОЦЕССАМ\\
		[18pt]  Под редакцией А. В. Гасникова 
		
	}
	
}
\vfill

{\parindent=0mm \small
	
	\small {Редакторы: \emph{В.\,А.~Дружинина}, \emph{И.\,А.~Волкова}, \emph{О.\,П.~Котова}}
	
	\small {Корректор \emph{Н.\,Е.~Кобзева}}
	
	\small {Компьютерная верстка \emph{Н.\,Е.~Кобзева}}
	
	\small {Дизайн обложки  \emph{Е.\,А.~Казённова}}

	\vspace*{3mm}
	
	Подписано в печать 00.07.2019. Формат 60$\times$84\,\!$^{1}\!/\!_{16}$. 
	
	Усл. печ. л.  15{,}9. Уч.-изд. л. 13{,}7. Тираж 000~экз. Заказ №\,000.
	
	\vspace*{3mm}
	
Федеральное государственное автономное образовательное учреждение

высшего  образования <<Московский физико-технический институт  

(национальный исследовательский университет)>>

141700, Московская обл., г. Долгопрудный, Институтский пер., 9

Тел. (495) 408-58-22, e-mail: rio@mipt.ru

\rule[2pt]{\textwidth}{0.2pt}

Отпечатано в полном соответствии с предоставленным оригиналом-макетом

ООО <<Печатный салон ШАНС>>

127412, г. Москва, ул. Ижорская, д. 13, стр. 2

Тел. (495) 484-26-55

	
	\newpage
	
	\thispagestyle{empty}
	\begin{center}
		{\largeД\,л\,я\ \ з\,а\,м\,е\,т\,о\,к}
	\end{center}

}


\begin{thebibliography} {20}

\elena{
\bibitem{Afanas'evaBulinskaya}
Афанасьева Л.Г., Булинская Е.В. случайные процессы в теории массового обслуживания и управления запасами.  Москва~: Изд-во МГУ, 1960. 110~с.
}

\bibitem{Baimurzina2015}
Баймурзина Д.Р., Гасников А.В., Гасникова Е.В. Теория макросистем с точки зрения стохастической химической кинетики // Труды МФТИ. 2015. Т.~7, №~4. С.~95--103.

\bibitem{BengioGoodfellowCurville}
Бенджио И., Гудфеллоу Я., Курвилль А. Глубокое обучение. Москва~: ДМК-Пресс. 2018.

\bibitem{Billingsley}
Биллингсли П. Сходимость вероятностных мер. Москва~: Наука, 1977. 352~с.

\bibitem{BillingsleyErgod}
Биллингсли П. Эргодическая теория и информация. Москва~: Мир, 1969. 238~с.

\bibitem{BorovkovMatStat}
Боровков А.А. Математическая статистика: учебник. Москва~: Лань, 2010. 704~с.

\bibitem{Borovkov1999}
Боровков А.А. Теория вероятностей. Москва~: Эдиториал УРСС, 1999. 472~с.

\bibitem{StochAn2016}
Бузун Н.О., Гасников А.В., Гончаров Ф.О., Горбачев О.Г., Гуз С.А., Крымова Е.А., Натан А.А., Черноусова Е.О. Стохастический анализ в задачах. Ч.~1 / под ред. А.В. Гасникова. Москва~: МФТИ, 2016. 212~с. URL:~\url{https://arxiv.org/pdf/1508.03461.pdf}

\bibitem{Bulinsky2010}
Булинский А.В. Случайные процессы. Примеры, задачи и упражнения: учебное пособие. Москва~: МФТИ, 2010. 216~с.

\bibitem{BulinskyShiryaev2005}
Булинский А.В., Ширяев А.Н. Теория случайных процессов. Москва~: Физматлит, 2005. 408~с.

\bibitem{VentselAD}
Вентцель А.Д. Курс теории случайных процессов. Москва~: Наука, 1996. 400~с.

\bibitem{Ventsel1}
Вентцель Е.С. Исследование операций. Москва~: Советское радио, 1972. 552~с.


\bibitem{Ventsel2}
Вентцель Е.С. Исследование операций: задачи, принципы, методология. 2-е изд. Москва~: Наука, 1988. 208~с.









\bibitem{Vereschyagin}
Верещагин Н.К., Щепин Е.В. Информация, кодирование и предсказание. Москва~: МЦНМО, 2012. 236~с.

\bibitem{WienerExample} 
Волков И.К., Зуев С.М., Цветкова Г.М. Случайные процессы: учеб. для вузов / под ред. B.C. Зарубина, А.П. Крищенко. Москва~: Изд-во МГТУ им. Н.Э. Баумана, 1999. 448~с.

\bibitem{Vyugin} 
Вьюгин В.В. Математические основы теории машинного обучения и прогнозирования. Москва~: МЦНМО, 2013. 390~с. URL:~\url{http://www.iitp.ru/upload/publications/6256/vyugin1.pdf}

\bibitem{Gardiner}
Гардинер К.В. Стохастические модели в естественных науках. Москва~: Мир, 1986. 591 c.

\bibitem{GasnikovOpt}
Гасников А.В. Современные численные методы оптимизации. Метод универсального градиентного спуска. 2-е изд. М: МФТИ, 2019.  270~с. URL:~\url{https://arxiv.org/pdf/1711.00394.pdf} 

\bibitem{pagerank1}
Гасников А.В., Гасникова Е.В., Двуреченский П.Е., Мохаммед А.А.М., Черноусова Е.О. Вокруг степенного закона распределения компонент вектора PageRank. Ч.~1. Численные методы поиска вектора PageRank // Сибирский журнал вычислительной математики. 2017. Т.~20, №~4. С.~359--378.

\bibitem{pagerank2}
Гасников А.В., Двуреченский  П.Е., Жуковский М.Е., Ким С.В., Плаунов С.С., Смирнов Д.А., Носков Ф.А. Вокруг степенного закона распределения компонент вектора PageRank. Ч.~2. Модель Бакли–Остгуса, проверка закона для этой модели и устройство реальных поисковых систем // Сибирский журнал вычислительной математики. 2018. Т.~21, №~1. С.~23--45.

\bibitem{GasnikovOnline}
Гасников А.В., Нестеров Ю.Е., Спокойный В.Г. Об эффективности одного метода рандомизации зеркального спуска в задачах онлайн оптимизации // Журнал вычислительной математики и математической физики. 2015. Т.~55, №~4. С.~582--598.

\bibitem{GikhmanSkorohod1} 
Гихман И.И., Скороход А.В. Введение в теорию случайных процессов. Москва~: Наука, 1977. 568~с.

\bibitem{GikhmanSkorohod2} 
Гихман И.И., Скороход А.В. Теория случайных процессов. В 3-х томах. Москва~: Наука, 1971.

\bibitem{GnedenkoKovalenko}
Гнеденко Б.В., Коваленко И.Н. Введение в теории массового обслуживания, Москва~: УРСС, 2005. 397~с.

\bibitem{GnedenkoKolmogorov} 
Гнеденко Б.В., Колмогоров А.Н. Предельные распределения для сумм независимых случайных величин. Москва--Ленинград~: Государственное издательство технико-теоретической литературы, 1949. 264~с. 

\bibitem{Gnedenko}
Гнеденко Б.В. Курс теории вероятностей. Москва~: Наука, 1988. 451~с.

\bibitem{Gusein-Zade}
Гусейн-Заде С.М. Разборчивая невеста. Серия: Библиотека  <<Математическое просвещение>>. Вып. 25. Москва~: МЦНМО, 2003. 24~с.

\bibitem{Zorich}
Зорич В.А. Математический анализ задач естествознания. Москва~:\linebreak МЦНМО, 2017. 160~с.

\bibitem{Katok1999}
Каток А.Б., Хассельблат Б. Введение в современную теорию динамических систем. Москва~: Факториал, 1999. 767~с.


\bibitem{KelbertSukhov2010} 
Кельберт М.Я., Сухов Ю.М. Вероятность и статистика в примерах и задачах. Т.~2. Марковские цепи как отправная точка теории случайных процессов. Москва~: МЦНМО, 2010. 550 c.


\bibitem{KendallMoran}
Кендалл М., Моран П. Геометрические вероятности. Москва~: Наука, 1972. 192~с.

\bibitem{Kingman}
Кингман Дж. Пуассоновские процессы. Москва~: МЦНМО, 2007. 136 c.

\bibitem{Kingsep2001}
Кингсеп А.С., Локшин Г.Р., Ольхов О.А. Основы физики. Курс общей физики: учебник. В 2-х томах. Т.~1. Механика, электричество и магнетизм, колебания и волны, волновая оптика / под ред. А.С. Кингсепа. Москва~: Физматлит,  2001.  225~с.

\bibitem{Kolmogorov1974}
Колмогоров А.Н. Основные понятия теории вероятностей. Москва\;: \mbox{Наука,} 1974.


\bibitem{KoralovSinai} 
Коралов\,Л.Б., Синай\,Я.Г. Теория вероятностей и случайные\,процессы\,/ пер. с англ. Москва~: МЦНМО, 2014. 408~с.

\bibitem{Kramer1969}
Крамер Г., Лидбеттер М. Стационарные случайные процессы. Москва~: Мир, 1969.

\bibitem{KrasnoselskyCrane} 
Красносельский М.А., Крейн С.Г. Итерационный процесс с минимальными невязками // Матем. сб. 1952. 31(73):2.  315--334.

\bibitem{Krasnoselsky}
Красносельский М.А., Лифшиц Е.А., Соболев А.В. Позитивные линейные системы. Метод положительных операторов. Москва~: Наука, 1985.

\bibitem{Malyshev} 
Малышев В.А. Кратчайшее введение в современные вероятностные модели. Москва~: Изд-во мехмата МГУ, 2009. 

\bibitem{MalyshevPirogov}
Малышев В.А., Пирогов С.А.  Обратимость и необратимость в стохастической химической кинетике // УМН. 2008. Т.~63:1(379).   С.~3–36.

\bibitem{MillerPankov}
Миллер Б.М., Панков А.Р. Теория случайных процессов в примерах и задачах. Москва~: Физматлит, 2002. 320~с.


\bibitem{NGG1} 
Натан А.А., Горбачев О.Г., Гуз С.А. Теория вероятностей: учеб. пособие. Москва~: МЗ Пресс; МФТИ, 2007. 

\bibitem{NGG2} 
Натан А.А., Горбачев О.Г., Гуз С.А.  Основы теории случайных процессов: учеб. пособие. Москва~: МЗ Пресс; МФТИ, 2003.

\bibitem{NGG3}
Натан А.А., Горбачев О.Г., Гуз С.А. Математическая статистика: учеб. пособие. Москва~: МЗ Пресс; МФТИ, 2005.

\bibitem{Nikaydo} 
Никайдо\;Х. Выпуклые структуры и математическая экономика. Москва~: Мир, 1972. 520~с. 

\bibitem{Oksendal2003} Оксендаль Б. Стохастические дифференциальные уравнения. Москва~: Мир, 2003. 408~с.

\bibitem{Peterbarg}
Питербарг В.И. Двадцать лекций о гауссовских случайных процессах. Москва~: МЦНМО, 2015. 192~с.

\bibitem{PolyakTremba} 
Поляк Б.Т., Тремба А. А. Решение задачи PageRank для больших матриц с помощью регуляризации // Автомат. и телемех. 2012. №~11. 144–166; Autom. Remote Control. 2012. 73:11. 1877–1894.

\bibitem{Raigor}
Райгородский А.М. Модели интернета. Долгопрудный~: Издательский дом <<Интеллект>>. 2013.

\bibitem{Rozanov1} 
Розанов Ю.А. Лекции по теории вероятностей.  Долгопрудный~: Издательский дом <<Интеллект>>, 2008. 136~с.

\bibitem{Rozanov2}
Розанов Ю.А. Теория вероятностей, случайные процессы и математическая статистика. Москва~: Наука, 1985. 320~с.

\elena{\bibitem{Romanovsky}
Романовский В.И. Дискретные цепи Маркова. Москва~: ГОСТЕХИЗДАТ, 1949. 436~с.}


\bibitem{Sanov1957}
Санов И.Н. О вероятности больших отклонений случайных величин // Математический сборник. 1957. Т.~42(84):1. С.~11--44.

\bibitem{Sekey}
Секей Г. Парадоксы в теории вероятностей и математической статистике. Москва~: РХД, 2003. 272~с.

\bibitem{SinaiLections1996}
Синай Я.Г. Введение в эргодическую теорию. Москва~: Фазис, 1996. 128~с.


\bibitem{Polya} 
Скопенков М., Смыкалов В., Устинов А. Случайные блуждания и электрические цепи // Матем. просв., сер. 3. 2012. Т.~16. С.~25—47.

\bibitem{Sobol}
Соболь И.М. Численные методы Монте-Карло. Москва~: Физматлит, 1973. 312~с. 

\bibitem{Sosinsky}
Сосинский А.Б. Мыльные пленки и случайные блуждания. Серия: Библиотека <<Математическое просвещение>>. Вып. 6. Москва~: МЦНМО, 2000. 24~с.


\ag{\bibitem{Stoyanov}
Стоянов Й. Контрпримеры в теории вероятностей. М.: МЦНМО, 2012. 296 с.
}

\bibitem{Falin1} 
Фалин Г.И., Фалин А.И. Актуарная математика в задачах. Москва~: Физматлит, 2003. 192 c.

\bibitem{Falin2} Фалин А.Г., Фалин Г.И. Введение в актуарную математику. Математические модели в страховании: учебн. пособие. Москва~: Изд-во Московского университета, 1994. 110~с. 










\bibitem{Feller}
Феллер В. Введение в теорию вероятностей и ее приложения. Т.~1,~2. Москва~: Мир, 1984. 528~с.

\bibitem{Fichtenholz}
Фихтенгольц Г.М. Основы математического анализа. В 2-х томах. Москва~: Наука, 1968.

\bibitem{Halmos}
Халмош П. Гильбертово пространство в задачах. Москва~: Мир, 1970. 351~с. 

\bibitem{Chebotarev}
Чеботарев А.М. Введение в теорию вероятностей и математическую статистику для физиков. Москва~: МФТИ, 2009. 250~с.

\bibitem{KaiLai1964}
Чжун К.Л. Однородные цепи Маркова. Москва~: Мир, 1964. 426~с.

\bibitem{Shabbat}
Шабат Б.В. Введение в комплексный анализ. В 2-х частях. Москва~: Лань, 2004.

\bibitem{Shiryaev1}
Ширяев А.Н. Основы финансовой стохастической математики. Т.~1. Москва~: МЦНМО, 2016. 440~с.


\bibitem{ShiryaevT1}
Ширяев А.Н. Вероятность-1.  Москва~: МЦНМО, 2017. 552 c.

\bibitem{Shiryaev2}
Ширяев А.Н. Вероятность-2.  Москва~: МЦНМО, 2017. 416 c.



\bibitem{Yaglom1987}
Яглом А.М. Корреляционная теория стационарных случайных функций. Ленинград~: Гидрометеоиздат, 1987.




\bibitem{Applebaum}
Applebaum D. Le'vy processes and stochastic calculus. 2nd ed. Cambridge University Press, 2009.

\bibitem{AmirContTime}
Amir, G. Continuous time Markov Chains. URL:~\url{http://u.math.biu.ac.il/~amirgi/CTMCnotes.pdf}

\bibitem{Bertsekas} 
Bertsekas, Dimitri P., et al. Dynamic programming and optimal control. V.~1, N~2. Belmont, MA: Athena scientific, 1995.

\bibitem{BlumHopcroftKannan}
Blum A., Hopcroft J., Kannan R. Foundations of data science. 2019. URL:~\url{https://www.cs.cornell.edu/jeh/}

\bibitem{BoucheronLugosiMassart}
Boucheron S., Lugosi G., Massart P. Concentration Inequalities: A Non\-asymp\-to\-tic Theory of Independence. Oxford~: Oxford University Press, 2013. 480~p.

\bibitem{Bubeck}
Bubeck S., Cesa-Bianchi N. Regret analysis of stochastic and nonstochastic multi-armed bandit problems // Foundations and Trends® in Machine Learning. 2012 Dec 12. V.~5(1). P.~1--122.


\bibitem{Cesa-Bianchi}
Cesa-Bianchi N., Lugosi G. Prediction, learning, and games. Cambridge university press, 2006.


\bibitem{Cover}
Cover T.M., Thomas J.A. Elements of information theory. John Wiley \& Sons, 2012.

\bibitem{Diaconis}
Diaconis P. The Markov chain Monte Carlo revolution // Bulletin (New Series) of the AMS. 2009. V.~49:2. P.~179–205.

\bibitem{Duchi}
Duchi J.C. Introductory lectures on stochastic optimization // IAS/Park City Mathematics Series. 2016. URL: \url{http://stanford.edu/~jduchi/PCMIConvex/Duchi16.pdf}

\bibitem{EthierKurtz}
\eg{Ethier N.S., Kurtz T.G. Markov processes. Wiley Series in Probability and Mathematical Statistics. New York~: John Wiley \& Sons, Inc., 2005.}

\bibitem{Gittins}
Gittins J.C., Glazebrook K.D., Weber R. Multi-armed bandit allocation indices. New York~:  John Wiley \& Sons, Inc., 2011.


\bibitem{Jaakkola}
Jaakkola T., Jordan M.I., Singh S.P. Convergence of stochastic iterative dynamic programming algorithms // In Advances in neural information processing systems, 1994. P.~703--710.

\bibitem{Jin}
Jin C., Allen-Zhu Z., Bubeck S., Jordan M.I. Is Q-learning provably efficient? // In Advances in Neural Information Processing Systems. 2018. P.~4863--4873.

\ag{\bibitem{JinSidford}
Jin Y., Sidford A. Efficiently Solving MDPs with Stochastic Mirror Descent.  URL:~\url{https://proceedings.icml.cc/static/paper_files/icml/2020/6568-Supplemental.pdf}}

\bibitem{Hazan}
Hazan E. Introduction to online convex optimization // Foundations and Trends$^{\text{\textregistered}}$ in Optimization. 2016. V.~2, N~3–4. P.~157–325.

\elena{
\bibitem{Kac}
Kac M. Random Walk and the Theory of Brownian Motion // The American Mathematical Monthly. 1947. V.~54, N.~7 (Part I). P.~369-391.
}


\bibitem{Klartag}
Klartag B. A central limit theorem for convex sets // Inventiones ma\-the\-ma\-ti\-cae. April 2007. V. 168, I.~1. P.~91–131.

\bibitem{Koltchinskii}
Koltchinskii V. Oracle inequalities in empirical risk minimization and sparse recovery problems, volume 2033 of Lecture Notes in Mathematics, 2011. 
URL:~\url{https://www.stat.washington.edu/jaw/COURSES/EPWG/PAPERS-11/sflour_book.pdf}


\bibitem{LanNemirovskiShapiro}
Lan G., Nemirovski A., Shapiro A. Validation analysis of mirror descent stochastic approximation method // Mathematical programming. 2012. V.~134(2). P.~425--58.

\bibitem{Lovasz}
Lovász, L., Vempala S. (2003). Hit-and-run is fast and fun. Microsoft Research preprint, 2003.

\bibitem{Milman2008}
Milman V.D. Geometrization of Probability // Progress in Mathematics. 2008. V.~265.

\bibitem{Mitzenmacher}
Mitzenmacher M. A brief history of generative models for power law and lognormal distributions // Internet mathematics. 2004. V.~1, N~2. P.~226–251.

\bibitem{Narita2001}
Narita K. Asymptotic behavior of continuous-time Markov chains of the Ehrenfest type with applications // Mathematical Models and Stochastic Processes Arising in Natural Phenomena and Their Applications. V.~1193. P.~79--103.

\bibitem{Newman}
Newman M.E.J. Power laws, Pareto distributions and Zipf’s law // Contemporary physics. 2005. V.~46, N~5. P.~323--351.


\bibitem{Rakhlin}
Rakhlin A, Sridharan K. Statistical learning theory and sequential prediction. Lecture Notes in University of Pennsyvania, 2014. URL:~\url{http://www.mit.edu/~rakhlin/courses/stat928/stat928_notes.pdf}

\bibitem{Ross}
Ross S.M. An elementary introduction to mathematical finance. Cambridge University Press, 2011. 

\bibitem{SatoBook1999}
Sato Ken-iti. L\'evy Processes and Infinitely Divisible Distributions. Cambridge University Press, 1999.

\bibitem{Shalev-Shwartz}
Shalev-Shwartz S., Ben-David S. Understanding Machine Learning: From theory to algorithms. Cambridge University Press, 2014.

\bibitem{Shapiro}
Shapiro A., Dentcheva D., Ruszczynski A. Lecture on stochastic pro\-gram\-ming: Modeling and theory. Series on Optimization. Philadelphia~: MOS SIAM, 2014.

\bibitem{SigmanLectures}
Sigman K. Lecture Notes on Stochastic Modeling I. URL:~\url{http://www.columbia.edu/~ks20/stochastic-I/stochastic-I.html}

\bibitem{SuttonBarto}
Sutton R.S., Barto A.G. Reinforcement learning: An introduction. MIT\;press, 2018. URL:\;\href{http://incompleteideas.net/book/bookdraft2017nov5.pdf}{http://incompleteideas.net/bookdraft2017nov5.pdf}

\bibitem{Vershynin}
Vershynin R. Introduction to the non-asymptotic analysis of random matrices~: preprint // arXiv.org. 2010. URL:~\url{https://arxiv.org/abs/1011.3027} 

\bibitem{Vershynin2}\!Vershynin R. Four lectures on probabilistic methods for data science~: preprint // arXiv.org. 2016.  URL:~\url{https://arxiv.org/abs/1612.06661}  

\ag{\bibitem{Wainwright} Wainwright M. J. Variance-reduced $ Q $-learning is minimax optimal // arXiv.org. 2019.  URL:~\url{https://arxiv.org/pdf/1906.04697.pdf}}

\ag{\bibitem{Slivkins}\!URL:~\url{https://arxiv.org/pdf/1904.07272.pdf}}

\bibitem{banditalgs}\!URL:~\url{http://banditalgs.com}

\end{thebibliography}
\end{document}